\renewcommand{\baselinestretch}{1.3} \normalsize
\newcommand{\R}{\mathbb{R}}
\newcommand{\N}{\mathbb{N}}
\newcommand{\Z}{\mathbb{Z}}
\newcommand{\Sp}{\mathbb{S}}
\newcommand{\Ns}{\mathcal{N}}
\newcommand{\U}{\mathcal{U}}
\newcommand{\I}{\mathcal{I}}
\newcommand{\E}{\mathscr{E}}
\newcommand{\Le}{\mathscr{L}}
\newcommand{\Ha}{\mathscr{H}}
\newcommand{\M}{\mathscr{M}}
\newcommand{\eps}{\varepsilon}
\newcommand{\kla}[1]{\ensuremath{\left(#1\right)}}
\newcommand{\norm}[1]{\ensuremath{\left\lVert#1\right\rVert}}
\newcommand{\abs}[1]{\ensuremath{\lvert#1\rvert}}
\newcommand{\babs}[1]{\ensuremath{\Bigl\lvert#1\Bigr\rvert}}
\newcommand{\menge}[1]{\ensuremath{\left\{#1\right\}}}
\newcommand{\eins}[1]{\ensuremath{\Big\arrowvert_{#1}}}
\newcommand{\intl}{\int\limits}
\newcommand{\iiintl}{\iiint\limits}
\newcommand{\suml}{\sum\limits}
\newcommand{\name}[1]{\textsc{#1}}
\newcommand{\mppack}{MpPACK}
\newcommand{\lapack}{LAPACK}
\newcommand{\povray}{PovRay}
\newcommand{\gnuplot}{GnuPlot}
\newcommand{\python}{Python}
\newcommand{\maple}{Maple{\scriptsize{\texttrademark}}}
\newcommand{\cpp}{{C\footnotesize{++}}}
\newcommand{\knotplot}{KnotPlot}
\newcommand{\libbiarc}{libbiarc}
\newcounter{constc}
\newcounter{consttheta}
\newcounter{constTheta}
\def\mvint_#1{\mathchoice
          {\mathop{\vrule width 6pt height 3 pt depth -2.5pt
                  \kern -8pt \intop}\nolimits_{\kern -3pt #1}}%
          {\mathop{\vrule width 5pt height 3 pt depth -2.6pt
                  \kern -6pt \intop}\nolimits_{#1}}%
          {\mathop{\vrule width 5pt height 3 pt depth -2.6pt
                  \kern -6pt \intop}\nolimits_{#1}}%
          {\mathop{\vrule width 5pt height 3 pt depth -2.6pt
                  \kern -6pt \intop}\nolimits_{#1}}}
\newenvironment{enumabc}{\begin{enumerate}[(a)]}{\end{enumerate}}
\newtheorem{thm}{Theorem}[chapter]
\newtheorem{lem}[thm]{Lemma}
\newtheorem{cor}[thm]{Corollary}
\newtheorem{prop}[thm]{Proposition}
\theoremstyle{definition}
\newtheorem{defn}[thm]{Definition}
\newtheorem{exmp}[thm]{Example}
\theoremstyle{remark}
\newtheorem{rem}[thm]{Remark}
\newcommand{\mylabel}[1]{\thlabel{#1}\label{T#1}}
\begin{document}

\pagenumbering{Roman}
\pagestyle{empty} 

\newcommand{\globaltitle}{Analysis of the first variation and a numerical gradient flow for integral \name{Menger} curvature}
\title{\globaltitle}
\author{Tobias Hermes}
\date{\today}

\renewcommand{\baselinestretch}{1.3} \LARGE
\begin{titlepage}
\begin{center}
\vspace*{0.1cm}
\textbf{\LARGE \globaltitle}\\[2cm]
\Large Von der Fakult\"at f\"ur Mathematik, Informatik und Naturwissenschaften der RWTH Aachen University zur Erlangung des akademischen Grades eines Doktors der Naturwissenschaften genehmigte Dissertation\\[1.2cm]
\Large vorgelegt von\\[0.5cm]
\Large Diplom-Computermathematiker\\
\Large Tobias Hermes\\[0.1cm]
\Large aus Neuss\\[2.5cm]
\begin{tabbing}
\Large \hspace{1.5cm}Berichter: \=Univ.-Prof. Dr. Heiko von der Mosel\\
\Large \>AOR Priv.-Doz. Dr. Alfred Wagner\\[1cm]
\end{tabbing}
\Large Tag der m\"undlichen Pr\"ufung: 14.06.2012\\[2cm]
\end{center}
\textbf{\small Diese Dissertation ist auf den Internetseiten der Hochschulbibliothek online verf\"ugbar.}
\end{titlepage}

\renewcommand{\baselinestretch}{1.3} \normalsize

\pagestyle{plain}
\cleartooddpage[\thispagestyle{empty}]
\chapter*{Abstract}
In this thesis, we consider the knot energy ``integral \name{Menger} curvature'' which is the triple integral over the inverse of the classic circumradius of three distinct points on the given knot to the power $p\in [2,\infty)$. We prove the existence of the first variation for a subset of a certain fractional \name{Sobolev} space if $p>3$ and for a subset of a certain \name{H\"older} space otherwise. We also discuss how fractional \name{Sobolev} and \name{H\"older} spaces can be generalised for $2\pi$-periodic, closed curves. Since this energy is not invariant under scaling, we additionally consider a rescaled version of the energy, where we take the energy to the power one over $p$ and multiply by the length of the curve to a certain power. We prove that a circle is at least a stationary point of the rescaled energy. Furthermore, we show that in general a functional with a scaling behaviour like $E(r\gamma)=r^\alpha E(\gamma)$, $\alpha\in\R$ cannot have stationary points unless $\alpha=0$. Consequently ``integral \name{Menger} curvature'' for $p\neq 3$ can be used as a subsidiary condition for a \name{Lagrange}-multiplier rule.\\
We consider a numerical gradient flow for the rescaled energy. For this purpose we use trigonometric polynomials to approximate the knots and the trapezoidal rule for numerical integration, which is very efficient in this case. Moreover, we derive a suitable representation of the first variation. We present an algorithm for the adaptive choice of time step size and for the redistribution of the \name{Fourier} coefficients. After discussing the full discretization we present a wide collection of example flows.
\cleartooddpage[\thispagestyle{empty}]


\setlength{\parskip}{0.0ex} 

\tableofcontents
\cleartooddpage[\thispagestyle{empty}]

\listoffigures
\cleartooddpage[\thispagestyle{empty}]

\pagestyle{fancy}
\fancyhead[EL,OR]{\thepage}
\fancyhead[ER]{\nouppercase{\rightmark}}
\fancyhead[OL]{\nouppercase{\leftmark}}
\fancyfoot{}
\addtolength{\headheight}{12pt}

\setlength{\parskip}{2.0ex plus 1.0ex minus 0.5ex} 
\pagenumbering{arabic}


\chapter{Introduction}
In this thesis we are considering knots. These are embeddings of the unit circle (in other words the one dimensional unit sphere $\Sp^1$) into the \name{Euclidean} three-dimensional space, in particular a function $f: \Sp^1 \rightarrow \R^3$ such that $f: \Sp^1 \rightarrow f(\Sp^1)$ is a homeomorphism.

Knots are objects that appear in everyday life for example when tying your shoes. In this context one thinks about a knot as something like a piece of rope. The mathematical model for this describes the centreline of such a rope or in other words, a rope with infinitesimal diameter. In real life we normally are dealing with open knots. However, it is difficult to handle them mathematically, since they are topologically equivalent to a straight line, cf. \cite{pisa_rawdon}. Therefore, traditional mathematical knots are closed. Nevertheless, there are many applications for closed curves in physics and biology, for instance in the field of ring polymers \cite{millett} and supercoiled DNA sequences \cite{dewitt}.

We are interested in defining an equivalence class for knots, such that two knots are ``equal'' if they represent the same ``way of knotting'' in our opinion and can be transformed into each other. Using \emph{cut-and-seal} actions this can be achieved for every pair of knots. Proteins manipulate closed DNA molecules in such a way and thereby changing their knot type \cite{dewitt}. Hence, as mentioned before, this is a very important and interesting application. However, these actions are not permitted here. The exact mathematical definition can be found in Chapter~\ref{knottheory}. At least for closed polygons an equivalent definition exists, which goes back to \name{R. Reidemeister}, who introduced the so-called \emph{\name{Reidemeister} moves} in 1932. Two closed polygons that can be transformed into each other by a certain number of those moves are in the same knot class and vice versa. A knot, which is in the same knot class as the circle is called an \emph{unknot}. Even an unknot can assume very headstrong shapes, such that it is very difficult to identify its knot class at first sight. This clarifies the need for a method identifying the knot class. Another goal of research is to determine invariants for knots, but one that could distinguish all knot classes has not been found up to now. Moreover, it is interesting to consider knots mapping into an arbitrary manifold instead of mapping into $\R^3$, respectively knots moving on geometric objects, such as the surface of a torus. A detailed introduction can be found in \cite{BZ03} and \cite{CF77}.

Besides the classic knot theory the so-called \emph{geometric knot theory} has been developed, where among other topics \emph{knot energies} were considered. Such energies are often motivated by physical considerations and one hopes that physical phenomena could be modelled appropriately. In the field of fluid mechanics for instance, one considers knotted structures influenced by a continuous flow, that try to ``relax'', cf. \cite{Mof98}. Other examples are magnetohydrodynamics \cite{Mof85} and vortex filaments \cite{calini}. Another application is the search for the ``optimal'' shape of a knot by minimizing a corresponding energy within a given knot class. The problem to choose the ``optimal'' representative of each knot class in general is hard or even impossible to solve, hence we do not have an intuitive conception of such a configuration and the results may differ strongly. ``Optimality'' of a knot could mean that it has some pure geometric properties, that its parametrization is as simple as possible, or that there are some strong symmetries in its projection. Regardless of the kind of ``optimality'' a transformation into such an ``optimal'' configuration should have the advantage that some properties of the knot could be read off easily. The search for the ``optimal'' shape in the sense of minimizing a knot energy, can be done using a so-called \emph{gradient flow}, where the knot deforms in the direction of the negative gradient of the energy until a critical point is reached. This method helps determining whether two knots are in the same knot class, by applying this gradient flow to both knots and comparing their respective final configuration. However, to do so one has to assume that the knot class is not abandoned during the flow.
\name{J. O'Hara} gives an axiomatic definition of a knot energy functional in \cite{OH92}, for which it is essential that the value of the functional expands beyond all bounds if the knot degenerates to a continuous mapping with self-intersections and that knots with congruent images were assigned to the same value. A functional with the first property is called \emph{self-repulsive} \cite[Definition 1.1]{ohara} and is also known as a \emph{charge energy} \cite[Definition 2.5]{DEJvR}. The self-repulsiveness should prevent the knot from abandoning its knot class during a gradient flow, but there are repulsive functionals that do not punish the ``pulling-tight'' of small knots, cf. \cite[Theorem 3.1 (2)]{OH92}.

We start by imagining electrons being attached along a curve, which repel each other. A first approach like this
\begin{equation} \label{I001}
\intl_{\Sp^1} \intl_{\Sp^1} \frac{1}{\abs{\gamma(s)-\gamma(t)}^p} \, ds \, dt, \qquad p\geq 2,
\end{equation}
is infinite through nearest neighbour effects for every smooth and closed curve $\gamma$. Different examples for such repulsive energies were considered by \name{R. B. Kusner} and \name{J. M. Sullivan} \cite{KS98_Moeb} among others. However, first models of repulsive energies for smooth knots were defined by \name{O'Hara} in \cite{OH91} and \cite{OH92}. For $C^2$-embeddings he considered energies like this
\begin{equation} \label{defOHenergy}
E^\alpha(\gamma) := \intl_{\Sp^1} \intl_{\Sp^1} \kla{\frac{1}{\abs{\gamma(s)-\gamma(t)}^\alpha}-\frac{1}{\text{D}_\gamma(s,t)^\alpha}} \abs{\gamma'(s)} \abs{\gamma'(t)} \, ds \, dt, \quad \alpha \in [2,3),
\end{equation}
where $\text{D}_\gamma$ denotes the intrinsic distance function on the curve. The problem of \eqref{I001} is that all pairs of points that are close to each other are penalised. Here, on the contrary, only the pairs of points having small distance in space but being far away on a sufficiently smooth curve, are punished, because the \name{Euclidean}-distance and $\text{D}_\gamma$ are nearly the same for pairs of points close to each other. For $\alpha=2$ this energy is also called \emph{\name{M\"obius}~energy}, which is the first one for which satisfying existence and regularity results have been found. The name is due to the invariance under \name{M\"obius} transformations, which was found by \name{M. H. Freedman}, \name{Y.-X. He} and \name{Z. Wang}, who analysed it comprehensively in \cite{FHW94}. Using this invariance they prove that circles are the global minimizers and verify the existence and $C^{1,1}$-regularity of minimizers in prime knot classes. \emph{Prime knots} are knots that cannot be divided into simpler knots (see \cite{RS06} for details). Moreover, \name{Kusner} and \name{Sullivan} \cite{KS98_Moeb} conjecture that there are no minimizers among non-prime knots. Another important result of their work is the statement that given a fixed energy bound there are only finitely many knot classes having a representative, whose energy is bounded by this given value. The regularity of minimizers could be improved to $C^\infty$, see \name{He} \cite{He00} and the rigorous analysis of \name{P. Reiter} in \cite{reiter_reg} and his PhD thesis \cite{phd_reiter}, \cite{reiter_pdc}. Very recently \name{S. Blatt}, \name{Reiter} and \name{A. Schikorra} \cite{BRS12} proved that any stationary point of $E^2$ with finite \name{M\"obius}~energy is $C^\infty$, without using the \name{M\"obius} invariance. Also find a similar result for the family of energies $E^\alpha$, $\alpha\in(2,3)$ in \cite{blattreiter}. The uniqueness of minimizers for various repulsive knot energies including \name{M\"obius} and \name{O'Hara}'s energies is proven in \cite{circmin}. Using residue calculus \name{D. Kim} and \name{Kusner} in \cite{KK93} were able to explicitly compute the \name{M\"obius}~energy for $E^2$-critical torus knots. To the best of our knowledge \name{Blatt} is the only one so far, who proved long time existence for the gradient flow for \name{M\"obius}~energy \cite{blatt_flow}. A linear combination of an elastic energy and the \name{M\"obius}~energy is considered in \cite{LinSch10}.

Besides using repulsive energies in order to achieve self-avoidance, which could be categorised as a ``soft'' method, one can also consider the other extreme to assign a constant ``thickness'' to a curve $\gamma$ and using its inverse as a knot energy of $\gamma$. This models a ``hard'' or steric constraint and can be expressed by the \emph{global radius of curvature}, which was introduced by \name{O. Gonzales} and \name{J. H. Maddocks} in \cite{GM99} and extensively analysed together with \name{F. Schuricht} and \name{H. von der Mosel} in \cite{GMSvdM02} and \cite{SchvdM03_global}. Foundation of this approach is the classic \emph{circumradius} $R$, which is the radius of the unique circle passing through three distinct non-collinear points in $\R^3$. Three point are \emph{collinear} if they lie on one straight line. \name{Gonzales} and \name{Maddocks} defined the global radius of curvature for all $x\in \gamma(\Sp^1)$ as
\begin{equation} \label{defgrc}
\rho_G[\gamma](x) := \inf\limits_{\stackrel{y,z \in \gamma(\Sp^1)}{x\neq y\neq z\neq x}} R(x,y,z)
\end{equation}
and moreover,
\begin{equation} \label{thick}
\Delta[\gamma] := \inf\limits_{x\in \gamma(\Sp^1)} \rho_G[\gamma](x),
\end{equation}
as the \emph{thickness} of $\gamma$. Now there exists a ``tubular neighbourhood'' of $\gamma$ that has thickness $\eta:=\Delta[\gamma]$, since every open ball with radius $\eta$ that touches $\gamma$, is not intersected by the curve, see \cite[Lemma 3]{GMSvdM02}. For a $C^2$-curve $\gamma$ it is proven in \cite{GM99} that $\Delta[\gamma]$ is equal to its \emph{normal injectivity radius}, which is defined by considering normal circular discs with fixed radius around every point on the curve and increasing the radius until there are intersecting discs (see for instance \cite{Sim98}). The same is true for a curve $\gamma$ that is only continuous and rectifiable. The normal injectivity radius can than be defined via the arclength parametrization, which turns out to be of class $C^{1,1}$ if $\Delta[\gamma]>0$, see \cite[Lemmas 2 and 3]{GMSvdM02}. Definition \eqref{thick} is not just physically meaningful and mathematically precise but also analytically useful, in particular for the calculus of variations. More examples for defining ``thickness'' can be found in \cite{KS98_thick}. The idea of a completely tightened knot in a thick rope leads to the concept of \emph{ideal knots}. Those are the shortest representatives with a fixed thickness $\Delta[\cdot]>0$. For example one could image that a loose trefoil knot in a thick rope is pulled tighter and tighter by some process, finally becoming the ideal trefoil knot. Important in this context is the so-called \emph{ropelength}, which is the quotient of length over thickness. In \cite{GMSvdM02} and independently in \cite{CKS02} the existence of ideal knots in every knot class is established. In \cite{SchvdM04}, \cite{CKS02}, \cite{CFKSW06} and \cite{CFKS11} some characteristic properties of ideal knots are shown by means of techniques from non-smooth analysis.

As mentioned before, these methods are useful for analytical examinations but seem to be less suitable for numerical approaches. Therefore, we consider a somehow intermediate approach between ``soft'' and ``hard'' potentials. The idea is to replace the supremization of a variable, which  corresponds to the infimization of the inverse, by an integration over $\Sp^1$. The following energy, already proposed in \cite{GM99},
\begin{equation} \label{energyU}
\mathscr{U}_p(\gamma) := \kla{\;\intl_{\gamma(\Sp^1)} \frac{1}{\rho_G[\gamma](x)^p} \, d\Ha^1(x)}^\frac{1}{p}, \quad p\geq 1,
\end{equation}
which is actually the $L_p$-norm of the inverse of the global radius of curvature, was precisely analysed in \cite{StrvdM07}. \name{P. Strzelecki} and \name{von der Mosel} proved, that finite energy of a curve implies that it is embedded and that the representative parametrized by arclength is actually in the \name{Sobolev} space $W^{2,p}$. In turn, embedded curves in $W^{2,p}$ for $p>1$ can be characterised by finite $\mathscr{U}_p$-energy, see \cite[Theorem 1 and 2]{StrSzvdM09}. Moreover, it is shown, that $\mathscr{U}_p$-minimizers exist in each knot class, not only for prime knots, and that the circle is the unique global minimizer for closed curves with prescribed length. Replacing yet another suprimization by integration, we obtain
\begin{equation} \label{energyI}
\mathscr{I}_p(\gamma) := \intl_{\gamma(\Sp^1)} \intl_{\gamma(\Sp^1)} \frac{1}{\inf_{z\in\gamma(\Sp^1)} R(x,y,z)^p} \, d\Ha^1(x) \, d\Ha^1(y), \quad p\geq 2,
\end{equation}
which was considered in \cite{StrSzvdM09}. They proved, that finite energy implies that there are no self-intersections of the curve and that for $p>2$ the parametrization by arclength is actually in the \name{H\"older} space $C^{1,\alpha}$ with $\alpha=1-\frac{2}{p}$. In a last step we come to the so-called \emph{integral \name{Menger} curvature}, which is the main topic of this thesis
\begin{equation} \label{energyM}
\M_p(\gamma) := \intl_{\gamma(\Sp^1)} \intl_{\gamma(\Sp^1)} \intl_{\gamma(\Sp^1)} \frac{1}{R(x,y,z)^p} \, d\Ha^1(x) \, d\Ha^1(y) \, d\Ha^1(z), \quad p\geq 2.
\end{equation}
This energy was also already mentioned in \cite{GM99}. Observe that for $x,y$ and $z$ on the curve, with $y$ and $z$ moving to $x$, the inverse of $R(x,y,z)$ tends to the classic local curvature of $\gamma$ at $x$. Therefore, in contrast to our first attempt \eqref{I001}, it is not necessary to regularise the integrands in \eqref{energyU}, \eqref{energyI} and \eqref{energyM}. See also the advantages of integral \name{Menger} curvature over repulsive potentials from a physical point of view in \cite{banavar}. Measure theoretic results of \name{L\'eger} for sets $E\subset \R^n$ with finite one-dimensional \name{Hausdorff} measure can be found in \cite{Leg99}. For the generalised energy of those sets, he proved that $\M_2(E)<\infty$ implies that $E$ is \emph{rectifiable}, meaning that $E$ is contained in a countable family of \name{Lipschitz} images up to a null set with respect to \name{Hausdorff} measure.

In \cite[Theorem 1.2]{StrSzvdM10} \name{Strzelecki}, \name{M. Szuma\'nska} and \name{von der Mosel} proved for $p>3$ that a curve, which is a local homeomorphism, furthermore, parameterized by arclength and with finite integral \name{Menger} curvature, is of class $C^{1,1-\frac{3}{p}}(\Sp^1,\R^3)$. If there exists at least one simple point of $\gamma$, then $\gamma$ is injective. \name{Blatt} came up with the idea to consider the \name{Sobolev-Slobodeckij} function spaces for integral \name{Menger} curvature. In \cite[Theorem~1.1]{blatt_menger} he was able to prove that for $p>3$ an injective curve $\gamma\in C^1(\Sp^1,\R^3)$ parameterized by arclength has finite integral \name{Menger} curvature $\M_p(\gamma)<\infty$ if and only if $\gamma\in W^{2-\frac{2}{p},p}(\Sp^1,\R^3)$. The space $W^{2-\frac{2}{p},p}$ is a \name{Sobolev} space with a fractional order of differentiation. In particular, a $\gamma$ in this \name{Sobolev-Slobodeckij} space is of class $W^{1,p}(\Sp^1,\R^3)$ and in addition the following semi norm is finite
\begin{equation} \label{deffracSobolevS}
[\gamma']_{W^{s,p}(\Sp^1,\R^3)} := \kla{\,\intl_{\Sp^1} \intl_{\Sp^1} \;\frac{\abs{\gamma'(x)-\gamma'(y)}^p}{\abs{x-y}^{1+sp}}\,dx\,dy}^\frac{1}{p} \; < \; \infty \qquad \text{with } s=1-\frac{2}{p}.
\end{equation}
Together with the theorem of \name{Strzelecki}, \name{Szuma\'nska} and \name{von der Mosel}, this directly leads to the following statement (cf. \cite[Corollary~1.2]{blatt_menger}). For $p>3$ and a local homeomorphism $\gamma\in C^{0,1}(\Sp^1,\R^3)$ parametrized by arclength with at least one simple point, $\gamma$ has finite integral \name{Menger} curvature $\M_p(\gamma)<\infty$ if and only if $\gamma$ is embedded and belongs to $W^{2-\frac{2}{p},p}(\Sp^1,\R^3)$.

In this thesis we prove the existence of the first variation of $\M_p$, $p\in(3,\infty)$ and derive it for injective and regular functions in $W^{2-\frac{2}{p},p}(\Sp^1,\R^3)$ (see \thref{thmdiff}). Here \emph{regular} means that the norm of the tangent does not vanish on $\Sp^1$. The reason for the restriction to $p>3$ is our need for the embedding $W^{2-\frac{2}{p},p}(\Sp^1,\R^3)\subset C^{1,1-\frac{3}{p}}(\Sp^1,\R^3)$ \cite[Theorem 8.2]{hitchhiker}. After developing the proof of \thref{thmdiff} independently, these final result were reached by slightly improving \name{Blatt}'s methods in \cite[Theorem~1.1]{blatt_menger}. This proof can easily be extended to the case $p\in[2,\infty)$ for injective and regular functions in $C^{1,\alpha}(\Sp^1,\R^3)$ with $\alpha\in(1-\frac{2}{p},1]$ (see \thref{remsmallp}). Observe that $C^{1,\alpha}(\Sp^1,\R^3)\subset W^{2-\frac{2}{p},p}(\Sp^1,\R^3)$, for such $\alpha$, which we will prove later on. One part of the proof is to show that $\M_p$ is finite for knots in the respective function space. Another proof that $C^{1,\alpha}(\Sp^1,\R^3)$ regularity with $\alpha\in(1-\frac{2}{p},1]$ implies finite integral \name{Menger} curvature can also be found in \cite{KS11} for the case $p>2$. Moreover, \name{S. Kolasi\'nski} and \name{Szuma\'nska} showed in their paper that this \name{H\"older}-exponent is optimal by constructing a $C^{1,1-\frac{2}{p}}$-knot with infinite energy.


Integral \name{Menger} curvature is not invariant under scaling for $p\neq 3$, since we have
\begin{equation} \label{IscaleMp}
\M_p(r\gamma) = r^{3-p} \; \M_p(\gamma), \qquad r>0,
\end{equation}
which causes the effect that it is possible to reduce the energy by simply scaling the knot up (for $p>3$) respectively down (for $p<3$). This is something we are not interested in. Moreover, we will see later in this thesis, that there are no stationary points of $\M_p$, $p\neq 3$ at all. In particular, we prove that this is the case for every functional $E$ that scales like $E(r\gamma)=r^\alpha\,E(\gamma)$ for some $\alpha\neq 0$ and with $E(\gamma)\neq 0$ for every $\gamma$. However, this has the consequence that the $\M_p$-functional for $p\neq 3$ can always be used as a subsidiary condition for the calculus of variations in the sense of a \name{Lagrange}-multiplier rule (cf. \cite[2, Theorem 1]{GH96}. In order to find local minimizers, we introduce a modified integral \name{Menger} curvature energy
\begin{equation} \label{defmodmenger}
\E_p(\gamma) := \kla{\M_p(\gamma)}^\frac{1}{p}\,\Le(\gamma)^\frac{p-3}{p} \; \xrightarrow{p\rightarrow\infty} \; \frac{\Le(\gamma)}{\Delta[\gamma]},
\end{equation}
where this convergence establishes a connection between integral \name{Menger} curvature and ropelength. In addition we will have a short look on another variant $\E_p^\lambda := \kla{\M_p(\gamma)}^\frac{1}{p} + \lambda \Le(\gamma),\;\lambda>0$. We consider an example with a fixed $\lambda>0$. Furthermore, it would also be possible to choose $\lambda>0$ as a \name{Lagrange}-multiplier in order to keep the length of the knot fixed during the flow. 

In \cite{reiter_iso}, \name{Reiter} proved that all knots in a certain $C^1$-neighbourhood belong to the same knot class. Therefore, a (local) minimizer of some energy in a given knot class already is a local minimizer among \emph{all} knots. As we will see, this is an important step to prove that a local minimizer of $\M_p$ in a given knot class with prescribed length is indeed a stationary point of the modified integral \name{Menger} curvature $\E_p$. Up to now the very likely conjecture that also for integral \name{Menger} curvature the circle is the global minimizer (cf. \cite{circmin}) has \emph{not} been shown. Nevertheless, we present a proof that a circle is at least a stationary point of $\E_p$ and in particular of $\M_3$ (see \thref{thmcirclesp}).

We will now review what kind of numerical approaches have been considered for knot energies. In order to minimize the knot energy of a given knot, one can use a numerical gradient flow. To do so one has to approximate the knot in an appropriate finite dimensional subspace of functions. A common solution is to use polygons with sufficiently many points and to move these points into the desired direction. However, in the case of integral \name{Menger} curvature this will not work for $p\geq 3$, since this energy is infinite for piecewise linear functions (see \thref{rempwenergy}). We will quote a proof later on. For $p\in(0,3)$ in \cite{scholtes_fin} \name{S. Scholtes} proved that $\M_p$ is finite for piecewise linear functions. Since the first case is the one we are interested in, we need functions with higher regularity. We represent knots as trigonometric polynomials and will just move the corresponding \name{Fourier} coefficients in our flow.

As it is very difficult to calculate \name{M\"obius}~energy in most cases, \name{E. J. Rawdon} developed in his PhD thesis \cite{Raw98} a polygonal definition for the thickness of a knot. Impressive animations, visualising the evolution towards an ideal knot and drawing it with maximal thickness result from the work of \name{J. Cantarella}, \name{M. Piatek} and \name{Rawdon} in \cite{CPR05}. Very useful and comprehensive are the \texttt{knotplot} project by \name{R. Scharein} \cite{knotplot} and the \texttt{libbiarc} library by \name{M. Carlen} \cite{phd_carlen}. Remarkable in this context are also \cite{CKS02}, \cite{CFKSW06}, \cite{CFKS11} and \cite{CLR11}. Further examples of gradient flows for other energies may be found in \cite{CDIO04} and \cite{dipl}.

Choosing trigonometric polynomials for the representation of knots has the advantage that quite a few coefficients are sufficient to encode a large range of closed curves and we are able to choose an easy and at the same time very accurate numerical integration scheme for the triple integrals. Since this energy is highly non-linear and global, and since the support of the basis functions of our approximation space is not compact, we have to deal with long computation times. Nevertheless, it was possible to achieve significant speed-ups by performing some optimisations using trigonometric identities, summing up in a highly efficient way and using optimised data structures. The crucial issue of choosing an appropriate time step size is eased by implementing an adaptive choice of time step size. After all we get a very robust and very reliable algorithm.

This thesis is structured as described in the following.

\emph{Chapter~2} starts with a collection of meaningful facts about knot theory and calculus, in particular, we introduce some function spaces and discuss important details how to handle periodic functions in this context, especially for \name{H\"older} and \name{Sobolev-Slobodeckij} spaces. Then we define the circumradius as well as relative objects and give two examples why this function is discontinuous in $\R^3$. After that we define the thickness of a knot and present two examples of unknots with thickness equal to one. Next, we consider the integral \name{Menger} curvature and its properties and calculate the first variation of $\M_p$ and afterwards of $\E_p$. Moreover, we prove that for $C^2$-curves the integrand of the energy and its first variation are continuous. In the end we shortly consider another modification of the energy.

After that \emph{Chapter~3} starts with describing some details how the discretization is done. We discuss different alternatives for approximation spaces and numerical integration schemes, where possible improvements are pointed out. Then we introduce our approaches of an adaptive choice of time step size and of a redistribution algorithm for \name{Fourier} knots.

Finally, in \emph{Chapter~4} we consider a large collection of examples. Firstly we discuss how to get \name{Fourier} coefficients, needed as starting configurations for our flow. Afterwards we add some comments how the implementation has been done. Then we watch the evolution of some representatives of the first five prime knot classes with respect to the \name{Alexander-Briggs} notation. This notation categorises each knot class by its crossing number, i.e. the minimal number of crossings of the knot, when projected onto a plane. Additionally an index that simply counts the classes with the same crossing number in a specific tabular is added \cite{natclass}. The knots with crossing number equal to $0$, $3$ and $4$ only correspond to one knot class each, namely the \emph{unknot}, the \emph{trefoil} and the \emph{figure-eight knot}. There exist two types of knots with crossing number $5$, namely $5_1$ and $5_2$ in the \name{Alexander-Briggs} notation. An increasing crossing number means that the number of knot classes increases very fast. For instance there are $165$ knot classes with ten crossings but for $15$ crossings there are already $253293$ different classes \cite{knotbook}. The classes with up to five crossings are the five knot classes we are considering.

\newpage
\section*{Acknowledgements}
I would like to express my deepest gratitude to my advisor Prof. Dr. Heiko von der Mosel for giving me the opportunity to work on this highly interesting and exciting topic and for his preserving support. He was always able to encourage me by his enthusiastic way of proceeding. Due to his effort I was supported by the German Research Foundation (DFG) for most of the time. I want to express my gratitude to them as well. Moreover, I am very thankful to him for encouraging me to participate in the conference ``Knots and Links: From Form to Function'' at the Mathematical Research Center 'Ennio De Giorgi' in Pisa (Italy) July 2011 of the European Science Foundation (ESF), which was a sustainable inspiring experience. I would like to thank the organisers of the conference as well.

During a stay at the EPFL I very much enjoyed the hospitality of Prof. John H. Maddocks, Dr. Henryk Gerlach and Dr. Mathias Carlen, who in particular told me very much about their \texttt{libbiarc} project and how to visualise knots. Im am deeply indebted to Prof. Dr. Pawe\l{} Strzelecki for inviting me to Warsaw and also to him and his colleagues at the University of Warsaw, especially Prof. Maksymilian Dryja, Dr. Przemys\l{}aw Kiciak, S\l{}awomir Kolasi\'{n}ski and Prof. Henryk Wo\'{z}niakowski for their hospitality and for many fruitful discussions.

I want to thank Prof. Dr. Siegfried M\"uller and Dr. Karl-Heinz Brakhage for their readiness to help as well as Markus Bachmayr for supporting me in \lapack{}. I also want to thank all my colleagues at the Institut f\"ur Mathematik at the RWTH Aachen University for a very pleasant atmosphere.
Especially I want to thank for numerous fruitful discussions with Sebastian Scholtes, Martin Meurer and Patrick Overath. Sincere thanks to Heidi Bouj\'{e} and Doris Rihlmann, the secretaries of the institut, for solving all the bureaucratic problems. I am grateful for inspiring discussions with my former colleagues Dr. Philipp Reiter, Dr. Simon Blatt and Dr. Armin Schikorra not only during their time at the institut.

I am very thankful that Maria Nau and Patrick Overath proofread large parts of this thesis. I also want to deeply thank Maria, my brother and my parents for their moral support in times of need.

A special thank also goes to my former fellow students Christian Bagh, Thomas Bedbur, Michael Dahmen, Jan Jongen and Dr. Matthias Schlottbom for a great time as a student and for a joint project work that initially interested me in this topic.


\cleartooddpage[\thispagestyle{empty}]

\chapter{Integral \name{Menger} curvature for knots}
Before we define \emph{integral \name{Menger} curvature} and discuss related topics, we start with a collection of basics we need from calculus and knot theory.

\section{Essentials in knot theory and calculus}
\subsection*{Calculus} \label{calculus}
For a real number $a\in\R$ we denote its absolute value by $\abs{a}$. We also use this notation for vectors in $\R^3$, meaning the standard \name{Euclidean} norm, namely for $a\in\R^3$
\begin{equation*}
\abs{a} := \norm{a}_2 = \sqrt{a_1^2+a_2^2+a_3^2}.
\end{equation*}
The standard basis of $\R^3$ is denoted by $e_i$, $i=1,2,3$. We use the standard definition of the \emph{scalar vector product} in $\R^3$
\begin{equation} \label{defscalar}
\left( \begin{array}{c}
a_1\\
a_2\\
a_3
\end{array} \right) \cdot
\left( \begin{array}{c}
b_1\\
b_2\\
b_3
\end{array} \right) := a_1 b_1 + a_2 b_2 + a_3 b_3.
\end{equation}
Moreover, for $a,b\in\R^3$ we mention the essential connections to the \name{Euclidean} norm
\begin{equation} \label{scalarnorm}
\abs{a}^2 = a \cdot a
\end{equation}
and to the trigonometric functions
\begin{equation} \label{scalarcos}
a \cdot b = \abs{a} \abs{b} \cos( \sphericalangle(a,b) ),
\end{equation}
where $\sphericalangle(a,b)$ indicates the angle between the vectors $a$ and $b$. We define the standard \emph{wedge product} in three dimensional space, which is also known as \emph{cross product}, for $a, b \in \R^3$ by
\begin{equation} \label{defwedge}
\left( \begin{array}{c}
a_1\\
a_2\\
a_3
\end{array} \right) \wedge
\left( \begin{array}{c}
b_1\\
b_2\\
b_3
\end{array} \right) :=
\left( \begin{array}{c}
a_2 b_3 - a_3 b_2\\
a_3 b_1 - a_1 b_3\\
a_1 b_2 - a_2 b_1
\end{array} \right).
\end{equation}

Observe we have that $a\wedge a=0$. In addition this product is \emph{antisymmetric}, i.e. for all $a,b\in\R^3$
\begin{equation} \label{wedgeantisym}
a \wedge b \; + \; b \wedge a \; = \; 0.
\end{equation}
For $a, b, c, d \in \R^3$ we have (cf. \cite[1-4]{docarmo})
\begin{equation} \label{calScalarCross}
(a \wedge b) \cdot (c \wedge d) = (a \cdot c) (b \cdot d) - (a \cdot d) (b \cdot c).
\end{equation}
Consequently, we have for $a,b\in\R^3$
\begin{equation} \label{calabsWedge}
\abs{a \wedge b}^2 = \abs{a}^2\, \abs{b}^2 - (a \cdot b)^2,
\end{equation}
and the well known fact
\begin{equation} \label{wedgesin}
\abs{a \wedge b} = \abs{a}\abs{b}\, \abs{\sin( \sphericalangle(a,b) )}.
\end{equation}
Obviously $a\cdot b$ and $a\wedge b$ are linear in $a$ and $b$ respectively. Furthermore, for two differentiable functions $f:\R\rightarrow\R^3$ and $g:\R\rightarrow\R^3$ the product rule generalises for the scalar vector and the wedge product, more precisely
\begin{equation} \label{prodrules}
\begin{split}
\frac{d}{dx} \kla{f(x)\cdot g(x)} &= f'(x)\cdot g(x) + f(x)\cdot g'(x)\\
\frac{d}{dx} \kla{f(x)\wedge g(x)} &= f'(x)\wedge g(x) + f(x)\wedge g'(x).
\end{split}
\end{equation}
\begin{rem} \mylabel{threeintegrals}
Let $a>b$. For a \name{Lebesgue} integrable function $f: [a,b] \rightarrow \R^3$ we use the standard integral notation for integrating each scalar component of $f$:
\begin{equation*}
\int_a^b f(x) \, dx \quad := \quad \left(
\begin{array}{c}
\int_a^b f_1(x) \, dx\\
\int_a^b f_2(x) \, dx\\
\int_a^b f_3(x) \, dx
\end{array} \right).
\end{equation*}
\end{rem}
We use the elementary fact that for $p\geq 1$ the function $\R\rightarrow\R, x\mapsto\abs{x}^p$ is convex. From $\abs{\frac{1}{2}a+\frac{1}{2}b}^p\leq\frac{1}{2}\abs{a}^p+\frac{1}{2}\abs{b}^p$ we gain the useful estimate
\begin{equation} \label{estabssum}
\abs{a+b}^p\leq 2^{p-1} \kla{\abs{a}^p+\abs{b}^p}.
\end{equation}
We define the characteristic function for sets in the ordinary way
\begin{defn} \mylabel{defcharfct}
Let $A\subset\R$ be a set. Then we define the \emph{characteristic function of $A$} $\chi_A:\R\rightarrow\R$ as follows
\begin{equation*}
\chi_A(x) :=
\begin{cases}
1, &x\in A\\
0, &x\notin A.
\end{cases}
\end{equation*}
\end{defn}
\begin{lem} \mylabel{lemintch}
Let $a,b,c\in\R$. For all $v,w\in\R$ we have
\begin{align*}
\chi_{[a+b,c]}(v)\,\chi_{[a,v-b]}(w)\quad &= \quad\chi_{[w+b,c]}(v)\,\chi_{[a,c-b]}(w),\\
\chi_{[b-c,a]}(v)\,\chi_{[b-v,c]}(w)\quad &= \quad\chi_{[b-w,a]}(v)\,\chi_{[b-a,c]}(w).
\end{align*}
\end{lem}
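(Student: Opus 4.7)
The plan is to observe that both sides of each identity are products of two characteristic functions, hence take only the values $0$ and $1$, and it therefore suffices to show that the set of pairs $(v,w)\in\R^2$ on which the left-hand side equals $1$ coincides with the corresponding set for the right-hand side. Each such set is just a finite conjunction of linear inequalities in $v$, $w$, $a$, $b$, $c$, so the whole argument reduces to elementary bookkeeping.

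For the first identity, I would write out the conditions explicitly. The product on the left equals $1$ precisely when
\[
a+b\leq v,\quad v\leq c,\quad a\leq w,\quad w\leq v-b,
\]
while the product on the right equals $1$ precisely when
\[
w+b\leq v,\quad v\leq c,\quad a\leq w,\quad w\leq c-b.
\]
Three of the four inequalities match (noting $w\leq v-b \Leftrightarrow w+b\leq v$), so only the remaining bounds $a+b\leq v$ (left) and $w\leq c-b$ (right) need to be reconciled. Assuming the left set holds, $w\leq c-b$ follows from $w+b\leq v$ together with $v\leq c$; conversely, assuming the right set holds, $a+b\leq v$ follows from $a\leq w$ together with $w+b\leq v$. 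This proves equality of the two indicator functions pointwise, hence the first identity.

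The second identity is treated in exactly the same way. The left-hand side is $1$ iff
\[
b-c\leq v,\quad v\leq a,\quad b-v\leq w,\quad w\leq c,
\]
and the right-hand side is $1$ iff
\[
b-w\leq v,\quad v\leq a,\quad b-a\leq w,\quad w\leq c.
\]
Again three conditions coincide (using $b-v\leq w\Leftrightarrow b-w\leq v$), and the remaining pair $b-c\leq v$ versus $b-a\leq w$ is mutually derivable: $b-c\leq v$ follows from $w\leq c$ and $b-w\leq v$, while $b-a\leq w$ follows from $v\leq a$ and $b-w\leq v$.

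I do not expect any real obstacle here: the statement is a purely combinatorial rearrangement of bounds, of the sort one encounters when swapping the order of integration in an iterated integral (which is presumably the purpose of the lemma in the sequel). The only pitfall is clerical, namely making sure the translated inequalities $w\leq v-b\Leftrightarrow w+b\leq v$ and $b-v\leq w\Leftrightarrow b-w\leq v$ are handled correctly so that the two sets of conditions can be compared coordinate by coordinate.
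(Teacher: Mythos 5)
Your proof is correct and follows essentially the same approach as the paper: both arguments reduce each identity to the equivalence of two systems of linear inequalities and verify the equivalence by chaining the bounds in each direction. No issues.
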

\begin{proof}
We prove that the left-hand side is equal to $1$ if and only if the right-hand side is equal to $1$. For the first equation we assume $v\in[a+b,c]$ and $w\in[a,v-b]$, then we have
\begin{equation*}
a\leq w\leq v-b\leq c-b \qquad \text{and} \qquad a+b\leq w+b\leq v\leq c,
\end{equation*}
and hence $v\in[w+b,c]$ and $w\in[a,c-b]$. Now assume that $w\in[a,c-b]$ and $v\in[w+b,c]$, and we get
\begin{equation*}
a+b\leq w+b\leq v\leq c \qquad \text{and} \qquad a\leq w\leq v-b\leq c-b,
\end{equation*}
and hence $v\in[a+b,c]$ and $w\in[a,v-b]$. For the second equation assume that $v\in[b-c,a]$ and $w\in[b-v,c]$, and we get
\begin{equation*}
b-a\leq b-v\leq w\leq c \qquad \text{and} \qquad b-c\leq b-w \leq v\leq a,
\end{equation*}
and hence $v\in[b-w,a]$ and $w\in[b-a,c]$. Now we assume $w\in[b-a,c]$ and $v\in[b-w,a]$, then we have
\begin{equation*}
b-c\leq b-w\leq v\leq a \qquad \text{and} \qquad b-a\leq b-v\leq w\leq c,
\end{equation*}
and hence $v\in[b-c,a]$ and $w\in[b-v,c]$.
\end{proof}
\begin{rem} \mylabel{remproofbw}
Let $f$ be a function, which is \name{Lebesgue} measurable on $\R^n$. To prove \name{Lebesgue} integrability of $f$ we can use \name{Fubini}'s theorem \cite[8.8 Theorem and the notes afterwards]{rudin} by showing that one so-called ``iterated integral'' of the absolute value of $f$ is finite. ``Iterated integral'' means that we compute $n$ one-dimensional \name{Lebesgue} integrals step-by-step. \name{Fubini}'s theorem \cite{rudin} also tells us that we are allowed to interchange the order of integration if $f$ is \emph{non-negative} or if $f$ is \name{Lebesgue} integrable on $\R^n$ or if an ``iterated integral'' of the absolute value of $f$ is finite.\\
Assume $f$ is non-negative and that we want to estimate the \name{Lebesgue} integral of $f$ from above. If we need the integrability of $f$ for one step in between and we end up with an expression that is finite, then we are allowed to write it down this way, since we could apply the whole chain of estimates in reversed order.
\end{rem}

In the following part we define some function spaces that we need later on. We denote the classic spaces of continuously differentiable functions by $C^k$ for $k\in\N_0$ and the space of the functions whose $p$-th power is \name{Lebesgue} integrable by $L^p$, where $1<p<\infty$. Recall that the space $L^2$ is a \name{H\"older} space with the following scalar product for $f,g\in L^2$
\begin{equation*}
\left\langle f,g \right\rangle_{L^2} := \int f(x)\cdot g(x) \, dx. 
\end{equation*}
\begin{rem} \mylabel{remperiodic}
Let $f:[0,2\pi) \rightarrow \R^3$ be a function. Then $f$ can be expanded to a $2\pi$-periodic function $\tilde{f}:\R \rightarrow \R^3$, by defining $\tilde{f}(x+2\pi k) := f(x)$ for all $x \in [0,2\pi)$ and all $k \in \Z$. We express this by writing $f: \R/2\pi\Z \rightarrow \R^3$, because $\R/2\pi\Z \cong \Sp^1 \cong [0,2\pi)$. If there is no cause for confusion we do not distinguish between $f$ and $\tilde{f}$.

\begin{tabularx}{\textwidth}{llX}
$f \in L^p(\R/2\pi\Z,\R^3)$ & :$\Leftrightarrow \tilde{f} \in L_\text{loc}^p(\R,\R^3)$ & The function $\abs{\tilde{f}}^p$ is \name{Lebesgue} integrable over every compact set, in particular over every finite interval in $\R$.\\
$f \in C^k(\R/2\pi\Z,\R^3)$ & :$\Leftrightarrow \tilde{f} \in C^k(\R,\R^3)$ & The function $\tilde{f}$ is $k$-times continuously differentiable (with $k \in \N_0 \cup \menge{\infty}$) and therefore closed, i.e. $f(x)\rightarrow f(0)$ if $x\rightarrow 2\pi$.
\end{tabularx}

Obviously this can be transferred to a function $f: \R/\ell\Z \rightarrow \R^3$, with $\ell>0$.
\end{rem}
\begin{prop} \mylabel{propshift}
Let $f: \R/2\pi\Z \rightarrow \R^3$ be a \name{Lebesgue} integrable function. For $a \in \R$ we have
\begin{equation*}
\int_0^{2\pi} f(s) \, ds = \int_{a-\pi}^{a+\pi} f(s) \, ds.
\end{equation*}
\end{prop}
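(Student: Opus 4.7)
The plan is to reduce the statement to the familiar fact that the integral of a $2\pi$-periodic function over any interval of length $2\pi$ is independent of the choice of endpoints. First I would use \thref{remperiodic} to justify that $f$ extends to a locally Lebesgue integrable, $2\pi$-periodic function on all of $\R$, so that translates of $f$ satisfy $f(s+2\pi k)=f(s)$ for every $k\in\Z$ and almost every $s\in\R$.

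Next I would pick $k\in\Z$ such that $b:=a-\pi+2\pi k\in[0,2\pi)$ and use the translation invariance of the Lebesgue integral together with periodicity to write
\begin{equation*}
\int_{a-\pi}^{a+\pi} f(s)\,ds \;=\; \int_{b}^{b+2\pi} f(s-2\pi k)\,ds \;=\; \int_{b}^{b+2\pi} f(s)\,ds.
\end{equation*}
This reduces the proposition to showing $\int_0^{2\pi} f(s)\,ds = \int_b^{b+2\pi} f(s)\,ds$ for $b\in[0,2\pi)$.

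Now the key manipulation: split at $s=2\pi$ to get
\begin{equation*}
\int_b^{b+2\pi} f(s)\,ds \;=\; \int_b^{2\pi} f(s)\,ds + \int_{2\pi}^{b+2\pi} f(s)\,ds,
\end{equation*}
and in the second summand substitute $s=t+2\pi$, which together with $f(t+2\pi)=f(t)$ yields $\int_{2\pi}^{b+2\pi} f(s)\,ds = \int_0^b f(t)\,dt$. Adding this to $\int_b^{2\pi} f(s)\,ds$ gives $\int_0^{2\pi} f(s)\,ds$, completing the proof.

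There is no real obstacle here, since everything reduces to a substitution and an interval split; the only points to be careful about are that the substitution is valid for Lebesgue integrable (not merely continuous) $f$ and that the periodicity identity $f(s+2\pi k)=f(s)$ is used only almost everywhere, which is sufficient for equality of the corresponding Lebesgue integrals.
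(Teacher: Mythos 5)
Your proof is correct and follows essentially the same route as the paper: split the interval of length $2\pi$ at a suitable point, substitute by a shift of $2\pi$, and invoke periodicity. Your preliminary normalization of $a-\pi$ into $[0,2\pi)$ is in fact slightly more careful than the paper's version, which tacitly splits $[0,2\pi]$ at $a+\pi$ and hence implicitly assumes $a\in[-\pi,\pi]$.
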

\begin{proof}
As we mentioned in \thref{remperiodic} the function $f$ is \name{Lebesgue} integrable over every finite interval in $\R$. We use the substitution $\tilde{s}:=s-2\pi$ and get
\begin{align*}
\int_0^{2\pi} f(s) \, ds &= \int_0^{a+\pi} f(s) \, ds + \int_{a+\pi}^{2\pi} f(s) \, ds\\
&= \int_0^{a+\pi} f(s) \, ds + \int_{a-\pi}^0 f(\tilde{s}+2\pi) \, d\tilde{s}\\
&= \int_{a-\pi}^{a+\pi} f(s) \, ds,
\end{align*}
because f is $2\pi$-periodic.
\end{proof}
We denote the class of continuous functions by $C^0(\R/2\pi\Z,\R^3)$ with the norm
\begin{equation*}
\norm{\gamma}_{C^0(\R/2\pi\Z,\R^3)} \quad := \quad \sup_{x\in[0,2\pi]} \abs{\gamma(x)} \; = \; \sup_{x\in\R}\;\abs{\gamma(x)},
\end{equation*}
which is finite since $\tilde{\gamma}$ is in particular continuous on the compact interval $[0,2\pi]$. The class $C^\infty(\R/2\pi\Z,\R^3)$ stands for those functions, which are continuously differentiable to any order. Now we come to an extension of these classic spaces, the so-called \name{H\"older} spaces, cf. \cite[5.1. \name{H\"older} spaces]{evans}.
\begin{defn} \mylabel{defhoelder}
Let $k\in\N$ and $\alpha\in (0,1]$. A function $\gamma: \R/2\pi\Z \rightarrow \R^3$ belongs to the \name{H\"older} class $C^{k,\alpha}(\R/2\pi\Z,\R^3)$, if $\gamma \in C^k(\R/2\pi\Z,\R^3)$ and if the following semi norm is finite
\begin{equation*}
[\gamma^{(k)}]_{C^{0,\alpha}(\R/2\pi\Z,\R^3)} \quad := \quad \sup_{\stackrel{s,t\in[0,2\pi]}{s\neq t}} \frac{\abs{\gamma^{(k)}(s)-\gamma^{(k)}(t)}}{\abs{s-t}^\alpha} \quad < \quad \infty.
\end{equation*}
These functions form a \name{Banach} space (\cite[5.1, Theorem 1]{evans}) with the norm
\begin{equation*}
\norm{\gamma}_{C^{k,\alpha}(\R/2\pi\Z,\R^3)} \quad := \quad \sum_{i=0}^k \;\lVert\gamma^{(i)}\rVert_{C^0(\R/2\pi\Z,\R^3)} + [\gamma^{(k)}]_{C^{0,\alpha}(\R/2\pi\Z,\R^3)}.
\end{equation*}
A function in $C^{0,1}$ is also called \emph{\name{Lipschitz} continuous}.
\end{defn}
Observe that it is sufficient to define this semi norm with respect to an arbitrary interval of length $2\pi$. Let $a\in\R$, we consider the interval $[a,a+2\pi]$. Then we find an upper bound for the supremum with respect to the interval $[a,a+4\pi]$, which is useful since the semi norm on $[a,a+4\pi]$ is again an upper bound for the corresponding semi norm on every interval of length $2\pi$, which we will see afterwards. We start to show the first estimate
\begin{align*}
\sup_{\stackrel{s,t\in[a,a+4\pi]}{s\neq t}} \frac{\abs{\gamma(s)-\gamma(t)}}{\abs{s-t}^\alpha} &\leq \sup_{\stackrel{s,t\in[a,a+2\pi]}{s\neq t}} \frac{\abs{\gamma(s)-\gamma(t)}}{\abs{s-t}^\alpha} + \sup_{\stackrel{s,t\in[a+2\pi,a+4\pi]}{s\neq t}} \frac{\abs{\gamma(s)-\gamma(t)}}{\abs{s-t}^\alpha}\\
& \qquad + 2 \sup_{\stackrel{s\in[a+2\pi,a+4\pi]}{\stackrel{t\in[a,a+2\pi]}{s\neq t}}} \frac{\abs{\gamma(s)-\gamma(t)}}{\abs{s-t}^\alpha}.
\end{align*}
We have
\begin{align*}
\frac{\abs{\gamma(s)-\gamma(t)}}{\abs{s-t}^\alpha} &\quad=& &\frac{\abs{\gamma(s-2\pi)-\gamma(t-2\pi)}}{\abs{(s-2\pi)-(t-2\pi)}^\alpha}\\ \Rightarrow \qquad \sup_{\stackrel{s,t\in[a+2\pi,a+4\pi]}{s\neq t}} \frac{\abs{\gamma(s)-\gamma(t)}}{\abs{s-t}^\alpha} &\quad=& \sup_{\stackrel{s,t\in[a,a+2\pi]}{s\neq t}} &\frac{\abs{\gamma(s)-\gamma(t)}}{\abs{s-t}^\alpha}.
\end{align*}
Let $s\in[a+2\pi,a+4\pi]$ and $t\in[a,a+2\pi]$ with $s\neq t$. Define $\tilde{s}=s-2\pi\in [a,a+2\pi]$ and we have $\max\menge{\abs{\tilde{s}-a},\abs{(a+2\pi)-t}}\leq \abs{\tilde{s}-a}+\abs{(a+2\pi)-t} = (\tilde{s}-a)+(a+2\pi-t) = \abs{\tilde{s}+2\pi-t}$. Now we get
\begin{align*}
\frac{{\abs{\gamma^{(k)}(s)-\gamma^{(k)}(t)}}}{\abs{s-t}^\alpha} &= \frac{{\abs{\gamma^{(k)}(\tilde{s})-\gamma^{(k)}(t)}}}{\abs{\tilde{s}+2\pi-t}^\alpha} \leq \frac{{\abs{\gamma^{(k)}(\tilde{s})-\gamma^{(k)}(a)}}}{\abs{\tilde{s}-a}^\alpha} + \frac{{\abs{\gamma^{(k)}(a+2\pi)-\gamma^{(k)}(t)}}}{\abs{(a+2\pi)-t}^\alpha}\\
&\leq 2\;\sup_{\stackrel{s,t\in[a,a+2\pi]}{s\neq t}} \frac{\abs{\gamma^{(k)}(s)-\gamma^{(k)}(t)}}{\abs{s-t}^\alpha},
\end{align*}
and finally,
\begin{equation*}
\sup_{\stackrel{s,t\in[a,a+4\pi]}{s\neq t}} \frac{\abs{\gamma(s)-\gamma(t)}}{\abs{s-t}^\alpha} \leq 6\;\sup_{\stackrel{s,t\in[a,a+2\pi]}{s\neq t}} \frac{\abs{\gamma(s)-\gamma(t)}}{\abs{s-t}^\alpha}.
\end{equation*}
Now we want to show the second estimate mentioned above, more precisely for $b\in\R$
\begin{equation*}
\sup_{\stackrel{s,t\in[b,b+2\pi]}{s\neq t}} \frac{\abs{\gamma^{(k)}(s)-\gamma^{(k)}(t)}}{\abs{s-t}^\alpha} \quad \leq \quad  [\gamma^{(k)}]_{C^{0,\alpha}(\R/2\pi\Z,\R^3)} \quad < \quad \infty.
\end{equation*}
Since there exists a $l\in\Z$ such that $b+2\pi l\in [a,a+2\pi) \subset [a,a+4\pi]$ we conclude that $(b+2\pi)+2\pi l\in [a,a+4\pi]$ as well and
\begin{align*}
\frac{\abs{\gamma^{(k)}(s)-\gamma^{(k)}(t)}}{\abs{s-t}^\alpha} &\quad=& &\frac{\abs{\gamma^{(k)}(s+2\pi l)-\gamma^{(k)}(t+2\pi l)}}{\abs{(s+2\pi l)-(t+2\pi l)}^\alpha}\\
\Rightarrow \qquad \sup_{\stackrel{s,t\in[b,b+2\pi]}{s\neq t}} \frac{\abs{\gamma^{(k)}(s)-\gamma^{(k)}(t)}}{\abs{s-t}^\alpha} &\quad\leq& \sup_{\stackrel{\tilde{s},\tilde{t}\in[a,a+4\pi]}{\tilde{s}\neq \tilde{t}}} &\frac{\abs{\gamma^{(k)}(\tilde{s})-\gamma^{(k)}(\tilde{t})}}{\abs{\tilde{s}-\tilde{t}}^\alpha}.
\end{align*}

In the following we introduce the very useful \name{Sobolev} function spaces, which are quite common for instance in the calculus of variations, cf. \cite[5.2.2. Definition of \name{Sobolev} spaces]{evans}.
\begin{defn} \mylabel{defsobolev}
Let $k\in\N$ and $p\in(1,\infty)$. A function $\gamma: \R/2\pi\Z \rightarrow \R^3$ belongs to the \name{Sobolev} class $W^{k,p}(\R/2\pi\Z,\R^3)$, if $\gamma \in L^p(\R/2\pi\Z,\R^3)$ and if for all $j=1,\dots,k$ there exists a function $\gamma_j \in L^p(\R/2\pi\Z,\R^3)$ such that
\begin{equation*}
\int_0^{2\pi} \gamma(x) \cdot \frac{d^j}{dx^j}\phi(x) \, dx = (-1)^j \int_0^{2\pi} \gamma_j(x) \cdot \phi(x) \, dx \qquad \text{for all } \phi \in C^\infty(\R/2\pi\Z,\R^3).
\end{equation*}
The functions $\gamma^{(j)} := \gamma_j$ for $j=1,\dots,k$ are called \emph{weak derivatives} of $\gamma$.
\end{defn}
The \name{Sobolev} space is a \name{Banach} space \cite[5.2.3 Theorem 2]{evans} with the norm
\begin{equation*}
\norm{\gamma}_{W^{k,p}(\R/2\pi\Z,\R^3)} \quad := \quad \sum_{i=0}^k \kla{\int_0^{2\pi} \babs{\gamma^{(i)}(x)}^p \, dx}^{\frac{1}{p}}.
\end{equation*}
Observe that due to \thref{propshift} we could equivalently integrate over an arbitrary interval of length $2\pi$.

In addition, we need the so-called \emph{fractional \name{Sobolev} spaces} or \name{Sobolev-Slobodeckij} spaces, where we have a fractional order of differentiation. However, we will not define them directly for $2\pi$-periodic functions.
\begin{defn} \mylabel{deffracsobolev}
Let $a<b$, $p\in(1,\infty)$, $k\in\N$ and $s\in(0,1)$. A function $\gamma:[a,b] \rightarrow \R^3$ belongs to the \name{Sobolev-Slobodeckij} class $W^{k+s,p}([a,b],\R^3)$, if $\gamma\in W^{k,p}([a,b],\R^3)$ and if the following semi norm is finite
\begin{equation*}
[\gamma^{(k)}]_{W^{s,p}([a,b],\R^3)} \quad := \quad  \kla{\int_a^b \int_a^b \;\frac{\abs{\gamma^{(k)}(x)-\gamma^{(k)}(y)}^p}{\abs{x-y}^{1+sp}}\,dx\,dy}^\frac{1}{p} \quad < \quad \infty.
\end{equation*}
\end{defn}
Due to \cite[(2.9)]{hitchhiker} these classes are as well \name{Banach} spaces with the norm
\begin{equation*}
\norm{\gamma}_{W^{k+s,p}([a,b],\R^3)} \quad := \quad \norm{\gamma}_{W^{k,p}([a,b],\R^3)} + [\gamma^{(k)}]_{W^{s,p}([a,b],\R^3)}.
\end{equation*}
For $k\in\Z$ we have
\begin{align*}
\int_{a-\pi}^{a+\pi} \int_{a-\pi}^{a+\pi} &\frac{\abs{\gamma^{(k)}(x)-\gamma^{(k)}(y)}^p}{\abs{x-y}^{1+sp}}\,dx\,dy\\
&= \int_{a+(2k-1)\pi}^{a+(2k+1)\pi} \int_{a-\pi}^{a+\pi} \;\frac{\abs{\gamma^{(k)}(x)-\gamma^{(k)}(\tilde{y}-2\pi k)}^p}{\abs{x-\tilde{y}+2\pi k}^{1+sp}}\,dx\,d\tilde{y}\\
&= \int_{a+(2k-1)\pi}^{a+(2k+1)\pi} \int_{a+(2k-1)\pi}^{a+(2k+1)\pi} \;\frac{\abs{\gamma^{(k)}(\tilde{x}-2\pi k)-\gamma^{(k)}(\tilde{y}-2\pi k)}^p}{\abs{\tilde{x}-\tilde{y}}^{1+sp}}\,d\tilde{x}\,d\tilde{y}\\
&= \int_{a+(2k-1)\pi}^{a+(2k+1)\pi} \int_{a+(2k-1)\pi}^{a+(2k+1)\pi} \;\frac{\abs{\gamma^{(k)}(x)-\gamma^{(k)}(y)}^p}{\abs{x-y}^{1+sp}}\,dx\,dy,
\end{align*}
where we substituted $\tilde{y}:=y+2\pi k$ as well as $\tilde{x}:=x+2\pi k$ and used the periodicity of $\gamma$. Observe that these computations are only possible since we translate the domain by a multiple of $2\pi$.
\begin{figure}[H]
\begin{center}
\includegraphics{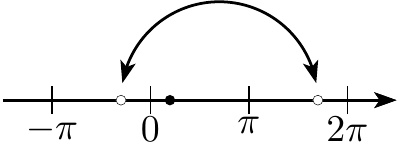}
\end{center}
\caption{A consequence of the $2\pi$-periodic setting}
\label{figperset}
\end{figure}
In Figure~\ref{figperset} we see an example of the problem. The two white points represent the same point on the curve, since their distance is $2\pi$. If we integrate over the interval $[0,2\pi]$ the distance of the white and the black point is quite large, however, if we integrate over the interval $[-\pi,\pi]$ it is very short. In particular we could not exclude the possibility that there exists a function $\gamma$ such that the integral is finite for $[a-\pi,a+\pi]$ but for $[a,a+2\pi]$ it is not. Therefore, in contrary to the case of the \name{Sobolev} norm, it seems to be insufficient to define the semi norm for one fixed interval $[a-\pi,a+\pi]$ in order to guarantee the finiteness of the semi norm for every interval. At first we consider the following definition for periodic functions $\gamma:\R/2\pi\Z \rightarrow \R^3$, where we use another semi norm, cf. for instance \cite{blatt_menger}.
\begin{defn} \mylabel{defperfracsobolev}
Let $p\in(1,\infty)$, $k\in\N$ and $s\in(0,1)$. A periodic function $\gamma:\R/2\pi\Z \rightarrow \R^3$ is in the \name{Sobolev-Slobodeckij} class $W^{k+s,p}(\R/2\pi\Z,\R^3)$, if $\gamma\in W^{k,p}(\R/2\pi\Z,\R^3)$ and the following semi norm is finite
\begin{equation*}
[\gamma^{(k)}]_{W^{s,p}(\R/2\pi\Z,\R^3)} \quad := \quad  \kla{\int_0^{2\pi} \int_{-\pi}^{\pi} \;\frac{\abs{\gamma^{(k)}(x+w)-\gamma^{(k)}(x)}^p}{\abs{w}^{1+sp}}\,dw\,dx}^\frac{1}{p} \quad < \quad \infty.
\end{equation*}
\end{defn}
Then the norm is
\begin{equation*}
\norm{\gamma}_{W^{k+s,p}(\R/2\pi\Z,\R^3)} \quad := \quad \norm{\gamma}_{W^{k,p}(\R/2\pi\Z,\R^3)} + [\gamma^{(k)}]_{W^{s,p}(\R/2\pi\Z,\R^3)}.
\end{equation*}
Observe, that this norm is the more natural one for closed curves, since the integrand is $2\pi$-periodic with respect to $x$. Moreover, $w$ measures the distance of the points $x$ and $x+w$ on the unit circle, which is also a natural domain to define a curve. We have seen that we need to consider more than one interval of length $2\pi$. However, we only have to use one additional interval, namely $[a,a+2\pi]$. Hence, we could use for instance $[-\pi,\pi]$ and $[0,2\pi]$. We will show this in two steps. At first we estimate the semi norm for periodic functions by a sum of semi norms on the intervals $[a-\pi,a+\pi]$ and $[a,a+2\pi]$.
\begin{align*}
& \int_0^{2\pi} \int_{-\pi}^{\pi} \frac{\abs{\gamma^{(k)}(x+w)-\gamma^{(k)}(x)}^p}{\abs{w}^{1+sp}}\,dw\,dx\\
&= \int_a^{a+2\pi} \int_{-\pi}^{0} \;\frac{\abs{\gamma^{(k)}(x+w)-\gamma^{(k)}(x)}^p}{\abs{w}^{1+sp}}\,dw\,dx + \int_{a-\pi}^{a+\pi} \int_{0}^{\pi} (\dots) \,dw\,dx\\
&= \int_{a+\pi}^{a+2\pi} \int_{-\pi}^{0} \;\frac{\abs{\gamma^{(k)}(x+w)-\gamma^{(k)}(x)}^p}{\abs{w}^{1+sp}}\,dw\,dx + \int_{a}^{a+\pi} \int_{-\pi}^{0} (\dots) \,dw\,dx\\
& \qquad \qquad + \int_{a-\pi}^{a} \int_{0}^{\pi} (\dots) \,dw\,dx + \int_{a}^{a+\pi} \int_{0}^{\pi} (\dots) \,dw\,dx\\
&= \int_{a+\pi}^{a+2\pi} \int_{x-\pi}^{x} \;\frac{\abs{\gamma^{(k)}(y)-\gamma^{(k)}(x)}^p}{\abs{y-x}^{1+sp}}\,dy\,dx + \int_{a}^{a+\pi} \int_{x-\pi}^{x} (\dots) \,dy\,dx\\
& \qquad \qquad + \int_{a-\pi}^{a} \int_{x}^{x+\pi} (\dots) \,dy\,dx + \int_{a}^{a+\pi} \int_{x}^{x+\pi} (\dots) \,dy\,dx\\
&\leq \int_{a+\pi}^{a+2\pi} \int_{a}^{a+2\pi} \;\frac{\abs{\gamma^{(k)}(y)-\gamma^{(k)}(x)}^p}{\abs{y-x}^{1+sp}}\,dy\,dx + \int_{a}^{a+\pi} \int_{a-\pi}^{a+\pi} (\dots) \,dy\,dx\\
&\qquad \qquad + \int_{a-\pi}^{a} \int_{a-\pi}^{a+\pi} (\dots) \,dy\,dx + \int_{a}^{a+\pi} \int_{a}^{a+2\pi} (\dots) \,dy\,dx\\
&\leq 2\,\int_{a}^{a+2\pi} \int_{a}^{a+2\pi} \;\frac{\abs{\gamma^{(k)}(y)-\gamma^{(k)}(x)}^p}{\abs{y-x}^{1+sp}}\,dy\,dx + 2\,\int_{a-\pi}^{a+\pi} \int_{a-\pi}^{a+\pi} (\dots) \,dy\,dx,
\end{align*}
where we used \thref{propshift} and substituted $w$ by $y:=w+x$. On the other hand we have for every $b\in\R$
\begin{align*}
\int_{b-\pi}^{b+\pi} \int_{b-\pi}^{b+\pi} \frac{\abs{\gamma^{(k)}(x)-\gamma^{(k)}(y)}^p}{\abs{x-y}^{1+sp}}\,dy\,dx &= \int_{b-\pi}^{b+\pi} \int_{b-\pi-x}^{b+\pi-x} \frac{\abs{\gamma^{(k)}(x+w)-\gamma^{(k)}(x)}^p}{\abs{w}^{1+sp}}\,dw\,dx\\
&\leq \int_{b-\pi}^{b+\pi} \int_{-2\pi}^{2\pi} \frac{\abs{\gamma^{(k)}(x+w)-\gamma^{(k)}(x)}^p}{\abs{w}^{1+sp}}\,dw\,dx\\
&= \int_{0}^{2\pi} \int_{-2\pi}^{2\pi} \frac{\abs{\gamma^{(k)}(x+w)-\gamma^{(k)}(x)}^p}{\abs{w}^{1+sp}}\,dw\,dx,
\end{align*}
where we substituted $y$ by $w:=y-x$. We continue with
\begin{align*}
&= \int_{0}^{2\pi} \int_{-\pi}^{\pi} \frac{\abs{\gamma^{(k)}(x+w)-\gamma^{(k)}(x)}^p}{\abs{w}^{1+sp}}\,dw\,dx + \int_{0}^{2\pi} \int_{\pi}^{2\pi} (\dots)\,dw\,dx + \int_{0}^{2\pi} \int_{-2\pi}^{-\pi} (\dots)\,dw\,dx\\
&\leq \int_{0}^{2\pi} \int_{-\pi}^{\pi} \frac{\abs{\gamma^{(k)}(x+w)-\gamma^{(k)}(x)}^p}{\abs{w}^{1+sp}}\,dw\,dx + 2\,C\,\norm{\gamma^{(k)}}_{L^p(\R/2\pi\Z,\R^3)}^p,
\end{align*}
since
\begin{align*}
\int_{0}^{2\pi} \int_{-2\pi}^{-\pi} \frac{\abs{\gamma^{(k)}(x+w)-\gamma^{(k)}(x)}^p}{\abs{w}^{1+sp}}\,dw\,dx &= \int_{0}^{2\pi} \int_{\pi}^{2\pi} \frac{\abs{\gamma^{(k)}(x-\tilde{w})-\gamma^{(k)}(x)}^p}{\abs{\tilde{w}}^{1+sp}}\,d\tilde{w}\,dx\\
&= \int_{\pi}^{2\pi} \int_{-w}^{2\pi-w} \frac{\abs{\gamma^{(k)}(\tilde{x})-\gamma^{(k)}(\tilde{x}+\tilde{w})}^p}{\abs{\tilde{w}}^{1+sp}}\,d\tilde{x}\,d\tilde{w}\\
&= \int_{0}^{2\pi} \int_{\pi}^{2\pi} \frac{\abs{\gamma^{(k)}(x+w)-\gamma^{(k)}(x)}^p}{\abs{w}^{1+sp}}\,dw\,dx\\
&\leq 3\pi \frac{2^{p-1}}{\pi^{1+sp}} \norm{\gamma^{(k)}}_{L^p(\R/2\pi\Z,\R^3)}^p := C\,\norm{\gamma^{(k)}}_{L^p(\R/2\pi\Z,\R^3)}^p,
\end{align*}
where we substituted $\tilde{w}:=-w$ as well as $\tilde{x}:=x-\tilde{w}$. Consequently, a periodic function $\gamma\in W^{k+s,p}(\R/2\pi\Z,\R^3)$ is always also a function in $W^{k+s,p}([a-\pi,a+\pi],\R^3)$ for all $a\in\R$ and we can apply the standard theorems for fractional \name{Sobolev} spaces.

A detailed introduction to these functions spaces can be found in \cite{evans} and \cite{adams}, respectively in \cite{RS96} and \cite{triebel}.
\begin{rem} \mylabel{remembedd}
Later on we need the following (continuous) embeddings
\begin{enumabc}
\item For $p\in(3,\infty)$ we have
\begin{equation*}
W^{2-\frac{2}{p},p}(\R/2\pi\Z,\R^3) \quad\subset\quad C^{1,1-\frac{3}{p}}(\R/2\pi\Z,\R^3).
\end{equation*}
Observe that for the standard \name{Sobolev} spaces we ``only'' have these embeddings (see \cite[8.13 \textlangle 1\textrangle]{alt}) for $p\in(1,\infty)$
\begin{equation*}
W^{1,p}(\R/2\pi\Z,\R^3) \subset C^{0,1-\frac{1}{p}}(\R/2\pi\Z,\R^3) \quad \text{and} \quad W^{2,p}(\R/2\pi\Z,\R^3) \subset C^{1,1-\frac{1}{p}}(\R/2\pi\Z,\R^3).
\end{equation*}
\item For $p\in[2,\infty)$ and $1-\frac{2}{p}<\alpha\leq 1$ we have
\begin{equation*}
C^{1,\alpha}(\R/2\pi\Z,\R^3) \quad\subset\quad W^{2-\frac{2}{p},p}(\R/2\pi\Z,\R^3).
\end{equation*}
\end{enumabc}
\end{rem}
\begin{proof}
\begin{enumabc}
\item This embedding is provided by \cite[Theorem 8.2]{hitchhiker}.
\item Let $p\in[2,\infty)$, $\alpha\in (1-\frac{2}{p},1]$ and $\gamma\in C^{1,\alpha}(\R/2\pi\Z,\R^3)$. At first we note that $C^1(\R/2\pi\Z,\R^3)\subset W^{1,p}(\R/2\pi\Z,\R^3)$, see \cite[1.25]{alt}. In the case $p=2$ we are done, because $2-\frac{2}{p}=1$ is an integer. Moreover, due to $1+(1-\frac{2}{p})\,p=p-1$, we have that the semi norm $[\gamma']^p_{W^{1-\frac{2}{p},p}(\R/2\pi\Z,\R^3)}$ is equal to
\begin{align*}
\int_0^{2\pi} \int_{-\pi}^\pi \frac{\babs{\gamma'(y+w) - \gamma'(y)}^p}{\abs{w}^{p-1}} \, dw \, dy &\leq \int_0^{2\pi} \int_{-\pi}^\pi [\gamma']^p_{C^{0,\alpha}(\R/2\pi\Z,\R^3)} \frac{\abs{w}^{\alpha p}}{\abs{w}^{p-1}} \, dw \, dy\\
&\leq 2\pi\,\norm{\gamma}^p_{C^{1,\alpha}(\R/2\pi\Z,\R^3)}\; \int_{-\pi}^\pi \abs{w}^{-(p-1-\alpha p)} \, dw,
\end{align*}
where we substituted $x$ by $w:=x-y$. The integral on the right-hand side is finite if $(p-1-\alpha p)<1\;\Leftrightarrow\;1-\frac{2}{p}<\alpha$. Observe that $\int_{-\pi}^\pi \abs{w}^{-1} \, dw$ is infinite. \qedhere
\end{enumabc}
\end{proof}

Now we introduce the basic concept of the \emph{calculus of variations}, cf. \cite[p. 9]{GH96}.
\begin{defn} \mylabel{defFVar}
Let $\Omega\subset\R$ and $E$ be a functional defined on a subset $V$ of a function space $X \subset C^0(\Omega,\R^3)$. Let further be $\gamma_0$ some point in $V$ and $\Phi$ be a vector in $X$. Assume that $\menge{\gamma \,\arrowvert\, \gamma=\gamma_0+\tau\Phi, \tau\in[-\tau_0,\tau_0]}$ is contained in $V$ for some $\tau_0>0$.  We want to investigate the behaviour of $E$ while perturbing $\gamma_0$ slightly in direction $\Phi$. Therefore, we define the \emph{first Variation of $E$ at $\gamma_0$ in direction $\Phi$} as
\begin{equation*}
\delta E(\gamma_0,\Phi) := \frac{d}{d\tau} \eins{\tau=0} E(\gamma_0+\tau\Phi) = \lim_{\tau\rightarrow 0} \frac{E(\gamma_0+\tau\Phi)-E(\gamma_0)}{\tau},
\end{equation*}
if the limit exists. Observe that whenever $\gamma_\text{sp}$ is a stationary point (e.g. a local minimum) of $E$ in $V$ we have that $\delta E(\gamma_\text{sp},\Phi)=0$ for every admissible $\Phi\in X$.
\end{defn}
A quite simple but typical situation would be an interval $\Omega := [a,b]$ for $a<b$ and the function space $X:=C^2([a,b])$. One could be interested in minimizing a functional $E$ in this space $X$ with fixed boundary conditions, which could be reflected in the definition of $V$. In this case we would not be allowed to perturb the boundary and therefore, the choice of $\Phi$ would be restricted to functions of $C_0^2([a,b])$. In our situation we do not need boundary conditions, hence we handle with periodic functions. Taking for example $X=W^{2-\frac{2}{p}}(\R/2\pi\Z,\R^3)$
we are able to choose $\Phi$ to be any function in $X$. This will be convenient for the discretization, since we can use the \name{Ritz-Galerkin} method, see Chapter~\ref{discspace}.

\begin{prop} \mylabel{propmeas}
Let $n \in \N$ and $f, g: \R^n \rightarrow \R$ be \name{Lebesgue} measurable functions on $\R^n$ with $g\neq 0$ almost everywhere, i.e. $\Ns:= \menge{x \in \R^n \arrowvert g(x)=0}$ is a \name{Lebesgue} null set in $\R^n$. Then
\begin{equation*}
h: \R^n \setminus \Ns \rightarrow \R,\; x \mapsto \dfrac{f(x)}{g(x)}
\end{equation*}
is an \emph{almost everywhere defined \name{Lebesgue} measurable} function (see \cite[consequence of 13.6 Theorem]{bauer}) and so can be extended to
\begin{equation*}
\tilde{h}: \R^n \rightarrow \R, \; x \mapsto
\begin{cases}
h(x), & x\in \R^n \setminus \Ns\\
0, & x\in \Ns
\end{cases},
\end{equation*}
which is a \name{Lebesgue} measurable function. If $\tilde{h}$ is \name{Lebesgue} integrable on $\R^n$ then every \name{Lebesgue} measurable extension of $h$ is \name{Lebesgue} integrable on $\R^n$ as well and therefore, we say that $h$ itself is \name{Lebesgue} integrable on $\R^n$, see \cite[13.7 Definition]{bauer}.
\end{prop}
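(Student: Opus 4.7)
The plan is to verify the three assertions in sequence, each of which is a routine measure-theoretic bookkeeping step rather than a genuine estimate.

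First, to establish measurability of $h$ on the measurable set $\R^n\setminus\Ns$, I would note that the denominator $g$ does not vanish there, so the quotient $f/g$ is well-defined. Its measurability follows from the standard fact that the pair $(f,g)\colon\R^n\setminus\Ns\to\R\times(\R\setminus\{0\})$ is measurable (componentwise) and the division map $(u,v)\mapsto u/v$ is continuous on $\R\times(\R\setminus\{0\})$; the composition is therefore measurable. This is essentially the consequence of the Theorem in \name{Bauer} already quoted in the statement.

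Second, to check that the zero-extension $\tilde h\colon\R^n\to\R$ is \name{Lebesgue} measurable, I would examine preimages of \name{Borel} sets $B\subset\R$. One has
\begin{equation*}
\tilde h^{-1}(B)=
\begin{cases}
h^{-1}(B), & 0\notin B,\\
h^{-1}(B)\cup \Ns, & 0\in B.
\end{cases}
\end{equation*}
In both cases the set is \name{Lebesgue} measurable, since $h^{-1}(B)$ is measurable by step one and $\Ns$ is a null set (hence measurable). So $\tilde h$ is measurable on all of $\R^n$.

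Third, for the integrability statement, let $\hat h$ be any other \name{Lebesgue} measurable extension of $h$ to $\R^n$. By definition $\tilde h$ and $\hat h$ agree on $\R^n\setminus\Ns$ and therefore differ only on the null set $\Ns$, i.e.\ $\tilde h=\hat h$ almost everywhere. Since the \name{Lebesgue} integral is insensitive to changes on a null set, $\tilde h$ is integrable if and only if $\hat h$ is, and the integrals coincide. This justifies calling $h$ itself integrable whenever one, and hence every, measurable extension is integrable.

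The main (and in fact only) potential pitfall is step two: one must use that $\Ns$ is \name{Lebesgue} measurable, not merely a subset of a null set. This holds automatically because completed measures contain all null sets, but it is the one point where the argument would fail in an incomplete measure framework. Beyond that, the proof is purely formal and requires no analytic estimates.
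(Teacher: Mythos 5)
Your proposal is correct and follows essentially the same route as the paper: measurability of the quotient on $\R^n\setminus\Ns$ by the standard quotient/composition fact, measurability of the zero-extension by inspecting preimages (the paper checks only the rays $(-\infty,a)$, which generate the Borel $\sigma$-algebra, while you check all Borel sets --- an immaterial difference), and integrability of any measurable extension because two extensions differ only on the null set $\Ns$. Your closing remark that $\Ns$ must itself be Lebesgue measurable, which holds by completeness of the Lebesgue measure, is a point the paper also makes at the start of its proof.
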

\begin{exmp}
Observe that in general $h$ is even not defined in $\overline{\R} = \R \cup \menge{-\infty,\infty}$, for instance if we choose for $n:=1$
\begin{equation*}
f(x):=1, \quad g(x):=x \qquad \text{for all $x \in \R$}
\end{equation*}
then
\begin{equation*}
\lim\limits_{x\searrow 0} h(x)=\infty \qquad\lim_{x\nearrow 0} h(x)=-\infty
\end{equation*}
and we are not able to define a suitable value in $\overline{\R}$ for $h$ at $x=0$.
\end{exmp}
\begin{proof}[Proof of \thref{propmeas}]
Since $\Ns$ is a \name{Lebesgue} null set on $\R^n$, $\Ns$ and $\R^n\setminus \Ns$ are \name{Lebesgue} measurable. The function $h$ is $\mathcal{B}^n\setminus \Ns$-measurable, due to \cite[1.1.2 Theorem 6(i)]{evans_gariepy}, since $g(x) \neq 0$ for all $x \in \R^n\setminus \Ns$, where $\mathcal{B}^n$ is the $n$-dimensional \name{Borel}-algebra. As mentioned in the first part of the proof of \cite[1.1.2 Theorem 6(i)]{evans_gariepy} it is sufficient to show that $\tilde{h}^{-1}( (-\infty,a) )$ is \name{Lebesgue} measurable for each $a\in\R$, which is the case as
\begin{equation*}
\tilde{h}^{-1}( (-\infty,a) ) =
\begin{cases}
h^{-1}( (-\infty,a) ), & a \leq 0\\
h^{-1}( (-\infty,a) ) \cup \Ns, & a > 0.
\end{cases}
\end{equation*}
Now we follow the arguments in the part about consequences of \cite[13.6 Theorem]{bauer}. Another \name{Lebesgue} measurable extension of $h$ differs from $\tilde{h}$ only on a null set. If $\tilde{h}$ is \name{Lebesgue} integrable on $\R^n$, the same is true for other extensions and furthermore, their integrals are the same, see \cite[13.4 Theorem]{bauer}.
\end{proof}
%

\subsection*{Knot theory} \label{knottheory}
\begin{defn} \mylabel{defsimple}
A closed curve $\gamma \in C^0(\R/2\pi\Z,\R^3)$ is called \emph{simple} if
\begin{equation*}
\gamma(s) \neq \gamma(t) \qquad \text{for all } s,t \in \R \text{ with } \abs{s-t}\notin 2\pi\Z.
\end{equation*}
In particular this means that $\gamma$ is injective on $[0,2\pi)$. From another point of view one can say that the image of the curve $\gamma$ is embedded into $\R^3$.
\end{defn}

We follow the definition of \cite[48. Arc Length]{lass}.
\begin{defn} \mylabel{defrectifiable}
Let $a<b$. A curve $\gamma:[a,b] \rightarrow \R^3$, not necessary closed, is called \emph{rectifiable} if the length of all inscribed polygons of the curve is bounded. More precisely, if there exists a fixed constant $C>0$ such that for all $n\in\N$
\begin{equation*}
L_n := \sup\menge{\sum_{i=1}^n \abs{\gamma(t_i)-\gamma(t_{i-1})} \,\Big\arrowvert\, a = t_0 < t_1 < \dots < t_n=b} < C.
\end{equation*}
For such a curve we define its \emph{length} as
\begin{equation*}
\Le(\gamma) = \sup_{n\in\N} L_n.
\end{equation*}
Moreover, if $\gamma$ is $C^1$ then due to \cite[Proposition 2.1.18 and Exercise 2.3]{baer} we have the following well-known equation
\begin{equation*}
\Le(\gamma) = \int_a^b \abs{\gamma'(x)} \, dx.
\end{equation*}
\end{defn}
Later on we consider functions that are continuous and rectifiable. The following example shows that continuity is not enough.
\begin{exmp}
We consider $f:[0,1] \rightarrow \R$
\begin{equation*}
x \mapsto
\begin{cases}
x \sin\kla{\frac{\pi}{2x}} & x\neq 0\\
0 & x=0.
\end{cases}
\end{equation*}
This function is continuous in $[0,1]$ but in \cite[47. Functions of Bounded Variation]{lass} it is shown that $f$ is not rectifiable.
\end{exmp}
\begin{prop} \mylabel{propabscont}
Additionally, we introduce the space $\text{AC}(\R/2\pi\Z,\R^3)$ of \emph{absolutely continuous} functions (see \cite[Definition 3.31 and the lines below]{AFP}). It coincides with the \name{Sobolev} space $W^{1,1}(\R/2\pi\Z,\R^3)$ (see \thref{defsobolev}) and a function $f\in \text{AC}(\R/2\pi\Z,\R^3)$ is continuous and satisfies the fundamental theorem of calculus
\begin{equation*}
f(s)-f(t) = \int_t^s f'(x) \, dx \qquad \text{for all $s,t\in\R$}.
\end{equation*}
Moreover, it is well known that $f$ is rectifiable, as for $0=t_0<t_1<\dots<t_n=2\pi$ we have
\begin{equation*}
\sum_{i=1}^n\,\abs{f(t_i)-f(t_{i-1})} = \sum_{i=1}^n\,\babs{\int_{t_{i-1}}^{t_i} f'(x) \, dx} \leq \sum_{i=1}^n \int_{t_{i-1}}^{t_i} \abs{f'(x)} \, dx = \int_a^b \abs{f'(x)} \, dx < \infty,
\end{equation*}
because $f' \in L^1([0,2\pi],\R^3)$.
\end{prop}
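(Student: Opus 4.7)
The plan is to handle the three assertions of the proposition in sequence: identification of $\text{AC}(\R/2\pi\Z,\R^3)$ with $W^{1,1}(\R/2\pi\Z,\R^3)$, the fundamental theorem of calculus, and rectifiability.

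For the first point, I would invoke the reference \cite[Definition 3.31 ff.]{AFP}, which defines a function to be absolutely continuous if for every $\varepsilon>0$ there exists $\delta>0$ such that for every finite family of pairwise disjoint intervals $(a_i,b_i)$ with $\sum (b_i-a_i)<\delta$ one has $\sum\abs{f(b_i)-f(a_i)}<\varepsilon$, and which then identifies the space of such functions with $W^{1,1}$ on an interval. To pass to the periodic setting, I would apply that result on the interval $[0,2\pi]$ and use that $\R/2\pi\Z$-periodicity imposes the single boundary condition $f(0)=f(2\pi)$, which is compatible with both notions (the boundary values are automatically well-defined since AC-functions are continuous, and the periodicity just forces the continuous representative of the $W^{1,1}$ class to close up).

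For the fundamental theorem of calculus, I would again quote \cite{AFP}, where the identity $f(s)-f(t)=\int_t^s f'(x)\,dx$ is part of the standard characterisation of absolutely continuous functions; the weak derivative of $f\in W^{1,1}$ coincides with its almost-everywhere pointwise derivative, and the integral formula then holds for every pair $s,t\in\R$, using periodicity to extend from $[0,2\pi]$ to all of $\R$.

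The rectifiability claim is then a direct consequence: for any partition $0=t_0<t_1<\dots<t_n=2\pi$, I apply the fundamental theorem componentwise, then the triangle inequality under the integral sign, and finally additivity of the integral to obtain
\begin{equation*}
\sum_{i=1}^n\abs{f(t_i)-f(t_{i-1})}\;=\;\sum_{i=1}^n\babs{\int_{t_{i-1}}^{t_i}f'(x)\,dx}\;\leq\;\sum_{i=1}^n\int_{t_{i-1}}^{t_i}\abs{f'(x)}\,dx\;=\;\int_0^{2\pi}\abs{f'(x)}\,dx,
\end{equation*}
which is finite since $f'\in L^1([0,2\pi],\R^3)$. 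This bound is independent of the partition, so $L_n$ stays bounded and $f$ is rectifiable in the sense of \thref{defrectifiable}.

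There is no real obstacle here; the proposition is essentially a dictionary between the AC-formulation and the $W^{1,1}$-formulation together with one elementary consequence. The only point requiring a small argument is that the results of \cite{AFP}, formulated on a fixed interval, transfer verbatim to the periodic setting, which is immediate once one chooses the continuous representative and uses the closing condition $f(0)=f(2\pi)$.
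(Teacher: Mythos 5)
Your proposal is correct and follows essentially the same route as the paper: the identification of $\text{AC}$ with $W^{1,1}$ and the fundamental theorem of calculus are quoted from \cite{AFP}, and rectifiability is obtained by exactly the displayed chain (componentwise FTC, triangle inequality under the integral, additivity), bounding every $L_n$ by $\int_0^{2\pi}\abs{f'}\,dx<\infty$. Your added remark on passing from the interval $[0,2\pi]$ to the periodic setting via the continuous representative and the closing condition $f(0)=f(2\pi)$ is a harmless (and slightly more careful) elaboration of what the paper leaves implicit.
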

\begin{rem}
Observe that the fact that a function $f\in W^{1,1}(\R/2\pi\Z,\R^3)$ is continuous in general is only valid for a one dimensional domain.
\end{rem}

\begin{defn} \mylabel{defregular}
An absolutely continuous curve $\gamma:\R/2\pi\Z\rightarrow\R^3$ is called \emph{regular} if there exists a constant $C>0$ such that
\begin{equation*}
\abs{\gamma'(x)} \geq C > 0 \qquad \text{for almost every $x\in\R$}.
\end{equation*}
Observe that for a continuously differentiable curve $\gamma$ it is sufficient that $\abs{\gamma'(x)}\neq 0$ for all $x\in\R$ in order to be regular. For every rectifiable curve $\gamma$ there exists a reparametrization $\Gamma$, which is parametrized by \emph{arclength} \cite[2.5.16]{federer}, i.e. $\abs{\Gamma'(x)}=1$ for almost every $x\in\R$ and hence $\Gamma$ is obviously regular.
\end{defn}

A curve $\gamma \in AC(\R/2\pi\Z,\R^3)$ that is closed, simple and regular will be called \emph{a knot}.

\begin{defn} \mylabel{defloccur}
For a curve $\gamma\in C^2(\R/2\pi\Z,\R^3)$ parametrized by arclength, we define the \emph{classic local curvature} of $\gamma$ at $t\in\R$ by
\begin{equation*}
\kappa_\gamma(t) := \abs{\gamma''(t)}.
\end{equation*}
Moreover, for a regular curve, not necessarily parametrized by arclength, we have
\begin{equation*}
\kappa_\gamma(t) = \frac{\abs{\gamma'(t)\wedge \gamma''(t)}}{\abs{\gamma'(t)}^3}.
\end{equation*}
\end{defn}

For an exact mathematical description of the so-called \emph{knot classes} we follow the presentation in \cite[Definition 1.1 and 1.2]{BZ03} and let $X$ and $Y$ be \name{Hausdorff} spaces. A function $f: X \rightarrow Y$ is called \emph{embedding}, if $f: X \rightarrow f(X)$ is a homeomorphism.
\begin{defn}[Isotopy]
Two embeddings $f_0, f_1: X \rightarrow Y$ are \emph{isotopic}, if there exists an embedding
\begin{equation*}
F: X \times [0,1] \rightarrow Y \times [0,1],
\end{equation*}
such that $F(x,t) = (f(x,t),t)$ for all $x\in X$ and $t \in [0,1]$, with $f(\cdot,0)=f_0$ and $f(\cdot,1)=f_1$. F is called a \emph{level-preserving isotopy} connecting $f_0$ and $f_1$.
\end{defn}
\begin{defn}[Ambient isotopy]
Two embeddings $f_0, f_1:X\rightarrow Y$ are \emph{ambient isotopic} if there is a level preserving isotopy
\begin{equation*}
H:Y\times[0,1] \rightarrow Y\times[0,1], \qquad H(y,t) = (h_t(y),t).
\end{equation*}
with $f_1=h_1 f_0$ and $h_0=id_Y$. The mapping $H$ is called an \emph{ambient isotopy}.
\end{defn}
For our purpose we use $X=\Sp^1$ and $Y=\R^3$. Two knots belong to the same \emph{knot class}, if they are ambient isotopic. An Isotopy describes the transformation of one knot to another. However, the concept of isotopy is not sufficient here, hence small (sub-)knots could tighten more and more and even vanish in the end. However, this is not part of our concept of equivalency. The advantage of ambient isotopy is in particular that the complete surrounding space of the knot is transformed into the end configuration in a continuous way. Closed polygons that can be transformed into each other by \name{Reidemeister} moves are ambient isotopic and vice versa \cite[Proposition 1.14]{BZ03}.
\begin{prop} \mylabel{propc1neigh}
Let $\gamma \in C^1(\R/2\pi\Z,\R^3)$ be simple and regular and $\Phi \in C^1(\R/2\pi\Z,\R^3)$. Then there exists a $\tau_0>0$ such that all $\gamma+\tau\Phi$ are simple, regular and ambient isotopic to $\gamma$ for $\tau\in[-\tau_0,\tau_0]$.\\
Since $\gamma$ is regular there exists a constant $C>0$ such that $\abs{\gamma'(x)} \geq C$ for all $x\in\R/2\pi\Z$. Moreover, we have
\begin{equation*}
\abs{\gamma'(x)+\tau\Phi'(x)} \geq \frac{1}{2}\,C >0 \qquad \text{for all $x\in\R/2\pi\Z$ and all $\tau\in[-\tau_0,\tau_0]$}.
\end{equation*}
One could say that $\gamma+\tau\Phi$, $\tau\in [-\tau_0,\tau_0]$ is even uniformly regular, in the sense that the constant does not depend on $\tau$.
\end{prop}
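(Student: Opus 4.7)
The plan is to establish the three properties (simplicity, regularity, ambient isotopy) one after another, with regularity proven first since it serves as a tool for simplicity, and with ambient isotopy reduced to a cited $C^1$-neighbourhood result.

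First I would prove the quantitative regularity estimate. Since $\gamma\in C^1(\R/2\pi\Z,\R^3)$ is regular there is $C>0$ with $\abs{\gamma'(x)}\ge C$ for all $x$. Because $\Phi'$ is continuous on the compact set $\R/2\pi\Z$, the quantity $M:=\norm{\Phi'}_{C^0(\R/2\pi\Z,\R^3)}$ is finite. Choosing any $\tau_0\in(0,C/(2M)]$ (with $\tau_0$ arbitrary if $M=0$) and using the reverse triangle inequality
\begin{equation*}
\abs{\gamma'(x)+\tau\Phi'(x)}\ge \abs{\gamma'(x)}-\abs{\tau}\abs{\Phi'(x)}\ge C-\tau_0 M\ge \tfrac{1}{2}C
\end{equation*}
immediately gives the uniform lower bound and, in particular, regularity of $\gamma+\tau\Phi$ for every $\tau\in[-\tau_0,\tau_0]$.

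Next I would establish simplicity by a contradiction-plus-compactness argument. Suppose no $\tau_0>0$ works; then there exist sequences $\tau_n\to 0$ and points $s_n,t_n\in[0,2\pi)$ with $s_n\neq t_n$ and $(\gamma+\tau_n\Phi)(s_n)=(\gamma+\tau_n\Phi)(t_n)$. By compactness, pass to subsequences with $s_n\to s_*$, $t_n\to t_*$; uniform convergence of $\tau_n\Phi$ to $0$ plus continuity of $\gamma$ gives $\gamma(s_*)=\gamma(t_*)$, so simplicity of $\gamma$ forces $s_*=t_*$ modulo $2\pi\Z$. The key input is that near a regular point $\gamma$ is bi-\name{Lipschitz}: by continuity of $\gamma'$ one can pick $\delta>0$ so that $\abs{\gamma'(u)-\gamma'(s_*)}\le \tfrac14\abs{\gamma'(s_*)}$ whenever $\abs{u-s_*}\le\delta$, which by the fundamental theorem of calculus yields
\begin{equation*}
\abs{\gamma(x)-\gamma(y)}\ge \tfrac{3}{4}\abs{\gamma'(s_*)}\,\abs{x-y}\qquad \text{for }x,y\text{ close to }s_*.
\end{equation*}
Combining with $\abs{(\tau\Phi)(x)-(\tau\Phi)(y)}\le\abs{\tau}M\abs{x-y}$, for $\tau_n$ small enough $\gamma+\tau_n\Phi$ is strictly length-expanding on this neighbourhood; together with $s_n,t_n\to s_*$ this contradicts $(\gamma+\tau_n\Phi)(s_n)=(\gamma+\tau_n\Phi)(t_n)$. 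Hence simplicity holds after possibly shrinking $\tau_0$.

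Finally I would obtain ambient isotopy by invoking \name{Reiter}'s $C^1$-neighbourhood result \cite{reiter_iso}, cited in the introduction: every simple regular $C^1$ curve has a $C^1$-open neighbourhood contained in a single ambient isotopy class. Since $\norm{\tau\Phi}_{C^1(\R/2\pi\Z,\R^3)}\le\abs{\tau}\norm{\Phi}_{C^1(\R/2\pi\Z,\R^3)}$ can be made arbitrarily small by shrinking $\tau_0$ further, all $\gamma+\tau\Phi$ fall into that neighbourhood and are therefore ambient isotopic to $\gamma$. The step I expect to need most care is the simplicity argument, specifically ensuring that the local bi-\name{Lipschitz} estimate is applied in the circular metric so that the case $\{s_*,t_*\}=\{0,2\pi\}$ (identified on $\R/2\pi\Z$) is handled without missing pairs $s_n,t_n$ that straddle the identification; using \thref{propshift} to translate the parameter interval to one centred at $s_*$ before carrying out the local estimate sidesteps this issue cleanly.
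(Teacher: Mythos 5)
Your proposal is correct and follows essentially the same route as the paper: regularity via the reverse triangle inequality with $\tau_0\lesssim C/\norm{\Phi'}_{C^0}$, ambient isotopy via \name{Reiter}'s $C^1$-neighbourhood lemma, and simplicity by a compactness-and-contradiction argument in which the coincidence points are forced to collapse to a single point $s_*$ where regularity of $\gamma$ yields the contradiction. The only (immaterial) difference is in closing that contradiction: the paper applies the mean value theorem componentwise along the sequences to show the difference quotient of $\gamma$ tends to $\gamma'(s_*)\neq 0$ while equalling $\abs{\tau_k}$ times a bounded quantity, whereas you derive a uniform local bi-\name{Lipschitz} lower bound for $\gamma$ from the fundamental theorem of calculus and beat it against the \name{Lipschitz} constant of $\tau\Phi$ -- both are valid, and your explicit handling of the wrap-around case via \thref{propshift} matches the paper's restriction $y\in[x-\pi,x+\pi)$.
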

\begin{proof}
Due to \cite[Lemma]{reiter_iso} there exists an $\eps^\ast>0$ such that for $\eps_0:=\frac{\eps^\ast}{\norm{\Phi'}_{C^0}\,+1}$ we have that $\gamma$ and $\gamma+\tau\Phi$ are ambient isotopic for all $\tau\in[-\eps_0,\eps_0]$ since
\begin{equation*}
\norm{\gamma'-(\gamma'+\tau\Phi')}_{C^0} = \abs{\tau} \norm{\Phi'}_{C^0} \leq \eps^\ast.
\end{equation*}
Let $\eps_1 := \frac{C}{2\norm{\Phi'}_{C^0}\,+1}$. Then we have for $\tau\in[-\eps_1,\eps_1]$
\begin{equation*}
\abs{\gamma'(x)+\tau\Phi'(x)} \geq \abs{\gamma'(x)} - \abs{\tau} \abs{\Phi'(x)} \geq \frac{1}{2}\,C > 0.
\end{equation*}
Therefore, $\gamma+\tau\Phi$ is regular for $\tau\in[-\eps_1,\eps_1]$.\\
We prove the simplicity by contradiction. We assume that for all $\eps_2>0$ there exist $x\in[0,2\pi)$, $y\in[x-\pi,x+\pi)\setminus\menge{x}$ and $\tau\in [-\eps_2,\eps_2]$ such that
\begin{equation*}
(\gamma+\tau\Phi)(x) = (\gamma+\tau\Phi)(y) \qquad \Leftrightarrow \qquad \abs{\gamma(x)-\gamma(y)+\tau(\Phi(x)-\Phi(y))}=0.
\end{equation*}
Let $\sigma_k>0$ for $k\in\N$ be a null sequence and let $x_k\in[0,2\pi)$, $y_k\in[x_k-\pi,x_k+\pi)\setminus\menge{x_k}$ as well as $\tau_k\in[-\sigma_k,\sigma_k]$ for $k\in\N$ be the corresponding sequences from the assumption. At first we observe that $\tau_k$ is a null sequence. Due to the \name{Bolzano-Weierstrass} theorem, as $y_k\in[-\pi,3\pi]$ for all $k\in\N$, there exists a convergent subsequence, which will be denoted by the index $k$ again, with
\begin{equation*}
x_k \rightarrow \bar{x}, \quad y_k \rightarrow \bar{y}, \qquad \text{where $\bar{x}\in [0,2\pi]$ and $\bar{y}\in [\bar{x}-\pi,\bar{x}+\pi]$}.
\end{equation*}
Due to
\begin{equation*}
0 = \abs{\gamma(x_k)-\gamma(y_k)+\tau_k(\Phi(x_k)-\Phi(y_k))} \geq \babs{\abs{\gamma(x_k)-\gamma(y_k)}-\abs{\tau_k}\,\abs{\Phi(x_k)-\Phi(y_k)}} \geq 0,
\end{equation*}
we have $\abs{\gamma(x_k)-\gamma(y_k)}=\abs{\tau_k}\,\abs{\Phi(x_k)-\Phi(y_k)}$. Since $\abs{\tau_k}$ is a null sequence and
\begin{equation*}
\abs{\Phi(x_k)-\Phi(y_k)} \leq \abs{\Phi(x_k)}+\abs{\Phi(y_k)} \leq 2 \norm{\Phi}_{C^0} < \infty,
\end{equation*}
we have $\abs{\gamma(x_k)-\gamma(y_k)} \xrightarrow{k\rightarrow\infty} 0$ and consequently
\begin{equation*}
\abs{\gamma(\bar{x})-\gamma(\bar{y})}=0.
\end{equation*}
As $\gamma$ is simple we have $z:=\bar{x}=\bar{y} \in [0,2\pi]$. Therefore,
\begin{equation*}
\frac{\Phi(x_k)-\Phi(y_k)}{x_k-y_k} \xrightarrow{k\rightarrow\infty} \Phi'(z), \qquad \abs{\Phi'(z)} \leq \norm{\Phi'}_{C^0} < \infty.
\end{equation*}
We see this if we apply the mean value theorem with respect to the points $x_k, y_k$ to each scalar component $\Phi_i\in C^1(\R/2\pi\Z,\R)$ getting a $\zeta_k$ between $x_k$ and $y_k$ each time with
\begin{equation*}
\frac{\Phi_i(x_k)-\Phi_i(y_k)}{x_k-y_k} = \Phi_i'(\zeta_k) \xrightarrow{k\rightarrow\infty} \Phi_i'(z),
\end{equation*}
since $\zeta_k \xrightarrow{k\rightarrow\infty}z$. The same is valid for $\gamma$ instead of $\Phi$. Hence,
\begin{equation*}
0 \xleftarrow{k\rightarrow\infty} \abs{\tau_k} \babs{\frac{\Phi(x_k)-\Phi(y_k)}{x_k-y_k}} = \babs{\frac{\gamma(x_k)-\gamma(y_k)}{x_k-y_k}} \xrightarrow{k\rightarrow\infty} \abs{\gamma'(z)},
\end{equation*}
in contradiction to the fact that $\gamma$ is regular. Consequently, there exists a constant $\eps_2$ such that $\gamma+\tau\Phi$ is simple for all $\tau\in[-\eps_2,\eps_2]$.\\
Finally, we complete the proof by defining $\tau_0 := \min\menge{\eps_0,\eps_1,\eps_2} > 0$.
\end{proof}

\subsection*{Trigonometric identities} \label{trigid}
At first we mention a well known property of the sine function
\begin{equation} \label{sinelb}
\abs{\sin(x)} \;\geq\; \frac{2}{\pi}\,\abs{x} \quad \text{for all } x\in[-\frac{\pi}{2},\frac{\pi}{2}],
\end{equation}
which can be explained by the convexity of the sine function in this area. Recall that sine is \emph{odd}, meaning $\sin(-x)=-\sin(x)$ for all $x\in\R$ and cosine is \emph{even}, meaning $\cos(-x)=\cos(x)$ for all $x\in\R$. Remember that for all $x\in\R$ we have $\abs{\sin(x)}\leq 1$ and $\abs{\cos(x)}\leq 1$.

Now we collect some useful trigonometric identities. We start with the \emph{addition theorems} (see for example \cite[VIII. Additionstheoreme]{trig_ebene})
\begin{align}
\sin(\alpha+\beta) &= \sin(\alpha) \cos(\beta) + \cos(\alpha) \sin(\beta) \label{trigadds}\\
\cos(\alpha+\beta) &= \cos(\alpha) \cos(\beta) - \sin(\alpha) \sin(\beta) \label{trigaddc}.
\end{align}
Next we mention these direct conclusions
\begin{align}
\sin(2 \alpha) &= 2 \sin(\alpha) \cos(\alpha) \label{trigtwos}\\
\cos(2 \alpha) &= 2 \cos(\alpha)^2 -1 \;=\; 1 -2 \sin(\alpha)^2 \label{trigtwoc}
\end{align}
and
\begin{align}
\sin(3 \alpha) &= \sin(2\alpha)\cos(\alpha)+\cos(2\alpha)\sin(\alpha) \notag\\
&= \sin(\alpha) \kla{4 \cos(\alpha)^2 -1} \label{trigthrees}\\
\cos(3 \alpha) &= \cos(2\alpha)\cos(\alpha)-\sin(2\alpha)\sin(\alpha) \notag\\
&= \cos(\alpha) \kla{1 -4 \sin(\alpha)^2}. \label{trigthreec}
\end{align}
Now we gain formulas for the sum of two $\sin$ respectively $\cos$ terms
\begin{alignat}{2}
\sin(\alpha) + \sin(\beta) &=\;&& \sin(\frac{\alpha+\beta}{2}+\frac{\alpha-\beta}{2}) + \sin(\frac{\alpha+\beta}{2}-\frac{\alpha-\beta}{2}) \notag\\
&=\;&& \sin(\frac{\alpha+\beta}{2}) \cos(\frac{\alpha-\beta}{2}) + \cos(\frac{\alpha+\beta}{2}) \sin(\frac{\alpha-\beta}{2}) + \notag\\
&&& \sin(\frac{\alpha+\beta}{2}) \cos(\frac{\alpha-\beta}{2}) - \cos(\frac{\alpha+\beta}{2}) \sin(\frac{\alpha-\beta}{2}) \notag\\
&=\;&& 2 \sin(\frac{\alpha+\beta}{2}) \cos(\frac{\alpha-\beta}{2}), \label{trigsums}
\end{alignat}
analog
\begin{equation} \label{trigdiffs}
\sin(\alpha) - \sin(\beta) = 2 \cos(\frac{\alpha+\beta}{2}) \sin(\frac{\alpha-\beta}{2}),
\end{equation}
and
\begin{alignat}{2}
\cos(\alpha) + \cos(\beta) &=\;&& \cos(\frac{\alpha+\beta}{2}+\frac{\alpha-\beta}{2}) + \cos(\frac{\alpha+\beta}{2}-\frac{\alpha-\beta}{2}) \notag\\
&=\;&& \cos(\frac{\alpha+\beta}{2}) \cos(\frac{\alpha-\beta}{2}) - \sin(\frac{\alpha+\beta}{2}) \sin(\frac{\alpha-\beta}{2}) + \notag\\
&&& \cos(\frac{\alpha+\beta}{2}) \cos(\frac{\alpha-\beta}{2}) + \sin(\frac{\alpha+\beta}{2}) \sin(\frac{\alpha-\beta}{2}) \notag\\
&=\;&& 2 \cos(\frac{\alpha+\beta}{2}) \cos(\frac{\alpha-\beta}{2}), \label{trigsumc}
\end{alignat}
analog
\begin{equation} \label{trigdiffc}
\cos(\alpha) - \cos(\beta) = 2 \sin(\frac{\alpha+\beta}{2}) \sin(\frac{\alpha-\beta}{2}),
\end{equation}
using that $\cos$ is even and $\sin$ is odd. The next identity is also standard
\begin{equation} \label{trigssum}
\sin(\alpha)^2 + \cos(\alpha)^2 = 1.
\end{equation}

\section{The circumcircle}
For three distinct and non-collinear points in three dimensional space $X,Y,Z \in \R^3$ we denote the radius of their circumcircle as $R(X,Y,Z)$, which is also called \emph{circumradius}.
\begin{figure}[H]
\begin{center}
\includegraphics{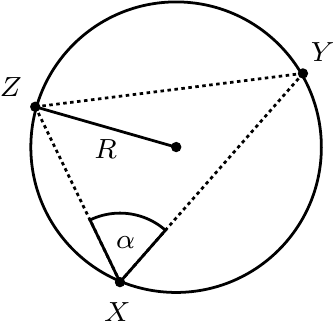}
\end{center}
\caption{The circumradius $R(X,Y,Z)$ of three points $X, Y, Z \in \R^3$}
\label{figcircumcircle}
\end{figure}
The points $X,Y,Z \in \R^3$ always form a triangle which lies in one particular plane. Moreover, these points also lie on a unique circle called the \emph{circumcircle}. Due to the symmetry of a circle the perpendicular bisectors of the three sides of the triangle all intersect in one point $O$, which is called the \emph{circumcentre}, see \cite[1.5, p. 12f]{coxeter}. We represent the well known proof of \name{Euclid}'s theorem ``In a circle the angle at the centre is double the angle at the circumference, when the rays forming the angles meet the circumference in the same two points.'', cf. \cite[1.3 Pons Asinorum]{coxeter}. By $\,\overline{\!XY}$ we denote the line segment between $X$ and $Y$. Moreover, we denote by $\sphericalangle(XY\!Z)$ the smaller angle enclosed by $\,\overline{\!XY}$ and $\overline{Y\!Z}$ as well as by $\triangle(XY\!Z)$ the triangle defined by the sides $\,\overline{\!XY}$, $\overline{Y\!Z}$ and $\,\overline{\!X\!Z}$. We consider Figure~\ref{figcompcircumcircle}, where we have the circumcircle of the triangle $\triangle(XY\!Z)$ around $O$ and we split the angle $\alpha = \sphericalangle(Z\!XY)$ into the angles $\alpha_1$ and $\alpha_2$ by the line segment $\,\overline{\!O\!X}$.
\begin{figure}[H]
\begin{center}
\includegraphics{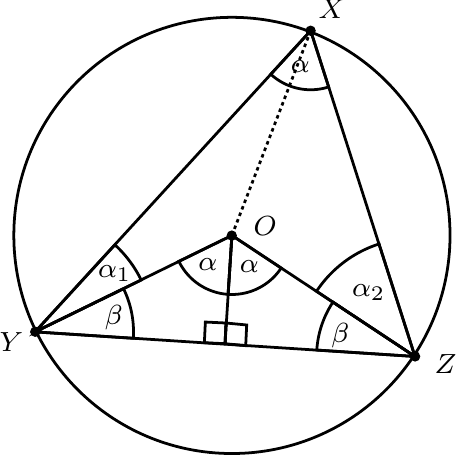}
\end{center}
\caption{Computing the circumradius $R(X,Y,Z)$ of three points $X,Y,Z \in \R^3$}
\label{figcompcircumcircle}
\end{figure}
Since the triangles $\triangle(Y\!O\!X)$, $\triangle(Z\!X\!O)$ and $\triangle(ZOY)$ are isosceles, the angles at each base are equal. The perpendicular of the side $\,\overline{Y\!Z}$ through the centre $O$ splits the triangle $\triangle(ZOY)$ into two congruent triangles and therefore, the angle at the centre is halved. Now we can compute, using the fact that the sum of the angles in a triangle is equal to $\pi$
\begin{alignat*}{2}
&&\alpha + (\alpha_1+\beta) + (\alpha_2+\beta) &= \pi\\
\Rightarrow\qquad &&\alpha &= \frac{\pi}{2} - \beta
\end{alignat*}
and therefore, the halved angle at the centre is equal to $\alpha$. Again as in \cite[1.5, p. 12f]{coxeter} we can now compute
\begin{equation*}
R \sin(\alpha) = \frac{1}{2}\, \text{d}_{Y,Z},
\end{equation*}
where $\text{d}_{X,Y} = \abs{Y-X}$ measures the distance between the points $X$ and $Y$. For the inverse of the circumradius we now have
\begin{alignat}{2}
\frac{1}{R(X,Y,Z)} &= \frac{2\,\sin\kla{\sphericalangle(\mathit{ZXY})}}{\text{d}_{Y,Z}} &\quad=\quad& \frac{2\, \abs{\frac{Y-X}{\abs{Y-X}} \wedge \frac{Z-X}{\abs{Z-X}}}}{\abs{Z-Y}} \notag\\
&= \frac{2\,\abs{(Y-X) \wedge (Z-X)}}{\abs{Y-X}\abs{X-Z}\abs{Z-Y}} \label{R001}\\
&= \frac{4\,\text{A}\kla{\triangle(X,Y,Z)}}{\text{d}_{X,Y}\,\text{d}_{X,Z}\,\text{d}_{Y,Z}} &\quad=\quad& \frac{4\kla{\frac{1}{2}\,\text{d}_{Y,Z}\; \text{d}(X,L_{YZ})}}{\text{d}_{X,Y}\,\text{d}_{X,Z}\,\text{d}_{Y,Z}} \notag\\
&= \frac{2\,\text{d}(L_{YZ},X)}{\text{d}_{X,Y}\,\text{d}_{X,Z}}, \notag
\end{alignat}
where $\text{A}(\triangle)$ is the area of a given triangle, $\text{d}(X,L_{YZ})$ gives the distance between the point $X$ to the line through the points $Y$ and $Z$. In Figure~\ref{figcircumcircle} we see the points $X,Y,Z\in\R^3$, the radius $R=R(X,Y,Z)$, the angle $\alpha = \sphericalangle(\mathit{ZXY})$ and the triangle $\triangle(X,Y,Z)$.

Another interesting alternative is the following in which we express the whole formula only by distance functions. We start with \eqref{R001} and compute for $X,Y,Z \in \R^3$
\begin{align}
& 4\,\abs{(Y-X)\wedge (Z-X)}^2 \;=\; 4\,\kla{\abs{Y-X}^2\,\abs{Z-X}^2-\kla{(Y-X)\cdot (Z-X)}^2} \notag\\
&= 4\,\text{d}_{X,Y}^2\,\text{d}_{X,Z}^2 - \kla{\text{d}_{X,Y}^2+\text{d}_{X,Z}^2-\text{d}_{Y,Z}^2}^2 \label{dist001}\\
&= \kla{2\,\text{d}_{X,Y}\,\text{d}_{X,Z}+\text{d}_{X,Y}^2+\text{d}_{X,Z}^2-\text{d}_{Y,Z}^2}\kla{2\,\text{d}_{X,Y}\,\text{d}_{X,Z}-\text{d}_{X,Y}^2-\text{d}_{X,Z}^2+\text{d}_{Y,Z}^2} \notag\\
&= \kla{\kla{\text{d}_{X,Y}+\text{d}_{X,Z}}^2-\text{d}_{Y,Z}^2}\kla{\text{d}_{Y,Z}^2-\kla{\text{d}_{X,Y}-\text{d}_{X,Z}}^2} \notag\\
&= \kla{\text{d}_{X,Y}+\text{d}_{X,Z}+\text{d}_{Y,Z}}\kla{\text{d}_{X,Y}+\text{d}_{X,Z}-\text{d}_{Y,Z}}\kla{\text{d}_{X,Y}-\text{d}_{X,Z}+\text{d}_{Y,Z}}\kla{-\text{d}_{X,Y}+\text{d}_{X,Z}+\text{d}_{Y,Z}}, \notag
\end{align}
where we used the binomial theorems and in \eqref{dist001} 
\begin{align*}
&\abs{X-Y}^2+\abs{X-Z}^2-\abs{Y-Z}^2\\
&= \abs{X}^2 -2\,X\cdot Y+\abs{Y}^2+\abs{X}^2-2\,X\cdot Z+\abs{Z}^2-\abs{Y}^2+2\,Y\cdot Z-\abs{Z}^2\\
&= 2\,\abs{X}^2 -2\,X\cdot Z-2\,Y\cdot (Z-X) = 2\,(Y-X)\cdot (Z-X).
\end{align*}
Finally, we obtain
\begin{align}
&\frac{1}{R(X,Y,Z)}= \label{R002}\\
&\frac{\sqrt{\kla{\text{d}_{X,Y}+\text{d}_{X,Z}+\text{d}_{Y,Z}}\kla{\text{d}_{X,Y}+\text{d}_{X,Z}-\text{d}_{Y,Z}}\kla{\text{d}_{X,Y}-\text{d}_{X,Z}+\text{d}_{Y,Z}}\kla{-\text{d}_{X,Y}+\text{d}_{X,Z}+\text{d}_{Y,Z}}}}{\text{d}_{X,Y}\,\text{d}_{X,Z}\,\text{d}_{Y,Z}}. \notag
\end{align}
To numerically compute the inverse of the circumradius the formulas \eqref{R001} and \eqref{R002} are reasonable candidates at first sight, but some experiments indicated that the latter is worse due to loss of significance effects. Therefore, we will use this representation, which is symmetric in its three arguments,
\begin{equation} \label{oneoverr}
\frac{1}{R(X,Y,Z)} = \frac{2\,\abs{(Y-X) \wedge (Z-X)}}{\abs{Y-X}\abs{X-Z}\abs{Z-Y}} = \frac{2\,\abs{Y\wedge Z + X\wedge Y +Z\wedge X}}{\abs{Y-X}\abs{X-Z}\abs{Z-Y}},
\end{equation}
see \eqref{defwedge}. Observe that all representations are also well-defined for collinear triples such that $\frac{1}{R}$ is ``equal'' to $0$ in this case. This leads to an infinite radius for such a triple, which corresponds to the view of a straight line as a circle with infinite radius. Since the wedge product is bi-linear we immediately obtain for $r>0$
\begin{equation} \label{scaleR}
\frac{1}{R(rX,rY,rZ)} = \frac{1}{r\,R(X,Y,Z)}.
\end{equation}

The function $\frac{1}{R}$ is continuous at triples of distinct points. Nevertheless, if two or more points coincide, the corresponding circumcircle is not unique any more leading to the fact that the function is not continuous at those triples.
\begin{exmp}
We consider $\frac{1}{R}$ in the $x,y$-plane and show that it is not continuous at $(-e_2,e_2,e_2)$ respectively $(e_2,e_2,e_2)$. To do so we take the shifted unit circle given by
\begin{equation*}
\gamma(s):=\left(
\begin{array}{c}
\cos(\frac{\pi}{2}-s)\\
\sin(\frac{\pi}{2}-s)
\end{array}
\right), \qquad s\in [0,2\pi].
\end{equation*}
This way, $[0,\pi]\ni s\mapsto \gamma(s)$ parameterizes the right semicircle, in particular $\gamma(0)=e_2$ and $\gamma(\pi)=-e_2$.
\begin{enumabc}
\item Now we consider a second circle. Choose $r>1$ and define $a:=\sqrt{r^2-1}$ as well as $\alpha:=\arcsin(\frac{1}{r})\in (0,\frac{\pi}{2})$, where $\arcsin$ is the inverse function of the sine function.
\begin{figure}[H]
\begin{center}
\includegraphics{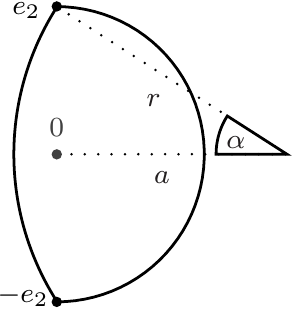}
\end{center}
\caption{A planar closed curve where $R$ is discontinuous}
\label{figRdiscont}
\end{figure}
We define
\begin{equation}
\tilde{\gamma}(s):=\left(
\begin{array}{c}
r \cos(s+\pi-\alpha) + a\\
r \sin(s+\pi-\alpha)
\end{array}
\right), \qquad s\in [0,2\pi],
\end{equation}
which is the circle with radius $r$ around the point $a e_1$. Observe, that $\tilde{\gamma}(\alpha)=(a-r)e_1$, i.e. $\tilde{\gamma}(\alpha)$ lies on the x-axis. Moreover, $\tilde{\gamma}$ is constructed such that $\tilde{\gamma}(0)=e_2$ and $\tilde{\gamma}(2\alpha)=-e_2$. Consequently we have that the image $\gamma([0,\pi])\cup\tilde{\gamma}([0,2\alpha])$ is exactly the curve of Figure~\ref{figRdiscont}. Let $(s_n)_{n\in\N}\subset (0,\pi)$ and $(c_n)_{n\in\N}\subset (0,2\alpha)$ be two null series. Therefore,
\begin{equation*}
\frac{1}{R(-e_2,e_2,\gamma(s_n))} = 1 \qquad \text{and} \qquad \frac{1}{R(-e_2,e_2,\tilde{\gamma}(c_n))} = r,
\end{equation*}
however, $\gamma(s_n)$ and $\tilde{\gamma}(c_n)$ both converge to $e_2$ for $n \rightarrow \infty$. As we found two possible values for $(-e_2,e_2,e_2)$, $\frac{1}{R}$ cannot be continuous there.
\item Even if we consider a smoother function like this \emph{stadium-curve}, which is $C^{1,1}$, we can get problems if all three points coincide.
\begin{figure}[H]
\begin{center}
\includegraphics{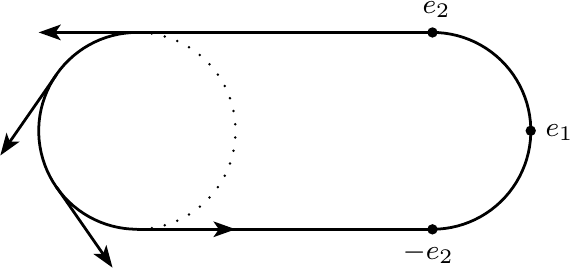}
\end{center}
\caption{A stadium-curve}
\label{figStadionCurve}
\end{figure}
Let $(s^1_n)_{n\in\N}\subset (0,\pi)$ and $(c^1_n)_{n\in\N}\subset (0,1]$ be two null series and furthermore, for $\alpha_s,\alpha_c\in (0,1)$ we define $s^2_n:=\alpha_s s^1_n$ and $c^2_n:=\alpha_c s^1_n$ for all $n\in\N$, which are also null series. Since the points $e_2$, $e_2-c^1_n e_1$ and $e_2-c^2_n e_1$ are collinear, we obtain
\begin{equation*}
\frac{1}{R(e_2,e_2-c^1_n e_1,e_2-c^2_n e_1)} = 0 \qquad \text{and} \qquad \frac{1}{R(e_2,\gamma(s^1_n),\gamma(s^2_n))} = 1,
\end{equation*}
for all $n\in\N$. We get a contradiction by the same argument as before, as both triples converges to $(e_2,e_2,e_2)$ for $n \rightarrow \infty$.
\end{enumabc}
\end{exmp}

\section{Thickness of a knot}
We use the following definition that goes back to \cite{GM99}.
\begin{defn} \mylabel{defthickness}
Let $\gamma: \R/2\pi\Z \rightarrow \R^3$ be a closed, simple and rectifiable curve. We define the \emph{thickness of $\gamma$} as
\begin{equation*}
\Delta[\gamma] := \inf\limits_{\stackrel{x,y,z \in \gamma(\I)}{x\neq y\neq z\neq x}} R(x,y,z).
\end{equation*}
\end{defn}
To get used to that quantity we have a look at the following
\begin{exmp} \mylabel{exmpthickone}
We consider two examples of knots with thickness $\Delta[\gamma] = 1$.
\begin{enumerate}
\item A trivial case is the \emph{unit circle}, because for every triple of points on the curve the circumcircle is again the unit circle. Therefore, the restriction of $R$ to triples of points on the unit circle is constantly equal to $1$ and we immediately get that the thickness is $1$ as well.
\item Here comes another example:
\begin{figure}[H]
\begin{center}
\includegraphics{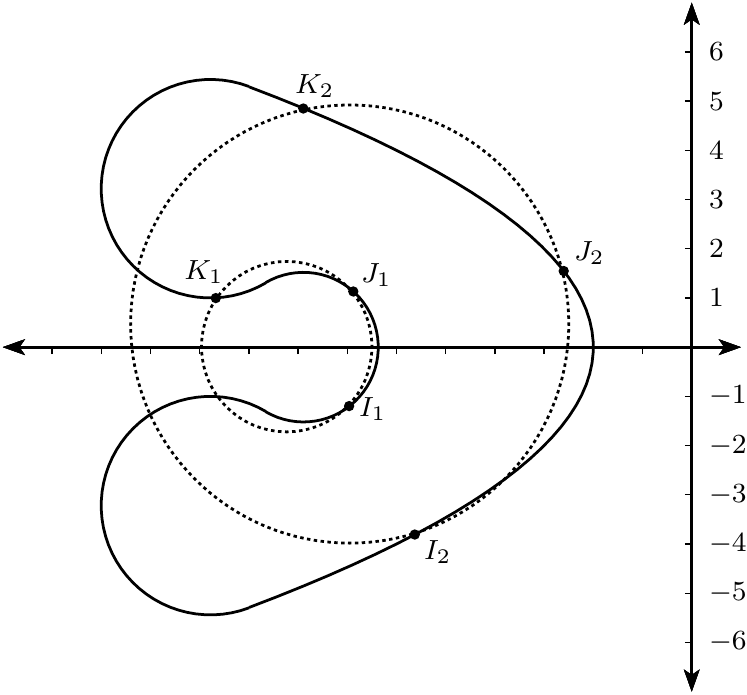}
\end{center}
\caption{A non-trivial example of an unknot with $\Delta[\gamma] = 1$ --- two arbitrary circumcircle}
\label{exmpcircumcircle}
\end{figure}
where we see the considered knot as a solid line and the two circumcircle of the triples $I_1,J_1,K_1$ and $I_2,J_2,K_2$ as pointed lines. To understand why this knot has thickness $1$ we look at the following picture:
\begin{figure}[H]
\begin{center}
\includegraphics{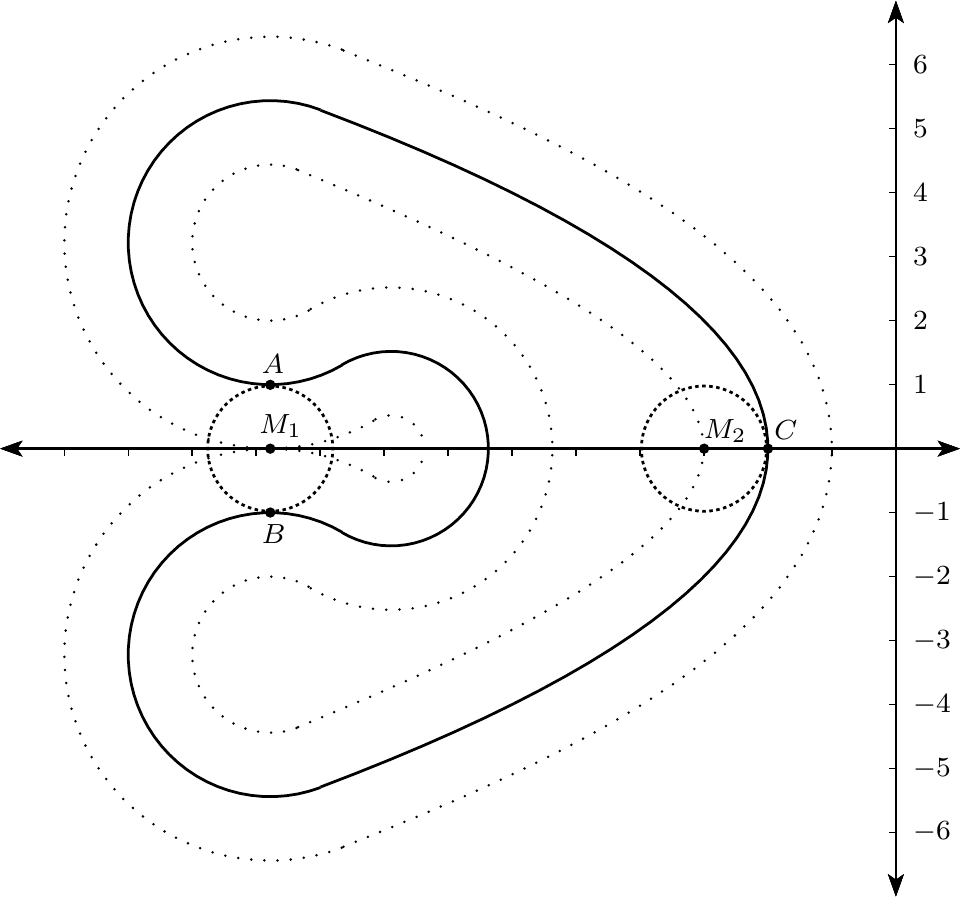}
\end{center}
\caption{A non-trivial example of an unknot with $\Delta[\gamma] = 1$ --- the minimal limit circles}
\label{exmpthickness}
\end{figure}
Again the knot is a solid line surrounded by two pointed lines in such a way that the distance to the solid line is exactly one everywhere. A circumcircle corresponding to three points on the curve as in Figure~\ref{exmpcircumcircle} converges, if the limit exists, to a circle that only has one or two points in common with the curve \cite{phd_smutny}, see also \cite{GMS02}. The circles drawn in dashed lines here are two examples of those. The one on the left-hand side has two distinct points $A,B$ in common with the knot and touches it of order one at the point where two points collapse. The one on the right-hand side touches the knot only at point $C$ of order two. Such a circle is also called an \emph{osculating circle}. As proven by \cite[Lemma 2 and Proposition 1]{SchvdM03_global} 
the thickness of a $C^\infty$-curve is achieved by the radius of such ``limit circles'' and the two circles mentioned before are responsible for the fact that $\Delta(\gamma)=1$. The one on the left is minimal because the distance between $A$ and $B$ is the minimal self-distance of the knot. The radius of an osculating circle is the inverse of the classic local curvature $\kappa_\gamma$ (cf. \thref{defloccur}). Based on this the circle on the right is minimal, because the classic local curvature is maximal at point $C$.
Finally, hence the two circles in dashed lines (Figure~\ref{exmpthickness}) have radius $1$ we have shown that the thickness of $\gamma$ is one.
\end{enumerate}
\end{exmp}

\section{Integral \name{Menger} curvature}
Let $\gamma \in AC(\R/2\pi\Z,\R^3)$ be a closed, simple and regular curve. For a knot $\gamma$ we want to consider the integral of the inverse of $R$ to the fixed power $p>0$ over all triples of points on the curve, more specific
\begin{equation} \label{MenergySet}
\int_\gamma \int_\gamma \int_\gamma \frac{1}{R(x,y,z)^p} \, d\Ha^1(x) \, d\Ha^1(y) \, d\Ha^1(z),
\end{equation}
where we denote the image of $\gamma(\R/2\pi\Z)$ also by $\gamma$ and $d\Ha^1$ is the one dimensional \name{Hausdorff} measure (see for instance \cite[Chapter 2]{evans_gariepy}). As it is a common way of notation we introduce $c_\gamma:\R^3\setminus \Ns \rightarrow \R$ as
\begin{equation} \label{defcg}
c_\gamma(s,t,\sigma) := \frac{1}{R(\gamma(s),\gamma(t),\gamma(\sigma))} = \frac{2 \abs{(\gamma(t)-\gamma(s)) \wedge (\gamma(\sigma)-\gamma(s))}}{\abs{\gamma(t)-\gamma(s)}\abs{\gamma(s)-\gamma(\sigma)}\abs{\gamma(\sigma)-\gamma(t)}},
\end{equation}
which is defined for $(s,t,\sigma) \in \R^3$ almost everywhere except for
\begin{equation} \label{nullset1}
\Ns := \menge{ (s,t,\sigma) \in \R^3 \,\Big\arrowvert\, \abs{s-t}\in 2\pi\Z \text{ or } \abs{s-\sigma}\in 2\pi\Z \text{ or } \abs{t-\sigma}\in 2\pi\Z},
\end{equation}
and if we restrict our self on the cube $[0,2\pi]^3$, we get
\begin{multline} \label{nullset1r}
\Ns\cap[0,2\pi]^3 = \menge{(s,t,\sigma) \in [0,2\pi]^3 \,\Big\arrowvert\, s=t \text{ or } s=\sigma \text{ or } t=\sigma } \cup \\
\menge{ (s,t,\sigma) \in [0,2\pi]^3 \,\Big\arrowvert\, \{s,t\}=\{0,2\pi\} \text{ or } \{s,\sigma\}=\{0,2\pi\} \text{ or } \{t,\sigma\}=\{0,2\pi\}},
\end{multline}
since $\gamma$ is simple but closed. $\Ns\cap[0,2\pi]^3$ is a null-set as a finite union of hypersurfaces and lines. Let additionally $\gamma$ be parameterized by arclength. For those curves we define a first version of \emph{integral \name{Menger} curvature} like this
\begin{equation} \label{MenergyAL}
\M_p(\gamma) := \int_0^L \int_0^L \int_0^L c_\gamma(s,t,\sigma)^p \; d\sigma\,dt\,ds,
\end{equation}
where $L>0$ is the length of $\gamma$ and remark that this expression equals to \eqref{MenergySet}.
However, we want to express the energy without arclength parametrization, because this cannot be guaranteed in our task. Therefore, we use the following definition
\begin{defn} \mylabel{defMenergy}
Let $p>0$ and $\gamma \in AC(\R/2\pi\Z,\R^3)$ be a closed, simple and regular curve. Then we define \emph{integral \name{Menger} curvature} as
\begin{equation*}
\M_p (\gamma) := \int_0^{2\pi} \int_0^{2\pi} \int_0^{2\pi} c(s,t,\sigma)^p \; \abs{\gamma'(s)}\abs{\gamma'(t)}\abs{\gamma'(\sigma)} \, d\sigma \, dt \, ds.
\end{equation*}
\end{defn}
The advantage of this expression is, that it is invariant under reparametrization, see \cite[7.26 Theorem]{rudin} and the special case afterwards. From \eqref{scaleR} we gain that $\M_p$ is not invariant under scaling for $p\neq 3$. In particular for $r>0$ we have
\begin{equation} \label{scaleMp}
\M_p(r\gamma) = r^{3-p} \M_p(\gamma),
\end{equation}
where $r\gamma$ means that each scalar component of $\gamma$ is multiplied by $r$.

For the analytic part of this thesis it is much more convenient to represent integral \name{Menger} curvature in a slightly other way. Due to the fact that the integrand is symmetric in $s,t,\sigma$ we will use \thref{propshift} from time to time. Assuming $\M_p(\gamma) < \infty$, we get
\begin{align}
\M_p (\gamma) & = \int_0^{2\pi} \int_0^{2\pi} \int_0^{2\pi} \frac{\abs{\gamma'(s)}\abs{\gamma'(t)}\abs{\gamma'(\sigma)}}{R^p(\gamma(s), \gamma(t), \gamma(\sigma))} \, d\sigma \, dt \, ds \notag\\
& = \int_0^{2\pi} \int_0^{2\pi} \int_{-s}^{2\pi-s} \frac{\abs{\gamma'(s)}\abs{\gamma'(t)}\abs{\gamma'(s+w)}}{R^p(\gamma(s), \gamma(t), \gamma(s+w))} \, dw \, dt \, ds \label{Z001}\\
& = \int_0^{2\pi} \int_0^{2\pi} \int_{-\pi}^{\pi} \frac{\abs{\gamma'(s)}\abs{\gamma'(t)}\abs{\gamma'(s+w)}}{R^p(\gamma(s), \gamma(t), \gamma(s+w))} \, dw \, dt \, ds \label{Z002}\\
& = \int_0^{2\pi} \int_{-s}^{2\pi-s} \int_{-\pi}^{\pi} \frac{\abs{\gamma'(s)}\abs{\gamma'(s+v)}\abs{\gamma'(s+w)}}{R^p(\gamma(s), \gamma(s+v), \gamma(s+w))} \, dw \, dv \, ds \label{Z003}\\
& = \int_0^{2\pi} \int_{-\pi}^\pi \int_{-\pi}^\pi \frac{\abs{\gamma'(s)}\abs{\gamma'(s+v)}\abs{\gamma'(s+w)}}{R^p(\gamma(s), \gamma(s+v), \gamma(s+w))} \, dw \, dv \, ds, \label{altMenergyOne}
\intertext{where we used the substitution $w:=\sigma-s$ in \eqref{Z001} respectively $v:=t-s$ in \eqref{Z003} and \thref{propshift} in \eqref{Z002} and \eqref{altMenergy}. Using the latter one once more we get}
& = \int_0^{2\pi} \int_{-\frac{\pi}{2}}^{\frac{3\pi}{2}} \int_{0}^{\pi} \frac{\abs{\gamma'(s)}\abs{\gamma'(s+v)}\abs{\gamma'(s+w)}}{R^p(\gamma(s), \gamma(s+v), \gamma(s+w))} \, dw \, dv \, ds \notag\\
& \qquad + \; \int_0^{2\pi} \int_{-\frac{3\pi}{2}}^{\frac{\pi}{2}} \int_{-\pi}^{0} \frac{\abs{\gamma'(s)}\abs{\gamma'(s+v)}\abs{\gamma'(s+w)}}{R^p(\gamma(s), \gamma(s+v), \gamma(s+w))} \, dw \, dv \, ds. \label{altMenergy}
\end{align}
We define the abbreviation $\I := \kla{[0,2\pi]\times[-\frac{\pi}{2},\frac{3\pi}{2}]\times[0,\pi]}\cup\kla{[0,2\pi]\times[-\frac{3\pi}{2},\frac{\pi}{2}]\times[-\pi,0]}$ and get
\begin{align}
\M_p(\gamma) &= \iiintl_\I & c_\gamma(s,s+v,s+w)^p \qquad \abs{\gamma'(s)} \abs{\gamma'(s+v)} \abs{\gamma'(s+w)} \,dw\,dv\,ds \notag\\
&= \iiintl_\I & \frac{2^p \abs{(\gamma(s+v)-\gamma(s)) \wedge (\gamma(s+w)-\gamma(s))}^p}{\abs{\gamma(s+v)-\gamma(s)}^p \abs{\gamma(s)-\gamma(s+w)}^p \abs{\gamma(s+w)-\gamma(s+v)}^p} \quad \notag\\
&& \abs{\gamma'(s)} \abs{\gamma'(s+v)} \abs{\gamma'(s+w)} \,dw\,dv\,ds. \label{altMenergy2}
\end{align}
Observe that $\I\setminus\widetilde{\Ns}\ni(s,v,w)\mapsto c_\gamma(s,s+v,s+w)$ is consequently (see \eqref{nullset1}) defined almost everywhere on $\I$ except for
\begin{align*}
&\widetilde{\Ns} := \menge{(s,v,w)\in\R^3 \,\Big\arrowvert\, v\in2\pi\Z \text{ or } w\in2\pi\Z \text{ or } \abs{w-v}\in2\pi\Z}, \\
\text{where}\quad &\widetilde{\Ns}\cap\I = \menge{(s,v,w)\in\I \,\Big\arrowvert\, v=0 \text{ or } w=0 \text{ or } v=w}.
\end{align*}
\begin{rem} \mylabel{rempropI}
The advantage of the domain $\I$ is that it is compact and moreover, that the distance between $s,s+v,s+w$ is at most $\frac{3\pi}{2}$ which is strictly smaller than $2\pi$, hence $\abs{(s+v)-s}=\abs{v}\leq\frac{3\pi}{2}$, $\abs{(s+w)-s}=\abs{w}\leq\pi$ and $-\frac{3\pi}{2}\leq(s+v)-(s+w)=v-w\leq\frac{3\pi}{2}$.
\end{rem}
In the introduction we already mentioned regularity results by \name{P. Strzelecki}, \name{M. Szuma\'nska} and \name{H. von der Mosel} \cite[Theorem 1.2]{StrSzvdM10} as well as by \name{S. Blatt} \cite[Theorem~1.1]{blatt_menger}. In the next section there will be another proof for the integrability of $c_\gamma$ for knots in the fractional \name{Sobolev} space $W^{2-\frac{2}{p},p}(\R/2\pi\Z,\R^3)$ with $p\in(3,\infty)$, see \thref{deffracsobolev} and at least in a \name{H\"older} space $C^{1,\alpha}(\R/2\pi\Z,\R^3)$ with $\alpha>1-\frac{2}{p}$ for $p\in[2,3]$, see \thref{defhoelder}.
\begin{rem} \mylabel{rempwenergy}
Additionally, it can be shown in a very elegant way that $\M_p$ energy is infinite for piecewise linear knots, which are $C^{0,1}(\R/2\pi\Z,\R^3)$, if $p \geq 3$.
\end{rem}
The following proof is due to \name{P. Strzelecki} (personal communication, June 2009).
\begin{proof}
Without loss of generality we can assume, that the origin is one vertex of the polygon $\gamma$ and that we have two more vertices $a \neq b \in \R^3$ such that $a, b$ and $0$ are not collinear. That is the case, since the energy is invariant under translation and the polygon is closed and embedded. Now we consider the two corresponding edges
\begin{equation*}
S := [0,a] \cup [0,b] \subset \R^3
\end{equation*}
and we define a covering of $S$ for $k \in \N_0$ like that
\begin{equation*}
S_k := \Bigl[ \kla{\frac{1}{2}}^{k+1} a, \kla{\frac{1}{2}}^k a \Bigr] \cup \Bigl[ \kla{\frac{1}{2}}^{k+1} b, \kla{\frac{1}{2}}^k b \Bigr] \qquad \longrightarrow \qquad S = \bigcup_{j=0}^\infty S_j.
\end{equation*}
Now we define
\begin{align*}
M_k &:= \intl_{S_k} \intl_{S_k} \intl_{S_k} \frac{1}{R^p(x,y,z)} \, d\Ha^1(x) \, d\Ha^1(y) \, d\Ha^1(z)\\
&= \intl_{S_{k+1}} \intl_{S_{k+1}} \intl_{S_{k+1}} 2^3 \frac{1}{R^p(2\tilde{x},2\tilde{y},2\tilde{z})} \, d\Ha^1(\tilde{x}) \, d\Ha^1(\tilde{y}) \, d\Ha^1(\tilde{z})\\
&= \intl_{S_{k+1}} \intl_{S_{k+1}} \intl_{S_{k+1}} 2^{3-p} \frac{1}{R^p(\tilde{x},\tilde{y},\tilde{z})} \, d\Ha^1(\tilde{x}) \, d\Ha^1(\tilde{y}) \, d\Ha^1(\tilde{z}),
\end{align*}
where we used $\eqref{scaleR}$ and substituted $\tilde{x}:=\frac{1}{2}x$, $\tilde{y}:=\frac{1}{2}y$ and $\tilde{z}:=\frac{1}{2}z$. Observe that $M_k>0$ for all $k\in\N_0$. Since $\frac{1}{R}$ is a nonnegative function $M_k=0$ would imply that $\frac{1}{R}$ is equal to zero on $S_k$, which is not the case due to the non-collinearity of $a, b$ and $0$. We set $d := 2^{p-3} \geq 1$ as $p \geq 3$ and gain $M_{k+1} = d \, M_k$. Therefore,
\begin{equation*}
M_k = d^k \, M_0 \geq M_0 > 0.
\end{equation*}
After all we get
\begin{equation*}
\M_p(\gamma) \geq \intl_S \intl_S \intl_S \frac{1}{R^p(x,y,z)} \, d\Ha^1(x) \, d\Ha^1(y) \, d\Ha^1(z) = \suml_{k=0}^\infty M_k \geq \suml_{k=0}^\infty M_0 = \infty,
\end{equation*}
which completes the proof.
\end{proof}

In the last part of this section we assume that $\gamma \in C^2(\R/2\pi\Z,\R^3)$. This way we are able to show that $\I\setminus\widetilde{\Ns} \ni (s,v,w) \mapsto c_\gamma(s,s+v,s+w)$ can be extended continuously on $\I$. From \thref{propabscont} we get for $s,t\in \R$
\begin{equation} \label{gdiff_st}
\gamma(t)-\gamma(s) = \int_s^t \gamma'(\tilde{y}) \, d\tilde{y} = (t-s) \; \int_0^1 \gamma'(yt+(1-y)s) \, dy,
\end{equation}
where we substituted $y:=\frac{\tilde{y}-s}{t-s}$. Due to the simplicity and closeness of $\gamma$ the expression $\abs{\gamma(s)-\gamma(t)}$ is zero if and only if $(s-t) \in 2\pi\Z$. For $s=t$ we have $\abs{\int_0^1 \gamma'(s) \, dy} = \abs{\gamma'(s)} \neq 0$ and therefore,
\begin{equation} \label{intzero_iff}
\babs{\int_0^1 \gamma'(ys+(1-y)t) \, dy} = 0 \qquad \Leftrightarrow \qquad \kla{s-t} \in 2\pi\Z\setminus\menge{0}.
\end{equation}
Consequently, we get for $s,v,w\in\R$
\begin{align}
\gamma(s+v)-\gamma(s) &= v \int_0^1 \gamma'(s+yv) \, dy \label{gdiff_sv}\\
\gamma(s+w)-\gamma(s) &= w \int_0^1 \gamma'(s+yw) \, dy \label{gdiff_sw}\\
\gamma(s+w)-\gamma(s+v) &= (w-v) \int_0^1 \gamma'(s+yw+(1-y)v) \, dy \label{gdiff_vw}
\end{align}
and there exist positive constants $C_1,C_2,C_3 > 0$ independent of $(s,v,w)\in\I$ with
\begin{equation} \label{intconsts}
C_1 \leq \babs{\int_0^1 \gamma'(s+yv) \, dy},\, C_2 \leq \babs{\int_0^1 \gamma'(s+yw) \, dy},\, C_3 \leq \babs{\int_0^1 \gamma'(s+yw+(1-y)v) \, dy}.
\end{equation}
This is true, because we can regard the integrand of each right-hand side as a continuous function $\I\times[0,1]\rightarrow\R^3$ and therefore, the corresponding integrals are continuous as well due to \cite[16.1 Continuity lemma]{bauer}. Moreover, $\I$ is compact and these integrals are not zero on $\I$, because the right-hand side of \eqref{intzero_iff} is never true as it is mentioned in \thref{rempropI}. We go ahead to
\begin{equation} \label{wedgeinta}
\abs{(\gamma(s+v)-\gamma(s)) \wedge (\gamma(s+w)-\gamma(s))} = \abs{v}\abs{w}\,\babs{\int_0^1 \gamma'(s+yv) \, dy \wedge \int_0^1 \gamma'(s+yw) \, dy}.
\end{equation}
Observe that up to now we only used the fact that $\gamma$ is $C^1(\R/2\pi\Z,\R^3)$. The integrand of the second factor of the cross product can be regarded as a continuously differentiable function $f$ of $w$, hence $\gamma \in C^2(\R/2\pi\Z,\R^3)$. Therefore, we can use \eqref{gdiff_st} to get $f(w) = f(v) + (w-v) \int_0^1 f'(\tilde{y}w+(1-\tilde{y})v) \, d\tilde{y}$, which was an easy consequence of the fundamental theorem and we get for every $(s,v,w) \in \I$
\begin{align}
= & \abs{v}\abs{w}\,\Bigl\lvert \int_0^1 \gamma'(s+yv) \, dy \,\wedge \notag\\
& \qquad \qquad \Bigl( \int_0^1 \gamma'(s+yv) \, dy + \int_0^1 (w-v) \int_0^1 y\,\gamma''(s+y(\tilde{y}w+(1-\tilde{y})v)) \,d\tilde{y}\,dy \Bigr) \Bigr\rvert \notag\\
= &\abs{v}\abs{w}\abs{w-v}\,\babs{\int_0^1 \gamma'(s+yv) \, dy \wedge \int_0^1 \int_0^1 y\,\gamma''(s+y(\tilde{y}w+(1-\tilde{y})v)) \, d\tilde{y} \, dy}. \label{wedgeintb}
\end{align}
Together with \eqref{gdiff_sv}, \eqref{gdiff_sw} and \eqref{gdiff_vw} we see that the first factors cancel out in $c_\gamma$ and we obtain
\begin{equation} \label{cgammaint}
c_\gamma(s,s+v,s+w) = \frac{2\, \babs{\int_0^1 \gamma'(s+yv) \, dy \wedge \int_0^1 \int_0^1 y\, \gamma''(s+y(\tilde{y}w+(1-\tilde{y})v)) \, d\tilde{y} \, dy}}{\babs{\int_0^1 \gamma'(s+yv) \, dy } \babs{\int_0^1 \gamma'(s+yw) \, dy} \babs{\int_0^1 \gamma'(s+yw+(1-y)v)) \, dy}}.
\end{equation}
Let $s_i \xrightarrow{i \rightarrow \infty} s$, $t_i \xrightarrow{i \rightarrow \infty} t$ and $\sigma_i \xrightarrow{i \rightarrow \infty} \sigma$ where $(s,v,w) \in \I$ and all $(s_i,v_i,w_i) \in \I\setminus\widetilde{\Ns}$. We can choose $y\mapsto\norm{\gamma'}_{C^0(\R/2\pi\Z,\R^3)}$, which is \name{Lebesgue} integrable on $\I$, as a dominating function of $\gamma'(s_i+yw_i+(1-y)v_i))$, which converges pointwise to $\gamma'(s+yw+(1-y)v))$ for every $y \in [0,1]$ if $i \rightarrow \infty$, since $\gamma' \in C^0(\R/2\pi\Z,\R^3)$. Hence we gain
\begin{equation*}
\int_0^1 \gamma'(s_i+yw_i+(1-y)v_i)) \, dy \quad \xrightarrow{i \rightarrow \infty} \quad \int_0^1 \gamma'(s+yw+(1-y)v)) \, dy,
\end{equation*}
applying \name{Lebesgue}'s dominated convergence theorem. Since $\gamma \in C^2(\R/2\pi\Z,\R^3)$, we obtain the estimate $c_\gamma(s,s+v,s+w)\leq \frac{2}{C_1 C_2 C_3}\norm{\gamma}^2_{C^2(\R/2\pi\Z,\R^3)}$ for all $(s,v,w)\in\I$. After handling the other expressions analogously we get
\begin{multline*}
\frac{2\, \babs{\int_0^1 \gamma'(s_i+yv_i) \, dy \wedge \int_0^1 \int_0^1 y\, \gamma''(s_i+y(\tilde{y}w_i+(1-\tilde{y})v_i)) \, d\tilde{y} \, dy}}{\babs{\int_0^1 \gamma'(s_i+yv_i) \, dy } \babs{\int_0^1 \gamma'(s_i+yw_i) \, dy} \babs{\int_0^1 \gamma'(s_i+yw_i+(1-y)v_i)) \, dy}}\\
\xrightarrow{i \longrightarrow \infty} \frac{2\, \babs{\int_0^1 \gamma'(s+yv) \, dy \wedge \int_0^1 \int_0^1 y\, \gamma''(s+y(\tilde{y}w+(1-\tilde{y})v)) \, d\tilde{y} \, dy}}{\babs{\int_0^1 \gamma'(s+yv) \, dy } \babs{\int_0^1 \gamma'(s+yw) \, dy} \babs{\int_0^1 \gamma'(s+yw+(1-y)v)) \, dy}},
\end{multline*}
which means
\begin{equation*}
c_\gamma(s_i,s_i+v_i,s_i+w_i) \xrightarrow{i \longrightarrow \infty} c_\gamma(s,s+v,s+w)
\end{equation*}
and this completes the proof. Moreover, we are now able to easily compute the value of $c_\gamma$ when at least two points coincide. As an example we now compute $c_\gamma(s,s+v,s+v)$ for $(s,v,v)\in\menge{(s,v,w)\in\I\,\arrowvert\,w=v\neq 0}\subset\widetilde{\Ns}$
\begin{equation*}
c_\gamma(s,s+v,s+v) = \frac{2\, \babs{\int_0^1 \gamma'(s+yv) \, dy \wedge \int_0^1 y\,\gamma''(s+yv) \,dy}}{\babs{\int_0^1 \gamma'(s+yv) \, dy}^2 \babs{\gamma'(s+v)}}.
\end{equation*}
First we use partial integration to get
\begin{equation*}
v\,\int_0^1 y\,\gamma''(s+yv) \, dy = \Bigl[y\,\gamma'(s+yv)\Bigr]_{y=0}^{y=1} -\int_0^1 \gamma'(s+yv)\, dy = \gamma'(s+v) -\int_0^1 \gamma'(s+yv) \,dy.
\end{equation*}
Now we use the fundamental theorem of calculus again and do the same computations that lead to \eqref{gdiff_sv} but in opposite direction.
After all we gain
\begin{equation} \label{csvv}
c_\gamma(s,s+v,s+v) = \frac{2\, \abs{(\gamma(s+v) - \gamma(s)) \wedge \gamma'(s+v)}}{\abs{\gamma(s+v)-\gamma(s)}^2\, \abs{\gamma'(s+v)}}.
\end{equation}
The interesting case that all three points coincide will be considered now. Let $s\in[0,2\pi]$
\begin{equation} \label{csss}
c_\gamma(s,s,s) = \frac{2\, \babs{\gamma'(s) \wedge \int_0^1 y\, \gamma''(s) \, dy}}{\abs{\gamma'(s)}^3} = \frac{\abs{\gamma''(s) \wedge \gamma'(s)}}{\abs{\gamma'(s)}^3} = \kappa_\gamma(s),
\end{equation}
which reflects the fact that $\kla{R(\gamma(s),\gamma(t),\gamma(\sigma))}^{-1}$ converges to the classic local curvature (see \thref{defloccur}) when all three points tend to one.

We have proven that $(s,v,w) \mapsto c_\gamma(s,s+v,s+w)$ can be extended continuously on $\I$. Hence we are now able to come back to \thref{defMenergy} and to define a extension of $c_\gamma$ on $[0,2\pi)$, which will be called $c_\gamma$ again. Observe that we have to choose the half-opened interval here, which is not compact, in order to avoid the singularities due to the $2\pi$-periodicity of $\gamma$. Let $(s,t,\sigma)\in[0,2\pi)$. Then there exist $k_1,k_2\in\Z$ such that for $\bar{v}:=(t-s)+2\pi k_1$ and $\bar{w}:=(t-s)+2\pi k_2$ we get $(s,\bar{v},\bar{w})\in\I$. Therefore, we have
\begin{equation*}
c_\gamma(s,t,\sigma) \;=\; c_\gamma(s,s+(t-s),s+(\sigma-s)) \;=\; c_\gamma(s,s+\bar{v},s+\bar{w}),
\end{equation*}
and we gain the desired extension. Of course this extension respects the definition of $c_\gamma$ on $[0,2\pi)^3 \setminus\Ns$ from \eqref{defcg}. For instance, we can carry over the example above and define for $s\neq t$
\begin{equation} \label{cstt}
c_\gamma(s,t,t) := \frac{2\, \abs{(\gamma(t) - \gamma(s)) \wedge \gamma'(t)}}{\abs{\gamma(t)-\gamma(s)}^2\, \abs{\gamma'(t)}}.
\end{equation}
Since the integrand is symmetric in its arguments it is sufficient to calculate this expression for the case that two points are equal and the third one is distinct, because
\begin{alignat}{3}
c_\gamma(s,t,\sigma) &=& \; c_\gamma(t,s,\sigma) &=& \; c_\gamma(t,\sigma,s) \notag\\
\downarrow_{\sigma \rightarrow t} && \downarrow_{\sigma \rightarrow t} && \downarrow_{\sigma \rightarrow t} \label{symmconv}\\
c_\gamma(s,t,t) &=& c_\gamma(t,s,t) &=& c_\gamma(t,t,s). \notag
\end{alignat}
The definition of $c_\gamma(s,s,s)$, which is the extension to the case that all three points coincide, stays the same as in \eqref{csss}.

\section{The first variation}
In order to get compacter notations we use the following abbreviation. For a function $f:\R/2\pi\Z \rightarrow \R^3$ in a function space $X$ we write $\norm{f}_X$ instead of $\norm{f}_{X(\R/2\pi\Z,\R^3)}$.\\

Instead of switching from the domain $\U := [0,2\pi]\times[-\pi,\pi]\times[-\pi,\pi]$ to $\I$ we need to consider the set $G$ this time, which consists of the triples in $\U$, where $w$ and $v$ have different signs and the absolute value of at least one of $w$ and $v$ is less of equal to $\frac{3\pi}{4}$, more precisely
\begin{multline} \label{defG}
G := \kla{\menge{(s,v,w)\in\U\;\arrowvert\;v\leq 0, w\geq 0} \cup \menge{(s,v,w)\in\U\;\arrowvert\;v\geq 0, w\leq 0}} \cap\\
\kla{\menge{(s,v,w)\in\U\;\arrowvert\; \abs{v}\leq\tfrac{3\pi}{4}} \cup \menge{(s,v,w)\in\U\;\arrowvert\; \abs{w}\leq\tfrac{3\pi}{4}}}
\end{multline}
\begin{rem} \mylabel{rempropG}
Like domain $\I$ the set $G$ is compact and the distance between $s,s+v,s+w$ is at most $\frac{7\pi}{4}$ which is again strictly smaller than $2\pi$, hence $\abs{v}\leq\pi$, $\abs{w}\leq\pi$ and $\abs{w-v}\leq \abs{w}+\abs{v}\leq \pi+\frac{3\pi}{4}=\frac{7\pi}{4}$.
\end{rem}
\begin{lem} \mylabel{lemestG}
Let $g:(\R/2\pi\Z)^3 \rightarrow \R$ be a locally \name{Lebesgue} integrable, non-negative function on $\R^3$, which is symmetric in its arguments. The triple integral of $g$ can be transformed as in \eqref{altMenergyOne}. Moreover, there exist sets $G_1,\dots, G_4\subseteq G$ independent of $g$, such that
\begin{equation} \label{lemestG1}
\int_0^{2\pi} \int_{-\pi}^\pi \int_{-\pi}^\pi g(s,s+v,s+w) \, dw \, dv \, ds = 2\,\sum_{k=1}^4\; \iiintl_{G_k} g(s,s+v,s+w) \, dw \, dv \, ds.
\end{equation}
In particular, we have
\begin{equation} \label{lemestG2}
\int_0^{2\pi} \int_{-\pi}^\pi \int_{-\pi}^\pi g(s,s+v,s+w) \, dw \, dv \, ds \leq 8 \iiintl_G g(s,s+v,s+w) \, dw \, dv \, ds.
\end{equation}
\end{lem}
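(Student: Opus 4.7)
The first claim, that the triple integral of $g$ over $[0,2\pi]^3$ equals the integral of $g(s, s+v, s+w)$ over $[0,2\pi] \times [-\pi,\pi]^2$, is obtained by repeating the chain of substitutions and interval shifts leading to \eqref{altMenergyOne}. Those manipulations only invoke Proposition \thref{propshift} and the symmetry of the integrand in its three arguments; since $g$ is symmetric, $2\pi$-periodic in each argument, and non-negative (so Fubini--Tonelli applies without a separate integrability assumption), the same chain goes through verbatim.

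For the main identity \eqref{lemestG1} my plan is to partition $\U := [0,2\pi] \times [-\pi,\pi]^2$ into eight measurable subregions and, on each of them, to apply a substitution of unit Jacobian, built from the symmetry of $g$ combined with $2\pi$-periodicity and Proposition \thref{propshift}, that maps the region into $G$. The partition is driven by three successive dichotomies: (i) the four sign combinations of $(v,w)$; (ii) inside each same-sign quadrant, whether $|v| \leq |w|$ or $|v| \geq |w|$; (iii) inside each opposite-sign quadrant, whether at least one of $|v|, |w|$ is bounded by $\tfrac{3\pi}{4}$ (the \emph{easy} part, already contained in $G$) or both exceed $\tfrac{3\pi}{4}$ (the \emph{hard} part).

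On a same-sign piece such as $\{0 \leq v \leq w \leq \pi\}$ I take $(\tilde s, \tilde v, \tilde w) := (s+v,\, -v,\, w-v)$. The absolute value of the Jacobian is one, the identity $g(s, s+v, s+w) = g(\tilde s, \tilde s + \tilde v, \tilde s + \tilde w)$ follows by swapping the first two arguments of $g$, and the image satisfies the opposite-sign requirement together with the bound $|\tilde v| + |\tilde w| = w \leq \pi$, so it lies in $G$. On a hard opposite-sign piece such as $\{v \in (\tfrac{3\pi}{4}, \pi],\; w \in [-\pi, -\tfrac{3\pi}{4})\}$ the analogous substitution $(\tilde s, \tilde v, \tilde w) := (s+v,\, -v,\, w-v+2\pi)$---legitimate because $g$ is $2\pi$-periodic in its third argument---produces $|\tilde w| \in [0, \tfrac{\pi}{2})$, so the image again sits in $G$. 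On the two easy opposite-sign pieces the identity map already lands in $G$.

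Finally I plan to pair the eight resulting images using the elementary symmetry $v \leftrightarrow w$ of $g$: the two ``$++$'' pieces map to a common triangular subset of the ``$-+$'' quadrant described by $|\tilde v| + \tilde w \leq \pi$, the two ``$--$'' pieces to its mirror image inside the ``$+-$'' quadrant, the two easy opposite-sign pieces are identified with one another by the swap, and the two hard opposite-sign pieces land in a common narrow slab inside $G$. Labelling the four coinciding images $G_1, \dots, G_4 \subseteq G$ then yields \eqref{lemestG1}; the estimate \eqref{lemestG2} is immediate from $G_k \subseteq G$ together with $g \geq 0$. The main obstacle I foresee is the bookkeeping for the hard pieces: one must verify that after the $2\pi$-shift the images in the two opposite-sign quadrants really coincide under the $v \leftrightarrow w$ identification, and that the shifted coordinates fall into $\U$ as required. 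The other steps are routine applications of Proposition \thref{propshift} and the translation-type change of variables.
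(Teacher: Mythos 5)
Your proposal is correct and follows essentially the same route as the paper: split $[0,2\pi]\times[-\pi,\pi]^2$ by the signs of $(v,w)$, halve the same-sign quadrants by $\abs{v}\lessgtr\abs{w}$ and the opposite-sign quadrants by whether both $\abs{v},\abs{w}>\tfrac{3\pi}{4}$, then use the unit-Jacobian shifts $(s,v,w)\mapsto(s+v,-v,w-v)$ (with an extra $+2\pi$ on the hard pieces) together with the symmetry of $g$ and \thref{propshift} to land every piece in $G$, pairing the eight images into four sets each counted twice. The only cosmetic difference is that the paper routes the $--$ quadrant through the $++$ quadrant so that $G_1=G_2$, whereas you send it directly into the $+-$ quadrant; since the lemma only asserts existence of four subsets of $G$, both bookkeepings are fine.
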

\begin{proof}
We start using \name{Fubini}'s theorem (consider \thref{remproofbw})
\begin{align*}
\int_0^{2\pi} \int_0^\pi \int_v^\pi g(s,s+v,s+w) \, dw \, dv \, ds &= \int_0^\pi \int_v^\pi \int_v^{2\pi+v} g(\tilde{s}-v,\tilde{s},\tilde{s}+(w-v))  \, d\tilde{s} \, dw \, dv\\
&= \int_0^\pi \int_0^{\pi-v} \int_v^{2\pi+v} g(\tilde{s}-v,\tilde{s},\tilde{s}+\tilde{w})  \, d\tilde{s} \, d\tilde{w} \, dv\\
&= \int_{-\pi}^0 \int_0^{\pi+\tilde{v}} \int_{-\tilde{v}}^{2\pi-\tilde{v}} g(\tilde{s}+\tilde{v},\tilde{s},\tilde{s}+\tilde{w})  \, d\tilde{s} \, d\tilde{w} \, d\tilde{v}\\
&= \int_0^{2\pi} \int_{-\pi}^0 \int_0^{\pi+v} g(s,s+v,s+w) \, dw \, dv \, ds,
\end{align*}
where we substituted $\tilde{s}:=s+v$, $\tilde{w}:=w-v$ and $\tilde{v}:=-v$, used \thref{propshift} as well as the symmetry of the arguments and \name{Fubini}'s theorem again to get the last line. From \thref{lemintch} we get for the characteristic function $\chi$ the fact that $\chi_{[0,\pi]}(v)\chi_{[0,v]}(w)=\chi_{[w,\pi]}(v)\chi_{[0,\pi]}(w)$ for all $v,w\in\R$. Hence,
\begin{align*}
\int_0^{2\pi} \int_0^\pi \int_0^v g(s,s+v,s+w) \, dw \, dv \, ds &= \int_0^{2\pi} \int_0^\pi \int_w^\pi  g(s,s+w,s+v)  \, dv \, dw \, ds\\
&= \int_0^{2\pi} \int_0^\pi \int_v^\pi g(s,s+v,s+w) \, dw \, dv \, ds,
\end{align*}
where we interchanged the variables $v$ and $w$ in the last step. Using the same reasoning as before, we obtain
\begin{align*}
\int_0^{2\pi} \int_{-\pi}^0 \int_v^0 g(s,s+v,s+w) \, dw \, dv \, ds &= \int_{-\pi}^0 \int_v^0 \int_v^{2\pi+v} g(\tilde{s}-v,\tilde{s},\tilde{s}+(w-v))  \, d\tilde{s} \, dw \, dv\\
&= \int_0^\pi \int_{-\tilde{v}}^0 \int_v^{2\pi+v} g(\tilde{s}+\tilde{v},\tilde{s},\tilde{s}+(w+\tilde{v}))  \, d\tilde{s} \, dw \, d\tilde{v}\\
&= \int_0^\pi \int_0^{\tilde{v}} \int_{-\tilde{v}}^{2\pi-\tilde{v}} g(\tilde{s}+\tilde{v},\tilde{s},\tilde{s}+\tilde{w})  \, d\tilde{s} \, d\tilde{w} \, d\tilde{v}\\
&= \int_0^{2\pi} \int_0^\pi \int_0^v g(s,s+v,s+w) \, dw \, dv \, ds,
\end{align*}
where we substituted $\tilde{s}:=s+v$, $\tilde{v}:=-v$ and $\tilde{w}:=w+\tilde{v}$. Furthermore, \thref{lemintch} leads us to $\chi_{[-\pi,0]}(v)\chi_{[-\pi,v]}(w)=\chi_{[w,0]}(v)\chi_{[-\pi,0]}(w)$ for all $v,w\in\R$ and we gain
\begin{align*}
\int_0^{2\pi} \int_{-\pi}^0 \int_{-\pi}^v g(s,s+v,s+w) \, dw \, dv \, ds &= \int_0^{2\pi} \int_{-\pi}^0 \int_w^0  g(s,s+w,s+v)  \, dv \, dw \, ds\\
&= \int_0^{2\pi} \int_{-\pi}^0 \int_v^0 g(s,s+v,s+w) \, dw \, dv \, ds.
\end{align*}
We define $G_1:=G_2:=[0,2\pi]\times [-\pi,0]\times [0,\pi+v]$, which are subsets of $G$, since for $v\in [-\pi,-\frac{3\pi}{4}]$ we have $0\leq \pi+v\leq \frac{\pi}{4}$. Therefore, we have
\begin{align*}
\int_0^{2\pi} \int_0^\pi \int_0^\pi g(s,s+v,s+w) \, dw \, dv \, ds &= 2\,\iiintl_{G_1} g(s,s+v,s+w) \, dw \, dv \, ds\\
=\,\int_0^{2\pi} \int_{-\pi}^0 \int_{-\pi}^0 g(s,s+v,s+w) \, dw \, dv \, ds &= 2\,\iiintl_{G_2} g(s,s+v,s+w) \, dw \, dv \, ds.
\end{align*}
Next we decompose $[0,2\pi]\times[0,\pi]\times[-\pi,0]$ into the subset where $\abs{v}, \abs{w}\geq \frac{3\pi}{4}$ and the set $G_4:=[0,2\pi]\times[0,\pi]\times[-\pi,0] \cap G$. Now we move on to
\begin{align*}
& \int_0^{2\pi} \int_0^\pi \int_{-\pi}^0 g(s,s+v,s+w) \, dw \, dv \, ds\\
&\qquad= \int_0^{2\pi} \int_\frac{3\pi}{4}^\pi \int_{-\pi}^{-\frac{3\pi}{4}} g(s,s+v,s+w) \, dw \, dv \, ds + \iiintl_{G_4} g(s,s+v,s+w) \, dw \, dv \, ds
\end{align*}
and
\begin{align}
&\int_0^{2\pi} \int_\frac{3\pi}{4}^\pi \int_{-\pi}^{-\frac{3\pi}{4}} g(s,s+v,s+w) \, dw \, dv \, ds \notag\\
&\qquad= \int_\frac{3\pi}{4}^\pi \int_{-\pi}^{-\frac{3\pi}{4}} \int_v^{2\pi+v} g(\tilde{s}-v,\tilde{s},\tilde{s}+(w-v)) \, d\tilde{s} \, dw \, dv \notag\\
&\qquad= \int_\frac{3\pi}{4}^\pi \int_{\pi-v}^{\frac{5\pi}{4}-v} \int_v^{2\pi+v} g(\tilde{s}-v,\tilde{s},\tilde{s}+\tilde{w}-2\pi) \, d\tilde{s} \, d\tilde{w} \, dv \notag\\
&\qquad= \int_{-\pi}^{-\frac{3\pi}{4}} \int_{\pi+\tilde{v}}^{\frac{5\pi}{4}+\tilde{v}} \int_{-\tilde{v}}^{2\pi-\tilde{v}} g(\tilde{s}+\tilde{v},\tilde{s},\tilde{s}+\tilde{w}) \, d\tilde{s} \, d\tilde{w} \, d\tilde{v} \notag\\
&\qquad= \int_0^{2\pi} \int_{-\pi}^{-\frac{3\pi}{4}} \int_{\pi+v}^{\frac{5\pi}{4}+v} g(s,s+v,s+w) \, dw \, dv \, ds, \label{estG001}
\end{align}
where we substituted $\tilde{s}:=s+v$, $\tilde{w}:=2\pi+w-v$ and $\tilde{v}:=-v$. Consequently, we define $G_3:=[0,2\pi]\times [-\pi,-\frac{3\pi}{4}]\times [\pi+v,\frac{5\pi}{4}+v]$, which is a subset of $G$ since $0\leq\pi+v\leq w\leq\frac{5\pi}{4}+v\leq\frac{\pi}{2}$. Again using \name{Fubini}'s theorem and the symmetry of the arguments, we obtain
\begin{align*}
\int_0^{2\pi} \int_{-\pi}^0 \int_0^\pi g(s,s+v,s+w) \, dw \, dv \, ds &= \int_0^{2\pi} \int_0^\pi \int_{-\pi}^0  g(s,s+w,s+v)  \, dv \, dw \, ds\\
&= \int_0^{2\pi} \int_0^\pi \int_{-\pi}^0 g(s,s+v,s+w) \, dw \, dv \, ds.
\end{align*}
Finally, we have
\begin{align*}
&\int_0^{2\pi} \int_{-\pi}^\pi \int_{-\pi}^\pi g(s,s+v,s+w) \, dw \, dv \, ds\\
&\qquad= \int_0^{2\pi} \int_{-\pi}^0 \int_0^\pi g(s,s+v,s+w) \, dw \, dv \, ds + \int_0^{2\pi} \int_0^\pi \int_{-\pi}^0 g(s,s+v,s+w) \, dw \, dv \, ds\\
&\qquad\;+ \int_0^{2\pi} \int_0^\pi \int_0^\pi g(s,s+v,s+w) \, dw \, dv \, ds + \int_0^{2\pi} \int_{-\pi}^0 \int_{-\pi}^0 g(s,s+v,s+w) \, dw \, dv \, ds\\
&\qquad= 2\,\sum_{k=1}^4\; \iiintl_{G_k} g(s,s+v,s+w) \, dw \, dv \, ds. \qedhere
\end{align*}
\end{proof}
We are interested in knots that are simple, regular and of sufficient regularity. Therefore, we define the class for $p>3$
\begin{equation} \label{defvarclass}
W_{\text{s,r}}^{2-\frac{2}{p},p}(\R/2\pi\Z,\R^3) \; := \; \menge{\gamma \in W^{2-\frac{2}{p},p}(\R/2\pi\Z,\R^3) \,\arrowvert\, \text{$\gamma$ simple and regular}},
\end{equation}
which is well-defined since $W^{2-\frac{2}{p},p}(\R/2\pi\Z,\R^3)\subset C^1(\R/2\pi\Z,\R^3)$, due to \thref{remembedd}. For $\gamma\in W_{\text{s,r}}^{2-\frac{2}{p},p}(\R/2\pi\Z,\R^3)$, $\Phi\in W^{2-\frac{2}{p},p}(\R/2\pi\Z,\R^3)$ and $\tau\in\R$ we use from now on the abbreviation
\begin{equation} \label{gtabb}
\gamma_\tau := \gamma + \tau \Phi.
\end{equation} 
In some applications we want to find minimizers in a given knot class $k$, with or without prescribed length, thus for $\ell,L>0$ we define
\begin{equation} \label{knotclasslen}
\begin{split}
C_{L,k}(\R/\ell\Z,\R^3) &:= \menge{\gamma \in C^0(\R/\ell\Z,\R^3) \,\arrowvert\, \Le(\gamma)=L,\; \gamma \text{ ambient isotopic to } k},\\
C_k(\R/\ell\Z,\R^3) &:= \menge{\gamma \in C^0(\R/\ell\Z,\R^3) \,\arrowvert\, \Le(\gamma) < \infty,\; \gamma \text{ ambient isotopic to } k}.
\end{split}
\end{equation}
Now we come to our main result.
\begin{thm} \mylabel{thmdiff}
Let $\gamma$ be a function in $W_{\text{s,r}}^{2-\frac{2}{p},p}(\R/2\pi\Z,\R^3)$, $\Phi \in W^{2-\frac{2}{p},p}(\R/2\pi\Z,\R^3)$ and $p\in(3,\infty)$ then
\begin{equation*}
\int_0^{2\pi} \int_{-\pi}^\pi \int_{-\pi}^\pi c_{\gamma_\tau}(s,s+v,s+w)^p \,\abs{\gamma_\tau'(s)}\abs{\gamma_\tau'(s+v)}\abs{\gamma_\tau'(s+w)} \, dw \, dv \, ds
\end{equation*}
is continuously differentiable with respect to $\tau$ in a small neighbourhood of $0$. Consequently, the first variation of $\M_p(\gamma)$ exists.
\end{thm}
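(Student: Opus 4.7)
The approach is to apply the standard differentiation-under-the-integral theorem: show that the integrand is differentiable in $\tau$ pointwise almost everywhere, and that both the integrand and its $\tau$-derivative admit a common $L^1$-majorant independent of $\tau$ in a small neighbourhood of $0$. Continuity of the resulting derivative then follows by a second application of dominated convergence.

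First I would fix $\tau_0>0$ via Proposition~\thref{propc1neigh}, so that for all $\tau\in[-\tau_0,\tau_0]$ the perturbed curve $\gamma_\tau=\gamma+\tau\Phi$ remains simple and uniformly regular, i.e.\ $\abs{\gamma_\tau'(x)}\ge\tfrac{1}{2}C$ on $\R/2\pi\Z$, and lies in the same ambient-isotopy class as $\gamma$. Linearity in $\tau$ yields a uniform bound on $\norm{\gamma_\tau}_{W^{2-\frac{2}{p},p}}$, and Remark~\thref{remembedd}(a) upgrades this to a uniform bound on $\norm{\gamma_\tau}_{C^{1,1-\frac{3}{p}}}$. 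Lemma~\thref{lemestG} reduces the triple integral to one over the compact set $G$, up to the factor $8$, so it suffices to differentiate the corresponding integral over $G$.

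On $G\setminus\widetilde{\Ns}$ the denominator of $c_{\gamma_\tau}$ is strictly positive, so the integrand is smooth in $\tau$; its $\tau$-derivative is computed by the product and chain rules~\eqref{prodrules} and is a finite sum of terms having the same singular denominators as $c_{\gamma_\tau}^p$ itself, each with one occurrence of $\gamma_\tau,\gamma_\tau'$ replaced by $\Phi,\Phi'$. The hard part is producing a $\tau$-independent $L^1(G)$-majorant. Here I would exploit the integral representations~\eqref{gdiff_sv}--\eqref{gdiff_vw} together with the antisymmetry $a\wedge a=0$ to rewrite the numerator of $c_{\gamma_\tau}$ as
\begin{equation*}
(\gamma_\tau(s{+}v){-}\gamma_\tau(s))\wedge(\gamma_\tau(s{+}w){-}\gamma_\tau(s{+}v)) = v(w{-}v)\iint_{[0,1]^2}\gamma_\tau'(s{+}y_1 v)\wedge\bigl(\gamma_\tau'(s{+}v{+}y_2(w{-}v)){-}\gamma_\tau'(s{+}y_1 v)\bigr)\,dy_1\,dy_2.
\end{equation*}
The prefactors $\abs{v}$ and $\abs{w-v}$ cancel two of the three distance factors in the denominator, leaving essentially $\abs{w}^{-1}$ multiplying an averaged first difference of $\gamma_\tau'$. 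Using the uniform lower bound on $\abs{\gamma_\tau'}$ and its uniform $C^0$-bound, I get
\begin{equation*}
c_{\gamma_\tau}(s,s{+}v,s{+}w)\ \le\ \frac{C_1}{\abs{w}}\,\iint_{[0,1]^2}\abs{\gamma_\tau'(s{+}v{+}y_2(w{-}v)){-}\gamma_\tau'(s{+}y_1 v)}\,dy_1\,dy_2.
\end{equation*}
Raising to the $p$-th power via Jensen's inequality, interchanging integrals and performing the substitution that makes the increment of arguments the new variable, I expect $\iiint_G c_{\gamma_\tau}^p\,dw\,dv\,ds$ to be bounded by a constant multiple of $[\gamma_\tau']_{W^{1-\frac{2}{p},p}(\R/2\pi\Z,\R^3)}^p$, which is finite and uniformly bounded on $[-\tau_0,\tau_0]$. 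The exponent $1-\frac{2}{p}$ is dictated precisely by matching the weight $\abs{w}^{-p}$ with the Slobodeckij weight $\abs{w}^{-(1+sp)}=\abs{w}^{-(p-1)}$ after absorbing one power of $\abs{w}$ through the change of variables; this is in essence Blatt's estimate from \cite{blatt_menger}, very slightly sharpened.

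The $\tau$-derivative of the integrand admits an entirely analogous majorant: each summand has the same denominator structure, with at most one factor of $\gamma_\tau$ or $\gamma_\tau'$ replaced by $\Phi$ or $\Phi'$, and since $\Phi\in W^{2-\frac{2}{p},p}$ obeys the same estimates as $\gamma_\tau$, a completely parallel argument supplies the required uniform $L^1(G)$-bound. Lebesgue's dominated convergence theorem then justifies interchanging $d/d\tau$ with the triple integral for $\tau\in(-\tau_0,\tau_0)$, and a further application of dominated convergence, using pointwise continuity of the integrand in $\tau$ on $G\setminus\widetilde{\Ns}$, establishes continuity of the derivative in $\tau$. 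Combined with Lemma~\thref{lemestG} this gives differentiability on the full domain and hence the existence of the first variation of $\M_p(\gamma)$.
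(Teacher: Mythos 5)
Your overall architecture --- reduce to the compact set $G$ via \thref{lemestG}, verify pointwise differentiability in $\tau$ off $\widetilde{\Ns}$, and produce a $\tau$-uniform $L^1(G)$-majorant so that the differentiation lemma applies --- is exactly the paper's. But the central estimate, the one that is supposed to deliver the majorant, fails as you have set it up. You rewrote the numerator as $(\gamma_\tau(s{+}v)-\gamma_\tau(s))\wedge(\gamma_\tau(s{+}w)-\gamma_\tau(s{+}v))$ and cancelled the prefactors $\abs{v}$ and $\abs{w-v}$ against the corresponding chord lengths, leaving the weight $\abs{w}^{-1}$ in front of the averaged difference $\abs{\gamma_\tau'(s{+}v{+}y_2(w{-}v))-\gamma_\tau'(s{+}y_1 v)}$. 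The argument increment here, $v(1-y_1)+y_2(w-v)$, is \emph{not} proportional to $w$: for fixed $v\neq 0$ and $w\to 0$ it tends to $v(1-y_1-y_2)$, so the double integral over $(y_1,y_2)$ tends to a quantity bounded away from zero on any arc where $\gamma_\tau'$ is non-constant. Your majorant therefore behaves like $\abs{w}^{-p}$ times a positive constant on a set of positive measure near $\menge{w=0}\subset G$ and is not integrable for any $p\geq 1$; no change of variables can turn the weight $\abs{w}^{-p}$ into the Slobodeckij weight, because the weight and the increment live in different variables. Since one cannot prove integrability of $c_{\gamma_\tau}^p$ by dominating it with a non-integrable function, your Statement~1 (and, by the same mechanism, the majorant for the $\tau$-derivative) collapses at this point. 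A smaller imprecision: to differentiate the original integral you need the \emph{identity} \eqref{lemestG1} over the sets $G_k$, not merely the upper bound \eqref{lemestG2} with the factor $8$.

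The fix is to cancel the other pair of factors. Keep the representation
\begin{equation*}
(\gamma_\tau(s{+}v)-\gamma_\tau(s))\wedge(\gamma_\tau(s{+}w)-\gamma_\tau(s)) \;=\; v\,w\,\int_0^1\gamma_\tau'(s{+}yv)\,dy\;\wedge\;\int_0^1\gamma_\tau'(s{+}yw)\,dy,
\end{equation*}
cancel $\abs{v}$ and $\abs{w}$ against the two chords emanating from the base point $s$, and only then apply $a\wedge b=a\wedge(b-a)$ to the two \emph{averaged tangents}. This leaves the weight $\abs{w-v}^{-1}$ multiplying $\int_0^1\abs{\gamma_\tau'(s{+}yw)-\gamma_\tau'(s{+}yv)}\,dy$, whose argument increment $y(w-v)$ \emph{is} proportional to the surviving denominator; after \name{Jensen}'s inequality and the substitutions carried out in \eqref{SE011}--\eqref{SE008} one lands on $[\gamma_\tau']^p_{W^{1-\frac{2}{p},p}}$, which is finite and uniformly bounded for $\tau\in[-\tau_0,\tau_0]$. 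With that single correction the rest of your outline --- the uniform bounds from \thref{propc1neigh}, the pointwise differentiability, the parallel majorant for the derivative with one occurrence of $\gamma_\tau$ replaced by $\Phi$, and the two applications of dominated convergence --- goes through and coincides with the paper's proof.
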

\begin{rem} \mylabel{remdiff}
Assume we have an energy $E$ like $\M_p$, where the first variation exists for the given classes of functions. Let $k$ be a knot class. For $\gamma \in W_{\text{s,r}}^{2-\frac{2}{p},p}(\R/2\pi\Z,\R^3)\cap C_k(\R/2\pi\Z,\R^3)$ and $\Phi \in W^{2-\frac{2}{p},p}(\R/2\pi\Z,\R^3)$ \thref{propc1neigh} provides a $0<\tau_0\ll1$ such that $\gamma + \tau \Phi$ is in $W_{\text{s,r}}^{2-\frac{2}{p},p}(\R/2\pi\Z,\R^3)\cap C_k(\R/2\pi\Z,\R^3)$ as well for all $\tau \in [-\tau_0,\tau_0]$. Therefore, if $\gamma$ is for instance a local minimizer of $E$ among the given knot class $k$, then $\gamma$ is a stationary point, i.e.
\begin{equation*}
\delta E(\gamma,\Phi) =0 \qquad \text{for all $\Phi\in W^{2-\frac{2}{p},p}(\R/2\pi\Z,\R^3)$}.
\end{equation*}
Later in \thref{remnostpts} we will see that there are no minimizers for $\M_p$ due to its scaling behaviour. There are minimizers if we restrict ourself to $C_{L,k}(\R/2\pi\Z,\R^3)$, i.e. if the length of the considered curves is fixed to $L>0$. However, we use a variant of $\M_p$ that is scale invariant and which has (local) minimizers, which are consequently stationary points.
\end{rem}
For the case $p\in[2,3]$, please consider \thref{remsmallp}.
\begin{proof}[Proof of \thref{thmdiff}]
In the last section we have seen that we are abel to extend $c_\gamma$ on $\I$ for $\gamma\in C^2(\R/2\pi\Z,\R^3)$. Here we do not have that much regularity and therefore, we have to extend the function by zero and consider $\tilde{c}_\gamma: (\R/2\pi\Z)^3 \rightarrow\R$
\begin{equation} \label{defctilde}
\tilde{c}_\gamma(s,t,\sigma) :=
\begin{cases}
c_\gamma(s,t,\sigma)^p \,\abs{\gamma'(s)}\abs{\gamma'(t)}\abs{\gamma'(\sigma)}, & (s,t,\sigma) \in \R^3\setminus\Ns\\
0, & (s,t,\sigma) \in \Ns.
\end{cases}
\end{equation}
Keeping \thref{remproofbw} in mind, we apply \thref{lemestG} to $\tilde{c}_{\gamma_\tau}$, which is symmetric in its arguments due to \eqref{oneoverr}. Since \eqref{altMenergyOne} and equation \eqref{lemestG1} of \thref{lemestG} we aspire to compute
\begin{align*} 
\delta \M_p(\gamma,\Phi) &= \frac{d}{d\tau}\eins{\tau=0} \int_0^{2\pi} \int_{-\pi}^\pi \int_{-\pi}^\pi \tilde{c}_{\gamma_\tau}(s,s+v,s+w) \, dw \, dv \, ds,\\
&= \frac{d}{d\tau}\eins{\tau=0} \;2\,\sum_{k=1}^4 \iiintl_{G_k} \tilde{c}_{\gamma_\tau}(s,s+v,s+w) \, dw \, dv \, ds
\end{align*}
where $G_1,\dots,G_4\subset G$ are the sets from \thref{lemestG}, which are independent of the integrand. Our goal is to prove that this triple integral is differentiable and we will use \cite[16.2 Differentiation lemma]{bauer} on every $G_k, k=1,\dots,4$ for this. Afterwards we use \thref{lemestG} again to get
\begin{align*}
\delta \M_p(\gamma,\Phi) &= 2\,\sum_{k=1}^4 \iiintl_{G_k} \frac{d}{d\tau}\eins{\tau=0}\; \tilde{c}_{\gamma_\tau}(s,s+v,s+w) \, dw \, dv \, ds\\
&= \int_0^{2\pi} \int_{-\pi}^\pi \int_{-\pi}^\pi \frac{d}{d\tau}\eins{\tau=0}\; \tilde{c}_{\gamma_\tau}(s,s+v,s+w) \, dw \, dv \, ds,
\end{align*}
which can be applied since $\frac{d}{d\tau} \tilde{c}_{\gamma\tau}(s,s+v,s+w)$ is also symmetric in its arguments. To do so we will show the following statements, which prove a bit more than necessary:
\begin{enumerate}
\item $(s,v,w) \mapsto \tilde{c}_{\gamma_\tau}(s,s+v,s+w)$ is \name{Lebesgue} integrable on $\U$ for all $\tau \in (-\tau_0,\tau_0)$.
\item $\tau \mapsto \tilde{c}_{\gamma_\tau}(s,s+v,s+w)$ is differentiable on $(-\tau_0,\tau_0)$ for all $(s,v,w) \in \U$.
\item There exists a \name{Lebesgue} integrable function $h\geq0$ on $G$ such that\\
$\abs{\frac{d}{d\tau}\,\tilde{c}_{\gamma_\tau}(s,s+v,s+w)}\leq h(s,v,w)$ for all $(s,v,w)\in G$ and all $\tau\in[-\tau_0,\tau_0]$.
\end{enumerate}
As mentioned in \thref{remdiff} we have a $\tau_0>0$ such that $\gamma_\tau$ is simple and regular for all $\tau\in[-\tau_0,\tau_0]$. However, \thref{propc1neigh} also provides a constant $C_0>0$ such that $\abs{\gamma'_\tau(x)}\geq\frac{1}{2}\,C_0$ for all $x\in\R$ and all $\tau \in [-\tau_0,\tau_0]$.

\emph{Ad statement 1}. Observe that we want to prove the integrability for the whole domain $\U$ here. However, we can restrict our considerations to the set $G$ here as well, because we apply estimate \eqref{lemestG1} of \thref{lemestG} to $\tilde{c}_{\gamma_\tau}$. Obviously the numerator and denominator of $c_{\gamma_\tau}$ are measurable as continuous functions and \thref{propmeas} provides that $\tilde{c}_{\gamma_\tau}$ is measurable. Since $\gamma_\tau$ is in $C^1(\R/2\pi\Z,\R^3)$, due to \thref{remembedd}, we can argue in exactly the same way as in the last section between \eqref{gdiff_st} and \eqref{wedgeinta}. Among these, in analogy to \eqref{intconsts}, we get positive constants $\widetilde{C}_1,\widetilde{C}_2,\widetilde{C}_3>0$ independent of $(s,v,w)\in G$ and $\tau\in[-\tau_0,\tau_0]$
\begin{equation} \label{intconsts2}
\widetilde{C}_1 \leq \babs{\int_0^1 \gamma_\tau'(s+yv) dy}, \widetilde{C}_2 \leq \babs{\int_0^1 \gamma_\tau'(s+yw) dy}, \widetilde{C}_3 \leq \babs{\int_0^1 \gamma_\tau'(s+yw+(1-y)v) dy},
\end{equation}
since we can regard the corresponding integrands as continuous functions on the compact set $[-\tau_0,\tau_0]\times G\times[0,1]$ and the integrals are not zero on $[-\tau_0,\tau_0]\times G$ since $\gamma_\tau$ is simple for every $\tau\in[-\tau_0,\tau_0]$. Observe that for this statement it would be sufficient to have constants that do depend on $\tau$, but hence we need these independent ones later on we directly introduce them here. Now we proceed with \eqref{wedgeinta} to get
\begin{equation} \label{wedgeintdiff}
\begin{split}
&\abs{(\gamma_\tau(s+v)-\gamma_\tau(s)) \wedge (\gamma_\tau(s+w)-\gamma_\tau(s))}\\
=\;& \abs{v} \abs{w}\; \babs{\int_0^1 \gamma_\tau'(s+yv) \, dy \,\wedge \Bigl( \int_0^1 \gamma_\tau'(s+yw) \,dy - \int_0^1 \gamma_\tau'(s+yv) \, dy \Bigr)}\\
\leq\;& \abs{v} \abs{w}\; \kla{\int_0^1 \abs{\gamma_\tau'(s+yv)} \, dy} \kla{\int_0^1 \babs{\gamma_\tau'(s+yw) - \gamma_\tau'(s+yv)} \, dy}.
\end{split}
\end{equation}
Of course we also have $\int_0^1 \abs{\gamma_\tau'(s+yv)} \, dy \leq \norm{\gamma_\tau'}_{C^0} \leq \norm{\gamma_\tau}_{W^{2-\frac{2}{p},p}}$, due to the embedding mentioned in \thref{remembedd}. Therefore, together with the analogies of \eqref{gdiff_sv}, \eqref{gdiff_sw} and \eqref{gdiff_vw} as well as the estimates in \eqref{intconsts2} we finally have
\begin{equation*}
\tilde{c}_{\gamma_\tau}(s,v,w) \leq 2^p\, \widetilde{C}_1^{-p} \widetilde{C}_2^{-p} \widetilde{C}_3^{-p}\, \norm{\gamma_\tau}_{W^{2-\frac{2}{p},p}}^{p+3}\, \frac{\kla{\int_0^1 \babs{\gamma_\tau'(s+yw) - \gamma_\tau'(s+yv)} \, dy}^p}{\abs{w-v}^p},
\end{equation*}
which is a \name{Lebesgue} integrable majorant on $G$, as we will see in a moment. We get that
\begin{align}
& \iiintl_G \frac{\kla{\int_0^1 \babs{\gamma_\tau'(s+yw) - \gamma_\tau'(s+yv)} \, dy}^p}{\abs{w-v}^p} \, dw \, dv \, ds \label{SE011}\\
&\leq 2\,\int_0^{2\pi} \int_{-\pi}^0 \int_0^\pi \int_0^1 \frac{\babs{\gamma_\tau'(s+yw) - \gamma_\tau'(s+yv)}^p}{\abs{w-v}^p} \, dy \, dw \, dv \, ds, \label{SE012}
\end{align}
by applying \name{Jensen}'s inequality and \name{Fubini}'s theorem, using the fact that the integrand is symmetric in $w$ and $v$. Observe that this integrand is not $2\pi$-periodic any longer in $w$ and $v$ but still in $s$. We go ahead to
\begin{align}
& \int_0^{2\pi} \int_{-\pi}^0 \int_0^\pi \int_0^1 \frac{\babs{\gamma_\tau'(s+yw) - \gamma_\tau'(s+yv)}^p}{\abs{w-v}^p} \, dy \, dw \, dv \, ds \notag\\
= \; & \int_0^1 \int_{-\pi}^0 \int_0^\pi \int_0^{2\pi} \frac{\babs{\gamma_\tau'(s+yw) - \gamma_\tau'(s+yv)}^p}{\abs{w-v}^p} \, ds \, dw \, dv \, dy \label{SE001}\\
= \; & \int_0^1 \int_{-\pi}^0 \int_0^\pi \int_{yv}^{yv+2\pi} \frac{\babs{\gamma_\tau'(\tilde{s}+y(w-v)) - \gamma_\tau'(\tilde{s})}^p}{\abs{w-v}^p} \, d\tilde{s} \, dw \, dv \, dy\label{SE002}\\
= \; & \int_0^1 \int_{-\pi}^0 \int_{-v}^{\pi-v} \int_0^{2\pi} \frac{\babs{\gamma_\tau'(s+y\tilde{w}) - \gamma_\tau'(s)}^p}{\abs{\tilde{w}}^p} \, ds \, d\tilde{w} \, dv \, dy \label{SE003}\\
= \; & \int_0^1 \int_0^\pi \int_{-w}^0 \int_0^{2\pi} \frac{\babs{\gamma_\tau'(s+yw) - \gamma_\tau'(s)}^p}{\abs{w}^p} \, ds \, dv \, dw \, dy \notag\\
& \quad + \int_0^1 \int_{-\pi}^0 \int_\pi^{\pi-v} \int_0^{2\pi} \frac{\babs{\gamma_\tau'(s+yw) - \gamma_\tau'(s)}^p}{\abs{w}^p} \, ds \, dw \, dv \, dy, \label{SE004}
\end{align}
where we used \name{Fubini} in \eqref{SE001} and substituted $\tilde{s}:=s+yv$ in \eqref{SE002} and $\tilde{w}:=w-v$ in \eqref{SE003} where we used \thref{propshift} additionally. In \eqref{SE004} we split up the interval $[-v,\pi-v]$ at $\pi$ and apply \thref{lemintch} to the first integral. The second one is bounded from above by
\begin{multline*}
\int_0^1 \int_{-\pi}^0 \int_\pi^{\pi-v} \int_0^{2\pi} \frac{\babs{\gamma_\tau'(s+yw) - \gamma_\tau'(s)}^p}{\abs{w}^p} \, ds \, dw \, dv \, dy\\
\leq \int_0^1 \int_{-\pi}^0 \int_\pi^{2\pi} \int_0^{2\pi} \pi^{-p}\, 2^p\, \norm{\gamma'_\tau}^p_{C^0} \, ds \, dw \, dv \, dy := C,
\end{multline*}
using the embedding in \thref{remembedd} again. We proceed \eqref{SE004} with
\begin{align}
\leq \; & \int_0^1 \int_0^\pi \int_0^{2\pi} \frac{\babs{\gamma_\tau'(s+yw) - \gamma_\tau'(s)}^p}{\abs{w}^{p-1}} \, ds \, dw \, dy + C \label{SE005}\\
\leq \; & \int_0^1 \int_0^{2\pi} \int_{-\pi}^\pi \frac{\babs{\gamma_\tau'(s+yw) - \gamma_\tau'(s)}^p}{\abs{yw}^{p-1}} y^{p-1}\, dw \, ds \, dy + C \label{SE006}\\
= \; & \int_0^1 \int_0^{2\pi} \int_{-y\pi}^{y\pi} \frac{\babs{\gamma_\tau'(s+\tilde{w}) - \gamma_\tau'(s)}^p}{\abs{\tilde{w}}^{p-1}} y^{p-2}\, d\tilde{w} \, ds \, dy + C \label{SE007}\\
\leq \; & \int_0^{2\pi} \int_{-\pi}^\pi \frac{\babs{\gamma_\tau'(s+w) - \gamma_\tau'(s)}^p}{\abs{w}^{1+\sigma p}} \, dw \, ds + C \;=\; [\gamma'_{\tau}]_{W^{\sigma,p}}+C, \qquad \sigma=1-\frac{2}{p}, \label{SE008}
\end{align}
where we took advantage of the fact that the integrand is independent of $v$ in \eqref{SE005}, used \name{Fubini}'s theorem in \eqref{SE006} and substituted $\tilde{w}:=yw$ in \eqref{SE007}. Observe that for $\tau=0$ we have proven in particular that $\tilde{c}_\gamma(s,s+v,s+w)$ is \name{Lebesgue} integrable on $\U$ and therefore, $\M_p(\gamma)$ is finite for $\gamma\in W_{\text{s,r}}^{2-\frac{2}{p},p}(\R/2\pi\Z,\R^3)$.

\emph{Ad statement 2}. In order to get nicer representations of the following formulas we define for $(s,v,w)\in\U$ the variables $\tilde{v}:=s+v$ and $\tilde{w}:=s+w$. We only have to consider $(s,v,w)\in\U\setminus\widetilde{\Ns}$ since $\tilde{c}_{\gamma_\tau}$ is constant on $\widetilde{\Ns}$. The function $(-\tau_0,\tau_0) \rightarrow\R^3, \tau\mapsto\gamma_\tau(s)$, for instance, is linear for fixed $s$. Observe that the function $(-1,1)\rightarrow\R, x\mapsto x^\rho$ is continuous for every $\rho \geq 0$ and it is also well known that $(-1,1)\rightarrow\R, x\mapsto x^\rho$ is differentiable if $\rho \geq 1$. Therefore, $(-\tau_0,\tau_0) \rightarrow\R$
\begin{equation*}
\tau \mapsto \abs{\gamma_\tau(s)-\gamma_\tau(\tilde{v})}^p = \kla{(\gamma^1_\tau(s)-\gamma^1_\tau(\tilde{v}))^2+(\gamma^2_\tau(s)-\gamma^2_\tau(\tilde{v}))^2+(\gamma^3_\tau(s)-\gamma^3_\tau(\tilde{v}))^2}^\frac{p}{2}
\end{equation*}
for instance, is differentiable as a composition of differentiable functions, because $p\geq2$. In addition $\abs{\gamma_\tau(s)-\gamma_\tau(\tilde{v})}^p \neq 0$, since $(s,v,w)\notin \widetilde{\Ns}$ and $\gamma_\tau$ is simple for every $\tau\in[-\tau_0,\tau_0]$. Furthermore, $(0,1)\rightarrow\R, x\mapsto x^\rho$ is differentiable for all $\rho\in\R$. Consequently, the function $(-\tau_0,\tau_0) \rightarrow\R$
\begin{equation*}
\tau\rightarrow\abs{\gamma'_\tau(s)} = \kla{\gamma'^1_\tau(s)^2+\gamma'^2_\tau(s)^2+\gamma'^3_\tau(s)^2}^\frac{1}{2}
\end{equation*}
is differentiable, since $\gamma_\tau$ is regular for all $\tau\in[-\tau_0,\tau_0]$. After all $\tau \mapsto \tilde{c}_{\gamma_\tau}(s,v,w)$ is differentiable on $(-\tau_0,\tau_0)$ for all $(s,v,w)\in\U$. Now we can compute
\begin{align} 
\frac{d}{d\tau} & \kla{\frac{2^p\,\abs{\kla{\gamma_\tau(\tilde{v})-\gamma_\tau(s)} \wedge \kla{\gamma_\tau(\tilde{w})-\gamma_\tau(s)}}^p \; \abs{\gamma_\tau'(s)} \abs{\gamma_\tau'(\tilde{v})} \abs{\gamma_\tau'(\tilde{w})}}{\abs{\gamma_\tau(\tilde{v})-\gamma_\tau(s)}^p\,\abs{\gamma_\tau(s)-\gamma_\tau(\tilde{w})}^p\,\abs{\gamma_\tau(\tilde{w})-\gamma_\tau(\tilde{v})}^p}} \notag\\
=\; & \frac{2^p\,\abs{\gamma_\tau'(s)} \abs{\gamma_\tau'(\tilde{v})} \abs{\gamma_\tau'(\tilde{w})}}{\abs{\gamma_\tau(\tilde{v})-\gamma_\tau(s)}^p\,\abs{\gamma_\tau(s)-\gamma_\tau(\tilde{w})}^p\,\abs{\gamma_\tau(\tilde{w})-\gamma_\tau(\tilde{v})}^p} \Biggl[ \notag\\
& \;\babs{(\gamma_\tau(\tilde{v})-\gamma_\tau(s))\wedge (\gamma_\tau(\tilde{w})-\gamma_\tau(s))}^p \Biggl( \Bigl(\frac{\gamma_\tau'(s) \cdot \Phi'(s)}{\abs{\gamma_\tau'(s)}^2} + \frac{\gamma_\tau'(\tilde{v}) \cdot \Phi'(\tilde{v})}{\abs{\gamma_\tau'(\tilde{v})}^2} \notag\\
& \qquad + \frac{\gamma_\tau'(\tilde{w}) \cdot \Phi'(\tilde{w})}{\abs{\gamma_\tau'(\tilde{w})}^2} \Bigr) -p \Bigl( \frac{(\gamma_\tau(\tilde{v})-\gamma_\tau(s)) \cdot (\Phi(\tilde{v})-\Phi(s))}{\abs{\gamma_\tau(\tilde{v})-\gamma_\tau(s)}^2} \label{diffintegrand}\\
& \qquad + \frac{(\gamma_\tau(s)-\gamma_\tau(\tilde{w})) \cdot (\Phi(s)-\Phi(\tilde{w}))}{\abs{\gamma_\tau(s)-\gamma_\tau(\tilde{w})}^2} + \frac{(\gamma_\tau(\tilde{w})-\gamma_\tau(\tilde{v})) \cdot (\Phi(\tilde{w})-\Phi(\tilde{v}))}{\abs{\gamma_\tau(\tilde{w})-\gamma_\tau(\tilde{v})}^2} \Bigr) \Biggr) \notag\\
& \,+ p\,\babs{(\gamma_\tau(\tilde{v})-\gamma_\tau(s))\wedge (\gamma_\tau(\tilde{w})-\gamma_\tau(s))}^{p-2} \Biggl( \Bigl( (\gamma_\tau(\tilde{v})-\gamma_\tau(s)) \wedge (\gamma_\tau(\tilde{w})-\gamma_\tau(s)) \Bigr) \cdot \notag\\
& \qquad \Bigl( \Bigl( (\Phi(\tilde{v})-\Phi(s)) \wedge (\gamma_\tau(\tilde{w})-\gamma_\tau(s)) \Bigr) + \Bigl( (\gamma_\tau(\tilde{v})-\gamma_\tau(s)) \wedge (\Phi(\tilde{w})-\Phi(s)) \Bigr) \Bigr) \Biggr) \Biggr], \notag
\end{align}
using the product rules \eqref{prodrules}. With the same arguments as above we can conclude that the derivative of the integrand $\tau \mapsto \frac{d}{d\tau} \tilde{c}_{\gamma_\tau}(s,\tilde{v},\tilde{w})$ is continuous on $(-\tau_0,\tau_0)$ as well.

\emph{Ad statement 3}. We take the absolute value of \eqref{diffintegrand} and get the following estimate
\begin{align}
\babs{\frac{d}{d\tau} & \kla{\frac{2^p\,\abs{\kla{\gamma_\tau(\tilde{v})-\gamma_\tau(s)} \wedge \kla{\gamma_\tau(\tilde{w})-\gamma_\tau(s)}}^p \; \abs{\gamma_\tau'(s)} \abs{\gamma_\tau'(\tilde{v})} \abs{\gamma_\tau'(\tilde{w})}}{\abs{\gamma_\tau(\tilde{v})-\gamma_\tau(s)}^p\,\abs{\gamma_\tau(s)-\gamma_\tau(\tilde{w})}^p\,\abs{\gamma_\tau(\tilde{w})-\gamma_\tau(\tilde{v})}^p}}} \label{absdiffintegrand}\\
\leq\; & \frac{2^p\,\abs{\gamma_\tau'(\tilde{v})} \abs{\gamma_\tau'(s)} \abs{\gamma_\tau'(\tilde{w})}}{\abs{\gamma_\tau(s)-\gamma_\tau(s)}^p\,\abs{\gamma_\tau(s)-\gamma_\tau(\tilde{w})}^p\,\abs{\gamma_\tau(\tilde{w})-\gamma_\tau(\tilde{v})}^p} \Biggl[ \notag\\
& \;\babs{(\gamma_\tau(\tilde{v})-\gamma_\tau(s))\wedge (\gamma_\tau(\tilde{w})-\gamma_\tau(s))}^p \Biggl( \Bigl(\frac{\abs{\Phi'(s)}}{\abs{\gamma_\tau'(s)}} + \frac{\abs{\Phi'(\tilde{v})}}{\abs{\gamma_\tau'(\tilde{v})}} + \frac{\abs{\Phi'(\tilde{w})}}{\abs{\gamma_\tau'(\tilde{w})}} \Bigr) \notag\\
& +p \Bigl( \frac{\abs{\Phi(\tilde{v})-\Phi(s)}}{\abs{\gamma_\tau(\tilde{v})-\gamma_\tau(s)}} + \frac{\abs{\Phi(s)-\Phi(\tilde{w})}}{\abs{\gamma_\tau(s)-\gamma_\tau(\tilde{w})}} + \frac{\abs{\Phi(\tilde{w})-\Phi(\tilde{v})}}{\abs{\gamma_\tau(\tilde{w})-\gamma_\tau(\tilde{v})}} \Bigr) \Biggr) \notag\\
& \,+ p\,\babs{(\gamma_\tau(\tilde{v})-\gamma_\tau(s))\wedge (\gamma_\tau(\tilde{w})-\gamma_\tau(s))}^{p-1} \notag\\
& \qquad \babs{\Bigl( (\Phi(\tilde{v})-\Phi(s)) \wedge (\gamma_\tau(\tilde{w})-\gamma_\tau(s)) \Bigr) + \Bigl( (\gamma_\tau(\tilde{v})-\gamma_\tau(s)) \wedge (\Phi(\tilde{w})-\Phi(s)) \Bigr)} \Biggr] \notag
\end{align}
In the equations \eqref{gdiff_sv}, \eqref{gdiff_sw} and \eqref{gdiff_vw} one can replace $\gamma$ by $\Phi$, since $\Phi$ is $C^1(\R/2\pi\Z,\R^3)$ as well. Again we have $\int_0^1 \abs{\gamma_\tau'(s+yv)} \, dy \leq \norm{\gamma_\tau'}_{C^0} \leq \norm{\gamma}_{W^{2-\frac{2}{p},p}}+\norm{\Phi}_{W^{2-\frac{2}{p},p}}$ and $\int_0^1 \abs{\Phi'(s+yv)} \, dy \leq \norm{\Phi'}_{C^0} \leq \norm{\gamma}_{W^{2-\frac{2}{p},p}}+\norm{\Phi}_{W^{2-\frac{2}{p},p}}$, due to the embedding mentioned in \thref{remembedd} and since $\abs{\tau}<1$. Therefore, we can estimate exemplarily in the following way
\begin{equation} \label{SE100}
\frac{\abs{\Phi(\tilde{v})-\Phi(s)}}{\abs{\gamma_\tau(\tilde{v})-\gamma_\tau(s)}} \leq \frac{\abs{v}\norm{\Phi'}_{C^0}}{\abs{v}\babs{\int_0^1 \gamma_\tau'(s+yv) \, dy}} \leq \frac{\norm{\gamma}_{W^{2-\frac{2}{p},p}}+\norm{\Phi}_{W^{2-\frac{2}{p},p}}}{\widetilde{C}_1}.
\end{equation}
As in \eqref{wedgeintdiff} we get
\begin{equation} \label{wedgeint2}
\begin{split}
& \babs{\Bigl((\Phi(\tilde{v})-\Phi(s)) \wedge (\gamma_\tau(\tilde{w})-\gamma_\tau(s))\Bigr)+\Bigl((\gamma_\tau(\tilde{v})-\gamma_\tau(s)) \wedge (\Phi(\tilde{w})-\Phi(s))\Bigr)}\\
& \;= \abs{v} \abs{w}\; \Bigl\lvert \Bigl(\int_0^1 \Phi'(s+yv) \, dy \wedge \int_0^1 \gamma_\tau'(s+yw) \, dy\Bigr)+\\
& \; \qquad \qquad \; \Bigl(\int_0^1 \gamma_\tau'(s+yv) \, dy \wedge \int_0^1 \Phi'(s+yw) \, dy \Bigr) \Bigr\rvert \\
& \;\leq \abs{v} \abs{w}\; \Bigl(\Bigl(\int_0^1 \abs{\Phi'(s+yv)} \, dy\Bigr) \Bigl(\int_0^1 \abs{\gamma_\tau'(s+yw)-\gamma_\tau'(s+yv)} \, dy\Bigr)+\\
& \; \qquad \qquad \;\; \Bigl(\int_0^1 \abs{\gamma_\tau'(s+yv)} \, dy\Bigr) \Bigl(\int_0^1 \abs{\Phi'(s+yw)-\Phi'(s+yv)} \, dy\Bigr)\Bigr)\\
& \;\leq 2\; \abs{v} \abs{w}\; (\norm{\gamma}_{W^{2-\frac{2}{p},p}}+\norm{\Phi}_{W^{2-\frac{2}{p},p}})\,\Bigl(\int_0^1 \abs{\gamma'(s+yw)-\gamma'(s+yv)} \, dy\,+\\
& \; \qquad \qquad \qquad \qquad \qquad \qquad \qquad \qquad \int_0^1 \abs{\Phi'(s+yw)-\Phi'(s+yv)} \, dy\Bigr),
\end{split}
\end{equation}
where we used the anti-symmetry of the wedge product. Moreover, the last line of \eqref{wedgeint2} divided by $2$ is a upper bound for \eqref{wedgeintdiff}, which we get analogously. For the sake of clarity we define $\widehat{C} := \norm{\gamma}_{C^{1,\alpha}}+\norm{\Phi}_{C^{1,\alpha}}$. After all, using the basic estimate \eqref{estabssum} we get for \eqref{absdiffintegrand}
\begin{multline*}
\leq 2^p\,\tilde{C}_1^{-p}\tilde{C}_2^{-p}\tilde{C}_3^{-p}\;\widehat{C}^3\; \Biggl( 2^{p-1}\widehat{C}^{p} \Bigl( 3 \cdot 2\,C_0^{-2}\;\widehat{C} + p\;\widehat{C} \kla{\widetilde{C}_1^{-1}+\widetilde{C}_2^{-1}+\widetilde{C}_3^{-1}} \Bigr) +p\,2^p\;\widehat{C}^p \Biggr)\\
\Biggl( \frac{\kla{\int_0^1 \abs{\gamma'(s+yw)-\gamma'(s+yv)} \, dy}^p}{\abs{w-v}^p}\,+\frac{\kla{\int_0^1 \abs{\Phi'(s+yw)-\Phi'(s+yv)} \, dy}^p}{\abs{w-v}^p} \Biggr) =: h(s,v,w).
\end{multline*}
This is the required \name{Lebesgue} integrable majorant on $G$, which can be seen by just repeating the steps from \eqref{SE011} up to \eqref{SE008} again for $\gamma$ and $\Phi$ instead of $\gamma_\tau$. This completes the proof.
\end{proof}
\begin{rem} \mylabel{remsmallp}
In the case $p\in[2,\infty)$ (in particular the case $p=3$) we can prove the existence of the first variation at least for functions in $C^{1,\alpha}(\R/2\pi\Z,\R^3)$ with $\alpha\in (1-\frac{2}{p},1]$ in an analogous way. For $p<2$ this method fails since differentiating the integrand in \eqref{diffintegrand} we get a factor
\begin{equation*}
\babs{(\gamma_\tau(\tilde{v})-\gamma_\tau(s))\wedge (\gamma_\tau(\tilde{w})-\gamma_\tau(s))}^{p-2},
\end{equation*}
which cannot be estimated appropriately if the exponent is negative.
\end{rem}
\begin{proof}
Let $p\in[2,\infty)$ and $1-\frac{2}{p}<\alpha\leq 1$. We use the very same argumentation as in the proof before, except for the norm used there. Here we estimate $\norm{\gamma'_\tau}_{C^0} \leq \norm{\gamma_\tau}_{C^{1,\alpha}}$ and $\norm{\gamma'_\tau}_{C^0},\norm{\Phi'}_{C^0} \leq \norm{\gamma}_{C^{1,\alpha}}+\norm{\Phi}_{C^{1,\alpha}}$. Moreover, we exchange the estimate from \eqref{SE008} by
\begin{align*}
\int_0^{2\pi} \int_{-\pi}^\pi \frac{\babs{\gamma_\tau'(s+w) - \gamma_\tau'(s)}^p}{\abs{w}^{p-1}} \, dw \, ds &\leq \int_0^{2\pi} \int_{-\pi}^\pi \frac{[\gamma'_\tau]^p_{C^{0,\alpha}}\;\abs{w}^{\alpha p}}{\abs{w}^{p-1}} \, dw \, ds\\
&\leq 2\pi\,\norm{\gamma_\tau}^p_{C^{1,\alpha}}\; \int_{-\pi}^\pi \abs{w}^{-(p-1-\alpha p)} \, dw,
\end{align*}
which is finite if $(p-1-\alpha p)<1\;\Leftrightarrow\;1-\frac{2}{p}<\alpha$ in analogy to \thref{remembedd}.
\end{proof}
After all we get for the first Variation of $\M_p$, $p\in[2,\infty)$
\begin{multline} \label{long1var}
\frac{d}{d\tau} \M_p(\gamma_\tau) \eins{\tau=0} =\\
\int_0^{2\pi} \int_0^{2\pi} \int_0^{2\pi} \frac{2^p \abs{\gamma'(s)} \abs{\gamma'(t)} \abs{\gamma'(\sigma)}}{\abs{\gamma(s)-\gamma(t)}^p \abs{\gamma(t)-\gamma(\sigma)}^p \abs{\gamma(s)-\gamma(\sigma)}^p} \Biggl[ \babs{(\gamma(t)-\gamma(s))\wedge (\gamma(\sigma)-\gamma(s))}^p\\
\Biggl( \Bigl( \frac{\gamma'(s) \cdot \Phi'(s)}{\abs{\gamma'(s)}^2} + \frac{\gamma'(t) \cdot \Phi'(t)}{\abs{\gamma'(t)}^2} + \frac{\gamma'(\sigma) \cdot \Phi'(\sigma)}{\abs{\gamma'(\sigma)}^2} \Bigr) -p \Bigl( \frac{(\gamma(s)-\gamma(t)) \cdot (\Phi(s)-\Phi(t))}{\abs{\gamma(s)-\gamma(t)}^2} + \\
\frac{(\gamma(t)-\gamma(\sigma)) \cdot (\Phi(t)-\Phi(\sigma))}{\abs{\gamma(t)-\gamma(\sigma)}^2} + \frac{(\gamma(s)-\gamma(\sigma)) \cdot (\Phi(s)-\Phi(\sigma))}{\abs{\gamma(s)-\gamma(\sigma)}^2} \Bigr) \Biggr)\\
+p \, \babs{(\gamma(t)-\gamma(s))\wedge (\gamma(\sigma)-\gamma(s))}^{p-2} \Bigl( (\gamma(t)-\gamma(s)) \wedge (\gamma(\sigma)-\gamma(s)) \Bigr) \cdot\\
\Bigl( (\Phi(t)-\Phi(s)) \wedge (\gamma(\sigma)-\gamma(s)) + (\gamma(t)-\gamma(s)) \wedge (\Phi(\sigma)-\Phi(s)) \Bigr) \Biggr] \, ds \, dt \, d\sigma.
\end{multline}

If we assume again, that $\gamma$ and $\Phi$ are actually $C^2$, we obtain as in \eqref{wedgeintb}
\begin{align*}
& \Bigl((\Phi(\tilde{v})-\Phi(s)) \wedge (\gamma_\tau(\tilde{w})-\gamma_\tau(s))\Bigr)+\Bigl((\gamma_\tau(\tilde{v})-\gamma_\tau(s)) \wedge (\Phi(\tilde{w})-\Phi(s))\Bigr)\\
=&\; v\, w\; \Biggl( \Bigl(\int_0^1 \Phi'(s+yv) \, dy \,\wedge\\
& \Bigl( \int_0^1 \gamma_\tau'(s+yv) \, dy + \int_0^1 \kla{w-v} \int_0^1 y\,\gamma_\tau''(s+y(\tilde{y}w+(1-\tilde{y})v)) \,d\tilde{y}\,dy \Bigr) \Bigr)\\
&\;\quad + \Bigl( \int_0^1 \gamma_\tau'(s+yv) \, dy \,\wedge\\
& \Bigl( \int_0^1 \Phi'(s+yv) \, dy + \int_0^1 \kla{w-v} \int_0^1 y\,\Phi''(s+y(\tilde{y}w+(1-\tilde{y})v)) \,d\tilde{y}\,dy \Bigr) \Bigr) \Biggr)\\
=&\; v\,w \kla{w-v}\; \Biggl( \Bigl( \int_0^1 \Phi'(s+yv) \, dy \wedge \int_0^1 \int_0^1 y\, \gamma_\tau''(s+y(\tilde{y}w+(1-\tilde{y})v)) \, d\tilde{y} \, dy \Bigr)\\
& \qquad \qquad \qquad +\Bigl( \int_0^1 \gamma_\tau'(s+yv) \, dy \wedge \int_0^1 \int_0^1 y\, \Phi''(s+y(\tilde{y}w+(1-\tilde{y})v)) \, d\tilde{y} \, dy \Bigr) \Biggr),
\end{align*}
using the anti-symmetric property of the wedge product. Together with analogous operations on $\kla{(\gamma(\tilde{v})-\gamma(s))\wedge(\gamma(\tilde{w})-\gamma(s))}$ (cf. \eqref{wedgeintb}) we get the needed factors $\abs{v}^2$, $\abs{w}^2$ and $\abs{w-v}^2$. Finally, we gain for the full derivative of the integrand
\begin{align}
& \frac{d}{d\tau}\eins{\tau=0} \tilde{c}_{\gamma_\tau} (s,\tilde{v},\tilde{w}) = \notag\\
& \frac{2^p \, \abs{\gamma'(s)} \abs{\gamma'(\tilde{v})} \abs{\gamma'(\tilde{w})}}{\babs{\int_0^1 \gamma'(s+yv) \, dy}^p \babs{\int_0^1 \gamma'(s+yw) \, dy}^p \babs{\int_0^1 \gamma'(s+yw+(1-y)v)) \, dy}^p} \Biggl[ \notag\\
& \qquad \babs{\int_0^1 \gamma'(s+yv) \, dy \wedge \int_0^1 \int_0^1 y\,\gamma''(s+y(\tilde{y}w+(1-\tilde{y})v)) \,d\tilde{y} \,dy}^p \quad \Biggl( \notag\\
& \qquad \qquad \Bigl( \frac{\gamma'(s) \cdot \Phi'(s)}{\abs{\gamma'(s)}^2} + \frac{\gamma'(\tilde{v}) \cdot \Phi'(\tilde{v})}{\abs{\gamma'(\tilde{v})}^2} + \frac{\gamma'(\tilde{w}) \cdot \Phi'(\tilde{w})}{\abs{\gamma'(\tilde{w})}^2} \Bigr) \notag\\
& -p\, \Bigl( \frac{\Bigl( \int_0^1 \gamma'(s+yv) \, dy \Bigr) \cdot \Bigl( \int_0^1 \Phi'(s+yv) \, dy \Bigr)}{\babs{\int_0^1 \gamma'(s+yv) \, dy}^2} + \frac{\Bigl( \int_0^1 \gamma'(s+yw) \, dy \Bigr) \cdot \Bigl( \int_0^1 \Phi'(s+yw) \, dy \Bigr)}{\babs{\int_0^1 \gamma'(s+yw) \, dy}^2} \notag\\
& \qquad \qquad + \frac{\Bigl( \int_0^1 \gamma'(s+yw+(1-y)v)) \, dy \Bigr) \cdot \Bigl( \int_0^1 \Phi'(s+yw+(1-y)v)) \, dy \Bigr)}{\babs{\int_0^1 \gamma'(s+yw+(1-y)v)) \, dy}^2} \Bigr) \Biggr) \notag\displaybreak[0]\\
& +p \,\babs{\int_0^1 \gamma'(s+yv) \, dy \wedge \int_0^1 \int_0^1 y\,\gamma''(s+y(\tilde{y}w+(1-\tilde{y})v)) \,d\tilde{y}\,dy}^{p-2} \notag\\
& \qquad \qquad \Bigl( \int_0^1 \gamma'(s+yv) \, dy \wedge \int_0^1 \int_0^1 y\,\gamma''(s+y(\tilde{y}w+(1-\tilde{y})v)) \,d\tilde{y}\,dy \Bigr) \, \cdot \notag\\
& \qquad \Biggl( \int_0^1 \Phi'(s+yv) \, dy \wedge \int_0^1 \int_0^1 y\,\gamma''(s+y(\tilde{y}w+(1-\tilde{y})v)) \,d\tilde{y}\,dy \,+ \notag\\
& \qquad \quad \int_0^1 \gamma'(s+yv) \, dy \wedge \int_0^1 \int_0^1 y\,\Phi''(s+y(\tilde{y}w+(1-\tilde{y})v)) \,d\tilde{y}\,dy \Biggr) \Biggr]. \label{M2.2}
\end{align}

Again applying \name{Lebesgue}'s dominated convergence theorem, just like for $c_\gamma$ in \eqref{cgammaint}, we gain that there exists a continuous extension of the integrand on $(s,s+v,s+w)\in G$ and consequently on $(s,t,\sigma)\in [0,2\pi)^3$.

In order to derive a simplified version of \eqref{long1var}, we use \eqref{calScalarCross} to obtain
\begin{multline} \label{scalarcrossshort}
\Bigl( (\gamma(t)-\gamma(s)) \wedge (\gamma(\sigma)-\gamma(s)) \Bigr) \cdot \Bigl( (\Phi(t)-\Phi(s)) \wedge (\gamma(\sigma)-\gamma(s)) + (\gamma(t)-\gamma(s)) \wedge (\Phi(\sigma)-\Phi(s)) \Bigr)\\
= \abs{\gamma(\sigma)-\gamma(s)}^2 \, (\gamma(t)-\gamma(s)) \cdot (\Phi(t)-\Phi(s)) + \abs{\gamma(t)-\gamma(s)}^2 \, (\gamma(\sigma)-\gamma(s)) \cdot (\Phi(\sigma)-\Phi(s))\\
-(\gamma(t)-\gamma(s)) \cdot (\gamma(\sigma)-\gamma(s)) \Bigl( (\gamma(\sigma)-\gamma(s)) \cdot (\Phi(t)-\Phi(s)) + (\gamma(t)-\gamma(s)) \cdot (\Phi(\sigma)-\Phi(s)) \Bigr).
\end{multline}
Moreover, we want to apply the following simplification method. Let $F: [0,2\pi]^3\rightarrow\R^3$ be a function, which is \name{Lebesgue} integrable on $[0,2\pi]^3$ and we consider exemplarily the bounded function $x\mapsto\frac{\gamma'(x)\cdot\Phi'(x)}{\abs{\gamma'(x)}^2}$. Therefore,
\begin{align} 
& \int_0^{2\pi} \int_0^{2\pi} \int_0^{2\pi} F(s,t,\sigma) \Bigl( \frac{\gamma'(s) \cdot \Phi'(s)}{\abs{\gamma'(s)}^2} + \frac{\gamma'(t) \cdot \Phi'(t)}{\abs{\gamma'(t)}^2} + \frac{\gamma'(\sigma) \cdot \Phi'(\sigma)}{\abs{\gamma'(\sigma)}^2} \Bigr) \, ds \, dt \, d\sigma \notag\\
= & \int_0^{2\pi} \int_0^{2\pi} \int_0^{2\pi} F(s,t,\sigma) \frac{\gamma'(s) \cdot \Phi'(s)}{\abs{\gamma'(s)}^2} \, ds \, dt \, d\sigma + \int_0^{2\pi} \int_0^{2\pi} \int_0^{2\pi} F(t,s,\sigma) \frac{\gamma'(s) \cdot \Phi'(s)}{\abs{\gamma'(s)}^2} \, dt \, ds \, d\sigma \notag\\
& \qquad \qquad \qquad \qquad \qquad \qquad + \int_0^{2\pi} \int_0^{2\pi} \int_0^{2\pi} F(\sigma,t,s)\frac{\gamma'(s) \cdot \Phi'(s)}{\abs{\gamma'(s)}^2} \, d\sigma \, dt \, ds \notag\\
= & \int_0^{2\pi} \int_0^{2\pi} \int_0^{2\pi} \kla{F(s,t,\sigma)+F(t,s,\sigma)+F(\sigma,t,s)} \frac{\gamma'(s) \cdot \Phi'(s)}{\abs{\gamma'(s)}^2} \, ds \, dt \, d\sigma, \label{fubinitrick}\\
\intertext{where we interchange the variable names in the first step and apply \name{Fubini}'s theorem (see \thref{remproofbw}) in the second. If $F$ is additionally symmetric in its arguments we have}
= & \int_0^{2\pi} \int_0^{2\pi} \int_0^{2\pi} 3\, F(s,t,\sigma) \frac{\gamma'(s) \cdot \Phi'(s)}{\abs{\gamma'(s)}^2} \, ds \, dt \, d\sigma. \label{fubinishort}
\end{align}
Since this is the case for $R(X,Y,Z)^{-1}$ (see \eqref{oneoverr}), we gain
\begin{multline} \label{simp1var}
\int_0^{2\pi} \int_0^{2\pi} \int_0^{2\pi} \frac{2^p \abs{\gamma'(s)} \abs{\gamma'(t)} \abs{\gamma'(\sigma)}}{\abs{\gamma(s)-\gamma(t)}^p \abs{\gamma(t)-\gamma(\sigma)}^p \abs{\gamma(s)-\gamma(\sigma)}^p} \Biggl[ \babs{(\gamma(t)-\gamma(s))\wedge (\gamma(\sigma)-\gamma(s))}^p\\
\Biggl( 3 \frac{\gamma'(s) \cdot \Phi'(s)}{\abs{\gamma'(s)}^2} -3p \frac{(\gamma(s)-\gamma(t)) \cdot (\Phi(s)-\Phi(t))}{\abs{\gamma(s)-\gamma(t)}^2} \Biggr)\\
+p \babs{(\gamma(t)-\gamma(s))\wedge (\gamma(\sigma)-\gamma(s))}^{p-2} \Biggl( \abs{\gamma(\sigma)-\gamma(s)}^2 \, (\gamma(t)-\gamma(s)) \cdot (\Phi(t)-\Phi(s))\\
+ \abs{\gamma(t)-\gamma(s)}^2 \, (\gamma(\sigma)-\gamma(s)) \cdot (\Phi(\sigma)-\Phi(s)) - (\gamma(t)-\gamma(s)) \cdot (\gamma(\sigma)-\gamma(s))\\
\Bigl( (\gamma(\sigma)-\gamma(s)) \cdot (\Phi(t)-\Phi(s)) + (\gamma(t)-\gamma(s)) \cdot (\Phi(\sigma)-\Phi(s)) \Bigr) \Biggr) \Biggr] \, ds \, dt \, d\sigma.
\end{multline}
Observe that \name{Fubini}'s theorem must be used very carefully here. It is, for instance, not allowed to ``simplify'' the expression $\frac{(\gamma(s)-\gamma(t)) \cdot (\Phi(s)-\Phi(t))}{\abs{\gamma(s)-\gamma(t)}^2}$ to $2 \frac{(\gamma(s)-\gamma(t)) \cdot \Phi(s)}{\abs{\gamma(s)-\gamma(t)}^2}$ because the second one is singular!

Later on for the discretization in Chapter~\ref{discspace} we have to compute the corresponding values of the integrand when two or more points coincide. As already mentioned in the proof, with $c_\gamma$ also the integrand of the first variation \eqref{long1var} is symmetric in its arguments. Therefore, we are again able to apply the method used in \eqref{symmconv} here. Observe that the symmetry is broken in \eqref{simp1var}. Due to this method, for the case that two points coincide and the third one is distinct, it is sufficient to determine the integrand for $(s,t,t)$ and multiply the result by $3$. Therefore, we take the integrand of \eqref{M2.2} and compute the case $w=v$, in analogy to \eqref{csvv}. After applying \eqref{calScalarCross} again we obtain
\begin{multline} \label{stt1var}
\frac{2^p \abs{\gamma'(s)}}{\abs{\gamma(s)-\gamma(t)}^{2p} \abs{\gamma'(t)}^{p-2}} \Biggl[\\
\babs{(\gamma(t)-\gamma(s))\wedge \gamma'(t)}^p \Biggl( \frac{\gamma'(s) \cdot \Phi'(s)}{\abs{\gamma'(s)}^2} +(2-p) \frac{\gamma'(t) \cdot \Phi'(t)}{\abs{\gamma'(t)}^2} -2p \frac{(\gamma(s)-\gamma(t)) \cdot (\Phi(s)-\Phi(t))}{\abs{\gamma(s)-\gamma(t)}^2} \Biggr)\\
+p \babs{(\gamma(t)-\gamma(s))\wedge \gamma'(t)}^{p-2} \Biggl( \abs{\gamma'(t)}^2 \, (\gamma(t)-\gamma(s)) \cdot (\Phi(t)-\Phi(s)) + \abs{\gamma(t)-\gamma(s)}^2 \, \gamma'(t) \cdot \Phi'(t)\\
-(\gamma(t)-\gamma(s)) \cdot \gamma'(t) \Bigl( \gamma'(t) \cdot (\Phi(t)-\Phi(s)) + (\gamma(t)-\gamma(s)) \cdot \Phi'(t) \Bigr) \Biggr) \Biggr].
\end{multline}

For the very last case that all three points coincide in the point $s$ we obtain analogously from \eqref{M2.2}, by using \eqref{calScalarCross} once more
\begin{multline} \label{sss1var}
\abs{\gamma'(s)}^{3-3p} \Biggl[ (3-3p) \abs{\gamma'(s) \wedge \gamma''(s)}^p \frac{\gamma'(s) \cdot \Phi'(s)}{\abs{\gamma'(s)}^2} + p \abs{\gamma'(s) \wedge \gamma''(s)}^{p-2} \Bigl( \abs{\gamma'(s)}^2 \gamma''(s) \cdot \Phi''(s)\\
+ \abs{\gamma''(s)}^2 \gamma'(s) \cdot \Phi'(s) - (\gamma'(s) \cdot \gamma''(s)) (\gamma'(s) \cdot \Phi''(s) + \gamma''(s) \cdot \Phi'(s)) \Bigr) \Biggr].
\end{multline}

Due to the scaling behaviour of $\M_p$ \eqref{scaleMp} it would be possible to decrease the energy by scaling up the knot. To avoid this we multiply the energy by a specific power of its length. By taking the whole energy to the power $\frac{1}{p}$ we are able to compare our results with ideal knots:
\begin{equation*}
\E_p (\gamma) := \sqrt[p]{\M_p(\gamma)} \diagup \Le(\gamma)^{\frac{3-p}{p}} = \sqrt[p]{\M_p(\gamma)} \, \Le(\gamma)^{\frac{p-3}{p}} \xrightarrow{p \rightarrow \infty} \frac{\Le(\gamma)}{\Delta[\gamma]}.
\end{equation*}
The last expression converges, because due to \cite[U1.4]{alt}
\begin{equation*}
\kla{\;\intl_{[0,2\pi]^3} c_\gamma(z_1,z_2,z_3) \, d\lambda^3(z)}^{\frac{1}{p}} = \norm{c_\gamma}_{L^p([0,2\pi]^3)} \xrightarrow{p \rightarrow \infty} \norm{c_\gamma}_{L^\infty([0,2\pi]^3)} = \frac{1}{\Delta[\gamma]},
\end{equation*}
where $\lambda^3$ is the \name{Lebesgue} measure on $\R^3$. Moreover, the functional $\M_p$ $\Gamma$-converges to the functional $\frac{1}{\Delta}$, what one will find in \name{S. Scholtes}' forthcoming PhD-thesis. Furthermore, we have
\begin{equation} \label{estm_mprl}
\E_p(\gamma) \leq \kla{\int_0^{2\pi} \int_0^{2\pi} \int_0^{2\pi} \frac{\abs{\gamma'(s)}\abs{\gamma'(t)}\abs{\gamma'(\sigma)}}{\Delta[\gamma]^p} \, ds \, dt \, d\sigma}^{\frac{1}{p}} \, \Le(\gamma)^{\frac{p-3}{p}} = \frac{\Le(\gamma)}{\Delta[\gamma]}.
\end{equation}
We can easily compute the first variation of $\E_p$
\begin{align}
\delta\E_p(\gamma,\Phi) & \; = \; \frac{\Le(\gamma)^{\frac{p-3}{p}} \M_p(\gamma)^{\frac{1-p}{p}}}{p} \, \delta\M_p(\gamma,\Phi) + \kla{\frac{p-3}{p} \M_p(\gamma)^{\frac{1}{p}} \Le(\gamma)^{-\frac{3}{p}}} \, \delta \Le(\gamma,\Phi) \notag\\
& \; = \; \sqrt[p]{\frac{\M_p(\gamma)}{\Le(\gamma)^3}} \Bigl( \frac{\Le(\gamma)}{p \M_p(\gamma)} \, \delta\M_p(\gamma,\Phi) + \frac{p-3}{p} \, \delta \Le(\gamma,\Phi) \Bigr), \label{generalFVar}
\end{align}
where one can directly verify that $\delta \Le(\gamma,\Phi) = \int_0^{2\pi} \frac{\gamma'(s) \cdot \Phi'(s)}{\abs{\gamma'(s)}} \, ds$.

\begin{rem} \mylabel{remtestcinfty}
By a standard convolution argument (cf. \cite[Chapter~7.2]{GT} and \cite[Theorem~2.4]{hitchhiker}) we get that it is sufficient to show that $\delta \M_p(\gamma,\Phi)=0$ for all $\Phi\in C^\infty(\R/2\pi\Z,\R^3)$ in order to get that $\gamma$ is a stationary point of $\M_p$.
\end{rem}

\begin{lem} \mylabel{regfrac}
Let $k\in\N$ and $\gamma,\Phi \in C^k(\R/2\pi\Z,\R^3)$. Moreover, $\gamma$ be simple and regular, then
\begin{equation*}
(s,t) \mapsto\quad \frac{(\gamma(s)-\gamma(t))\cdot(\Phi(s)-\Phi(t))}{\abs{\gamma(s)-\gamma(t)}^2}
\end{equation*}
is in $C^{k-1}([0,2\pi]\times(0,2\pi))$. In particular for $\gamma,\Phi\in C^\infty(\R/2\pi\Z,\R^3)$ this quotient is in $C^\infty([0,2\pi]\times(0,2\pi))$ as well.
\end{lem}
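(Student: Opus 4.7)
The plan is to use the integral representation from \eqref{gdiff_st} to extract the common vanishing factor $(s-t)$ from both numerator and denominator, so that the apparent singularity at $s=t$ cancels algebraically. Setting
\begin{equation*}
F(s,t) := \int_0^1 \gamma'(t+\tau(s-t)) \, d\tau, \qquad G(s,t) := \int_0^1 \Phi'(t+\tau(s-t)) \, d\tau,
\end{equation*}
one has $\gamma(s)-\gamma(t)=(s-t)F(s,t)$ and $\Phi(s)-\Phi(t)=(s-t)G(s,t)$, hence the quotient in question equals $(F(s,t)\cdot G(s,t))/\abs{F(s,t)}^2$ wherever both sides are defined. The remaining work is to show that (i) $F$ and $G$ are $C^{k-1}$ jointly on the closed rectangle and (ii) $\abs{F}^2$ is bounded away from zero on the domain $[0,2\pi]\times(0,2\pi)$; then standard $C^{k-1}$ calculus (products and reciprocals of non-vanishing smooth functions) gives the result.

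For (i), I would apply the differentiation lemma under the integral sign \cite[16.2]{bauer} iteratively. Each derivative $\partial_s^i\partial_t^j$ of $\gamma'(t+\tau(s-t))$ with $i+j\leq k-1$ produces an integrand of the form $P_{ij}(\tau)\,\gamma^{(i+j+1)}(t+\tau(s-t))$ with $P_{ij}$ polynomial, which is jointly continuous in $(s,t,\tau)$ on the compact set $[0,2\pi]^2\times[0,1]$ because $\gamma\in C^k$; the same argument applies to $G$. Hence $F,G\in C^{k-1}([0,2\pi]^2,\R^3)$.

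For (ii), I would argue by cases: if $s=t$, then $F(t,t)=\gamma'(t)\neq 0$ by regularity; if $s\neq t$, then $F(s,t)=(\gamma(s)-\gamma(t))/(s-t)$ vanishes iff $\gamma(s)=\gamma(t)$, and simplicity of $\gamma$ forces $\abs{s-t}\in 2\pi\Z\setminus\{0\}$, i.e.\ $\abs{s-t}=2\pi$, which is impossible for $s\in[0,2\pi]$ and $t\in(0,2\pi)$. This explains why the open interval appears in the second coordinate: on the closed square the two corners $(0,2\pi)$ and $(2\pi,0)$ would obstruct the argument. The main obstacle I anticipate is not analytical but organisational, namely keeping track of the Leibniz differentiations and verifying that the $(s-t)^2$-factors cancel cleanly in both $(\gamma(s)-\gamma(t))\cdot(\Phi(s)-\Phi(t))$ and $\abs{\gamma(s)-\gamma(t)}^2$; once this is done, the conclusion is immediate, and the very same factorisation trick would also handle the analogous expressions needed to extend $c_\gamma$ and its first variation continuously, as in \eqref{cgammaint}.
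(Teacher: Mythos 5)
Your proposal is correct and follows essentially the same route as the paper: factor $(s-t)$ out of numerator and denominator via the fundamental theorem of calculus, observe that the resulting integrals $F$ and $G$ are $C^{k-1}$ by differentiation under the integral sign, and use regularity plus simplicity to see that the denominator never vanishes on $[0,2\pi]\times(0,2\pi)$. One small caveat: $\abs{F}^2$ is in fact \emph{not} bounded away from zero on this non-compact domain (it tends to $0$ as $(s,t)$ approaches the corner $(0,2\pi)$), but pointwise non-vanishing --- which is what your case analysis actually establishes --- is all that the quotient rule requires.
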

\begin{proof}
We have seen before that
\begin{equation}
\frac{(\gamma(s)-\gamma(t))\cdot(\Phi(s)-\Phi(t))}{\abs{\gamma(s)-\gamma(t)}^2} = \frac{\int_0^1 \gamma'(ys+(1-y)t) \, dy \cdot \int_0^1 \Phi'(ys+(1-y)t) \, dy}{\abs{\int_0^1 \gamma'(ys+(1-y)t) \, dy}^2}.
\end{equation}
Let $a<b$. For a continuously differentiable function $f:[a,b] \rightarrow \R^3$, with $\abs{f(x)}>0$ for all $x\in[a,b]$ we have for $k\in\N$
\begin{equation*}
\frac{d}{dx}\,\abs{f(x)}^{-k} = \kla{-k} \abs{f(x)}^{-(k+2)} \kla{f(x)\cdot \frac{d}{dx}\,f(x)}.
\end{equation*}
For $(s,t)\in [0,2\pi]\times(0,2\pi)$ the integral $\int_0^1 \gamma'(ys+(1-y)t) \, dy$ could be such a function $f$ with respect to $s$ or $t$.
Obviously we have $\frac{d}{ds} f(s)=\int_0^1 y\,\gamma''(ys+(1-y)t) \, dy$ and analogous expressions for $t$ and $\Phi$. Now the product rule implies the claim.
\end{proof}

It is very likely that a circle is the global minimizer of $\M_p$ among all closed and simple curves with prescribed length or equivalently (cf. \thref{remnostpts}(b)) of $\E_p$ among all closed and simple curves, but it has not been proven yet. However, we present a proof that a circle is at least a stationary point of this energy.
\begin{thm} \mylabel{thmcirclesp}
For $p\geq 2$ a circle is a stationary point of $\E_p$. In particular it is a stationary point of $\M_3$.
\end{thm}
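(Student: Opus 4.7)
The plan is to exploit the rotational symmetry of the circle together with the invariance of $\E_p$ under Euclidean motions, scaling and reparametrization, in order to reduce the identity $\delta\E_p(\gamma,\cdot)=0$ to three distinguished test directions, each of which can be handled by a trivial invariance argument. By the same convolution argument as in \thref{remtestcinfty} (applied to $\E_p$, which near a circle is a smooth combination of $\M_p$ and $\Le$) it suffices to check $\delta\E_p(\gamma,\Phi)=0$ for every $\Phi\in C^\infty(\R/2\pi\Z,\R^3)$, and the invariances let us fix $\gamma(s)=(\cos s,\sin s,0)$. Let $R_\theta$ denote the rotation of $\R^3$ about the $z$-axis by angle $\theta$; then $R_\theta\gamma(\,\cdot\,-\theta)=\gamma$, so combining rigid-motion and reparametrization invariance of $\E_p$ one obtains, for every $\theta\in\R$ and every sufficiently small $\tau$,
\begin{equation*}
\E_p(\gamma+\tau\Phi)=\E_p\kla{R_\theta(\gamma+\tau\Phi)(\,\cdot\,-\theta)}=\E_p\kla{\gamma+\tau\,R_\theta\Phi(\,\cdot\,-\theta)}.
\end{equation*}
Differentiating in $\tau$ at $\tau=0$ yields $\delta\E_p(\gamma,\Phi)=\delta\E_p(\gamma,R_\theta\Phi(\,\cdot\,-\theta))$ for every $\theta$; averaging over $\theta\in[0,2\pi]$ and commuting the $\theta$-integral with the first variation (via \name{Fubini} applied to the explicit integral representation \eqref{long1var}) then gives
\begin{equation*}
\delta\E_p(\gamma,\Phi)=\delta\E_p(\gamma,\bar\Phi), \qquad \bar\Phi(s):=\frac{1}{2\pi}\int_0^{2\pi}R_\theta\Phi(s-\theta)\,d\theta.
\end{equation*}

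In the complexified in-plane coordinate $\phi_1+i\phi_2$, $R_\theta$ acts as multiplication by $e^{i\theta}$, so a short Fourier-series computation shows that the averaging annihilates every mode except the first harmonic in the $xy$-plane and the zeroth mode in the $z$-direction. Rewriting the result in terms of the frame $\{\gamma,\gamma',e_3\}$ one obtains
\begin{equation*}
\bar\Phi=\alpha\,\gamma+\beta\,\gamma'+C\,e_3 \qquad \text{for some }\alpha,\beta,C\in\R.
\end{equation*}

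By linearity of $\delta\E_p(\gamma,\cdot)$ it remains to verify $\delta\E_p(\gamma,\Psi)=0$ for each $\Psi\in\{\gamma,\gamma',e_3\}$. For $\Psi=\gamma$, the identity $\gamma+\tau\gamma=(1+\tau)\gamma$ together with scale-invariance of $\E_p$ gives $\E_p(\gamma+\tau\gamma)=\E_p(\gamma)$. For $\Psi=e_3$, $\gamma+\tau e_3$ is a rigid translate of $\gamma$, so Euclidean invariance gives $\E_p(\gamma+\tau e_3)=\E_p(\gamma)$. For $\Psi=\gamma'$, a direct computation yields $\abs{\gamma(s)+\tau\gamma'(s)}^2=1+\tau^2$, so $\gamma+\tau\gamma'$ parameterizes the circle of radius $\sqrt{1+\tau^2}$ and scale-invariance again gives $\E_p(\gamma+\tau\gamma')=\E_p(\gamma)$. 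In each case the derivative at $\tau=0$ vanishes, and the claim about $\M_3$ follows at once from $\E_3=\M_3^{1/3}$ together with $\M_3(\gamma)>0$. The only real technical point I anticipate is the rigorous interchange of the $\theta$-average with the first-variation operator, handled by the pointwise integral formulas for $\delta\M_p$ and $\delta\Le$ together with \name{Fubini}'s theorem; the remainder of the argument is essentially mechanical.
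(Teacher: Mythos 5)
Your proof is correct, but it takes a genuinely different route from the one in the thesis. The thesis argues by direct computation: it expands $\Phi(x)-\Phi(s)$ in the orthogonal frame $\menge{\gamma(x)-\gamma(s),\gamma'(x)-\gamma'(s),e_3}$, simplifies the integrand of $\delta\M_p$ with half-angle identities, and reduces everything to the three cancellation lemmas \thref{lemcircleint1}, \thref{lemcircleint2} and \thref{lemcircleint3}, ending with the explicit identity $\delta\M_p(\gamma,\Phi)=2\pi\int_0^{2\pi}\int_0^{2\pi}(3-p)\,\alpha_1(t,s)\,ds\,dt$ whose length term then cancels in $\delta\E_p$. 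You instead exploit the full invariance group of $\E_p$ (rigid motions, scaling, reparametrization) together with the $SO(2)$-symmetry of the circle to replace $\Phi$ by its rotational average, and your Fourier computation correctly shows that this average lies in the span of $\gamma$, $\gamma'$ and $e_3$; each of these three directions is tangent to an orbit of the invariance group (scaling for $\gamma$, translation for $e_3$, and $\gamma+\tau\gamma'=\sqrt{1+\tau^2}\,\gamma(\cdot+\arctan\tau)$, a scaled reparametrization, for $\gamma'$), so $\tau\mapsto\E_p(\gamma+\tau\Psi)$ is constant and the derivative vanishes trivially. What each approach buys: yours is shorter and conceptually transparent --- it is the principle of symmetric criticality for a compact group, it generalizes verbatim to any reparametrization-, motion- and scale-invariant knot energy whose first variation exists and is linear in $\Phi$, and the only genuine verification is the Fubini interchange of the $\theta$-average with the integral \eqref{long1var}, which is harmless here since the integrand is bounded on the circle for smooth $\Phi$ by \thref{regfrac}; the thesis' computation is heavier but produces the quantitative formula $\delta\M_p(\gamma,\Phi)=2\pi(3-p)\iint\alpha_1$, which exhibits exactly how the circle fails to be critical for $\M_p$ itself when $p\neq 3$ and makes the cancellation against $\frac{p-3}{p}\,\delta\Le(\gamma,\Phi)$ explicit rather than structural. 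If you write your argument up, do record that $\delta\E_p(\gamma,R_\theta\Phi(\cdot-\theta))$ exists for every $\theta$ (the rotated test functions are again smooth, so \thref{remsmallp} applies) before differentiating the invariance identity in $\tau$.
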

\begin{proof}
Without loss of generality we consider the unit circle $\gamma(x) = \left(\begin{smallmatrix}
\cos(x)\\
\sin(x)\\
0
\end{smallmatrix} \right)$. Consequently, we have $\gamma'(x) = \left(\begin{smallmatrix}
-\sin(x)\\
\cos(x)\\
0
\end{smallmatrix} \right)$,  $\gamma'' \equiv -\gamma$,  $\norm{\gamma} \equiv 1$, $\norm{\gamma'} \equiv 1$ and $\gamma\cdot \gamma' \equiv 0$ as well as $c_\gamma \equiv 1$ (cf. \thref{exmpthickone}). Let $s\in [0,2\pi]$. Then for $x$ with $(x-s)\notin 2\pi\Z$ the vectors $\menge{\gamma(x)-\gamma(s), \gamma'(x)-\gamma'(s), e_3}=:B$ form an orthogonal basis of $\R^3$, because $(\gamma(x)-\gamma(s)) \cdot (\gamma'(x)-\gamma'(s))=-\gamma(x)\cdot\gamma'(s)-\gamma(s)\cdot\gamma'(x)=\cos(x)\sin(s)-\sin(x)\cos(s)+\cos(s)\sin(x)-\sin(s)\cos(x)=0$ and $\abs{\gamma(x)-\gamma(s)} = \abs{\gamma'(x)-\gamma'(s)} \neq 0$. The latter is true since
\begin{equation} \label{pcmquad}
\begin{split}
\abs{\gamma(x)-\gamma(s)}^2 &= 4 \sin(\frac{x+s}{2})^2 \sin(\frac{x-s}{2})^2 + 4 \cos(\frac{x+s}{2})^2 \sin(\frac{x-s}{2})^2\\
&= 4 \sin(\frac{x-s}{2})^2,
\end{split}
\end{equation}
where we used the trigonometric identities \eqref{trigdiffs} and \eqref{trigdiffc}. This will be done implicitly several times during the proof. Additionally, we needed \eqref{trigssum} here. Furthermore, observe that $\Ns\cap[0,2\pi]^3$ is a null-set (see \eqref{nullset1r}).\\
Let $\Phi \in C^{\infty}(\R/2\pi\Z, \R^3)$. We want to compute the first variation of $\M_p$ at $\gamma$ in direction $\Phi$. At first we express $\Phi(x)-\Phi(s)$ with respect to the basis $B$
\begin{equation} \label{pcmdiff}
\Phi(x) -\Phi(s) =: \alpha_1(x,s) (\gamma(x)-\gamma(s)) + \alpha_2(x,s) (\gamma'(x)-\gamma'(s)) + \alpha_3(x,s)\,e_3.
\end{equation}
Equivalently we can say
\begin{equation} \label{pcmalpha}
\begin{split}
\alpha_1(x,s) &= \frac{(\Phi(x)-\Phi(s)) \cdot (\gamma(x)-\gamma(s))}{\abs{\gamma(x)-\gamma(s)}^2},\\
\alpha_2(x,s) &= \frac{(\Phi(x)-\Phi(s)) \cdot (\gamma'(x)-\gamma'(s))}{\abs{\gamma'(x)-\gamma'(s)}^2},\\
\alpha_3(x,s) &= \Phi_3(x)-\Phi_3(s).
\end{split}
\end{equation}
We know from \thref{regfrac} that these three functions are at least $C^1$. Since $\gamma\in C^\infty$ the first variation exists due to \thref{remsmallp} also for $p\in [2,\infty)$ and in analogy to \eqref{simp1var} we obtain
\begin{multline*}
\delta\M_p(\gamma,\Phi) = \int_0^{2\pi} \int_0^{2\pi} \int_0^{2\pi} c_\gamma^p(s,t,\sigma) \abs{\gamma'(s)} \abs{\gamma'(t)} \abs{\gamma'(\sigma)} \Biggl( 3 \frac{\gamma'(t) \cdot \Phi'(t)}{\abs{\gamma'(t)}^2}\\
-2p \frac{(\gamma(s)-\gamma(t)) \cdot (\Phi(s)-\Phi(t))}{\abs{\gamma(s)-\gamma(t)}^2} -p \, \frac{(\gamma(s)-\gamma(\sigma)) \cdot (\Phi(s)-\Phi(\sigma))}{\abs{\gamma(s)-\gamma(\sigma)}^2} \Biggr) +\\
+4 p \frac{c_\gamma^{p-2}(s,t,\sigma)\; \abs{\gamma'(s)} \abs{\gamma'(t)} \abs{\gamma'(\sigma)}}{\abs{\gamma(s)-\gamma(t)}^2 \abs{\gamma(t)-\gamma(\sigma)}^2 \abs{\gamma(s)-\gamma(\sigma)}^2} \Biggl( \abs{\gamma(\sigma)-\gamma(s)}^2 \, (\gamma(t)-\gamma(s)) \cdot (\Phi(t)-\Phi(s))\\
+ \abs{\gamma(t)-\gamma(s)}^2 \, (\gamma(\sigma)-\gamma(s)) \cdot (\Phi(\sigma)-\Phi(s)) - (\gamma(t)-\gamma(s)) \cdot (\gamma(\sigma)-\gamma(s))\\
\Bigl( (\gamma(\sigma)-\gamma(s)) \cdot (\Phi(t)-\Phi(s)) + (\gamma(t)-\gamma(s)) \cdot (\Phi(\sigma)-\Phi(s)) \Bigr) \Biggr) \, ds \, dt \, d\sigma.
\end{multline*}
From the representation \eqref{pcmdiff} we conclude
\begin{equation} \label{pcmmultp}
\begin{split}
(\gamma(\sigma)-\gamma(s))\cdot (\Phi(t)-\Phi(s)) &= \alpha_1(t,s) (\gamma(\sigma)-\gamma(s))\cdot (\gamma(t)-\gamma(s))\\
&+\alpha_2(t,s) (\gamma(\sigma)-\gamma(s))\cdot (\gamma'(t)-\gamma'(s)).
\end{split}
\end{equation}
By using \eqref{pcmalpha}, \eqref{pcmmultp} and the properties of the unit circle mentioned above we obtain
\begin{multline*}
= \int_0^{2\pi} \int_0^{2\pi} \int_0^{2\pi} 3 \gamma'(t) \cdot \Phi'(t) -2p \alpha_1(t,s) -p \alpha_1(\sigma,s) +\\
+4 p \frac{(\alpha_1(t,s)+\alpha_1(\sigma,s)) \Bigl( (\abs{\gamma(t)-\gamma(s)}^2 \abs{\gamma(\sigma)-\gamma(s)}^2 - ((\gamma(t)-\gamma(s)) \cdot (\gamma(\sigma)-\gamma(s)))^2 \Bigr)}{\abs{\gamma(s)-\gamma(t)}^2 \abs{\gamma(t)-\gamma(\sigma)}^2 \abs{\gamma(s)-\gamma(\sigma)}^2}\\
-4 p \frac{\alpha_2(t,s) \, ((\gamma(t)-\gamma(s)) \cdot (\gamma(\sigma)-\gamma(s)))\,((\gamma(\sigma)-\gamma(s)) \cdot (\gamma'(t)-\gamma'(s)))}{\abs{\gamma(s)-\gamma(t)}^2 \abs{\gamma(t)-\gamma(\sigma)}^2 \abs{\gamma(s)-\gamma(\sigma)}^2}\\
-4 p \frac{\alpha_2(\sigma,s)\,((\gamma(t)-\gamma(s)) \cdot (\gamma(\sigma)-\gamma(s)))\,((\gamma(t)-\gamma(s)) \cdot (\gamma'(\sigma)-\gamma'(s)))}{\abs{\gamma(s)-\gamma(t)}^2 \abs{\gamma(t)-\gamma(\sigma)}^2 \abs{\gamma(s)-\gamma(\sigma)}^2} \, ds \, dt \, d\sigma.
\end{multline*}
Furthermore, we can compute using \eqref{trigaddc} and \eqref{trigadds} additionally
\begin{align}
&(\gamma(t)-\gamma(s)) \cdot (\gamma(\sigma)-\gamma(s)) \notag\\
=\;& (\cos(t)-\cos(s))(\cos(\sigma)-\cos(s)) + (\sin(t)-\sin(s))(\sin(\sigma)-\sin(s)) \notag\\
=\;&\quad\; 4 \sin(\frac{t+s}{2})\sin(\frac{t-s}{2})\sin(\frac{\sigma+s}{2})\sin(\frac{\sigma-s}{2}) \notag\\
& +4 \cos(\frac{t+s}{2})\sin(\frac{t-s}{2})\cos(\frac{\sigma+s}{2})\sin(\frac{\sigma-s}{2}) \notag\\
=\;& 4 \sin(\frac{t-s}{2}) \sin(\frac{\sigma-s}{2}) \cos(\frac{t-\sigma}{2}) \label{pcmmult20}
\intertext{and}
&(\gamma(t)-\gamma(s)) \cdot (\gamma'(\sigma)-\gamma'(s)) \notag\\
=\;& (\cos(t)-\cos(s))(-\sin(\sigma)+\sin(s)) + (\sin(t)-\sin(s))(\cos(\sigma)-\cos(s)) \notag\\
=\;& -4 \sin(\frac{t+s}{2})\sin(\frac{t-s}{2})\cos(\frac{\sigma+s}{2})\sin(\frac{\sigma-s}{2}) \notag\\
& +4 \cos(\frac{t+s}{2})\sin(\frac{t-s}{2})\sin(\frac{\sigma+s}{2})\sin(\frac{\sigma-s}{2}) \notag\\
=\;& -4 \sin(\frac{t-s}{2}) \sin(\frac{\sigma-s}{2}) \sin(\frac{t-\sigma}{2}). \label{pcmmult21}
\end{align}
Therefore, with \eqref{pcmmult20}, \eqref{pcmmult21} and \eqref{calabsWedge} the integrand simplifies to
\begin{multline*}
3 \gamma'(t)\cdot\Phi'(t) -2p \alpha_1(t,s) -p \alpha_1(\sigma,s) + p(\alpha_1(t,s)+\alpha_1(\sigma,s)) \,c_\gamma^2(s,t,\sigma)\\
-4 p \frac{-4^2 \alpha_2(t,s) \sin(\frac{t-s}{2})\sin(\frac{\sigma-s}{2})\cos(\frac{t-\sigma}{2})\sin(\frac{\sigma-s}{2})\sin(\frac{t-s}{2})\sin(\frac{\sigma-t}{2})}{4^3 \sin(\frac{s-t}{2})^2 \sin(\frac{t-\sigma}{2})^2 \sin(\frac{s-\sigma}{2})^2}\\
-4 p \frac{-4^2 \alpha_2(\sigma,s) \sin(\frac{t-s}{2}) \sin(\frac{\sigma-s}{2}) \cos(\frac{t-\sigma}{2}) \sin(\frac{t-s}{2}) \sin(\frac{\sigma-s}{2}) \sin(\frac{t-\sigma}{2})}{4^3 \sin(\frac{s-t}{2})^2 \sin(\frac{t-\sigma}{2})^2 \sin(\frac{s-\sigma}{2})^2}
\end{multline*}
\begin{equation*}
= 3 \gamma'(t)\cdot\Phi'(t) -p\alpha_1(t,s) -p \frac{(\alpha_2(t,s)-\alpha_2(\sigma,s))\cos(\frac{t-\sigma}{2})}{\sin(\frac{t-\sigma}{2})}.
\end{equation*}
By using \thref{lemcircleint1}, which will be our next issue, we see that the integral over the fraction vanishes. For the first expression we differentiate \eqref{pcmdiff}
\begin{multline}
\Phi'(x) = \alpha_1(x,s) \gamma'(x) -\alpha_2(x,s)\gamma(x)\\
+\alpha_1'(x,s) (\gamma(x)-\gamma(s)) +\alpha_2'(x,s) (\gamma'(x)-\gamma'(s)) +\alpha_3'(x,s)\,e_3,
\end{multline}
where $\alpha_i'$ denotes the derivative of $\alpha_i$ with respect to $x$ and we get
\begin{equation} \label{pcmfirst}
\gamma'(t)\cdot\Phi'(t) = \alpha_1(t,s)-\alpha_1'(t,s) \gamma'(t)\cdot\gamma(s)+\alpha_2'(t,s)-\alpha_2'(t,s) \gamma'(t)\cdot\gamma'(s).
\end{equation}
Observe, that $\int_0^{2\pi} \alpha_2'(t,s)\,dt=0$, since with $\gamma$ and $\gamma'$ also $\alpha_1, \alpha_2$ and $\alpha_3$ are $2\pi$-periodic. We consider the following expression
\begin{equation} \label{pcmrest}
\begin{split}
& \int_0^{2\pi} \int_0^{2\pi} \alpha_1'(t,s)\,\gamma'(t)\cdot\gamma(s)+\alpha_2'(t,s)\,\gamma'(t)\cdot\gamma'(s) \, ds \, dt\\
= & \int_0^{2\pi} \int_0^{2\pi} \alpha_1(t,s)\,(-\gamma''(t))\cdot\gamma(s)+\alpha_2(t,s)\,(-\gamma''(t))\cdot\gamma'(s) \, ds \, dt\\
= & \int_0^{2\pi} \int_0^{2\pi} \alpha_1(t,s)\,\gamma(t)\cdot\gamma(s)+\alpha_2(t,s)\,\gamma(t)\cdot\gamma'(s) \, ds \, dt,
\end{split}
\end{equation}
where we use integration by parts in addition. We continue with
\begin{align*}
\gamma(s)\cdot\gamma(t) &=& \cos(s)\cos(t)&+\sin(s)\sin(t) &=& \cos(s-t) \\
\gamma(s)\cdot\gamma'(t) &=& -\cos(s)\sin(t)&+\sin(s)\cos(t) &=& \sin(s-t)
\end{align*}
and therefore, together with \eqref{pcmalpha} the integrand of \eqref{pcmrest} is equal to
\begin{align*}
& \frac{(\Phi^1(s)-\Phi^1(t)) \Bigl[(\cos(s)-\cos(t))\cos(s-t)+(-\sin(s)+\sin(t))\sin(s-t)\Bigr]}{4 \sin(\frac{s-t}{2})^2} + \\
& \frac{(\Phi^2(s)-\Phi^2(t)) \Bigl[(\sin(s)-\sin(t))\cos(s-t)+(\cos(s)-\cos(t))\sin(s-t)\Bigr]}{4 \sin(\frac{s-t}{2})^2}.
\end{align*}
For the first line we get
\begin{align}
= & \frac{(\Phi^1(s)-\Phi^1(t)) \Bigl[\sin(\frac{s+t}{2})\sin(\frac{s-t}{2})\cos(s-t)-\cos(\frac{s+t}{2})\sin(\frac{s-t}{2})\sin(s-t)\Bigr]}{2 \sin(\frac{s-t}{2})^2} \notag\\
= & \frac{(\Phi^1(s)-\Phi^1(t)) \sin(t-\frac{s-t}{2})}{2 \sin(\frac{s-t}{2})} \label{pcialphasum1}
\end{align}
and for the second line
\begin{align}
= & \frac{(\Phi^2(s)-\Phi^2(t)) \Bigl[\cos(\frac{s+t}{2})\sin(\frac{s-t}{2})\cos(s-t)+\sin(\frac{s+t}{2})\sin(\frac{s-t}{2})\sin(s-t)\Bigr]}{2 \sin(\frac{s-t}{2})^2} \notag\\
= & \frac{(\Phi^2(s)-\Phi^2(t)) \cos(t-\frac{s-t}{2})}{2 \sin(\frac{s-t}{2})}. \label{pcialphasum2}
\end{align}
The integrals over \eqref{pcialphasum1} and \eqref{pcialphasum1} vanish, which we will prove in \thref{lemcircleint2,lemcircleint3}. Therefore, the only part of the expression \eqref{pcmfirst} that remains if we integrate is $\alpha_1(t,s)$ and we finally get
\begin{equation*}
\delta \M_p(\gamma,\Phi) = 2\pi \int_0^{2\pi} \int_0^{2\pi} (3-p)\,\alpha_1(t,s) \, ds \, dt,
\end{equation*}
which is zero for $p=3$. In addition we have $\Le(\gamma) = 2\pi$, $\M_p(\gamma) = (2\pi)^3$ and $\delta \Le(\gamma,\Phi) = \int_0^{2\pi} \gamma'(s)\cdot\Phi'(s) \, ds = \frac{1}{2\pi} \int_0^{2\pi} \int_0^{2\pi} \gamma'(s)\cdot\Phi'(s) \, ds \, dt = \frac{1}{2\pi} \int_0^{2\pi} \int_0^{2\pi} \alpha_1(t,s) \, ds \, dt$, as we have seen above. For the scale invariant energy this leads to
\begin{equation*}
\delta \E_p(\gamma,\Phi) = \frac{1}{2\pi p} \int_0^{2\pi} \int_0^{2\pi} (3-p)\,\alpha_1(t,s) \, ds \, dt + \frac{p-3}{2\pi p} \int_0^{2\pi} \int_0^{2\pi} \alpha_1(t,s) \, ds \, dt = 0. \qedhere
\end{equation*}
\end{proof}
\begin{lem} \mylabel{lemcircleint1}
Let $f$ be a $2\pi$-periodic function that is \name{H\"{o}lder}-continuous to some exponent $0 < \alpha \leq 1$. Then
\begin{equation*}
\int_0^{2\pi} \int_0^{2\pi} \frac{(f(s)-f(\sigma)) \cos(\frac{s-\sigma}{2})}{\sin(\frac{s-\sigma}{2})} \, ds \, d\sigma = 0.
\end{equation*}
\end{lem}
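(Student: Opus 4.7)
The plan is to show that after a change of variables, the integrand decouples into a piece depending only on the difference $w=s-\sigma$ and a piece $f(\sigma+w)-f(\sigma)$ which integrates to zero in $\sigma$ by periodicity.

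First I would verify that the double integral is absolutely convergent, so that Fubini applies. The only singularities of $\cos(\frac{s-\sigma}{2})/\sin(\frac{s-\sigma}{2})$ in $[0,2\pi]^2$ occur on the set where $s-\sigma\in 2\pi\Z$, i.e.\ at the diagonal $s=\sigma$ and at the corners $\{s-\sigma=\pm 2\pi\}$. Near such points, the H\"older hypothesis gives $\abs{f(s)-f(\sigma)}\leq C\,\abs{s-\sigma}^\alpha$ (using $2\pi$-periodicity of $f$ to reduce to the case $\abs{s-\sigma}\leq\pi$), while \eqref{sinelb} yields $\babs{\sin(\frac{s-\sigma}{2})}\geq\frac{1}{\pi}\,\abs{s-\sigma}$ for $\abs{s-\sigma}\leq\pi$. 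Together with $\abs{\cos}\leq 1$ this bounds the integrand by $C\pi\,\abs{s-\sigma}^{\alpha-1}$, which is locally integrable since $\alpha>0$.

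Next I would note that the integrand is $2\pi$-periodic in $s$ (and in $\sigma$): shifting $s\mapsto s+2\pi$ leaves $f(s)$ unchanged, changes $\cos(\frac{s-\sigma}{2})$ to its negative, and changes $\sin(\frac{s-\sigma}{2})$ to its negative, so the two sign changes cancel. I then substitute $w:=s-\sigma$ and apply \thref{propshift} to rewrite
\begin{equation*}
\int_0^{2\pi} \int_0^{2\pi} \frac{(f(s)-f(\sigma)) \cos(\frac{s-\sigma}{2})}{\sin(\frac{s-\sigma}{2})} \, ds \, d\sigma
= \int_0^{2\pi} \int_{-\pi}^\pi \frac{(f(\sigma+w)-f(\sigma))\cos(\frac{w}{2})}{\sin(\frac{w}{2})}\,dw\,d\sigma.
\end{equation*}

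By the absolute integrability established above, Fubini's theorem allows me to interchange the order of integration:
\begin{equation*}
= \int_{-\pi}^\pi \frac{\cos(\frac{w}{2})}{\sin(\frac{w}{2})}\,\Biggl(\int_0^{2\pi} \bigl(f(\sigma+w)-f(\sigma)\bigr)\,d\sigma\Biggr)\,dw.
\end{equation*}
For each fixed $w\in[-\pi,\pi]$, the inner integral vanishes: by \thref{propshift} applied to the $2\pi$-periodic function $f$, we have $\int_0^{2\pi} f(\sigma+w)\,d\sigma=\int_0^{2\pi} f(\sigma)\,d\sigma$. Hence the whole expression is zero, completing the proof.

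The only subtlety is the justification of Fubini and of the change of variables at the singularity $w=0$; both are handled cleanly by the a priori bound $\abs{s-\sigma}^{\alpha-1}$ derived from H\"older continuity and \eqref{sinelb}. Once that is in place, the argument is just periodicity of $f$.
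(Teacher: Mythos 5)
Your proof is correct and rests on exactly the same two ingredients as the paper's: the bound $\babs{f(s)-f(\sigma)}\,\babs{\sin(\tfrac{s-\sigma}{2})}^{-1}\leq C\abs{s-\sigma}^{\alpha-1}$ coming from H\"older continuity and \eqref{sinelb}, and the shift invariance $\int_0^{2\pi}f(\sigma+w)\,d\sigma=\int_0^{2\pi}f(\sigma)\,d\sigma$ of the periodic integral. The only difference is technical: you apply \name{Fubini} directly to the absolutely integrable integrand and integrate out $\sigma$ for each fixed $w\neq 0$, whereas the paper truncates to $w\in[\eps,\pi]$, shows the truncated integral vanishes, and passes to the limit $\eps\searrow 0$ by dominated convergence; your route is a legitimate streamlining of the same argument.
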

\begin{proof}
At first we consider
\begin{equation} \label{pci001}
\int_0^{2\pi} \int_{-\pi}^\pi \frac{(f(s+w)-f(s)) \cos(\frac{w}{2})}{\sin(\frac{w}{2})} \, dw \, ds.
\end{equation}
Observe that $\dfrac{\cos(\frac{x}{2})}{\sin(\frac{x}{2})}$ is $2\pi$-periodic as well, since $\dfrac{\cos(\frac{x}{2}+\pi)}{\sin(\frac{x}{2}+\pi)}=\dfrac{-\cos(\frac{x}{2})}{-\sin(\frac{x}{2})}$. Therefore, the whole integrand  is $2\pi$-periodic and we can use \thref{propshift} all the time. Due to \eqref{sinelb}, we have
\begin{equation*}
\abs{\sin(\frac{w}{2})} \geq \frac{1}{\pi}\,\abs{w} \qquad \text{for all $w \in [-\pi,\pi]$}.
\end{equation*}
Hence, we can estimate the integrand for all $s \in [0,2\pi]$ and $w \in [-\pi,\pi]$ by
\begin{equation} \label{pciconst001}
\babs{\frac{(f(s+w)-f(s)) \cos(\frac{w}{2})}{\sin(\frac{w}{2})}} \leq \pi [f]_{C^{0,\alpha}}\,\frac{\abs{w}^\alpha}{\abs{w}} =: C\,\abs{w}^{-(1-\alpha)}.
\end{equation}
Hence for $0 \leq 1-\alpha < 1$ this function is \name{Lebesgue} integrable around $0$ and due to \thref{remproofbw} we get that the integral \eqref{pci001} exists. Next we consider for $\eps >0$
\begin{equation*}
\int_0^{2\pi} \int_{\eps}^\pi \frac{f(s+w) \cos(\frac{w}{2})}{\sin(\frac{w}{2})} \, dw \, ds \qquad \text{and} \qquad \int_0^{2\pi} \int_{\eps}^\pi \frac{f(s) \cos(\frac{w}{2})}{\sin(\frac{w}{2})} \, dw \, ds
\end{equation*}
In both cases the integrand is continuous on $[0,2\pi]\times[\eps,\pi]$ and its absolute value can be estimated by the constant $\norm{f}_{C^0} \kla{\sin(\frac{\eps}{2})}^{-1}$, since sine is strictly increasing on $[0,\frac{\pi}{2}]$.
\begin{alignat*}{2}
& \int_0^{2\pi} \int_\eps^\pi \frac{f(s+w) \cos(\frac{w}{2})}{\sin(\frac{w}{2})} \, dw \, ds &\;=& \int_\eps^\pi \int_0^{2\pi} \frac{f(s+w) \cos(\frac{w}{2})}{\sin(\frac{w}{2})} \, ds \, dw\\
= & \int_\eps^\pi \int_w^{2\pi-w} \frac{f(\tilde{s}) \cos(\frac{w}{2})}{\sin(\frac{w}{2})} \, d\tilde{s} \, dw &\;=& \int_\eps^\pi \int_0^{2\pi} \frac{f(s) \cos(\frac{w}{2})}{\sin(\frac{w}{2})} \, ds \, dw\\
= & \int_0^{2\pi} \int_\eps^\pi \frac{f(s) \cos(\frac{w}{2})}{\sin(\frac{w}{2})} \, dw \, ds,
\end{alignat*}
where we substituted $\tilde{s}:=s+w$ and used \name{Fubini}'s theorem twice. Therefore,
\begin{equation*}
\int_0^{2\pi} \int_\eps^\pi \frac{(f(s+w)-f(s)) \cos(\frac{w}{2})}{\sin(\frac{w}{2})} \, dw \, ds = 0 \qquad \text{for all $\eps>0$}.
\end{equation*}
Moreover, applying \name{Lebesgue}'s dominated convergence theorem we conclude that
\begin{multline} \label{pci003}
\int_0^{2\pi} \int_\eps^\pi \frac{(f(s+w)-f(s)) \cos(\frac{w}{2})}{\sin(\frac{w}{2})} \, dw \, ds \\
\xrightarrow{\eps\searrow 0} \int_0^{2\pi} \int_0^\pi \frac{(f(s+w)-f(s)) \cos(\frac{w}{2})}{\sin(\frac{w}{2})} \, dw \, ds = 0 
\end{multline}
since the function $\frac{(f(s+w)-f(s)) \cos(\frac{w}{2})}{\sin(\frac{w}{2})} \chi_{[\eps,\pi]}$ converges pointwise to $\frac{(f(s+w)-f(s)) \cos(\frac{w}{2})}{\sin(\frac{w}{2})} \chi_{[0,\pi]}$ and can be estimated by the majorant \eqref{pciconst001}. In addition we get
\begin{equation} \label{pci002}
\begin{split}
\int_0^{2\pi} \int_{-\pi}^0 &\frac{(f(s+w)-f(s)) \cos(\frac{w}{2})}{\sin(\frac{w}{2})} \, dw \, ds\\
&= \int_{-\pi}^0 \int_0^{2\pi} \frac{(f(s+w)-f(s)) \cos(\frac{w}{2})}{\sin(\frac{w}{2})} \, ds \, dw\\
&= -\int_\pi^0 \int_0^{2\pi} \frac{(f(s-\tilde{w})-f(s)) \cos(\frac{\tilde{w}}{2})}{-\sin(\frac{\tilde{w}}{2})} \, d\tilde{w} \, ds\\
&= -\int_0^\pi \int_0^{2\pi} \frac{(f(s-w)-f(s)) \cos(\frac{w}{2})}{\sin(\frac{w}{2})} \, ds \, dw\\
&= -\int_0^\pi \int_{-w}^{2\pi-w} \frac{(f(\tilde{s})-f(\tilde{s}+w)) \cos(\frac{w}{2})}{\sin(\frac{w}{2})} \, d\tilde{s} \, dw\\
&= -\int_0^\pi \int_0^{2\pi} \frac{(f(s)-f(s+w)) \cos(\frac{w}{2})}{\sin(\frac{w}{2})} \, ds \, dw\\
&= \int_0^{2\pi} \int_0^\pi \frac{(f(s+w)-f(s)) \cos(\frac{w}{2})}{\sin(\frac{w}{2})} \, dw \, ds,
\end{split}
\end{equation}
where we substituted $\tilde{w}:=-w$ respectively $\tilde{s}:=s-w$ and used \name{Fubini}'s theorem twice. Finally \eqref{pci003} and \eqref{pci002} yields
\begin{align*}
0 &= \int_0^{2\pi} \int_{-\pi}^\pi \frac{(f(s+w)-f(s)) \cos(\frac{w}{2})}{\sin(\frac{w}{2})} \, dw \, ds\\
&= \int_0^{2\pi} \int_{s-\pi}^{s+\pi} \frac{(f(\sigma)-f(s)) \cos(\frac{s-\sigma}{2})}{-\sin(\frac{s-\sigma}{2})} \, d\sigma \, ds\\
&= \int_0^{2\pi} \int_{0}^{2\pi} \frac{(f(s)-f(\sigma)) \cos(\frac{s-\sigma}{2})}{\sin(\frac{s-\sigma}{2})} \, d\sigma \, ds,
\end{align*}
where we used the substitution $\sigma:=s+w$.
\end{proof}
\begin{lem} \mylabel{lemcircleint2}
Let $f$ be a $2\pi$-periodic function that is \name{H\"{o}lder}-continuous to some exponent $0 < \alpha \leq 1$. Then
\begin{equation}
\int_0^{2\pi} \int_0^{2\pi} \frac{(f(s)-f(t)) \sin(t-\frac{s-t}{2})}{2\sin(\frac{s-t}{2})} \, ds \, dt = 0.
\end{equation}
\end{lem}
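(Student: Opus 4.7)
The strategy is to turn the double integral into an iterated integral in which the inner integration produces a factor that cancels the singular denominator $\sin(\frac{s-t}{2})$; this is in the same spirit as lemcircleint1 but is actually cleaner, since no regularisation by a shrinking $\eps$ is required.

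First I will substitute $u:=s-t$ for fixed $t$, so that the integral rewrites as $\int_0^{2\pi}\int_{-t}^{2\pi-t}\frac{(f(t+u)-f(t))\sin(t-\frac{u}{2})}{2\sin(\frac{u}{2})}\,du\,dt$. Both $\sin(t-\frac{u}{2})$ and $\sin(\frac{u}{2})$ flip sign under $u\mapsto u+2\pi$, so the quotient is $2\pi$-periodic in $u$; together with the periodicity of $f$, \thref{propshift} lets me shift the inner interval to $[-\pi,\pi]$. The integrand is dominated by $\frac{\pi\,[f]_{C^{0,\alpha}}\abs{u}^\alpha}{\abs{u}}=C\abs{u}^{\alpha-1}$ (using \eqref{sinelb}), which is integrable on $[0,2\pi]\times[-\pi,\pi]$ since $\alpha>0$, so Fubini's theorem applies.

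With the order of integration swapped, I compute the inner integral $\int_0^{2\pi}(f(t+u)-f(t))\sin(t-\frac{u}{2})\,dt$ for fixed $u$. The substitution $\tau:=t+u$ together with periodicity gives $\int_0^{2\pi}f(t+u)\sin(t-\frac{u}{2})\,dt=\int_0^{2\pi}f(\tau)\sin(\tau-\frac{3u}{2})\,d\tau$, so the inner integral equals $\int_0^{2\pi}f(t)\bigl[\sin(t-\frac{3u}{2})-\sin(t-\frac{u}{2})\bigr]\,dt$. The identity \eqref{trigdiffs} gives $\sin(t-\frac{3u}{2})-\sin(t-\frac{u}{2})=-2\sin(\frac{u}{2})\cos(t-u)$, producing exactly the factor of $\sin(\frac{u}{2})$ that cancels the denominator.

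After this cancellation, the original integral reduces to $-\int_{-\pi}^{\pi}\int_0^{2\pi}f(t)\cos(t-u)\,dt\,du$. Interchanging the order of integration once more (the integrand is now continuous and bounded) and expanding $\cos(t-u)=\cos t\cos u+\sin t\sin u$, the $u$-integral becomes $\cos t\int_{-\pi}^{\pi}\cos u\,du+\sin t\int_{-\pi}^{\pi}\sin u\,du=0$, so the whole expression vanishes. The only delicate point is the Fubini justification at the singular locus $u=0$, which is exactly where the H\"older hypothesis on $f$ carries the proof.
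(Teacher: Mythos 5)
Your proof is correct, and it reorganises the argument in a way that is genuinely cleaner than the paper's, even though the trigonometric core is the same. The paper cannot integrate $f(t+w)$ and $f(t)$ against $\frac{\sin(t-\frac{w}{2})}{2\sin(\frac{w}{2})}$ separately near $w=0$ (each piece alone has a non-integrable $\abs{w}^{-1}$ singularity), so it truncates to $[\eps,\pi]$, performs the shift $\tilde t:=t+w$ there, passes to the limit $\eps\searrow 0$ by dominated convergence, and then repeats the whole computation on $[-\pi,0]$ via $\tilde w:=-w$. You avoid all of this by first establishing the integrable majorant $C\abs{u}^{\alpha-1}$ (exactly the paper's bound from \eqref{sinelb} and the \name{H\"older} hypothesis) and then invoking \name{Fubini} to do the $t$-integration first: for each fixed $u\neq 0$ the inner integral over $t$ is a difference of two finite integrals, so the shift $\tau:=t+u$ is unproblematic, and \eqref{trigdiffs} gives $\sin(t-\tfrac{3u}{2})-\sin(t-\tfrac{u}{2})=-2\sin(\tfrac{u}{2})\cos(t-u)$, cancelling the denominator exactly. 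The paper reaches the same quantity by the longer route of the triple-angle formulas \eqref{trigthrees}, \eqref{trigthreec} — its expression $2\sin(t)\cos(\tfrac{w}{2})\sin(\tfrac{w}{2})+\cos(t)\cos(w)$ is precisely $\cos(t-w)$ unsimplified — and then kills the two halves of the $w$-domain separately, whereas you observe at once that $\int_{-\pi}^{\pi}\cos(t-u)\,du=0$. The one point to state explicitly in a final write-up is the measurability/integrability justification for the first application of \name{Fubini} on the product domain (the integrand is continuous off the null set $u=0$ and the iterated integral of its absolute value is finite), which you have correctly identified as the only delicate step.
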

\begin{proof}
The proof is quite analog to the one of \thref{lemcircleint1}. Again we first consider
\begin{equation} \label{pci004}
\int_0^{2\pi} \int_{-\pi}^\pi \frac{(f(t+w)-f(t)) \sin(t-\frac{w}{2})}{2 \sin(\frac{w}{2})} \, dw \, dt.
\end{equation}
Observe that $\dfrac{\sin(t-\frac{x}{2})}{\sin(\frac{x}{2})}$ is $2\pi$-periodic since$\dfrac{\sin(t-\frac{x}{2}+\pi)}{\sin(\frac{x}{2}+\pi)}=\dfrac{-\sin(\pi-(\frac{x}{2}-t)}{-\sin(\frac{x}{2}+\pi)}$. The periodicity with respect to $t$ is obvious. With the same reasoning as in \thref{lemcircleint1} we get that integral \eqref{pci004} exists. Next we consider for $\eps >0$
\begin{equation} \label{pci005}
\int_0^{2\pi} \int_\eps^\pi \frac{(f(t+w)-f(t)) \sin(t-\frac{w}{2})}{2\sin(\frac{w}{2})} \, dw \, dt.
\end{equation}
The integrand is continuous on $[0,2\pi]\times[\eps,\pi]$ and its absolute value can be estimated by the constant $\norm{f}_{C^0} \kla{2\sin(\frac{\eps}{2})}^{-1}$. Furthermore,
\begin{align*}
& \int_0^{2\pi} \int_\eps^\pi \frac{(f(t+w)-f(t)) \sin(t-\frac{w}{2})}{2\sin(\frac{w}{2})} \, dw \, dt \\
= & \int_0^{2\pi} \int_\eps^\pi \kla{\frac{f(t+w) \sin((t+w)-\frac{3w}{2})}{2\sin(\frac{w}{2})} -\frac{f(t) \sin(t-\frac{w}{2})}{2\sin(\frac{w}{2})}} \, dw \, dt \\
= & \int_0^{2\pi} \int_\eps^\pi f(t) \kla{\frac{\sin(t-\frac{3w}{2})}{2\sin(\frac{w}{2})} -\frac{\sin(t-\frac{w}{2})}{2\sin(\frac{w}{2})}} \, dw \,dt,
\end{align*}
where the last step is due to
\begin{align*}
& \int_0^{2\pi} \int_\eps^\pi \frac{f(t+w) \sin((t+w)-\frac{3w}{2})}{2\sin(\frac{w}{2})} \, dw \, dt & = & \int_\eps^\pi \int_0^{2\pi} \frac{f(t+w) \sin((t+w)-\frac{3w}{2})}{2\sin(\frac{w}{2})} \, dt \, dw \\
= & \int_\eps^\pi \int_w^{2\pi+w} \frac{f(\tilde{t}) \sin(\tilde{t}-\frac{3w}{2})}{2\sin(\frac{w}{2})} \, d\tilde{t} \, dw & = & \int_\eps^\pi \int_0^{2\pi} \frac{f(t) \sin(t-\frac{3w}{2})}{2\sin(\frac{w}{2})} \, dt \, dw \\
= & \int_0^{2\pi} \int_\eps^\pi \frac{f(t) \sin(t-\frac{3w}{2})}{2\sin(\frac{w}{2})} \, dw \,dt,
\end{align*}
using the substitution $\tilde{t}:=t+w$. Next we apply \eqref{trigthrees} and \eqref{trigthreec} to observe that
\begin{align*}
\frac{\sin(t-\frac{w}{2})}{2 \sin(\frac{w}{2})} &= \frac{\sin(t) \cos(\frac{w}{2})}{2 \sin(\frac{w}{2})} -\frac{1}{2} \cos(t),\\
\frac{\sin(t-3\frac{w}{2})}{2 \sin(\frac{w}{2})} &= \frac{\sin(t) \cos(3\frac{w}{2})}{2 \sin(\frac{w}{2})} - \frac{\cos(t) \sin(3\frac{w}{2})}{2 \sin(\frac{w}{2})} \\
&= \frac{\sin(t) \cos(\frac{w}{2}) \kla{1-4\sin(\frac{w}{2})^2}}{2 \sin(\frac{w}{2})} - \frac{\cos(t) \sin(\frac{w}{2}) \kla{4\cos(\frac{w}{2})^2-1}}{2 \sin(\frac{w}{2})} \\
&= \frac{\sin(t) \cos(\frac{w}{2})}{2 \sin(\frac{w}{2})} -2 \sin(t) \cos(\frac{w}{2}) \sin(\frac{w}{2}) -2 \cos(t) \cos(\frac{w}{2})^2+\frac{1}{2} \cos(t).
\end{align*}
Therefore, \eqref{pci005} simplifies to
\begin{align}
& \int_0^{2\pi} \int_\eps^\pi f(t) \kla{-2 \sin(t) \cos(\frac{w}{2}) \sin(\frac{w}{2}) -\cos(t) \kla{2\cos(\frac{w}{2})^2-1}} \, dw \,dt \notag\\
= & \int_0^{2\pi} \int_\eps^\pi -f(t) \kla{2 \sin(t) \cos(\frac{w}{2}) \sin(\frac{w}{2}) +\cos(t) \cos(w)} \, dw \,dt \notag\\
\xrightarrow{\eps \rightarrow 0} & \int_0^{2\pi} \int_0^\pi -f(t) \kla{2 \sin(t) \cos(\frac{w}{2}) \sin(\frac{w}{2}) +\cos(t) \cos(w)} \, dw \,dt, \label{pci006}
\end{align}
applying \name{Lebesgue}'s dominated convergence theorem, since the integrand can be estimated by $3 \norm{f}_{C_0}$. Moreover, we conclude that
\begin{multline} \label{pci007}
\int_0^{2\pi} \int_\eps^\pi \frac{(f(s+w)-f(s)) \sin(t-\frac{w}{2})}{\sin(\frac{w}{2})} \, dw \, ds \\
\xrightarrow{\eps\searrow 0} \int_0^{2\pi} \int_0^\pi \frac{(f(s+w)-f(s)) \sin(t-\frac{w}{2})}{\sin(\frac{w}{2})} \, dw \, ds,
\end{multline}
using the same arguments than as \eqref{pci003}. Combining \eqref{pci006} and \eqref{pci007} we get
\begin{multline} \label{pci008}
\int_0^{2\pi} \int_0^\pi \frac{(f(s+w)-f(s)) \sin(t-\frac{w}{2})}{\sin(\frac{w}{2})} \, dw \, ds = \\
\int_0^{2\pi} \int_0^\pi -f(t) \kla{2 \sin(t) \cos(\frac{w}{2}) \sin(\frac{w}{2}) +\cos(t) \cos(w)} \, dw \,dt.
\end{multline}
Analogously we get
\begin{align}
& \int_0^{2\pi} \int_{-\pi}^0 \frac{(f(t+w)-f(t)) \sin(t-\frac{w}{2})}{2\sin(\frac{w}{2})} \, dw \, dt \notag\\
= & \int_0^{2\pi} \int_{-\pi}^0 -f(t) \kla{2 \sin(t) \cos(\frac{w}{2}) \sin(\frac{w}{2}) +\cos(t) \cos(w)} \, dw \,dt \notag\\
= & \int_0^{2\pi} \int_0^\pi -f(t) \kla{2 \sin(t) \cos(\frac{\tilde{w}}{2}) (-\sin(\frac{\tilde{w}}{2})) +\cos(t) \cos(\tilde{w})} \, d\tilde{w} \,dt \notag\\
= & \int_0^{2\pi} \int_0^\pi -f(t) \kla{-2 \sin(t) \cos(\frac{w}{2}) \sin(\frac{w}{2}) +\cos(t) \cos(w)} \, dw \,dt, \label{pci009}
\end{align}
where we substituted $\tilde{w}:=-w$. Furthermore, using \eqref{pci008} and \eqref{pci009} we see that
\begin{align*}
& \int_0^{2\pi} \int_{-\pi}^\pi \frac{(f(t+w)-f(t)) \sin(t-\frac{w}{2})}{2\sin(\frac{w}{t})} \, dw \, dt \\
= & \int_0^{2\pi} \int_0^\pi -2\,f(t) \cos(t) \cos(w) \, dw \,dt \\
= & \int_0^{2\pi} -2\,f(t) \cos(t) \Bigl[ \sin(w) \Bigr]_0^\pi \, dt = 0.
\end{align*}
And finally,
\begin{align*}
0 &= \int_0^{2\pi} \int_{-\pi}^\pi \frac{(f(t+w)-f(t)) \sin(t-\frac{w}{2})}{2\sin(\frac{w}{t})} \, dw \, dt\\
&= \int_0^{2\pi} \int_{t-\pi}^{t+\pi} \frac{(f(s)-f(t)) \sin(t-\frac{s-t}{2})}{2\sin(\frac{s-t}{t})} \, ds \, dt\\
&= \int_0^{2\pi} \int_{0}^{2\pi} \frac{(f(s)-f(t)) \sin(t-\frac{s-t}{2})}{2\sin(\frac{s-t}{t})} \, ds \, dt,
\end{align*}
where we used the substitution $s:=t+w$.
\end{proof}
\begin{lem} \mylabel{lemcircleint3}
Let $f$ be a $2\pi$-periodic function that is \name{H\"{o}lder}-continuous to some exponent $0 < \alpha \leq 1$. Then
\begin{equation}
\int_0^{2\pi} \int_0^{2\pi} \frac{(f(s)-f(t)) \cos(t-\frac{s-t}{2})}{2\sin(\frac{s-t}{2})} \, ds \, dt = 0.
\end{equation}
\end{lem}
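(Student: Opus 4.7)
The plan is to mimic the proof of \thref{lemcircleint2} almost verbatim. First I would substitute $s=t+w$ and use $2\pi$-periodicity (as in \thref{propshift}) to rewrite the claim as
\begin{equation*}
\int_0^{2\pi} \int_{-\pi}^{\pi} \frac{(f(t+w)-f(t))\cos(t-\tfrac{w}{2})}{2\sin(\tfrac{w}{2})}\,dw\,dt = 0.
\end{equation*}
Existence of this integral follows by exactly the same Hölder-based majorant $C\abs{w}^{-(1-\alpha)}$ used in the previous lemma, since $\cos$ is bounded and $\abs{\sin(w/2)}\geq\abs{w}/\pi$ on $[-\pi,\pi]$.

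Next, for fixed $\eps>0$, I would split the numerator and substitute $\tilde t=t+w$ in the $f(t+w)$ term, using $2\pi$-periodicity of both $f$ and the kernel, to obtain
\begin{equation*}
\int_0^{2\pi}\int_\eps^\pi \frac{(f(t+w)-f(t))\cos(t-\tfrac{w}{2})}{2\sin(\tfrac{w}{2})}\,dw\,dt = \int_0^{2\pi}f(t)\int_\eps^\pi \frac{\cos(t-\tfrac{3w}{2})-\cos(t-\tfrac{w}{2})}{2\sin(\tfrac{w}{2})}\,dw\,dt.
\end{equation*}
Now comes the small computation that distinguishes this lemma from the previous one: expanding via \eqref{trigaddc}, \eqref{trigthrees}, \eqref{trigthreec} and \eqref{trigtwoc} the numerator quotient simplifies to $\sin(t)\cos(w)-\cos(t)\sin(w)$ (i.e.\ $\sin(t-w)$). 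In particular there is no singular term left, so dominated convergence (bounded integrand, majorant $3\norm{f}_{C^0}$) lets me pass $\eps\searrow0$ and obtain
\begin{equation*}
\int_0^{2\pi}\int_0^\pi \frac{(f(t+w)-f(t))\cos(t-\tfrac{w}{2})}{2\sin(\tfrac{w}{2})}\,dw\,dt=\int_0^{2\pi}f(t)\int_0^\pi\bigl(\sin(t)\cos(w)-\cos(t)\sin(w)\bigr)dw\,dt.
\end{equation*}

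Finally I would treat the half with $w\in[-\pi,0]$ analogously. Substituting $\tilde w:=-w$ flips the sign of the $\cos(t)\sin(w)$ contribution but leaves the $\sin(t)\cos(w)$ part untouched, exactly as in the argument leading to \eqref{pci009}. Adding the two halves, the $\cos(t)\sin(w)$ terms cancel and the remaining piece is
\begin{equation*}
\int_0^{2\pi}f(t)\int_0^\pi 2\sin(t)\cos(w)\,dw\,dt = \int_0^{2\pi}2f(t)\sin(t)\bigl[\sin(w)\bigr]_0^\pi dt = 0.
\end{equation*}
Reversing the initial substitution $w=s-t$ completes the proof. The only real obstacle is the trigonometric simplification of the difference $\cos(t-\tfrac{3w}{2})-\cos(t-\tfrac{w}{2})$; once that yields a smooth integrand the rest of the argument is routine parallel to \thref{lemcircleint2}.
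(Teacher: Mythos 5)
Your proposal is correct and follows exactly the paper's own argument: the same reduction to $\int_0^{2\pi}\int_{-\pi}^{\pi}$, the same $\eps$-truncation and substitution $\tilde t=t+w$, the same simplification of $\bigl(\cos(t-\tfrac{3w}{2})-\cos(t-\tfrac{w}{2})\bigr)/\bigl(2\sin(\tfrac{w}{2})\bigr)$ to $\sin(t)\cos(w)-\cos(t)\sin(w)$ via \eqref{trigaddc}, \eqref{trigthrees}, \eqref{trigthreec}, and the same cancellation of the odd-in-$w$ term against the $[-\pi,0]$ half, leaving $\int_0^\pi\cos(w)\,dw=0$. No gaps.
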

\begin{proof}
This proof is completely analogous to the proof of \thref{lemcircleint2}. The first differences appear in the following equations
\begin{align*}
\frac{\cos(t-\frac{w}{2})}{2 \sin(\frac{w}{2})} &= \frac{\cos(t) \cos(\frac{w}{2})}{2 \sin(\frac{w}{2})} +\frac{1}{2} \sin(t),\\
\frac{\cos(t-3\frac{w}{2})}{2 \sin(\frac{w}{2})} &= \frac{\cos(t) \cos(3\frac{w}{2})}{2 \sin(\frac{w}{2})} + \frac{\sin(t) \sin(3\frac{w}{2})}{2 \sin(\frac{w}{2})} \\
&= \frac{\cos(t) \cos(\frac{w}{2}) \kla{1-4\sin(\frac{w}{2})^2}}{2 \sin(\frac{w}{2})} + \frac{\sin(t) \sin(\frac{w}{2}) \kla{4\cos(\frac{w}{2})^2-1}}{2 \sin(\frac{w}{2})} \\
&= \frac{\cos(t) \cos(\frac{w}{2})}{2 \sin(\frac{w}{2})} -2 \cos(t) \cos(\frac{w}{2}) \sin(\frac{w}{2}) +2 \sin(t) \cos(\frac{w}{2})^2-\frac{1}{2} \sin(t),
\end{align*}
using \eqref{trigthrees} and \eqref{trigthreec}. Therefore, the analogue of \eqref{pci005} simplifies to
\begin{align*}
& \int_0^{2\pi} \int_\eps^\pi f(t) \kla{-2 \cos(t) \cos(\frac{w}{2}) \sin(\frac{w}{2}) +\sin(t) \kla{2\cos(\frac{w}{2})^2-1}} \, dw \,dt\\
= & \int_0^{2\pi} \int_\eps^\pi -f(t) \kla{2 \cos(t) \cos(\frac{w}{2}) \sin(\frac{w}{2}) -\sin(t) \cos(w)} \, dw \,dt\\
\xrightarrow{\eps \rightarrow 0} & \int_0^{2\pi} \int_0^\pi -f(t) \kla{2 \cos(t) \cos(\frac{w}{2}) \sin(\frac{w}{2}) -\sin(t) \cos(w)} \, dw \,dt.
\end{align*}
And consequently we obtain
\begin{multline*}
\int_0^{2\pi} \int_0^\pi \frac{(f(s+w)-f(s)) \sin(t-\frac{w}{2})}{\cos(\frac{w}{2})} \, dw \, ds=\\
\int_0^{2\pi} \int_0^\pi -f(t) \kla{2 \cos(t) \cos(\frac{w}{2}) \sin(\frac{w}{2}) -\sin(t) \cos(w)} \, dw \,dt.
\end{multline*}
Again analogously we get
\begin{align*}
& \int_0^{2\pi} \int_{-\pi}^0 \frac{(f(t+w)-f(t)) \cos(t-\frac{w}{2})}{2\sin(\frac{w}{2})} \, dw \, dt\\
= & \int_0^{2\pi} \int_{-\pi}^0 -f(t) \kla{2 \cos(t) \cos(\frac{w}{2}) \sin(\frac{w}{2}) -\sin(t) \cos(w)} \, dw \,dt\\
= & \int_0^{2\pi} \int_0^\pi -f(t) \kla{2 \cos(t) \cos(\frac{\tilde{w}}{2}) (-\sin(\frac{\tilde{w}}{2})) -\sin(t) \cos(\tilde{w})} \, d\tilde{w} \,dt\\
= & \int_0^{2\pi} \int_0^\pi -f(t) \kla{-2 \cos(t) \cos(\frac{w}{2}) \sin(\frac{w}{2}) -\sin(t) \cos(w)} \, dw \,dt,
\end{align*}
where we substituted $\tilde{w}:=-w$. Finally, we gain
\begin{equation*}
\int_0^{2\pi} \int_{-\pi}^\pi \frac{(f(t+w)-f(t)) \cos(t-\frac{w}{2})}{2\sin(\frac{w}{2})} \, dw \, dt = \int_0^{2\pi} \int_0^\pi 2\,f(t) \sin(t) \cos(w)\, dw \,dt = 0.
\end{equation*}
This completes the proof.
\end{proof}

\begin{lem} \mylabel{lemFVar_scalesec}
Let $E$ be a functional for a function space $X$. Let further $\gamma$ and $\Phi$ be functions in $X$ such that the first variation of $E$ at $\gamma$ in direction $\Phi$ exists. Then we have for all $r>0$
\begin{equation*}
\delta E(\gamma, r\Phi) = r \, \delta E(\gamma, \Phi )
\end{equation*}
\end{lem}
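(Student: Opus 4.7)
The plan is to apply the definition of the first variation directly from \thref{defFVar} and perform a substitution in the limit. I would start by writing out
\begin{equation*}
\delta E(\gamma, r\Phi) = \lim_{\tau\to 0} \frac{E(\gamma+\tau(r\Phi)) - E(\gamma)}{\tau} = \lim_{\tau\to 0} \frac{E(\gamma+(r\tau)\Phi) - E(\gamma)}{\tau},
\end{equation*}
using the fact that $r\tau\Phi = (r\tau)\Phi$ in the vector space $X$.

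Next I would introduce the substitution $\tilde{\tau} := r\tau$. Since $r>0$ is a fixed positive constant, the map $\tau \mapsto r\tau$ is a homeomorphism of a small neighbourhood of $0$ onto a small neighbourhood of $0$, so $\tau \to 0$ if and only if $\tilde{\tau}\to 0$. Substituting $\tau = \tilde{\tau}/r$ in the denominator yields
\begin{equation*}
\delta E(\gamma,r\Phi) = \lim_{\tilde{\tau}\to 0} \frac{E(\gamma + \tilde{\tau}\Phi) - E(\gamma)}{\tilde{\tau}/r} = r \lim_{\tilde{\tau}\to 0} \frac{E(\gamma+\tilde{\tau}\Phi) - E(\gamma)}{\tilde{\tau}} = r\,\delta E(\gamma,\Phi),
\end{equation*}
where the last equality holds precisely because the first variation of $E$ at $\gamma$ in direction $\Phi$ exists by assumption, so the limit can be identified with $\delta E(\gamma,\Phi)$.

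There is no real obstacle here; the only subtlety worth mentioning is that one must justify that, whenever $\{\gamma + \tau(r\Phi) : \tau\in[-\tau_0,\tau_0]\}$ lies in the admissible set $V$ on which $E$ is defined, the corresponding family $\{\gamma + \tilde\tau\Phi : \tilde\tau\in[-r\tau_0, r\tau_0]\}$ also lies in $V$ (and vice versa). This follows by identity, since the two families describe the same subset of $X$. Hence the substitution is legitimate and the claim follows immediately.
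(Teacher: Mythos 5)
Your argument is correct and is essentially identical to the paper's own proof: both write out the difference quotient from \thref{defFVar}, pull out the factor $r$, and substitute $\rho := r\tau$ in the limit. The extra remark about the admissible set $V$ is a harmless (and accurate) observation that the paper omits.
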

\begin{proof}
From the \thref{defFVar} of the first variation we get
\begin{alignat*}{2}
\delta E(\gamma, r\Phi) &= \lim_{\tau \rightarrow 0} \frac{E(\gamma+\tau(r \Phi)) - E(\gamma)}{\tau} &&= r \, \lim_{\tau \rightarrow 0} \frac{E(\gamma+(r\tau)\Phi) - E(\gamma)}{r\tau} \\
&= r \, \lim_{\rho \rightarrow 0} \frac{E(\gamma+\rho \Phi) - E(\gamma)}{\rho} &&= r \, \delta E(\gamma, \Phi ),
\end{alignat*}
where we replace $\tau$ by $\rho := r \tau$ in the last line.
\end{proof}
\begin{rem} \mylabel{remFVar_linearsec}
Observe, that it is easy to see that for the energies we are considering here we have that the first variation is even linear in the second argument.
\end{rem}
Using the \thref{lemFVar_scalesec} we get the following statement
\begin{lem} \mylabel{lemFVar_scalefir}
Let $E$ be a functional for a function space $X$. Let further $\gamma$ and $\Phi$ be functions in $X$ such that the first variation of $E$ at $\gamma$ in direction $\Phi$ exists. Moreover, let $E$ scales like $E(r \gamma) = r^\alpha E(\gamma)$ for all $r>0$ and all admissible $\gamma\in X$. Then we have for all $r>0$
\begin{equation}
\delta E(r\gamma,\Phi) = r^{\alpha-1} \, \delta E(\gamma, \Phi).
\end{equation}
\end{lem}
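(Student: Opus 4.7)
The plan is to reduce the claim to \thref{lemFVar_scalesec} by cleverly choosing a perturbation direction that lets the scaling hypothesis $E(r\gamma)=r^\alpha E(\gamma)$ factor out cleanly. The key observation is the algebraic identity $r\gamma + \tau(r\Phi) = r(\gamma+\tau\Phi)$, which lets me transfer the scalar $r$ from inside the $E$ argument to outside via the scaling law.

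First I would compute the first variation of $E$ at $r\gamma$ in direction $r\Phi$ in two ways. On the one hand, by the definition of the first variation (\thref{defFVar}) together with the scaling behaviour of $E$,
\begin{equation*}
\delta E(r\gamma, r\Phi) \;=\; \lim_{\tau\to 0}\frac{E(r\gamma+\tau r\Phi)-E(r\gamma)}{\tau} \;=\; \lim_{\tau\to 0}\frac{r^{\alpha}E(\gamma+\tau\Phi)-r^{\alpha}E(\gamma)}{\tau} \;=\; r^{\alpha}\,\delta E(\gamma,\Phi),
\end{equation*}
where the existence of the final limit is guaranteed by hypothesis. On the other hand, \thref{lemFVar_scalesec} applied at the base point $r\gamma$ with direction $\Phi$ scaled by the factor $r$ yields
\begin{equation*}
\delta E(r\gamma, r\Phi) \;=\; r\,\delta E(r\gamma, \Phi).
\end{equation*}

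Equating the two expressions gives $r\,\delta E(r\gamma,\Phi)=r^{\alpha}\,\delta E(\gamma,\Phi)$, and dividing by $r>0$ produces the claimed identity $\delta E(r\gamma,\Phi)=r^{\alpha-1}\,\delta E(\gamma,\Phi)$. There is no real obstacle here: the only small subtlety is to ensure that the perturbation $\gamma+\tau\Phi$ is admissible for all sufficiently small $\tau$, which is implicit in the hypothesis that $\delta E(\gamma,\Phi)$ exists, and that $r\gamma+\sigma(r\Phi)$ remains admissible for small $\sigma$, which is equivalent to the same admissibility after the common factor $r$ is extracted. Everything else is a one-line manipulation plus a single invocation of \thref{lemFVar_scalesec}.
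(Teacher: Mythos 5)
Your argument is correct and is essentially the paper's proof rearranged: the paper writes $E(r\gamma+\tau\Phi)=r^{\alpha}E\kla{\gamma+\tau\frac{\Phi}{r}}$ and then invokes \thref{lemFVar_scalesec} on the direction $\frac{\Phi}{r}$, which is exactly the combination of the scaling law with homogeneity in the second argument that you use. The only cosmetic point is that when you invoke \thref{lemFVar_scalesec} at the base point $r\gamma$ you should apply it to the direction $r\Phi$ with scale factor $\frac{1}{r}$ (whose first variation you have already established in your first display), rather than to $\Phi$ with factor $r$, since the existence of $\delta E(r\gamma,\Phi)$ is part of what is being proved.
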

\begin{proof}
From the \thref{defFVar} of the first variation we get
\begin{alignat*}{2}
\delta E(r\gamma,\Phi) &= \frac{d}{d\tau} \eins{\tau =0} E(r\gamma+\tau\Phi) &&= r^\alpha \, \frac{d}{d\tau} \eins{\tau =0} E\kla{\gamma +\tau \frac{\Phi}{r}} \\
&= r^\alpha \, \delta E\kla{\gamma, \frac{\Phi}{r}} &&= r^{\alpha-1} \, \delta E(\gamma, \Phi),
\end{alignat*}
where we used \thref{lemFVar_scalesec} in the last equation.
\end{proof}
\begin{cor} \mylabel{corFVar_stpts}
Let $X$ be a function space, $\alpha\in\R$ and $E$ be a functional that scales like $E(r \gamma) = r^\alpha E(\gamma)$ for all $r>0$ and all admissible $\gamma\in X$. Assume there exists a stationary point $\gamma \in X$ of $E$, i.e. $\delta E(\gamma,\Phi)=0$ for all functions $\Phi\in X$, then we have
\begin{equation*}
\alpha=0 \qquad \textbf{or} \qquad E(\gamma)=0.
\end{equation*}
In this case we have that $\delta E(r\gamma, \Phi)=0$ for all $\Phi\in X$ and all $r>0$.
\end{cor}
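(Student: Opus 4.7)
The plan is to exploit the stationarity by testing the first variation in the very special direction $\Phi := \gamma$. Since $\gamma \in X$ is assumed to be a stationary point, $\delta E(\gamma, \Phi) = 0$ for every admissible $\Phi \in X$, and in particular for $\Phi = \gamma$ itself, so that $\delta E(\gamma, \gamma) = 0$.

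On the other hand, this specific first variation can be computed explicitly from the scaling hypothesis. For $\tau$ in a small neighbourhood of $0$ we have $\gamma + \tau \gamma = (1+\tau)\gamma$ with $1+\tau > 0$, and therefore by the scaling assumption
\begin{equation*}
E(\gamma + \tau\gamma) \;=\; E((1+\tau)\gamma) \;=\; (1+\tau)^\alpha E(\gamma).
\end{equation*}
Differentiating this one-variable expression in $\tau$ at $\tau = 0$ yields
\begin{equation*}
\delta E(\gamma, \gamma) \;=\; \frac{d}{d\tau}\Big\arrowvert_{\tau=0} (1+\tau)^\alpha E(\gamma) \;=\; \alpha\, E(\gamma).
\end{equation*}
Comparing the two computations gives $\alpha\, E(\gamma) = 0$, which is exactly the dichotomy $\alpha = 0$ or $E(\gamma) = 0$.

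For the supplementary claim, fix $r>0$ and $\Phi\in X$. Since $\gamma$ is stationary and $E$ scales as assumed, \thref{lemFVar_scalefir} directly applies:
\begin{equation*}
\delta E(r\gamma, \Phi) \;=\; r^{\alpha-1}\,\delta E(\gamma,\Phi) \;=\; 0.
\end{equation*}
The statement is essentially a one-line consequence of the scaling identity, and I do not anticipate any serious obstacle: the only point one has to verify is that $\gamma$ is admissible as a perturbation direction (so that the stationarity equation may be applied to $\Phi=\gamma$) and that $(1+\tau)\gamma$ belongs to the domain of $E$ for small $\tau$, both of which are implicit in the hypothesis that $E$ satisfies the scaling identity on $X$.
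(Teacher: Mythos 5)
Your proof is correct and uses the same key idea as the paper: testing the stationarity condition in the direction $\Phi=\gamma$ and computing $\delta E(\gamma,\gamma)=\frac{d}{d\tau}\big|_{\tau=0}(1+\tau)^\alpha E(\gamma)=\alpha E(\gamma)$ via the scaling identity. The paper carries out the identical computation at $r\gamma$ for general $r>0$ (obtaining $\alpha r^{\alpha-1}E(\gamma)=0$), so your version at $r=1$ is just a slightly cleaner instance of the same argument, and your use of \thref{lemFVar_scalefir} for the supplementary claim matches the paper as well.
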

\begin{rem} \mylabel{remnostpts}
\begin{enumabc}
\item Observe that if $\gamma$ is a (local) minimizer or maximizer of $E$ then this statement is trivial, because if $E(\gamma)\neq 0$, the energy can be increased or decreased by scaling up or down the knot.
\item In our situation it is the case that
\begin{equation*}
\M_p(\gamma) > 0,
\end{equation*}
because if it were zero then the integrand would need to be zero as it is a nonnegative function. This would imply that for each triple $s,t,\sigma \in \R^3\setminus \Ns$
\begin{equation*}
\abs{\gamma'(s)} \abs{\gamma'(t)} \abs{\gamma'(\sigma)}=0 \quad \text{or} \quad \abs{(\gamma(t)-\gamma(s)) \wedge (\gamma(\sigma)-\gamma(s))}=0,
\end{equation*}
where the first one is impossible, because $\gamma$ is regular. This would guarantee that for each triple $s,t,\sigma \in \R^3\setminus \Ns$ there exists a constant $c=c(s,t,\sigma)\in\R$ such that
\begin{alignat*}{2}
&& (\gamma(t)-\gamma(s)) & = c\,(\gamma(\sigma)-\gamma(s))\\
\Leftrightarrow\qquad && \gamma(t) & = c\,\gamma(\sigma) + (1-c)\,\gamma(s).
\end{alignat*}
Therefore, by fixing $s,\sigma\in\R^2$, with $(s-t)\notin 2\pi\Z$ we would gain that $\gamma(t)$ lies on the straight line through $\gamma(s)$ and $\gamma(\sigma)$ for all $t\in\R$. This is not possible since $\gamma$ is closed and simple. Consequently, the \emph{only} variant of our energy that could have stationary points is the one that is scale invariant! In this case with $\gamma$ all scales of $\gamma$ are stationary points. Furthermore, \name{Strzelecki}, \name{Szuma\'nska} and \name{von der Mosel} mentioned in \cite[Remark 4.6.]{StrSzvdM10} that minima of the $\M_p$-energy are achieved in prescribed knot classes. More precisely, they show the existence of a knot $\Gamma$ in $C_{L,k}:=C_{L,k}(\R/L\Z,\R^3)$, which was defined in \eqref{knotclasslen}, parametrized by arclength such that $\M_p(\Gamma) = \inf_{C_{L,k}} \M_p(\cdot)$. Applying \cite[Theorem 1.1]{blatt_menger} we obtain that the energy of $\Gamma$ is finite, since each knot class contains a smooth representative, and moreover, that $\Gamma$ actually is in $W^{2-\frac{2}{p},p}(\R/L\Z) \cap C_{L,k}$. Next we will prove that the same $\Gamma$ is also a minimizer of the $\E_p$-energy in $C_{k}(\R/L\Z)$, which was defined in \eqref{knotclasslen} as well. Assume there exists another knot $\widetilde{\Gamma}$ with a lower $\E_p$-energy value. Then $\widehat{\Gamma} := \frac{L}{\Le(\widetilde{\Gamma})} \widetilde{\Gamma}$ has the same energy level since $\E_p$ is invariant under scaling. Moreover, in contradiction to the minimality of $\Gamma$, we have $\M_p(\widehat{\Gamma}) < \M_p(\Gamma)$, hence $\widehat{\Gamma} \in C_{L,k}$ and consequently $\Le(\widehat{\Gamma})=\Le(\Gamma)=L$. In \thref{remdiff} after \thref{thmdiff} we proved that $\Gamma$ is a stationary point of $\E_p$.
\item Observe that in general an energy $E$ does \emph{not} have to be invariant under scaling in order to have a stationary point with a non-zero energy value. Necessary for the existence of such a point is that there is no $\alpha \in \R\setminus\menge{0}$ such that $E(r\gamma)=r^\alpha E(\gamma)$ for all $r>0$. For instance consider the energy \eqref{energy_pMp+L} and the associated \thref{mpcirclelambda}.
\end{enumabc}
\end{rem}
\begin{proof}[Proof of \thref{corFVar_stpts}]
Let $r>0$. As a consequence of the assumptions we get from \thref{lemFVar_scalefir} $\delta E(r\gamma,\Phi)=0$ for all functions $\Phi\in X$
\begin{equation*}
\delta E(r\gamma,\gamma) = \lim_{\tau\rightarrow 0} \frac{E(r\gamma+\tau\gamma)-E(r\gamma)}{\tau} = \lim_{\tau\rightarrow 0} \frac{(r+\tau)^\alpha-r^\alpha}{\tau} E(\gamma) = 0.
\end{equation*}
We consider the \name{Taylor}-series of $\tau \mapsto (r+\tau)^\alpha$ around $0$ that reads $r^\alpha + \alpha (r+\xi(0,\tau))^{\alpha-1}\,\tau$, where $(0,\tau) \ni \xi(0,\tau) \xrightarrow{\tau\rightarrow 0} 0$. Therefore,
\begin{equation*}
\lim_{\tau\rightarrow 0} \frac{(r+\tau)^\alpha-r^\alpha}{\tau}\, E(\gamma) = \lim_{\tau\rightarrow 0} \alpha\,(r+\xi(0,\tau))^{\alpha-1}\,E(\gamma) = \alpha\, r^\alpha E(\gamma) = 0.
\end{equation*}
This is exactly the case if $E(\gamma)=0$ or $\alpha=0$.
\end{proof}

In our situation we have to solve an equation like this
\begin{equation}
\int_0^{2\pi} \partial_t \gamma(x,t) \abs{\gamma'(x,t)} \, \Phi(x) \, dx + \delta E(\gamma, \Phi ) = 0.
\end{equation}
Now we compute
\begin{align}
\int_0^{2\pi} \partial_t (r\gamma)(x,t) \abs{(r \gamma')(x,t)} \, \Phi(x) \, dx &= r^2 \int_0^{2\pi} \partial_t \gamma(x,t) \abs{\gamma'(x,t)} \, \Phi(x) \, dx \notag\\
&= r^2 \kla{- \delta E(\gamma,\Phi)} \notag\\
&= r^{3-\alpha} \kla{- \delta E(r\gamma,\Phi)},
\end{align}
by applying \thref{lemFVar_scalefir} in the last step. Furthermore, we can conclude
\begin{equation} \label{firstscale}
\int_0^{2\pi} \frac{\partial_t(r\gamma)(x,t)}{r^3} \abs{(r\gamma)'(x,t)} \, \Phi(x) \, dx + \delta \E_p(r\gamma, \Phi) = 0,
\end{equation}
which implies that we may multiply the time step size by a factor of $r^3$ when scaling the knot by a factor of $r$.

\section{Modified integral \name{Menger} curvature energies}
We consider this modification of the energy
\begin{equation} \label{energy_pMp+L}
\E_p^\lambda (\gamma) := \kla{\M_p(\gamma)}^{\frac{1}{p}} + \lambda \Le(\gamma), \qquad \lambda > 0.
\end{equation}
A result of the calculus of variations is that if $\gamma$ is a minimizer of $(\M_p(\gamma))^{\frac{1}{p}}$ under the restriction that $\Le(\gamma)=c$ for a fixed given $c>0$ then there exists a $\lambda >0$ such that $\gamma$ is a minimizer of \eqref{energy_pMp+L}.\\
In a gradient flow it would be necessary to evaluate that $\lambda$ in every time step. In order to achieve constant length during the flow one could alternatively rescale the knot after every step, since the energy stays the same. Of course the resulting flow would not be exactly the same as using a \name{Lagrange} multiplier, but it would be faster. In addition it is interesting to consider the energy $\E_p^\lambda$ with a fixed $\lambda>0$. Observe that in this case we do not have any scaling behaviour as mentioned above, because $\E_p^\lambda(r\gamma) = r^{\frac{3}{p}-1} (\M_p(\gamma))^\frac{1}{p} + r \lambda \Le(\gamma)$. This is equal to $r \E_p^\lambda(\gamma)$ if and only if $p=\frac{3}{2}$, but we are only interested in $p>3$. Therefore, it is reasonable to search for roots of the first variation of $\E_p^\lambda$ among circles with different radii. For the first variation we get
\begin{align*}
\delta \E_p^\lambda(\gamma,\Phi) &= \frac{1}{p} \kla{\M_p(\gamma)}^{\frac{1}{p}-1} \delta\M_p(\gamma,\Phi) + \lambda\, \delta \Le(\gamma,\Phi).
\end{align*}
Let $\Theta$ be the unit circle and $r>0$, using \thref{lemFVar_scalefir} we get
\begin{align*}
\delta \E_p^\lambda(r\Theta,\Phi) &= \frac{1}{p} \kla{r^{3-p} \M_p(\Theta)}^{\frac{1}{p}-1} r^{2-p}\, \delta\M_p(\Theta,\Phi) + \lambda\, \delta \Le(\Theta,\Phi) \\
&= \int_0^{2\pi} \kla{-\frac{p-3}{p}\, (2\pi)^\frac{3-p}{p}\, r^\frac{3-2p}{p} + \lambda} \alpha_1(s) \, ds.
\end{align*}
This expression is zero if
\begin{align*}
\lambda &= (2\pi)^\frac{3-p}{p}\, \frac{p-3}{p}\, r^\frac{3-2p}{p} \\
\Longleftrightarrow \qquad r &= \kla{(2\pi)^\frac{p-3}{p}\, \frac{p}{p-3}\, \lambda}^\frac{p}{3-2p}.
\end{align*}
\begin{exmp} \mylabel{mpcirclelambda}
In this experiment we start with the unit circle $\Theta$, which has the length $\Le(\Theta) = 2\pi \approx 6.28319$, and want it to grow until it has a length of $\Le(r\Theta)=7.0$. We choose $p=4$ and set $r=\frac{7}{2\pi}$ accordingly to our issue and therefore, we gain
\begin{equation*}
\lambda = \frac{\pi}{2\cdot 7^\frac{5}{4}} \approx 0.13796.
\end{equation*}
The flow of this example is presented in the example part of this thesis (see Figure~\ref{exmpcirclelambda}).
\end{exmp}

\cleartooddpage[\thispagestyle{empty}]

\chapter{Numerical gradient flow}
In this chapter we want to discretize the first variation that we calculated and discussed in the last chapter in order to get a numerical gradient flow for integral \name{Menger} curvature.

\section{The \name{Ritz-Galerkin} Method} \label{discspace}
A \emph{gradient flow} moves a curve $\gamma$ along the steepest descent concerning a given energy $E$. Therefore, we need to expand the function space by a time component. For $T>0$ we regard a given knot as $\gamma: [0,T] \rightarrow W_{\text{s,r}}^{2-\frac{2}{p},p}(\R/2\pi\Z,\R^3)$, using notation \eqref{defvarclass}. By $\gamma'$ we denote the function $t\mapsto \frac{d}{dx} \gamma(t)$. We consider the so-called \emph{weak formulation} of our problem
\begin{equation*}
\left\langle \frac{d}{dt} \gamma(t)\;\abs{\gamma'(t)}, \Phi \right\rangle_{L^2} + \frac{d}{d\tau} E (\gamma(t) + \tau \Phi) \eins{\tau=0} = 0 \quad \text{for all } \Phi \in W^{2-\frac{2}{p},p}(\R/2\pi\Z,\R^3).
\end{equation*}

The main idea of the \name{Ritz-Galerkin} (cf. \cite[8.1]{hackbusch}) method is to choose an appropriate approximation space $V$ of finite dimension and to replace the full function space by $V$. We will turn to the approximation space in the next section. Further, we need that the first variation of $E$ is linear in its second argument, see \thref{remFVar_linearsec}. If $\varphi_i$ for $i=1,\dots,M$ and $M\in\N$ are the basis functions, we represent our knot by
\begin{equation*}
\gamma(t) := \sum_{i=1}^{M} c_i(t)\;\varphi_i, \qquad f_i(t) \in \R^3.
\end{equation*}
This way the weak formulation is equivalent to (cf. \cite[Lemma 8.1.2]{hackbusch})
\begin{equation*}
\left\langle \frac{d}{dt} \gamma(t)\;\abs{\gamma'(t)}, \varphi_i e_\ell \right\rangle_{L^2} + \frac{d}{d\tau} E (\gamma(t) + \tau \varphi_i e_\ell) \eins{\tau=0} = 0, 
\end{equation*}
for all $i=1, \dots, M$ and $\ell=1,2,3$.

\section{Approximation spaces}
We use (real) trigonometric polynomials, i.e. partial sums of \name{Fourier} series, cf. \cite[1.2]{butzer}, to approximate the knots in order to store and handle them for the discretization. Let $N \in \N$ and
\begin{equation*}
\gamma_N (x) = \frac{a_0}{2} + \sum_{k=1}^{N} \kla{a_k \cos(k x) + b_k \sin(k x)} \in C^\infty(\R/2\pi\Z,\R^3).
\end{equation*}
For $\gamma$ in $L_1(\R/2\pi\Z,\R^3)$ we can compute the following coefficients \cite[Definition 1.2.1]{butzer}
\begin{equation*}
a_k = \frac{1}{\pi} \int_0^{2\pi} \gamma(t) \cos(k t) \, dt, \quad b_k = \frac{1}{\pi} \int_0^{2\pi} \gamma(t) \sin(k t) \, dt.
\end{equation*}
If we have $\gamma \in L_2(\R/2\pi\Z,\R^3)$, $\gamma_N$ converges pointwise almost everywhere which is due to \name{Carleson}. $a_k$ and $b_k$ are called \name{Fourier} coefficients. Observe that the $2\pi$-periodicity is not a restriction at all. If $T>0$ and if $\gamma$ is $T$-periodic, then $\tilde{\gamma}(x):=\gamma(\frac{T}{2\pi} x)$ is $2\pi$-periodic. In our scenario we are not interested in translations of the knots. Therefore, we always set $a_0:=0$. This way our approximation space is $2N$ dimensional, with respect to $\R^3$.

To get a simpler notation we introduce the following abbreviation for the basis functions
\begin{equation}
\varphi_l(x) :=
\begin{cases}
\cos(kx), & l=2k-2\\
\sin(kx), & l=2k-1
\end{cases}
\quad \text{for $k=1,\dots,N$}.
\end{equation}
This way we get $\varphi_l$ for $l=0,\dots,2N-1$ and we can rewrite the \name{Fourier} approximation
\begin{equation*} \
\gamma_N (x) = \sum_{l=0}^{2N-1} c_l\, \varphi_l(x),
\end{equation*}
where we have for the coefficients
\begin{equation*}
c_l = \frac{1}{\pi} \int_0^{2\pi} \gamma(t)\;\varphi_l\!\kla{t} \, dt.
\end{equation*}
Moreover, we have (cf. \cite[7.9 Beispiel]{alt}) for $a,b=0,\dots,2N-1$
\begin{equation} \label{l2ortho}
\left\langle \varphi_a, \varphi_b \right\rangle_{L^2} =
\begin{cases}
\pi, &a=b\\
0, &a\neq b.
\end{cases}
\end{equation}

Later on we want to use an equidistant distribution on the interval $[0,2\pi]$ into $M\in\N$ segments for the discretization. If we define the distance between two points $h:=\frac{2\pi}{M}$, the points are $x_i=ih$ for $i=0,\dots,M$. Furthermore, we define
\begin{equation} \label{defdiscbasisq}
q_i^l :=
\begin{cases}
\cos(kih), & l=2k-2,\\
\sin(kih), & l=2k-1,
\end{cases}
\quad \text{for $l=0,\dots,2N-1$ and $i=0,\dots,M-1$},
\end{equation}
respectively
\begin{equation*}
{q'}_i^l := \varphi'_l(x_i) \quad\text{and}\quad {q''}_i^l := \varphi''_l(x_i).
\end{equation*}

A very common approximation space is the space of piecewise-linear functions. This is not an option here, since those functions have infinite energy. One could also try to use splines instead of \name{Fourier} curves. This way one would be able to create knots, which are in $C^{1,1}$ and not in $C^\infty$. However, it is well known that \name{Fourier} knots can be very close to those configurations and it is often reasonable to work with smooth functions. Another advantage of spline curves would be the fact that their basis functions have compact support. However, \name{Fourier} knots need very few coefficients in order to represent a quite large range of knots. For instance a circle which is supposed to be a global minimizer only needs one pair of \name{Fourier} coefficients.

\section{Numerical integration}
The simplest way to numerically integrate a function $f:[a,b] \rightarrow \R$ over $[a,b]$ is using the \emph{trapezoidal rule}. Let $M \in \N$ and $x_i \in [a,b]$ for $i=0,\dots, M$, where $x_0=a$, $a_M=b$ and $x_{j-1}-x_j =: h > 0$ for $j=1,\dots, M$. Then we approximate the integral by
\begin{equation*}
T_M(f) := h \kla{\frac{1}{2} f(a) + \sum_{k=1}^{M-1} f(x_k) + \frac{1}{2} f(b)}.
\end{equation*}
In our case we can simplify this a bit since we are considering closed curves defined on $[0,2\pi]$. Let $h := \frac{2\pi}{M}$ and 
\begin{equation} \label{trap}
T_M(\gamma) :=  h \sum_{k=0}^{M-1} \gamma(kh)
\end{equation}
which can be interpreted as the average or the \emph{centre of mass} of the integration points. These points are the images of equidistantly distributed points on the interval. Although it is a simple technique it can be a very efficient under certain conditions. We take the following theorem from \cite[Theorem 1]{weideman}
\begin{thm} \mylabel{trapErrEst}
Let $m \geq 0$, $M \geq 1$, and define $h:=2\pi/M$, $x_j = jh$ for $j = 0,1,\dots,M$. Further assume that $f(x)$ is $2m+2$ times continuously differentiable on $\I$ for some $m \geq 0$. Then, for the error in the trapezoidal rule \eqref{trap},
\begin{align*}
I(f) - T_M(f) = & - \suml_{k=1}^m \frac{B_{2k}}{(2k)!} \, h^{2k} \left[f^{(2k-1)} (2\pi) - f^{(2k-1)} (0)\right]\\
& - 2\pi \, h^{2m+2} \frac{B_{2m+2}}{(2m+2)!} f^{(2m+2)}(\xi),
\end{align*}
for some $\xi \in \I$. The $B_k$ are the Bernoulli numbers (see for instance \cite{number}).
\end{thm}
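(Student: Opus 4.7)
The plan is to derive this as a special case (for $2\pi$-periodic-like grids) of the classical Euler--Maclaurin summation formula. I would work on a single subinterval first and then sum. Concretely, fix $j\in\{1,\dots,M\}$ and consider $\int_{x_{j-1}}^{x_j} f(x)\,dx$. After the affine substitution $x = x_{j-1} + h t$ with $t\in[0,1]$, it suffices to expand $\int_0^1 g(t)\,dt - \tfrac{1}{2}(g(0)+g(1))$ for $g(t):=f(x_{j-1}+ht)$ in terms of the Bernoulli polynomials $B_k(t)$. The key identities are $B_k'(t)=k\,B_{k-1}(t)$, $B_1(t)=t-\tfrac12$, and $B_k(0)=B_k(1)=B_k$ for $k\ge 2$, with $B_{2k+1}=0$ for $k\ge 1$.

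Starting from $\int_0^1 g(t)\,dt = \int_0^1 g(t)\,B_1'(t)\,dt$ (since $B_1'\equiv 1$), I would integrate by parts $2m+2$ times, alternating the factor onto $B_k(t)/k!$ and picking up a factor of $h$ each time from differentiating $g$. After $2m+1$ integrations by parts the boundary contributions are $-\tfrac12(g(0)+g(1))$ (from $B_1$) plus terms $\tfrac{B_{2k}}{(2k)!}\,h^{2k-1}\bigl[f^{(2k-1)}(x_j)-f^{(2k-1)}(x_{j-1})\bigr]$ for $k=1,\dots,m$ (the odd-index Bernoulli numbers vanish, killing the intermediate terms). The leftover integral carries the factor $h^{2m+2}/(2m+2)!$ times $\int_0^1 B_{2m+2}(t)\,f^{(2m+2)}(x_{j-1}+ht)\,dt$.

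Next, I would sum over $j=1,\dots,M$. The interior boundary terms $f^{(2k-1)}(x_j)$ telescope and cancel, leaving only the endpoint contributions $f^{(2k-1)}(2\pi)-f^{(2k-1)}(0)$. Rearranging gives
\begin{equation*}
I(f)-T_M(f) = -\sum_{k=1}^m \frac{B_{2k}}{(2k)!}\,h^{2k}\bigl[f^{(2k-1)}(2\pi)-f^{(2k-1)}(0)\bigr] + R_M,
\end{equation*}
where $R_M = \tfrac{h^{2m+2}}{(2m+2)!}\sum_{j=1}^M\int_0^1 B_{2m+2}(t)\,f^{(2m+2)}(x_{j-1}+ht)\,dt$. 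To bring $R_M$ into the stated form, I would replace $B_{2m+2}(t)$ by $B_{2m+2}(t)-B_{2m+2}$: the subtracted piece contributes $B_{2m+2}\cdot\tfrac{h^{2m+2}}{(2m+2)!}[f^{(2m+1)}(2\pi)-f^{(2m+1)}(0)]$, which is exactly the missing $k=m+1$ term one would want to absorb, so after relabelling only the integral with $B_{2m+2}(t)-B_{2m+2}$ remains (alternatively one shows directly that $B_{2m+2}(t)-B_{2m+2}$ has constant sign on $[0,1]$, which is a standard fact about Bernoulli polynomials of even degree).

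The main obstacle is the last step: applying the mean value theorem to pull $f^{(2m+2)}$ out at a single point $\xi$. Because $B_{2m+2}(t)-B_{2m+2}$ is of one sign on $[0,1]$ and $f^{(2m+2)}$ is continuous on $[0,2\pi]$, the integral mean value theorem (applied on each subinterval and then combined via continuity) yields a single $\xi\in[0,2\pi]$ with
\begin{equation*}
R_M = -2\pi\,h^{2m+2}\,\frac{B_{2m+2}}{(2m+2)!}\,f^{(2m+2)}(\xi),
\end{equation*}
using $\int_0^1(B_{2m+2}(t)-B_{2m+2})\,dt = -B_{2m+2}$ and $Mh=2\pi$. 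Establishing the sign property of $B_{2m+2}(t)-B_{2m+2}$ cleanly, and verifying the bookkeeping of signs throughout the integration by parts, are the delicate technical points; everything else is routine.
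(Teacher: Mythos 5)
The paper offers no proof of this statement at all: it is imported verbatim as Theorem~1 of the cited reference \cite{weideman}, so there is no internal argument to compare yours against. Your Euler--Maclaurin derivation is the standard proof of exactly this result, and the outline is sound: integrate by parts against $B_k(t)/k!$ on each rescaled subinterval, use $B_{2k+1}=0$ for $k\geq 1$ to kill the odd boundary terms, telescope the interior contributions $f^{(2k-1)}(x_j)$ when summing over $j$, shift the remainder kernel to $B_{2m+2}(t)-B_{2m+2}$ (which has constant sign on $[0,1]$ --- a standard fact about even-degree Bernoulli polynomials that you should either prove by the usual induction on the symmetry and monotonicity of the odd ones or cite), apply the weighted mean value theorem on each subinterval, and combine the resulting points $\xi_j$ into a single $\xi$ via the intermediate value theorem for the continuous $f^{(2m+2)}$. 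Two bookkeeping items to repair in a written version. First, your displayed $R_M$ is short one power of $h$: each subinterval contributes $h\cdot h^{2m+2}$ (one factor from $dx=h\,dt$, the rest from $g^{(2m+2)}(t)=h^{2m+2}f^{(2m+2)}(x_{j-1}+ht)$), so the sum over $j=1,\dots,M$ produces $Mh\cdot h^{2m+2}=2\pi\,h^{2m+2}$, which is what the stated remainder requires. Second, the remark about ``absorbing the missing $k=m+1$ term'' should be made precise: subtracting the constant $B_{2m+2}$ from the kernel generates, via $\int_0^1 B_{2m+2}\,g^{(2m+2)}(t)\,dt = B_{2m+2}\bigl[g^{(2m+1)}(1)-g^{(2m+1)}(0)\bigr]$, exactly the $k=m+1$ boundary term of the longer expansion, so the two truncations are consistent and only the signed-kernel integral survives; the normalisation $\int_0^1\bigl(B_{2m+2}(t)-B_{2m+2}\bigr)\,dt=-B_{2m+2}$ then yields the stated constant. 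With those details filled in, the proof is complete and correct.
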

Since we are dealing with periodic functions the sum on the right-hand size vanishes and it only remains a high order term. To the best of our knowledge, the trapezoidal rule is the optimal choice for one-dimensional integrals in our situation. Moreover, the usability of \thref{trapErrEst} can be seen by the following examples
\begin{exmp} \mylabel{exmptrapExact}
\begin{enumabc}
\item If we integrate a trigonometric polynomial
\begin{equation*}
\int_0^{2\pi} \sum_{k=1}^N \kla{a_k \cos(kx) + b_k \sin(kx)} \, dx = 0
\end{equation*}
the trapezoidal rule is exact. For any $n\in\N$ the function $\gamma_N$ is $2n$ times continuously differentiable. We have
\begin{align*}
\gamma_N'(x) &= \sum_{k=1}^N \kla{-k a_k \sin(kx) + k b_k \cos(kx)}\\
\gamma_N''(x) &= \sum_{k=1}^N (-k^2) \kla{a_k \cos(kx) + b_k \sin(kx)}
\end{align*}
and therefore,
\begin{equation*}
\babs{\gamma^{(2n)}(x)} \leq \sum_{k=1}^N k^{2n} \kla{\abs{a_k}+\abs{b_k}} \leq N^{2n} \sum_{k=1}^N \kla{\abs{a_k}+\abs{b_k}}.
\end{equation*}
Now we use \cite[(2.2)]{number}
\begin{equation*}
(-1)^{n+1} \frac{B_{2n} (2\pi)^{2n}}{(2n)!} = 2\, \zeta(2n),
\end{equation*}
where $\zeta$ is the \emph{\name{Riemann} zeta function}. Moreover, we need the well known fact that $\zeta(\sigma)\rightarrow 1$ if $\sigma>1$ tends to $\infty$. We shortly recall the proof, cf. for instance \cite{fwerner}. Let $N\in\N$ and $\sigma>1$
\begin{alignat*}{2}
1 \leq \sum_{n=1}^N \frac{1}{n^\sigma} &= 1 + \sum_{n=2}^N \frac{1}{n^\sigma} &\;=\;& \sum_{n=1}^N \int_{n-1}^n \frac{1}{n^\sigma} \, dx\\
&\leq 1 + \sum_{n=1}^N \int_{n-1}^n \frac{1}{x^\sigma} \, dx &\;=\;& 1 + \int_1^N \frac{1}{n^\sigma} \, dx\\
&= 1 + \left[\frac{1}{1-\sigma}\,x^{1-\sigma} \right]_1^N &\;=\;& 1 + \frac{1}{1-\sigma} N^{1-\sigma} + \frac{1}{\sigma-1}.
\end{alignat*}
Consequently,
\begin{equation*}
1 \leq \sum_{n=1}^\infty \frac{1}{n^\sigma} \leq 1 + \frac{1}{\sigma-1} \qquad \text{and} \qquad \lim_{\sigma\rightarrow\infty} \sum_{n=1}^\infty \frac{1}{n^\sigma} = 1.
\end{equation*}
Now we conclude for $M>N$
\begin{equation*}
\abs{I(\gamma_N)-T_M(\gamma)} = 4 \pi\, \zeta(2n) \frac{\abs{\gamma^{(2n)}(\zeta)}}{M^{2n}} \leq C \kla{\frac{N}{M}}^{2n} \zeta(2n) \xrightarrow{n \rightarrow \infty} 0.
\end{equation*}
\item The same is true for functions like ($j,k\in\menge{1,\dots,N}$)
\begin{equation*}
f(x)=\cos(jx)\cos(kx),\quad f(x)=\sin(jx)\cos(kx)\quad\text{or}\quad f(x)=\sin(jx)\sin(kx).
\end{equation*}
We have
\begin{align*}
\frac{d}{dx} \sin(jx)\cos(kx) &= j\,\cos(jx)\cos(kx)-k\,\sin(jx)\sin(kx)\\
\frac{d}{dx} \sin(jx)\sin(kx) &= j\,\cos(jx)\sin(kx)+k\,\sin(jx)\cos(kx)\\
\frac{d}{dx} \cos(jx)\cos(kx) &= -j\,\sin(jx)\cos(kx)-k\,\cos(jx)\sin(kx)
\end{align*}
and therefore,
\begin{equation*}
\babs{f^{(2n)}(x)} \leq \kla{2N}^{2n}.
\end{equation*}
Analogous to the example above we obtain that for $M>2N$ the trapezoidal rule is exact.
\end{enumabc}
\end{exmp}
\begin{exmp} \mylabel{exmpCinfty}
However, even if the integrand is a $2\pi$-periodic function of class $C^\infty$, it is possible that one needs numerous integration points in order to get a good approximation of the integral. Consider
\begin{equation}
f(x) := \exp\kla{\sin(20x)+\cos(20x)+\sin(15x)+\cos(15x)}.
\end{equation}
Then we have for the trapezoidal rule

\begin{tabular}{lcccc}
number of integration points: & $50$ & $100$ & $200$ & $500$ \\
approximate error: & $2.37422$ & $0.02226$ & $0.21448\,10^{-5}$ & $0.25435\,10^{-24}$.
\end{tabular}
\end{exmp}

A suitable option to compute the triple integrals we are considering here, is to apply the trapezoidal rule to each integral. We can do this, since we are able to compute the value of the integrand for points where two or all three points coincide. Otherwise, techniques to avoid these triples as for instance the choice of different integration schemes for each integral would have been necessary. However, there are other methods which could be faster and could lead to a better convergence behaviour. These ideas are due to \name{Wo\'{z}niakowski} (personal communication, October 2011). Since the trapezoidal rule works fine for one-dimensional integrals, it could be used with the method of \emph{Sparse Grids}, where only a certain subset of the ``full grid'' of tuples of integration points is used. Other alternatives could be the \emph{(quasi) Monte Carlo} method.
%
\section{Discretization in time}
In order to discretize our problem in time we introduce discrete time steps, by adding a new index for each time step $m$
\begin{equation} \label{timebasis}
\gamma^m = \sum_{i=0}^{2N-1} c_i^m\, \varphi_i \; \in \R^3.
\end{equation}
Now we have to decide for each $\gamma$ in the discretization, which time step should be chosen. Our goal is to solve a linear system of equations for each time step $m$ and each scalar component $\ell=1,2,3$:
\begin{equation*}
S^m\,\overrightarrow{c}^{m+1}_\ell = \overrightarrow{r}^m_\ell, \qquad \overrightarrow{c}^{m+1}_\ell = \kla{\kla{c^{m+1}_i}_\ell}_{i=0,\cdots,2N-1},
\end{equation*}
where $S$ denotes the $2N\times 2N$-matrix corresponding to the linear system of equations. To do so we use a so-called \emph{implicit single step} scheme. We substitute $\dfrac{d}{dt} \gamma$ by $\dfrac{\gamma^{m+1}-\gamma^m}{\tau}$ and for the first variation we choose the next time step, i.e. $\gamma^{m+1}$ where $\gamma$ appears linearly and the current time step, i.e. $\gamma^m$ otherwise. This is a \emph{single step} scheme, since we only use data from the current time step while considering the next step and \emph{implicit}, because the successive time step also appears outside the difference quotient. Now we consider a small example that clarifies the procedure. For $\ell=1,2,3$ we test with $\varphi_a e_\ell$
\begin{multline*}
\sum_{i} h\,\abs{\gamma'(x_i)} \partial_t \gamma_\ell(x_i) \varphi_a(x_i)\\
+\sum_{i,j,k} -6 h^3 p\,\frac{\abs{\gamma'(x_i)} \abs{\gamma'(x_j)} \abs{\gamma'(x_k)}}{R^p(\gamma(x_i),\gamma(x_j),\gamma(x_k))} \frac{(\gamma_\ell(x_i)-\gamma_\ell(x_j))\,\varphi_a(x_i)}{\abs{\gamma(x_i)-\gamma(x_j)}^2}+\,(\dots) = 0
\end{multline*}
Applying the implicit single step scheme we conclude
\begin{multline*}
\sum_{i} h\,\abs{\gamma'^{m}(x_i)} \gamma_\ell^{m+1}(x_i) \varphi_a(x_i)\\
+\tau \Bigl( \sum_{i,j,k} -6 h^3 p\,\frac{\abs{\gamma'^{m}(x_i)} \abs{\gamma'^{m}(x_j)} \abs{\gamma'^{m}(x_k)}}{R^p(\gamma^{m}(x_i),\gamma^{m}(x_j),\gamma^{m}(x_k))} \frac{(\gamma_\ell^{m+1}(x_i)-\gamma_\ell^{m+1}(x_j))\,\varphi_a(x_i)}{\abs{\gamma^{m}(x_i)-\gamma^{m}(x_j)}^2}\\
+\,(\dots) \Bigr) = \sum_{i} h\,\abs{\gamma'^{m}(x_i)} \gamma_\ell^{m}(x_i) \varphi_a(x_i).
\end{multline*}
As we can write the next time step of $\gamma$ like this
\begin{equation*}
\gamma_\ell^{m+1} = \suml_{b=0}^{2N-1} \kla{c_b^{m+1}}_\ell \; \varphi_b,
\end{equation*}
we obtain that the left-hand size is actually a matrix vector multiplication and the entries of the matrix $S$ are
\begin{multline} \label{matCompAB}
\sum_{i} h\,\abs{\gamma'(x_i)} \varphi_b(x_i) \varphi_a(x_i)\,+\\
\tau \Bigl( \sum_{i,j,k} -6 h^3 p\,\frac{\abs{\gamma'(x_i)} \abs{\gamma'(x_j)} \abs{\gamma'(x_k)}}{R^p(\gamma(x_i),\gamma(x_j),\gamma(x_k))} \frac{(\varphi_b(x_i)-\varphi_b(x_j))\,\varphi_a(x_i)}{\abs{\gamma(x_i)-\gamma(x_j)}^2} +\,(\dots) \Bigr).
\end{multline}
Therefore, this matrix can be composed as $A+\tau B$, where $A$ and $B$ are symmetric matrices. The right-hand side stays the same and hence we get 3 symmetric $2N \times 2N$ linear systems of equations. As $N$ is quite small and the matrix $S$ is dense we use \lapack{} to solve the linear system of equations in every time step. $S$ is dense, since our basis functions do \emph{not} have compact support. Moreover, the energy effects the curve globally. Consider for instance the following expression
\begin{equation*}
\kla{\varphi_b(x_i) \varphi_a(x_i) - \varphi_b(x_j) \varphi_a(x_i)}.
\end{equation*}
If the basis functions had compact support, then the sum over $i$ of the first  summand would vanish if the distance of $a$ and $b$ is sufficiently large. However, this is not true for the sum over $i$ and $j$ of the second summand, as for every $a$ and $b$ there are $i$ and $j$ such that $x_i$ is in the support of $\varphi_a$ and $x_j$ is in the support of $\varphi_b$. Observe that the number of needed summations would be smaller. Nevertheless, later on we present some optimisations that also decrease the number of operations. 

Another effect of using basis functions with compact support, e.g. splines, is that we would need much more coefficients in order to represent our knots. Hence, it could be the case that we need to solve large linear systems of equations and therefore, it would be reasonable to use techniques like hierarchical matrices \cite{hMat}.

\section{The full discretization}
Assume that we have to compute an $M$-dimensional array $a$ or an $M\times M$ triangular matrix $b$ with
\begin{align*}
a_i &= \suml^{M-1}_{\stackrel{k=0}{k\neq i}} \suml^{k-1}_{\stackrel{j=0}{j\neq i}} F(i,j,k) &&\;\text{for $i = 0, \cdots, M-1$},\\
b_{jk} &= \suml^{M-1}_{\stackrel{i=0}{i\notin \menge{j,k}}} F(i,j,k) &&\begin{array}{l}
\text{for $k = 0, \cdots, M-1$},\\ 
\text{for $j = 0, \cdots, k-1$},
\end{array}
\end{align*}
where $F$ is a function $\menge{0,\cdots, M-1}^3 \rightarrow \R$. At first we consider a short example of two pseudo-code fragments
\begin{center}
\begin{tabular}{c|c}
\begin{lstlisting}
for i = 0 to M-1
 for j = i+1 to M-1
  (...)
\end{lstlisting}&
\begin{lstlisting}
for j = 0 to M-1
 for i = 0 to j-1
  (...)
\end{lstlisting}
\end{tabular}
\end{center}
which obviously both do the same since we have $0\leq i<j\leq M-1$ inside this nested loop in each case. One could say that we have \emph{flipped} the summation order. Observe that on the left-hand side for $i=M-1$ the second loop is skipped since $M>M-1$ and therefore nothing happens. The same is true on the right-hand side for $j=0$, because $0>-1$. Now we come back to $a_i$ and $b_{jk}$ and obtain

\begin{tabular}{c|c|c}
\begin{lstlisting}
for k = 0 to M-1
 for j = 0 to k-1
  for i = 0 to j-1
   a[i] += F(i,j,k)
   b[j,k] += F(i,j,k)
\end{lstlisting}&
\begin{lstlisting}
for k = 0 to M-1
 for j = 0 to k-1
  for i = j+1 to k-1
   a[i] += F(i,j,k)
   b[j,k] += F(i,j,k)
\end{lstlisting}&
\begin{lstlisting}
for k = 0 to M-1
 for j = 0 to k-1
  for i = k+1 to M-1
   a[i] += F(i,j,k)
   b[j,k] += F(i,j,k)
\end{lstlisting}\\
\hline
\begin{lstlisting}
for k = 0 to M-1
 for j = 0 to k-1
  for i = 0 to j-1
   a[i] += F(i,j,k)
   b[j,k] += F(i,j,k)
\end{lstlisting}&
\begin{lstlisting}
for k = 0 to M-1
 for i = 0 to k-1
  for j = 0 to i-1
   a[i] += F(i,j,k)
   b[j,k] += F(i,j,k)
\end{lstlisting}&
\begin{lstlisting}
for i = 0 to M-1
 for k = 0 to i-1
  for j = 0 to k-1
   a[i] += F(i,j,k)
   b[j,k] += F(i,j,k)
\end{lstlisting}\\
\hline
\begin{lstlisting}
for k = 0 to M-1
 for j = 0 to k-1
  for i = 0 to j-1
   a[i] += F(i,j,k)
   b[j,k] += F(i,j,k)
\end{lstlisting}&
\begin{lstlisting}
for k = 0 to M-1
 for j = 0 to k-1
  for i = 0 to j-1
   a[j] += F(j,i,k)
   b[i,k] += F(j,i,k)
\end{lstlisting}&
\begin{lstlisting}
for k = 0 to M-1
 for j = 0 to k-1
  for i = 0 to j-1
   a[k] += F(k,i,j)
   b[i,j] += F(k,i,j)
\end{lstlisting}
\end{tabular}

where we flip the summation order $0,1$ and $2$ times from the first to the second row, like in the example above and perform a variable interchange $0,1$ and $2$ times from the second to the third row. Hence, there remains only one nested loop to compute all $a_i$ and all $b_{jk}$ which is faster than the direct solution.

In order to compute the full matrix $S$ we have 5-times nested loops, because we have to approximate the triple integral for each entry of the matrix. Since we want to use the trapezoidal rule to approximate each integral, we get triple sums for each entry. Therefore, it is crucial to do this very efficiently in order to get acceptable calculation times. On the one hand we have to exploit the symmetry of the matrix and of the integrand as much as possible. On the other hand it is import to reduce the level of nestedness for some parts of the calculation. This can be done by reducing the number of variables being applied to $\Phi$, which could break symmetries of the equation. Now an approach is presented where we combine both ideas in a reasonable way.

In the last chapter we computed a simplified version \eqref{simp1var} of the first variation of $\M_p$. However, we can even simplify more. For instance we continue with \eqref{scalarcrossshort} and by a small calculation we obtain
\begin{multline*}
\Bigl( (\gamma(t)-\gamma(s)) \wedge (\gamma(\sigma)-\gamma(s)) \Bigr) \cdot \Bigl( (\Phi(t)-\Phi(s)) \wedge (\gamma(\sigma)-\gamma(s)) + (\gamma(t)-\gamma(s)) \wedge (\Phi(\sigma)-\Phi(s)) \Bigr)\\
= \abs{\gamma(t)-\gamma(\sigma)}^2\,\gamma(s)\cdot\Phi(s) + \abs{\gamma(s)-\gamma(t)}^2\,\gamma(\sigma)\cdot\Phi(\sigma) + \abs{\gamma(s)-\gamma(\sigma)}^2\,\gamma(t)\cdot\Phi(t)\\
-(\gamma(s)-\gamma(\sigma))\cdot(\gamma(t)-\gamma(\sigma)) \kla{\gamma(s)\cdot\Phi(t)+\gamma(t)\cdot\Phi(s)}\\
-(\gamma(s)-\gamma(t))\cdot(\gamma(s)-\gamma(\sigma)) \kla{\gamma(t)\cdot\Phi(\sigma)+\gamma(\sigma)\cdot\Phi(t)}\\
-(\gamma(t)-\gamma(s))\cdot(\gamma(t)-\gamma(\sigma)) \kla{\gamma(\sigma)\cdot\Phi(s)+\gamma(s)\cdot\Phi(\sigma)}.\;\;\;\quad\qquad\qquad
\end{multline*}
Now we replace in \eqref{simp1var} the variables $s,t,\sigma$ by the discrete samples $x_i,x_j,x_k$ for $i,j,k=0,\dots,M-1$ and the triple integral by a triple sum, which is already the trapezoidal rule up to a constant factor. In analogy to using \name{Fubini}'s theorem in \eqref{fubinishort} we now change the order of the three-times nested summation, since we are dealing with finite sums here. Using the analogue of \eqref{fubinishort}, we obtain that the sum over $i\neq j\neq k\neq i$ of
\begin{multline*}
\frac{2^p \abs{\gamma'(x_i)} \abs{\gamma'(x_j)} \abs{\gamma'(x_k)}}{\abs{\gamma(x_i)-\gamma(x_j)}^p \abs{\gamma(x_j)-\gamma(x_k)}^p \abs{\gamma(x_i)-\gamma(x_k)}^p} \Biggl[ \babs{(\gamma(x_j)-\gamma(x_i))\wedge (\gamma(x_k)-\gamma(x_i))}^p\\
\Biggl( 3 \frac{\gamma'(x_i) \cdot \Phi'(x_i)}{\abs{\gamma'(x_i)}^2} -3p \frac{(\gamma(x_i)-\gamma(x_j)) \cdot (\Phi(x_i)-\Phi(x_j))}{\abs{\gamma(x_i)-\gamma(x_j)}^2} \Biggr)\\
+3p \babs{(\gamma(x_j)-\gamma(x_i))\wedge (\gamma(x_k)-\gamma(x_i))}^{p-2} \Biggl(  \abs{\gamma(x_j)-\gamma(x_k)}^2 \gamma(x_i)\cdot\Phi(x_i)\\
 - (\gamma(x_i)-\gamma(x_k))\cdot(\gamma(x_j)-\gamma(x_k)) \kla{\gamma(x_i)\cdot\Phi(x_j)+\gamma(x_j)\cdot\Phi(x_i)} \Biggr) \Biggr]
\end{multline*}
is the same as the triple sum from above with respect to distinct points. However, observe that the triple integral of the function over $(s,t,\sigma)$ would be infinite and in particular not the same as the triple integral \eqref{simp1var}, as \name{Fubini}'s theorem cannot be applied here. For the case that two points coincide and the third point is distinct, as already mentioned earlier, we only have to consider the integrand for $(s,t,t)$, since we can use the method described in \eqref{symmconv}. We can take \eqref{stt1var} and apply an analogue of \eqref{fubinitrick} for triple sums to obtain that for $i\neq j$ we have to sum up
\begin{multline*}
\frac{2^p}{\abs{\gamma(x_i)-\gamma(x_j)}^{2p}} \Biggl[ \Biggl( \frac{\abs{(\gamma(x_j)-\gamma(x_i))\wedge\gamma'(x_j)}^p}{\abs{\gamma'(x_i)} \abs{\gamma'(x_j)}^{p-2}} - (p-2) \frac{\abs{(\gamma(x_i)-\gamma(x_j))\wedge\gamma'(x_i)}^p \abs{\gamma'(x_j)}}{\abs{\gamma'(x_i)}^p}\\
+p \frac{\abs{(\gamma(x_i)-\gamma(x_j))\wedge\gamma'(x_i)}^{p-2} \abs{\gamma'(x_j)}}{\abs{\gamma'(x_i)}^{p-2}} \abs{\gamma(x_i)-\gamma(x_j)}^2 \Biggr) \gamma'(x_i) \cdot \Phi'(x_i) +\\
\Biggl( -2p \frac{\abs{(\gamma(x_j)-\gamma(x_i))\wedge\gamma'(x_j)}^p \abs{\gamma'(x_i)}}{\abs{\gamma'(x_j)}^{p-2} \abs{\gamma(x_i)-\gamma(x_j)}^2} +p \frac{\abs{(\gamma(x_j)-\gamma(x_i))\wedge\gamma'(x_j)}^{p-2} \abs{\gamma'(x_i)}}{\abs{\gamma'(x_j)}^{p-4}} \Biggl)\\
\Bigl( (\gamma(x_i)-\gamma(x_j)) \cdot (\Phi(x_i)-\Phi(x_j)) \Bigr) +\\
\Biggl( -p \frac{\abs{(\gamma(x_i)-\gamma(x_j))\wedge\gamma'(x_i)}^{p-2} \abs{\gamma'(x_j)}}{\abs{\gamma'(x_i)}^{p-2}} \Bigl( (\gamma(x_i)-\gamma(x_j)) \cdot \gamma'(x_i)\Bigr) \Biggr)\\
\Bigl( \gamma'(x_i) \cdot (\Phi(x_i)-\Phi(x_j)) + (\gamma(x_i)-\gamma(x_j)) \cdot \Phi'(x_i) \Bigr) \Biggr].
\end{multline*}
For the last step that all three points coincide in the point $s$ we can directly take the sum \eqref{sss1var} after substituting $s$ by $x_i$.

For simplification we introduce the following abbreviations
\begin{align*}
p'_i &\;:=\; \gamma'(x_i), \qquad p''_i \;:=\; \gamma''(x_i),\\
dp_{ij} &\;:=\; \gamma(x_i)-\gamma(x_j),\\
\mathit{X0}_{ijk} &\;:=\; (\gamma(x_j)-\gamma(x_i))\wedge (\gamma(x_k)-\gamma(x_i)),\\
\mathit{X1}_{ij} &\;:=\; (\gamma(x_i)-\gamma(x_j))\wedge\gamma'(x_i),\\
\mathit{X2}_{i} &\;:=\; \gamma'(x_i)\wedge\gamma''(x_i).
\end{align*}
Since $\abs{dp_{ij}}$ and $\abs{\mathit{X0}_{ijk}}$ are symmetric they are stored in triangular structures. After all we gain the following entries of the matrix $S$ for all rows $a$ and all columns $b$. Observe that $a$ is the index of the testing basis function $\varphi_a$ and that $b$ corresponds to the basis function $\varphi_b$ representing the current curve $\gamma^m$.
\begin{multline*}
\suml_{i=0}^{M-1} \Biggl( \sigma_i^1 \, q_i^a q_i^b + \sigma_i^2 \, {q'}_i^a {q'}_i^b + \sigma_i^3 \, {q''}_i^a {q''}_i^b + \sigma_i^4 \, ({q'}_i^a {q''}_i^b+{q''}_i^a {q'}_i^b) +\\
\suml_{j=i+1}^{M-1} \Bigl( \sigma_{ij}^5 \, (q_i^a q_j^b + q_j^a q_i^b) + \sigma_{ij}^6 \, ({q'}_i^a d_{ij}^b + {q'}_i^b d_{ij}^a) + \sigma_{ij}^7 \, (d_{ij}^a d_{ij}^b) \Bigr) \Biggr),
\end{multline*}
where $d_{ij}^c := q_i^c-q_j^c$. We collect the terms that belong to the different $\sigma^k$, $k=1,\dots,7$ expressions which are part of the discrete first variation. In addition, powers with exponent close $p$ are collected, resulting in a speed-up for large $p$.\\
The vector $\sigma^1$:
\begin{align}
\sigma_i^1 &= h^3 \tau \Bigg\{ \suml_{\stackrel{k=0}{k\neq i}}^{M-1} \suml_{\stackrel{j=0}{j\neq i}}^{k-1} \mathbf{2} \kla{3p \frac{2^p \abs{p'_i} \abs{p'_j} \abs{p'_k}}{\abs{dp_{ij}}^p \abs{dp_{jk}}^p \abs{dp_{ik}}^p} \abs{\mathit{X0}_{ijk}}^{p-2} \abs{dp_{jk}}^2} \Biggr\}\\
&= h^3 \tau \Biggl\{\suml_{\stackrel{k=0}{k\neq i}}^{M-1} \suml_{\stackrel{j=0}{j\neq i}}^{k-1} 24 p \kla{\frac{2 \abs{\mathit{X0}_{ijk}}}{\abs{dp_{ij}}\abs{dp_{jk}}\abs{dp_{ik}}}}^{p-2} \frac{\abs{p'_i}\abs{p'_j}\abs{p'_k}}{\abs{dp_{ij}}^2 \abs{dp_{ik}}^2} \Biggr\}.
\end{align}
The vector $\sigma^2$:
\begin{align}
\sigma_i^2 &=&& h^3 \tau \Biggl\{ \suml_{\stackrel{k=0}{k\neq i}}^{M-1} \suml_{\stackrel{j=0}{j\neq i}}^{k-1} \mathbf{2} \kla{\frac{2^p \abs{p'_i} \abs{p'_j} \abs{p'_k}}{\abs{dp_{ij}}^p\abs{dp_{jk}}^p\abs{dp_{ik}}^p} \abs{\mathit{X0}_{ijk}}^p \frac{3}{\abs{p'_i}^2}} + \notag\\
&&&\abs{p'_i}^{-(3p-3)} \kla{\frac{3-3p}{\abs{p'_i}^2} \abs{\mathit{X2}_i}^p + p \abs{\mathit{X2}_i}^{p-2} \abs{p''_i}^2} + \notag\\
&&&\sum_{\stackrel{j=0}{j \neq i}}^{M-1} \mathbf{3} \Biggl[ \frac{2^p}{\abs{dp_{ij}}^{2p}} \Bigl( \frac{\abs{\mathit{X1}_{ji}}^p}{\abs{p'_i} \abs{p'_j}^{p-2}} -(p-2) \frac{\abs{\mathit{X1}_{ij}}^p \abs{p'_j}}{\abs{p'_i}^p} +p \frac{\abs{\mathit{X1}_{ij}}^{p-2} \abs{p'_j}}{\abs{p'_i}^{p-2}} \abs{dp_{ij}}^2 \Bigr) \Biggr] \Biggr\} \notag\\
&=&& h^3 \tau \Biggl\{ (3-3p) \kla{\frac{\abs{\mathit{X2}_i}}{\abs{p'_i}^3}}^p \abs{p'_i} + p \kla{\frac{\abs{\mathit{X2}_i}}{\abs{p'_i}^3}}^{p-2} \frac{\abs{p''_i}^2}{\abs{p'_i}^3} + \notag\\
&&&\sum_{\stackrel{j=0}{j \neq i}}^{M-1} \Biggl( 3 \kla{\frac{2 \abs{\mathit{X1}_{ji}}}{\abs{dp_{ij}}^2 \abs{p'_j}}}^p \frac{\abs{p'_j}^2}{\abs{p'_i}} - 3(p-2) \kla{\frac{2\abs{\mathit{X1}_{ij}}}{\abs{dp_{ij}}^2 \abs{p'_i}}}^p \abs{p'_j} + 12 p \kla{\frac{2 \abs{\mathit{X1}_{ij}}}{\abs{dp_{ij}}^2 \abs{p'_i}}}^{p-2} \frac{\abs{p'_j}}{\abs{dp_{ij}}^2} \Biggr) + \notag\\
&&&\suml_{\stackrel{k=0}{k\neq i}}^{M-1} \suml_{\stackrel{j=0}{j\neq i}}^{k-1} \kla{6 \kla{\frac{2 \abs{\mathit{X0}_{ijk}}}{\abs{dp_{ij}}\abs{dp_{jk}}\abs{dp_{ik}}}}^p \frac{\abs{p'_j}\abs{p'_k}}{\abs{p'_i}}} \Biggr\}.
\end{align}
The vector $\sigma^3$:
\begin{align}
\sigma_i^3 &= h^3 \tau \Biggl\{ \abs{p'_i}^{-(3p-3)} \kla{p\,\abs{\mathit{X2}_i}^{p-2} \abs{p'_i}^2} \Biggr\} &=& h^3 \tau \Biggl\{ p \kla{\frac{\abs{\mathit{X2}_i}}{\abs{p'_i}^3}}^{p-2} \frac{1}{\abs{p'_i}} \Biggr\}.
\end{align}
The vector $\sigma^4$:
\begin{align}
\sigma_i^4 &= h^3 \tau \Biggl\{ \abs{p'_i}^{-(3p-3)} \kla{ -p\, \abs{\mathit{X2}_i}^{p-2} (p'_i \cdot p''_i)} \Biggr\} &=& h^3 \tau \Biggl\{ (-p) \kla{\frac{\abs{\mathit{X2}_i}}{\abs{p'_i}^3}}^{p-2} \frac{p'_i \cdot p''_i}{\abs{p'_i}^3} \Biggr\}.
\end{align}
The upper triangular matrix $\sigma^5$ stored without the diagonal entries:
\begin{align}
\sigma_{ij}^5 &= h^3 \tau \Biggl\{ \sum_{\stackrel{k=0}{k \notin \menge{i,j}}}^{M-1} \mathbf{2} \kla{ \frac{2^p \abs{p'_i}\abs{p'_j}\abs{p'_k}}{\abs{dp_{ij}}^p\abs{dp_{jk}}^p\abs{dp_{ik}}^p} (-3p) \abs{\mathit{X0}_{ijk}}^{p-2} (dp_{jk} \cdot dp_{ik})} \Biggr\} \notag\\
&= h^3 \tau \Biggl\{ \sum_{\stackrel{k=0}{k \notin \menge{i,j}}}^{M-1} (-24 p) \kla{\frac{2 \abs{\mathit{X0}_{ijk}}}{\abs{dp_{ij}}\abs{dp_{jk}}\abs{dp_{ik}}}}^{p-2} \frac{\abs{p'_i}\abs{p'_j}\abs{p'_k}}{\abs{dp_{ij}}^2 \abs{dp_{jk}}^2 \abs{dp_{ik}}^2} (dp_{jk} \cdot dp_{ik}) \Biggr\}.
\end{align}
The full matrix $\sigma^6$:
\begin{align}
\sigma_{ij}^6 &= h^3 \tau \Biggl\{ \mathbf{3} \Biggl[ \frac{2^p}{\abs{dp_{ij}}^{2p}} \kla{(-p) \frac{\abs{\mathit{X1}_{ij}}^{p-2} \abs{p'_j}}{\abs{p'_i}^{p-2}} (dp_{ij} \cdot p'_i)} \Biggr] \Biggr\} \notag\\
&= h^3 \tau \Biggl\{ (-12 p) \kla{\frac{2 \abs{\mathit{X1}_{ij}}}{\abs{dp_{ij}}^2 \abs{p'_i}}}^{p-2} \frac{\abs{p'_j}}{\abs{dp_{ij}}^4} (dp_{ij} \cdot p'_i) \Biggr\}.
\end{align}
The upper triangular matrix $\sigma^7$ stored without the diagonal entries:
\begin{align}
\sigma_{ij}^7 &=&& h^3 \tau \Biggl\{ \mathbf{3} \Biggl[ \frac{2^p}{\abs{dp_{ij}}^{2p}} \kla{-2p \frac{\abs{p'_i} \abs{\mathit{X1}_{ji}}^p}{\abs{p'_j}^{p-2} \abs{dp_{ij}}^2} +p \frac{\abs{p'_i} \abs{\mathit{X1}_{ji}}^{p-2}}{\abs{p'_j}^{p-4}}}+ \notag\\
&&& \frac{2^p}{\abs{dp_{ij}}^{2p}} \kla{-2p \frac{\abs{p'_j} \abs{\mathit{X1}_{ij}}^p}{\abs{p'_i}^{p-2} \abs{dp_{ij}}^2} +p \frac{\abs{p'_j} \abs{\mathit{X1}_{ij}}^{p-2}}{\abs{p'_i}^{p-4}}} \Biggr] + \notag\\
&&& \sum_{\stackrel{k=0}{k \notin \menge{i,j}}}^{M-1} \mathbf{2} \kla{ \frac{2^p \abs{p'_i}\abs{p'_j}\abs{p'_k}}{\abs{dp_{ij}}^p \abs{dp_{jk}}^p \abs{dp_{ik}}^p} \abs{\mathit{X0}_{ijk}}^p \frac{-3p}{\abs{dp_{ij}}^{p-4}}} \Biggr\} \notag\\
&=&& h^3 \tau \Biggl\{  (-6p) \Biggl[ \kla{\frac{2 \abs{\mathit{X1}_{ji}}}{\abs{dp_{ij}}^2 \abs{p'_j}}}^p \frac{\abs{p'_j}^2 \abs{p'_i}}{\abs{dp_{ij}}^2} + \kla{\frac{2 \abs{\mathit{X1}_{ij}}}{\abs{dp_{ij}}^2 \abs{p'_i}}}^p \frac{\abs{p'_i}^2 \abs{p'_j}}{\abs{dp_{ij}}^2} \Biggr] + \notag\\
&&& 12p \Biggl[ \kla{\frac{2 \abs{\mathit{X1}_{ji}}}{\abs{dp_{ij}}^2 \abs{p'_j}}}^{p-2} \frac{\abs{p'_j}^2 \abs{p'_i}}{\abs{dp_{ij}}^4} + \kla{\frac{2 \abs{\mathit{X1}_{ij}}}{\abs{dp_{ij}}^2 \abs{p'_i}}}^{p-2} \frac{\abs{p'_i}^2 \abs{p'_j}}{\abs{dp_{ij}}^4} \Biggr] + \notag\\
&&& \sum_{\stackrel{k=0}{k \notin \menge{i,j}}}^{M-1} (-6p) \kla{\frac{2 \abs{\mathit{X0}_{ijk}}}{\abs{dp_{ij}}\abs{dp_{jk}}\abs{dp_{ik}}}}^p \frac{\abs{p'_i}\abs{p'_j}\abs{p'_k}}{\abs{dp_{ij}}^2} \Biggr\}.
\end{align}
The factor $\mathbf{2}$ comes from the symmetry in $i$ and $j$ in each case and the factor $\mathbf{3}$ from the symmetry of the integrand, see \eqref{symmconv}.

For the calculation of $\M_p$ in \mppack{} we use the following trick (\name{H. Wo\'{z}niakowski}, personal communication, October 2011). Let $y_i\in\R$, $i=1,\dots,N$, then we have for $y^\ast:=\max_{i=1,\dots,N}\abs{y_i}$
\begin{equation*}
y^\ast \sqrt{\sum_{k=1}^N \kla{\frac{y_k}{y^\ast}}^2} = \sqrt{\sum_{k=1}^N y_k^2}
\end{equation*}
which avoids buffer overflows for very large $p$. The disadvantage is that we have to find this maximum at first. However, one should easily be possible to adapt this for computing the first variation.

\section{Enhancing calculation speed}

During the implementation many different methods to increase performance has been applied. We now have a closer look on purely mathematical improvements, optimisations with respect to the used data structures and enhancements by C++ -- programming issues.

\paragraph{Mathematical improvements}
Let $i\in\menge{0,\dots,M-1}$ and $k,l \in \N$ with $1 \leq k \leq l \leq N$
\begin{alignat*}{3}
& \cos(kx_i) \cos(lx_i) & \; = \frac{1}{2} \Bigl( & &  \cos((l-k)x_i) &+ \cos((l+k)x_i) \Bigr)\\
& \sin(kx_i) \sin(lx_i) & \; = \frac{1}{2} \Bigl( & &  \cos((l-k)x_i) &- \cos((l+k)x_i) \Bigr)\\
& \sin(kx_i) \cos(lx_i) & \; = \frac{1}{2} \Bigl( & & -\sin((l-k)x_i) &+ \sin((l+k)x_i) \Bigr)\\
& \cos(kx_i) \sin(lx_i) & \; = \frac{1}{2} \Bigl( & &  \sin((l-k)x_i) &+ \sin((l+k)x_i) \Bigr),
\end{alignat*}
due to \eqref{trigsums}, \eqref{trigdiffs}, \eqref{trigsumc} and \eqref{trigdiffc}. The only factors that appear on the right-hand-side are $(l-k)$ and $(l+k)$ and due to $1 \leq k \leq l \leq N$ we have in the first case $0 \leq l-k \leq N-1$ and in the second $2 \leq l+k \leq 2N$. We consider those terms in the first variation where two basis functions appear with the same integration variable $x_i$, namely $q_i^a q_i^b$, ${q'}_i^a {q'}_i^b$, ${q''}_i^a {q''}_i^b$ and ${q'}_i^a {q''}_i^b+{q''}_i^a {q'}_i^b$. We can use these formulas in order to shorten the calculation time and additionally receive a higher accuracy. Instead of calculating $\sum_{i=0}^{M-1} \sigma_i^1\, q_i^a q_i^b$ for all $a, b=1,\dots, 2N$, that is $4 N^2$ times, for example, we can now compute $\sum_{i=0}^{M-1} \sigma_i^1\, \cos(kih)$ and $\sum_{i=0}^M \sigma_i^1\, \sin(kih)$ for $k=0,\dots,2N$. Since $\sin(0)=0$ we can skip one and eventually have to do $4N+1$ calculations. We save those values in arrays named $\vartheta_\iota$, for some index $\iota$. Afterwards we only have to use the formulas above to get the values for the products of basis functions like $q_i^a q_i^b$.

\paragraph{Better data structures}
We want to store $\abs{\mathit{X0}_{ijk}}$ for $i,j,k=0,\dots,M-1$ efficiently. It is symmetric in $i,j,k$ and zero if two indices are equal. Therefore, we only have to save values for $i<j<k$. Next we compute how many values we have to store
\begin{align*}
\sum_{k=2}^{M-1} \sum_{j=1}^{k-1} j &= \frac{1}{2} \kla{\sum_{k=2}^{M-1} k^2 -\sum_{k=2}^{M-1} k} \;=\; \frac{1}{2} \kla{\sum_{k=1}^{M-1} k^2 -\sum_{k=1}^{M-1} k}\\
&= \frac{1}{2} \kla{\frac{(M-1)M(2M-1)}{6}-\frac{(M-1)M}{2}} \;=\; \frac{(M-2)(M-1)M}{6},
\end{align*}
which is less than $\frac{1}{6}$ of the amount for the full structure $M^3$. Instead of using a nested array structure it is more efficient to save these values in a one-dimensional array. Consequently, the triple index $(i,j,k)$ matches the following index of this array
\begin{equation*}
i+\sum_{\iota=1}^{j-1} \iota+\sum_{\kappa=2}^{k-1}\sum_{\iota=1}^{\kappa-1} \iota \;=\; i+\frac{(j-1)j}{2}+\frac{(k-2)(k-1)k}{6}.
\end{equation*}
Analogously a symmetric matrix with a non-zero diagonal can be stored in an array with $\frac{M(M+1)}{2}$ elements and the double index $(i,j)$ with $i\leq j$ correspond to the index
\begin{equation*}
i + \frac{j(j+1)}{2}.
\end{equation*}

\paragraph{\cpp{} programming issues}
An important issue in \cpp{} programming is to avoid overhead by objective orientated programming. However, there is more that we taken into account.
\begin{exmp} \mylabel{altpow}
If we want to calculate powers of numbers close to $1$ to a fractional exponent, for instance $\frac{7}{2}$, using the standard \cpp{} \texttt{pow} function might be extremely slow. Applying the flow on the unit circle with $p=3.5$ we get
a calculation time of more than $1$h and $24$min. But by using a self-written variant, that always uses the fact that for $a>0$ and $b\in\R$
\begin{equation*}
a^b \quad = \quad \exp( b\; \ln(a) ),
\end{equation*}
we get a very significant speed-up
as this only takes about $10$s. However, for $p=3$ it is better to choose the standard method because the self-written solution takes 
about $11$s, but if we switch back to the standard method it only takes
$1.5$s. The situation for $p=50$ is $12.97$s with the self-written and $2.23$s for the standard \texttt{pow} function.
\end{exmp}
Since the application only needs a minimal amount of memory we can run multiple copies in parallel. However, it could additionally be beneficial to use real parallelisation. For instance \name{M. Dryja} (personal communication, October 2011) suggested to use \emph{PETSc}.

\section{An adaptive choice of time step size}
The stability of a solver for a linear system of equations strongly depends on the condition number of a matrix $A$. If we consider the spectral norm of $A$ the \emph{condition number} is defined like this, see \cite[(3.4)]{quarteroni}
\begin{equation}
K_2(A) := \norm{A}_2\,\norm{A^{-1}}_2.
\end{equation}
Furthermore, if $A$ is symmetric and positive definite, i.e. all eigenvalues are positive, we have the following useful representation, see \cite[(3.5)]{quarteroni}
\begin{equation}
K_2(A) = \frac{\lambda_A^{\max}}{\lambda_A^{\min}},
\end{equation}
where we call $\lambda_A^{\min}$ the minimal and $\lambda_A^{\max}$ the maximal eigenvalue of $A$. Hence, we need to estimate the eigenvalues of a matrix in order to control its condition number. A symmetric matrix $A$ has exactly $n$ real eigenvalues (up to multiplicity) to be denoted by $(\lambda_A^i)_{i=1,\dots,n}$. We have already seen that the matrix \eqref{matCompAB} can be decomposed into $A+\tau B$, where $A$ and $B$ are symmetric matrices and $\tau>0$. Moreover, $A$ is positive definite, which we will see next. Recall that $A=\sum\limits_i h\,\abs{\gamma'(x_i)}\,\varphi_b(x_i)\varphi_a(x_i)$ and for $0\neq v\in \R^{2N}$ we have
\begin{align*}
v^\text{T} A v &= \sum_{a=0}^{2N-1} \sum_{b=0}^{2N-1} \sum_i h\,v_a v_b\,\abs{\gamma'(x_i)}\,\varphi_b(x_i)\varphi_a(x_i)\\
&\geq C\,\sum_{a=0}^{2N-1} \sum_{b=0}^{2N-1} \int_0^{2\pi} v_a v_b\,\varphi_b(x)\varphi_a(x) \, dx \stackrel{\eqref{l2ortho}}{=} C\,\pi\sum_{a=0}^{2N-1} v_a^2 >0,
\end{align*}
which is correct because $\gamma$ is regular and the trapezoidal rule is exact for these integrands, see \thref{exmptrapExact}(b).\\
Consequently, for the eigenvalues of the matrices $A$ and $B$ we have $0 < \lambda_A^{\min} \leq \lambda_A^{\max}$ and $\lambda_B^{\min} \leq \lambda_B^{\max}$. Furthermore, for $n=2N$ there exists an orthonormal basis of the eigenspaces of $A$ and $B$ which we call $(x_i)_{i=1,\dots,n}$ and $(\tilde{x_i})_{i=1,\dots,n}$ respectively. Let $\hat{x}$ be an eigenvector of the matrix $A+\tau B$ with respect to the eigenvalue $\lambda$. Therefore, $x := \frac{\hat{x}}{\norm{\hat{x}}}$ is also a eigenvalue. Let $(\alpha_i)_{i=1,\dots,n}$ and $(\tilde{\alpha_i})_{i=1,\dots,n}$ be the coefficients of $x$ concerning the basis $(x_i)$ and $(\tilde{x_i})$. Due to the orthonormality we gain
\begin{equation*}
1 = \norm{x}^2 = \kla{\sum_{i=1}^n \alpha_i x_i}^T \kla{\sum_{i=1}^n \alpha_i x_i} = \sum_{i=1}^n \alpha_i^2 \qquad \text{and} \qquad \sum_{i=1}^n \tilde{\alpha_i}^2 = 1.
\end{equation*}
Now we conclude
\begin{eqnarray*}
\lambda & = & \lambda \norm{x}^2 = x^T \lambda x = x^T \kla{A+\tau B} x = x^T A x + \tau \, x^T B x\\
& = & \kla{\sum_{i=1}^n \alpha_i x_i}^T A \kla{\sum_{i=1}^n \alpha_i x_i} + \tau \kla{\sum_{i=1}^n \tilde{\alpha_i} \tilde{x_i}}^T B \kla{\sum_{i=1}^n \tilde{\alpha_i} \tilde{x_i}}\\
& = & \kla{\sum_{i=1}^n \alpha_i x_i}^T \kla{\sum_{i=1}^n \alpha_i \lambda_A^i x_i} + \tau \kla{\sum_{i=1}^n \tilde{\alpha_i} \tilde{x_i}}^T \kla{\sum_{i=1}^n \tilde{\alpha_i} \lambda_B^i \tilde{x_i}}\\
& = & \sum_{i=1}^n \alpha_i^2 \lambda_A^i + \tau \sum_{i=1}^n \tilde{\alpha_i}^2 \lambda_B^i.
\end{eqnarray*}
This is equivalent to
\begin{equation*}
\lambda \in \left[ \lambda_A^{\min} + \tau \lambda_B^{\min}, \lambda_A^{\max} + \tau \lambda_B^{\max} \right].
\end{equation*}
The aim of the algorithm we want to design for the adaptive choice of time step size is the following. For $\eps>0$ and we choose $\tau\geq 0$ such that
\begin{enumerate}
\item $A+\tau B$ is positive definite
\item $K_2(A+\tau B) \leq (1+\eps) K_2(A)$
\end{enumerate}
The first claim is fulfilled if the minimal eigenvalue of $A+\tau B$ is positive in particular if $0<\lambda_A^{\min}+\tau\lambda_B^{\min}$. Consequently, we require
\begin{equation}
\begin{cases} \label{HMpd}
\tau \geq 0, & \lambda_B^{\min} \geq 0\\
0 \leq \tau < \dfrac{\lambda_A^{\min}}{-\lambda_B^{\min}}, & \lambda_B^{\min} < 0.
\end{cases}
\end{equation}
For the second claim we consider
\begin{equation*}
K_2(A+\tau B) = \frac{\lambda_{A+\tau B}^{\max}}{\lambda_{A+\tau B}^{\min}}\leq \frac{\lambda_{A}^{\max}+\tau\lambda_{B}^{\max}}{\lambda_{A}^{\min}+\tau\lambda_{B}^{\min}}\leq (1+\eps)\frac{\lambda_{A}^{\max}}{\lambda_{A}^{\min}}=(1+\eps)K_2(A)
\end{equation*}
as $A+\tau B$ is positive definite. Therefore, we demand
\begin{equation}
\begin{cases} \label{HKs}
0 \leq \tau \leq \dfrac{\eps \lambda_A^{\min} \lambda_A^{\max}}{\lambda_B^{\max}\lambda_A^{\min}-(1+\eps) \lambda_A^{\max} \lambda_B^{\min}}, & \lambda_B^{\max}\lambda_A^{\min}-(1+\eps) \lambda_A^{\max} \lambda_B^{\min} > 0\\
\tau \geq 0, & \lambda_B^{\max}\lambda_A^{\min}-(1+\eps) \lambda_A^{\max} \lambda_B^{\min} \leq 0.
\end{cases}
\end{equation}
Now let $T:=\lambda_B^{\max}\lambda_A^{\min}-(1+\eps) \lambda_A^{\max} \lambda_B^{\min}$. At first we consider the case $T>0$. If $\lambda_B^{\max}\geq 0$ in addition, we may set
\begin{equation*}
0 \leq \tau:= \dfrac{\eps \lambda_A^{\min} \lambda_A^{\max}}{T},
\end{equation*}
as \eqref{HKs} and \eqref{HMpd} are satisfied. The latter is true, since assuming $\lambda_B^{\min} < 0$ we have
\begin{equation*}
0 \leq \tau = \dfrac{\eps \lambda_A^{\min} \lambda_A^{\max}}{T} \leq \dfrac{\eps\lambda_A^{\min} \lambda_A^{\max}}{-\lambda_A^{\max} \lambda_B^{\min}-\eps\lambda_A^{\max} \lambda_B^{\min}} <
\dfrac{\eps \lambda_A^{\min} \lambda_A^{\max}}{-\eps\lambda_A^{\max} \lambda_B^{\min}} =
\dfrac{\lambda_A^{\min}}{-\lambda_B^{\min}}.
\end{equation*}
However, if $\lambda_B^{\min} \geq 0$ then \eqref{HMpd} is trivial. Now we come to the case $\lambda_B^{\max} < 0$. This implies $\lambda_B^{\min} < 0$ and we define
\begin{equation*}
0 \leq \tau := \min \left\{ \dfrac{\eps \lambda_A^{\min} \lambda_A^{\max}}{T}, \dfrac{\lambda_A^{\min}}{-\lambda_B^{\min}} \right\}.
\end{equation*}
\begin{exmp}
Assume $\lambda_B^{\min}=\lambda_B^{\max}$ and $\lambda_A^{\max}=2$. If we consider
\begin{equation*}
\dfrac{\eps \lambda_A^{\min} \lambda_A^{\max}}{T} = \dfrac{2 \eps\,\lambda_A^{\min}}{-\lambda_B^{\min} (2+2\eps-\lambda_A^{\min})} =: c\,\frac{\lambda_A^{\min}}{-\lambda_B^{\min}},
\end{equation*}
we have $c=\frac{2\eps}{1+2\eps} < 1$ for $\lambda_A^{\min}=1$ and $c=2 > 1$ for $\lambda_A^{\min}=2$. Consequently, in general both options for $\tau$ are possible.
\end{exmp}
Secondly, we assume that $T\leq 0$. This case is trivial, since
\begin{equation*}
\lambda_B^{\min} \kla{(\lambda_A^{\min} - \lambda_A^{\max}) -\eps\lambda_A^{\max}} = \lambda_B^{\min} (\lambda_A^{\min} -(1+\eps) \lambda_A^{\max}) \leq T \leq 0
\end{equation*}
and we gain $\lambda_B^{\min} \geq 0$. Therefore, we may choose any $\tau \geq 0$ in this case.

In the last part of this section we want to examine how the matrices $A$ and $B$ and therefore, their eigenvalues scale if we apply $r\gamma$ for $r>0$ instead of $\gamma$. We denote the matrix $A$ with respect to the knot $\gamma$ by $A(\gamma)$ and for $B$ respectively. In \eqref{matCompAB} one can see that $A(r\gamma) = r\,A(\gamma)$. Recall that in the first variation a $\gamma$ which appears linearly in the equation is substituted by the next time step (cf. \eqref{matCompAB}) and does not appear in the matrix $B$. Moreover, $\delta\E_p(r\gamma,\Phi)=r^{-1}\delta\E_p(\gamma,\Phi)$, because $\E_p(\gamma)$ is scale invariant. Consequently, we have that $B(r\gamma)=r^{-2}\,B(\gamma)$. Finally, we obtain that by using $r\gamma$ instead of $\gamma$ we get a time step size multiplied by $r^3$ which is the same result as for the continuous first variation \eqref{firstscale}. Observe that if $T\leq 0$ this is also true for $r\gamma$ instead of $\gamma$ and in this case we also may choose $r^3\,\tau$ instead of $\tau$.

\section{A redistribution algorithm}
In numerical flows it is often helpful to redistribute the control points of the approximation in order to get a better convergence behaviour (see for instance \cite{DKS02}). Therefore, we transfer this idea to \name{Fourier} knots.
\begin{figure}[H]
\begin{tabular}{ccc}
\includegraphics{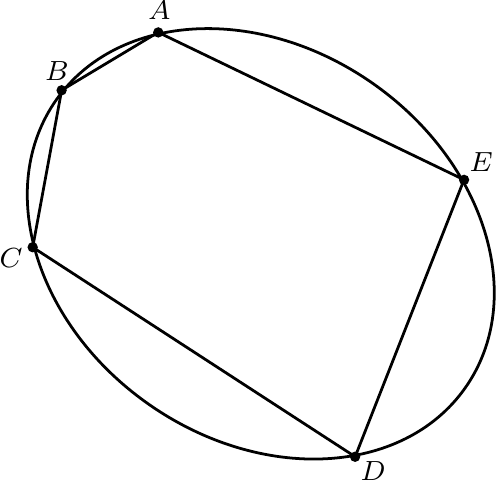}&
\hspace{2cm}
&
\includegraphics{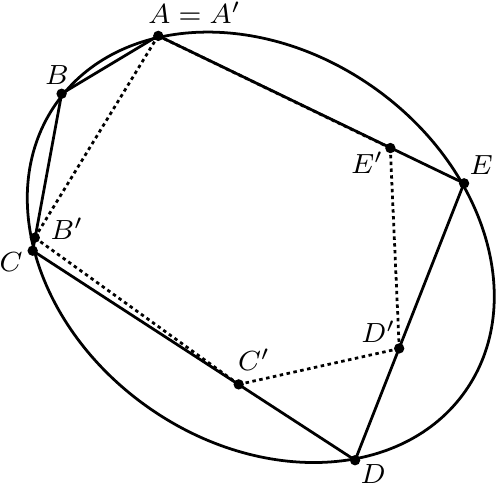}
 \\
step~1 & &step~2
\end{tabular}
\caption{A redistribution algorithm}
\label{figredista}
\end{figure}
\begin{figure}[H]
\begin{tabular}{ccc}
\includegraphics{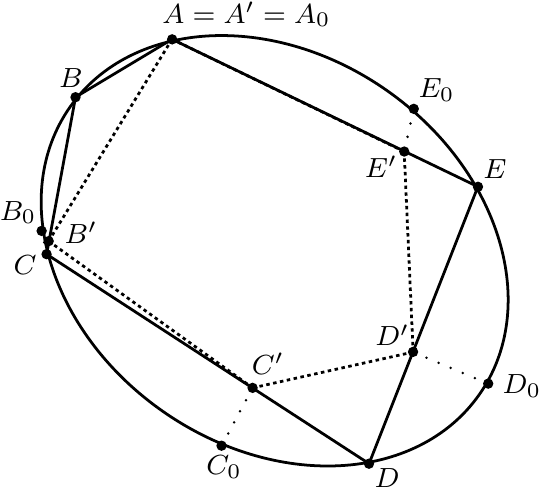}&
\hspace{2cm}
 &
\includegraphics{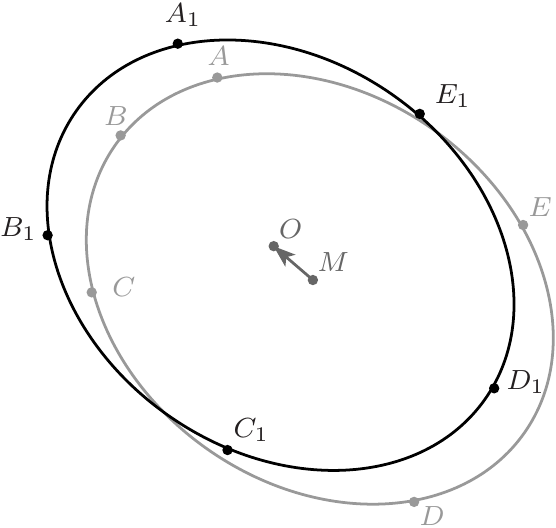}
 \\
step~3 & &step~4
\end{tabular}
\caption{A redistribution algorithm (continued)}
\label{figredistb}
\end{figure}
Firstly, we create an inscribed polygon $\mathcal{P}$ by choosing the vertices corresponding to equidistantly distributed points on $[0,2\pi]$ and compute its total length $L$, see (step~1). Then we apply the ``standard" redistribution algorithm to $P$. More precisely, if $n\in\N$ is the number of vertices of $P$, we create $n$ points on $\mathcal{P}$ such that the distance on the polygon (i.e. \emph{not} the \name{Euclidean} distance) between two points which are adjacent on $P$ is exactly $\frac{L}{n}$. Then these points define the vertices of a new polygon $\widetilde{\mathcal{P}}$ which has altered shape compared to $P$, see (step~2). Next we compute $n$ new points on the curve according to the vertices of $\widetilde{\mathcal{P}}$ which can be done since they also belong to the polygon $P$, see (step~3). Finally, we compute new \name{Fourier} coefficients with respect to a new parametrization of the curve such that these new points on the curve are the points corresponding to equidistantly distributed points on $[0,2\pi]$. Since we leave the first coefficient out the curve is slightly translated in such a way that the centre of mass of the new points is in the origin, see (step~4). Moreover, there will also be a slightly change in the shape of the curve as we only use $20$ coefficients which is not visible in Figure~\ref{figredistb}.

\section{Alternative approaches}
In principal one could also apply other optimisation algorithms instead of a gradient flow. An approach similar to a \name{Newton} method would lead to faster convergence. However, it would be necessary to compute the Hessian respectively the second variation of $\M_p$. Some hints and ideas in this direction are due to \name{P. Kiciak} (personal communication, October 2011). Another idea is to use the so-called \emph{simulated annealing} (see \cite{simanneal} and the PhD theses of \name{M. Carlen} \cite{phd_carlen} and \name{H. Gerlach} \cite{phd_gerlach}) where to put it simple the coefficients of the approximation are moved randomly such that the energy decreases. It would be interesting to compare the results with those in this thesis.

\cleartooddpage[\thispagestyle{empty}]

\chapter{Examples}
Before we come to our collection of examples we firstly discuss how to get \name{Fourier} parametrizations. Moreover, we shortly mention the tool box \mppack{}.

\section{Creating parametrizations of \name{Fourier} knots}
A collection of numerous examples of \name{Fourier} knots can be found in \cite{3d-meier}. Among others the prime knot classes up to $7_7$ are present. Moreover, on page \texttt{Seite27.html} you find the formula for \emph{torus knots}, which are curves defined on a torus. For instance, the representative of a $5_1$ knot is such a knot.

More parametrizations of \name{Fourier} knots can be found in \cite{harmknots}, where the so-called \emph{harmonic knots} are introduced and where we take a $4_1$ knot from and int \cite{kauffman}.

The discretization of a stadium-curve (see Figure~\ref{figStadionCurve}) can be found in \cite[3.4]{dipl}. The ``saddle'' knot is taken from the project work \cite{projekt}. The data for the deformed trefoil knot comes from \name{H. Gerlach}.

For instance one could generate a polygonal knot using \knotplot{} \cite{knotplot} and save it as an list of coordinates in $\R^3$. Then we can use \libbiarc{} \cite{phd_carlen}. One can convert the file into the pkf-format using \texttt{\libbiarc{}/tools/xyz2pkf} and then \texttt{\libbiarc{}/fourierknots/guess\_coeffs} in order to get the \name{Fourier} coefficients. For instance the configuration used for the $5_2$ knot (Section~\ref{knot52}) was created like this.

\section{Implementation}
The tool box \mppack{} contains a family of programs to handle \name{Fourier} knots and to compute the gradient flows considered in this thesis. For this purpose the programming languages \cpp{} and \python{} has been used. The computer algebra system \maple{} is also used to create previews of the gradient flows. The final visualisations have been done with \povray. \gnuplot{} can be used to create graphs related to the flows.

All knots we are considering in the following sections are \name{Fourier} knots with $20$ pairs of coefficients. A corresponding flow is visualised by a series of pictures of significant time steps. The caption of such a visualisation contains the information what $p$ has been chosen. For $\tau_{\max}$ and $\eps$, which are important for the adaptive choice of time step size, we use $\tau_{\max}=0.01$ and $\eps=0.05$ in most examples. If this is not the case there will be a hint in the caption. Unless otherwise noted the scale-invariant flow is used. Remember that for $p=3$ we have $\E_3\equiv\M_3$. Each picture is labelled with the index of the time step, the approximate length and $\E_p$-energy as well as the total time that has elapsed so far. The knots are rendered with a constant thickness during the flow, which does not depend on the actual thickness in the sense of \thref{defthickness} of the knot.

\newpage
\section{Unknots ($0_1$)} \label{knot0}
We start with a planar stadium-curve. Observe, that the curve used here is a \name{Fourier} approximation of Figure~\ref{figStadionCurve}, which is of class $C^\infty$, and therefore, some segments of this curve only seems to be straight. As was to be expected it converges to a circle. The energy value of a circle for $\eps=0.05$ stays constant at \texttt{6.28318530718} for all digits. Observe, that the redistribution applied to a circle does \emph{not} change its coefficients. Even for the unit circle it is crucial to choose the time step size carefully, for example with a time step size of $0.1$ the circle deforms strongly after only a few steps.
\begin{figure}[H]
\begin{center}
\begin{scriptsize}
\begin{tabular}{cc}
0/25000 & 1200/25000 \\
$\Le(\gamma)\approx 5.14154$ & $\Le(\gamma)\approx 5.03248$ \\
$\E_p(\gamma)\approx 7.18548$ & $\E_p(\gamma)\approx 6.83197$ \\
$\tau=0.0$ & $\tau=0.02728$ \\
\includegraphics[width=0.4\textwidth,keepaspectratio]{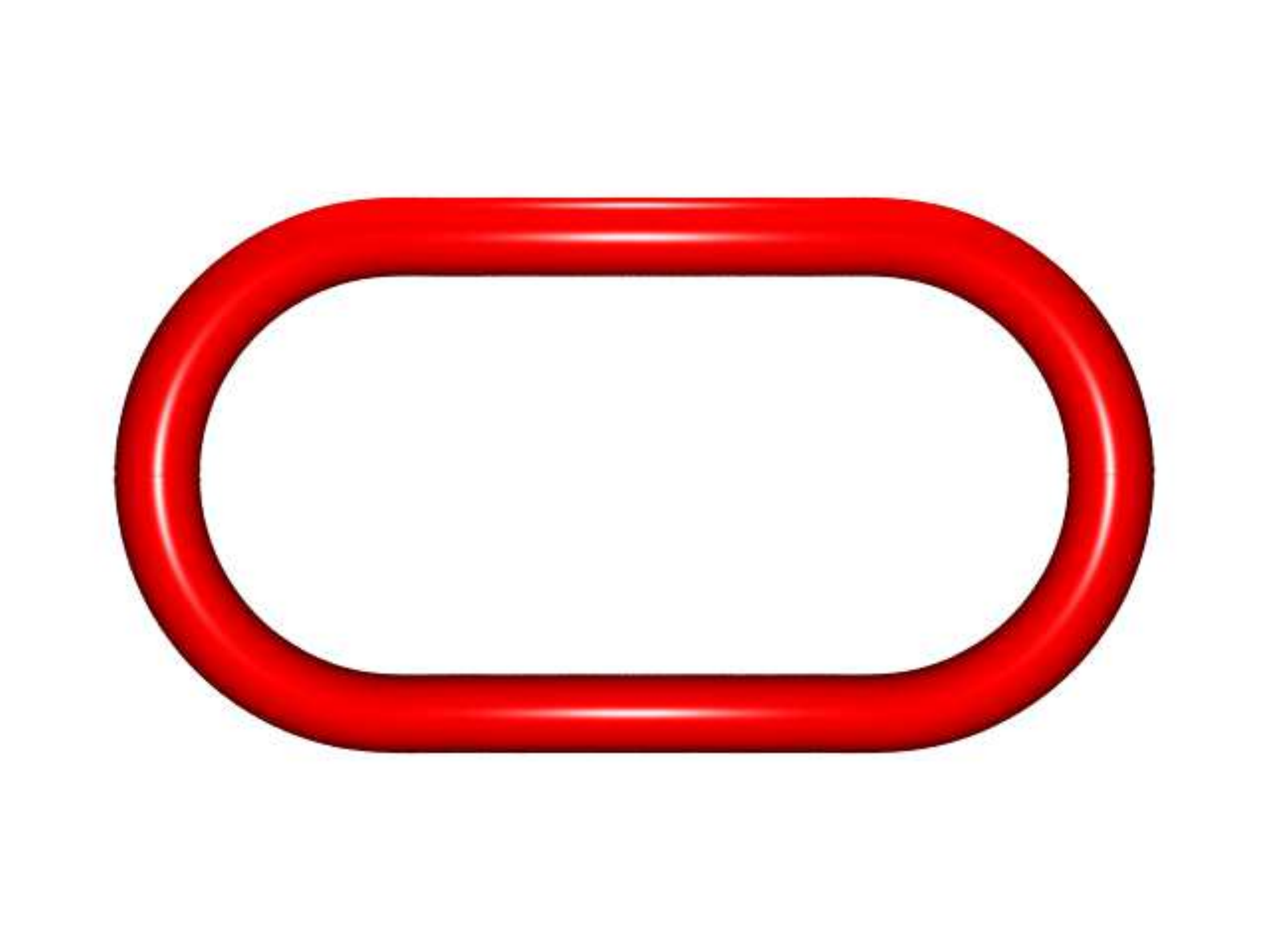} & \includegraphics[width=0.4\textwidth,keepaspectratio]{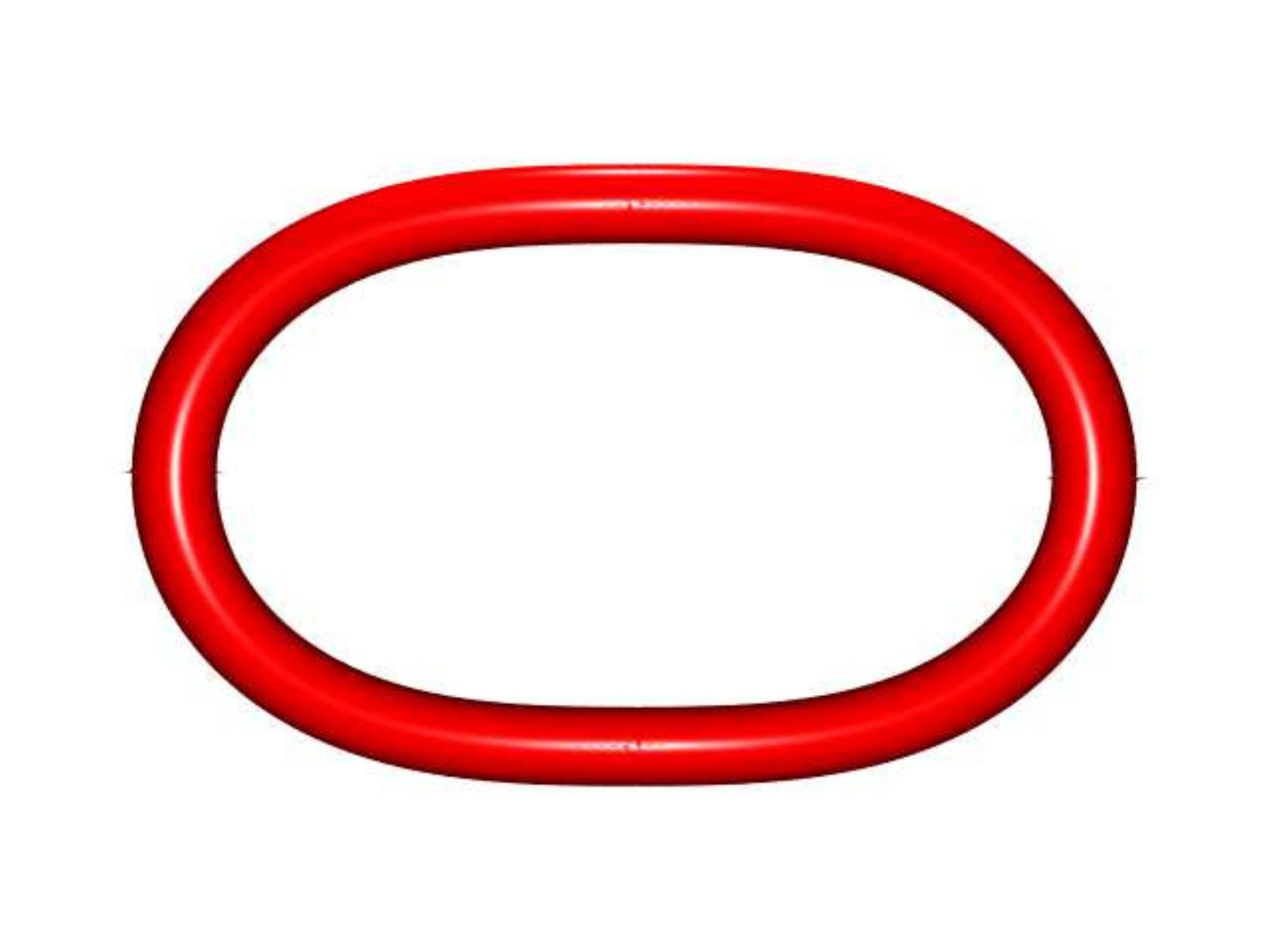}\\
7000/25000 & 25000/25000 \\
$\Le(\gamma)\approx 4.88948$ & $\Le(\gamma)\approx 4.88032$ \\
$\E_p(\gamma)\approx 6.31843$ & $\E_p(\gamma)\approx 6.28319$ \\
$\tau=0.14839$ & $\tau=0.51834$ \\
\includegraphics[width=0.4\textwidth,keepaspectratio]{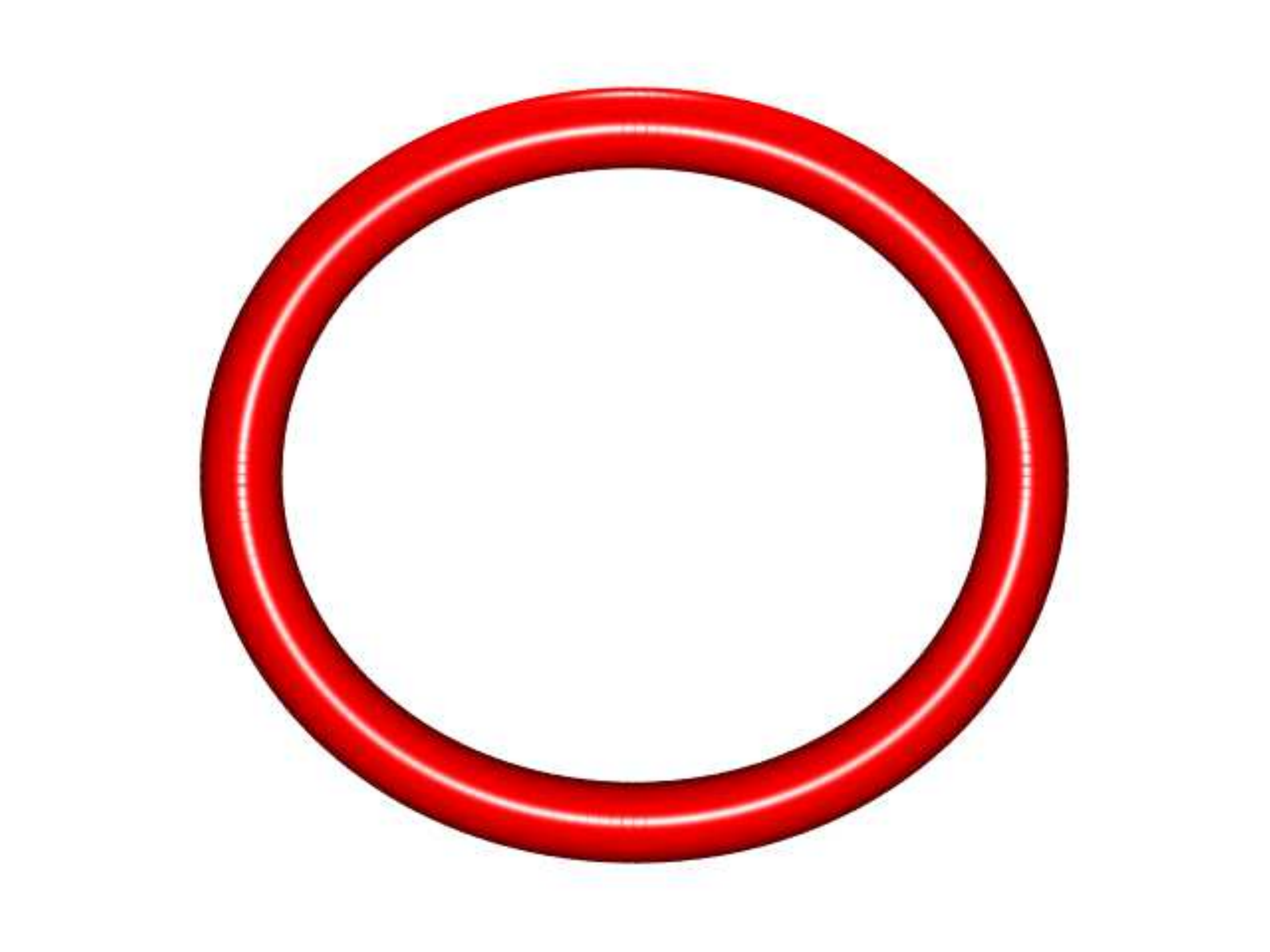} & \includegraphics[width=0.4\textwidth,keepaspectratio]{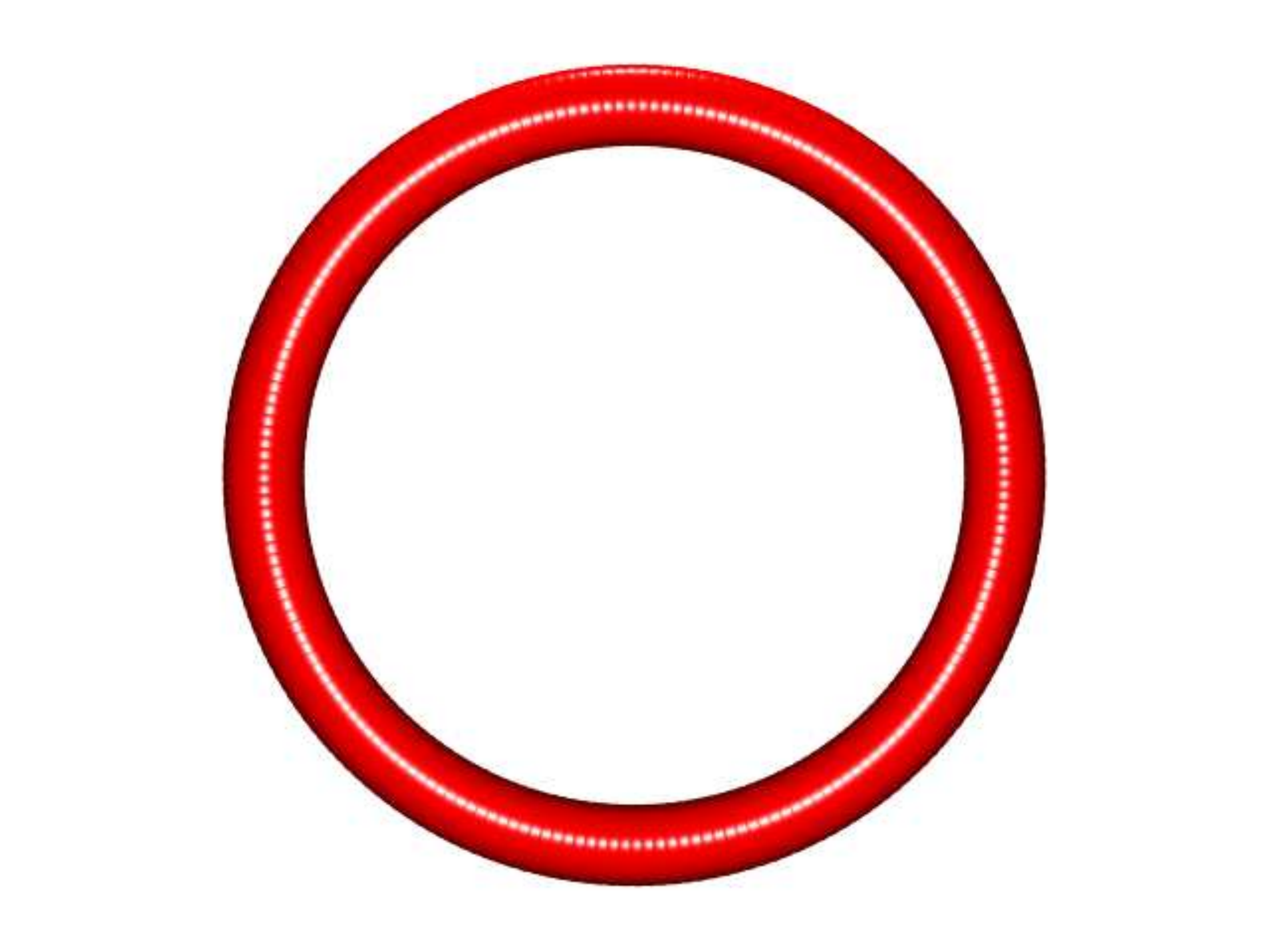}
\end{tabular}
\end{scriptsize}
\end{center}
\caption{A stadium-curve -- $p=3.0$}
\end{figure}

\newpage
This was the flow for $p=3$. If we choose $p=50$ instead, the stadium-curve again converges to a circle. However, there are some differences
\begin{figure}[H]
\begin{center}
\begin{scriptsize}
\begin{tabular}{cc}
0/10000 & 1200/10000 \\
$\Le(\gamma)\approx 5.14154$ & $\Le(\gamma)\approx 5.08261$ \\
$\E_p(\gamma)\approx 9.7989$ & $\E_p(\gamma)\approx 8.59715$ \\
$\tau=0.0$ & $\tau=0.01058$ \\
\includegraphics[width=0.4\textwidth,keepaspectratio]{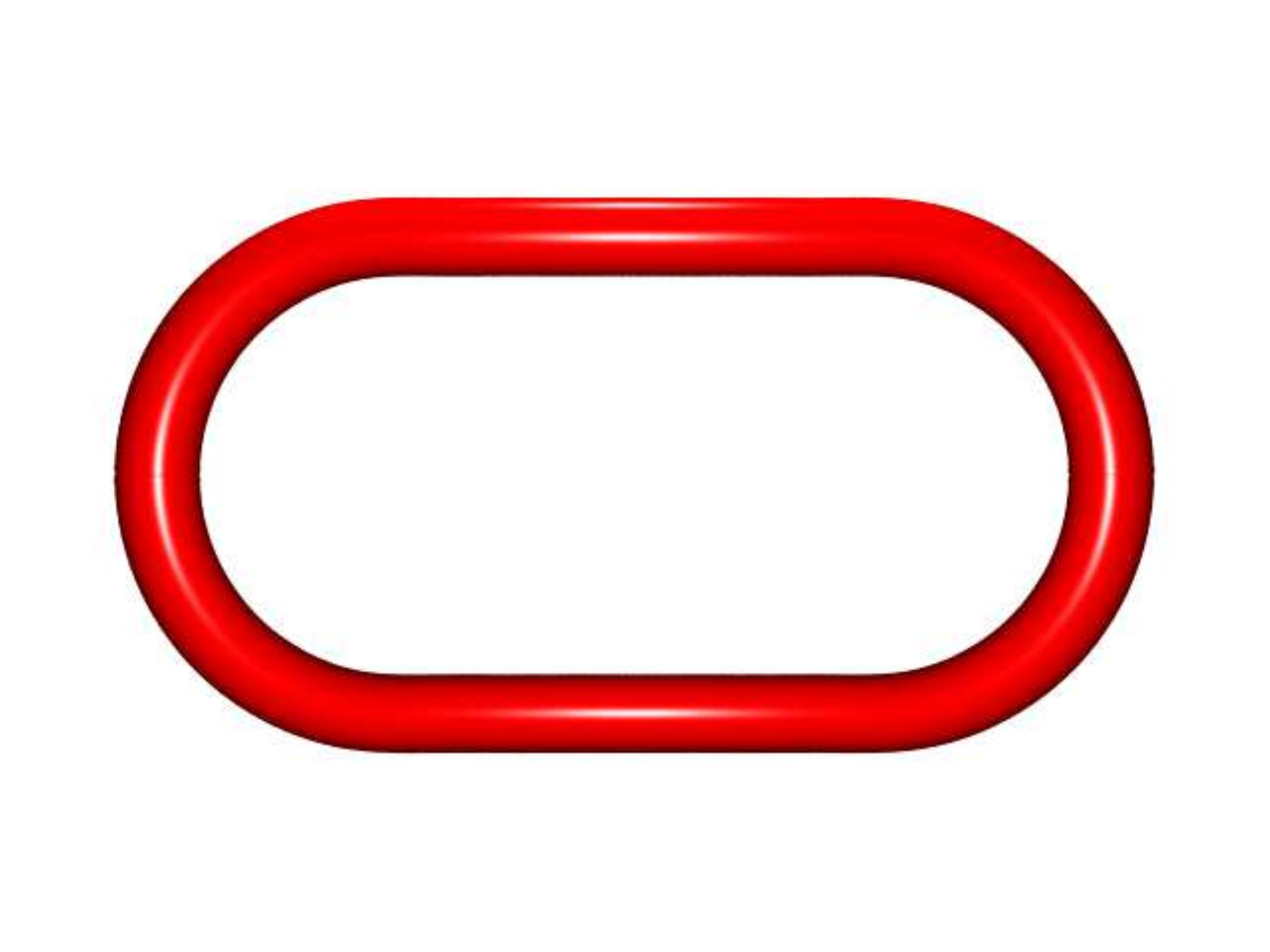} & \includegraphics[width=0.4\textwidth,keepaspectratio]{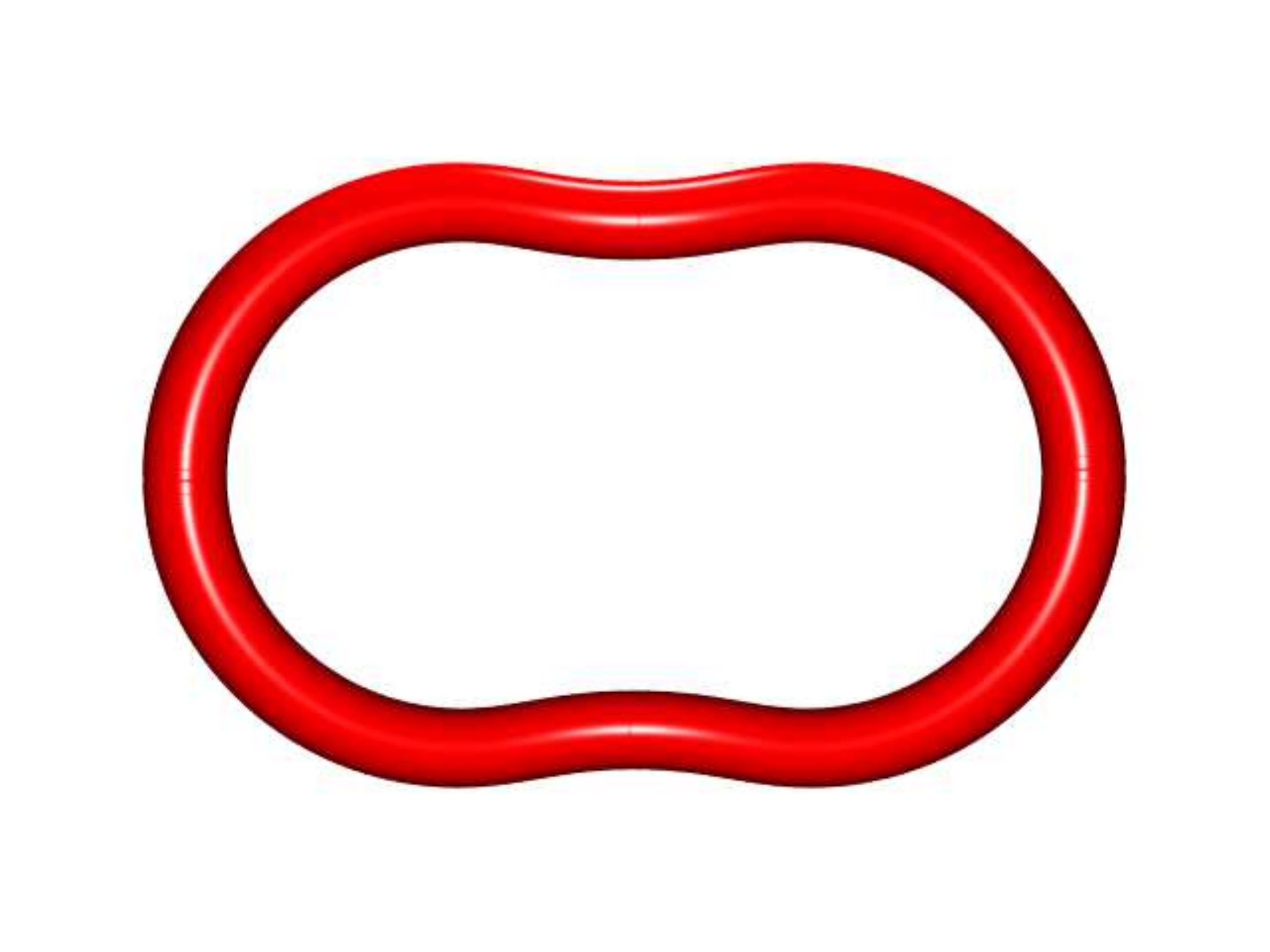} \\
4000/10000 & 10000/10000 \\
$\Le(\gamma)\approx 4.93668$ & $\Le(\gamma)\approx 4.90877$ \\
$\E_p(\gamma)\approx 6.97758$ & $\E_p(\gamma)\approx 6.28319$ \\
$\tau=0.03576$ & $\tau=0.1423$ \\
\includegraphics[width=0.4\textwidth,keepaspectratio]{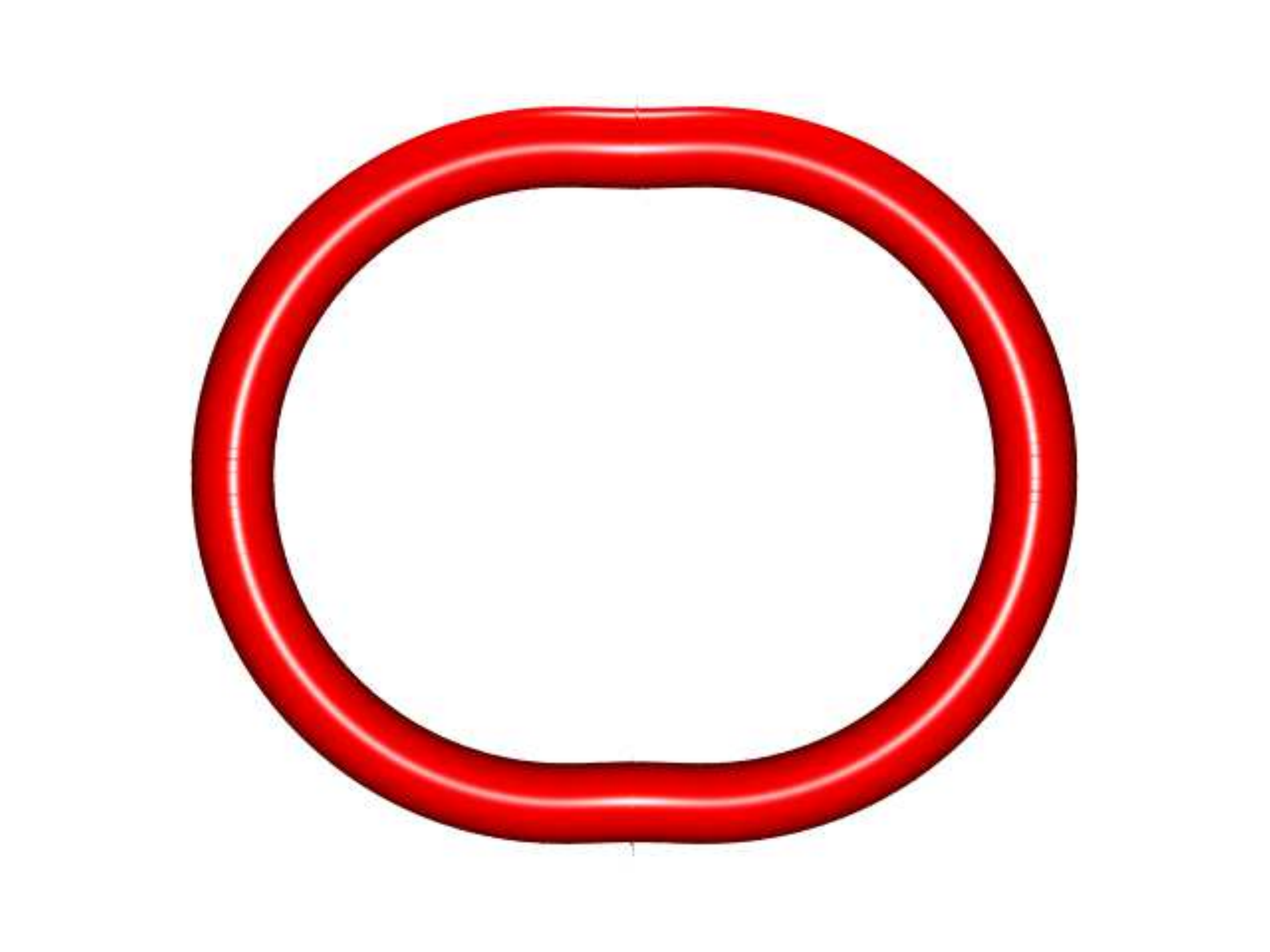} & \includegraphics[width=0.4\textwidth,keepaspectratio]{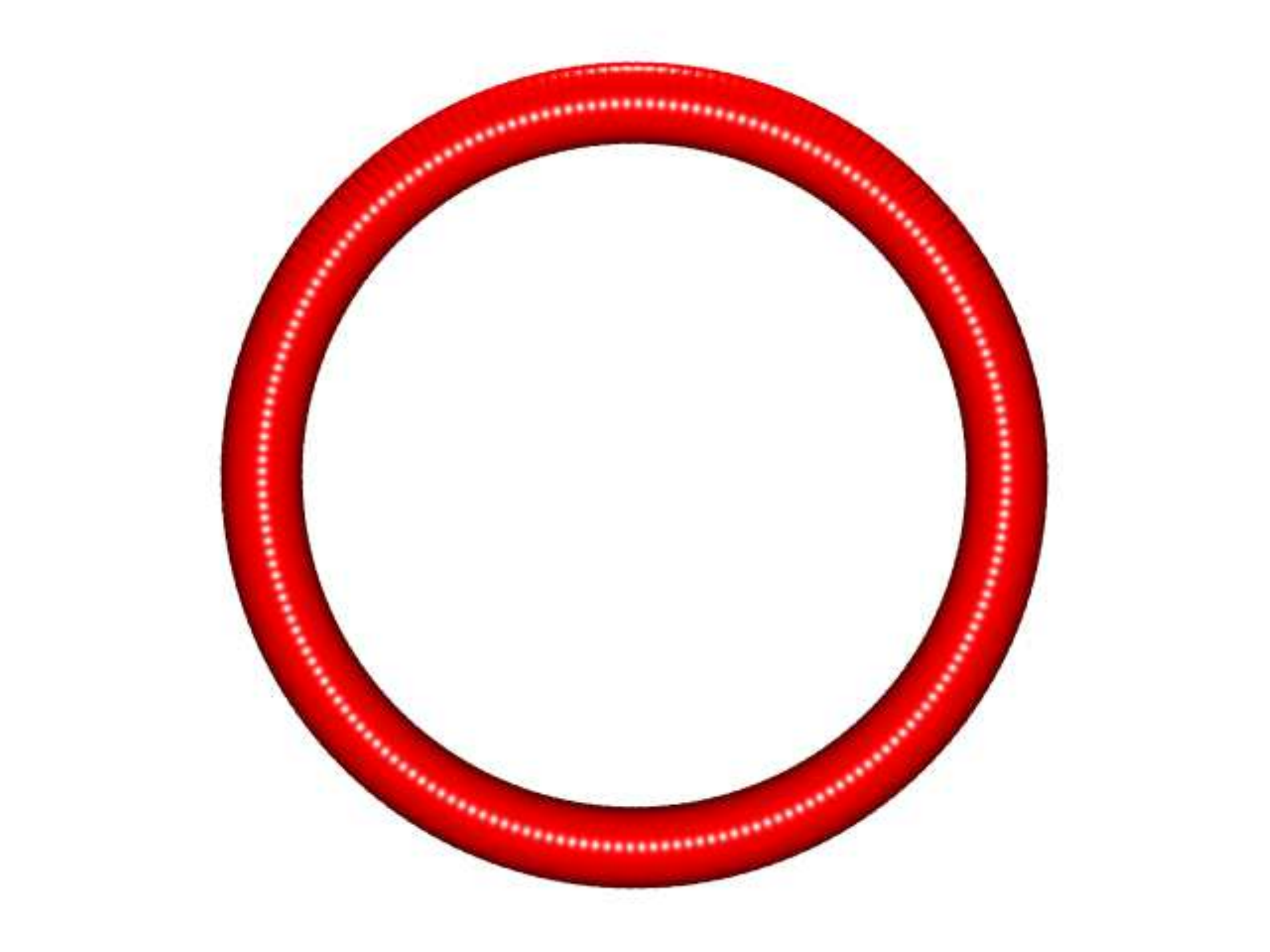}
\end{tabular}
\end{scriptsize}
\end{center}
\caption{A stadium-curve -- $p=50.0$}
\end{figure}

\newpage
Something similar happens to this ``saddle''-curve. Firstly for $p=3$
\begin{figure}[H]
\begin{center}
\begin{scriptsize}
\begin{tabular}{cc}
0/50000 & 300/50000 \\
$\Le(\gamma)\approx 7.64039$ & $\Le(\gamma)\approx 7.47673$ \\
$\E_p(\gamma)\approx 8.07063$ & $\E_p(\gamma)\approx 7.7699$ \\
$\tau=0.0$ & $\tau=0.02285$ \\
\includegraphics[width=0.4\textwidth,keepaspectratio]{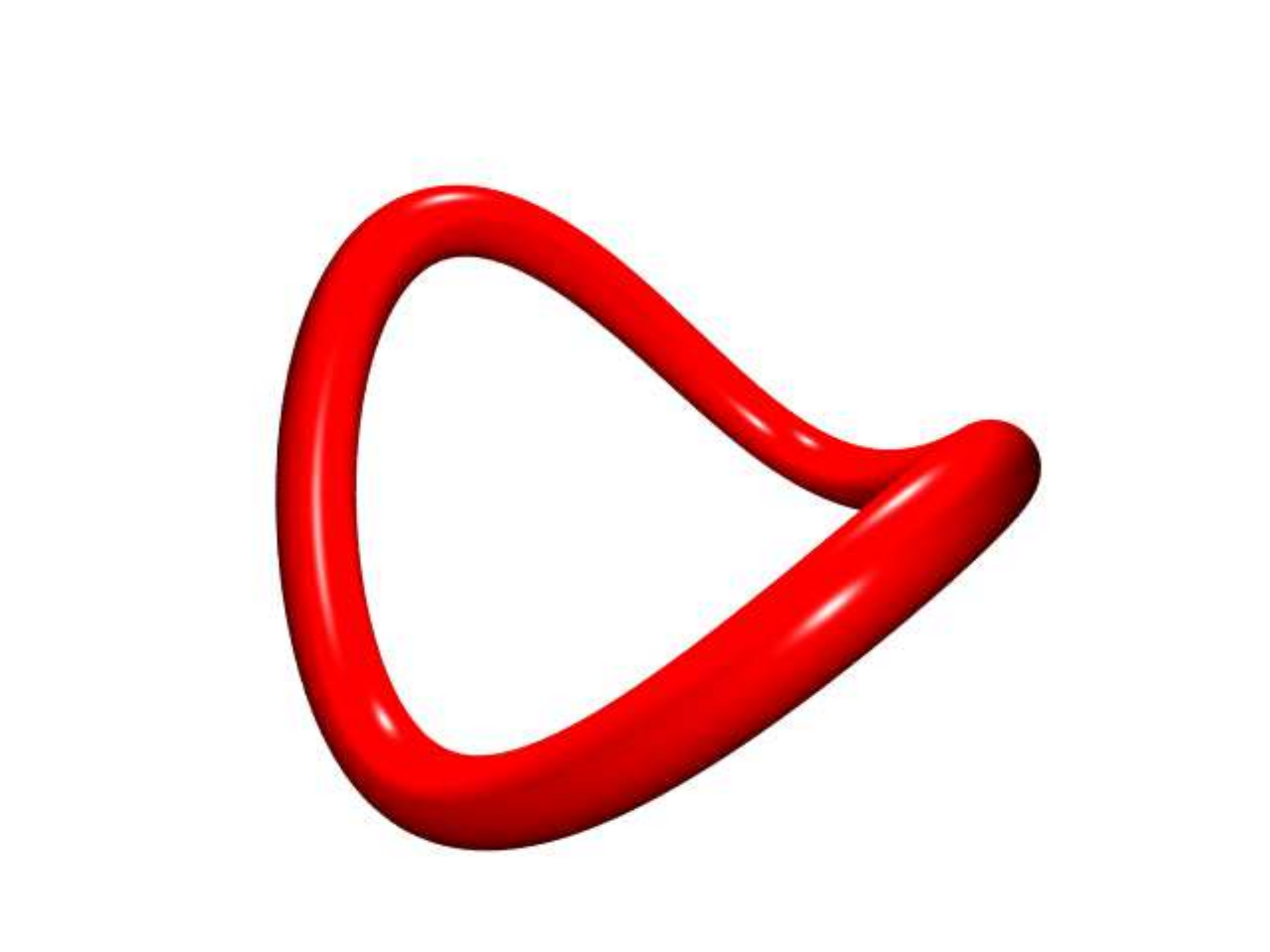} & \includegraphics[width=0.4\textwidth,keepaspectratio]{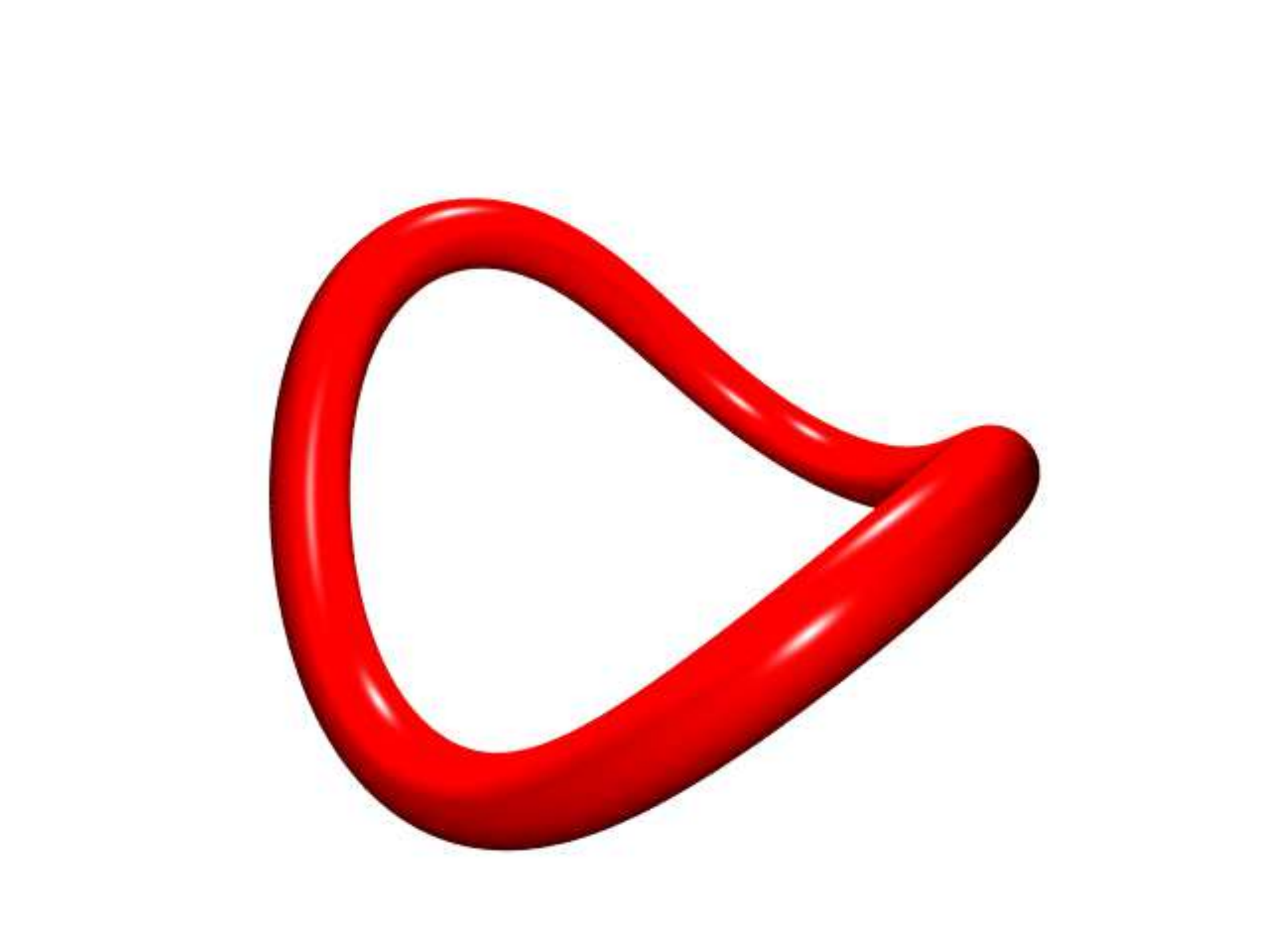} \\
5000/50000 & 10000/50000 \\
$\Le(\gamma)\approx 6.70235$ & $\Le(\gamma)\approx 6.6282$ \\
$\E_p(\gamma)\approx 6.43338$ & $\E_p(\gamma)\approx 6.29425$ \\
$\tau=0.30021$ & $\tau=0.55978$ \\
\includegraphics[width=0.4\textwidth,keepaspectratio]{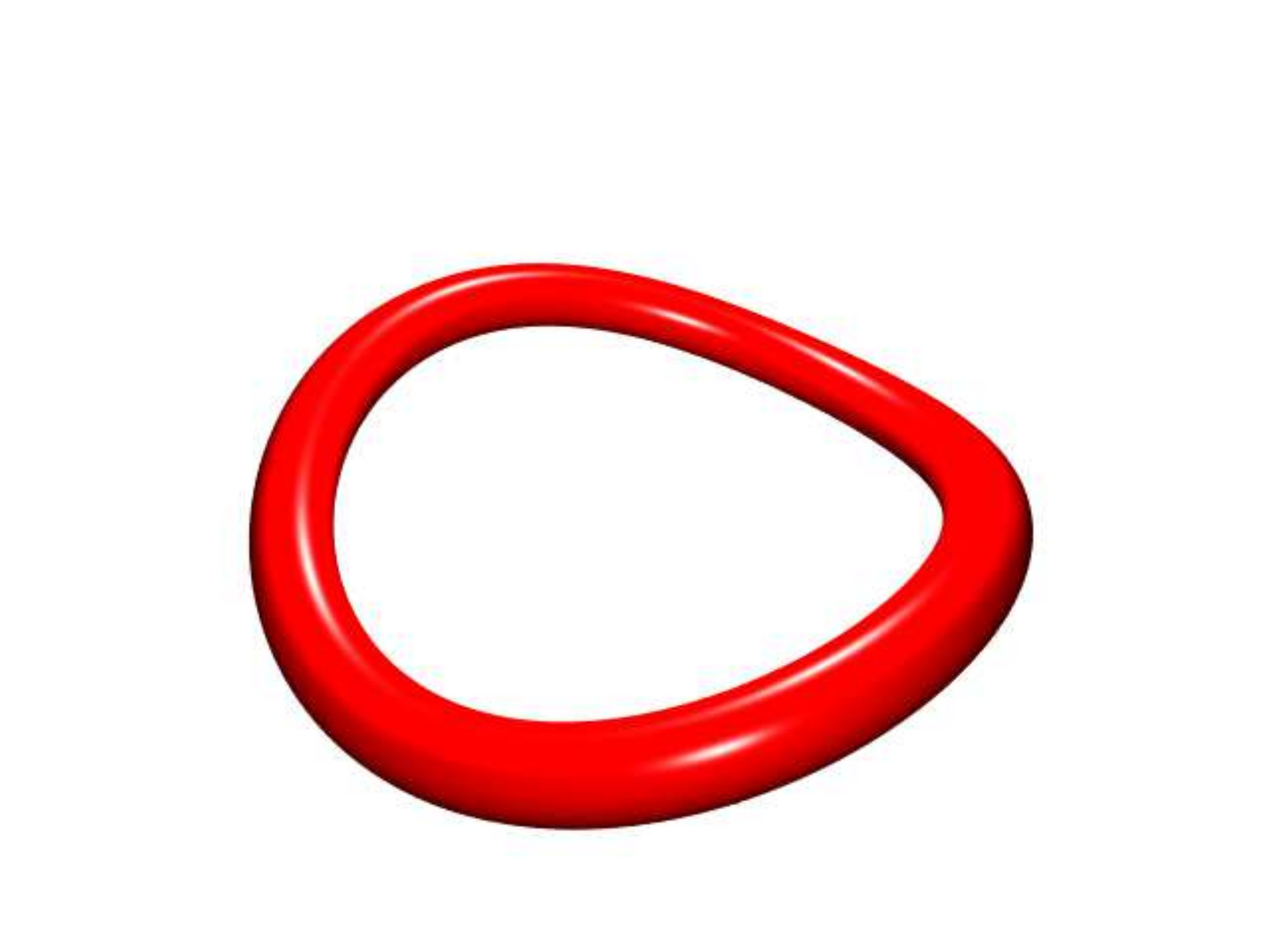} & \includegraphics[width=0.4\textwidth,keepaspectratio]{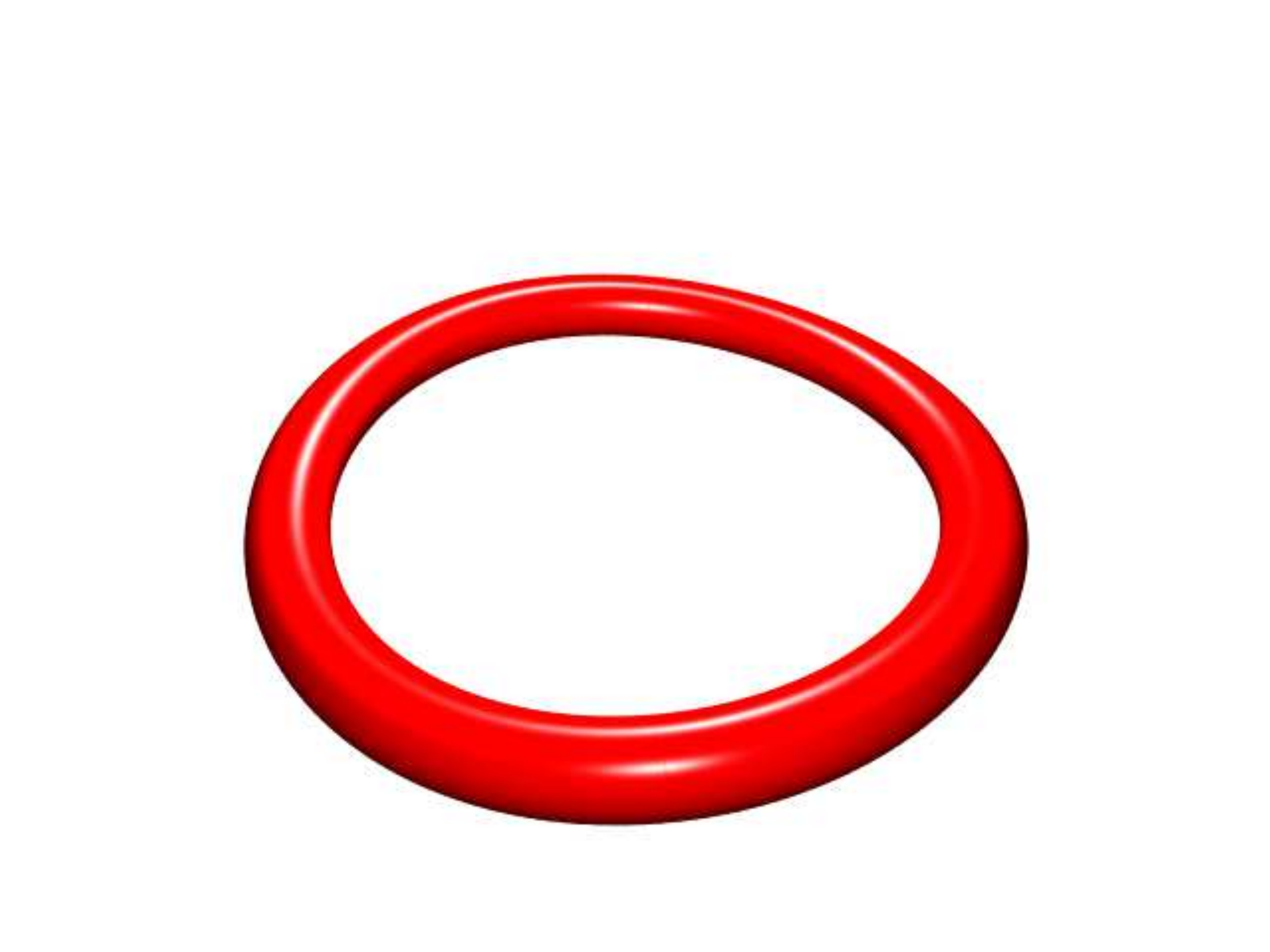} \\
20000/50000 & 50000/50000 \\
$\Le(\gamma)\approx 6.6224$ & $\Le(\gamma)\approx 6.62237$ \\
$\E_p(\gamma)\approx 6.28324$ & $\E_p(\gamma)\approx 6.28319$ \\
$\tau=1.07353$ & $\tau=2.61411$ \\
\includegraphics[width=0.4\textwidth,keepaspectratio]{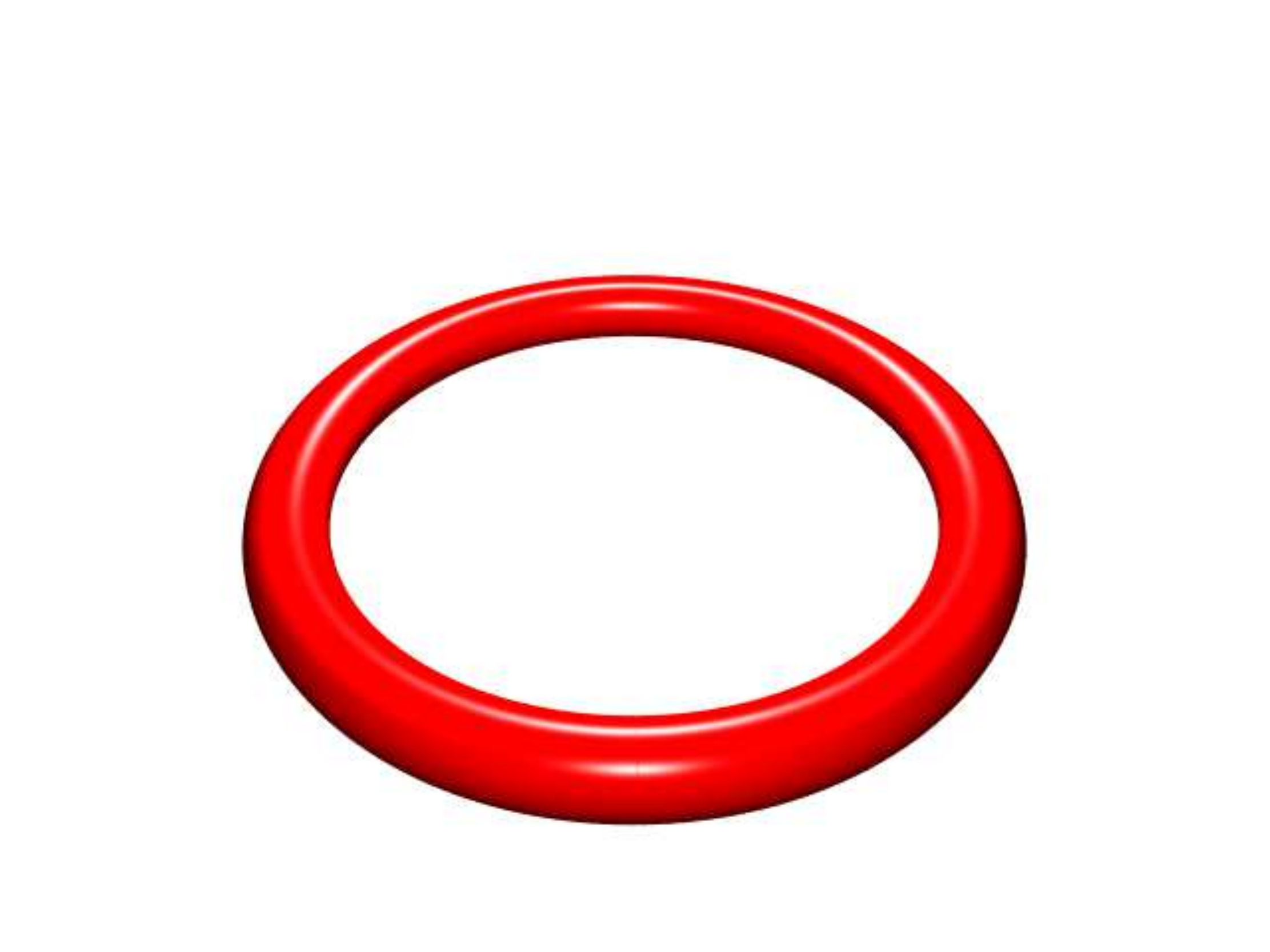} & \includegraphics[width=0.4\textwidth,keepaspectratio]{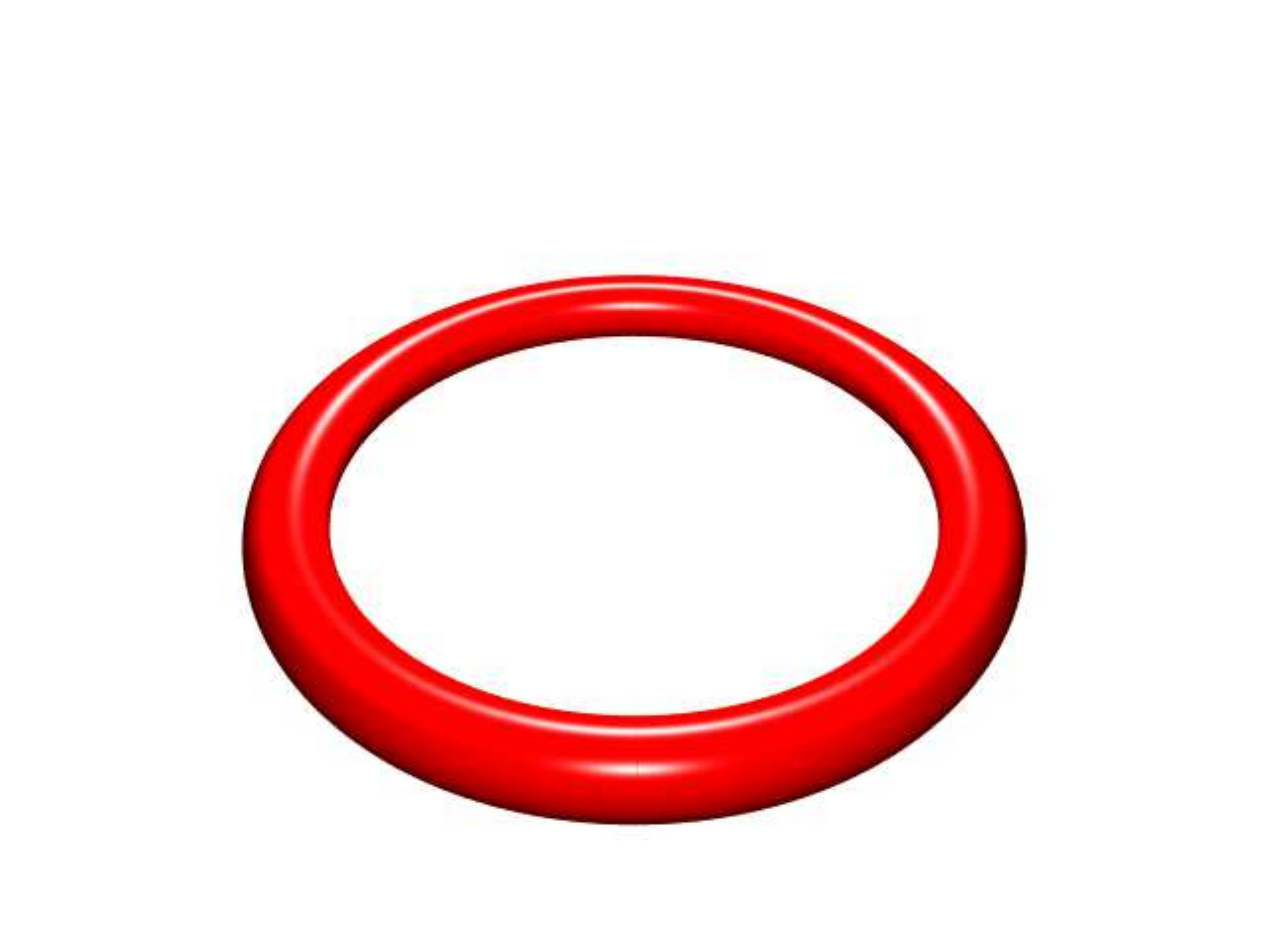}
\end{tabular}
\end{scriptsize}
\end{center}
\caption{A saddle-curve -- $p=3.0$}
\end{figure}

\newpage
and now for $p=50$
\begin{figure}[H]
\begin{center}
\begin{scriptsize}
\begin{tabular}{cc}
0/50000 & 300/50000 \\
$\Le(\gamma)\approx 7.64039$ & $\Le(\gamma)\approx 7.59426$ \\
$\E_p(\gamma)\approx 14.34766$ & $\E_p(\gamma)\approx 10.01585$ \\
$\tau=0.0$ & $\tau=0.00257$ \\
\includegraphics[width=0.4\textwidth,keepaspectratio]{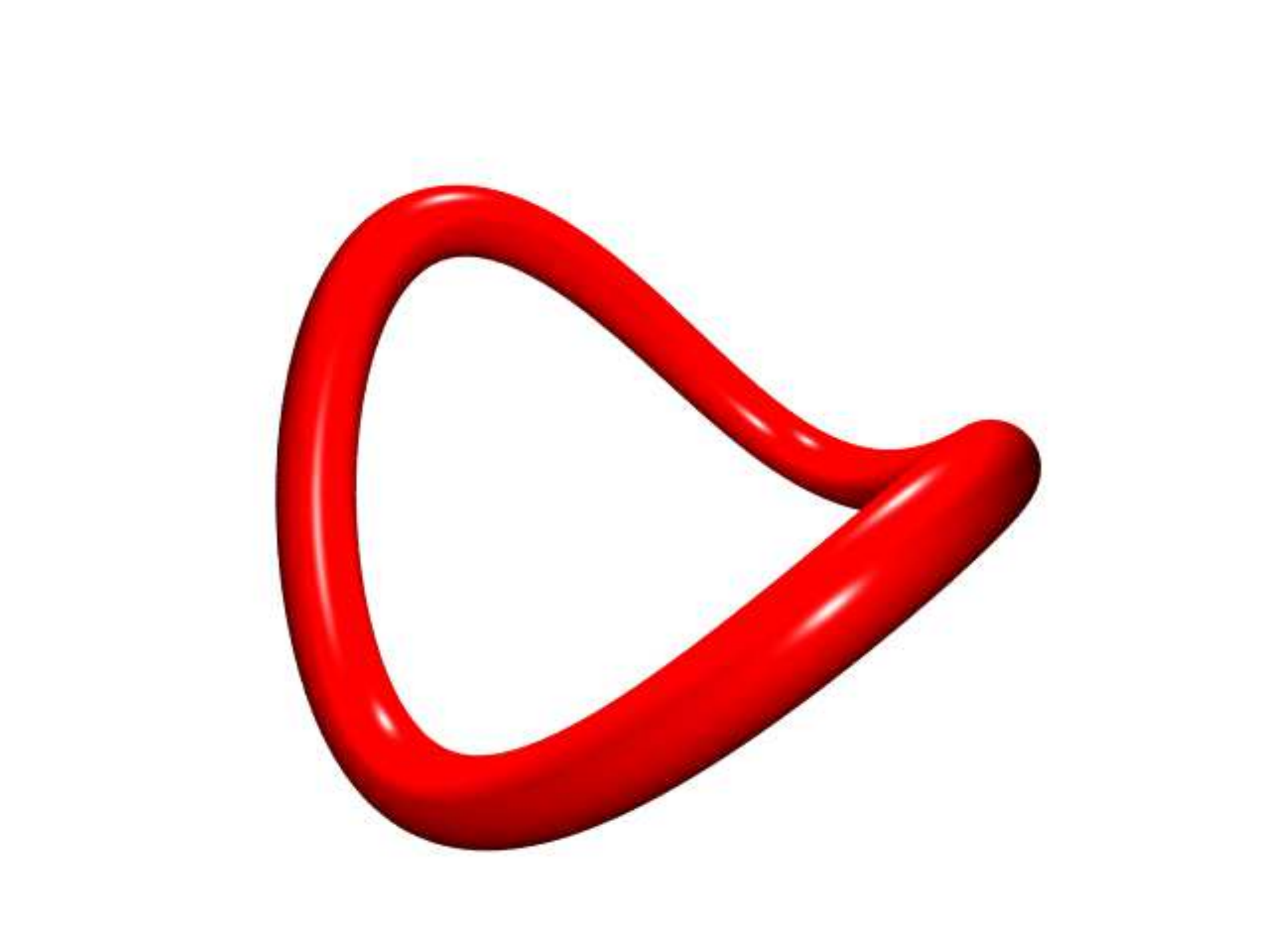} & \includegraphics[width=0.4\textwidth,keepaspectratio]{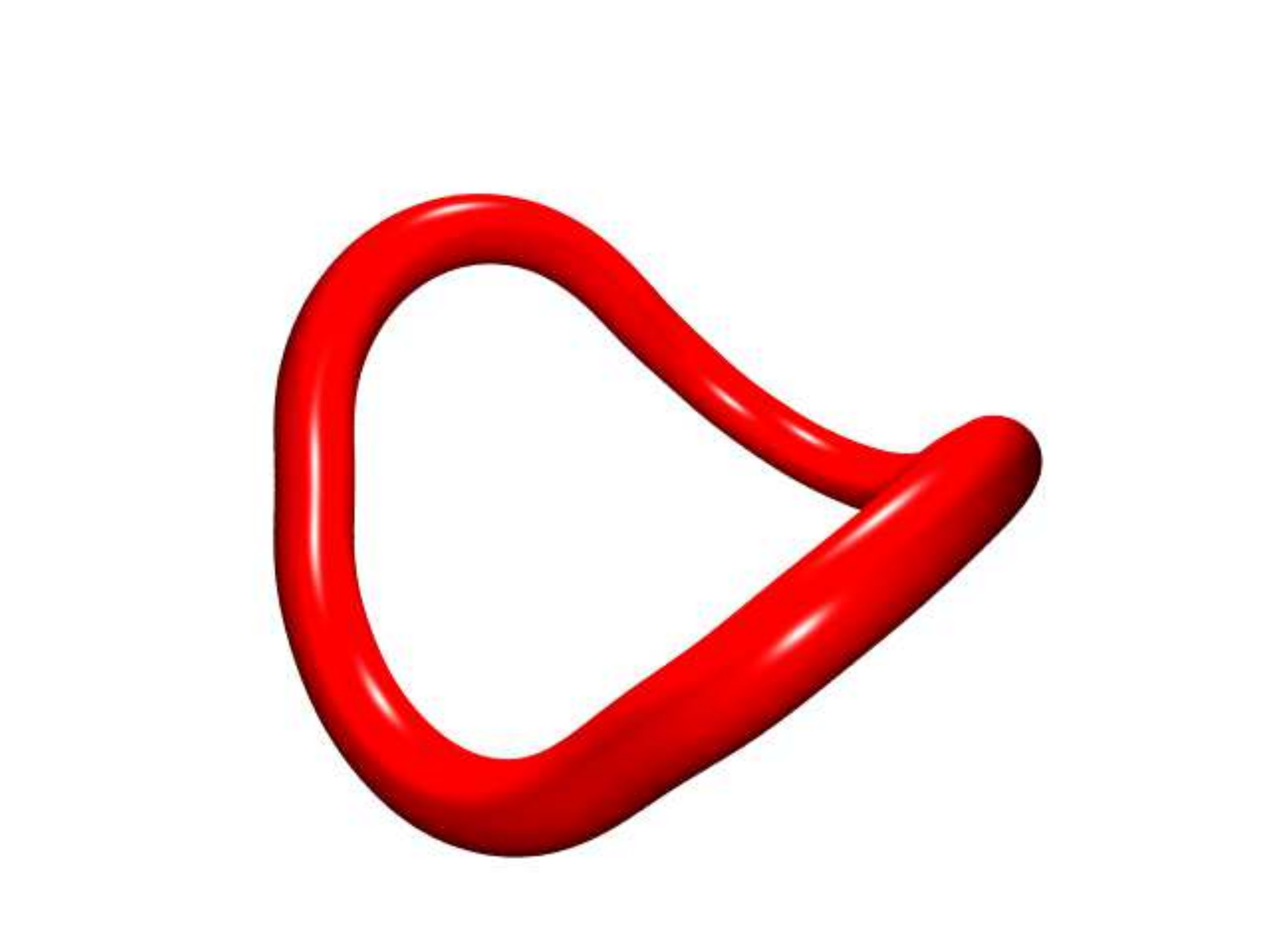} \\
5000/50000 & 10000/50000 \\
$\Le(\gamma)\approx 7.19844$ & $\Le(\gamma)\approx 6.92135$ \\
$\E_p(\gamma)\approx 7.7091$ & $\E_p(\gamma)\approx 6.99055$ \\
$\tau=0.05116$ & $\tau=0.10411$ \\
\includegraphics[width=0.4\textwidth,keepaspectratio]{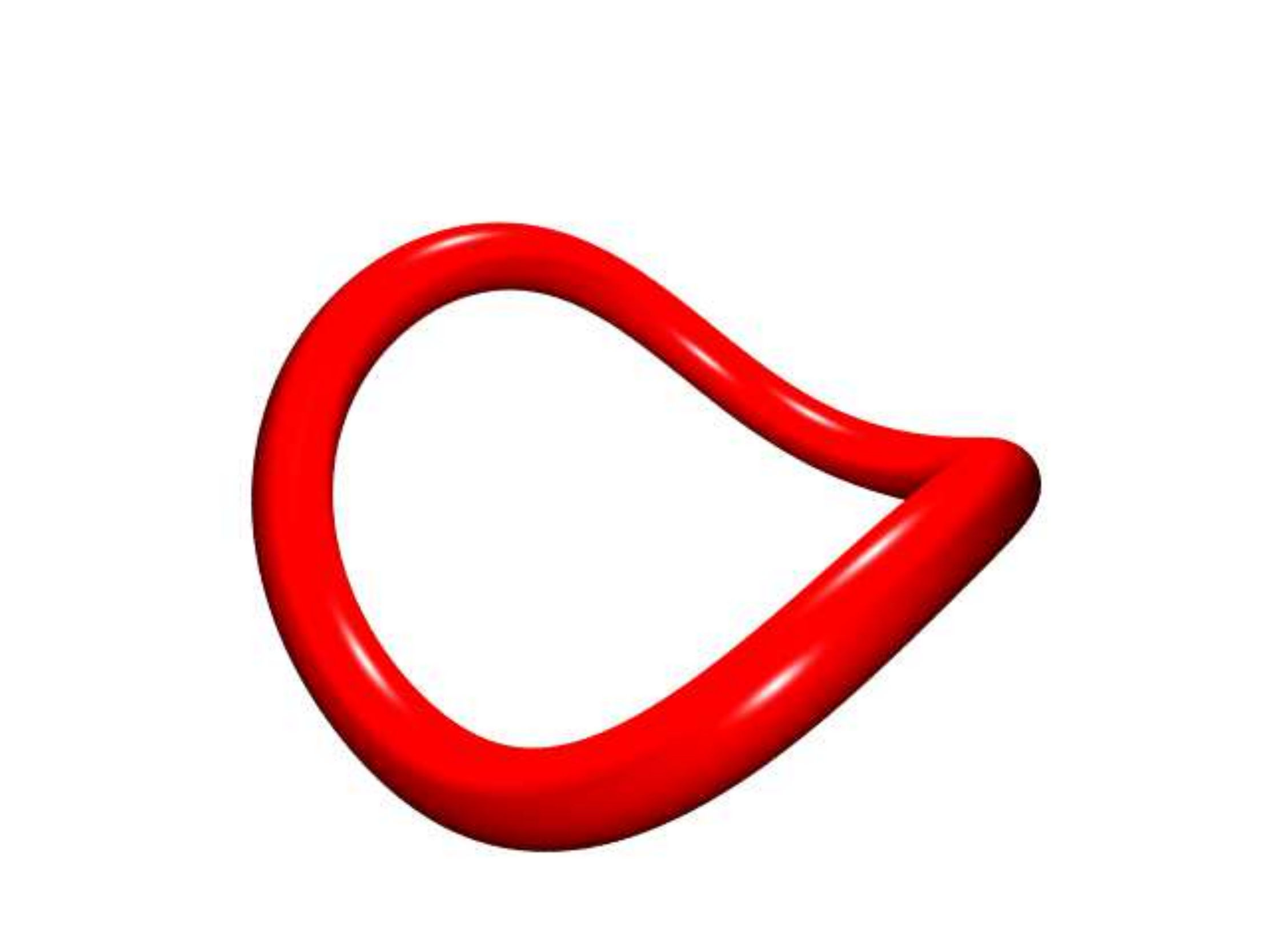} & \includegraphics[width=0.4\textwidth,keepaspectratio]{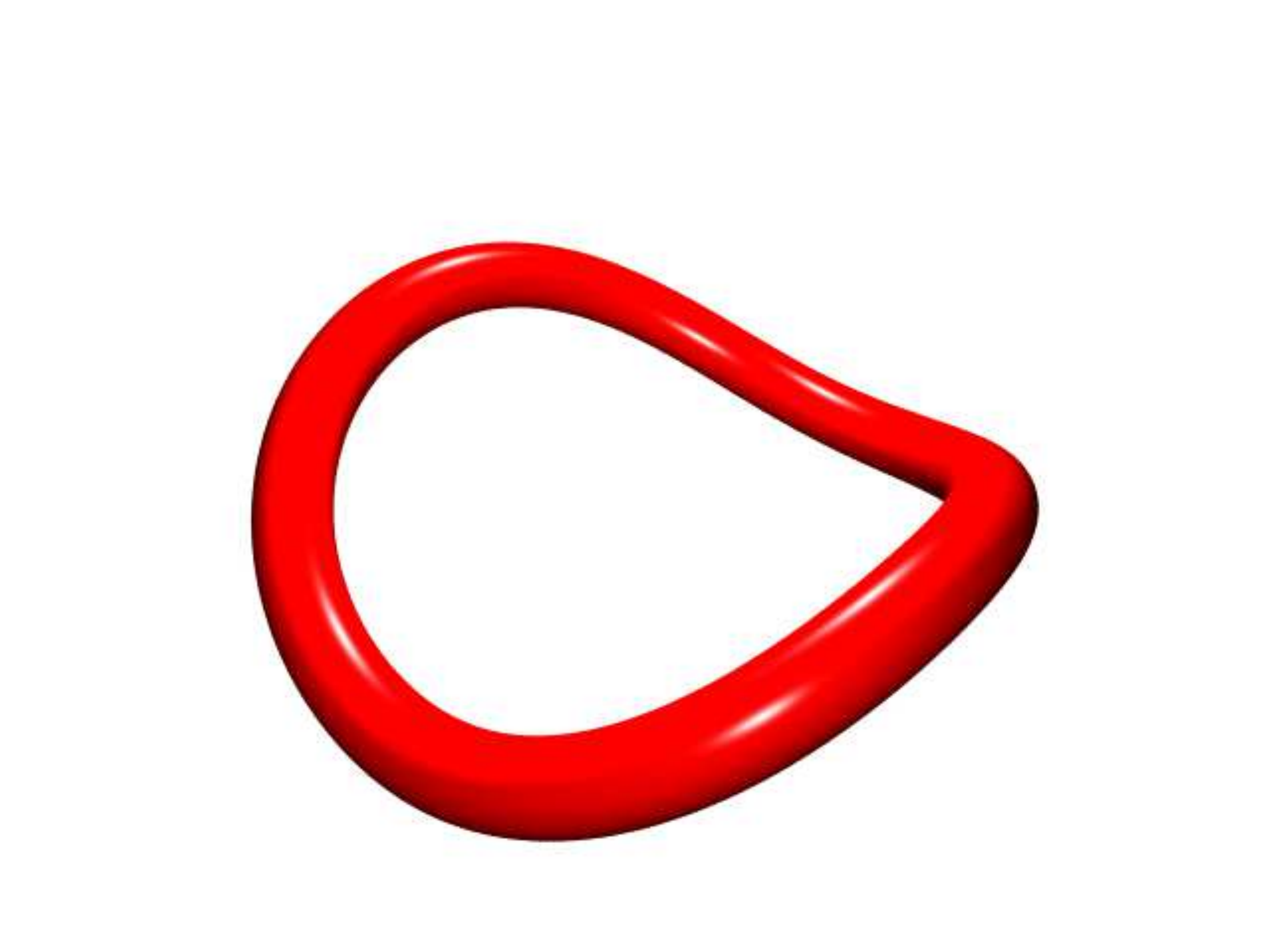} \\
20000/50000 & 50000/50000 \\
$\Le(\gamma)\approx 6.64259$ & $\Le(\gamma)\approx 6.63093$ \\
$\E_p(\gamma)\approx 6.30566$ & $\E_p(\gamma)\approx 6.28319$ \\
$\tau=0.37589$ & $\tau=1.91282$ \\
\includegraphics[width=0.4\textwidth,keepaspectratio]{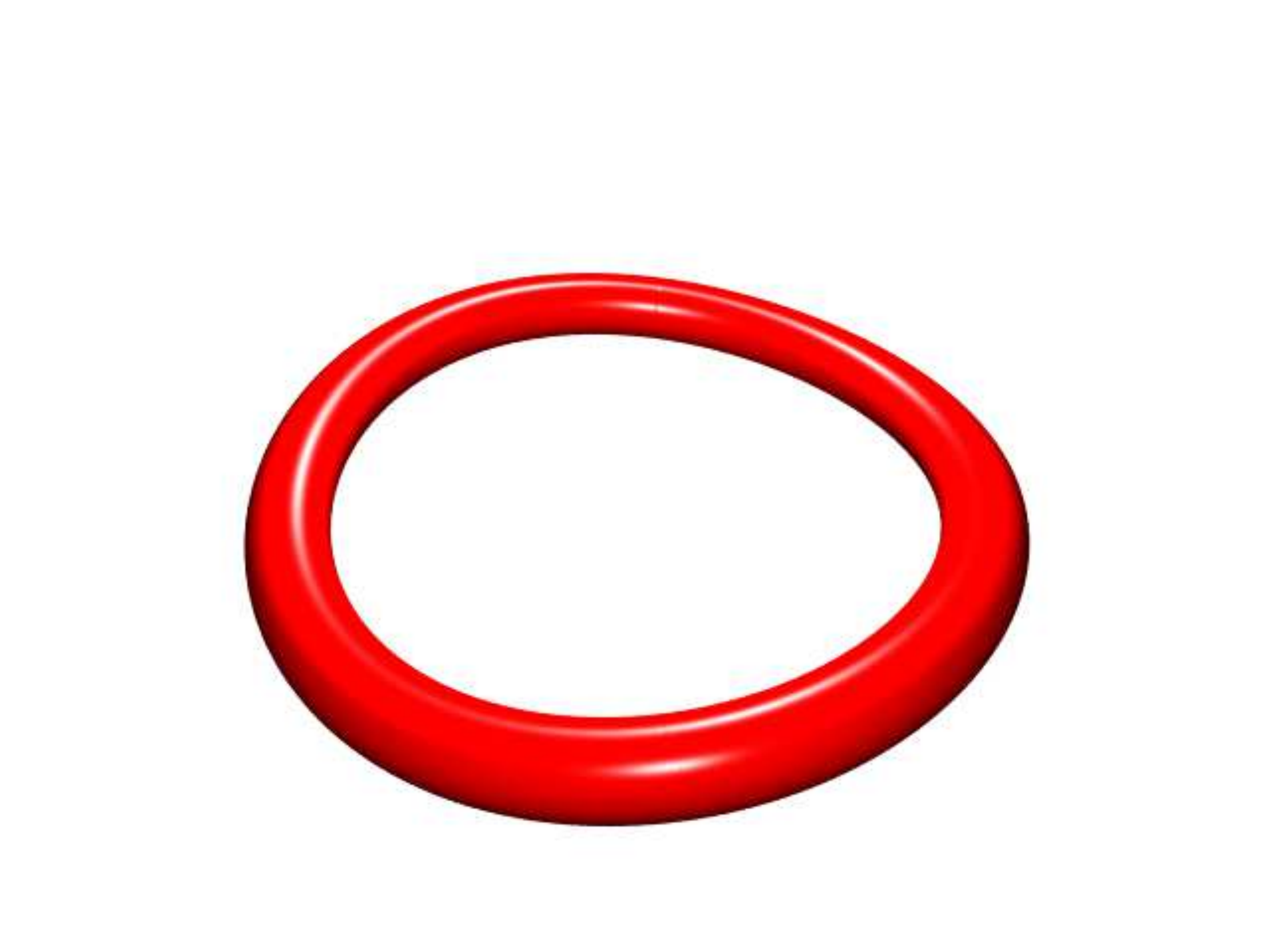} & \includegraphics[width=0.4\textwidth,keepaspectratio]{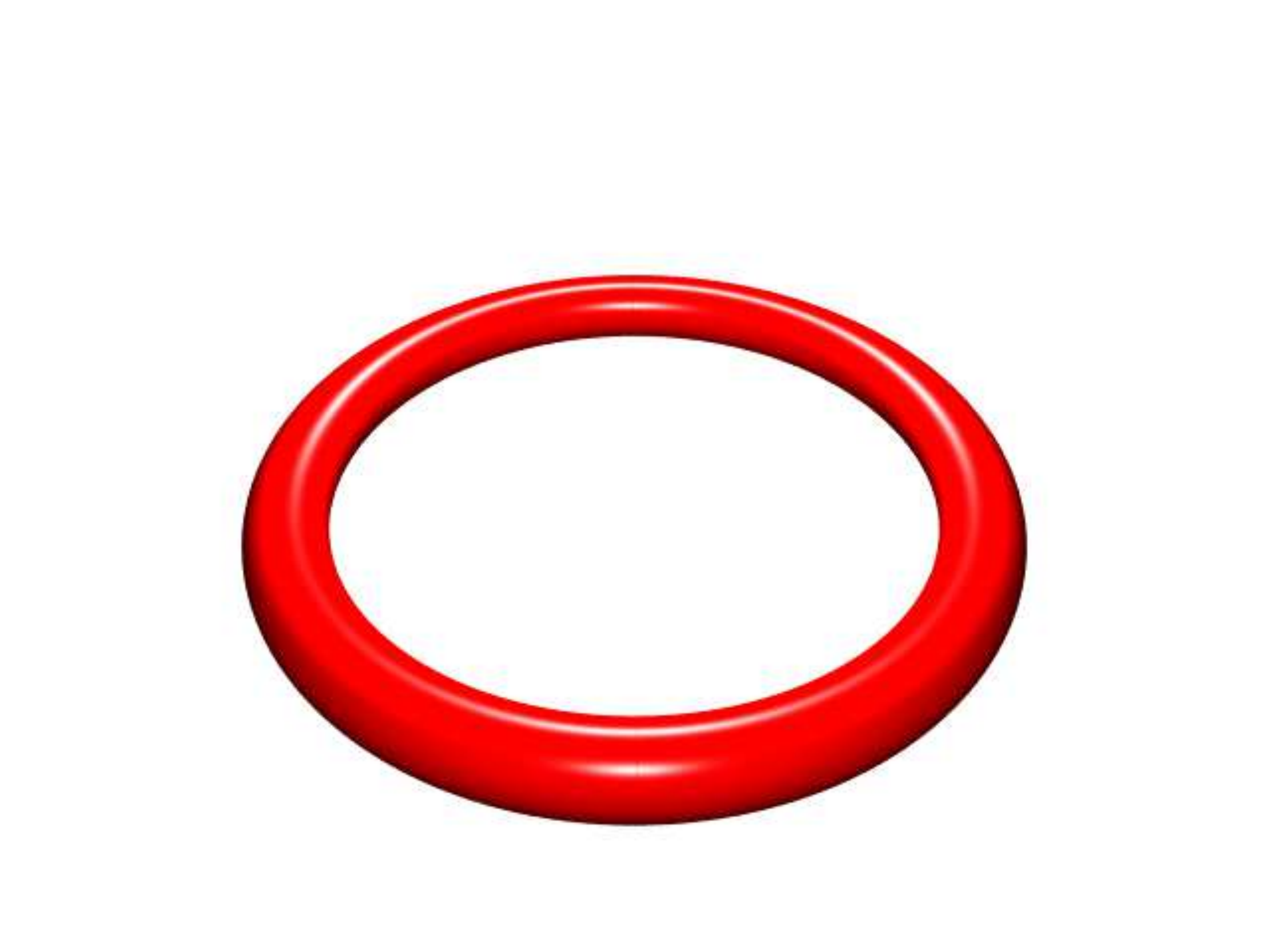}
\end{tabular}
\end{scriptsize}
\end{center}
\caption{A saddle-curve -- $p=50.0$}
\end{figure}

\newpage
We can take this example to further demonstrate the difficulties in choosing time step size. $\eps=0.05$ seems to be a suitable value for the flow with redistribution. However, if we run the flow without redistribution $\eps=0.05$ could be too large under certain circumstances. If we run this flow without redistribution for $\eps=0.05$ the minimal energy of \texttt{6.28319} is not reached and the energy even starts to slightly increase in the end. Nevertheless, the shape of the knot is very close to a circle and if we apply the redistribution algorithm to such a configuration and restart the algorithm without redistribution the knot immediately flows to a perfect circle.

Now we shortly switch to the flow for the modified integral \name{Menger} curvature, defined in \eqref{energy_pMp+L}. The following flow is discussed in \thref{mpcirclelambda}
\begin{figure}[H]
\begin{center}
\begin{scriptsize}
\begin{tabular}{ccc}
0/450000 & 90000/450000 & 180000/450000 \\
$\Le(\gamma)\approx 6.28319$ & $\Le(\gamma)\approx 6.93279$ & $\Le(\gamma)\approx 6.99425$ \\
$\E_p(\gamma)\approx 6.28319$ & $\E_p(\gamma)\approx 6.28319$ & $\E_p(\gamma)\approx 6.28319$ \\
$\tau=0.0$ & $\tau=15.66016$ & $\tau=33.2186$ \\
\includegraphics[width=0.33\textwidth,keepaspectratio]{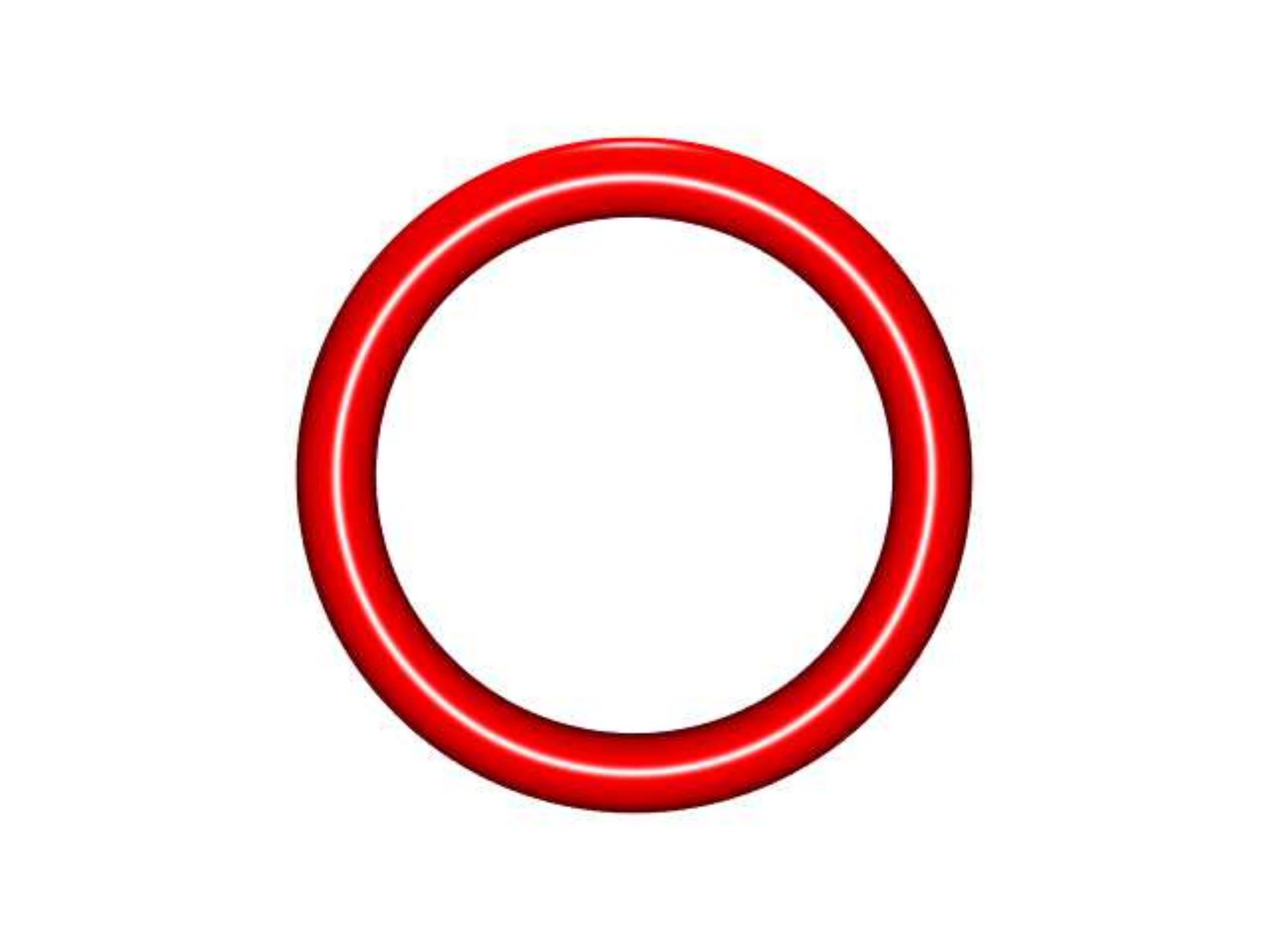} & \includegraphics[width=0.33\textwidth,keepaspectratio]{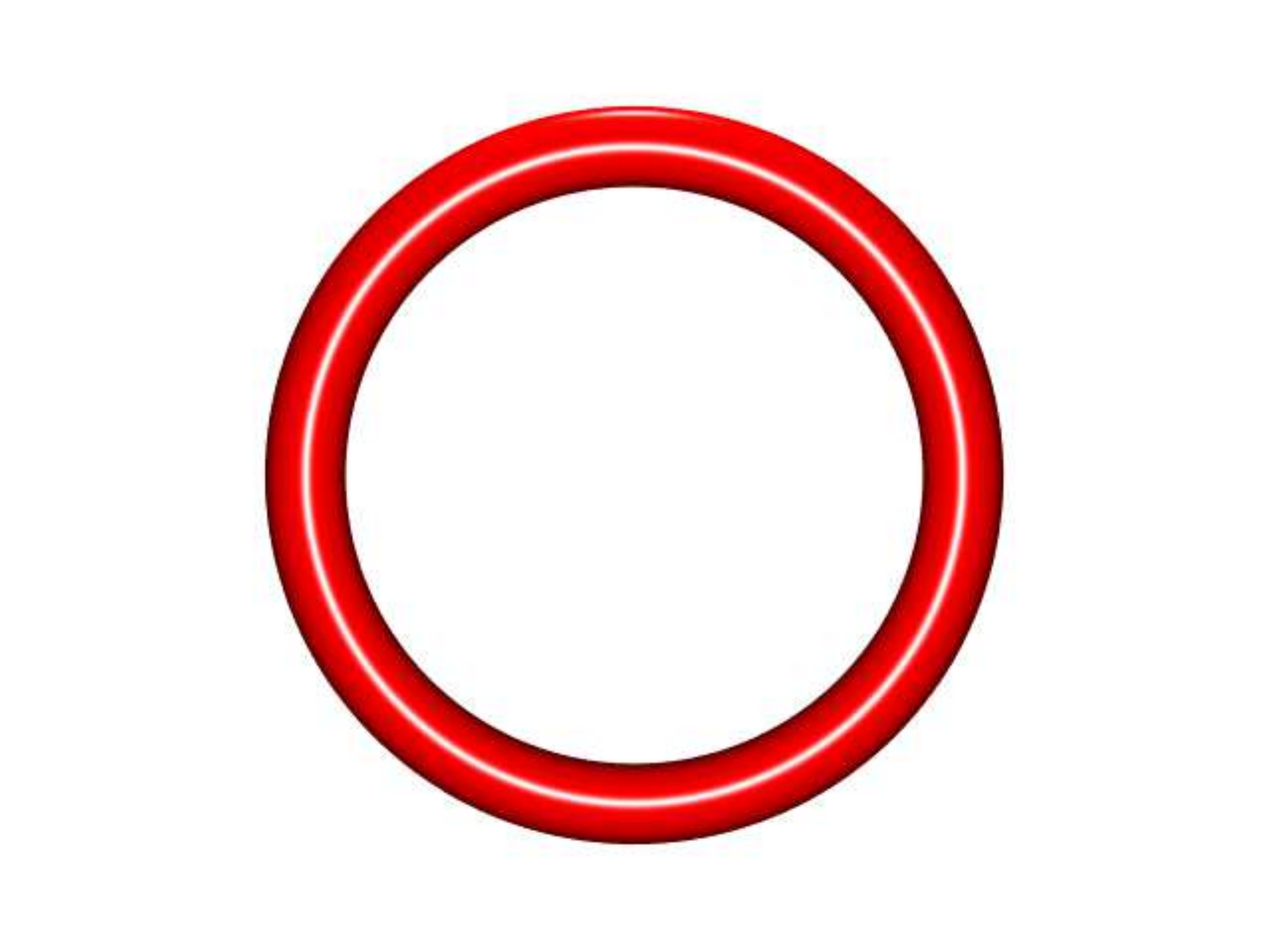} & \includegraphics[width=0.33\textwidth,keepaspectratio]{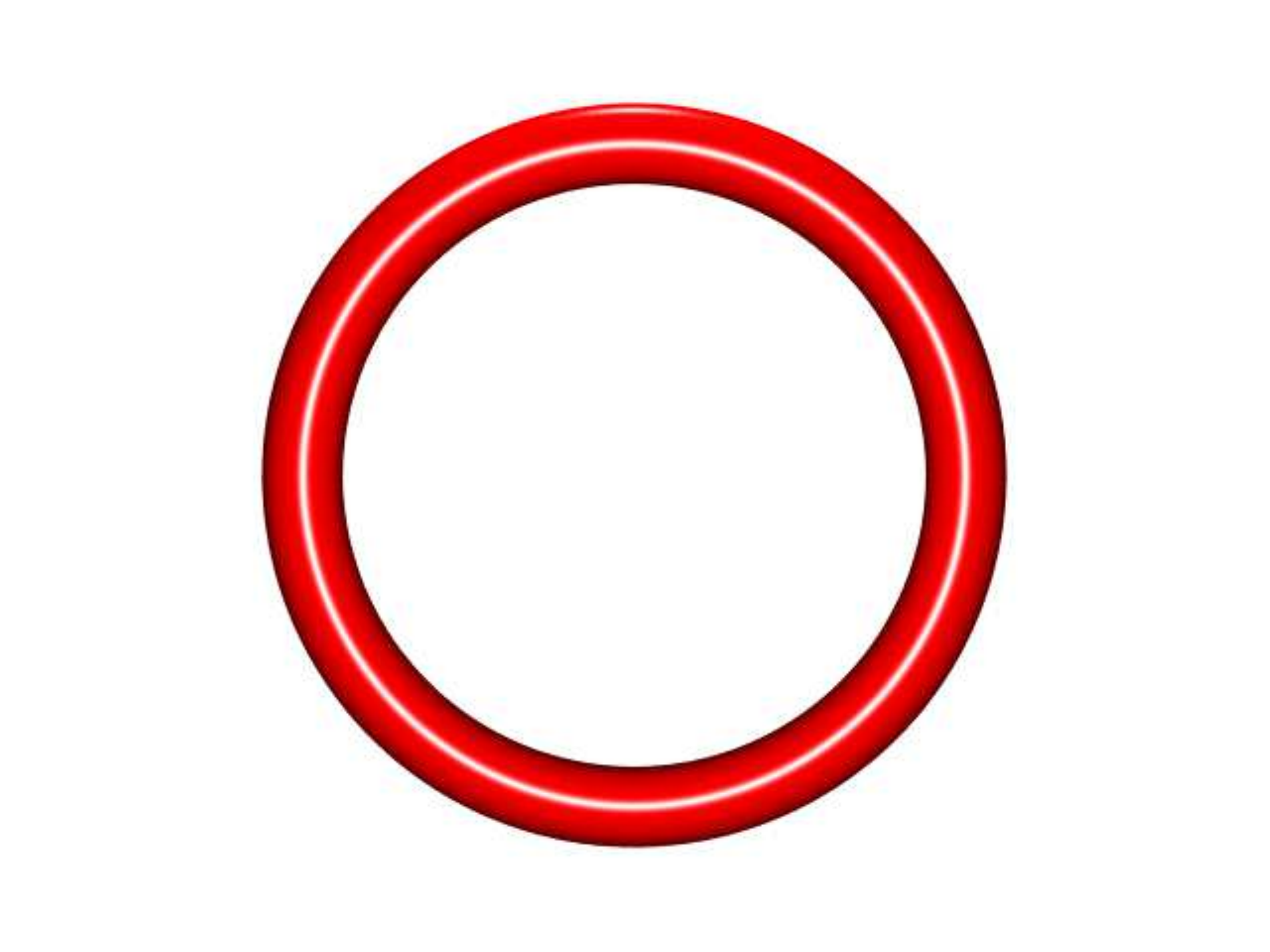} \\
270000/450000 & 360000/450000 & 450000/450000 \\
$\Le(\gamma)\approx 6.99951$ & $\Le(\gamma)\approx 6.99996$ & $\Le(\gamma)\approx 7.0$ \\
$\E_p(\gamma)\approx 6.28319$ & $\E_p(\gamma)\approx 6.28319$ & $\E_p(\gamma)\approx 6.28319$ \\
$\tau=50.95848$ & $\tau=68.71392$ & $\tau=86.47068$ \\
\includegraphics[width=0.33\textwidth,keepaspectratio]{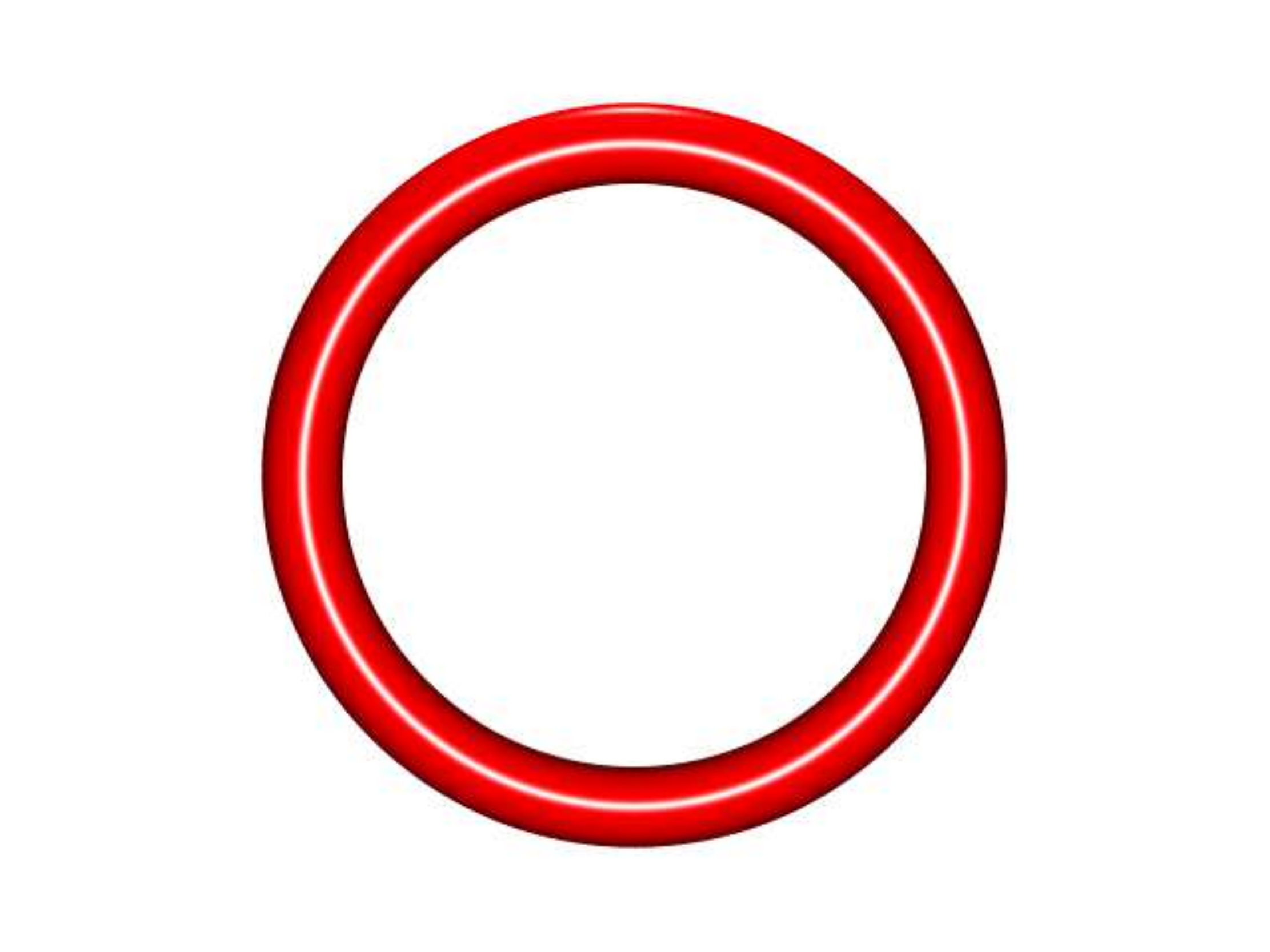} & \includegraphics[width=0.33\textwidth,keepaspectratio]{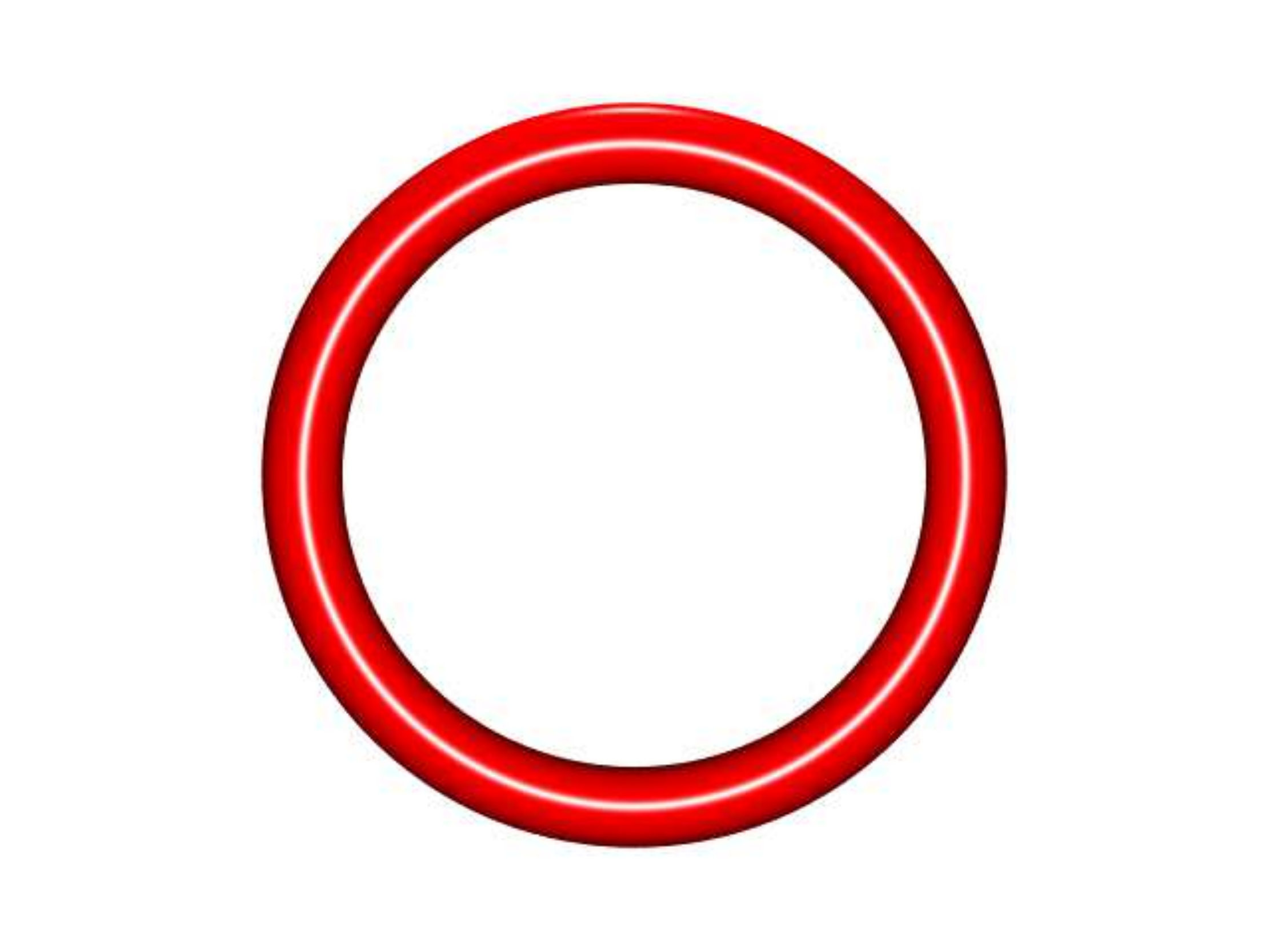} & \includegraphics[width=0.33\textwidth,keepaspectratio]{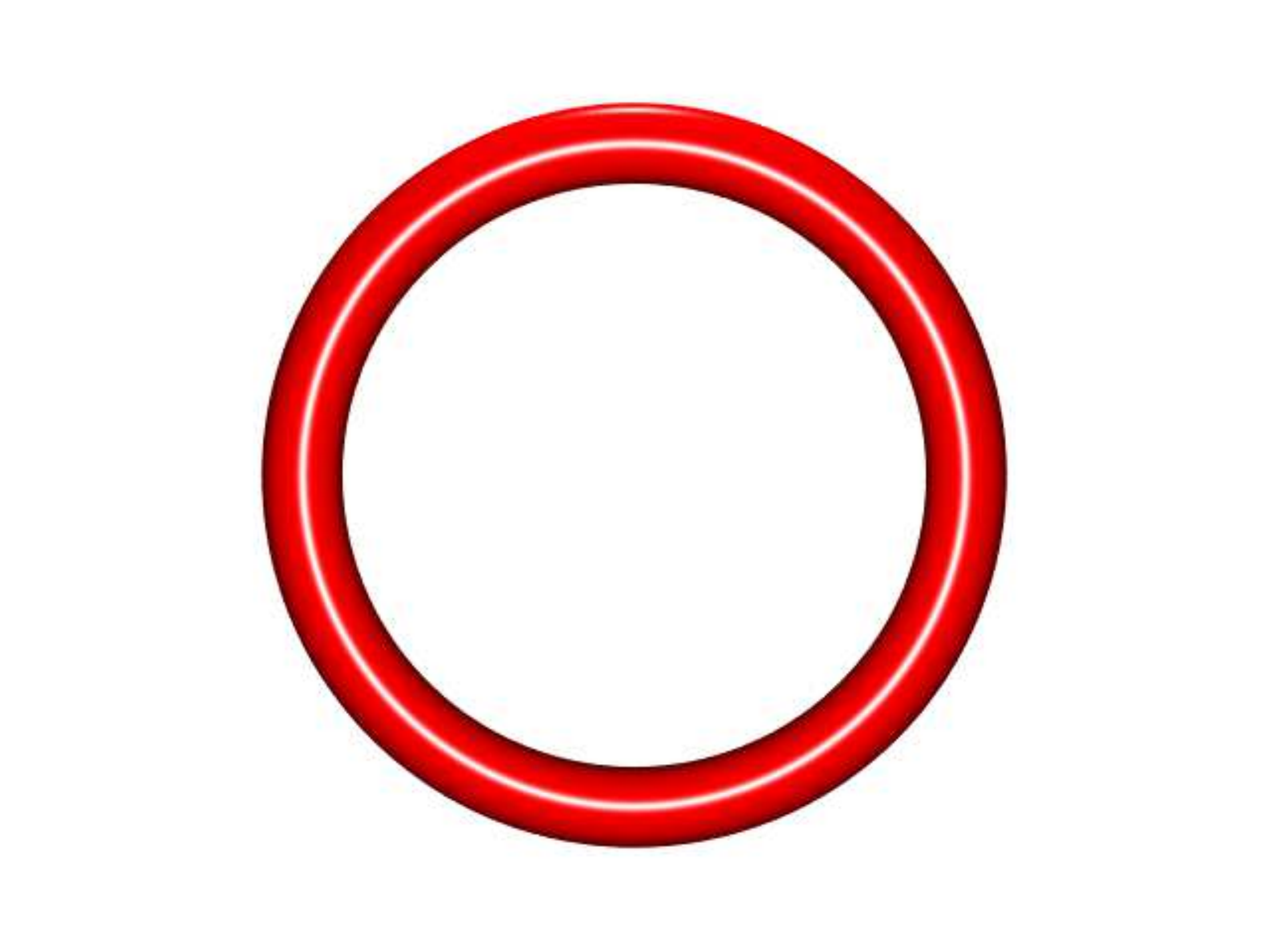}
\end{tabular}
\end{scriptsize}
\end{center}
\caption{A circle -- $p=4.0$ and fixed $\lambda=0.13796$ ($\tau_{\text{max}}=0.1$, $\eps=0.1$)}
\label{exmpcirclelambda}
\end{figure}

Next we come back to our standard flow for integral \name{Menger} curvature with redistribution. We consider an example of a non-trivial unknot and we see that the algorithm is able to untangle it. The idea for this configuration is due to \name{R. Kusner} (personal communication, July 2011).

\newpage
At first we look at this unknot, which is rendered with a smaller thickness here, in order to get a better idea of its configuration.
\begin{figure}[H]
\begin{center}
\includegraphics[width=0.4\textwidth,keepaspectratio]{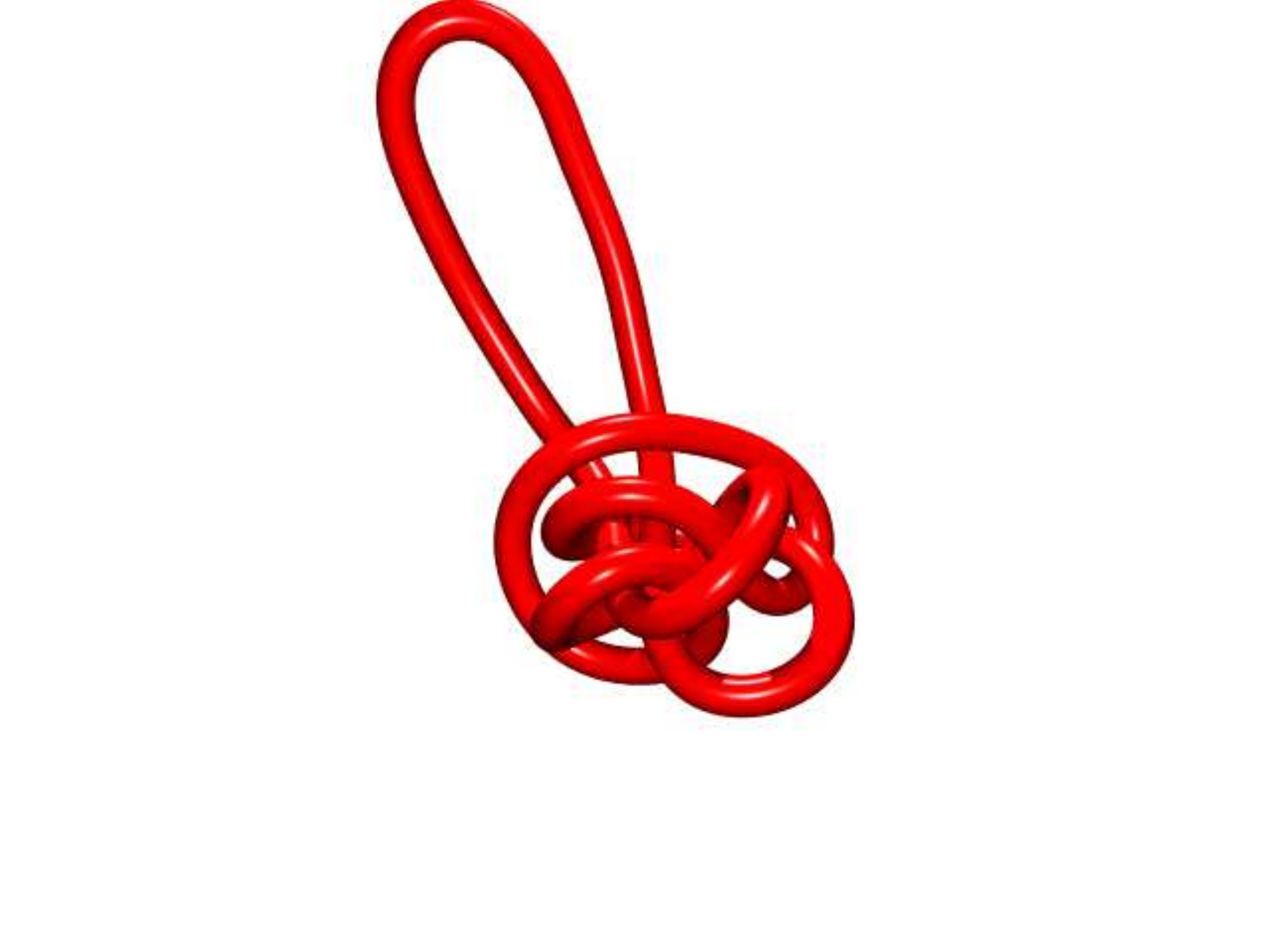} 
\end{center}
\caption{An entangled unknot}
\end{figure}
Now we consider the corresponding flow, where we watch the same unknot from another perspective.
\begin{figure}[H]
\begin{center}
\begin{scriptsize}
\begin{tabular}{ccc}
0/5000 & 960/5000 & 1060/5000 \\
$\Le(\gamma)\approx 97.85994$ & $\Le(\gamma)\approx 97.59822$ & $\Le(\gamma)\approx 87.09058$ \\
$\E_p(\gamma)\approx 147.84663$ & $\E_p(\gamma)\approx 93.1437$ & $\E_p(\gamma)\approx 81.7187$ \\
$\tau=0.0$ & $\tau=7.79625$ & $\tau=8.79271$ \\
\includegraphics[width=0.33\textwidth,keepaspectratio]{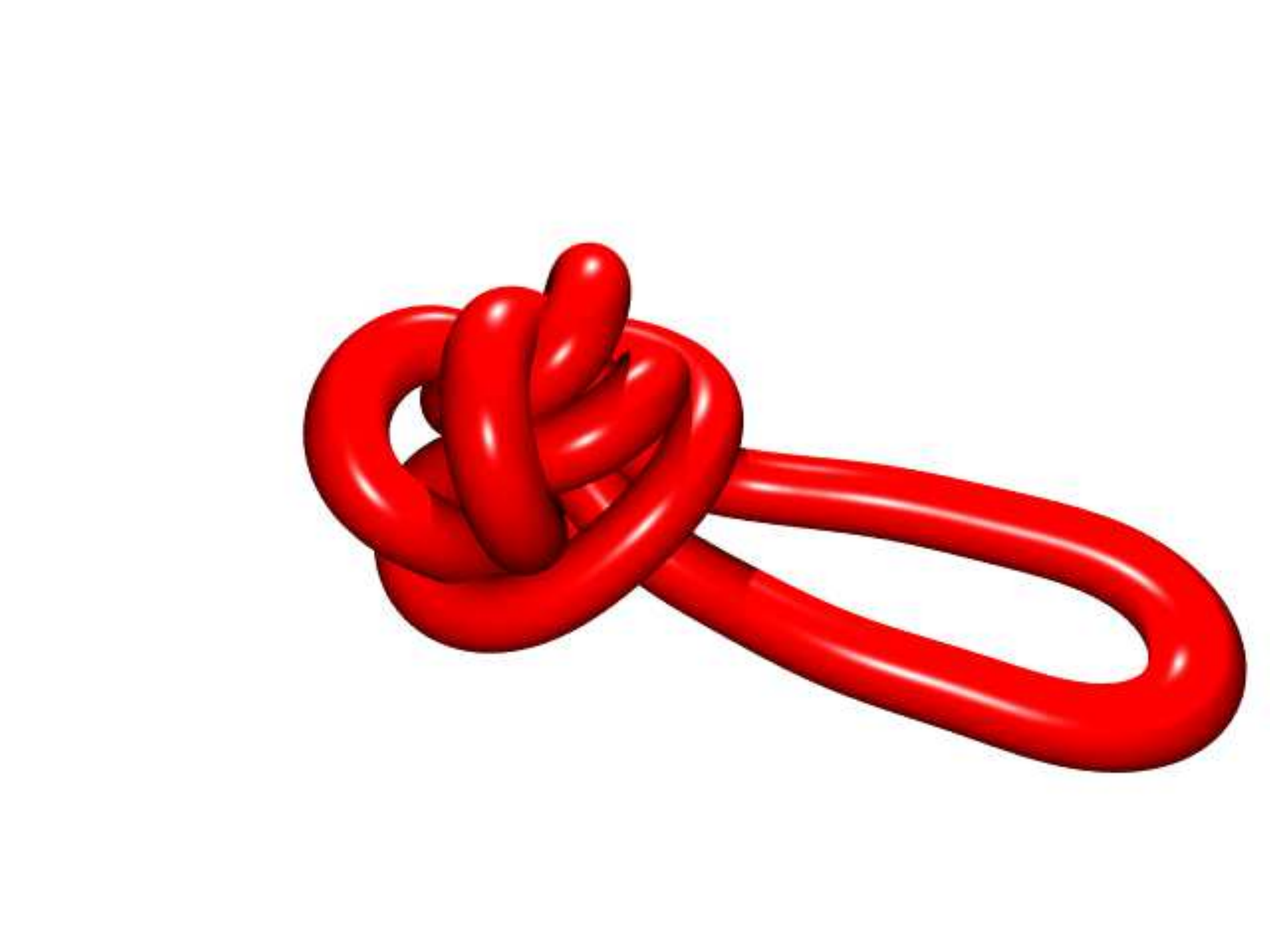} & \includegraphics[width=0.33\textwidth,keepaspectratio]{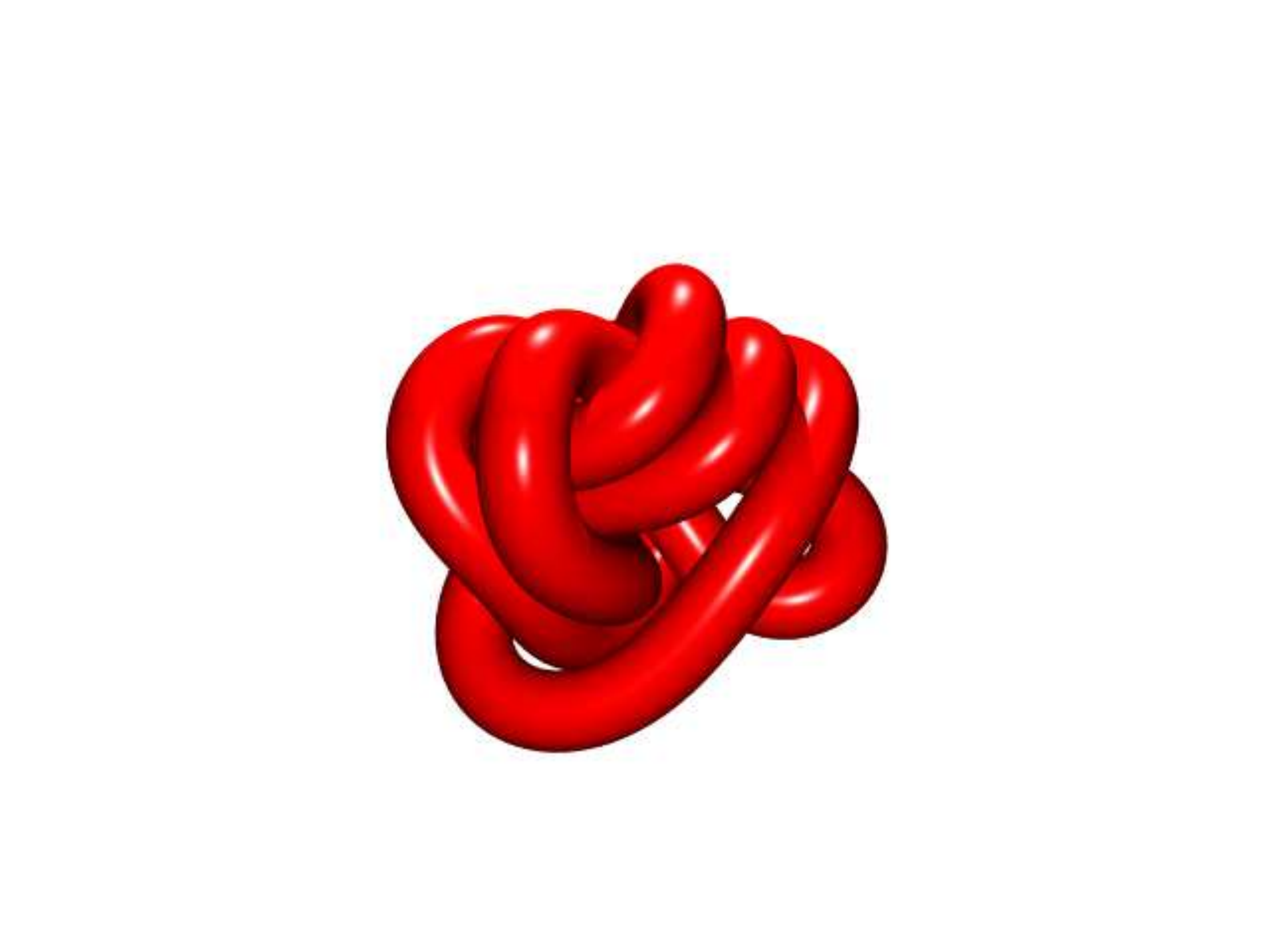} & \includegraphics[width=0.33\textwidth,keepaspectratio]{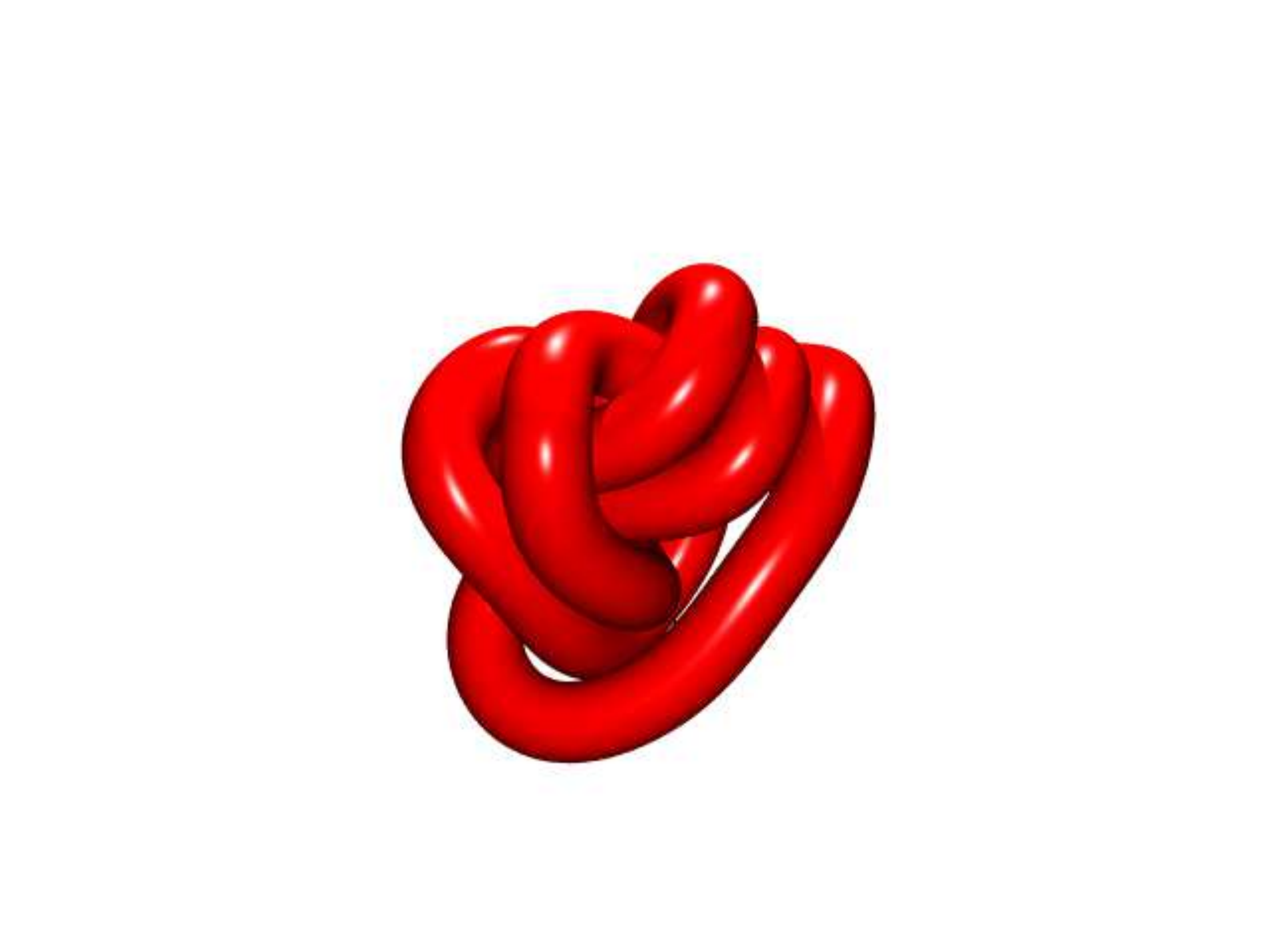} \\
1560/5000 & 2600/5000 & 5000/5000 \\
$\Le(\gamma)\approx 64.73693$ & $\Le(\gamma)\approx 49.18245$ & $\Le(\gamma)\approx 41.7716$ \\
$\E_p(\gamma)\approx 47.72924$ & $\E_p(\gamma)\approx 32.91997$ & $\E_p(\gamma)\approx 22.92765$ \\
$\tau=13.79271$ & $\tau=20.6022$ & $\tau=26.5256$ \\
\includegraphics[width=0.33\textwidth,keepaspectratio]{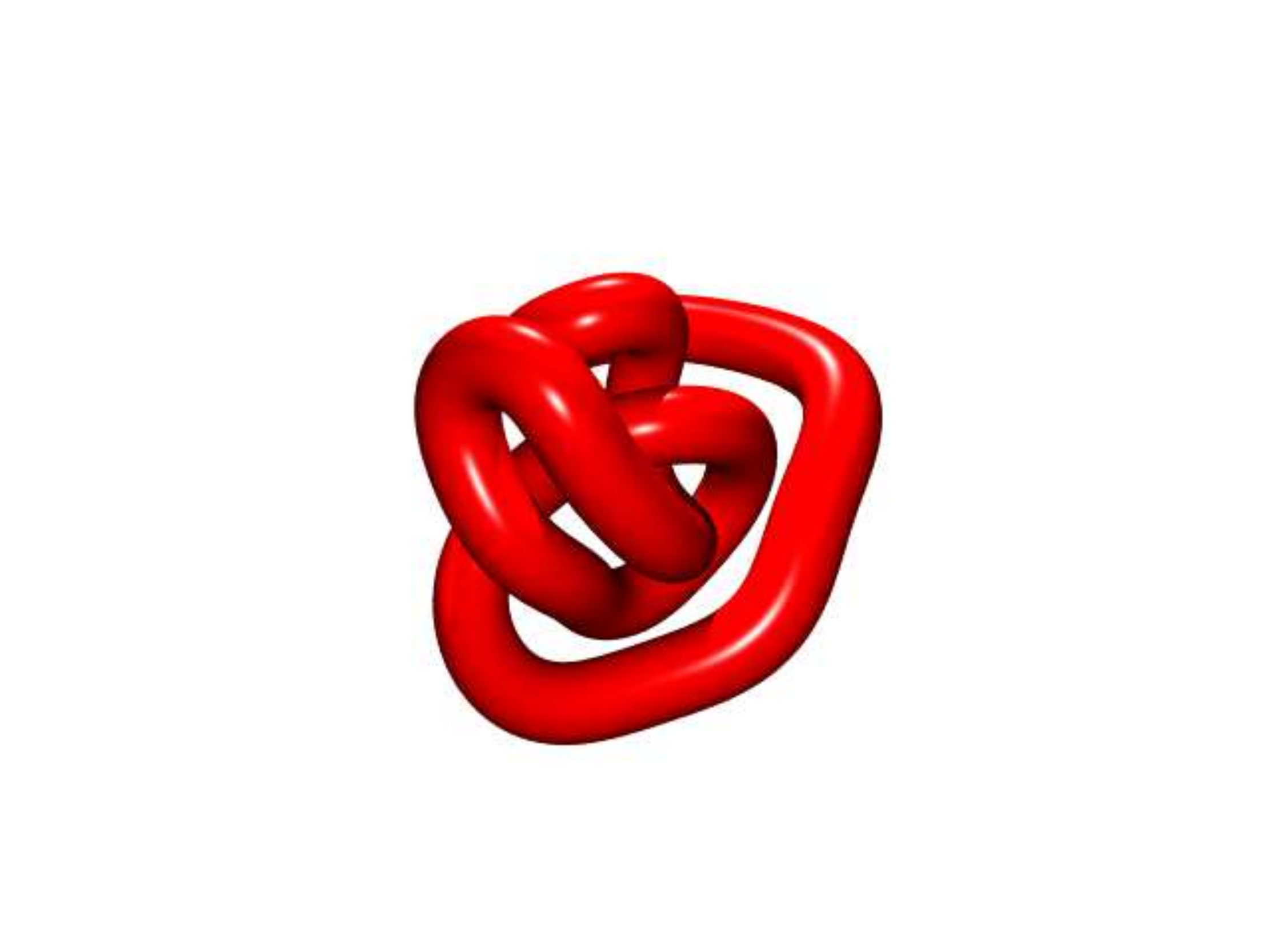} & \includegraphics[width=0.33\textwidth,keepaspectratio]{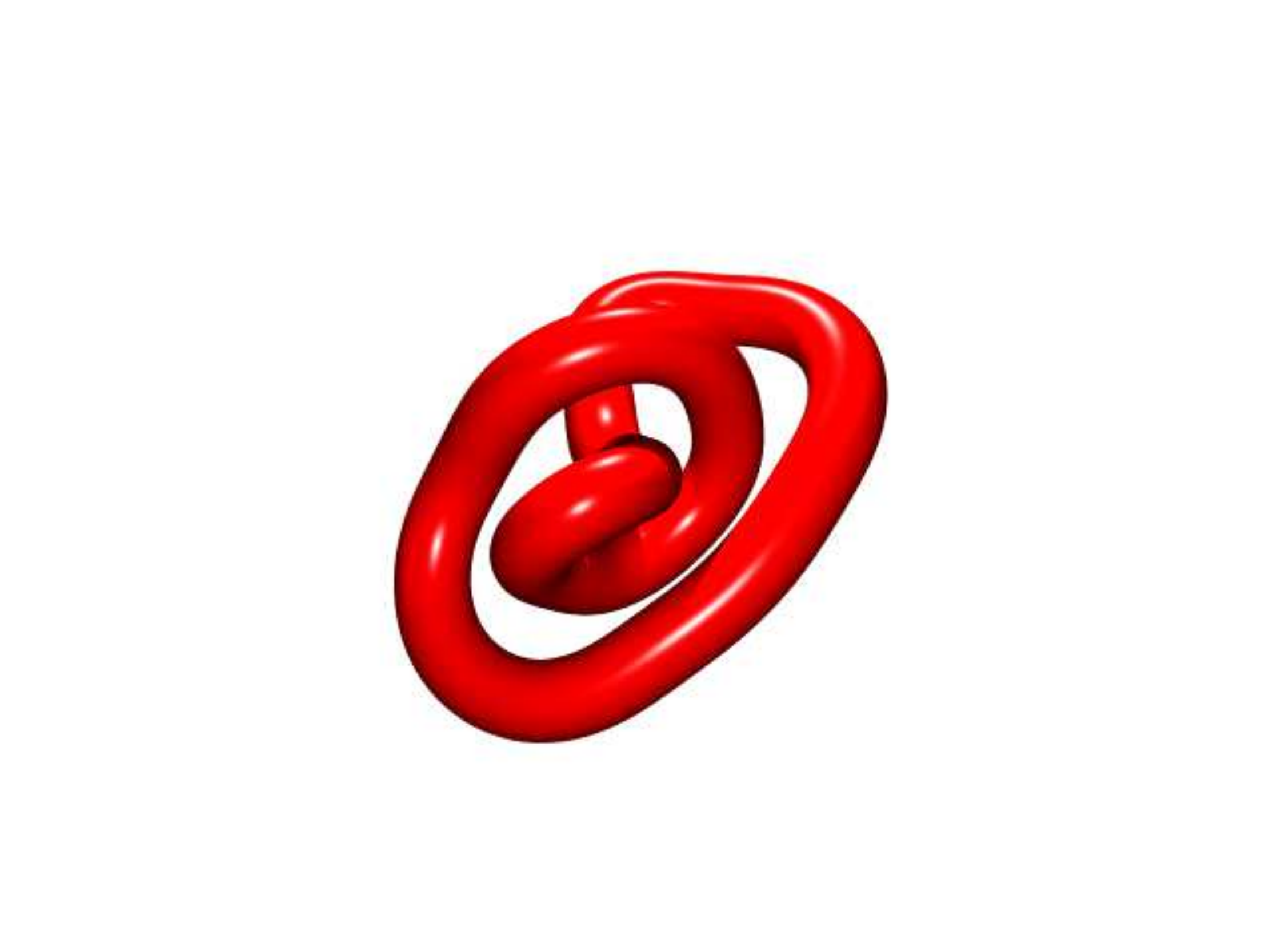} & \includegraphics[width=0.33\textwidth,keepaspectratio]{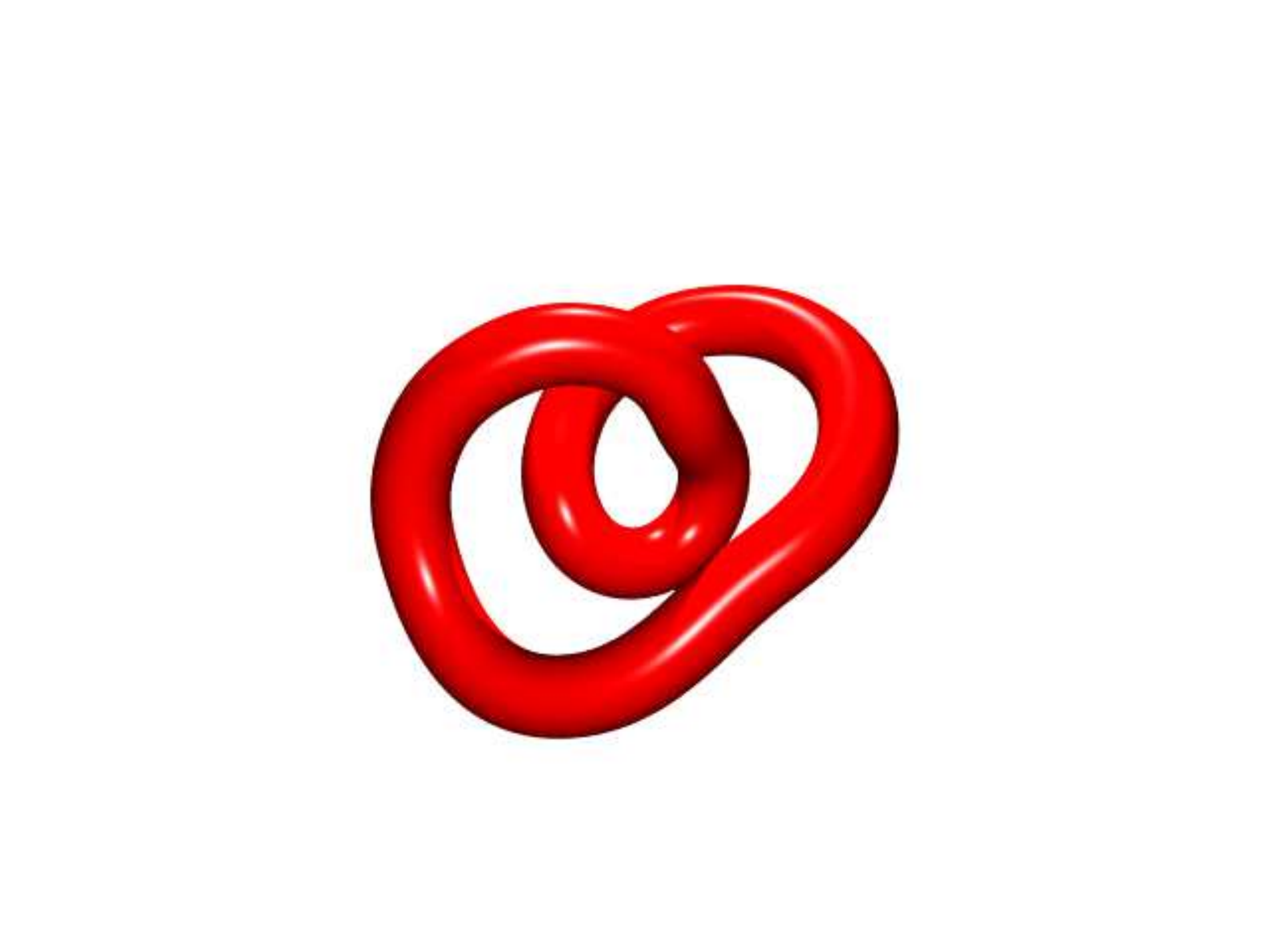}
\end{tabular}
\end{scriptsize}
\end{center}
\caption{An entangled unknot -- $p=50.0$}
\end{figure}

We restart the flow with the last configuration. Observe, that the redistribution algorithm is also applied to the initial configuration. Hence there are differences in the length and energy values between the last configuration of the prior flow and the starting configuration of the following flow. To avoid these, it is also possible to suppress the first redistribution in the algorithm.
\begin{figure}[H]
\begin{center}
\begin{scriptsize}
\begin{tabular}{ccc}
0/195000 & 32500/195000 & 65000/195000 \\
$\Le(\gamma)\approx 41.77149$ & $\Le(\gamma)\approx 27.13459$ & $\Le(\gamma)\approx 25.83265$ \\
$\E_p(\gamma)\approx 22.92858$ & $\E_p(\gamma)\approx 8.0456$ & $\E_p(\gamma)\approx 6.28319$ \\
$\tau=0.0$ & $\tau=23.052$ & $\tau=106.26745$ \\
\includegraphics[width=0.33\textwidth,keepaspectratio]{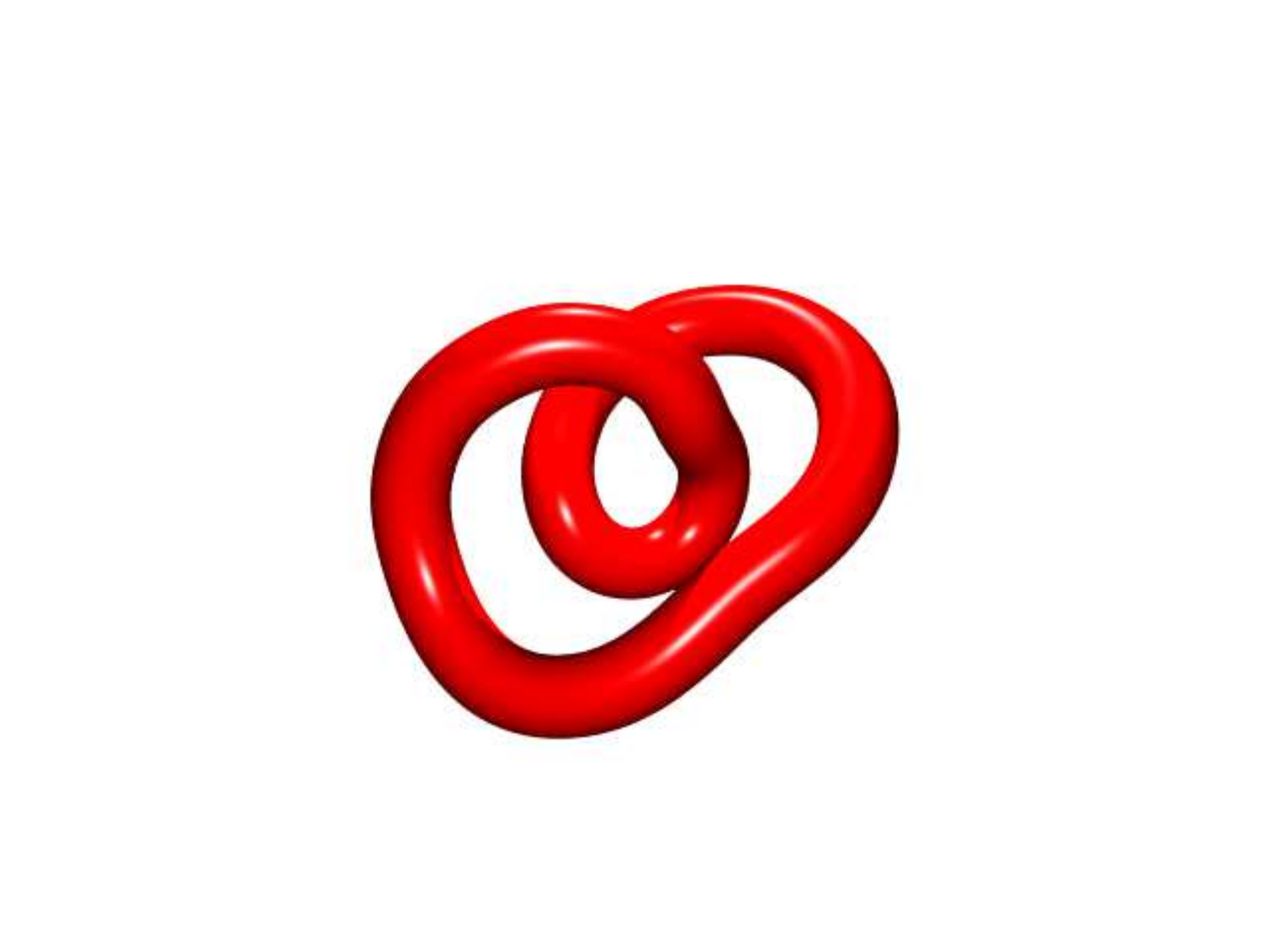} & \includegraphics[width=0.33\textwidth,keepaspectratio]{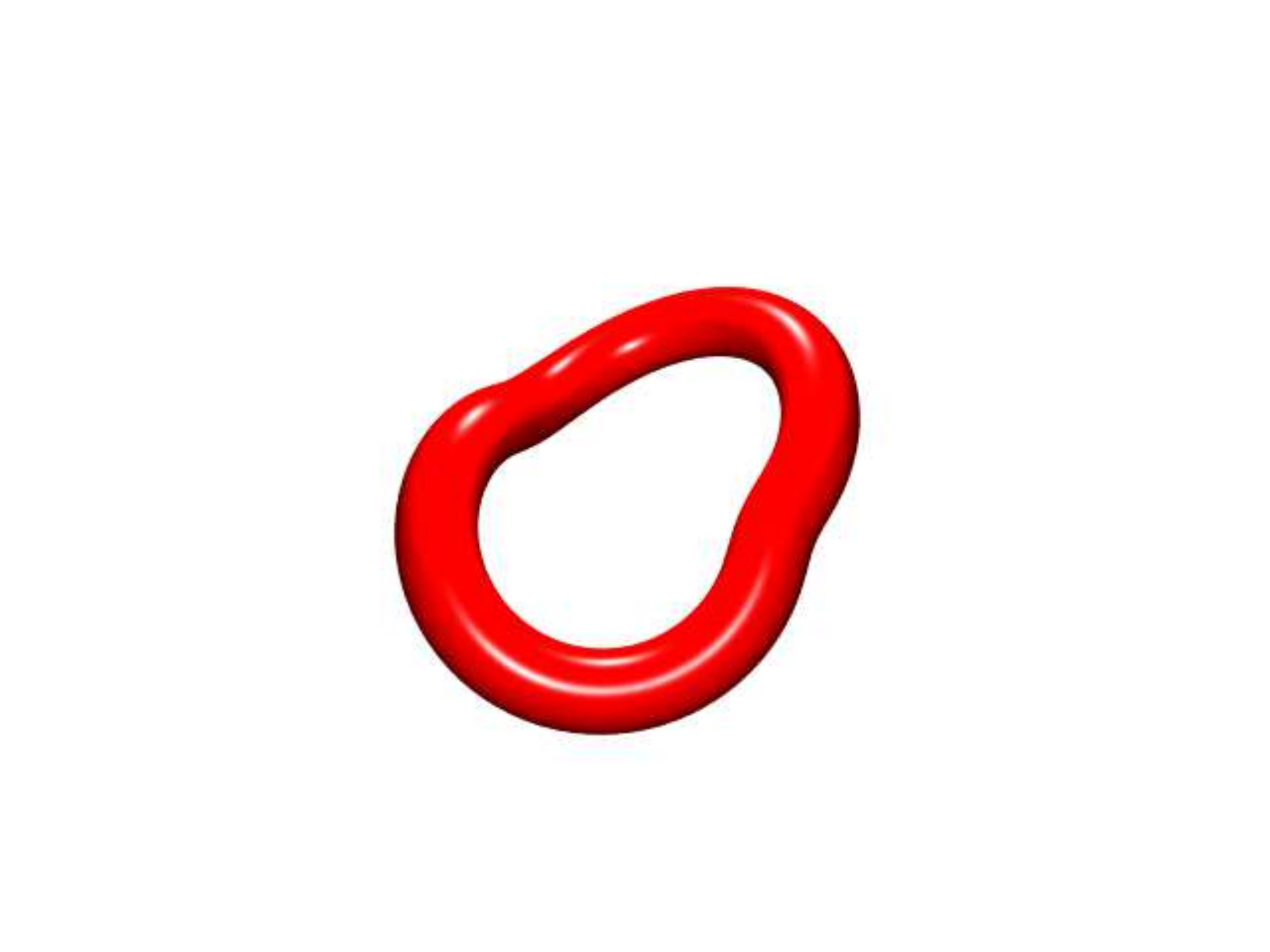} & \includegraphics[width=0.33\textwidth,keepaspectratio]{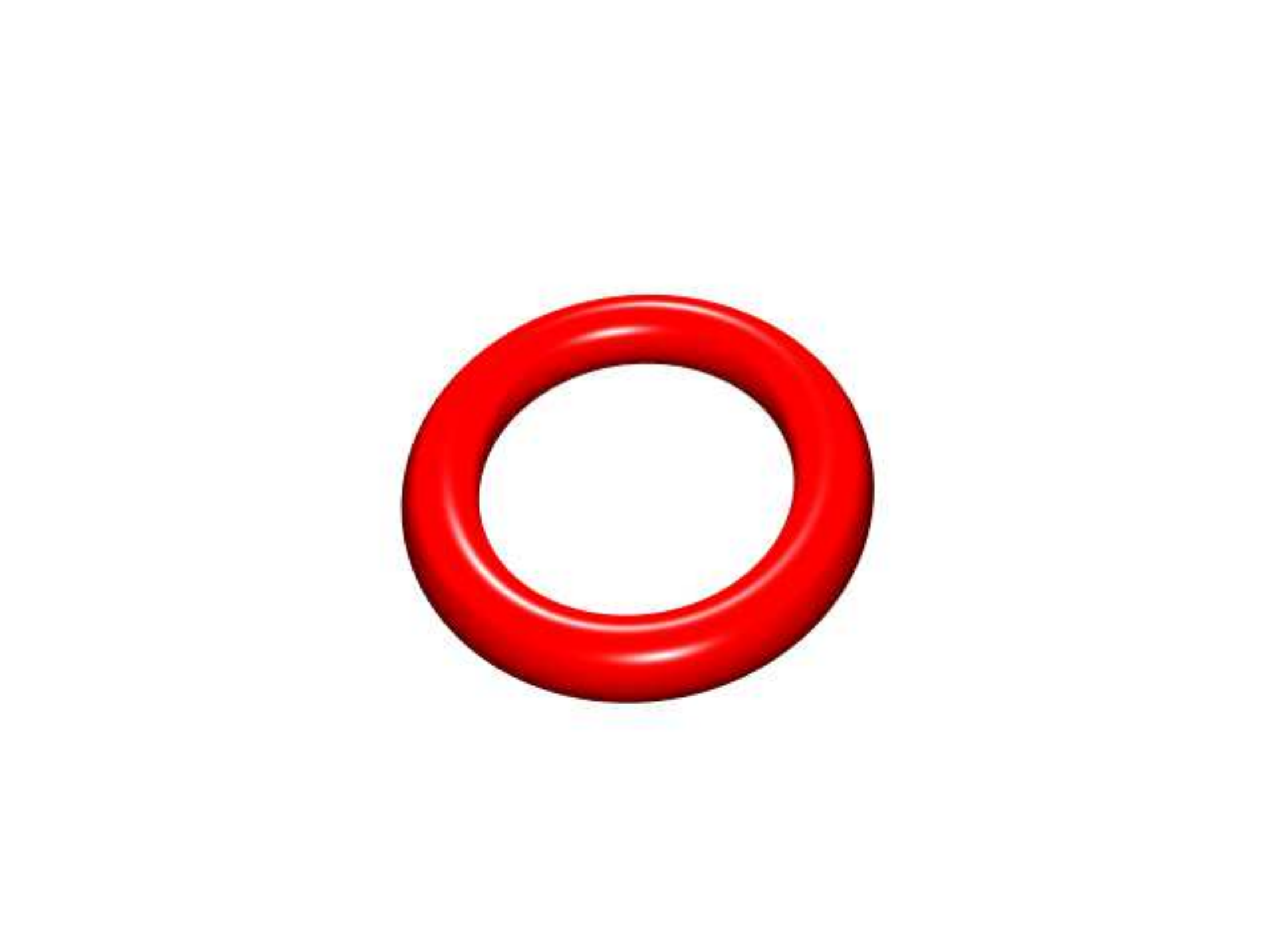}
\end{tabular}
\end{scriptsize}
\end{center}
\caption{An entangled unknot -- $p=50.0$ (continued)}
\end{figure}

\newpage
\section{Trefoils ($3_1$)} \label{knot31}
In this section we are dealing with the easiest knot one can imagine. The trefoil knot has a crossing number of $3$. We start with applying a trefoil-knot to the flow without redistribution
\begin{figure}[H]
\begin{center}
\begin{scriptsize}
\begin{tabular}{ccc}
0/400000 & 1000/400000 & 50000/400000 \\
$\Le(\gamma)\approx 35.43384$ & $\Le(\gamma)\approx 33.24154$ & $\Le(\gamma)\approx 32.79838$ \\
$\E_p(\gamma)\approx 17.20705$ & $\E_p(\gamma)\approx 16.46783$ & $\E_p(\gamma)\approx 16.40439$ \\
$\tau=0.0$ & $\tau=9.22458$ & $\tau=429.17874$ \\
\includegraphics[width=0.33\textwidth,keepaspectratio]{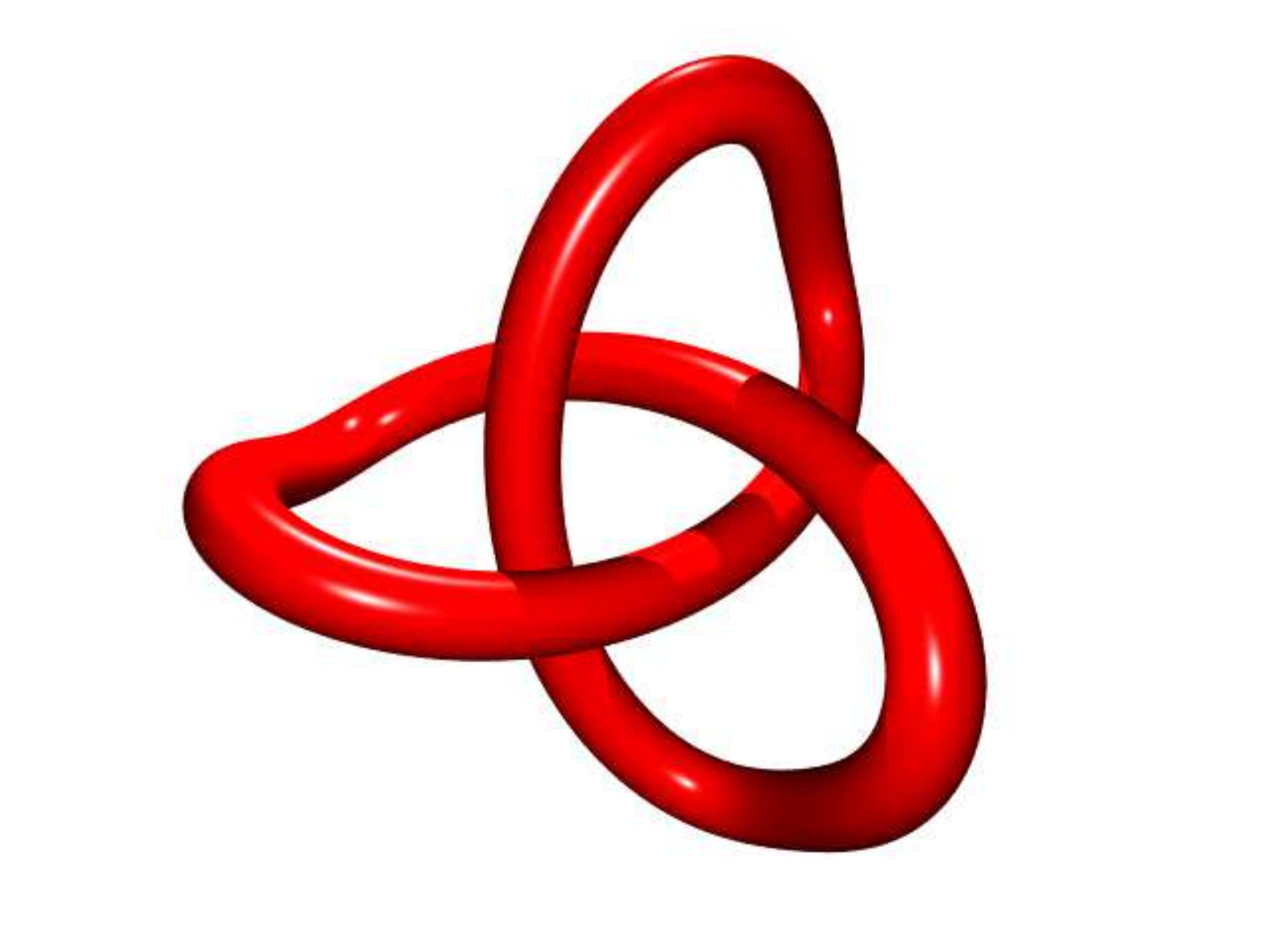} & \includegraphics[width=0.33\textwidth,keepaspectratio]{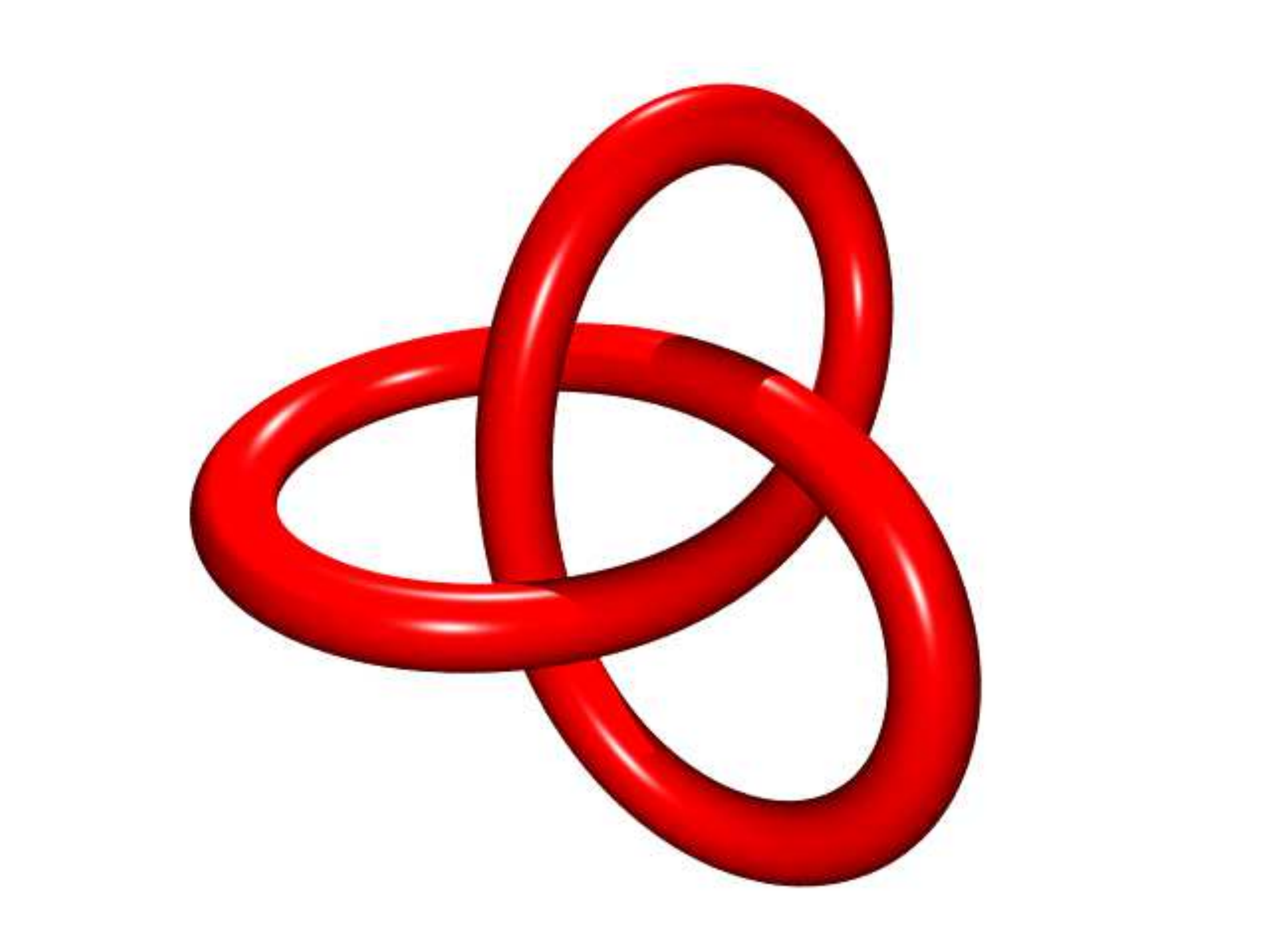} & \includegraphics[width=0.33\textwidth,keepaspectratio]{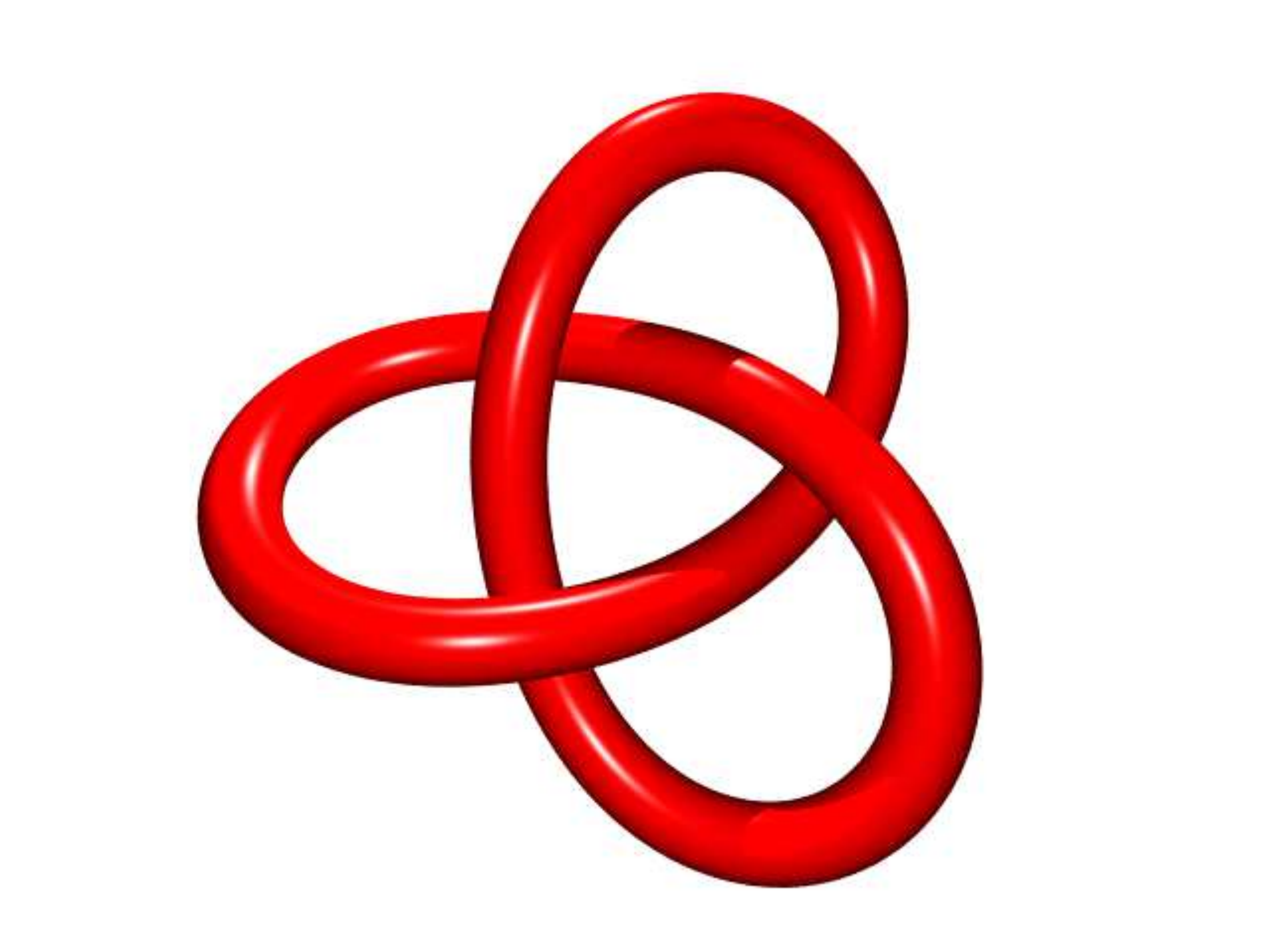} \\
85000/400000 & 100000/400000 & 400000/400000 \\
$\Le(\gamma)\approx 31.56549$ & $\Le(\gamma)\approx 30.25214$ & $\Le(\gamma)\approx 30.04025$ \\
$\E_p(\gamma)\approx 16.28163$ & $\E_p(\gamma)\approx 16.05689$ & $\E_p(\gamma)\approx 16.01051$ \\
$\tau=681.12347$ & $\tau=692.24391$ & $\tau=693.48698$ \\
\includegraphics[width=0.33\textwidth,keepaspectratio]{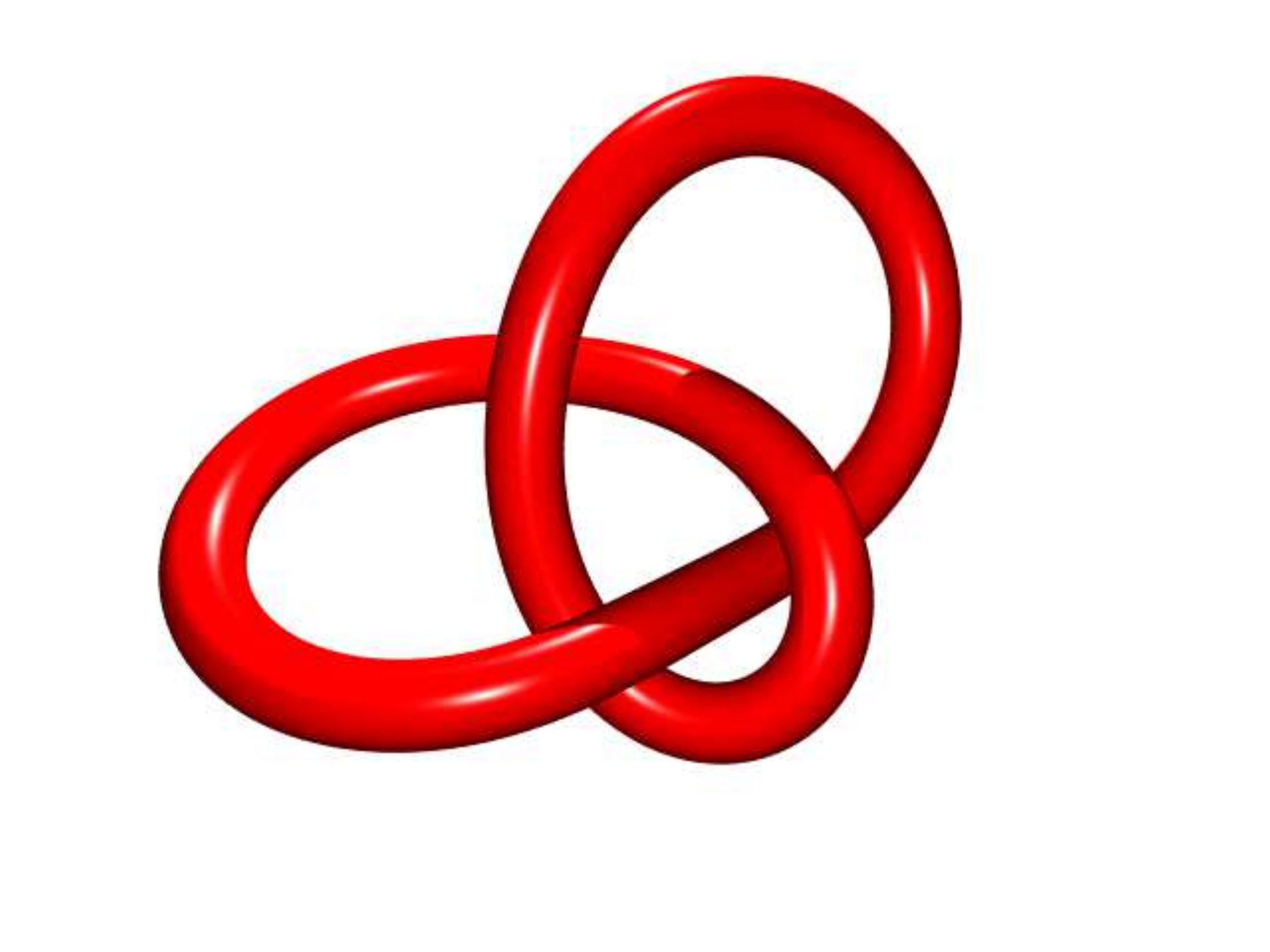} & \includegraphics[width=0.33\textwidth,keepaspectratio]{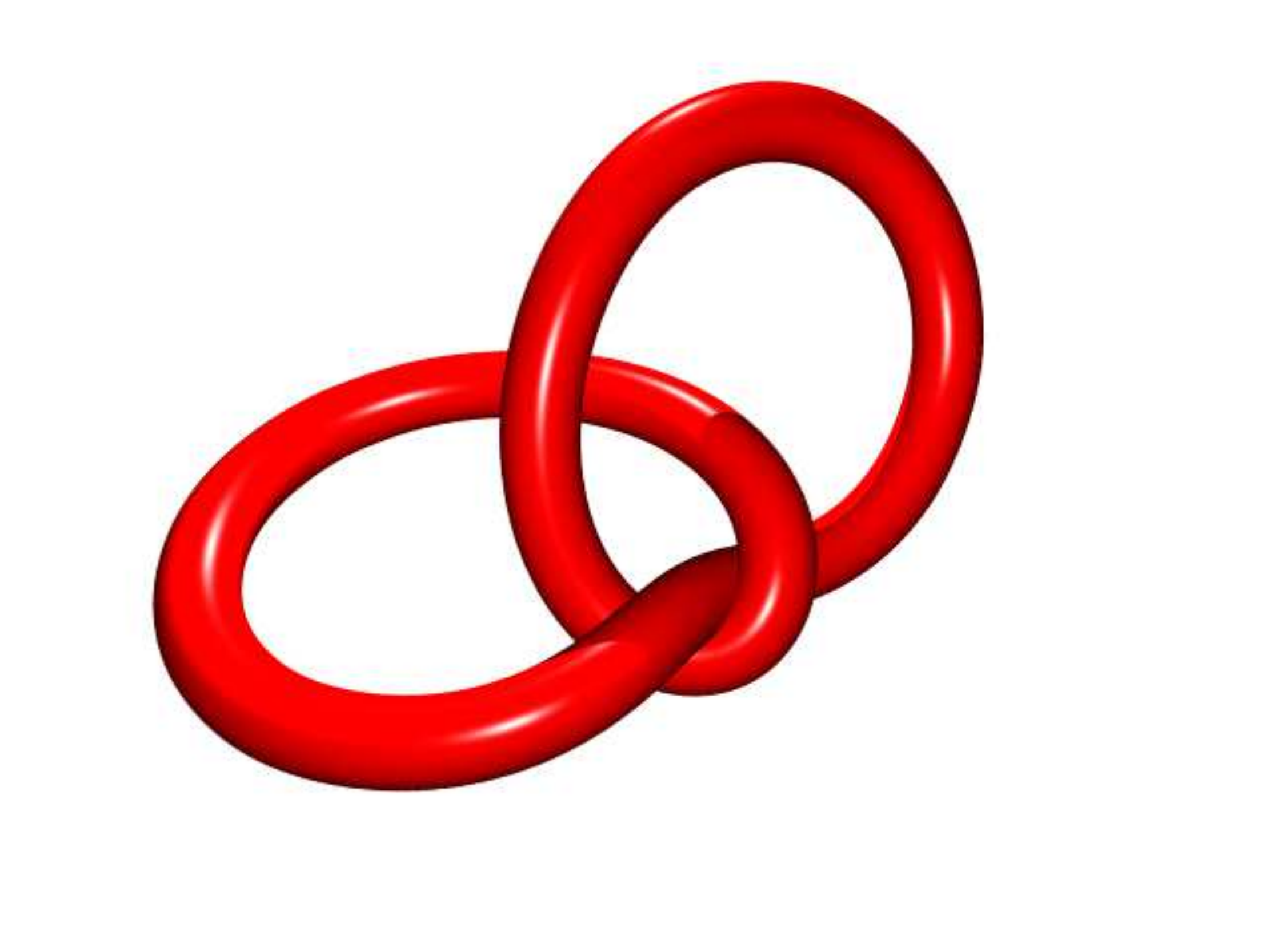} & \includegraphics[width=0.33\textwidth,keepaspectratio]{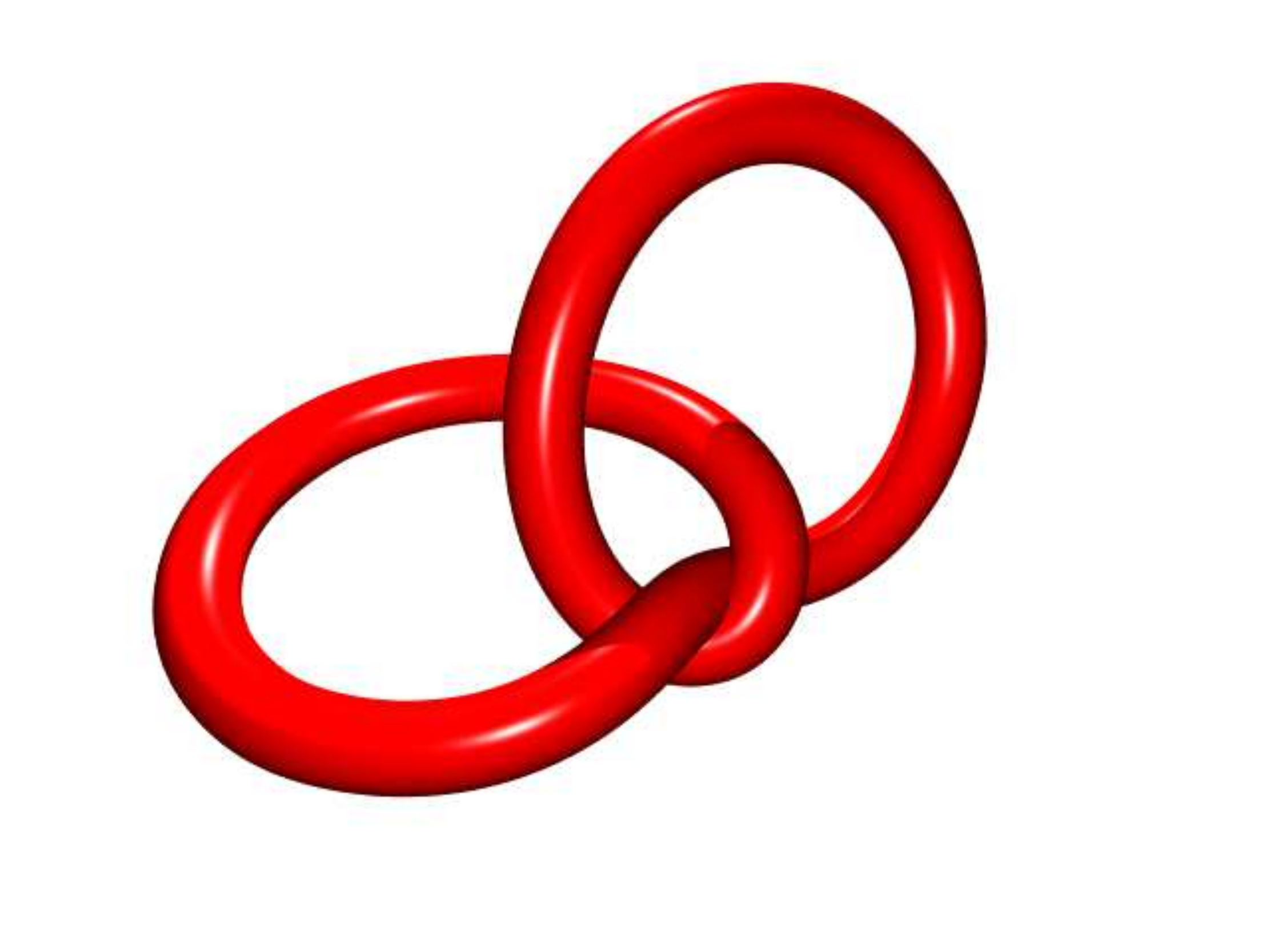}
\end{tabular}
\end{scriptsize}
\end{center}
\caption{A trefoil -- $p=3.0$ without redistribution ($\tau_{\text{max}}=0.1$, $\eps=0.05$)}
\end{figure}

However, at this position the flow seems to get stuck, which is indicated by a strong decrease of the calculated time step size. Now we add redistribution and use the same starting configuration as before, but viewed from another angle. This way we get the following
\begin{figure}[H]
\begin{center}
\begin{scriptsize}
\begin{tabular}{ccc}
0/400000 & 1000/400000 & 10000/400000 \\
$\Le(\gamma)\approx 35.42829$ & $\Le(\gamma)\approx 33.56804$ & $\Le(\gamma)\approx 32.78996$ \\
$\E_p(\gamma)\approx 17.20426$ & $\E_p(\gamma)\approx 16.54996$ & $\E_p(\gamma)\approx 16.40439$ \\
$\tau=0.0$ & $\tau=5.62177$ & $\tau=51.64631$ \\
\includegraphics[width=0.33\textwidth,keepaspectratio]{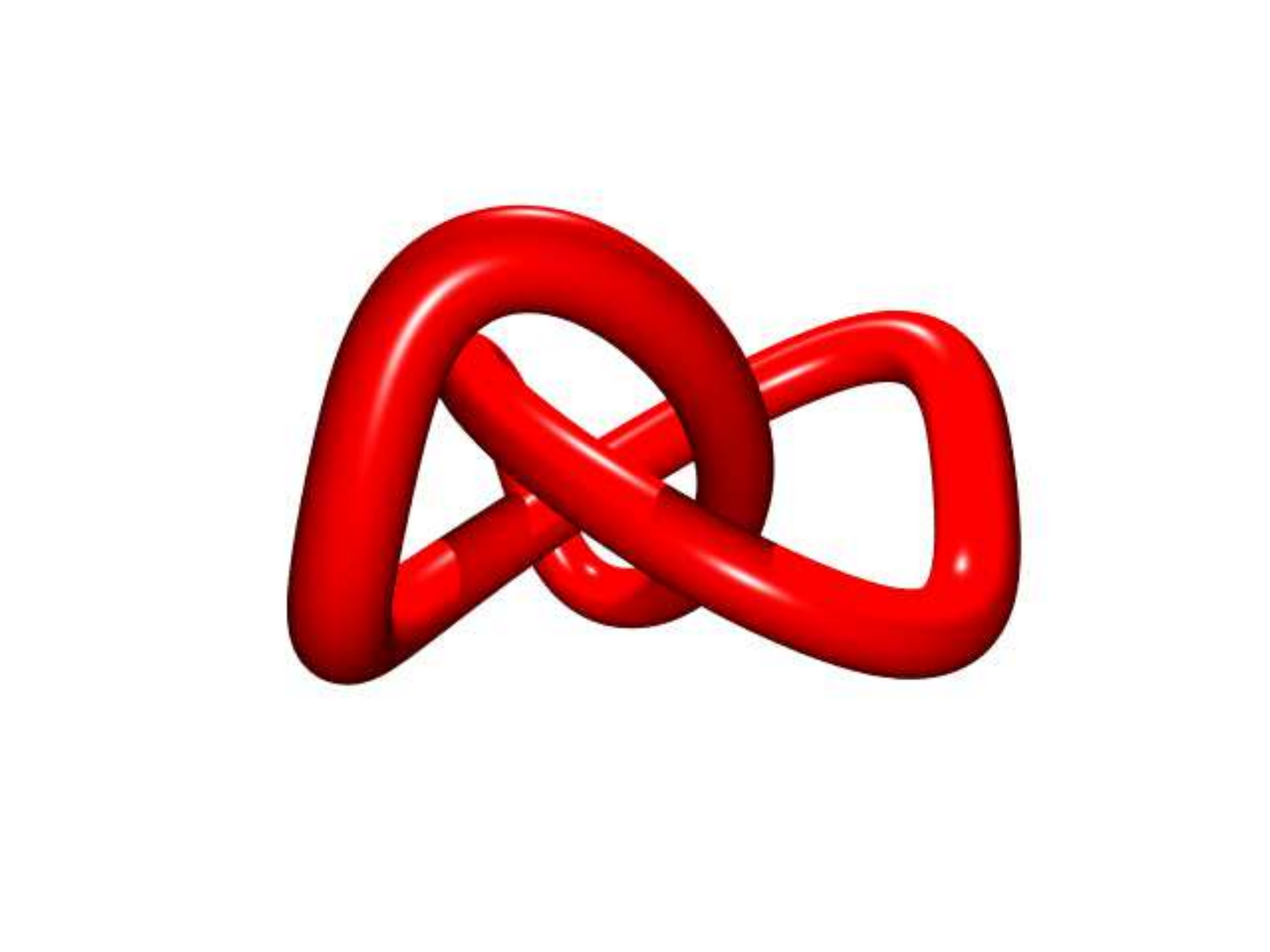} & \includegraphics[width=0.33\textwidth,keepaspectratio]{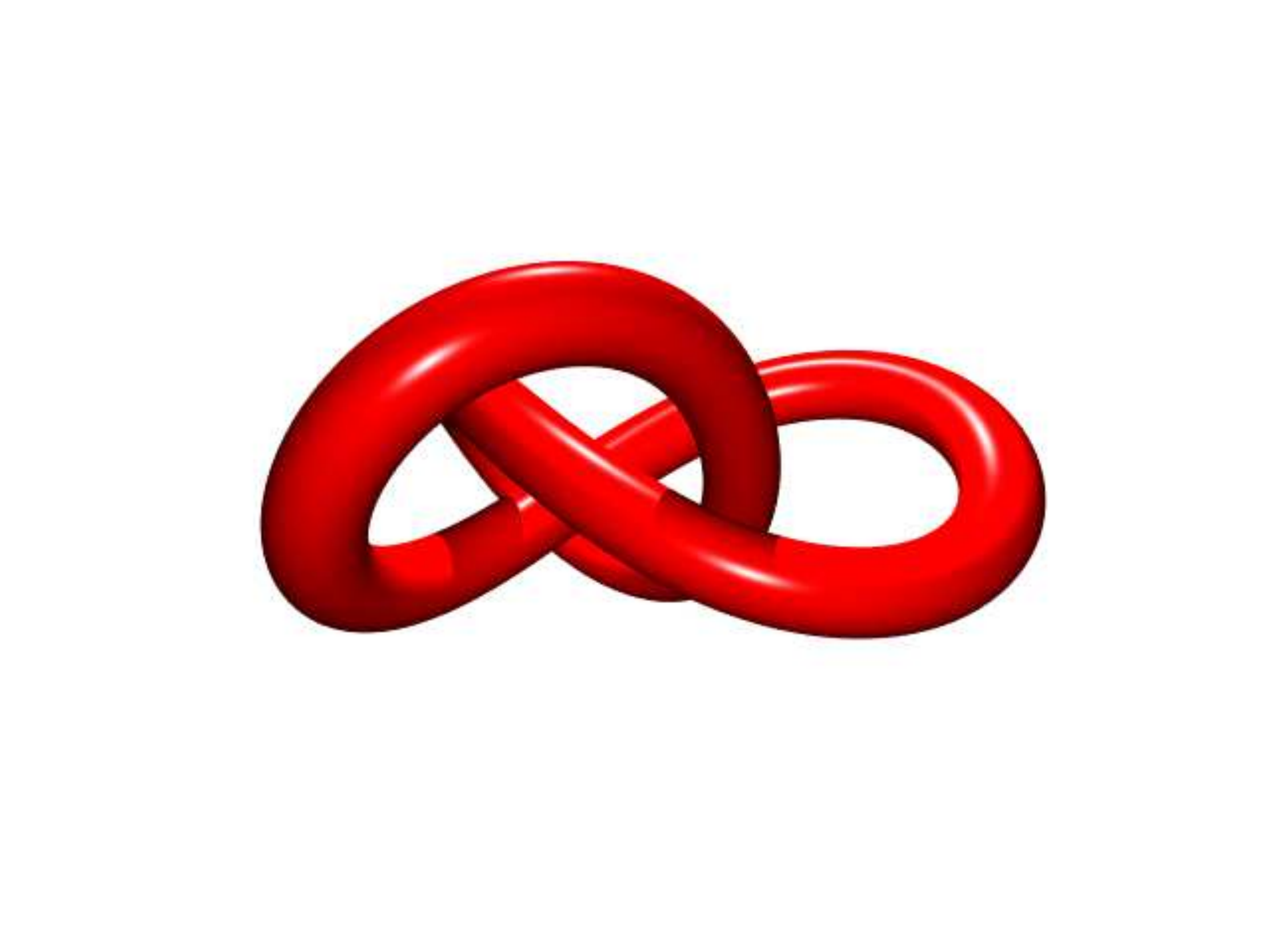} & \includegraphics[width=0.33\textwidth,keepaspectratio]{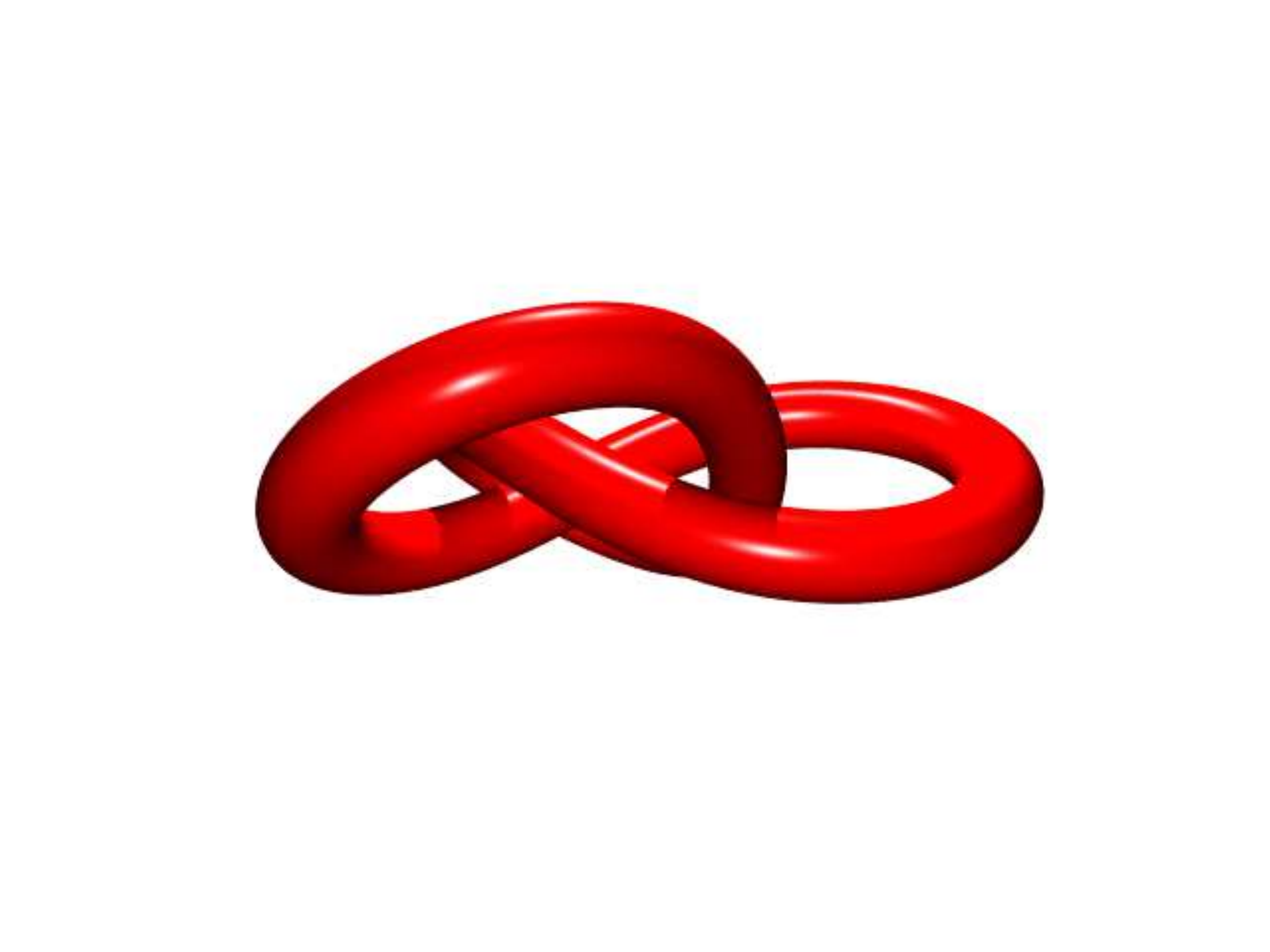} \\
170000/400000 & 200000/400000 & 400000/400000 \\
$\Le(\gamma)\approx 31.85679$ & $\Le(\gamma)\approx 27.7953$ & $\Le(\gamma)\approx 31.04354$ \\
$\E_p(\gamma)\approx 16.32394$ & $\E_p(\gamma)\approx 15.48498$ & $\E_p(\gamma)\approx 15.30027$ \\
$\tau=859.45056$ & $\tau=889.64543$ & $\tau=1006.80042$ \\
\includegraphics[width=0.33\textwidth,keepaspectratio]{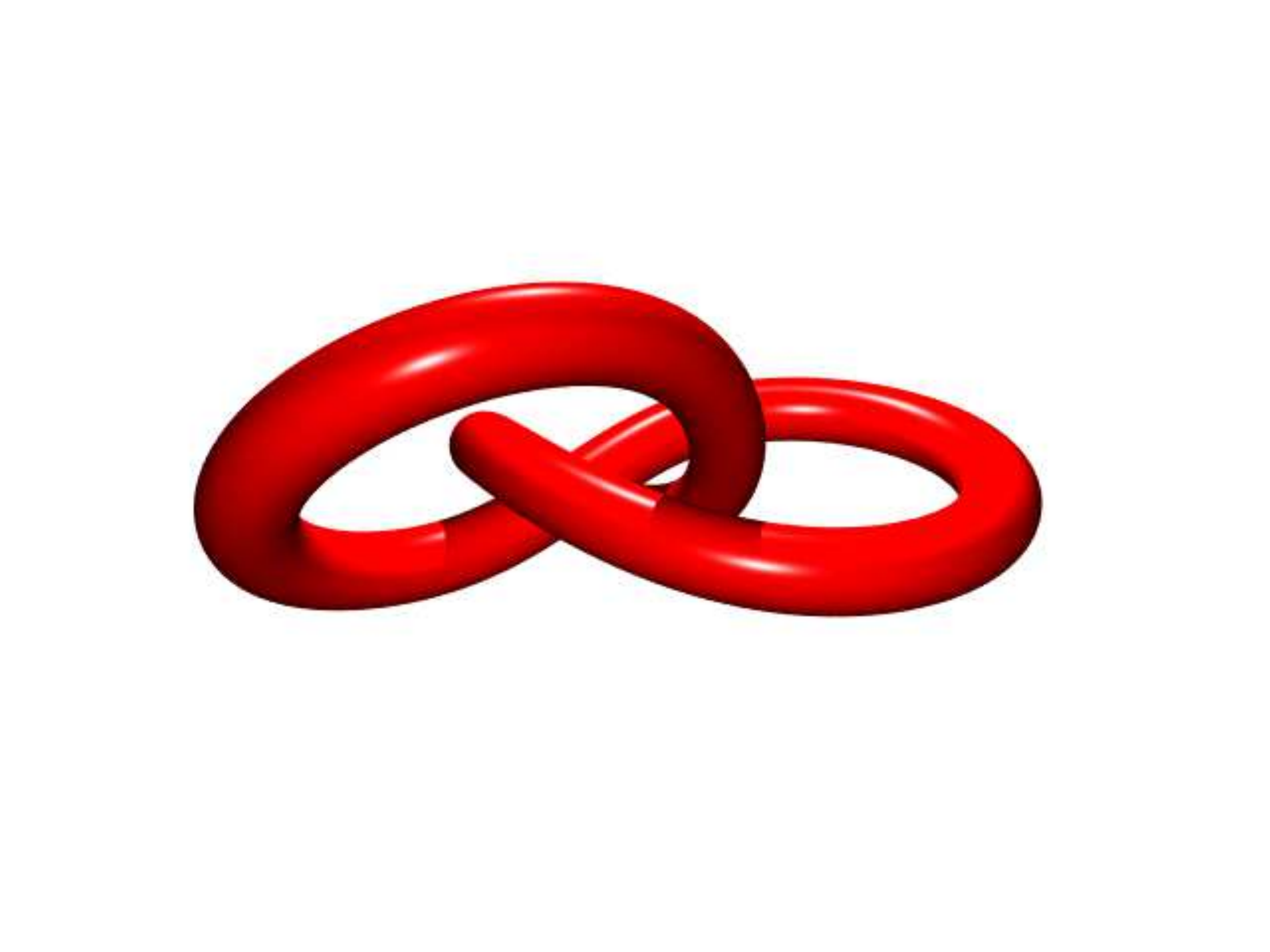} & \includegraphics[width=0.33\textwidth,keepaspectratio]{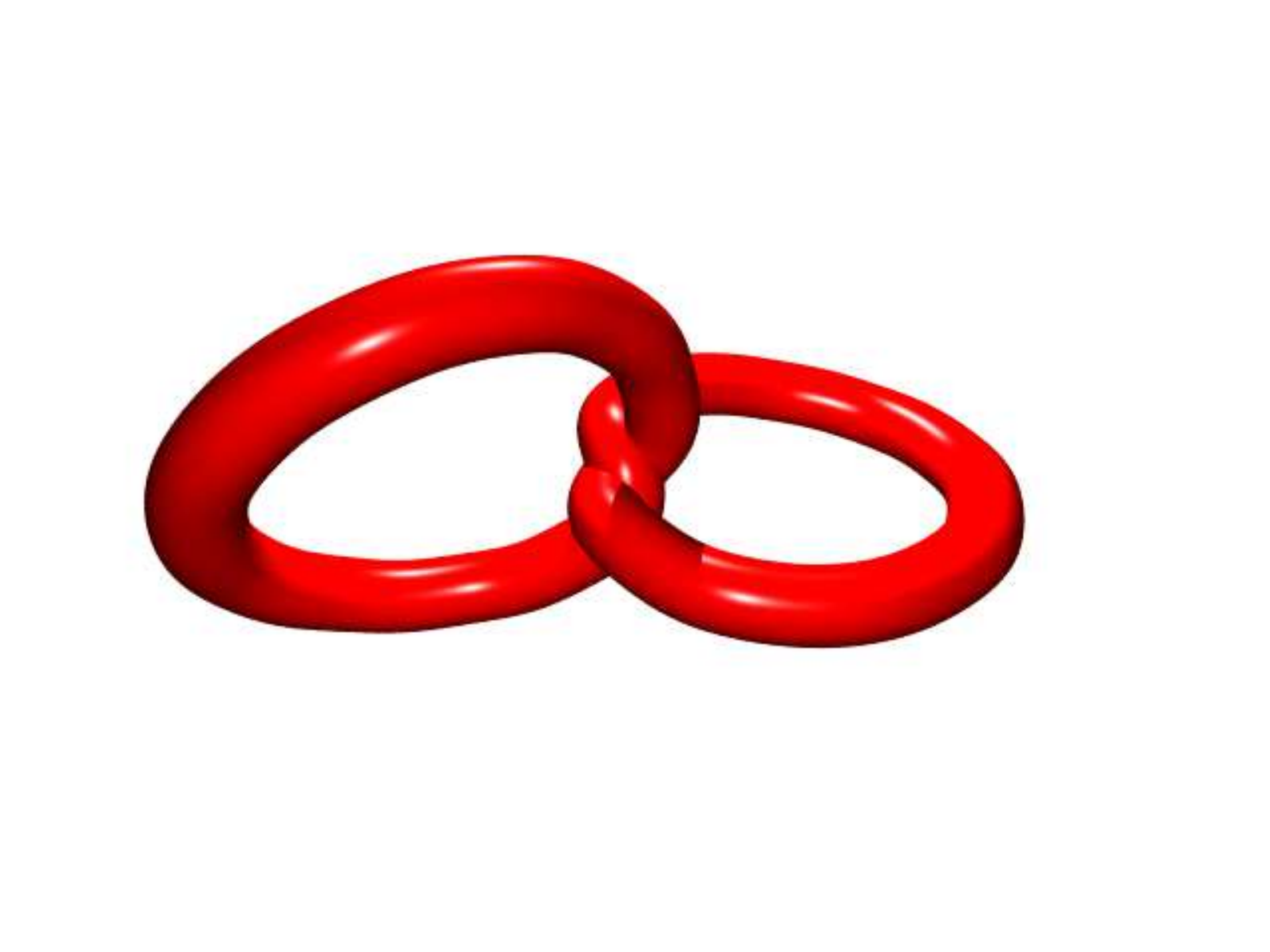} & \includegraphics[width=0.33\textwidth,keepaspectratio]{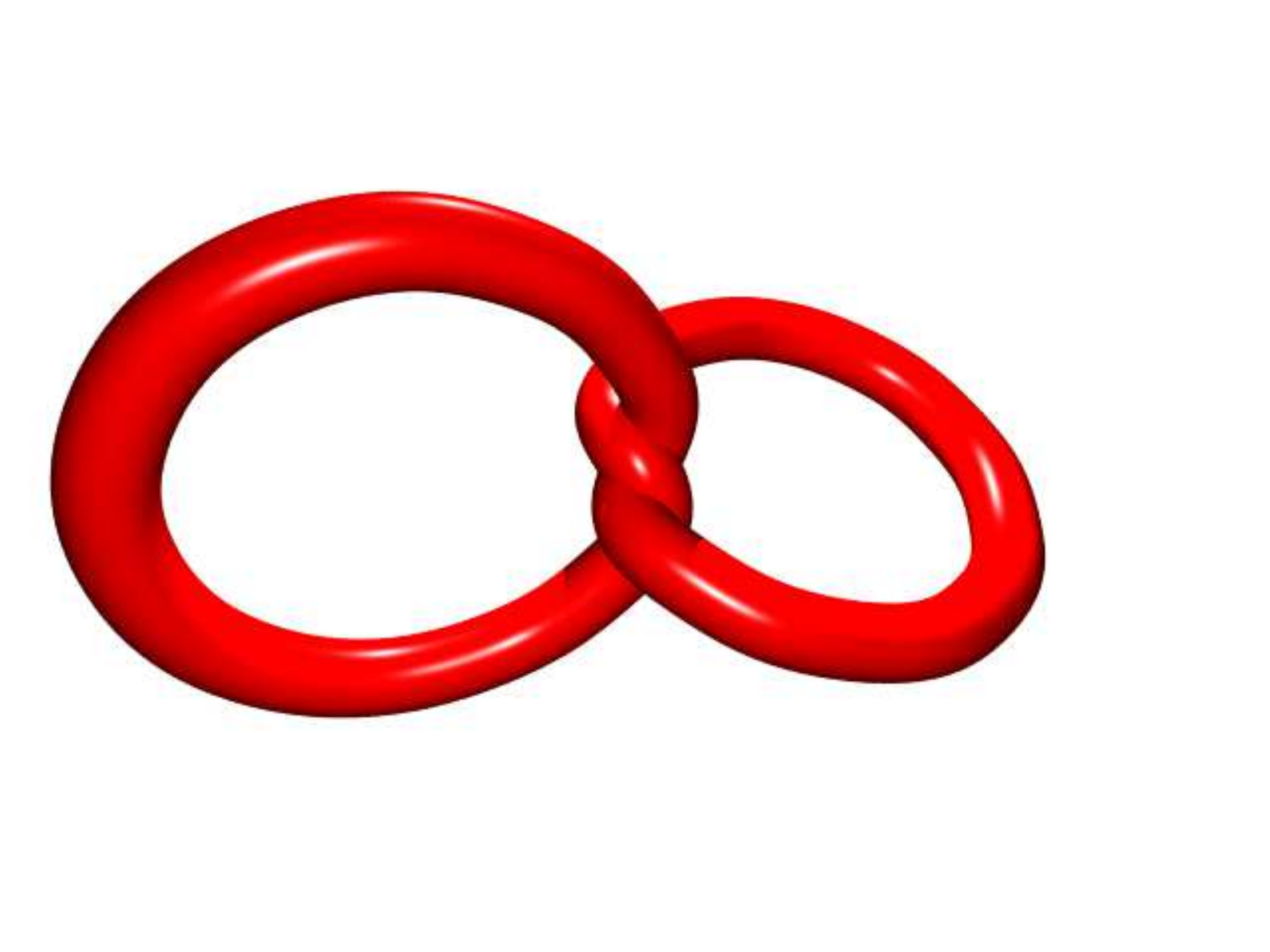}
\end{tabular}
\end{scriptsize}
\end{center}
\caption{A trefoil -- $p=3.0$ ($\tau_{\text{max}}=0.1$, $\eps=0.05$)}
\end{figure}

We continue without redistribution and we obtain that the knot class is abandoned here. Finally, we end up with a circle, which is also indicated by the $\E_p$ value of the last configuration.
\begin{figure}[H]
\begin{center}
\begin{scriptsize}
\begin{tabular}{ccc}
0/200000 & 55000/200000 & 200000/200000 \\
$\Le(\gamma)\approx 31.04354$ & $\Le(\gamma)\approx 27.51339$ & $\Le(\gamma)\approx 21.06661$ \\
$\E_p(\gamma)\approx 15.30027$ & $\E_p(\gamma)\approx 9.90118$ & $\E_p(\gamma)\approx 6.28319$ \\
$\tau=0.0$ & $\tau=24.55333$ & $\tau=317.84845$ \\
\includegraphics[width=0.33\textwidth,keepaspectratio]{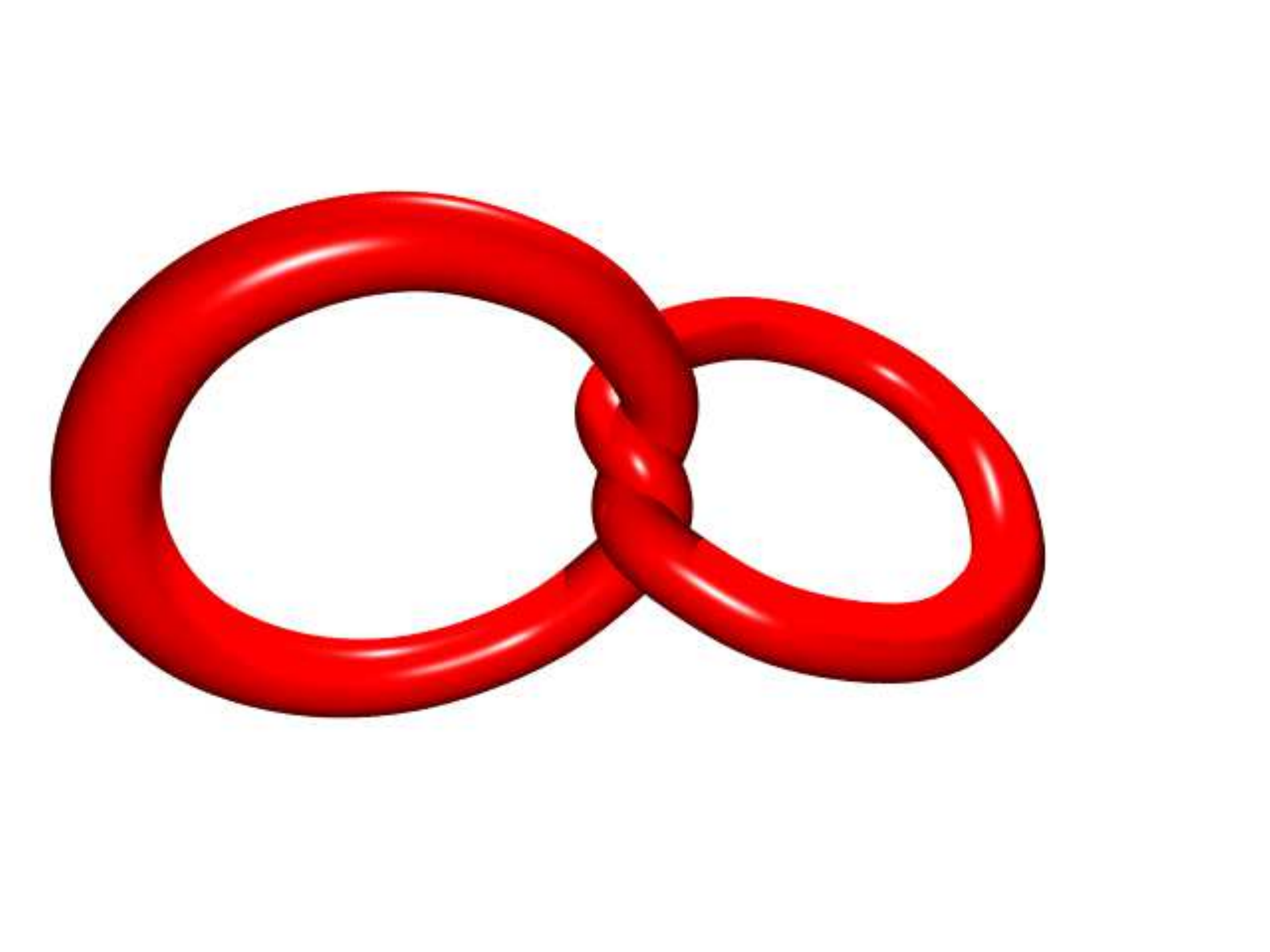} & \includegraphics[width=0.33\textwidth,keepaspectratio]{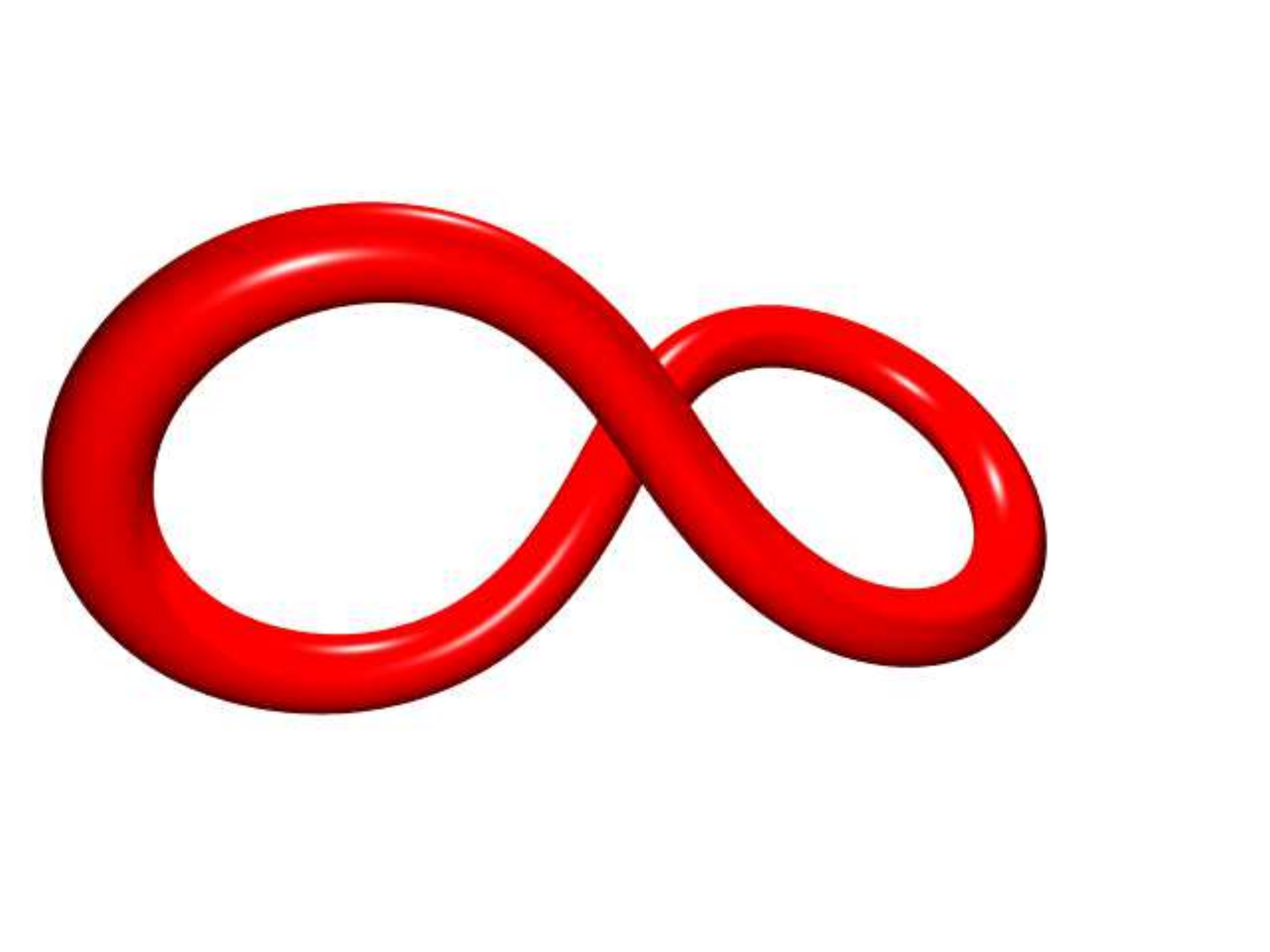} & \includegraphics[width=0.33\textwidth,keepaspectratio]{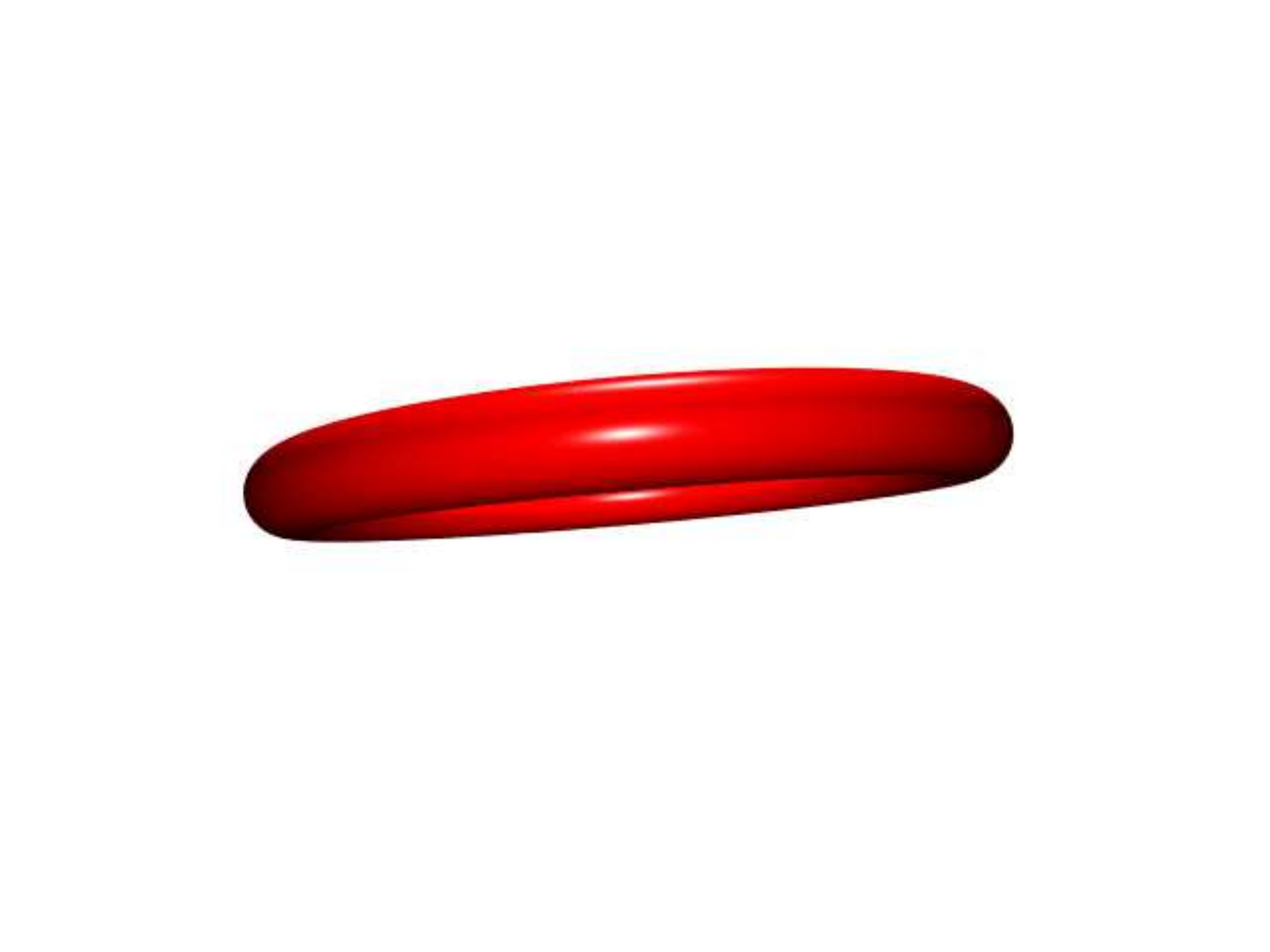}
\end{tabular}
\end{scriptsize}
\end{center}
\caption{A trefoil -- $p=3.0$ (continued) without redistribution ($\tau_{\text{max}}=0.1$, $\eps=0.05$)}
\end{figure}

Now we start with the very same configuration as in the flow before but with $p=3.5$ and again without redistribution
\begin{figure}[H]
\begin{center}
\begin{scriptsize}
\begin{tabular}{ccc}
0/100000 & 5000/100000 & 100000/100000 \\
$\Le(\gamma)\approx 31.04354$ & $\Le(\gamma)\approx 33.81871$ & $\Le(\gamma)\approx 34.37755$ \\
$\E_p(\gamma)\approx 18.44063$ & $\E_p(\gamma)\approx 17.28253$ & $\E_p(\gamma)\approx 17.25392$ \\
$\tau=0.0$ & $\tau=12.4572$ & $\tau=333.04893$ \\
\includegraphics[width=0.33\textwidth,keepaspectratio]{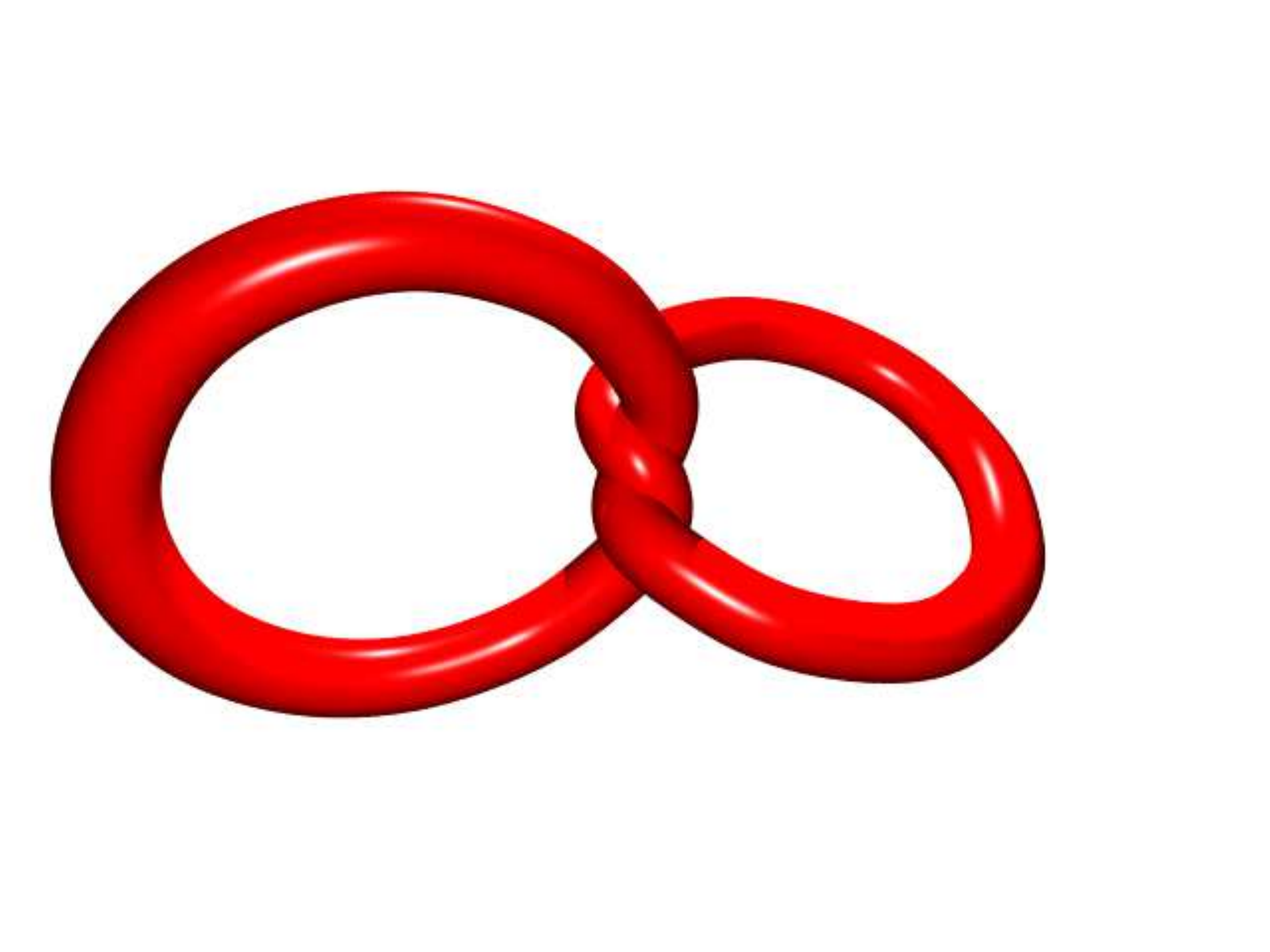} & \includegraphics[width=0.33\textwidth,keepaspectratio]{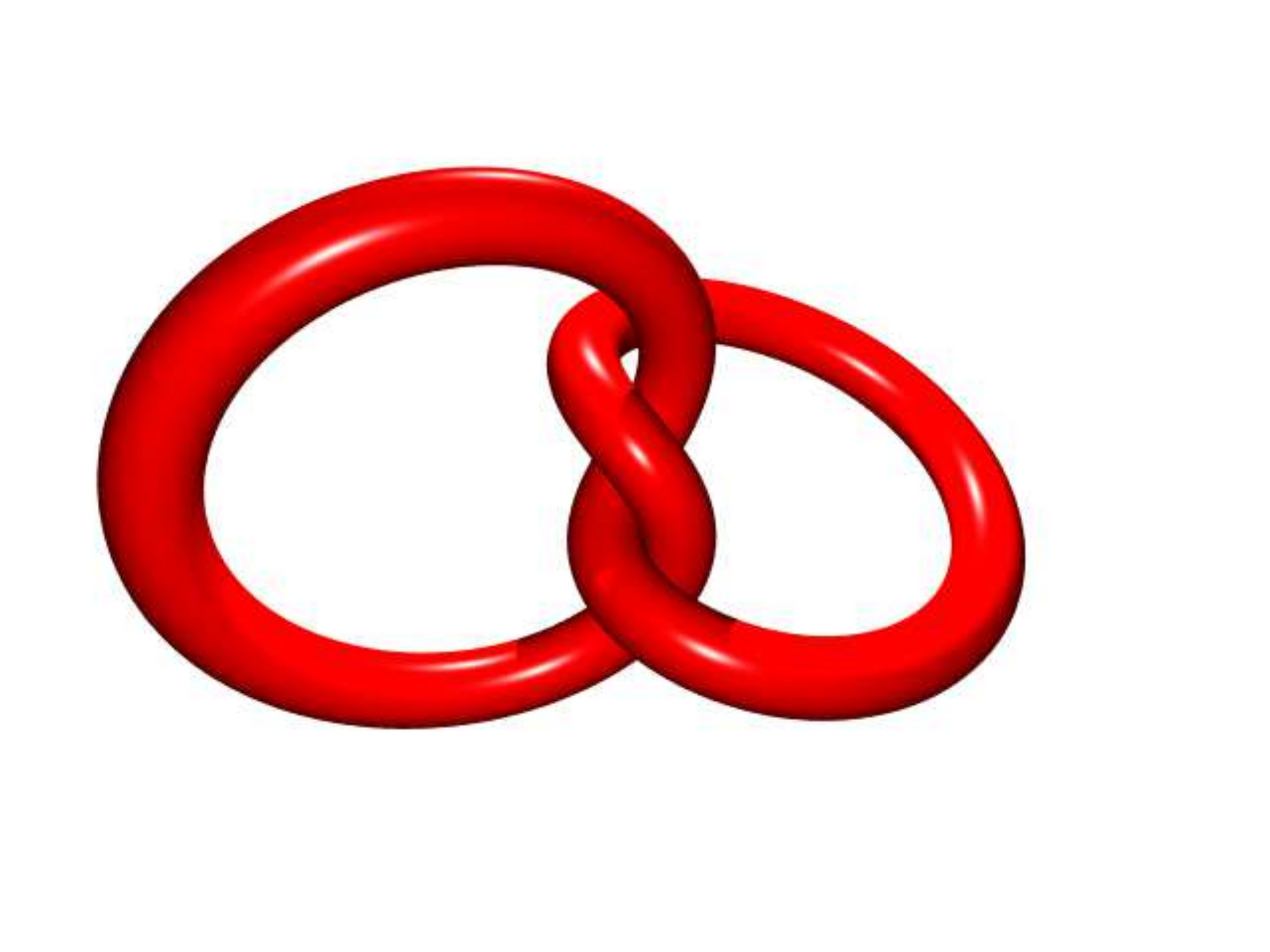} & \includegraphics[width=0.33\textwidth,keepaspectratio]{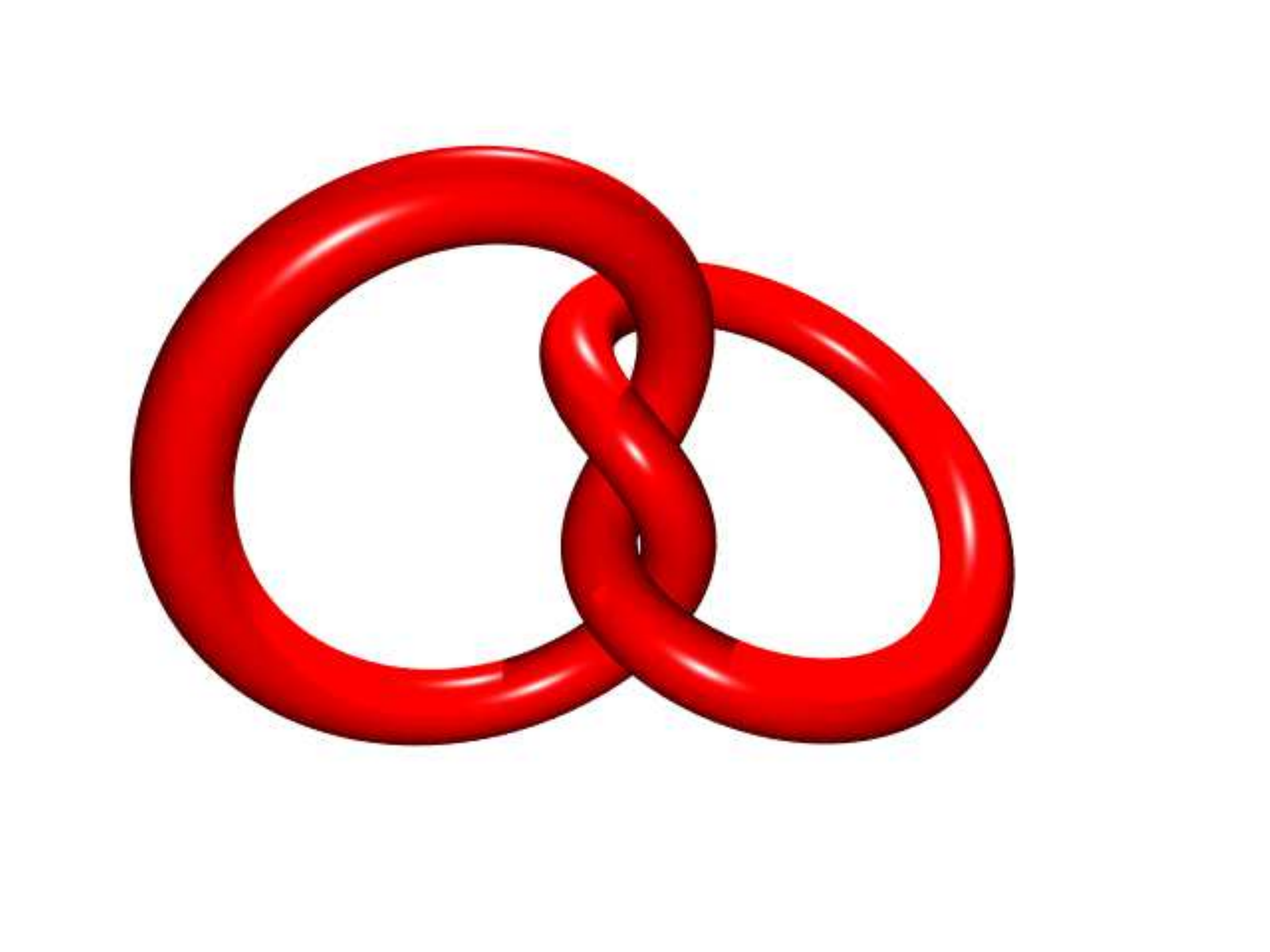}
\end{tabular}
\end{scriptsize}
\end{center}
\caption{A trefoil -- $p=3.5$ (continued) without redistribution ($\tau_{\text{max}}=0.1$, $\eps=0.05$)}
\end{figure}

The knot class is not abandoned this time. However, starting with the trefoil from the beginning of this section and using redistribution we end up in another configuration, which is symmetric and has a lower energy value for this $p$.
\begin{figure}[H]
\begin{center}
\begin{scriptsize}
\begin{tabular}{ccc}
0/500000 & 10000/500000 & 20000/500000 \\
$\Le(\gamma)\approx 35.42829$ & $\Le(\gamma)\approx 33.27166$ & $\Le(\gamma)\approx 33.27144$ \\
$\E_p(\gamma)\approx 17.84245$ & $\E_p(\gamma)\approx 17.0783$ & $\E_p(\gamma)\approx 17.0783$ \\
$\tau=0.0$ & $\tau=53.84697$ & $\tau=107.13935$ \\
\includegraphics[width=0.33\textwidth,keepaspectratio]{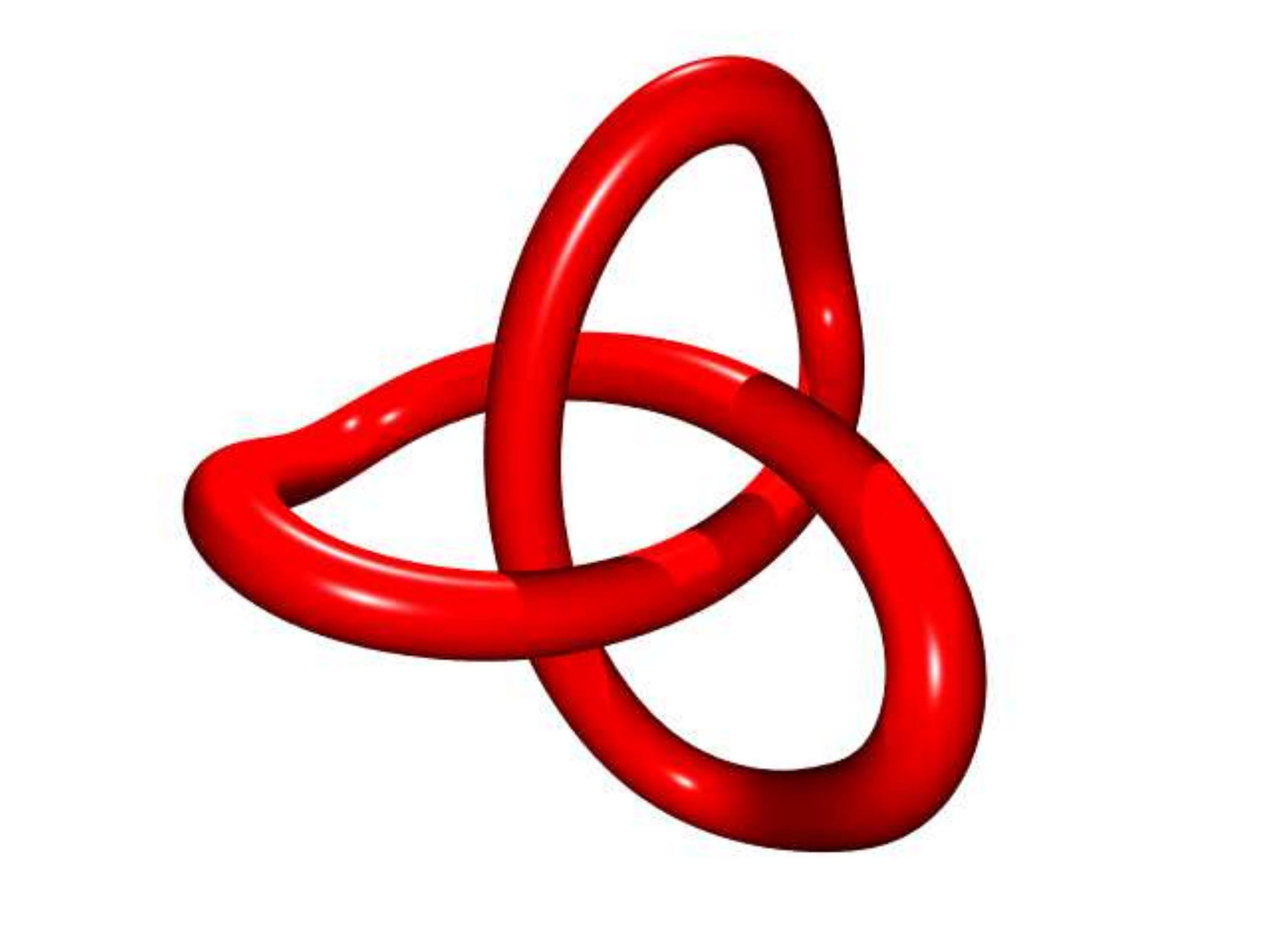} & \includegraphics[width=0.33\textwidth,keepaspectratio]{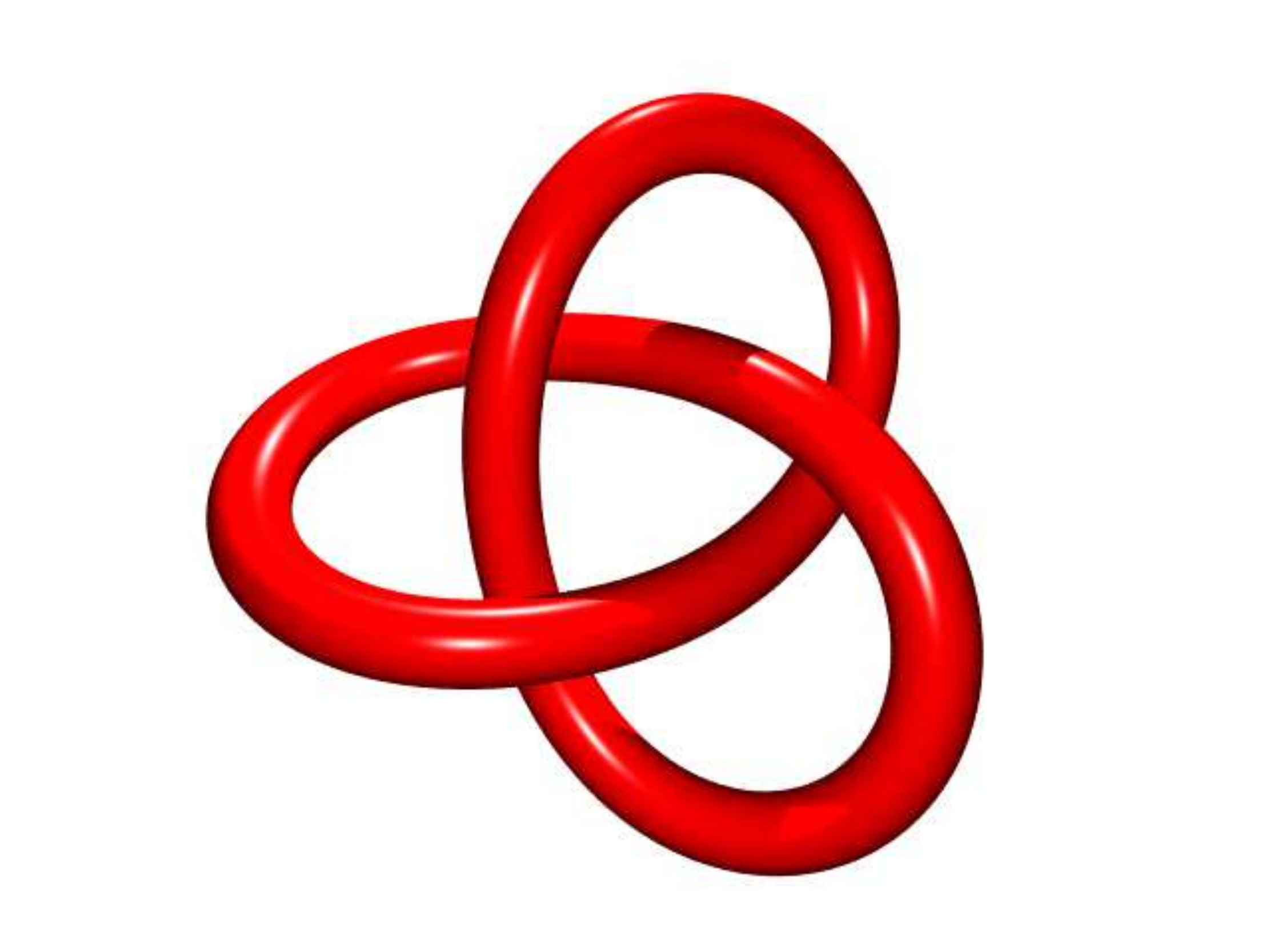} & \includegraphics[width=0.33\textwidth,keepaspectratio]{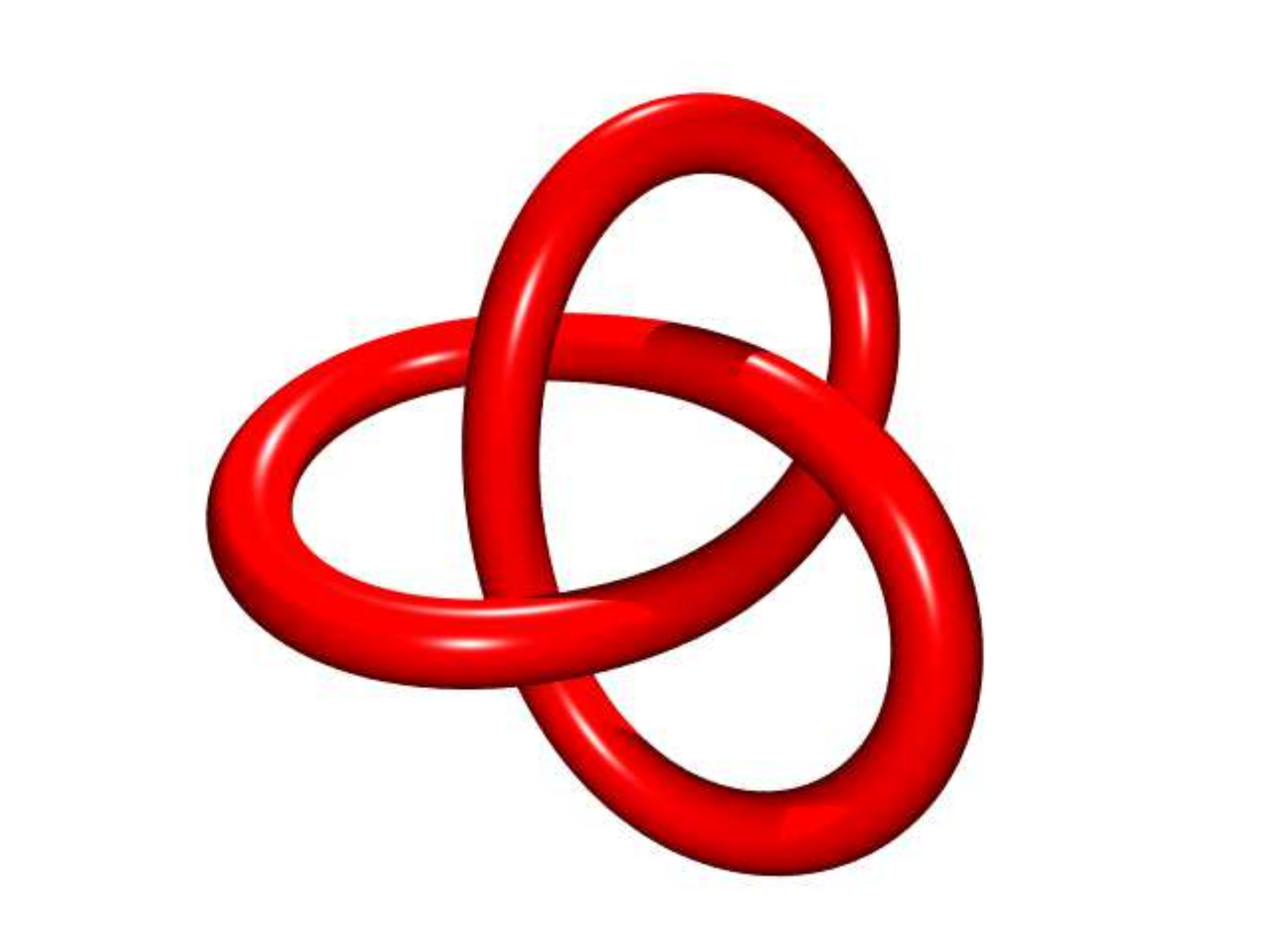}
\end{tabular}
\end{scriptsize}
\end{center}
\caption{A trefoil -- $p=3.5$ ($\tau_{\text{max}}=0.1$, $\eps=0.05$)}
\end{figure}

and the energy stays constant for the rest of the flow, i.e. $480000$ steps.

The examples show, that the flow for integral \name{Menger} curvature converges to configurations with small self-distance for smaller $p$ and large self-distance for larger $p$. In theory the energy value of a curve with self-intersection is infinite for each $p>3$. Here, and in most of the following examples $p=3.5$ seems to be large enough to pretend self-penetration.

The following example is a flow with $p=50$ and with redistribution
\begin{figure}[H]
\begin{center}
\begin{scriptsize}
\begin{tabular}{ccc}
0/2000 & 10/2000 & 60/2000 \\
$\Le(\gamma)\approx 35.42829$ & $\Le(\gamma)\approx 35.53827$ & $\Le(\gamma)\approx 35.36989$ \\
$\E_p(\gamma)\approx 45.11733$ & $\E_p(\gamma)\approx 37.77995$ & $\E_p(\gamma)\approx 31.68628$ \\
$\tau=0.0$ & $\tau=0.02376$ & $\tau=0.20622$ \\
\includegraphics[width=0.33\textwidth,keepaspectratio]{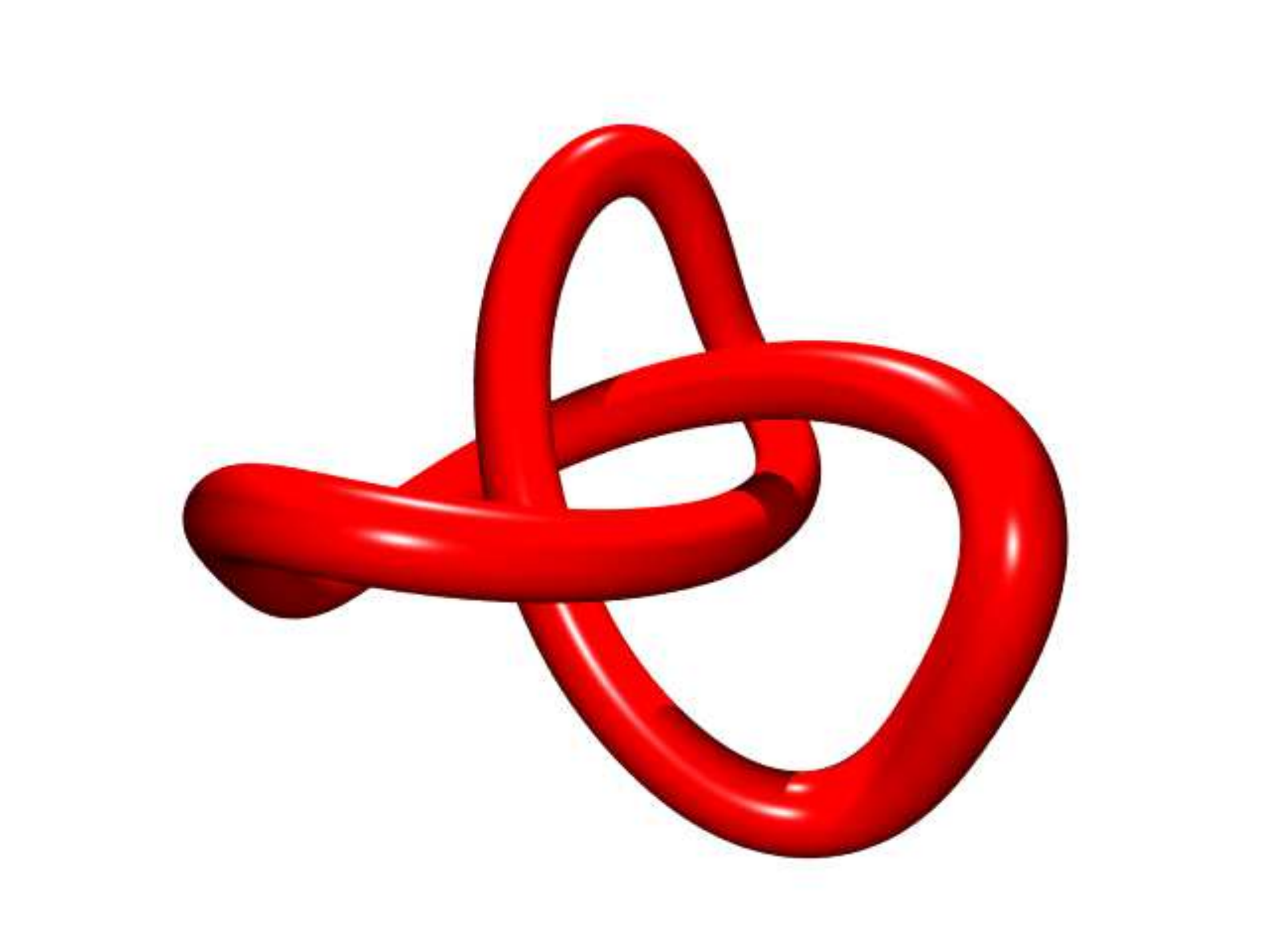} & \includegraphics[width=0.33\textwidth,keepaspectratio]{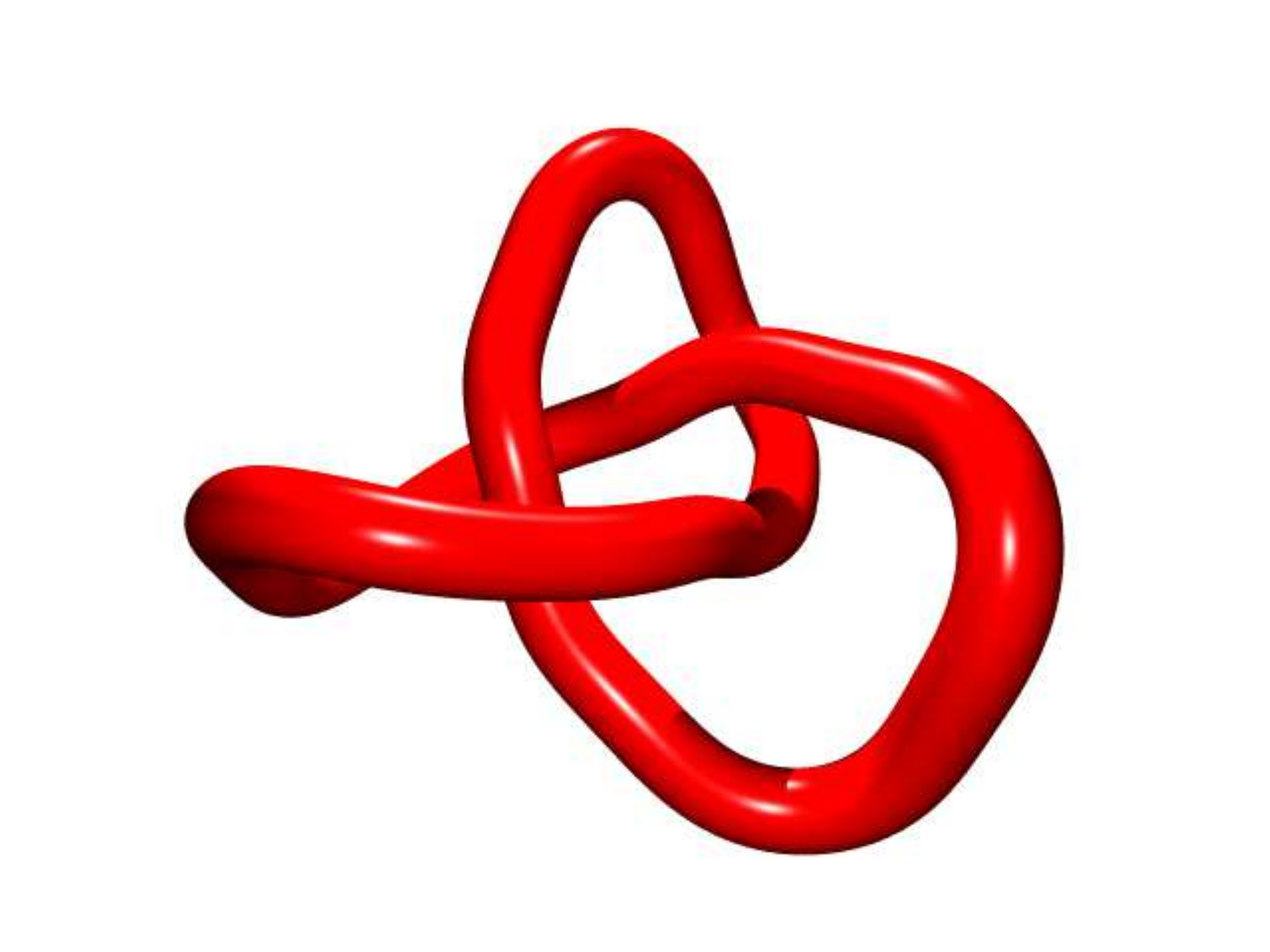} & \includegraphics[width=0.33\textwidth,keepaspectratio]{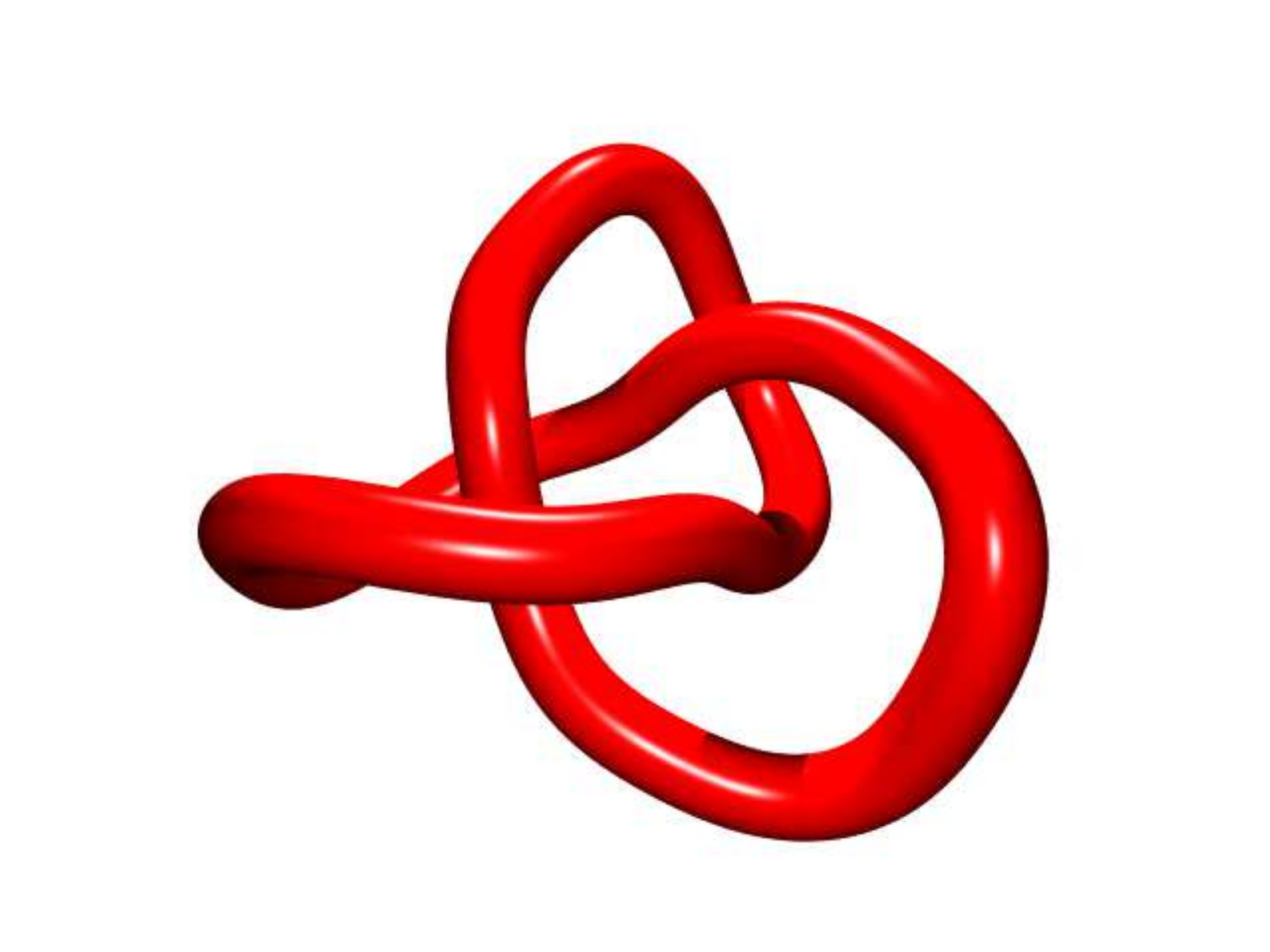} \\
200/2000 & 1200/2000 & 2000/2000 \\
$\Le(\gamma)\approx 35.03015$ & $\Le(\gamma)\approx 34.57029$ & $\Le(\gamma)\approx 34.55507$ \\
$\E_p(\gamma)\approx 30.07876$ & $\E_p(\gamma)\approx 29.5923$ & $\E_p(\gamma)\approx 29.57521$ \\
$\tau=0.78832$ & $\tau=4.91227$ & $\tau=8.19921$ \\
\includegraphics[width=0.33\textwidth,keepaspectratio]{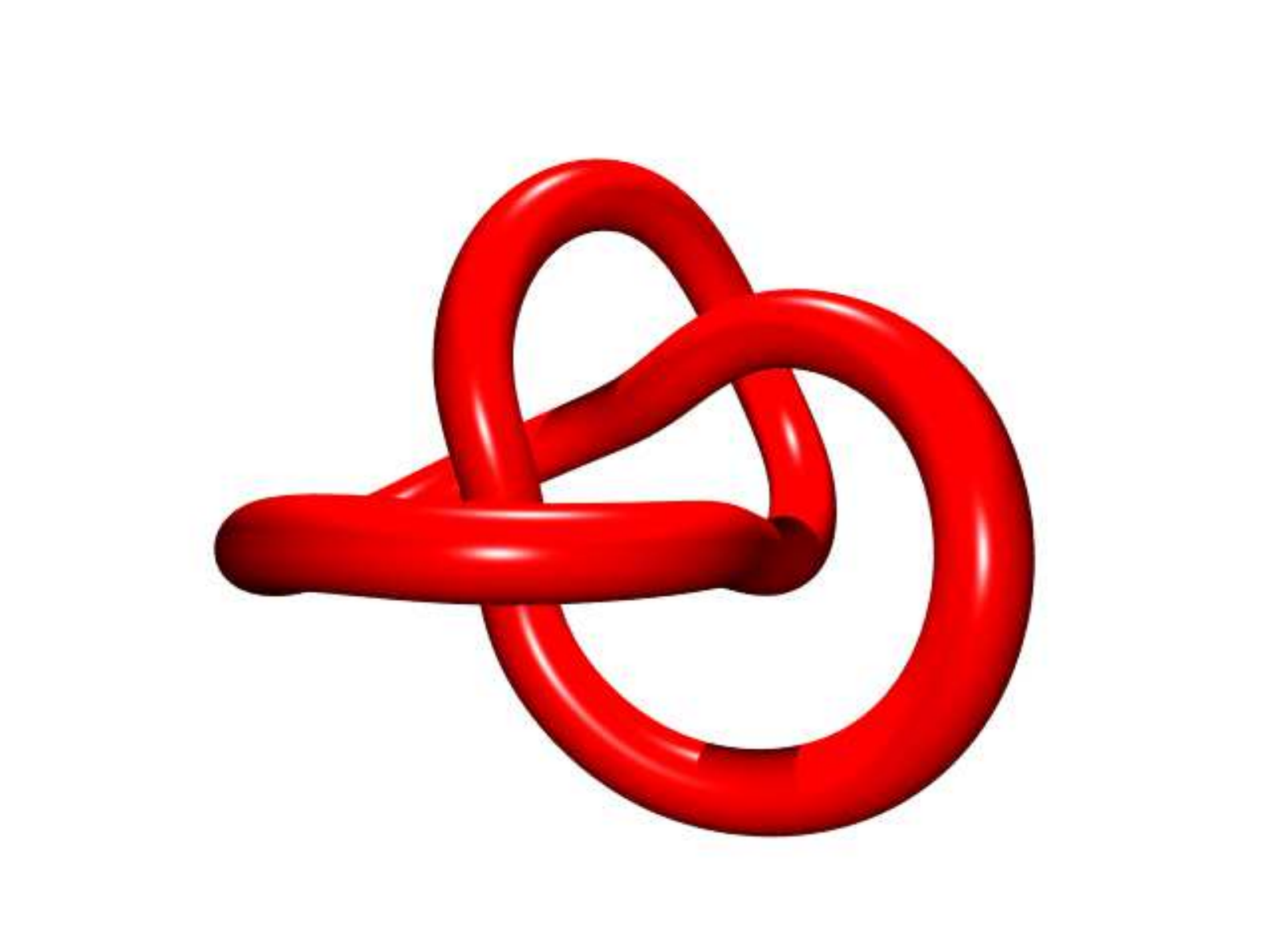} & \includegraphics[width=0.33\textwidth,keepaspectratio]{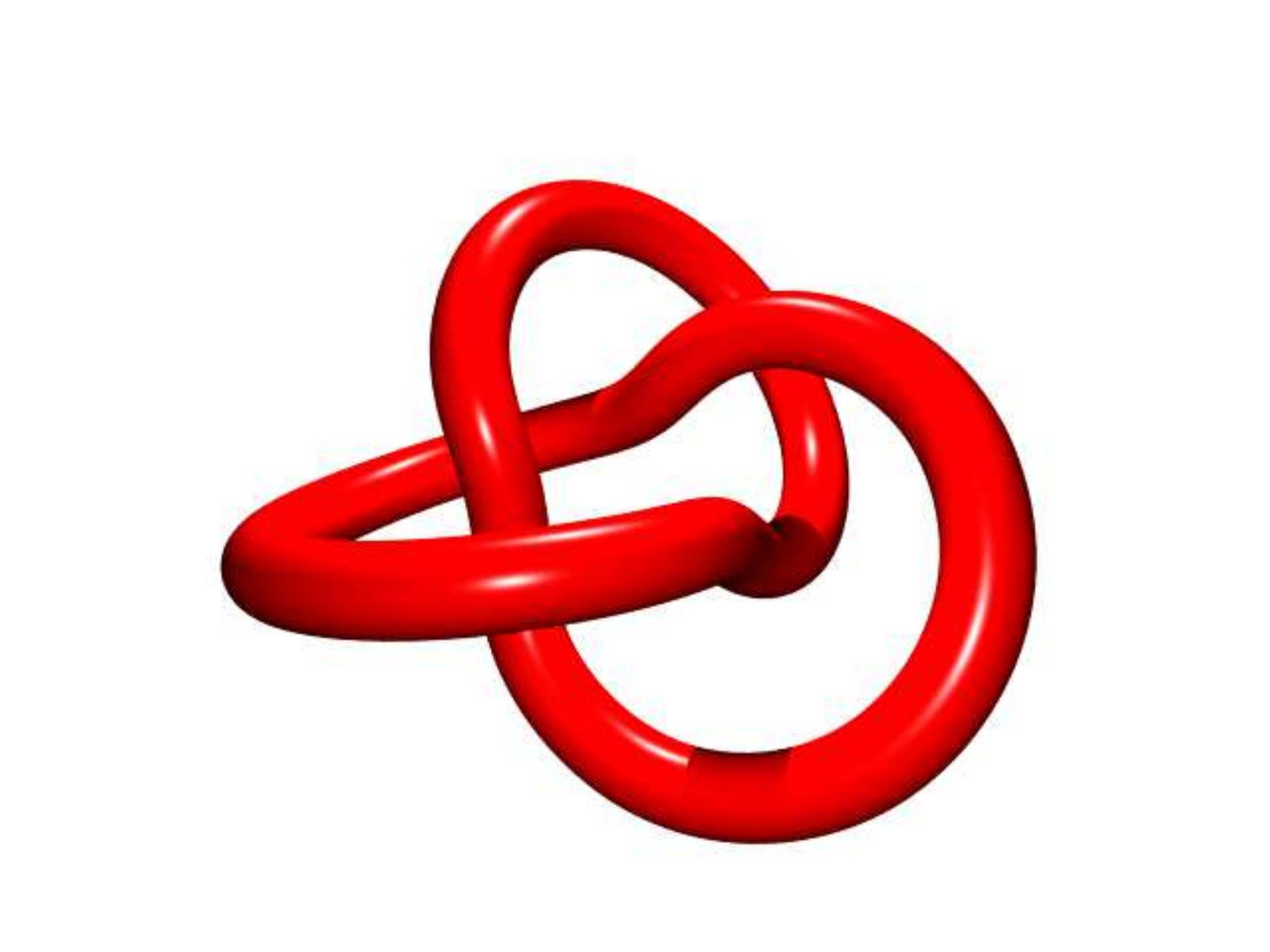} & \includegraphics[width=0.33\textwidth,keepaspectratio]{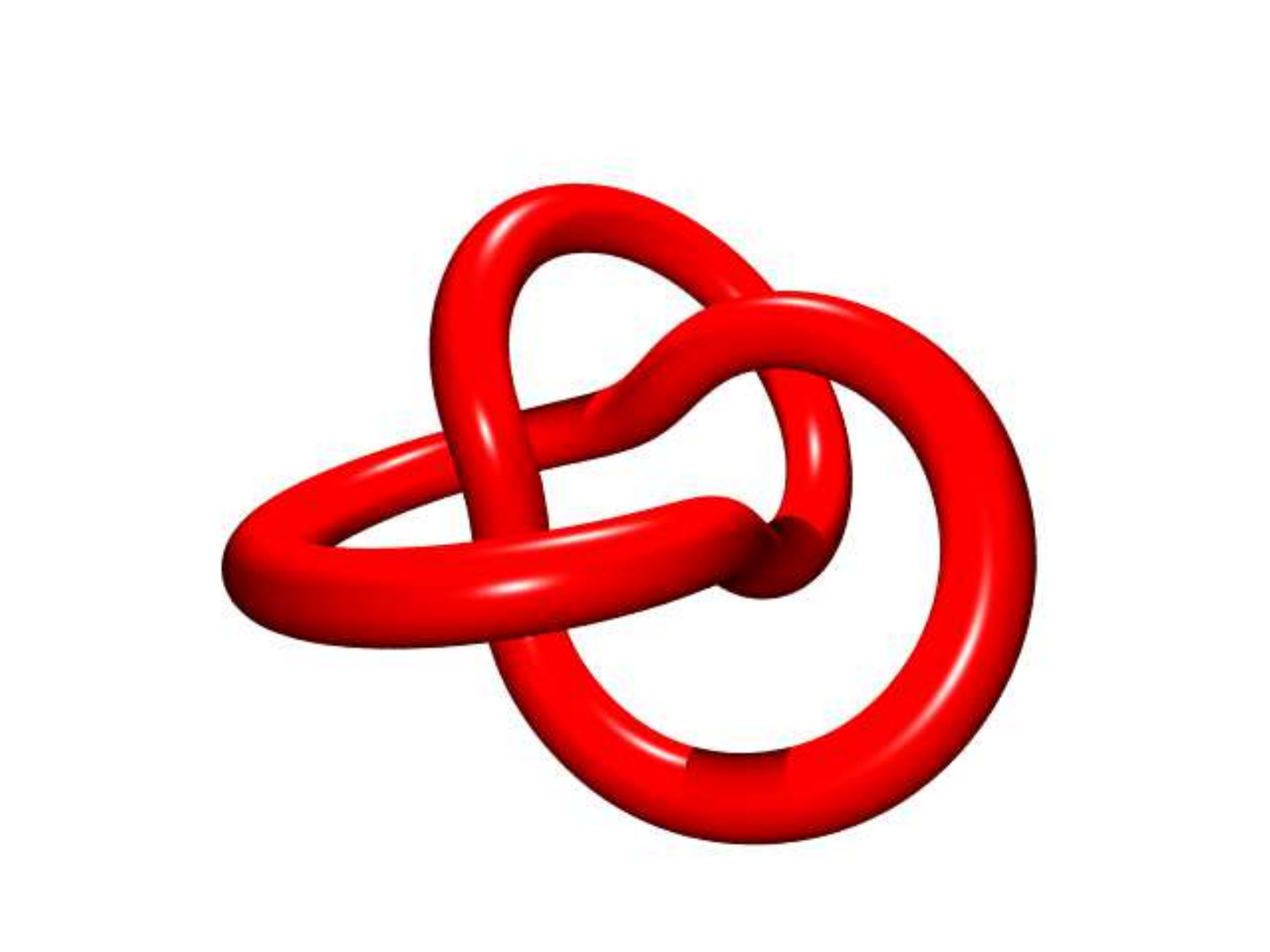}
\end{tabular}
\end{scriptsize}
\end{center}
\caption{A trefoil -- $p=50.0$}
\label{exmptrefoilp50}
\end{figure}

Afterwards there are only minor changes in shape, however, the $\E_p$ energy decreases a little bit until it reaches its minimum.
\begin{figure}[H]
\begin{center}
\begin{scriptsize}
\begin{tabular}{ccc}
0/8000 & 4000/8000 & 8000/8000 \\
$\Le(\gamma)\approx 34.55502$ & $\Le(\gamma)\approx 34.57705$ & $\Le(\gamma)\approx 34.59993$ \\
$\E_p(\gamma)\approx 29.57525$ & $\E_p(\gamma)\approx 29.57435$ & $\E_p(\gamma)\approx 29.57435$ \\
$\tau=0.0$ & $\tau=16.46821$ & $\tau=32.97004$ \\
\includegraphics[width=0.33\textwidth,keepaspectratio]{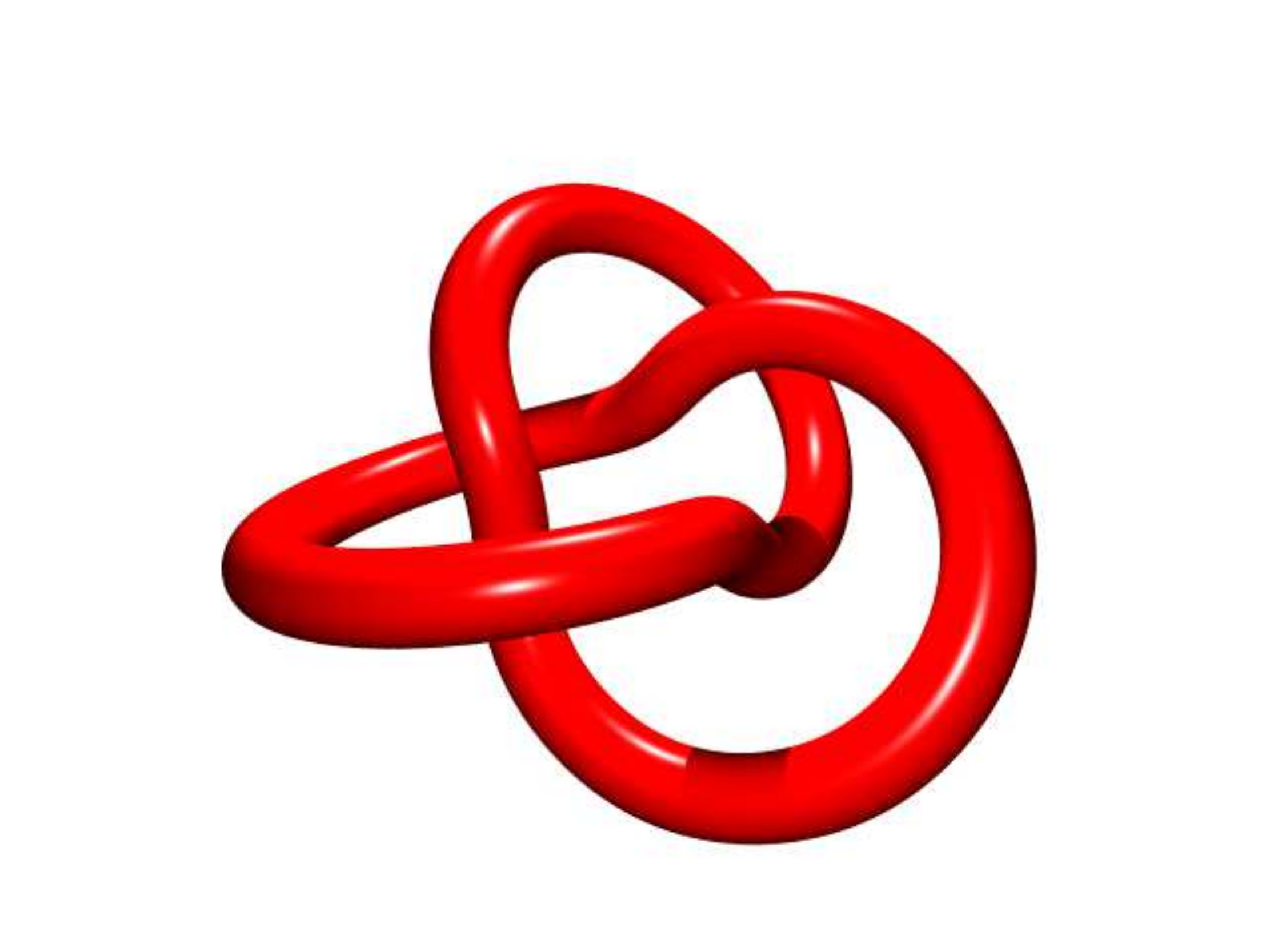} & \includegraphics[width=0.33\textwidth,keepaspectratio]{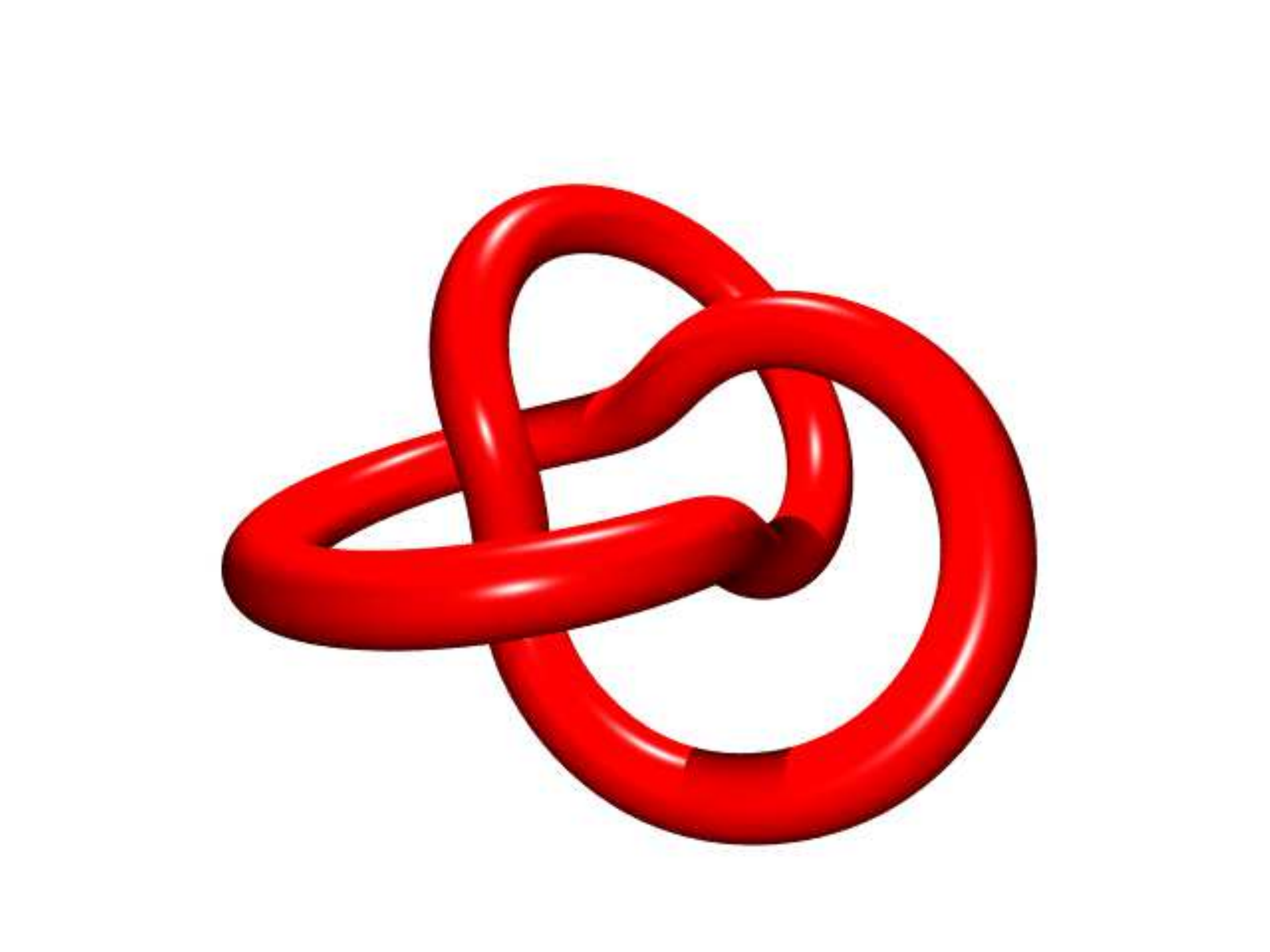} & \includegraphics[width=0.33\textwidth,keepaspectratio]{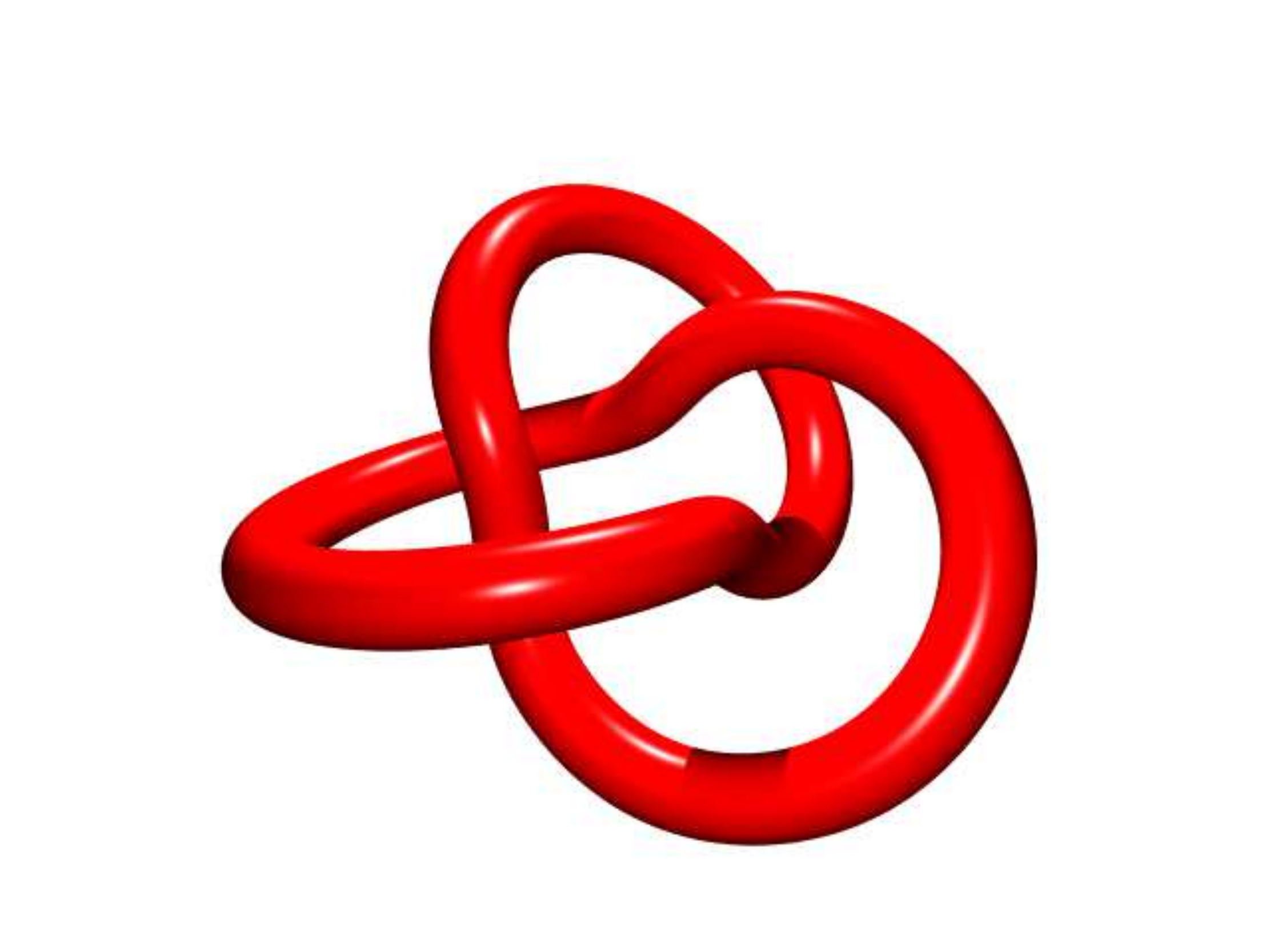}
\end{tabular}
\end{scriptsize}
\end{center}
\caption{A trefoil -- $p=50.0$ (continued)}
\end{figure}

Even running up to $1.000.000$ steps the energy level stays the same. Probably due to the redistribution there is a small increase of the length.

Observe that this is only the minimum when using redistribution. Otherwise, after $200.000$ steps the energy goes down to a value of $29.57199$. Even if we continue and allow larger time steps, by setting $\eps=0.1$, we reach a level of $29.57191$ after $1.000.000$ steps and after $800.000$ steps more we reach $29.57190$. However, the energy are not completely stable at this point. Nevertheless the changes in shape are far beyond visibility. It is interesting, that if we start with this minimal configuration using the flow with redistribution, except for the initial configuration, the energy increases until the value $29.57435$ is reached again.

\newpage
Here comes an example of a deformed trefoil knot. For $p=3$ using the flow with redistribution the knot class is abandoned and the final configuration is the circle once more.
\begin{figure}[H]
\begin{center}
\begin{scriptsize}
\begin{tabular}{cc}
0/30000 & 300/30000 \\
$\Le(\gamma)\approx 2.52383$ & $\Le(\gamma)\approx 1.88497$ \\
$\E_p(\gamma)\approx 28.04599$ & $\E_p(\gamma)\approx 20.96723$ \\
$\tau=0.0$ & $\tau=0.00028$ \\
\includegraphics[width=0.4\textwidth,keepaspectratio]{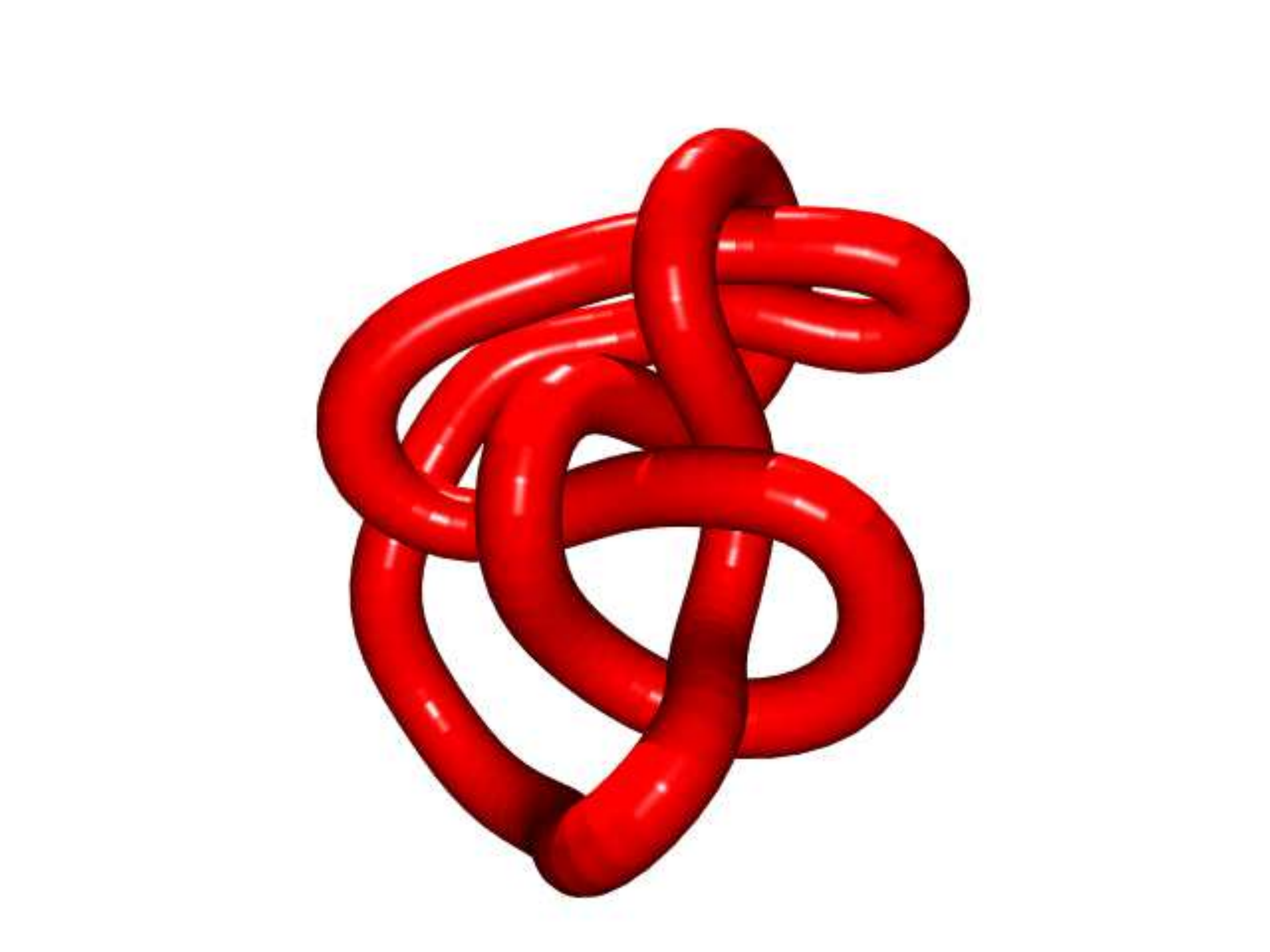} & \includegraphics[width=0.4\textwidth,keepaspectratio]{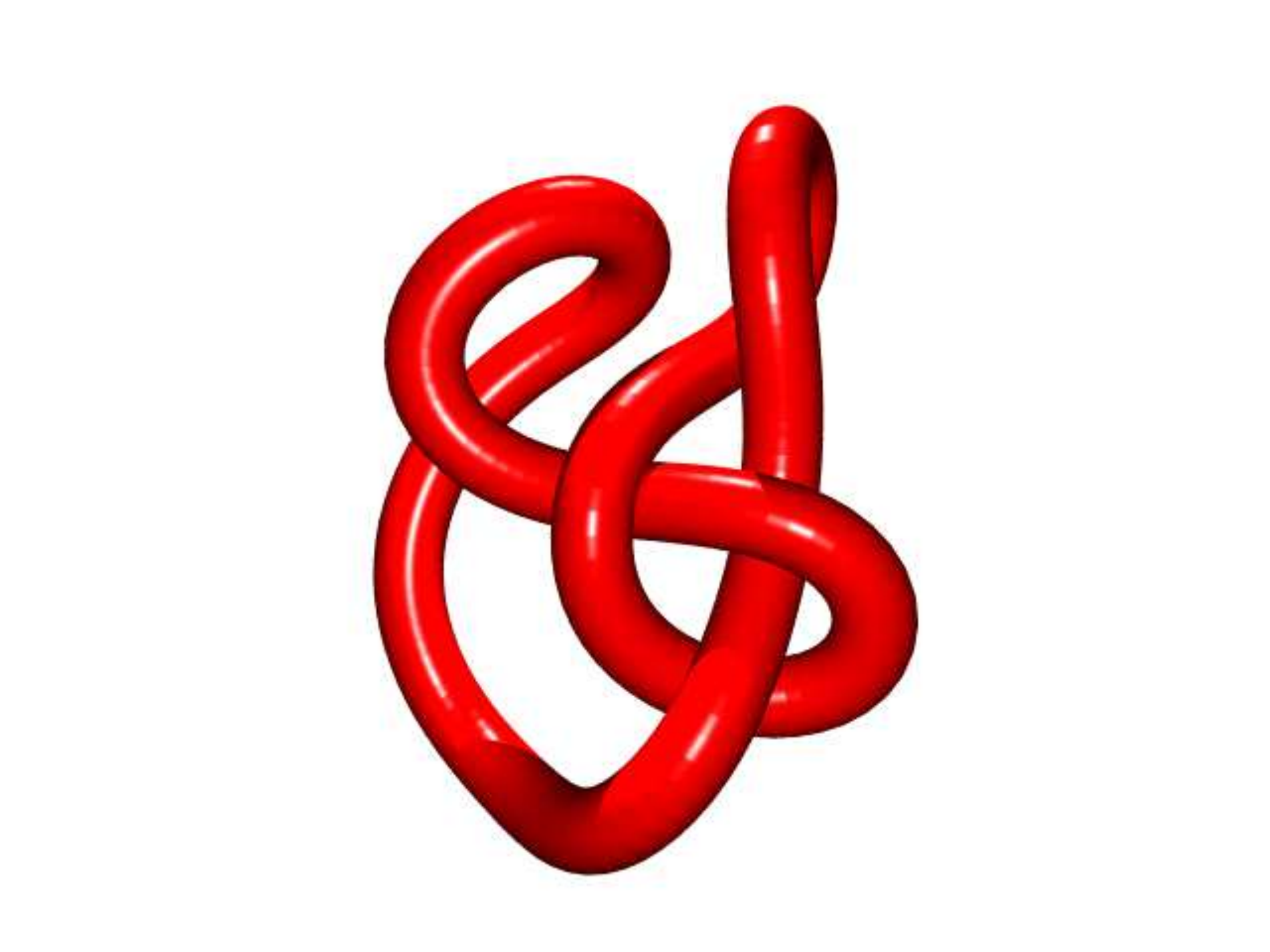} \\
1500/30000 & 4000/30000 \\
$\Le(\gamma)\approx 1.43415$ & $\Le(\gamma)\approx 1.28511$ \\
$\E_p(\gamma)\approx 17.73554$ & $\E_p(\gamma)\approx 16.61257$ \\
$\tau=0.00075$ & $\tau=0.001$ \\
\includegraphics[width=0.4\textwidth,keepaspectratio]{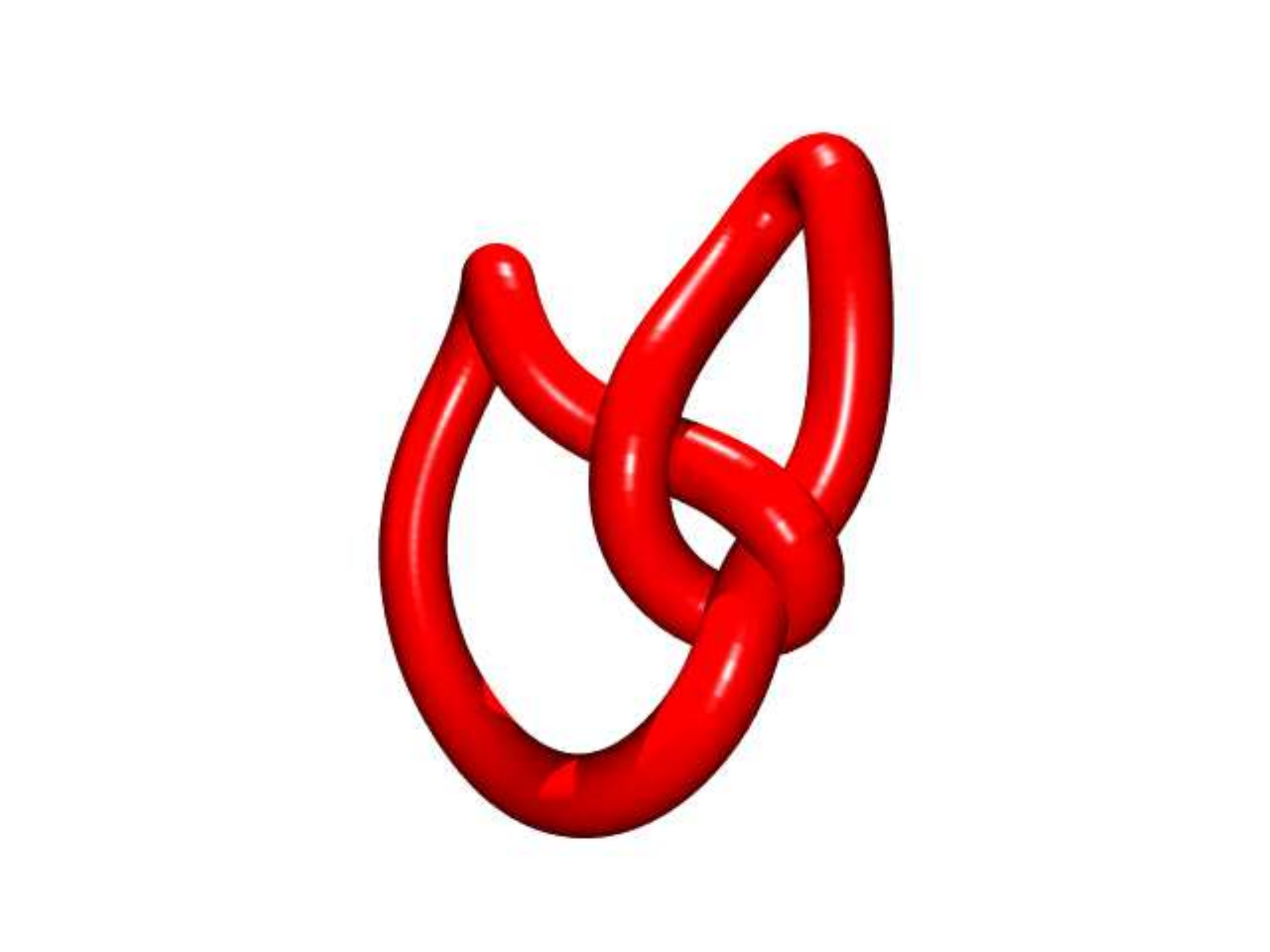} & \includegraphics[width=0.4\textwidth,keepaspectratio]{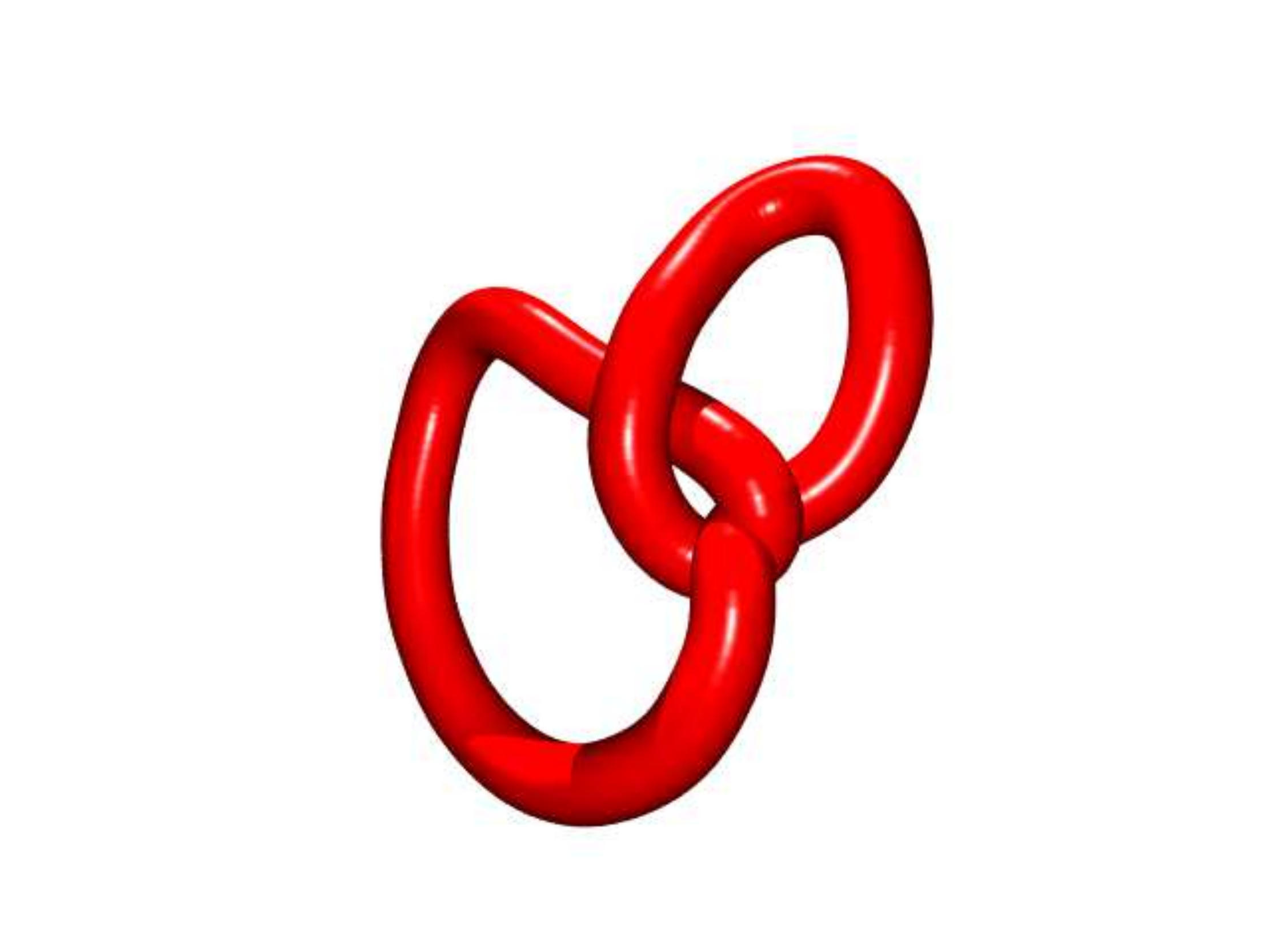} \\
5000/30000 & 30000/30000 \\
$\Le(\gamma)\approx 1.15102$ & $\Le(\gamma)\approx 0.85353$ \\
$\E_p(\gamma)\approx 12.34798$ & $\E_p(\gamma)\approx 6.28319$ \\
$\tau=0.00106$ & $\tau=0.00384$ \\
\includegraphics[width=0.4\textwidth,keepaspectratio]{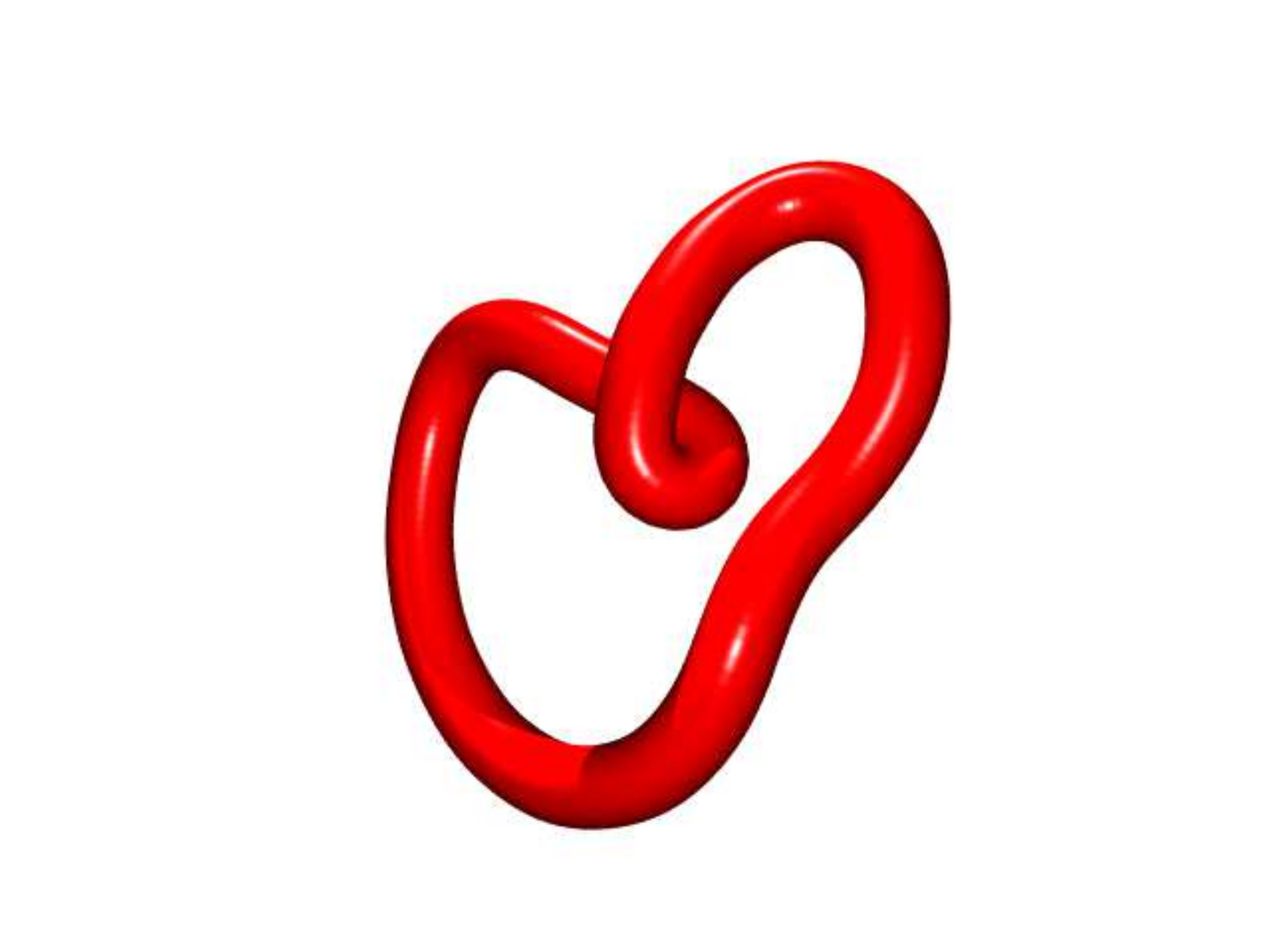} & \includegraphics[width=0.4\textwidth,keepaspectratio]{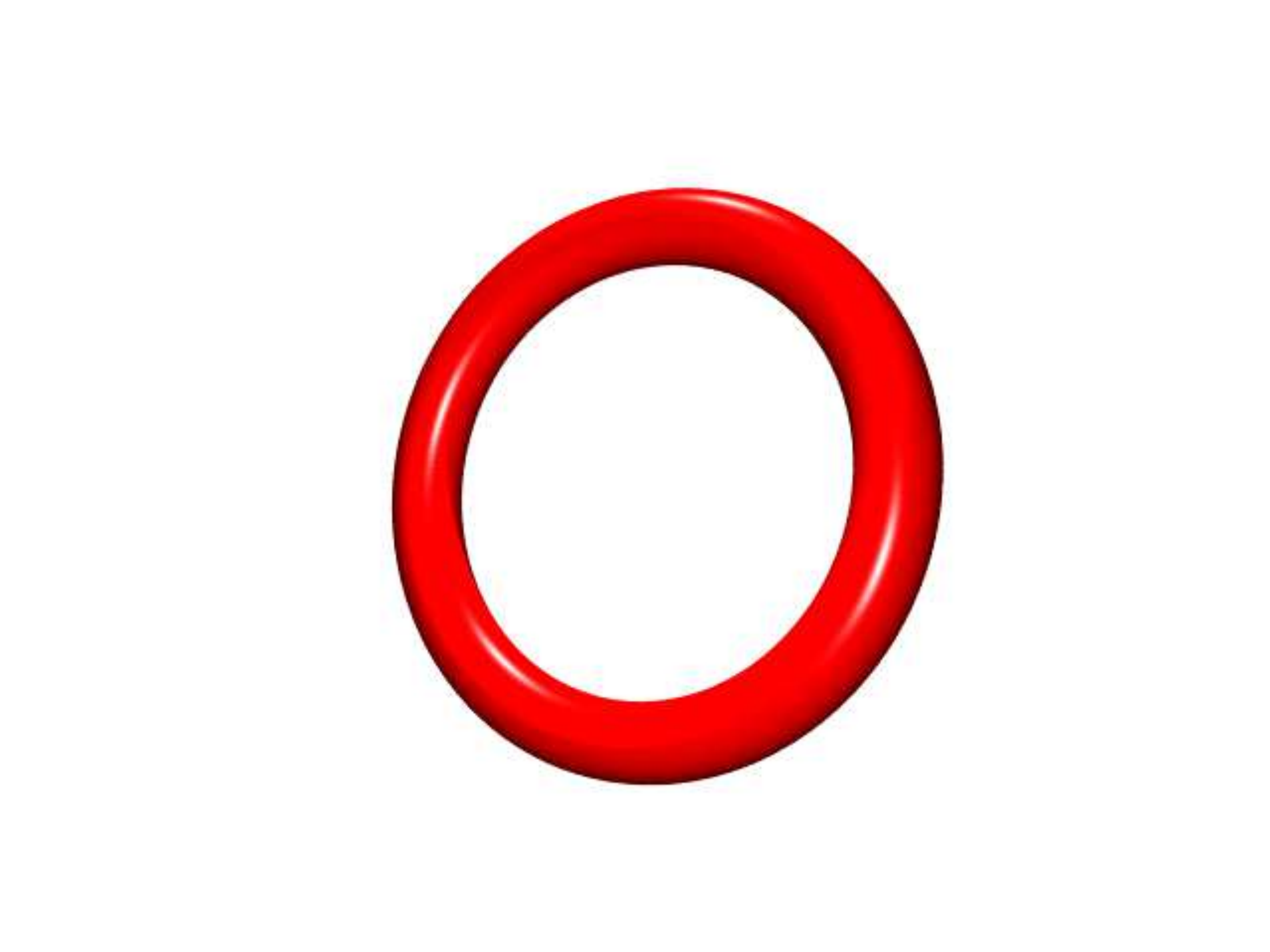}
\end{tabular}
\end{scriptsize}
\end{center}
\caption{A deformed trefoil -- $p=3.0$}
\end{figure}

\newpage
Again for $p=3.5$ the self-penetration is prevented and the knot untangles to the trefoil knot.
\begin{figure}[H]
\begin{center}
\begin{scriptsize}
\begin{tabular}{cc}
0/100000 & 100/100000 \\
$\Le(\gamma)\approx 2.52383$ & $\Le(\gamma)\approx 2.17273$ \\
$\E_p(\gamma)\approx 29.63985$ & $\E_p(\gamma)\approx 25.35132$ \\
$\tau=0.0$ & $\tau=0.00011$ \\
\includegraphics[width=0.4\textwidth,keepaspectratio]{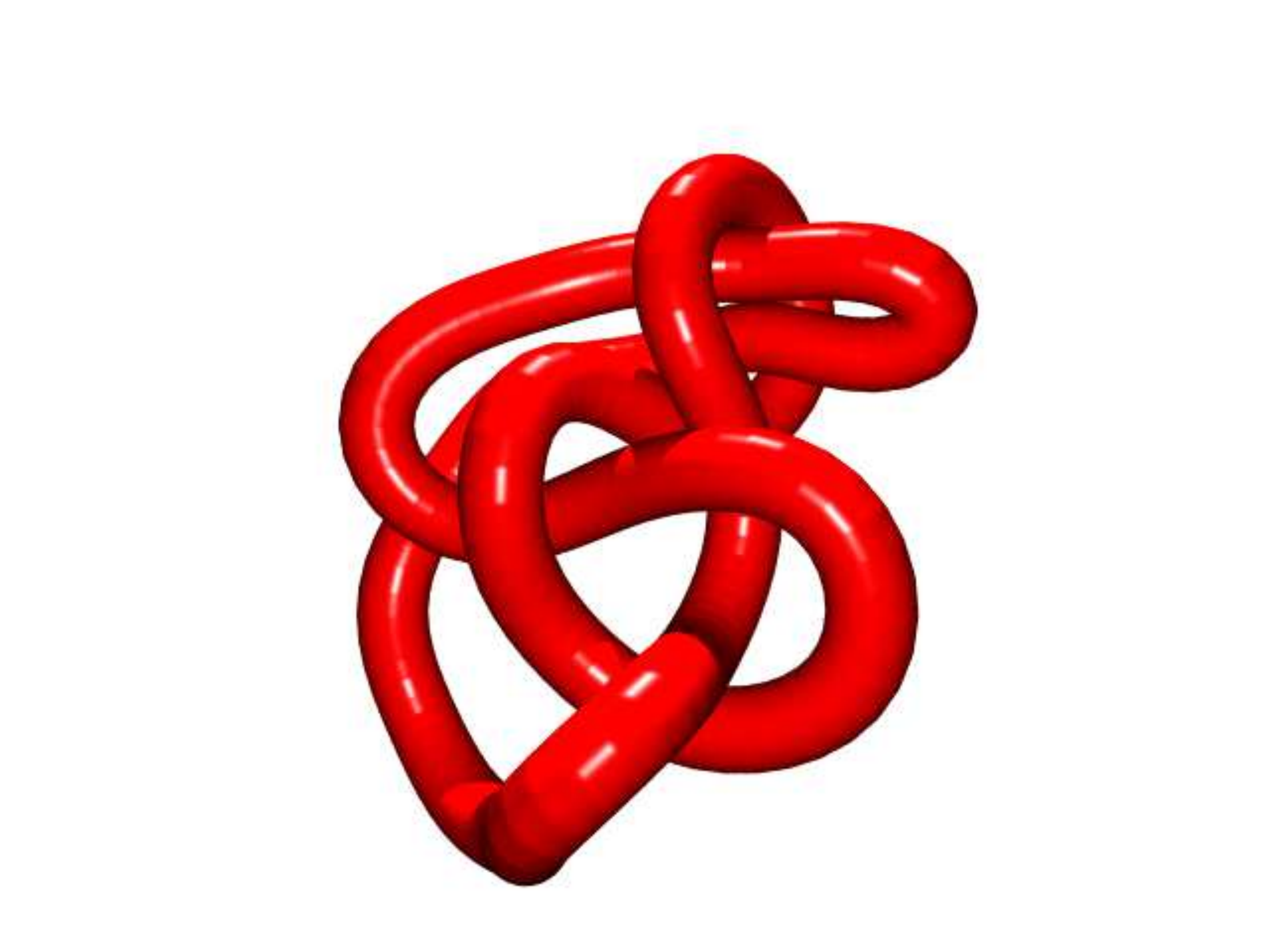} & \includegraphics[width=0.4\textwidth,keepaspectratio]{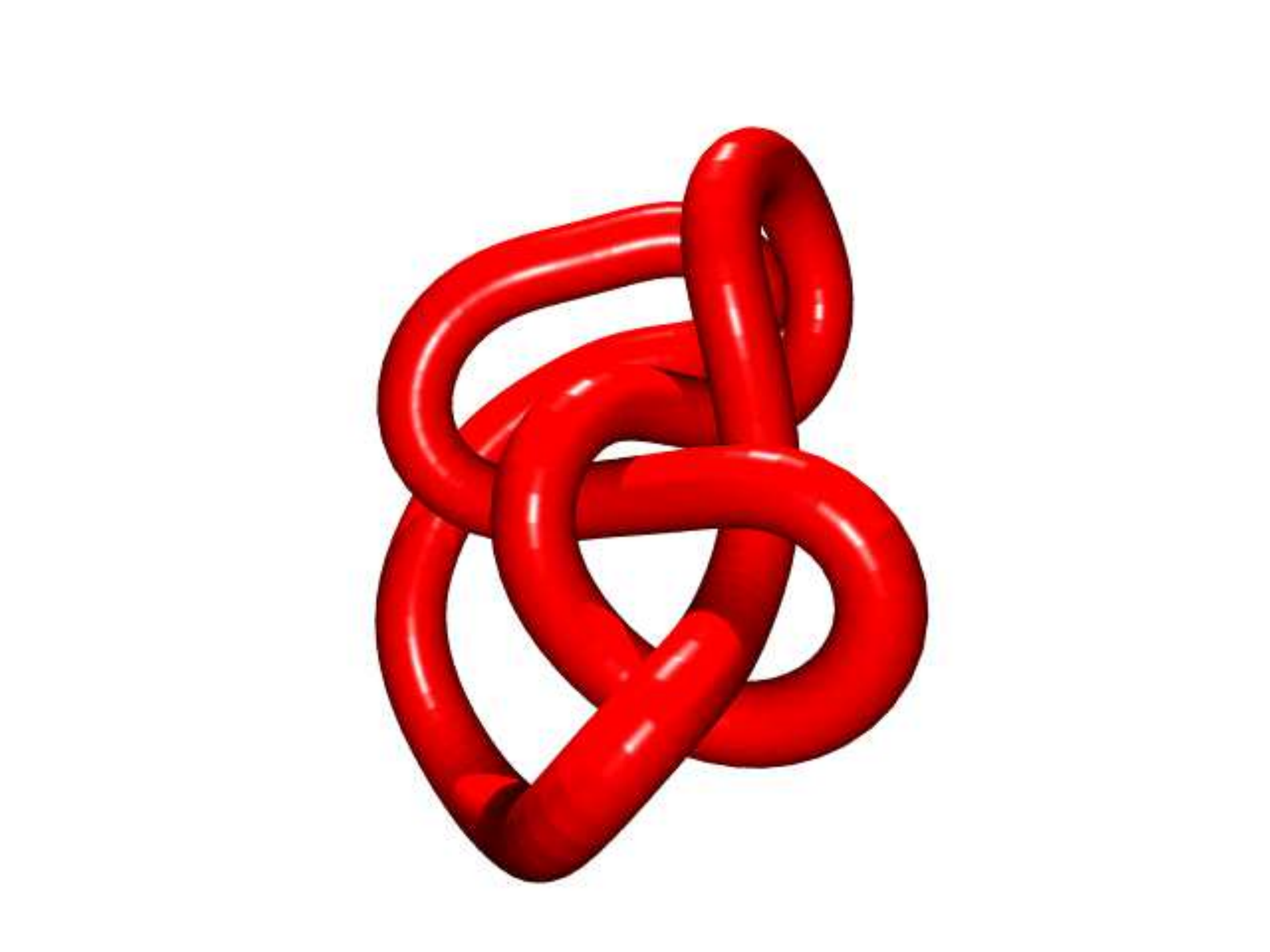} \\
500/100000 & 1000/100000 \\
$\Le(\gamma)\approx 1.80156$ & $\Le(\gamma)\approx 1.60106$ \\
$\E_p(\gamma)\approx 21.37764$ & $\E_p(\gamma)\approx 19.76579$ \\
$\tau=0.00039$ & $\tau=0.00063$ \\
\includegraphics[width=0.4\textwidth,keepaspectratio]{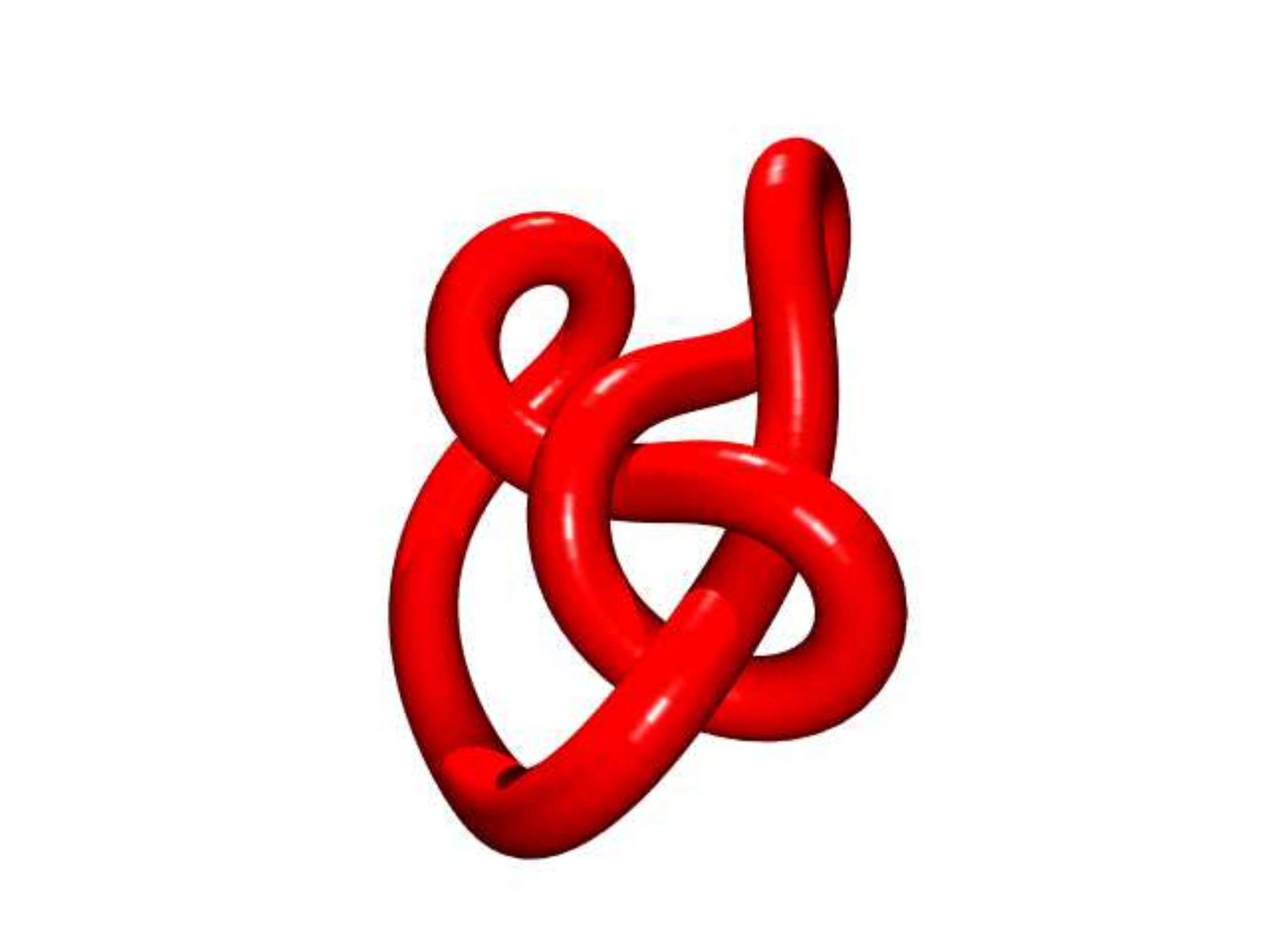} & \includegraphics[width=0.4\textwidth,keepaspectratio]{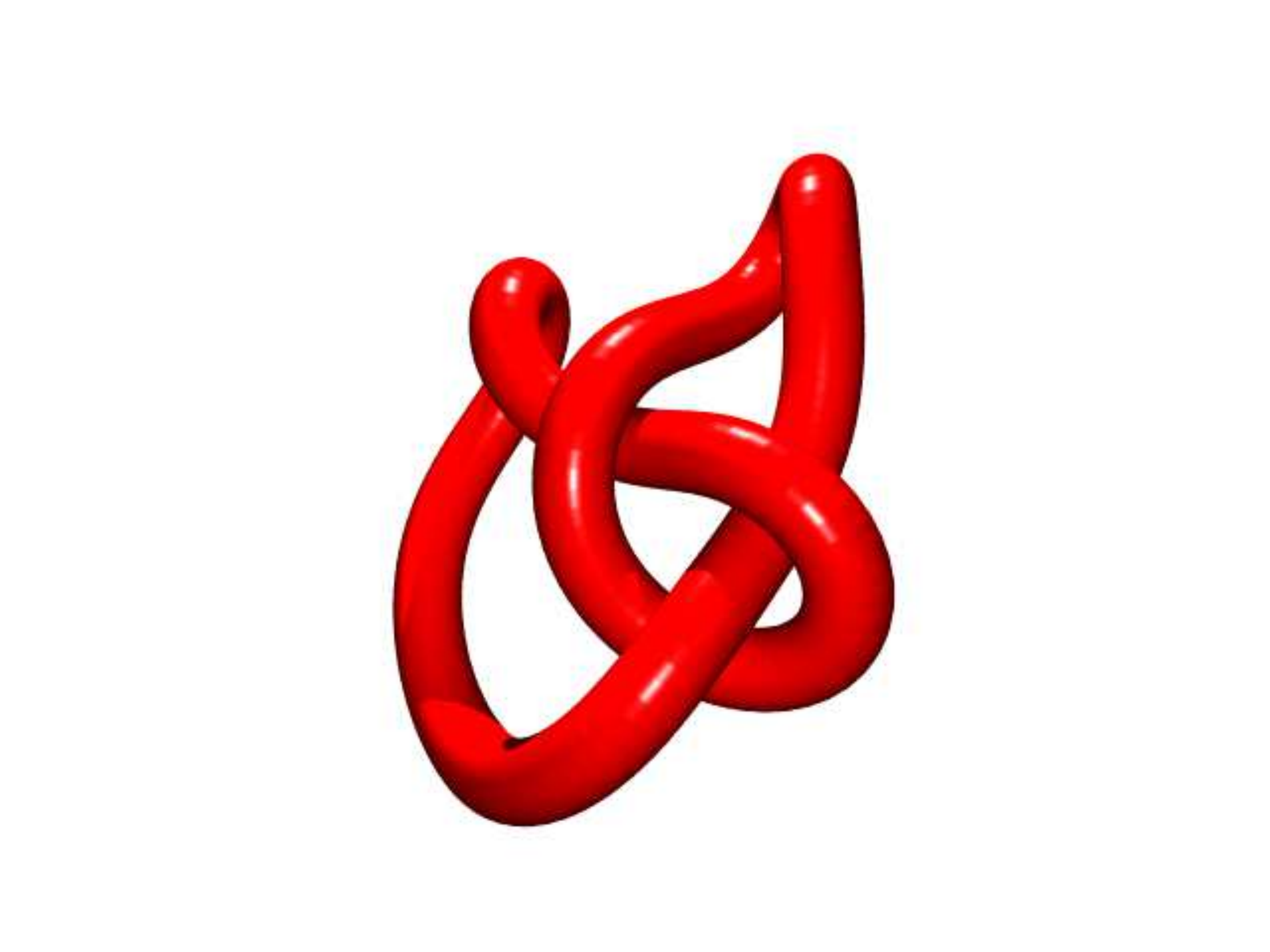} \\
5500/100000 & 100000/100000 \\
$\Le(\gamma)\approx 1.43246$ & $\Le(\gamma)\approx 1.46025$ \\
$\E_p(\gamma)\approx 17.13617$ & $\E_p(\gamma)\approx 17.0783$ \\
$\tau=0.00233$ & $\tau=0.0446$ \\
\includegraphics[width=0.4\textwidth,keepaspectratio]{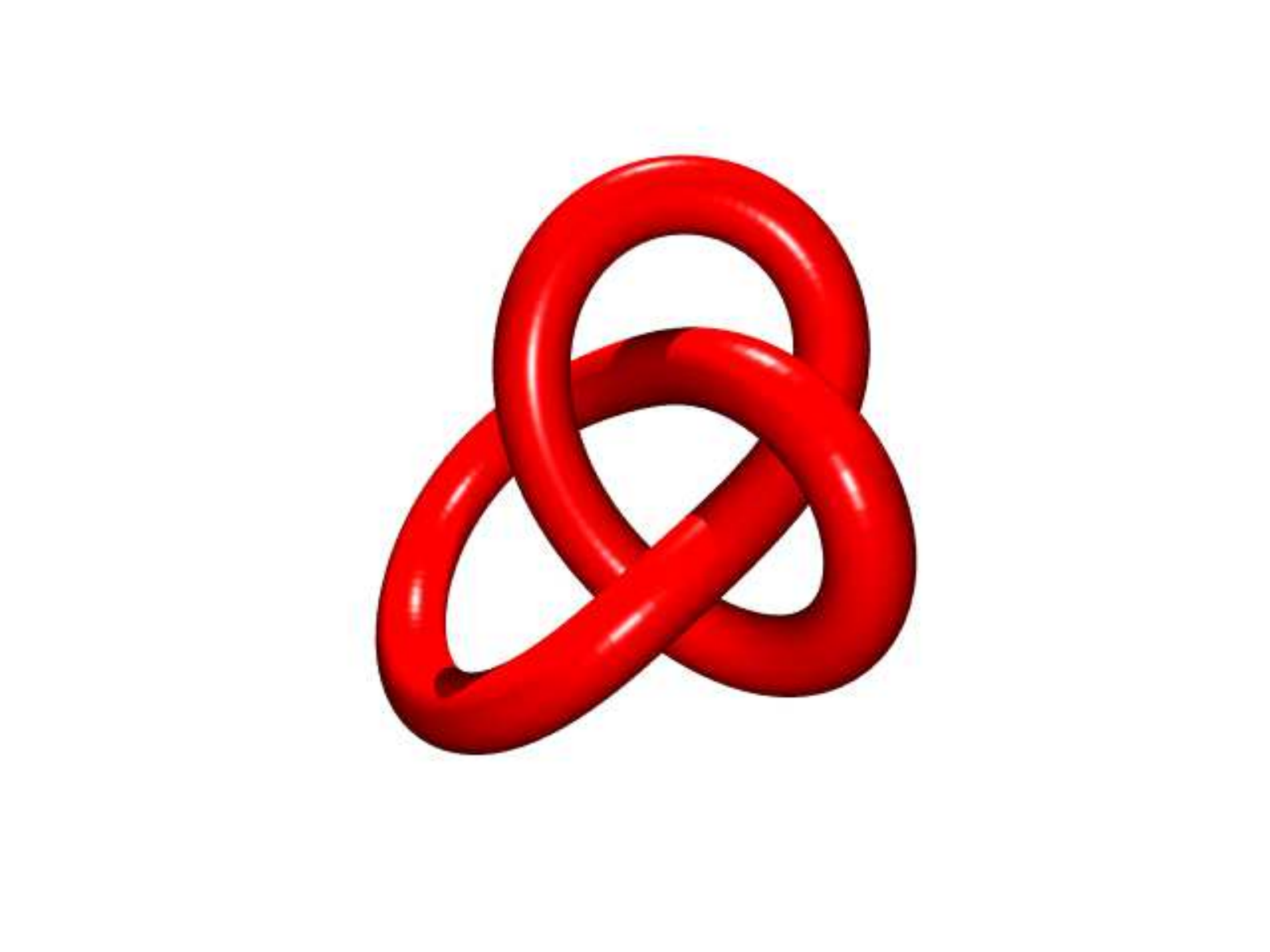} & \includegraphics[width=0.4\textwidth,keepaspectratio]{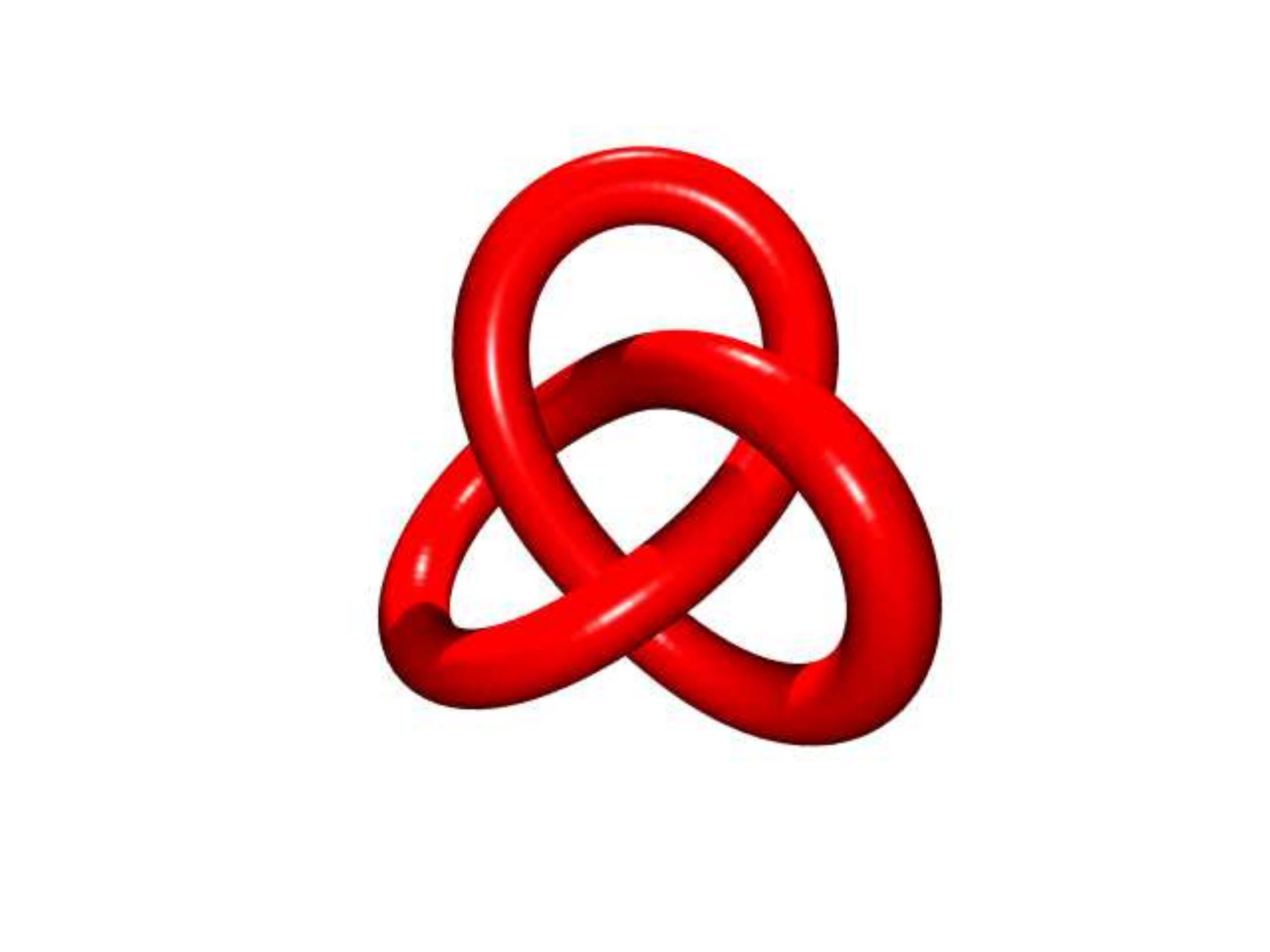}
\end{tabular}
\end{scriptsize}
\end{center}
\caption{A deformed trefoil -- $p=3.5$}
\end{figure}

The same is true for $p=50$
\begin{figure}[H]
\begin{center}
\begin{scriptsize}
\begin{tabular}{cc}
0/50000 & 100/50000 \\
$\Le(\gamma)\approx 2.52383$ & $\Le(\gamma)\approx 2.26165$ \\
$\E_p(\gamma)\approx 126.45736$ & $\E_p(\gamma)\approx 57.34568$ \\
$\tau=0.0$ & $\tau=2e-05$ \\
\includegraphics[width=0.4\textwidth,keepaspectratio]{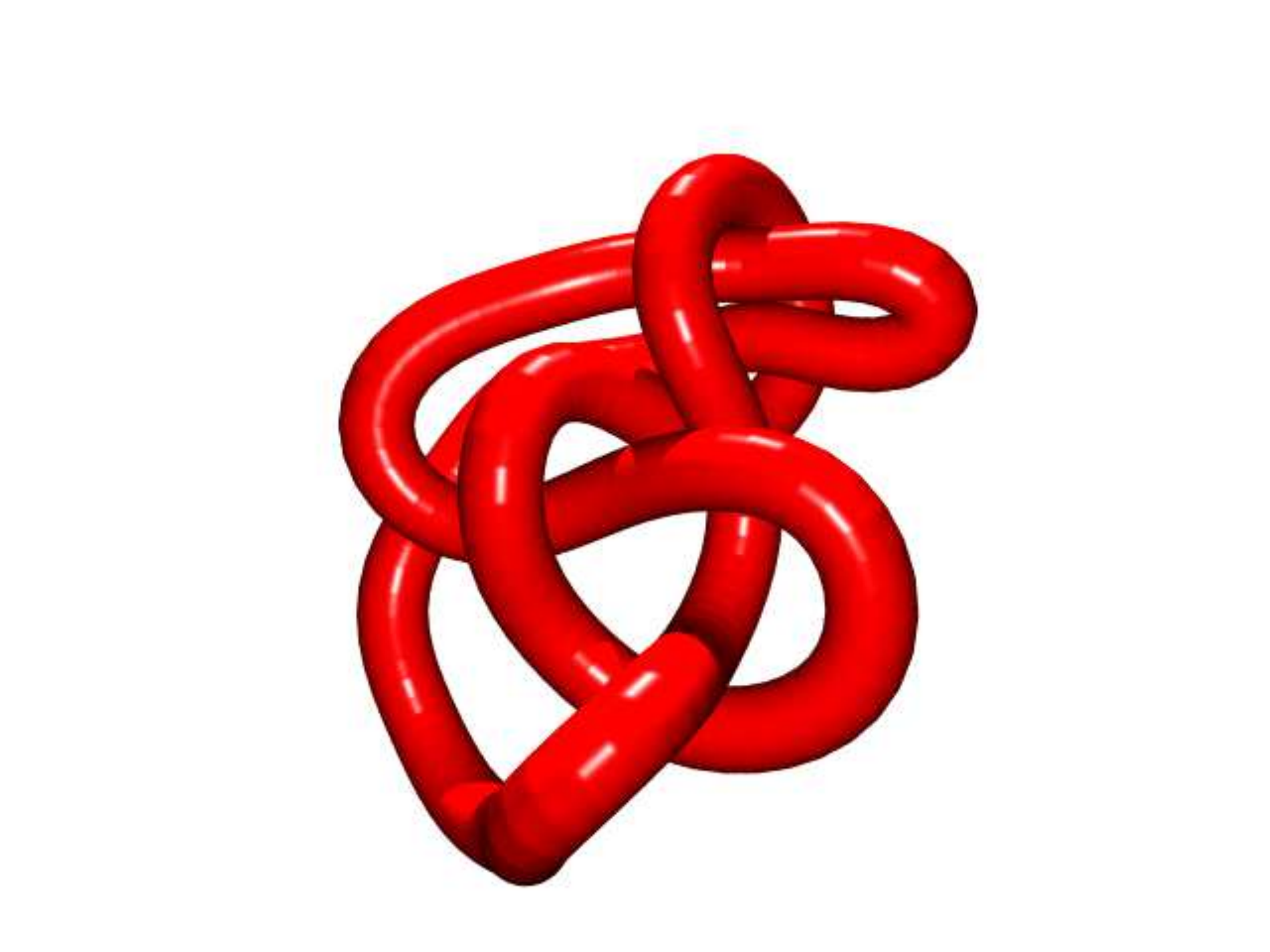} & \includegraphics[width=0.4\textwidth,keepaspectratio]{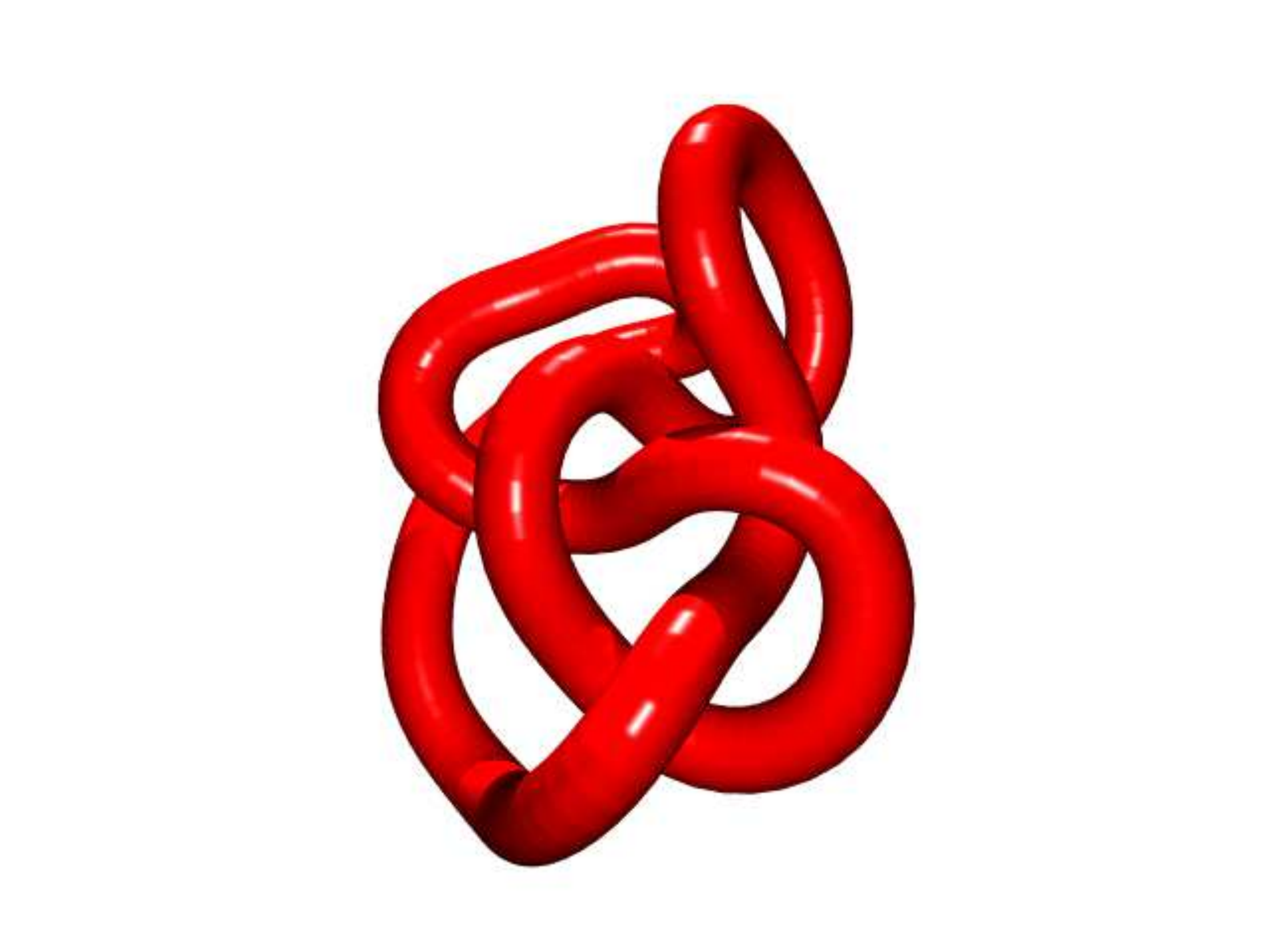} \\
300/50000 & 600/50000 \\
$\Le(\gamma)\approx 2.03835$ & $\Le(\gamma)\approx 1.82524$ \\
$\E_p(\gamma)\approx 45.96166$ & $\E_p(\gamma)\approx 38.70333$ \\
$\tau=9e-05$ & $\tau=0.00018$ \\
\includegraphics[width=0.4\textwidth,keepaspectratio]{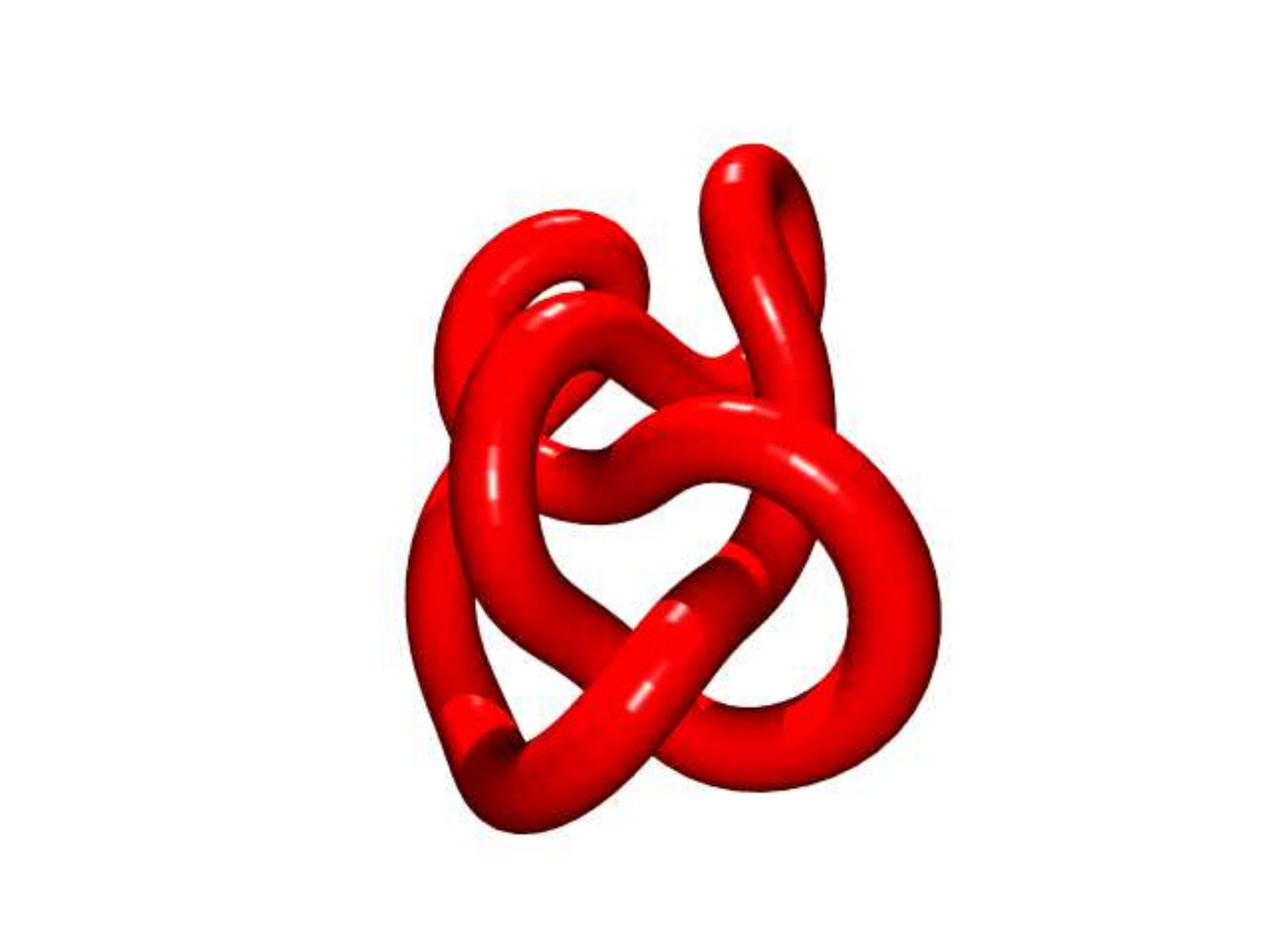} & \includegraphics[width=0.4\textwidth,keepaspectratio]{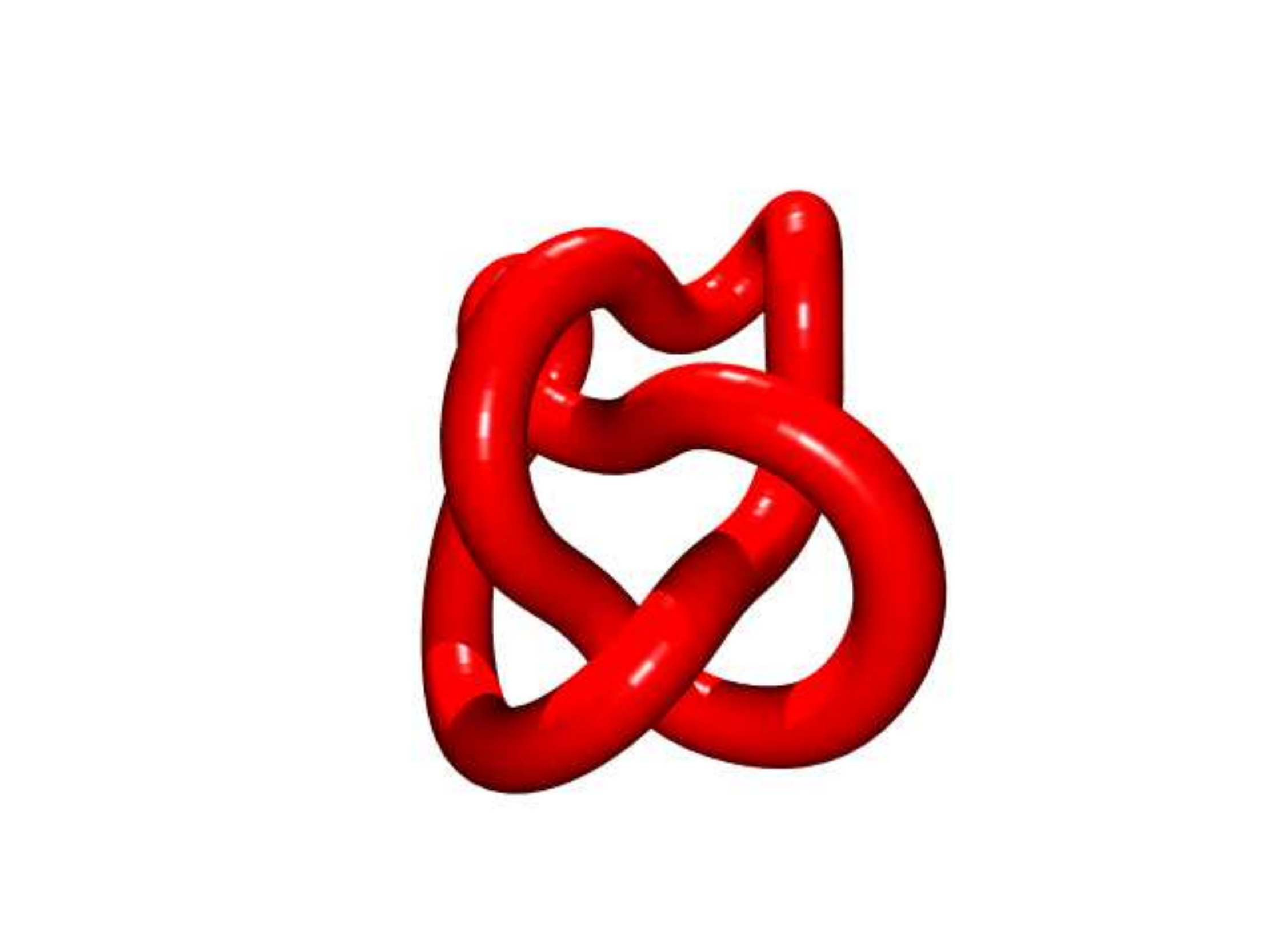} \\
1200/50000 & 10000/50000 \\
$\Le(\gamma)\approx 1.57742$ & $\Le(\gamma)\approx 1.55291$ \\
$\E_p(\gamma)\approx 30.4483$ & $\E_p(\gamma)\approx 29.57435$ \\
$\tau=0.00033$ & $\tau=0.00361$ \\
\includegraphics[width=0.4\textwidth,keepaspectratio]{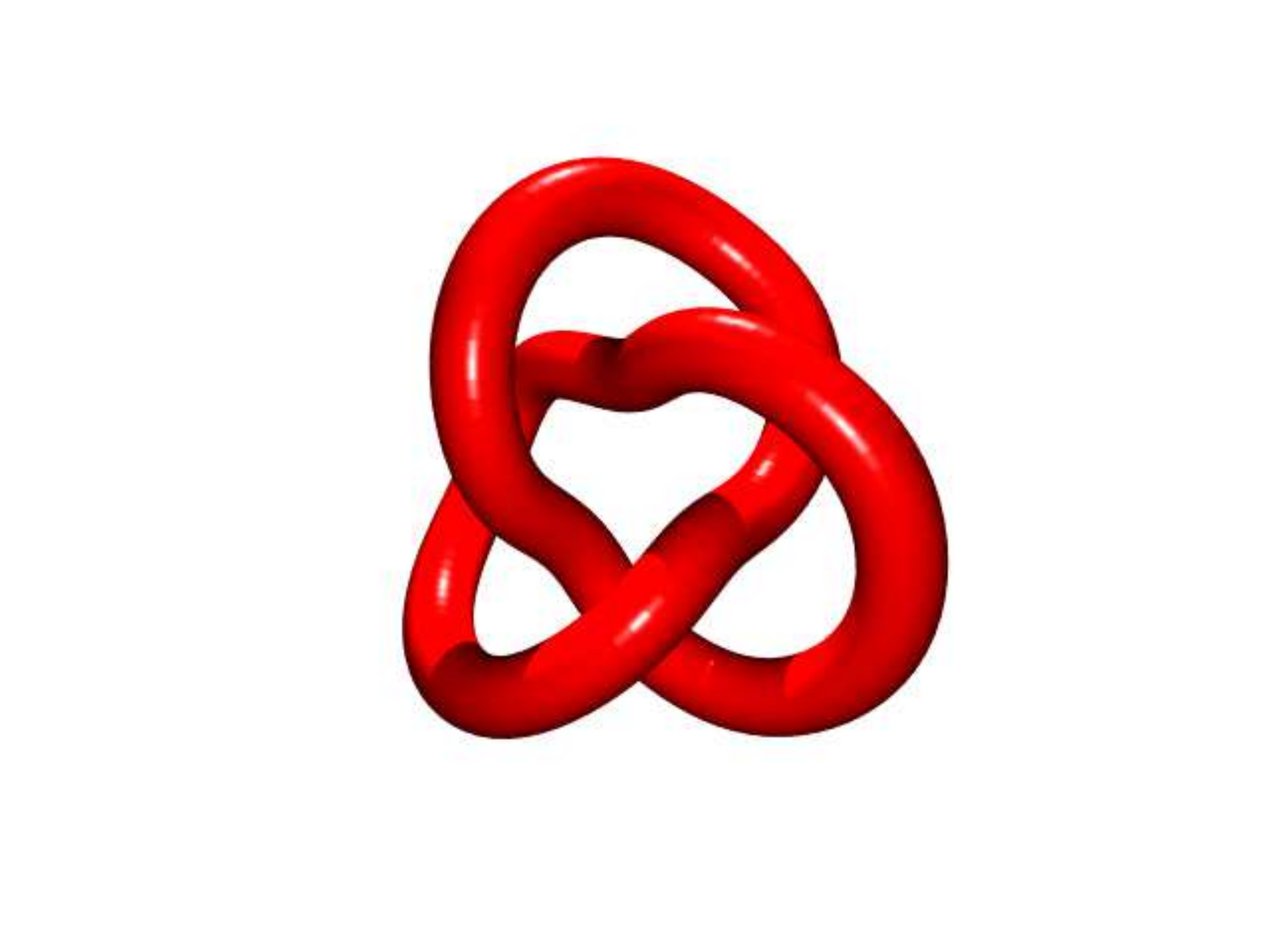} & \includegraphics[width=0.4\textwidth,keepaspectratio]{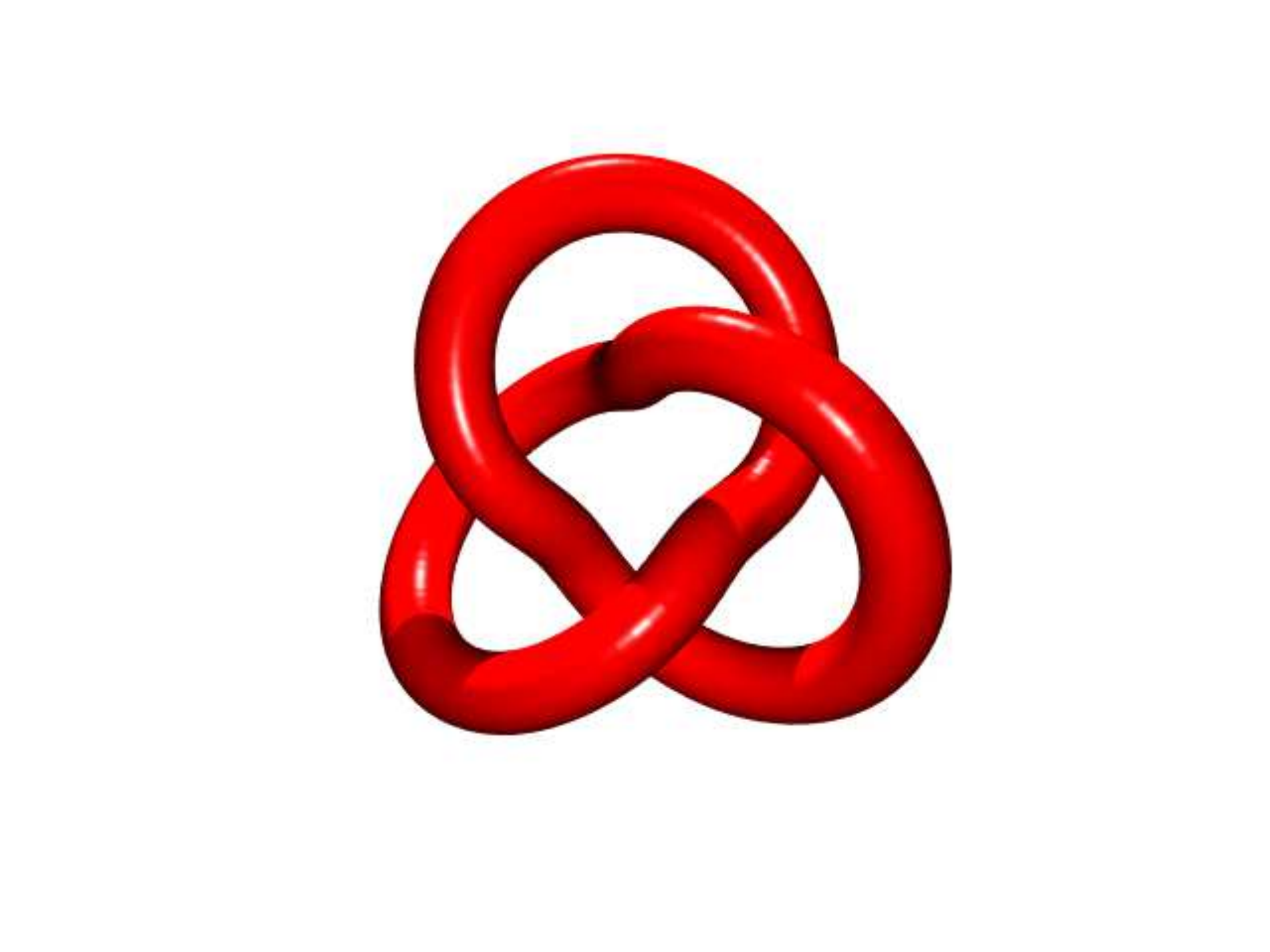}
\end{tabular}
\end{scriptsize}
\end{center}
\caption{A deformed trefoil -- $p=50.0$}
\end{figure}

The shapes of the last configurations of these flows and those for the trefoil we used first, are the same for $p=3.5$ and respectively for $p=50$. However, the shapes of the last configurations for $p=3.5$ and $p=50$ differ strongly.

Observe, that the flow for $p=3$ and for $p=3.5$ is not able to untangle the non-trivial unknot from above without self-penetration. However, if we choose $p=4$ instead the untanglement succeeds as well.

\section{Figure-eight knots ($4_1$)} \label{knot41}
We proceed with the knot, which has crossing number $4$. For $p=3$ the knot class is abandoned and we end up with a circle
\begin{figure}[H]
\begin{center}
\begin{scriptsize}
\begin{tabular}{ccc}
0/300000 & 5000/300000 & 25000/300000 \\
$\Le(\gamma)\approx 38.03944$ & $\Le(\gamma)\approx 34.49324$ & $\Le(\gamma)\approx 33.72952$ \\
$\E_p(\gamma)\approx 19.31638$ & $\E_p(\gamma)\approx 18.40864$ & $\E_p(\gamma)\approx 17.58617$ \\
$\tau=0.0$ & $\tau=18.84871$ & $\tau=45.60539$ \\
\includegraphics[width=0.33\textwidth,keepaspectratio]{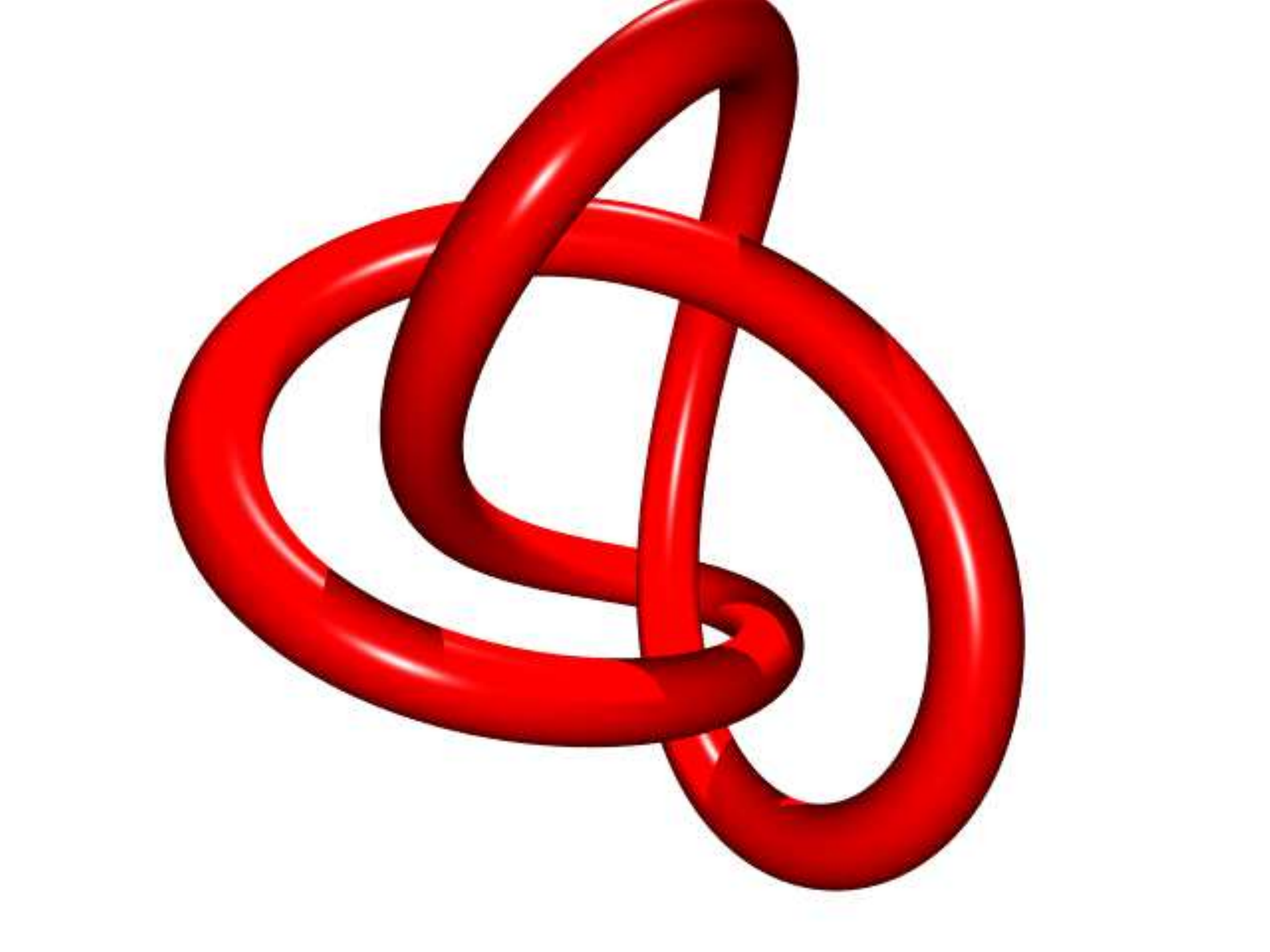} & \includegraphics[width=0.33\textwidth,keepaspectratio]{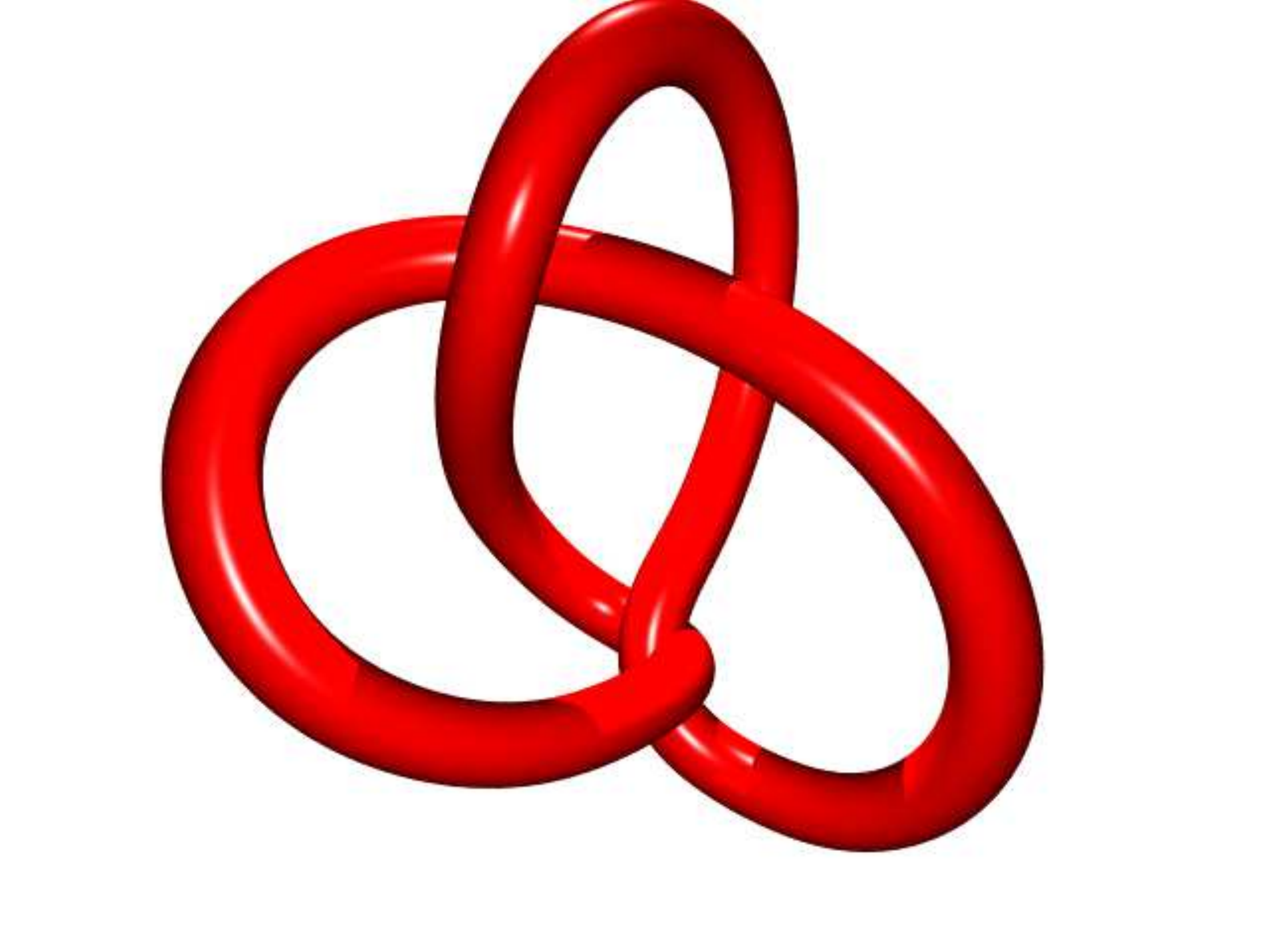} & \includegraphics[width=0.33\textwidth,keepaspectratio]{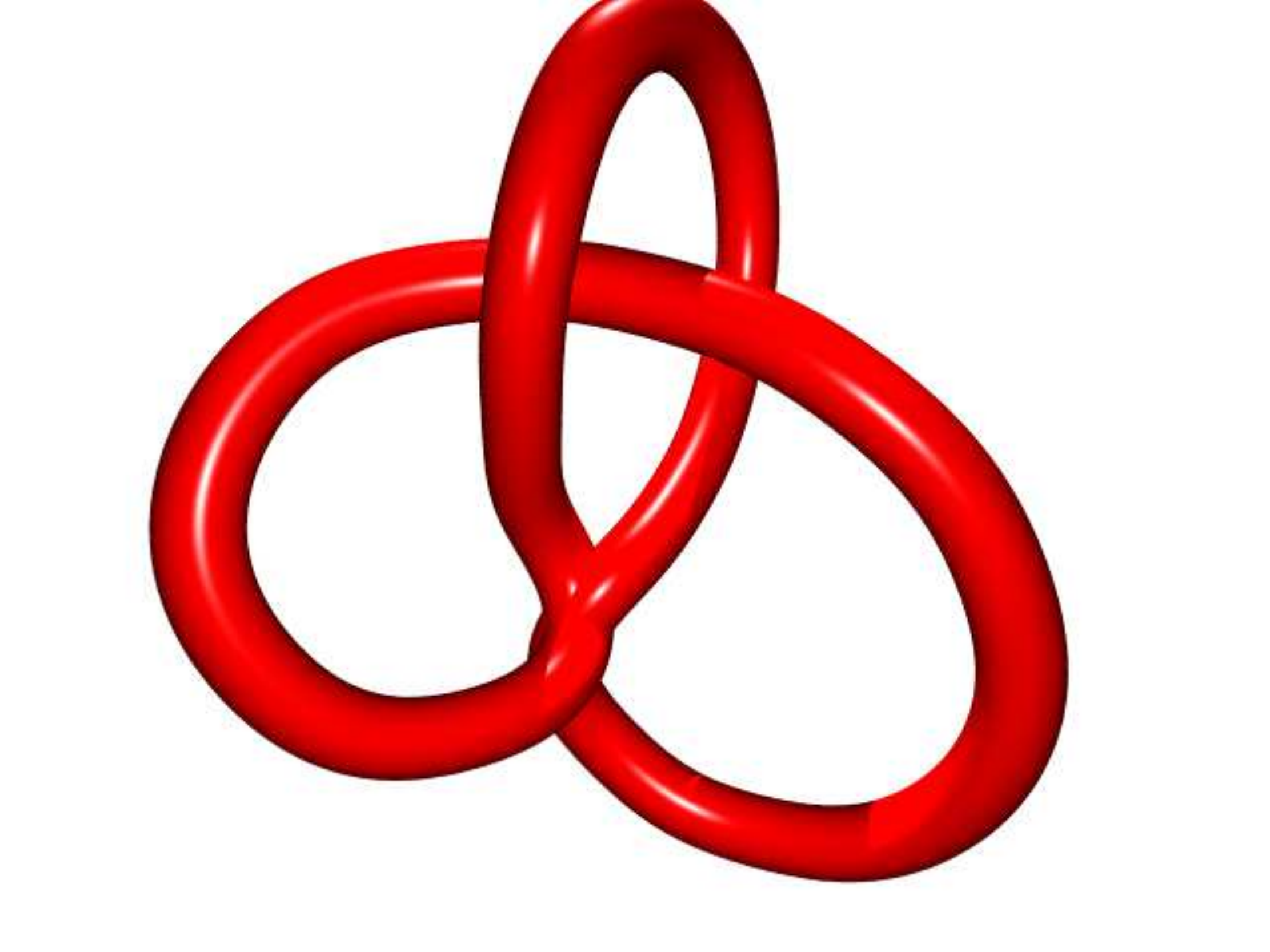} \\
100000/300000 & 126000/300000 & 300000/300000 \\
$\Le(\gamma)\approx 32.62229$ & $\Le(\gamma)\approx 27.78634$ & $\Le(\gamma)\approx 17.11188$ \\
$\E_p(\gamma)\approx 17.01334$ & $\E_p(\gamma)\approx 13.21349$ & $\E_p(\gamma)\approx 6.28319$ \\
$\tau=51.98823$ & $\tau=57.08792$ & $\tau=214.89181$ \\
\includegraphics[width=0.33\textwidth,keepaspectratio]{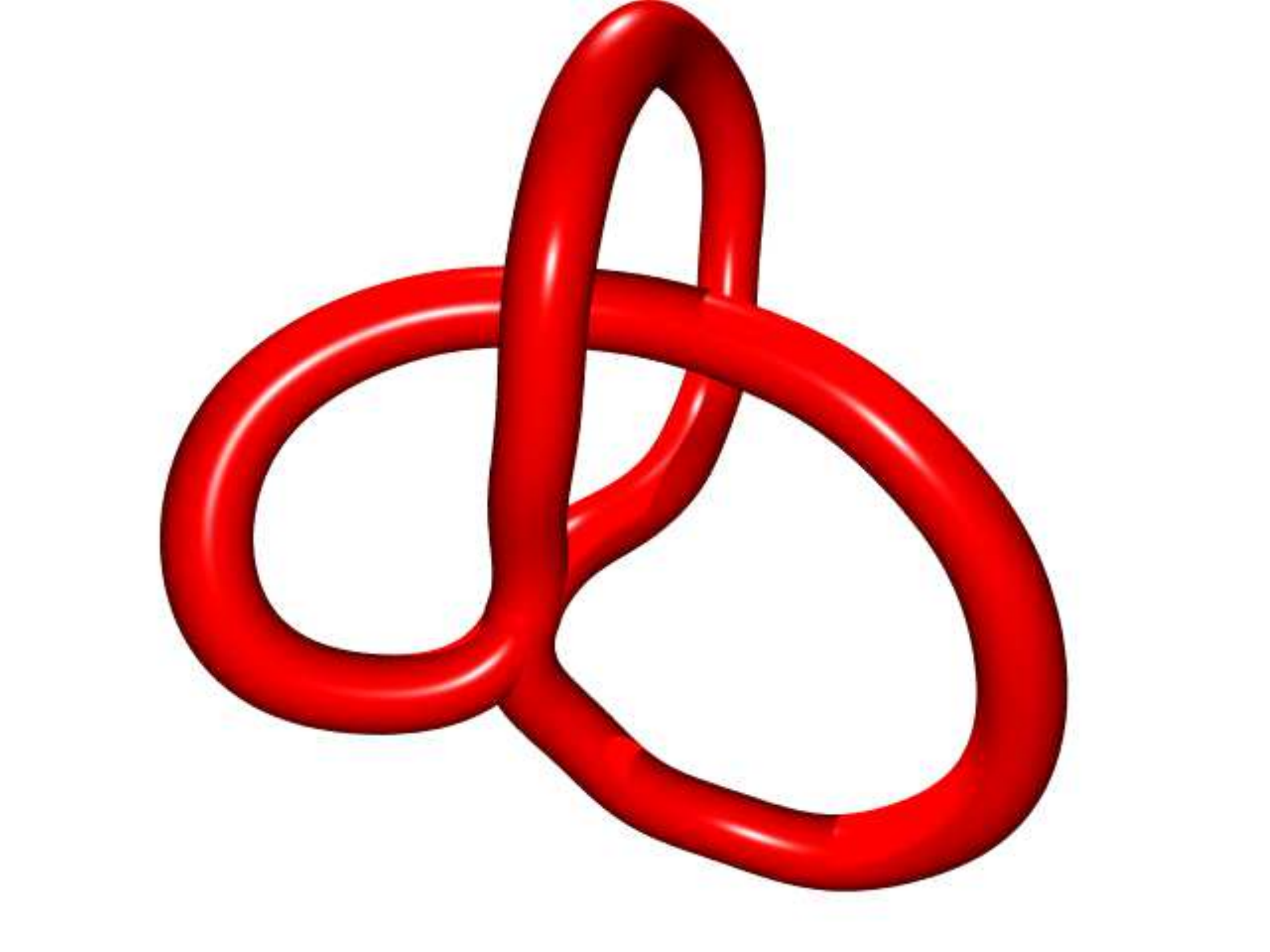} & \includegraphics[width=0.33\textwidth,keepaspectratio]{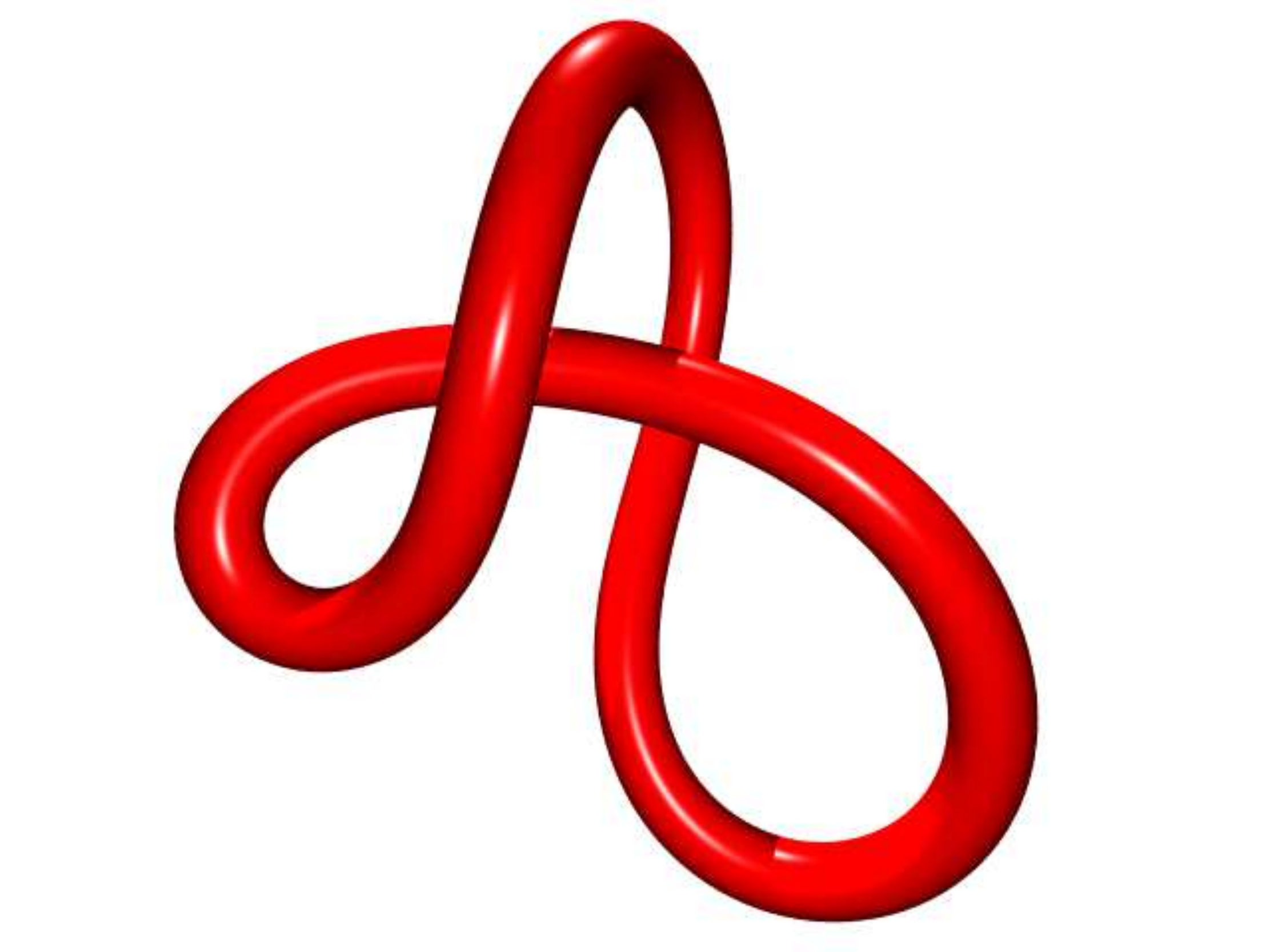} & \includegraphics[width=0.33\textwidth,keepaspectratio]{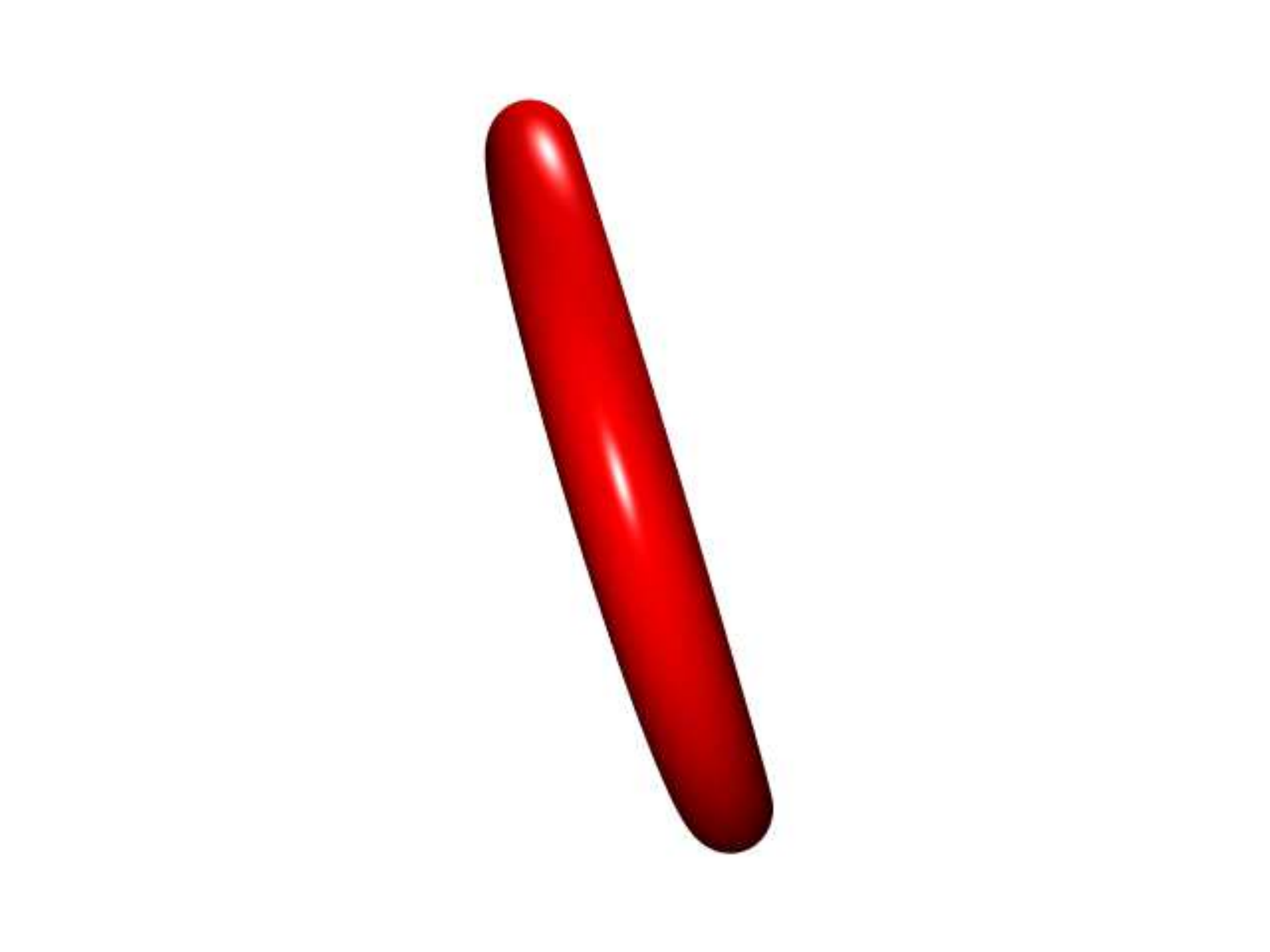}
\end{tabular}
\end{scriptsize}
\end{center}
\caption{A harmonic figure-eight -- $p=3.0$}
\end{figure}

Therefore, we consider the case $p=3.5$ next
\begin{figure}[H]
\begin{center}
\begin{scriptsize}
\begin{tabular}{cc}
0/300000 & 46000/300000 \\
$\Le(\gamma)\approx 38.03944$ & $\Le(\gamma)\approx 36.805$ \\
$\E_p(\gamma)\approx 20.1996$ & $\E_p(\gamma)\approx 19.72164$ \\
$\tau=0.0$ & $\tau=272.76018$ \\
\includegraphics[width=0.33\textwidth,keepaspectratio]{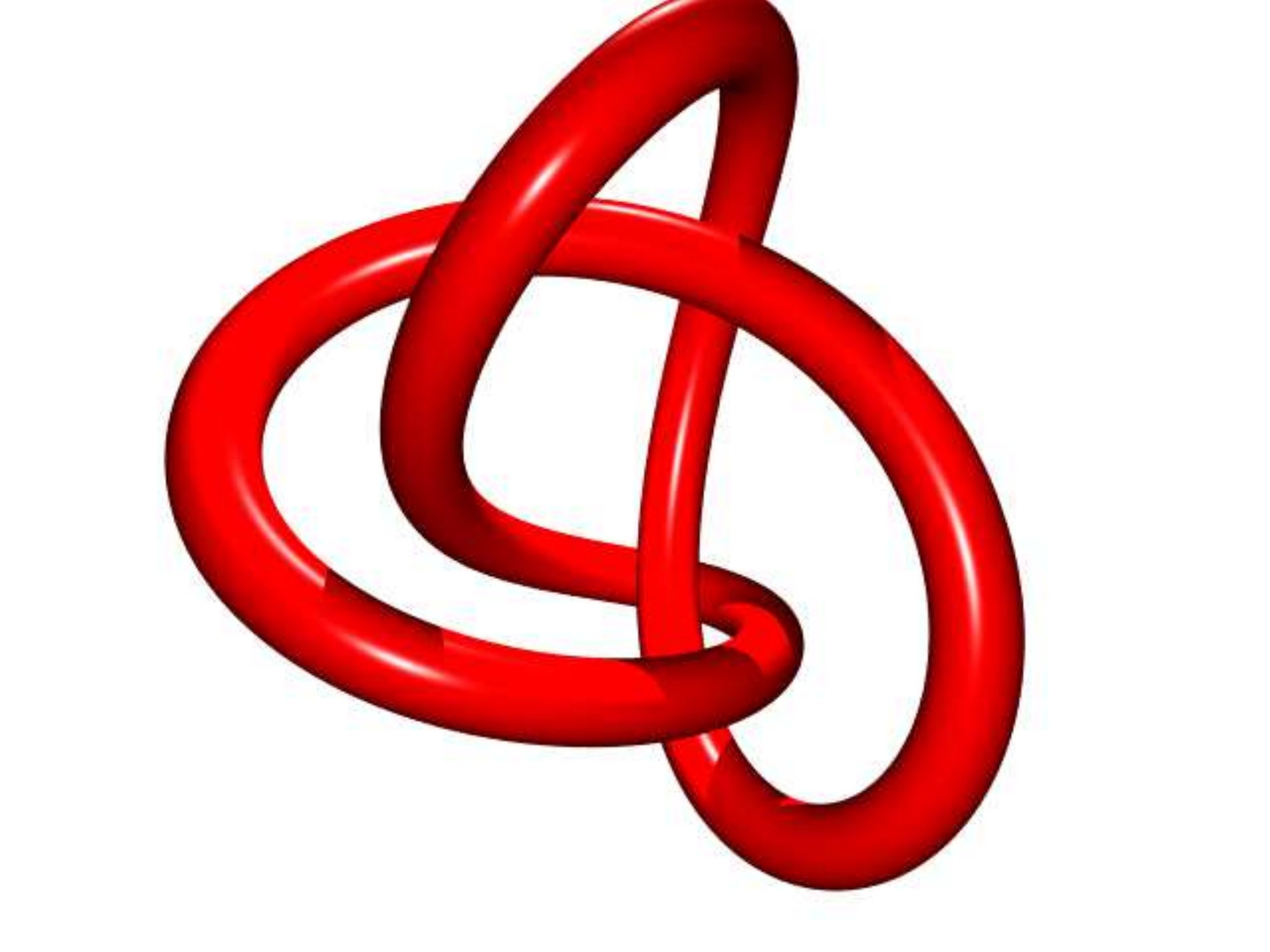} & \includegraphics[width=0.33\textwidth,keepaspectratio]{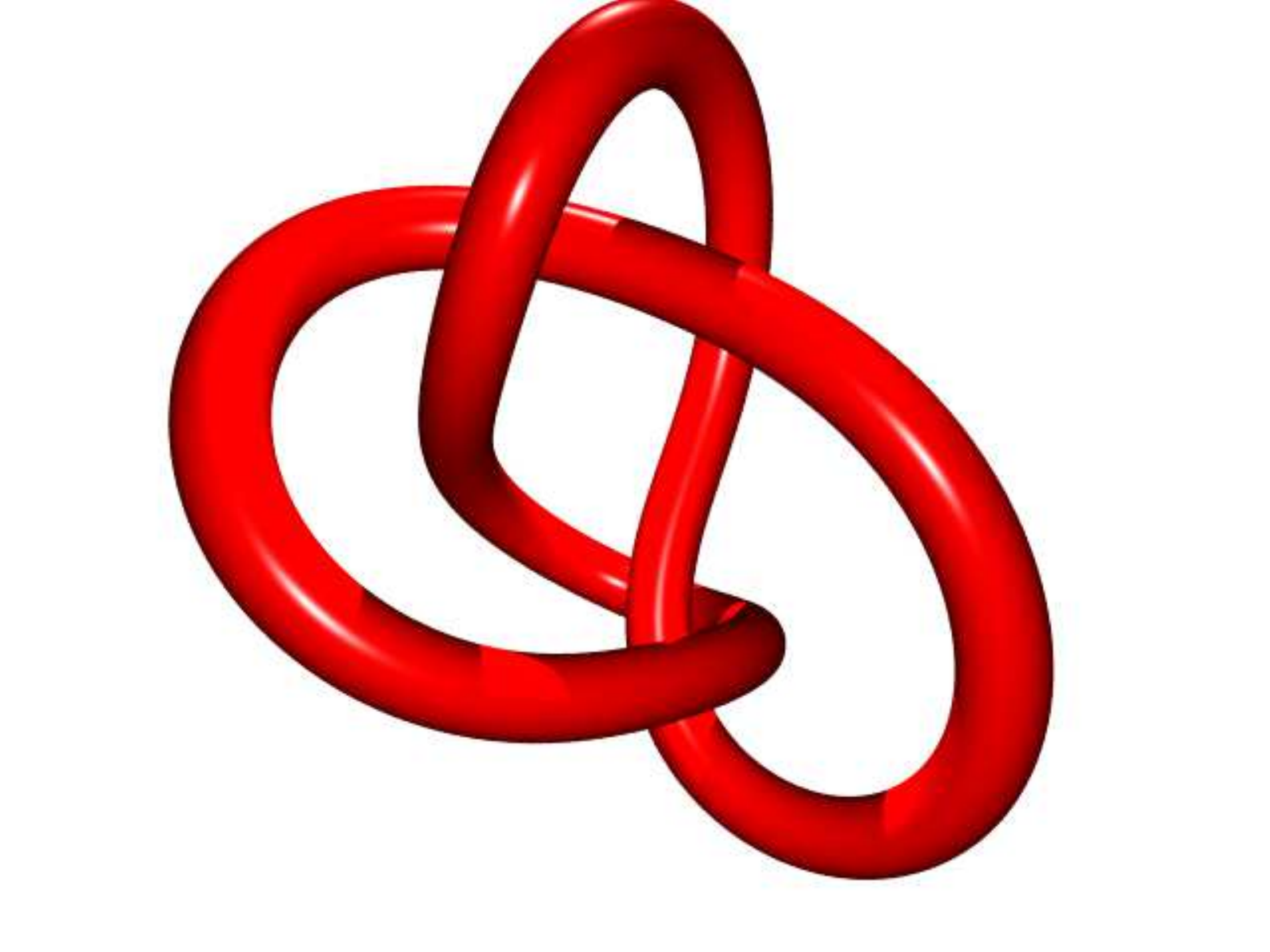} \\
170000/300000 & 300000/300000 \\
$\Le(\gamma)\approx 35.76346$ & $\Le(\gamma)\approx 35.14579$ \\
$\E_p(\gamma)\approx 19.68515$ & $\E_p(\gamma)\approx 19.6115$ \\
$\tau=972.00521$ & $\tau=1477.14949$ \\
\includegraphics[width=0.33\textwidth,keepaspectratio]{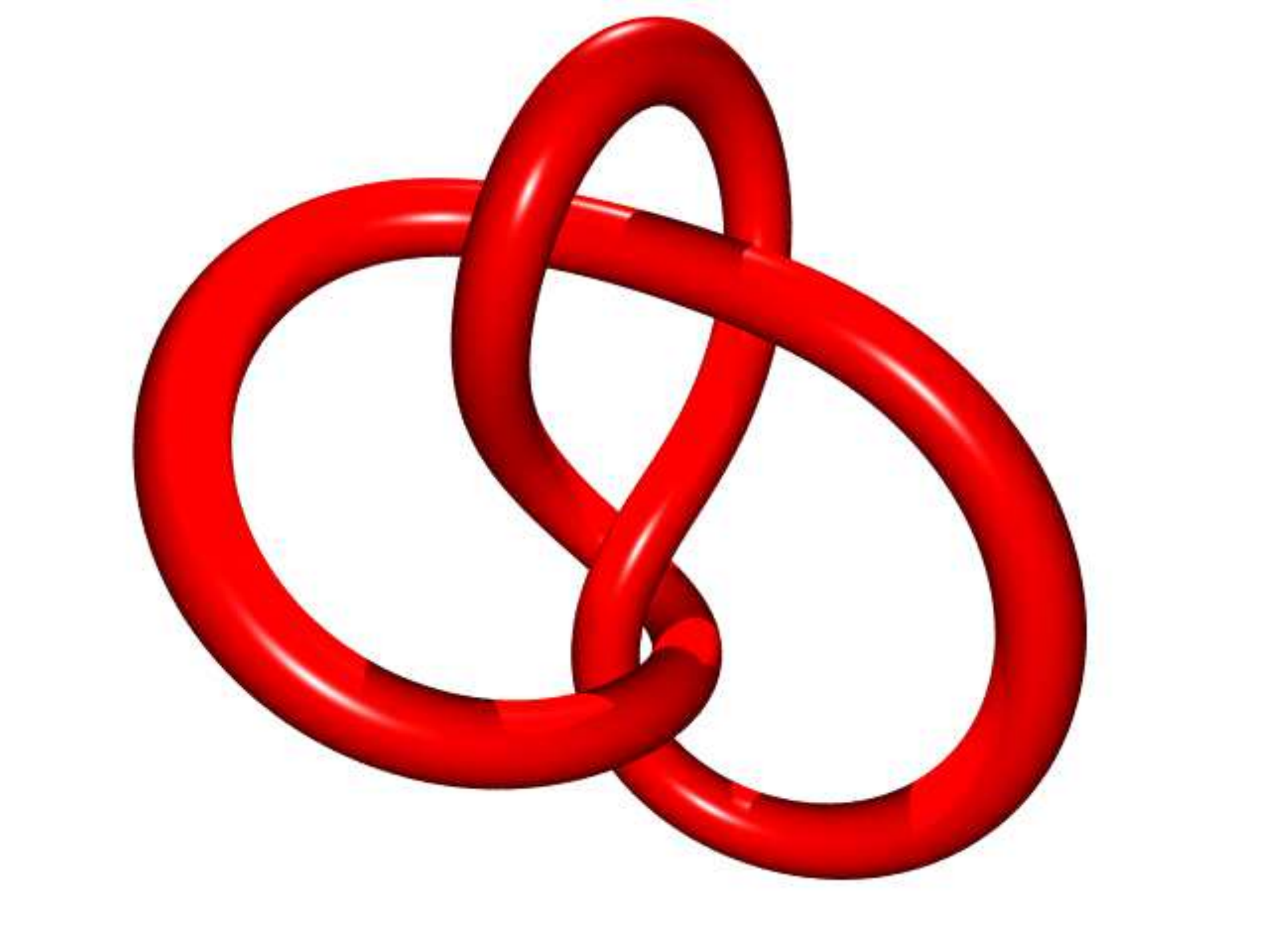} & \includegraphics[width=0.33\textwidth,keepaspectratio]{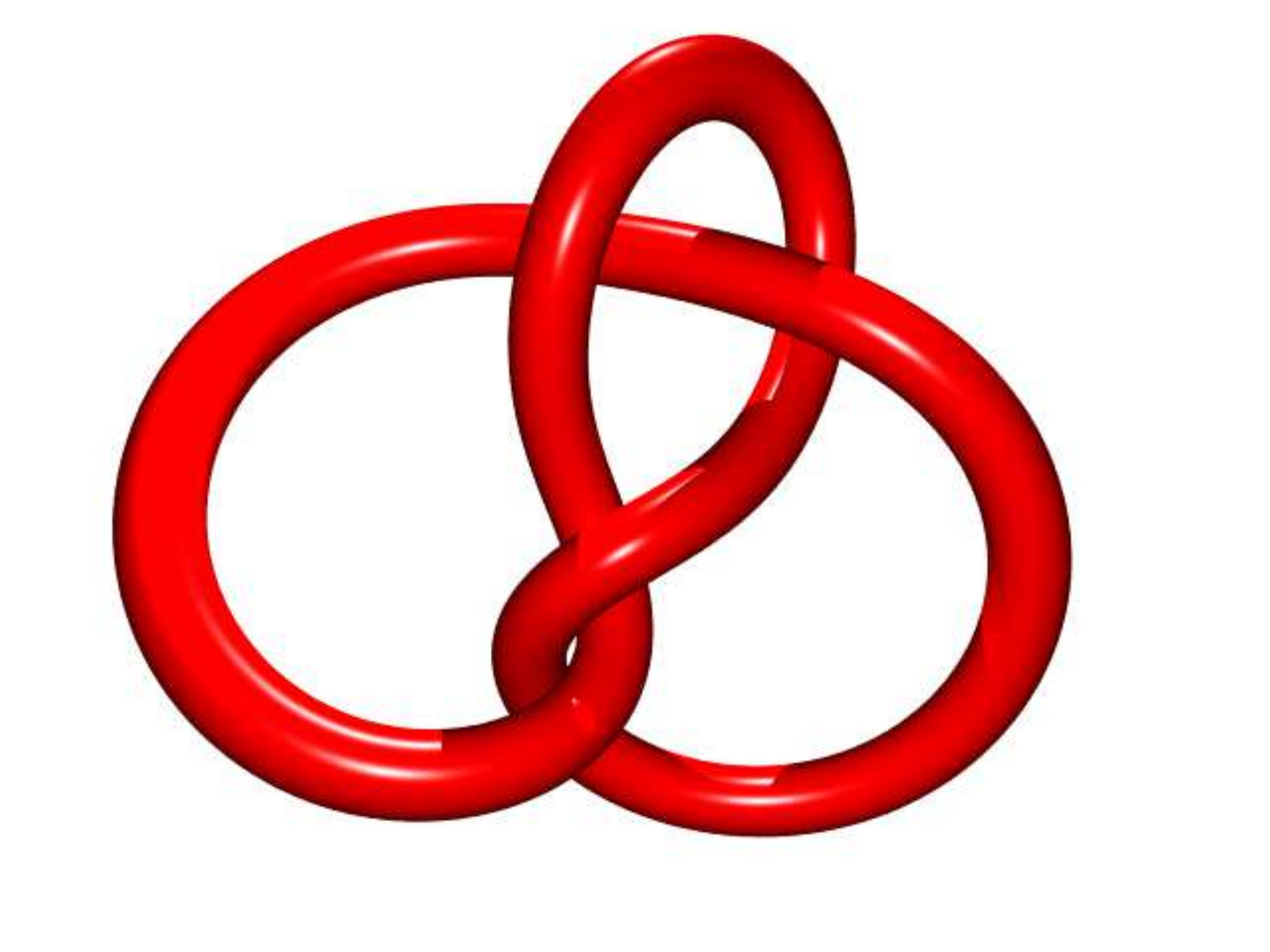}
\end{tabular}
\end{scriptsize}
\end{center}
\caption{A harmonic figure-eight -- $p=3.5$}
\end{figure}

By switching to the flow without redistribution after $500.000$ steps we reach an energy value of $19.61146$. Now we proceed with $p=50$
\begin{figure}[H]
\begin{center}
\begin{scriptsize}
\begin{tabular}{ccc}
0/30000 & 1000/30000 & 30000/30000 \\
$\Le(\gamma)\approx 38.03944$ & $\Le(\gamma)\approx 40.86402$ & $\Le(\gamma)\approx 41.89326$ \\
$\E_p(\gamma)\approx 68.70305$ & $\E_p(\gamma)\approx 37.51595$ & $\E_p(\gamma)\approx 37.49672$ \\
$\tau=0.0$ & $\tau=4.61167$ & $\tau=156.45102$ \\
\includegraphics[width=0.33\textwidth,keepaspectratio]{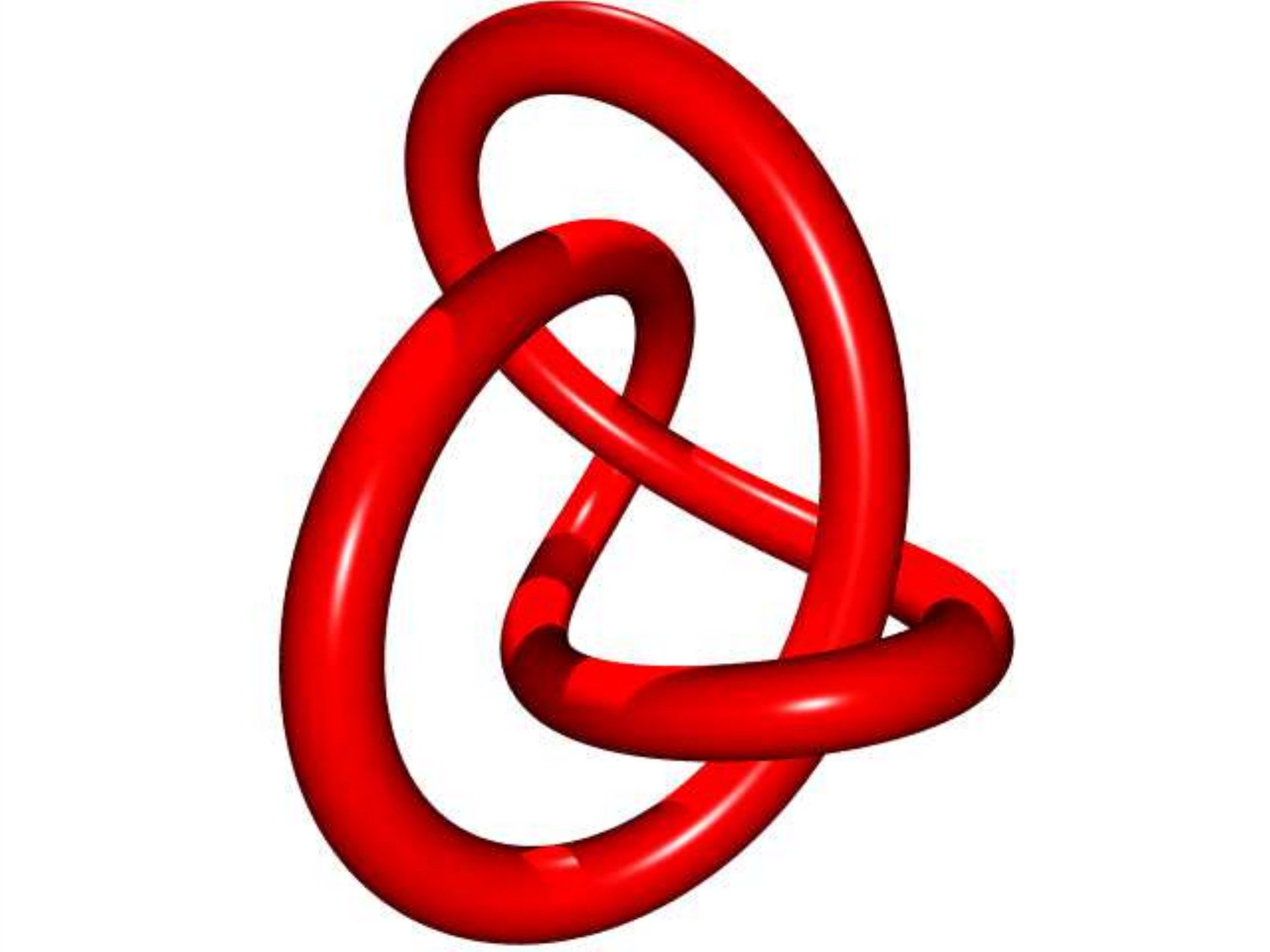} & \includegraphics[width=0.33\textwidth,keepaspectratio]{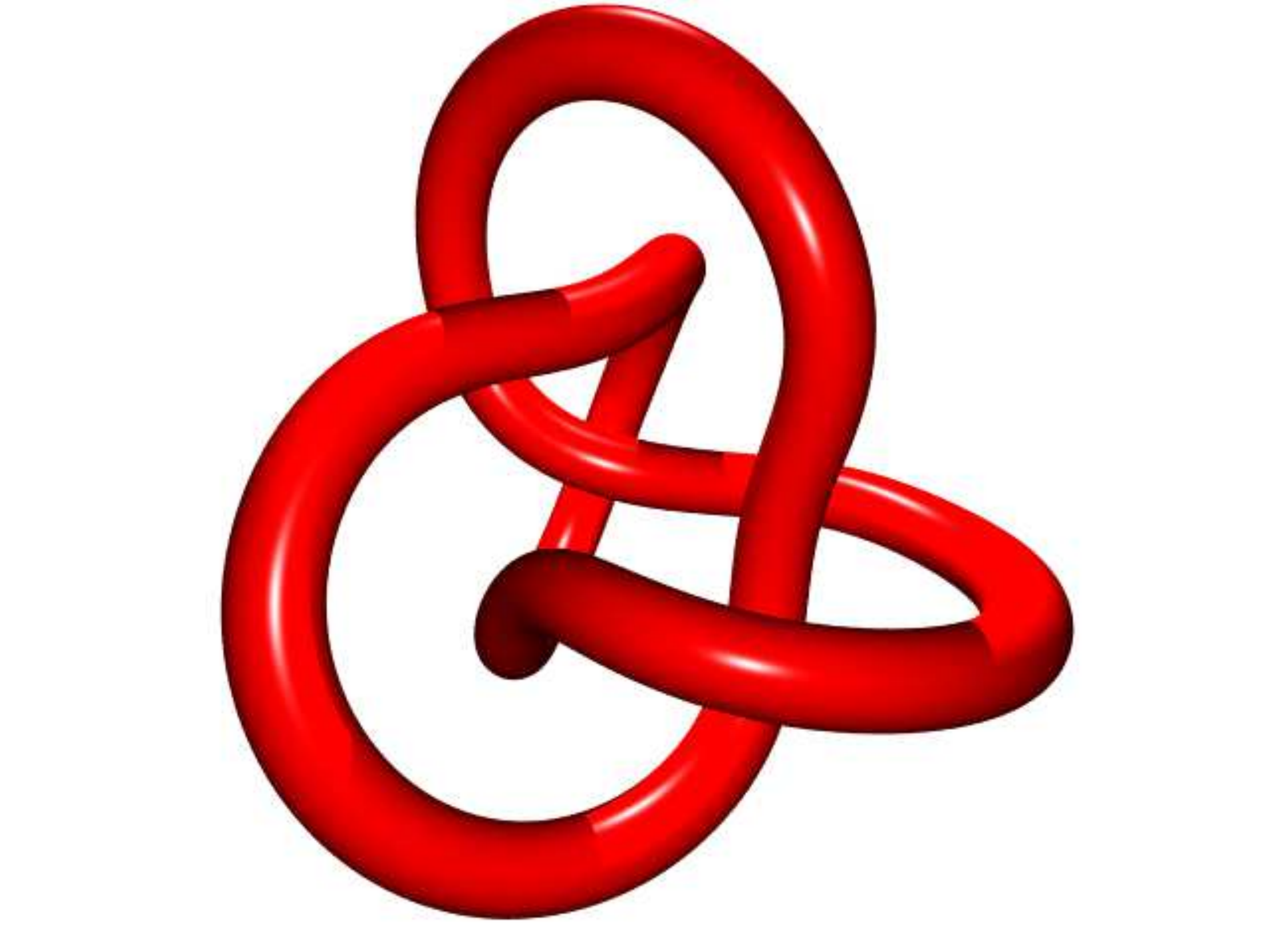} & \includegraphics[width=0.33\textwidth,keepaspectratio]{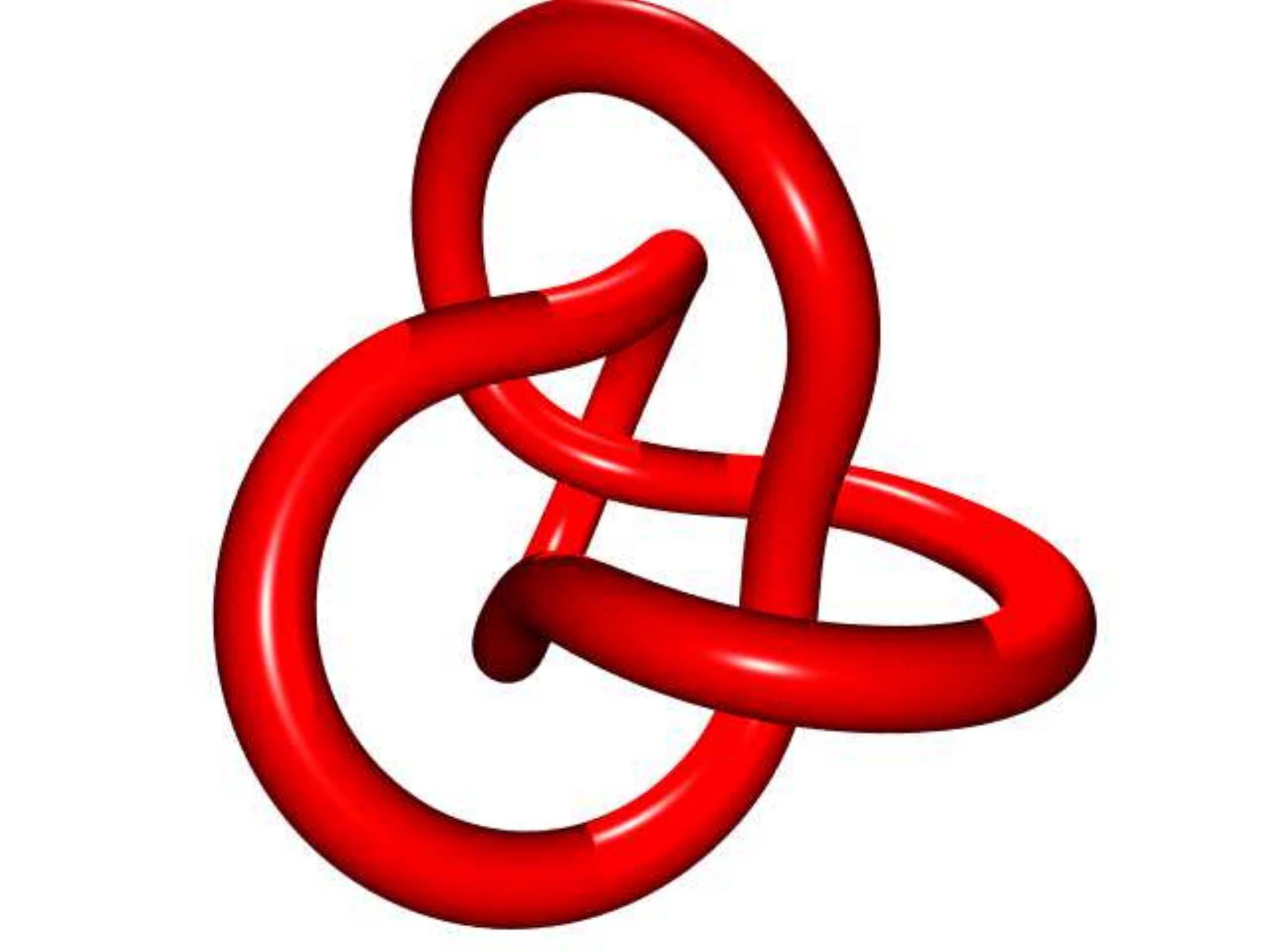}
\end{tabular}
\end{scriptsize}
\end{center}
\caption{A harmonic figure-eight -- $p=50.0$}
\end{figure}

After some time the energy starts to increase again, probably due to the redistribution. However, by switching to the flow without redistribution the energy decreases again and after $500.000$ steps the knot has the energy $37.47163$. Next we consider another representative of this knot class. Firstly, we use the flow with redistribution for $p=3.5$.
\begin{figure}[H]
\begin{center}
\begin{scriptsize}
\begin{tabular}{cc}
0/300000 & 10000/300000 \\
$\Le(\gamma)\approx 31.68443$ & $\Le(\gamma)\approx 19.72678$ \\
$\E_p(\gamma)\approx 30.34843$ & $\E_p(\gamma)\approx 20.90206$ \\
$\tau=0.0$ & $\tau=2.64537$ \\
\includegraphics[width=0.4\textwidth,keepaspectratio]{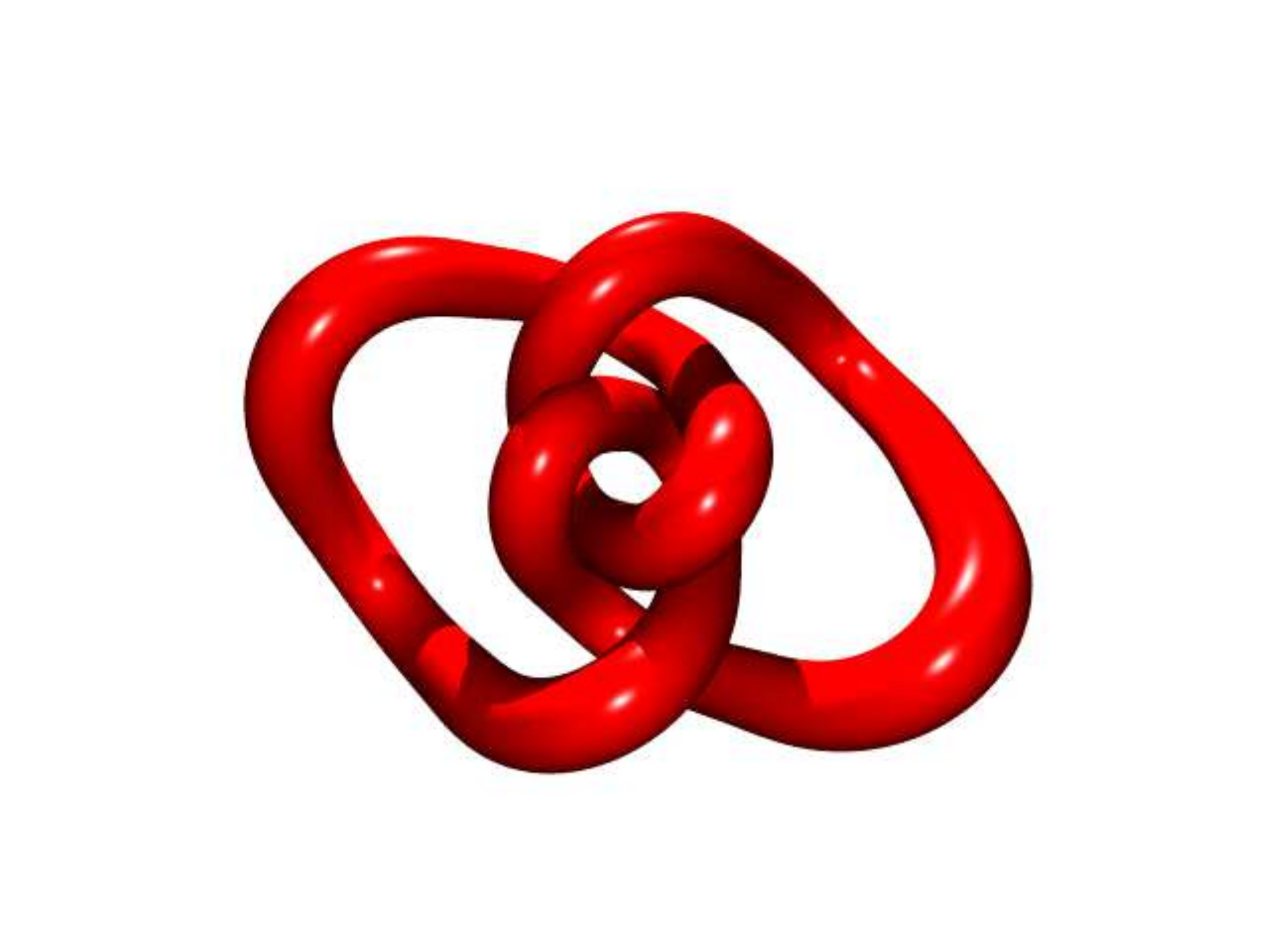} & \includegraphics[width=0.4\textwidth,keepaspectratio]{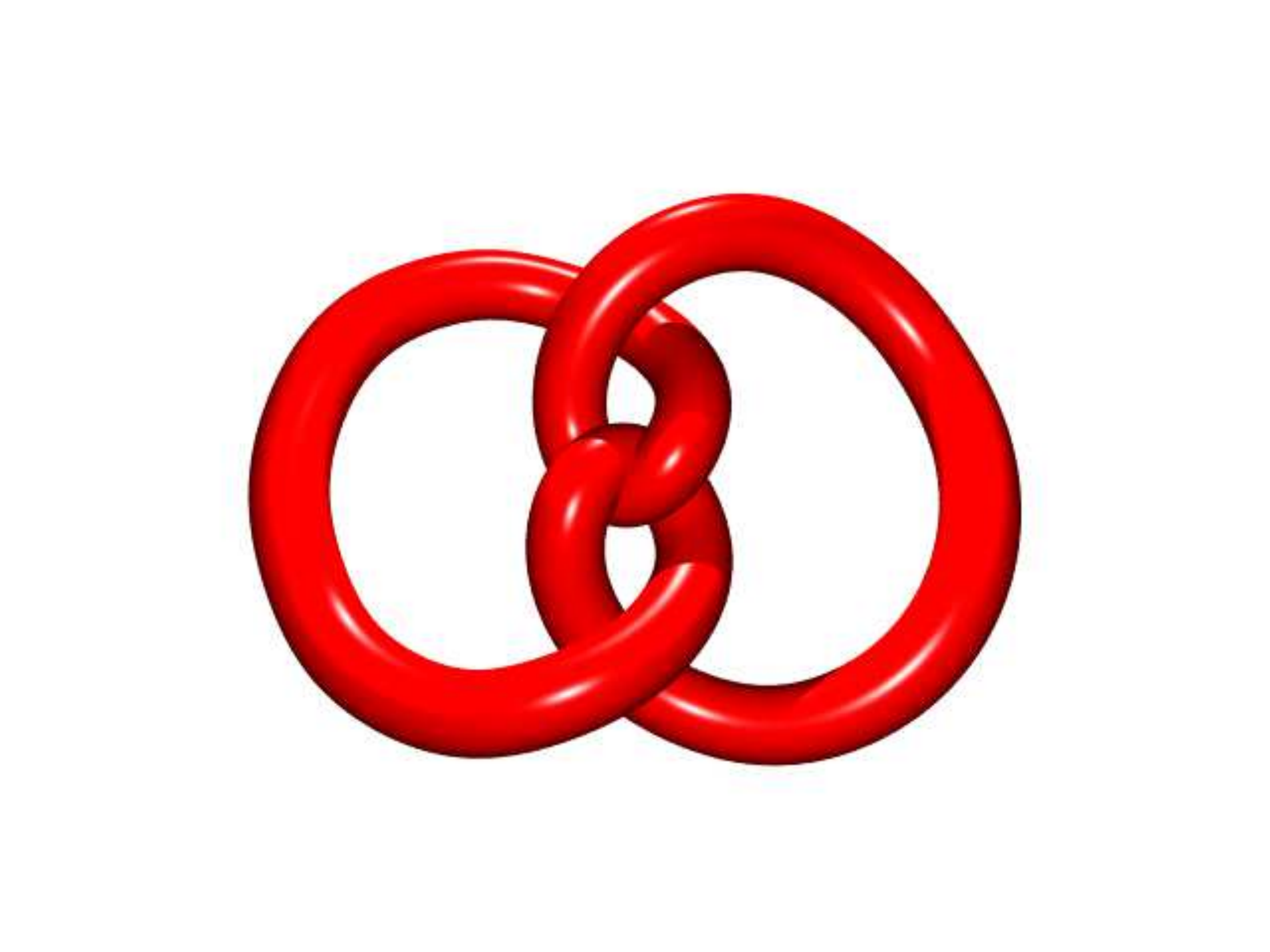} \\
100000/300000 & 174000/300000 \\
$\Le(\gamma)\approx 21.06465$ & $\Le(\gamma)\approx 21.72393$ \\
$\E_p(\gamma)\approx 20.64248$ & $\E_p(\gamma)\approx 20.30596$ \\
$\tau=24.86407$ & $\tau=47.02145$ \\
\includegraphics[width=0.4\textwidth,keepaspectratio]{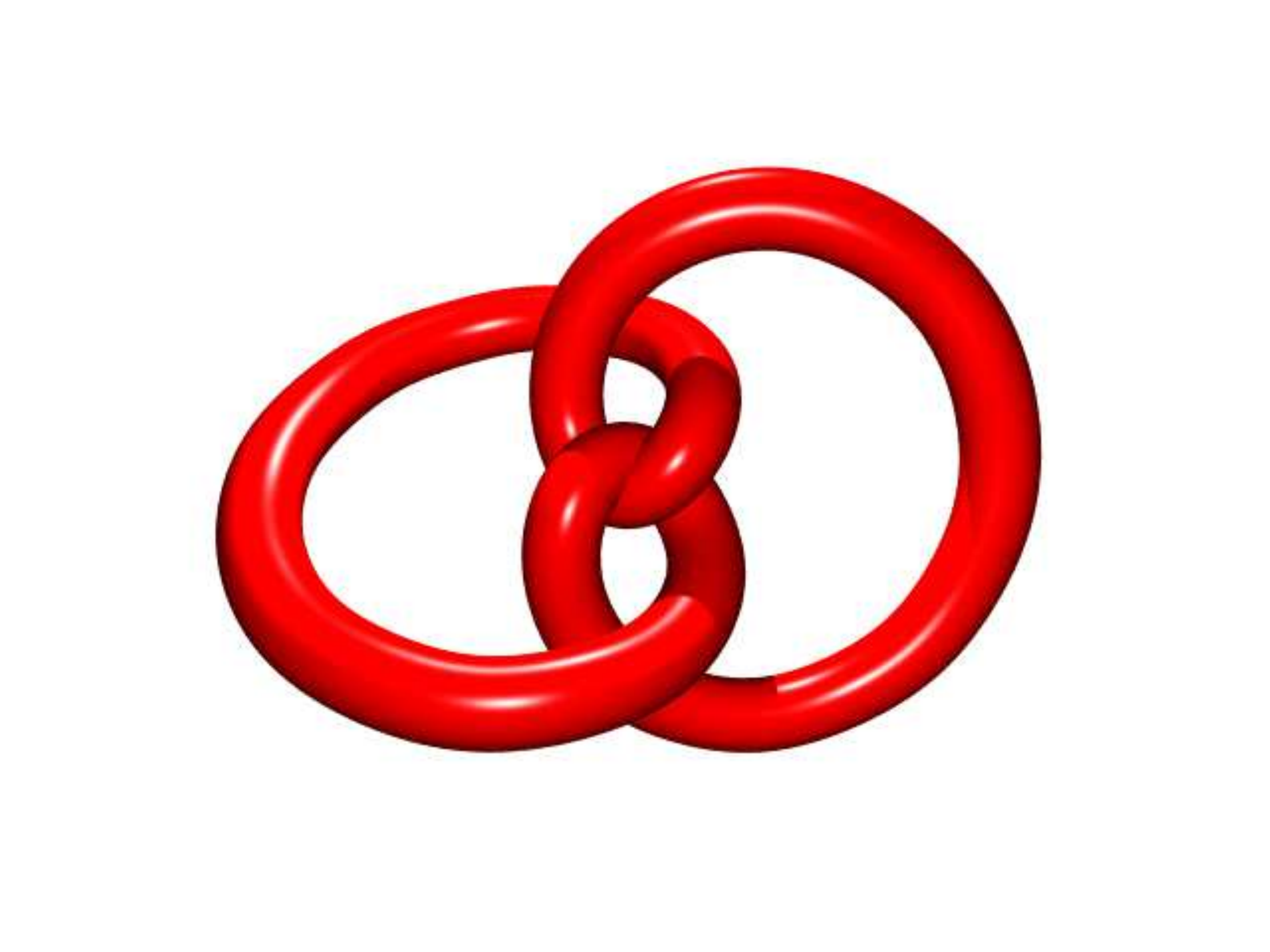} & \includegraphics[width=0.4\textwidth,keepaspectratio]{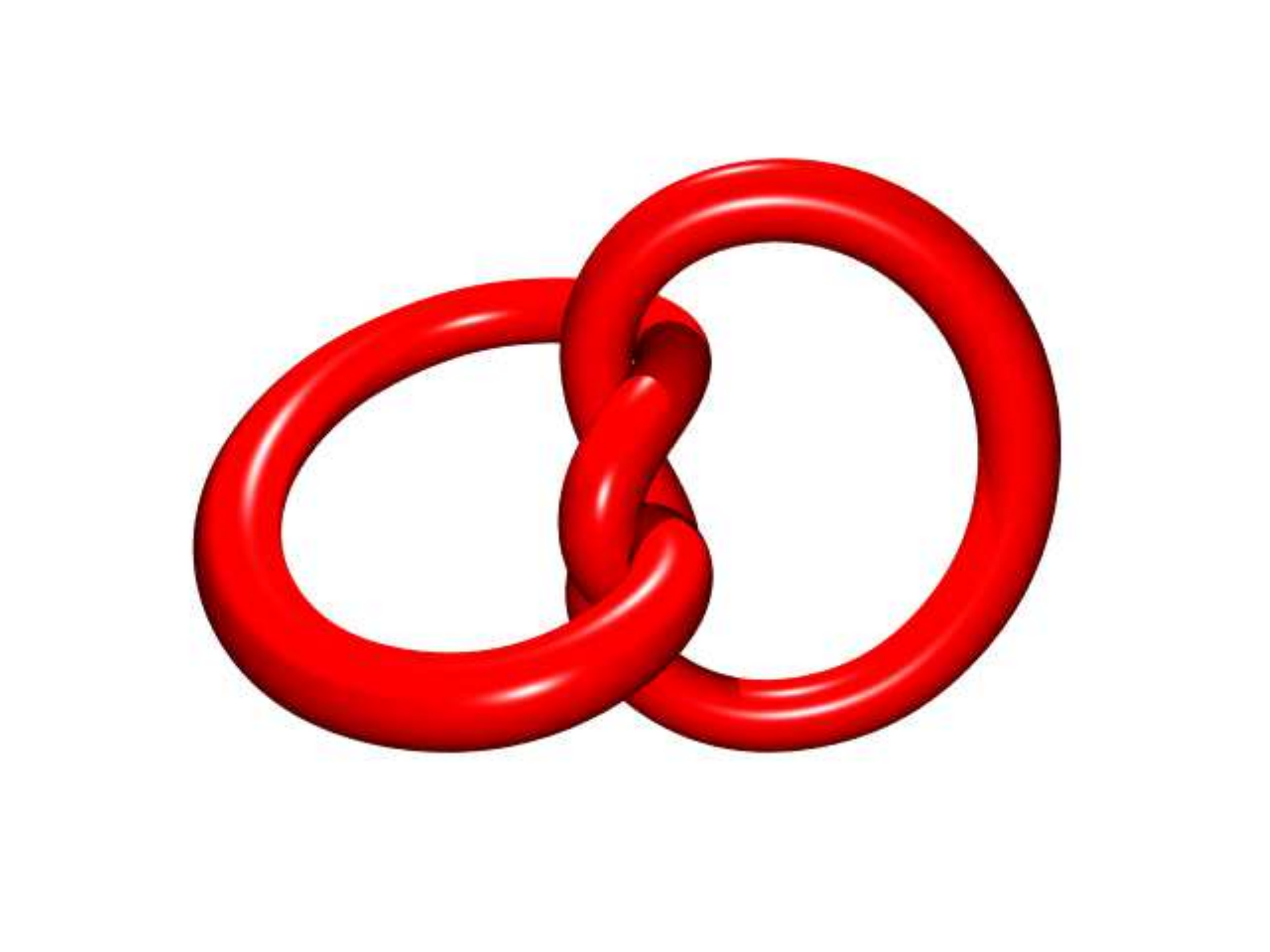} \\
178000/300000 & 300000/300000 \\
$\Le(\gamma)\approx 21.50917$ & $\Le(\gamma)\approx 22.37257$ \\
$\E_p(\gamma)\approx 19.88296$ & $\E_p(\gamma)\approx 19.6115$ \\
$\tau=49.36716$ & $\tau=168.33763$ \\
\includegraphics[width=0.4\textwidth,keepaspectratio]{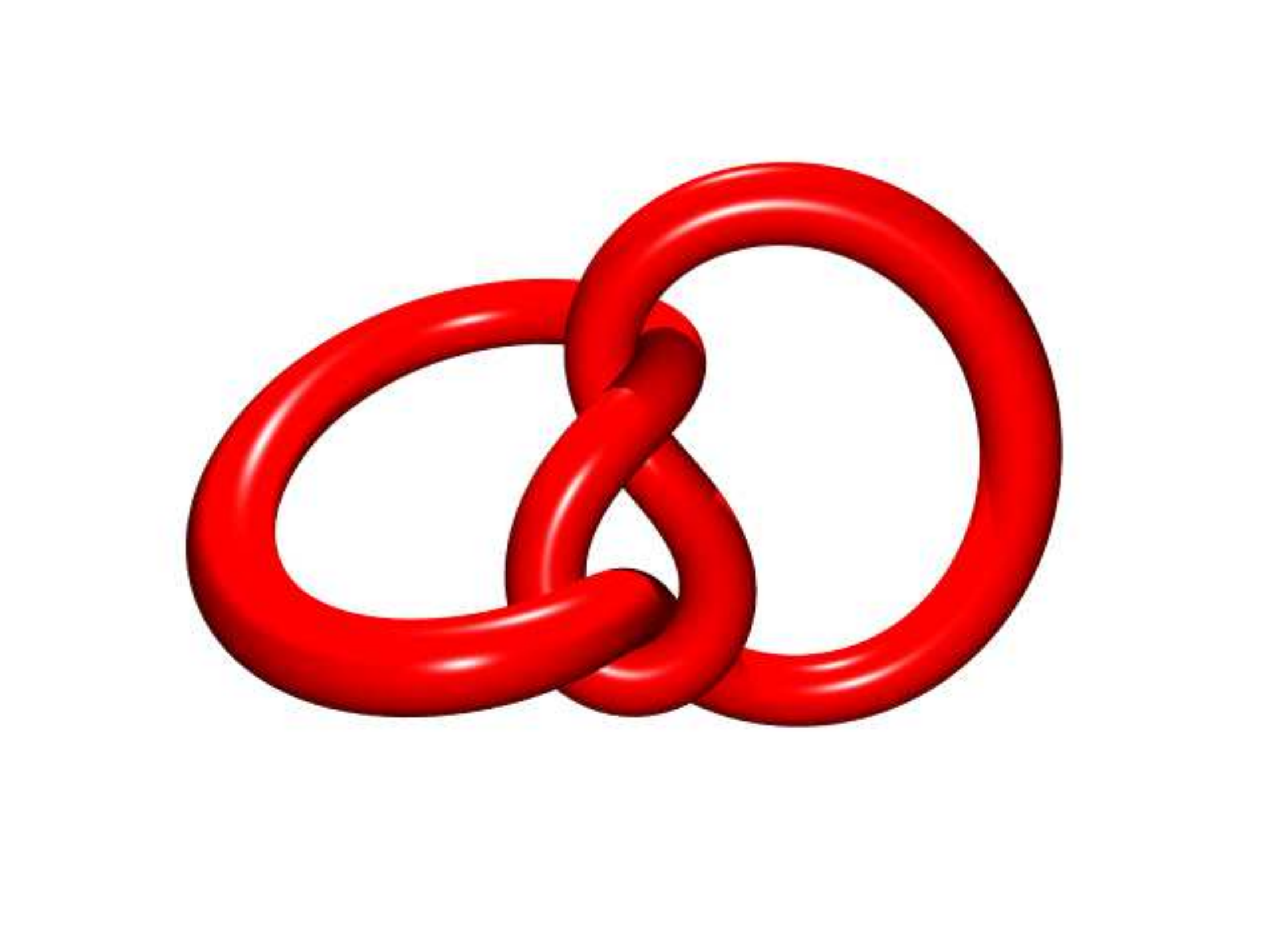} & \includegraphics[width=0.4\textwidth,keepaspectratio]{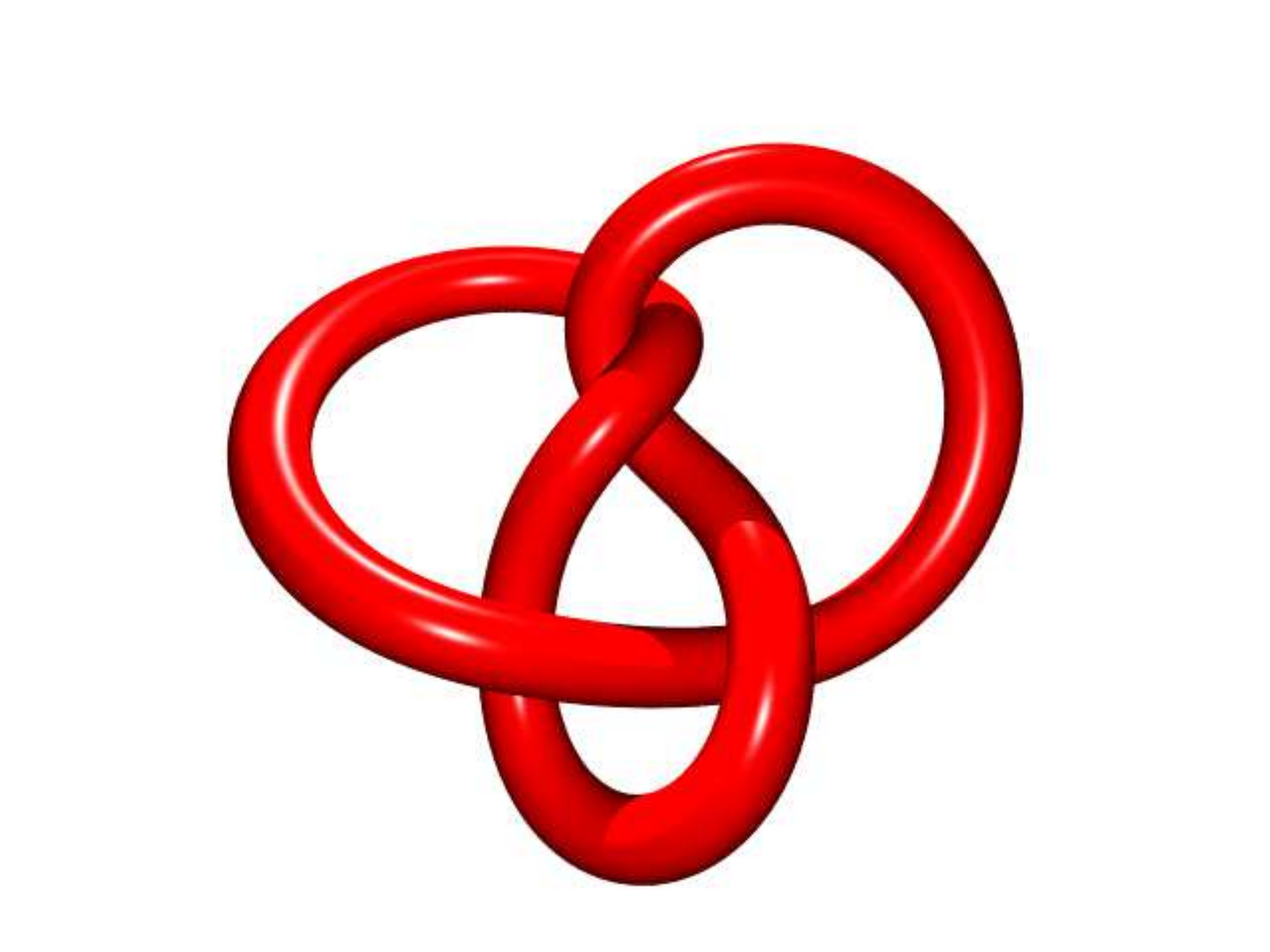}
\end{tabular}
\end{scriptsize}
\end{center}
\caption{A figure-eight -- $p=3.5$}
\end{figure}

We continue with the same starting configuration but from another point of view. Now we use the flow with redistribution for $p=50$.
\begin{figure}[H]
\begin{center}
\begin{scriptsize}
\begin{tabular}{ccc}
0/100000 & 600/100000 & 12000/100000 \\
$\Le(\gamma)\approx 31.68443$ & $\Le(\gamma)\approx 24.68854$ & $\Le(\gamma)\approx 26.32372$ \\
$\E_p(\gamma)\approx 130.00443$ & $\E_p(\gamma)\approx 51.91226$ & $\E_p(\gamma)\approx 48.33161$ \\
$\tau=0.0$ & $\tau=0.33164$ & $\tau=8.93904$ \\
\includegraphics[width=0.33\textwidth,keepaspectratio]{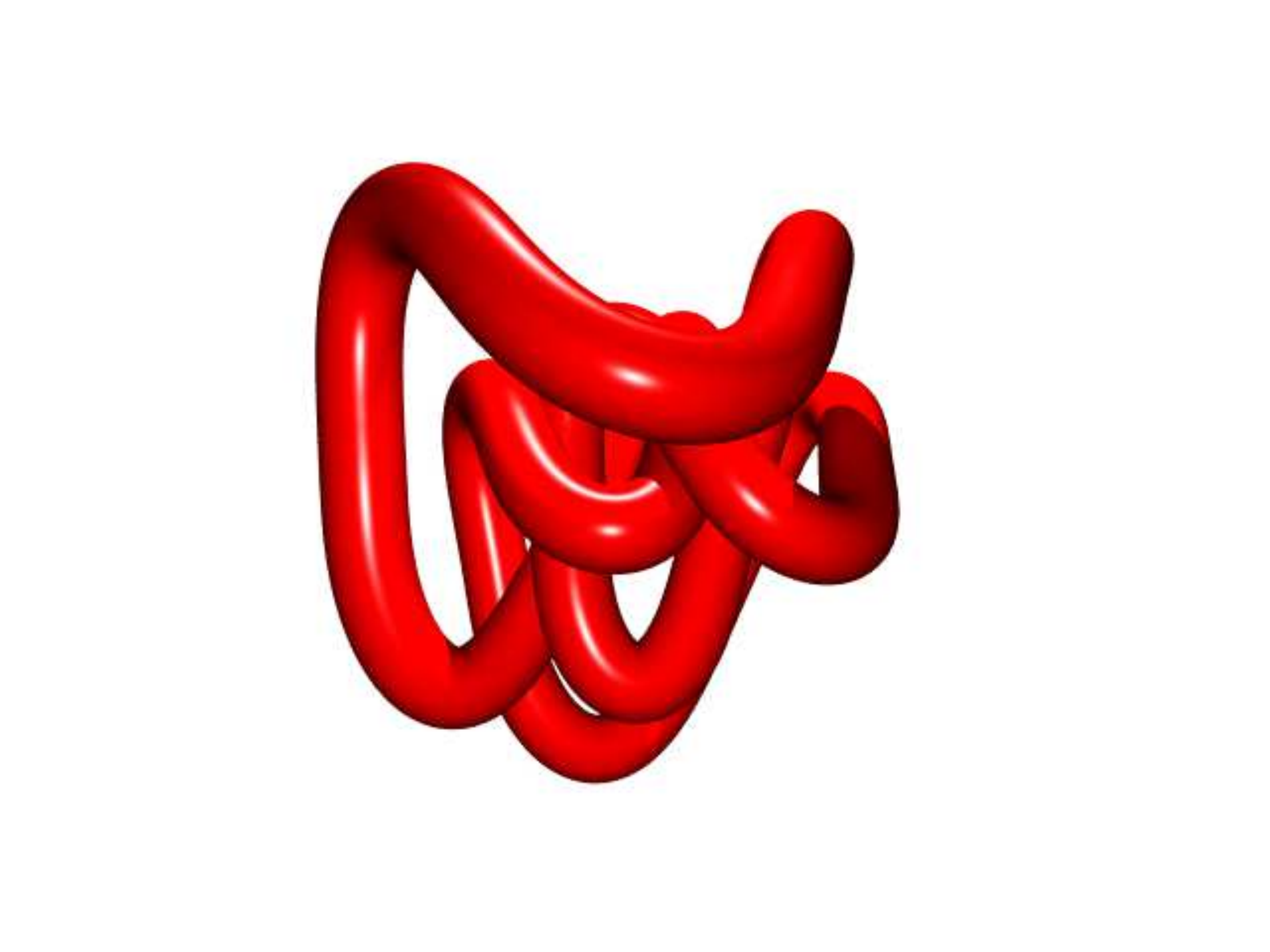} & \includegraphics[width=0.33\textwidth,keepaspectratio]{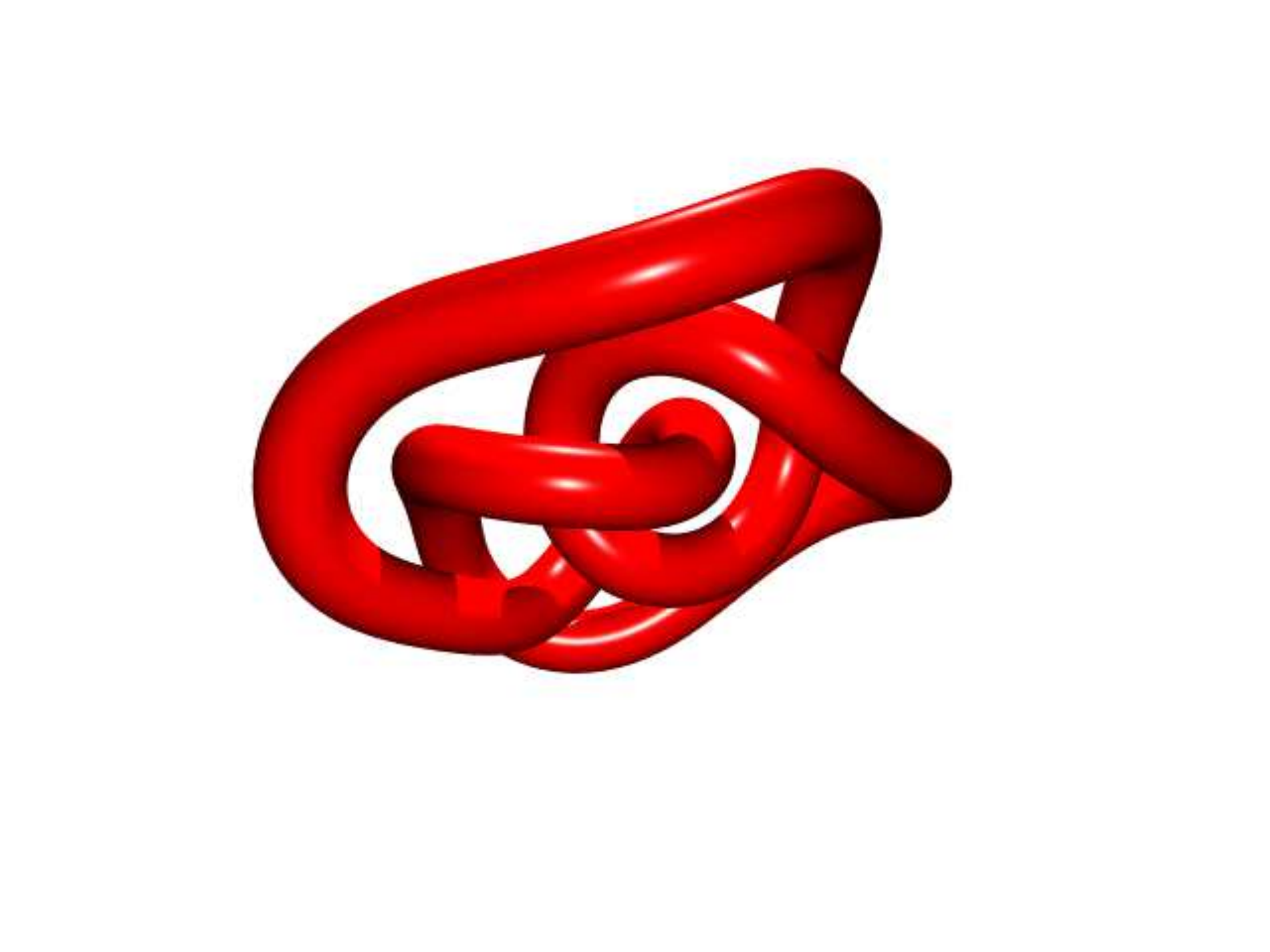} & \includegraphics[width=0.33\textwidth,keepaspectratio]{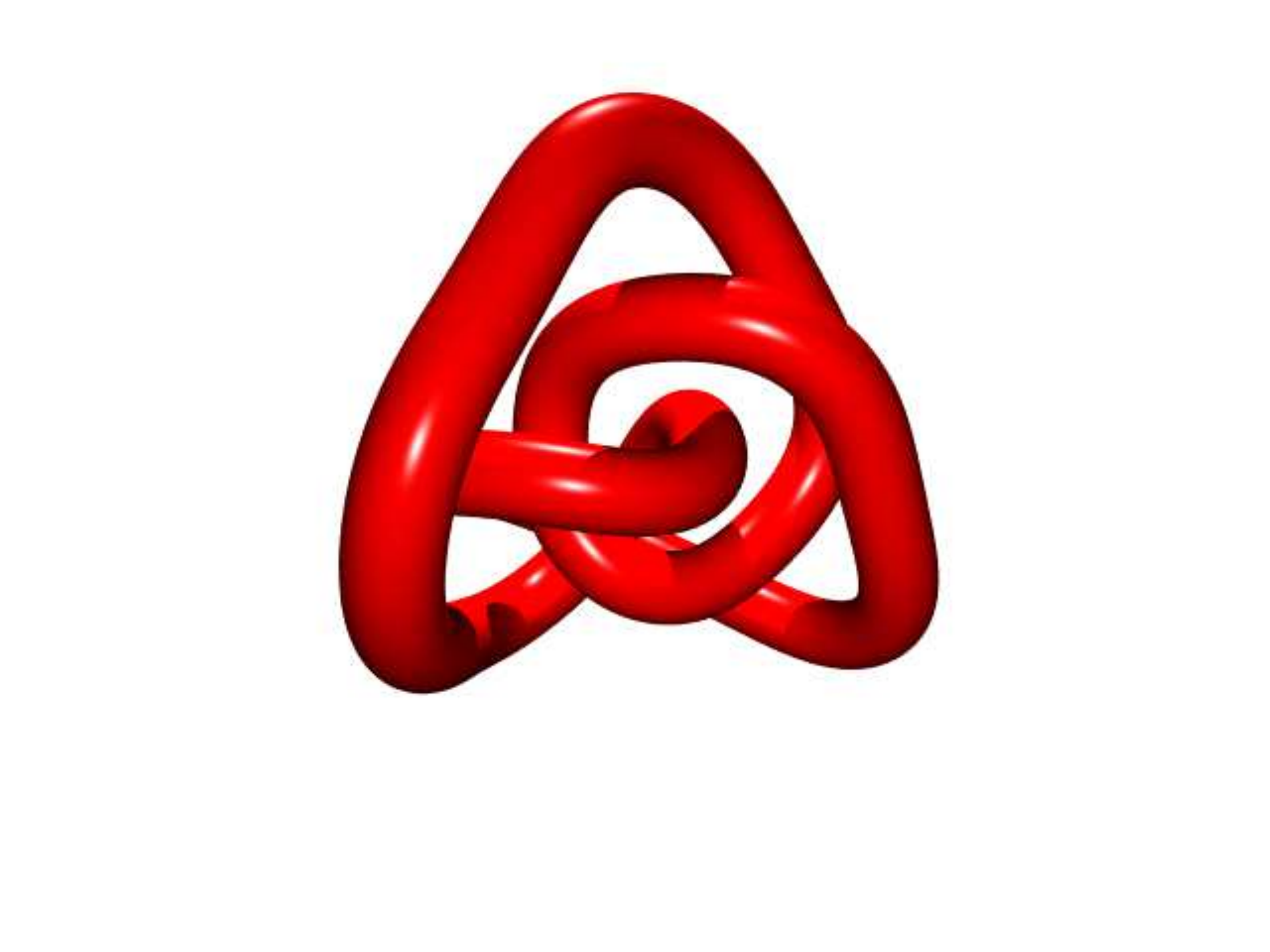} \\
27000/100000 & 29000/100000 & 30000/100000 \\
$\Le(\gamma)\approx 27.65309$ & $\Le(\gamma)\approx 27.07511$ & $\Le(\gamma)\approx 26.43185$ \\
$\E_p(\gamma)\approx 45.54672$ & $\E_p(\gamma)\approx 44.12169$ & $\E_p(\gamma)\approx 41.66007$ \\
$\tau=23.22984$ & $\tau=25.38418$ & $\tau=26.48759$ \\
\includegraphics[width=0.33\textwidth,keepaspectratio]{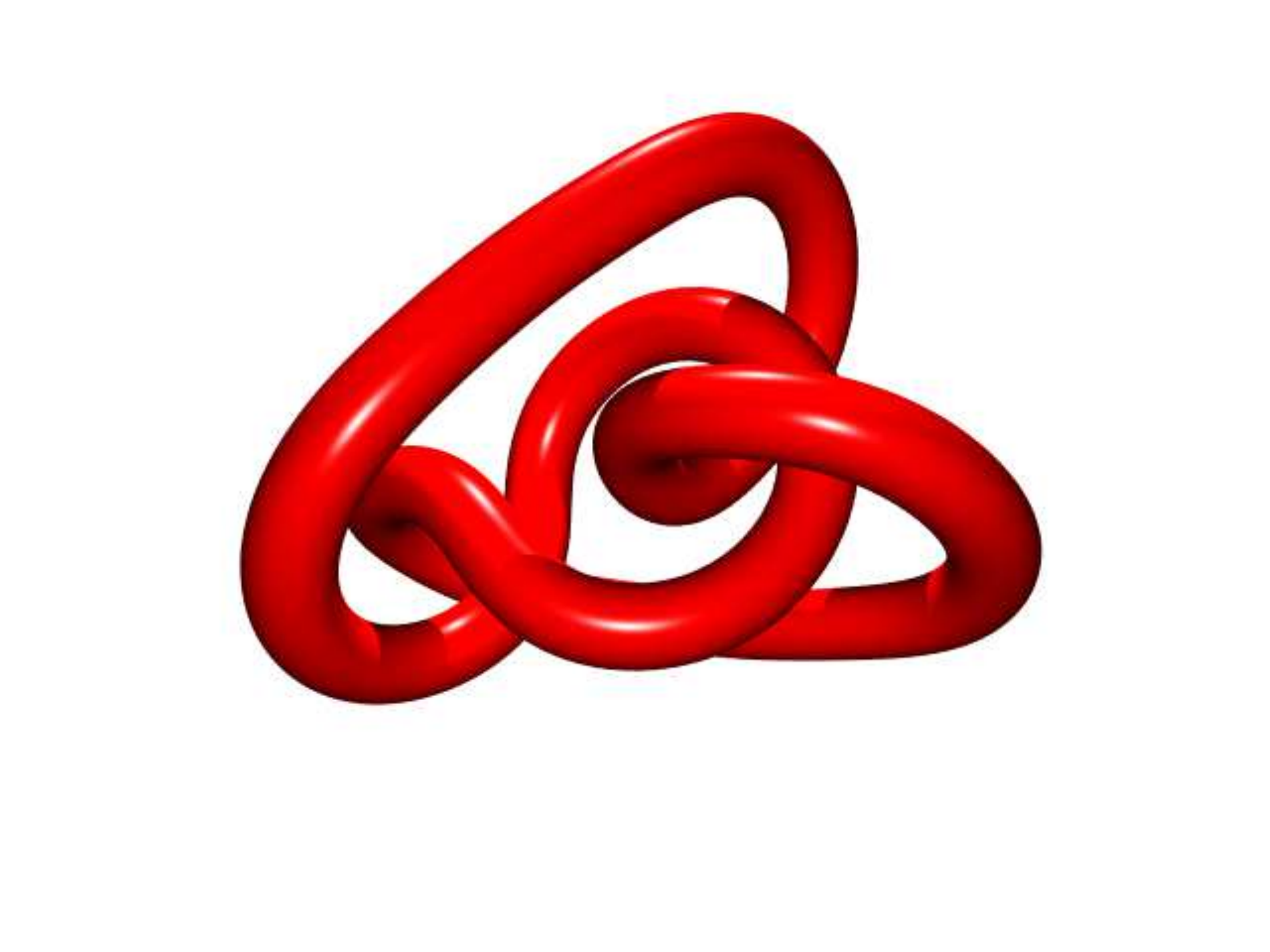} & \includegraphics[width=0.33\textwidth,keepaspectratio]{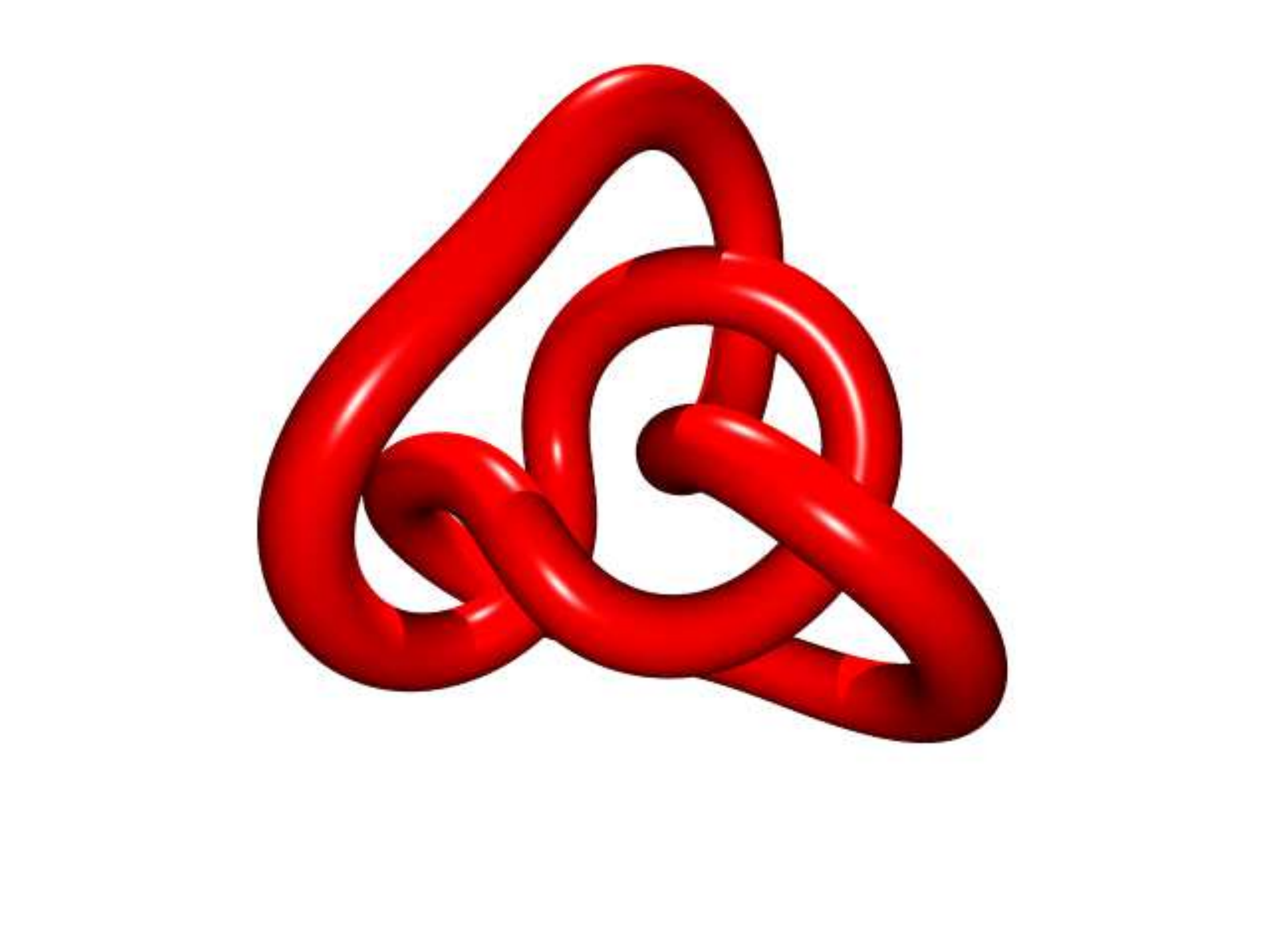} & \includegraphics[width=0.33\textwidth,keepaspectratio]{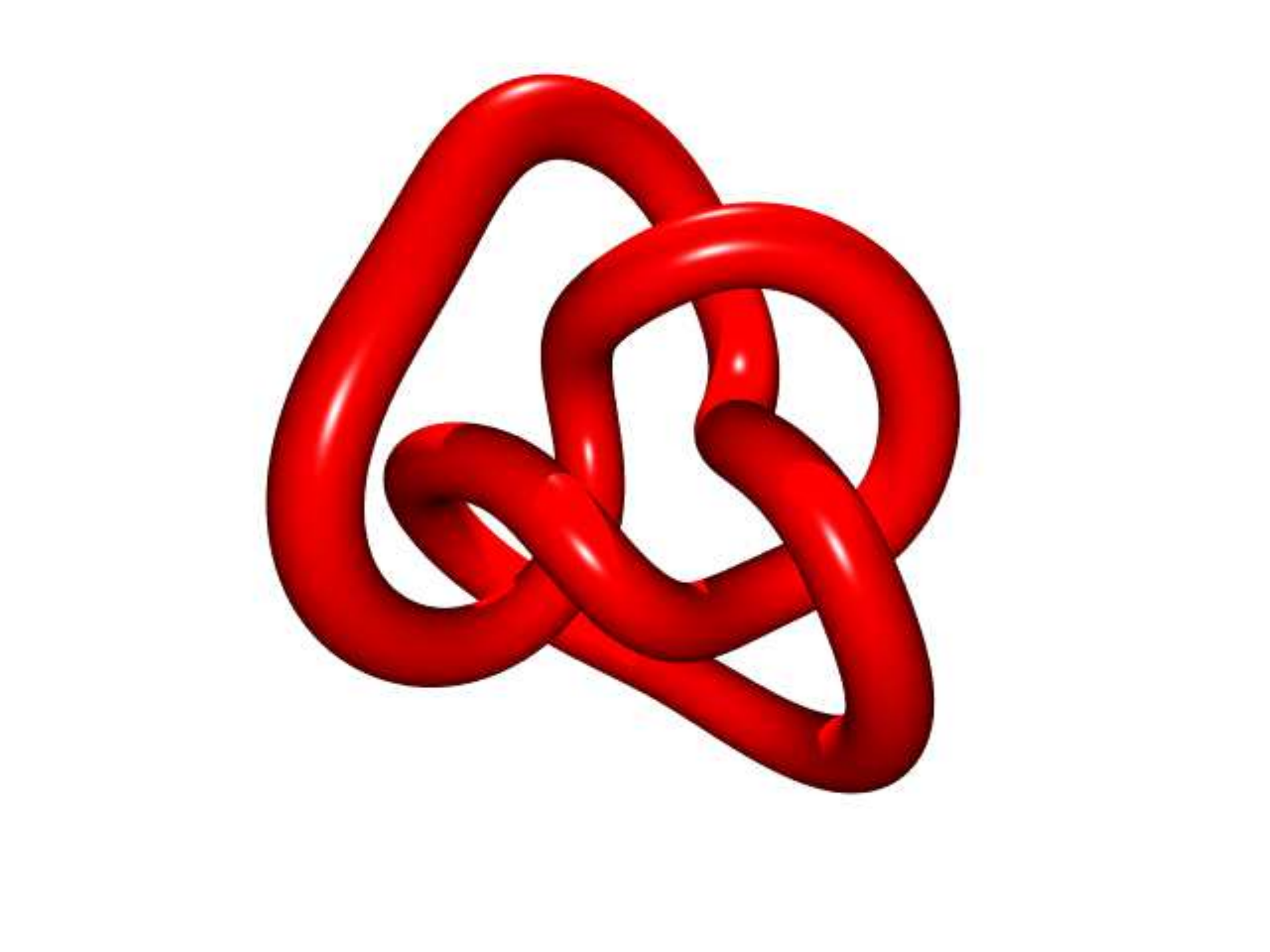} \\
40000/100000 & 53000/100000 & 100000/100000 \\
$\Le(\gamma)\approx 26.12792$ & $\Le(\gamma)\approx 27.4093$ & $\Le(\gamma)\approx 29.29693$ \\
$\E_p(\gamma)\approx 39.96641$ & $\E_p(\gamma)\approx 39.24609$ & $\E_p(\gamma)\approx 37.49673$ \\
$\tau=38.31868$ & $\tau=55.16871$ & $\tau=136.85656$ \\
\includegraphics[width=0.33\textwidth,keepaspectratio]{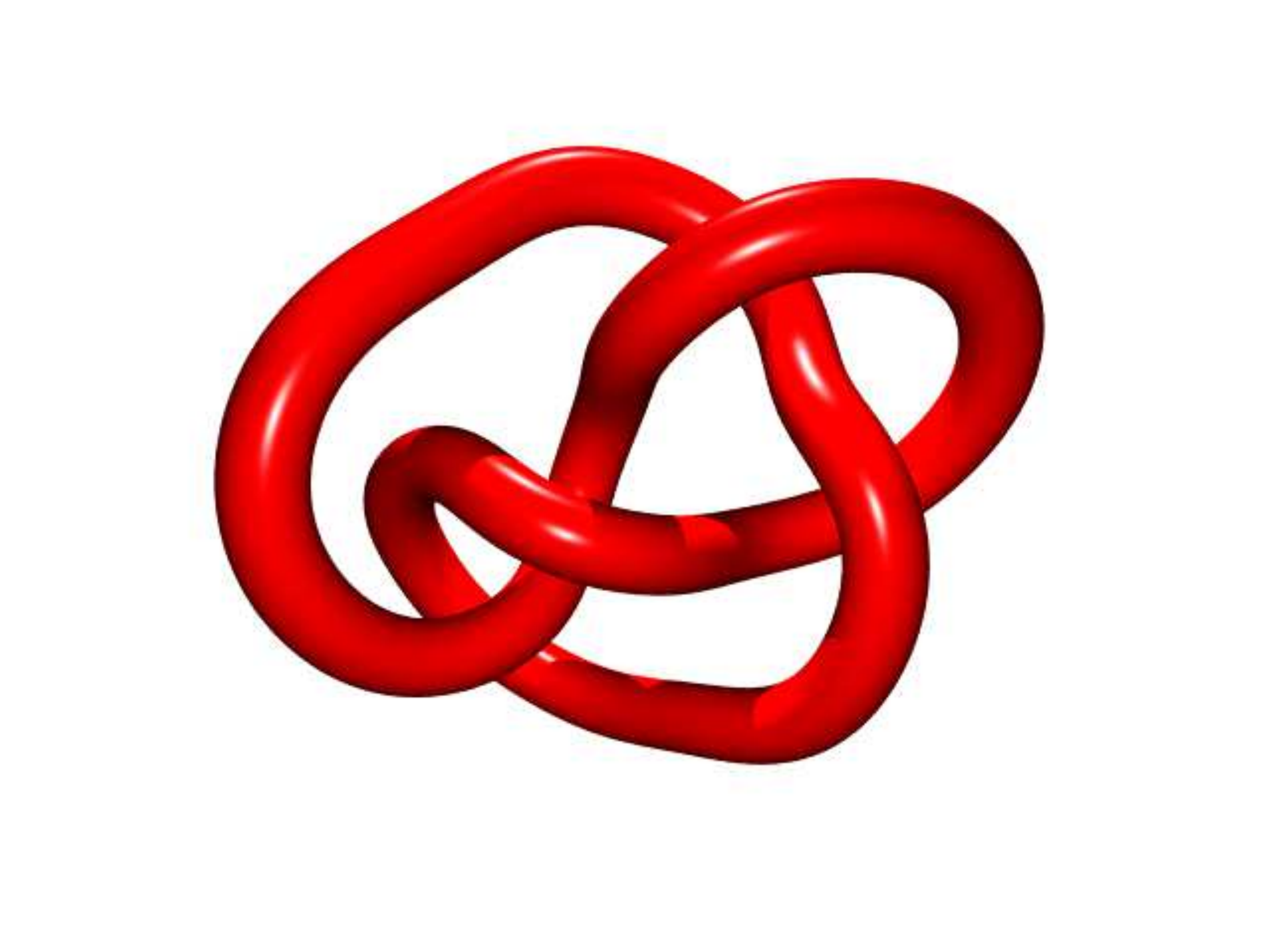} & \includegraphics[width=0.33\textwidth,keepaspectratio]{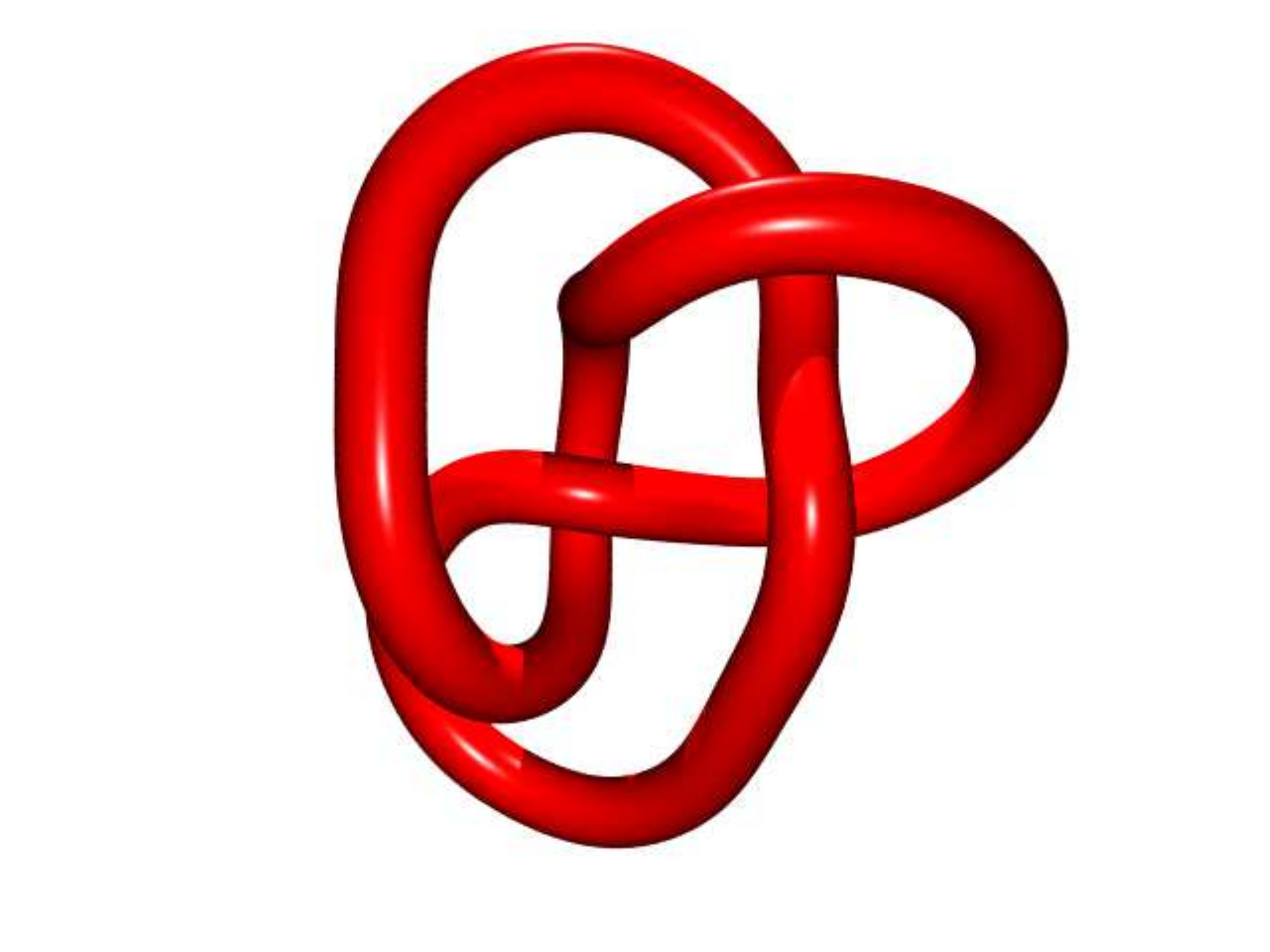} & \includegraphics[width=0.33\textwidth,keepaspectratio]{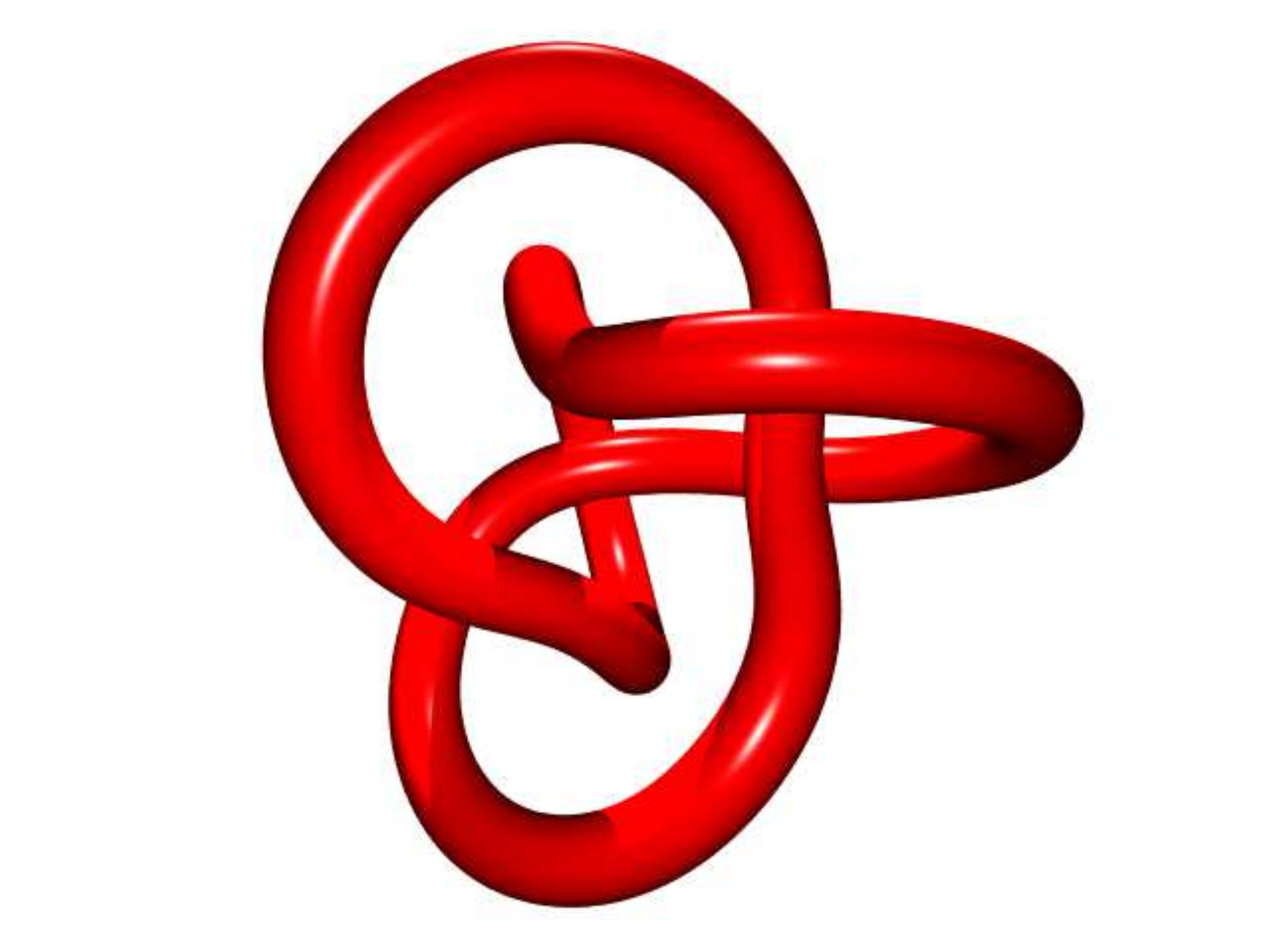}
\end{tabular}
\end{scriptsize}
\end{center}
\caption{A figure-eight -- $p=50.0$}
\end{figure}

Here the energy values stabilises in the end. Although these two representative of this knot class differ strongly at first sight, the shapes of the last configurations of the flows with respect to them are the same for $p=3.5$ and respectively for $p=50$.

\section{$5_1$ knots} \label{knot51}
This time we start with an example of the flow without redistribution for $p=3$.
\begin{figure}[H]
\begin{center}
\begin{scriptsize}
\begin{tabular}{ccc}
0/400000 & 1000/400000 & 100000/400000 \\
$\Le(\gamma)\approx 89.20002$ & $\Le(\gamma)\approx 86.18496$ & $\Le(\gamma)\approx 85.85831$ \\
$\E_p(\gamma)\approx 19.67839$ & $\E_p(\gamma)\approx 19.36133$ & $\E_p(\gamma)\approx 19.33206$ \\
$\tau=0.0$ & $\tau=97.56618$ & $\tau=9573.01393$ \\
\includegraphics[width=0.33\textwidth,keepaspectratio]{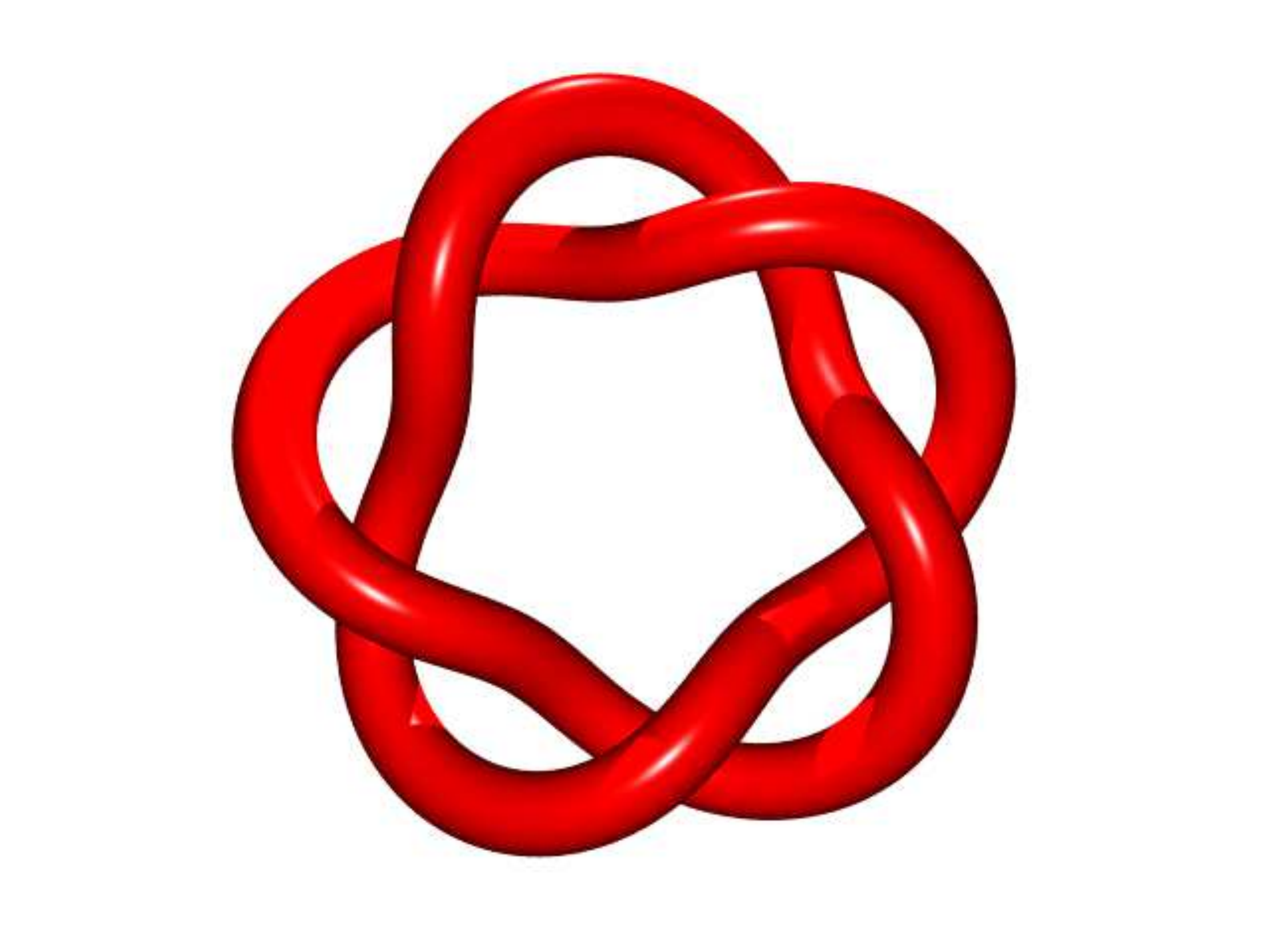} & \includegraphics[width=0.33\textwidth,keepaspectratio]{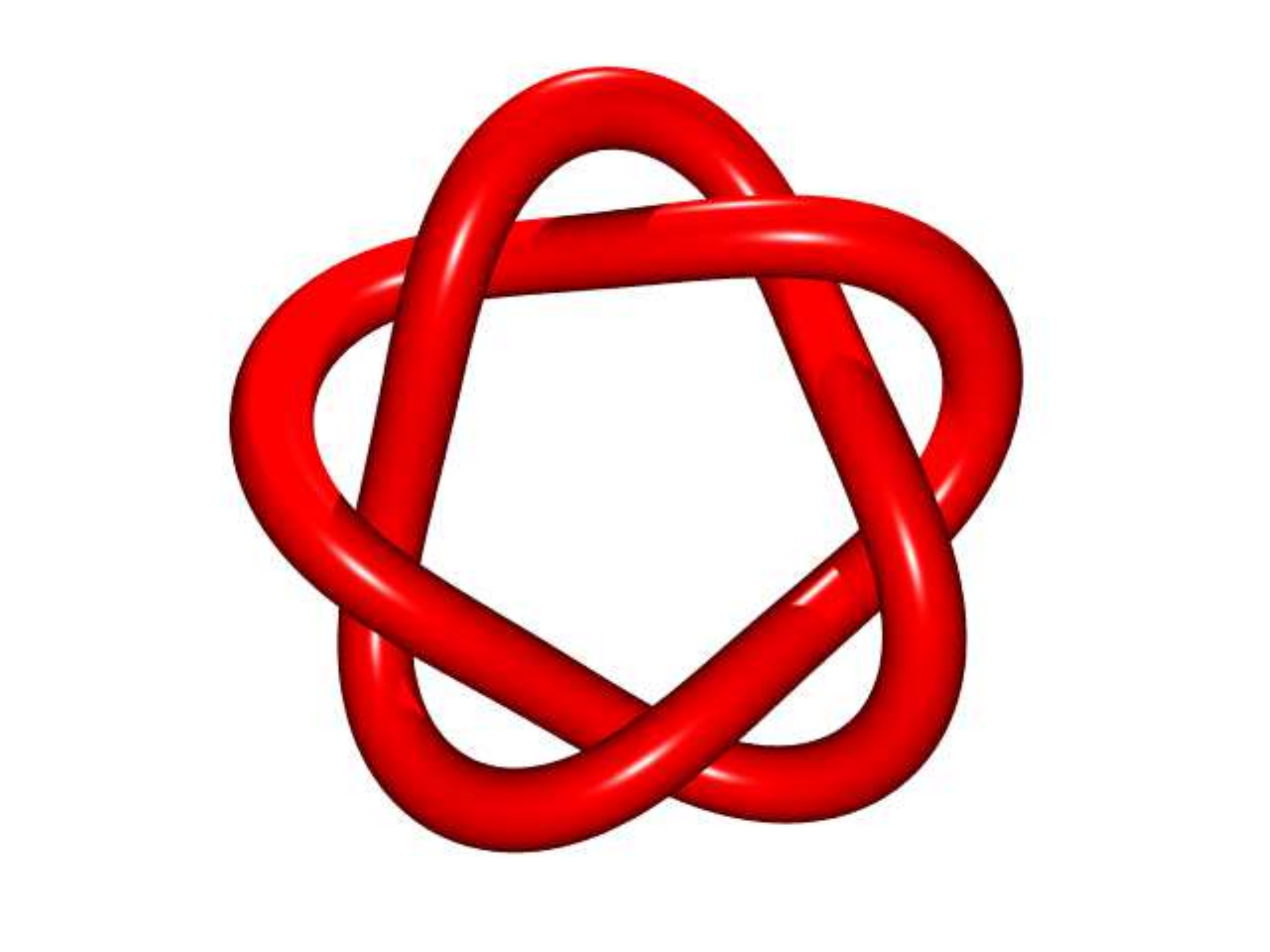} & \includegraphics[width=0.33\textwidth,keepaspectratio]{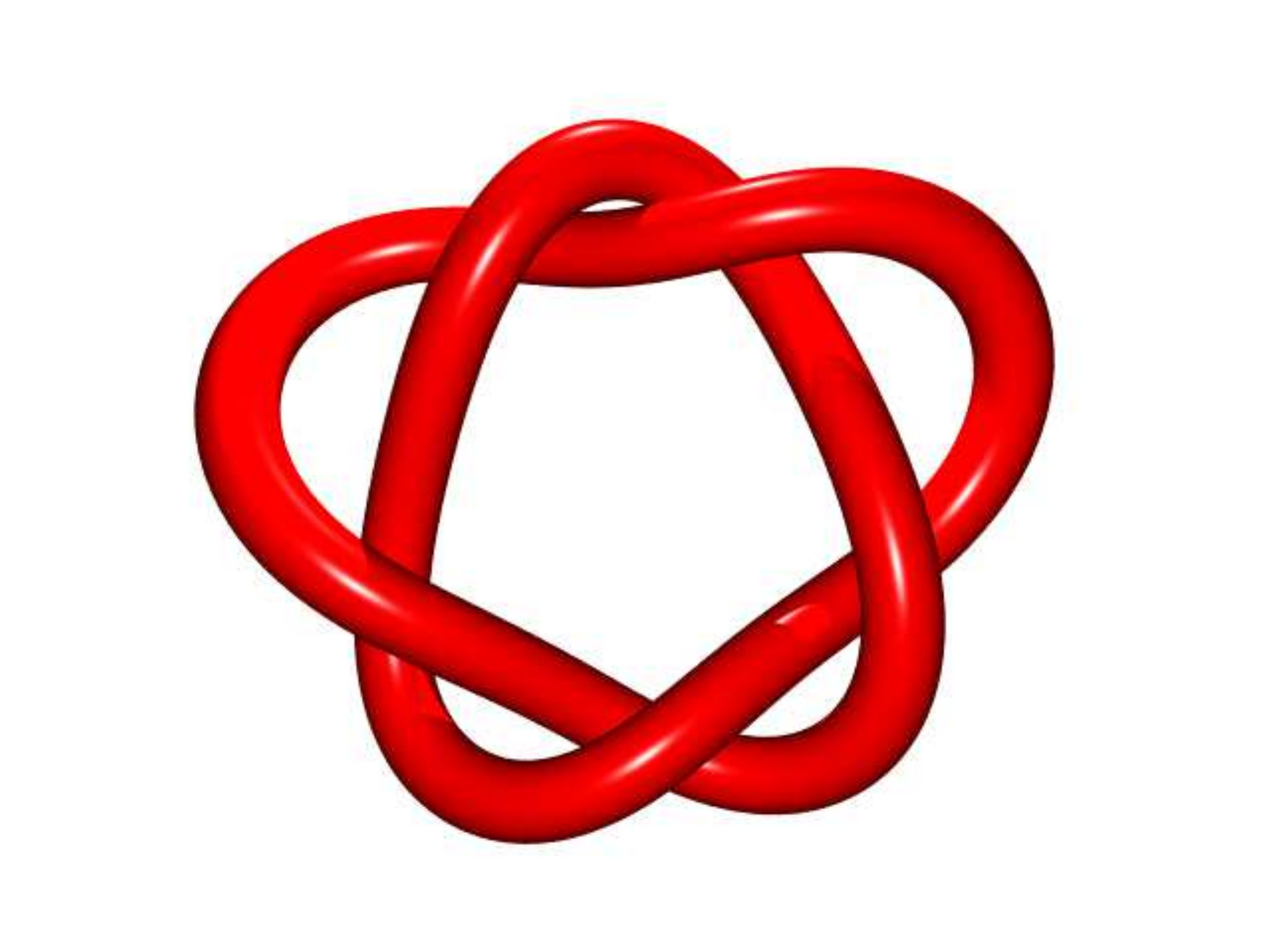} \\
107000/400000 & 125000/400000 & 400000/400000 \\
$\Le(\gamma)\approx 84.20402$ & $\Le(\gamma)\approx 79.685$ & $\Le(\gamma)\approx 75.92362$ \\
$\E_p(\gamma)\approx 19.21474$ & $\E_p(\gamma)\approx 18.98026$ & $\E_p(\gamma)\approx 18.8269$ \\
$\tau=10072.72315$ & $\tau=10477.91399$ & $\tau=10799.57702$ \\
\includegraphics[width=0.33\textwidth,keepaspectratio]{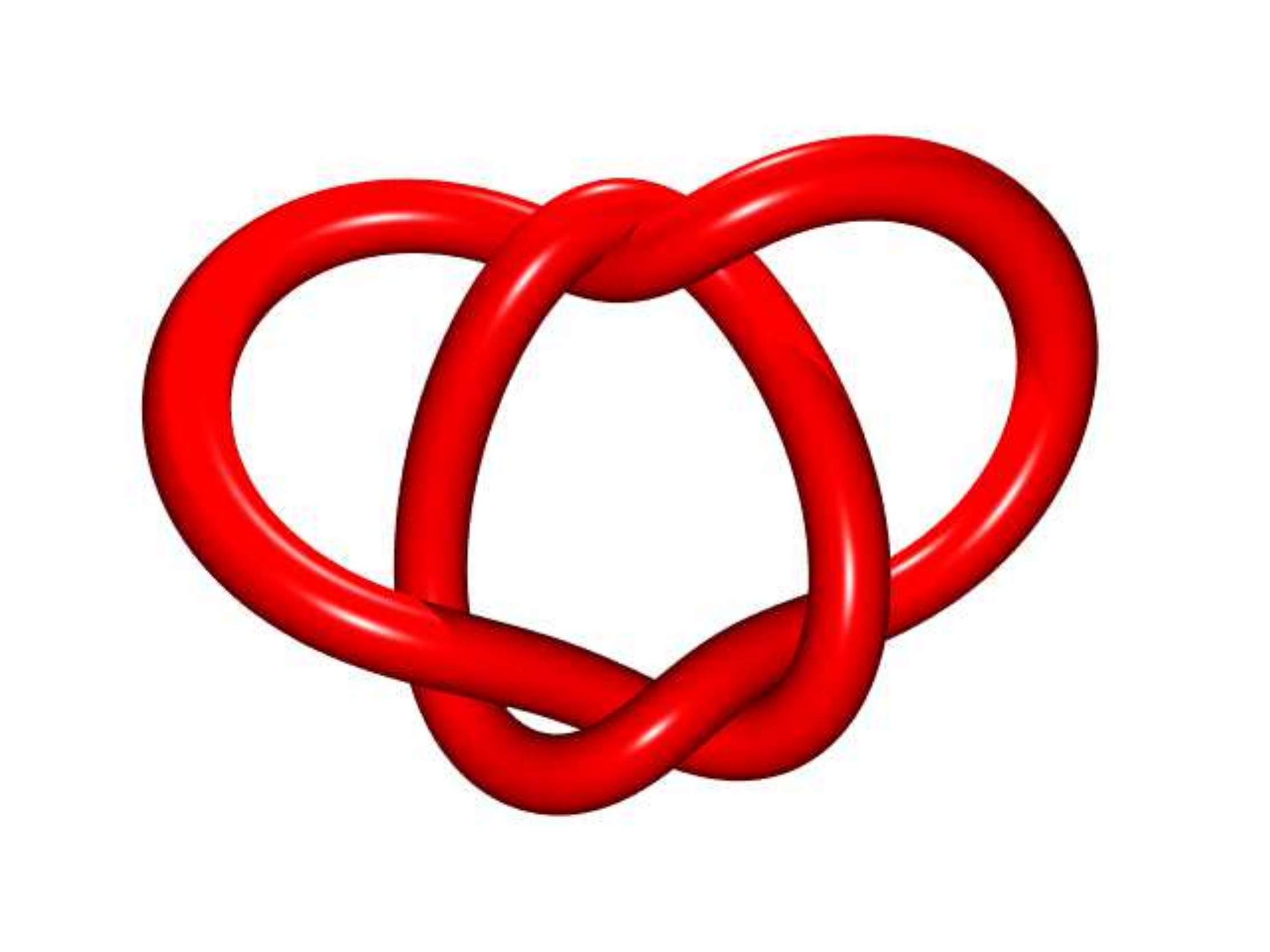} & \includegraphics[width=0.33\textwidth,keepaspectratio]{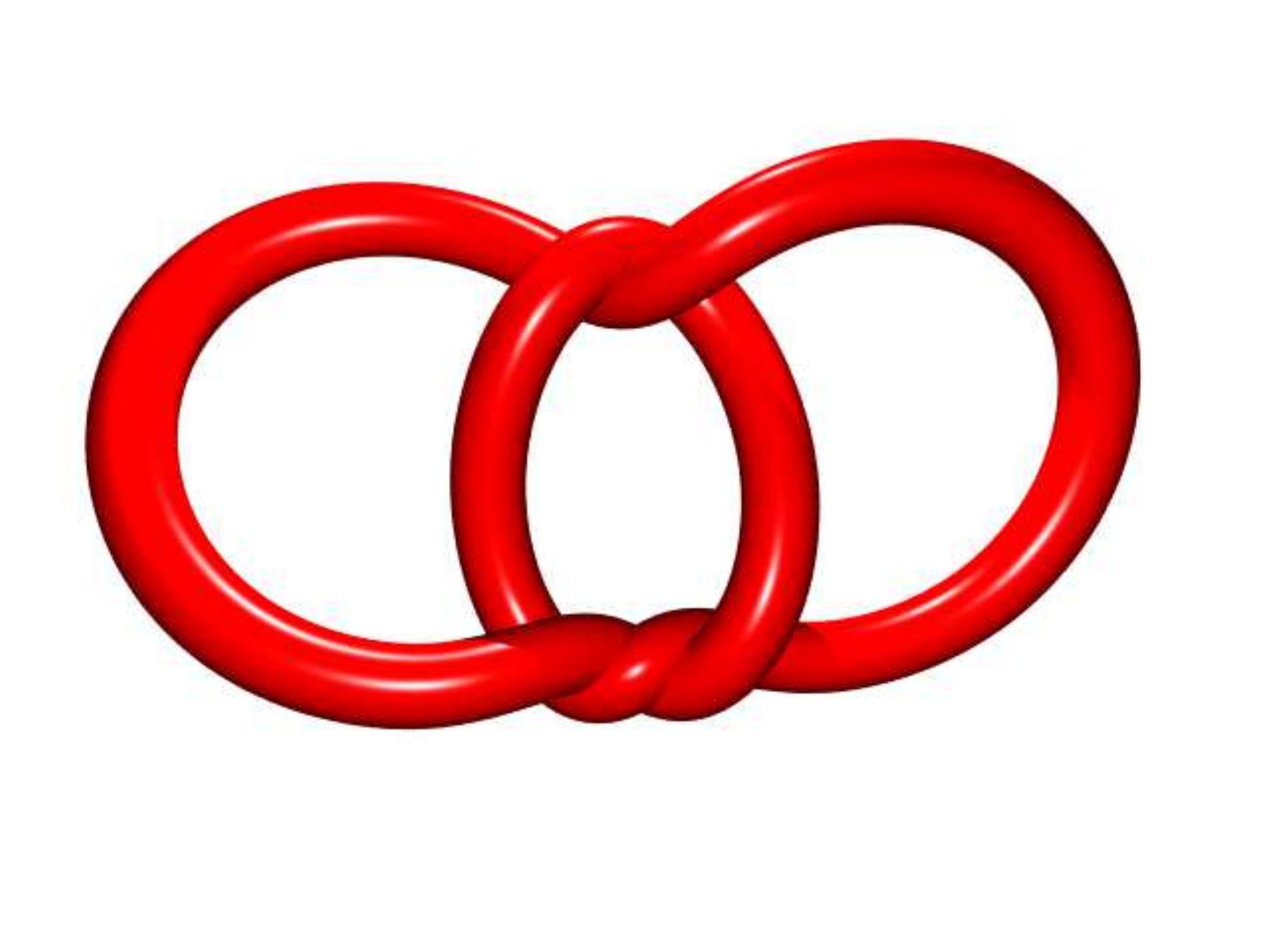} & \includegraphics[width=0.33\textwidth,keepaspectratio]{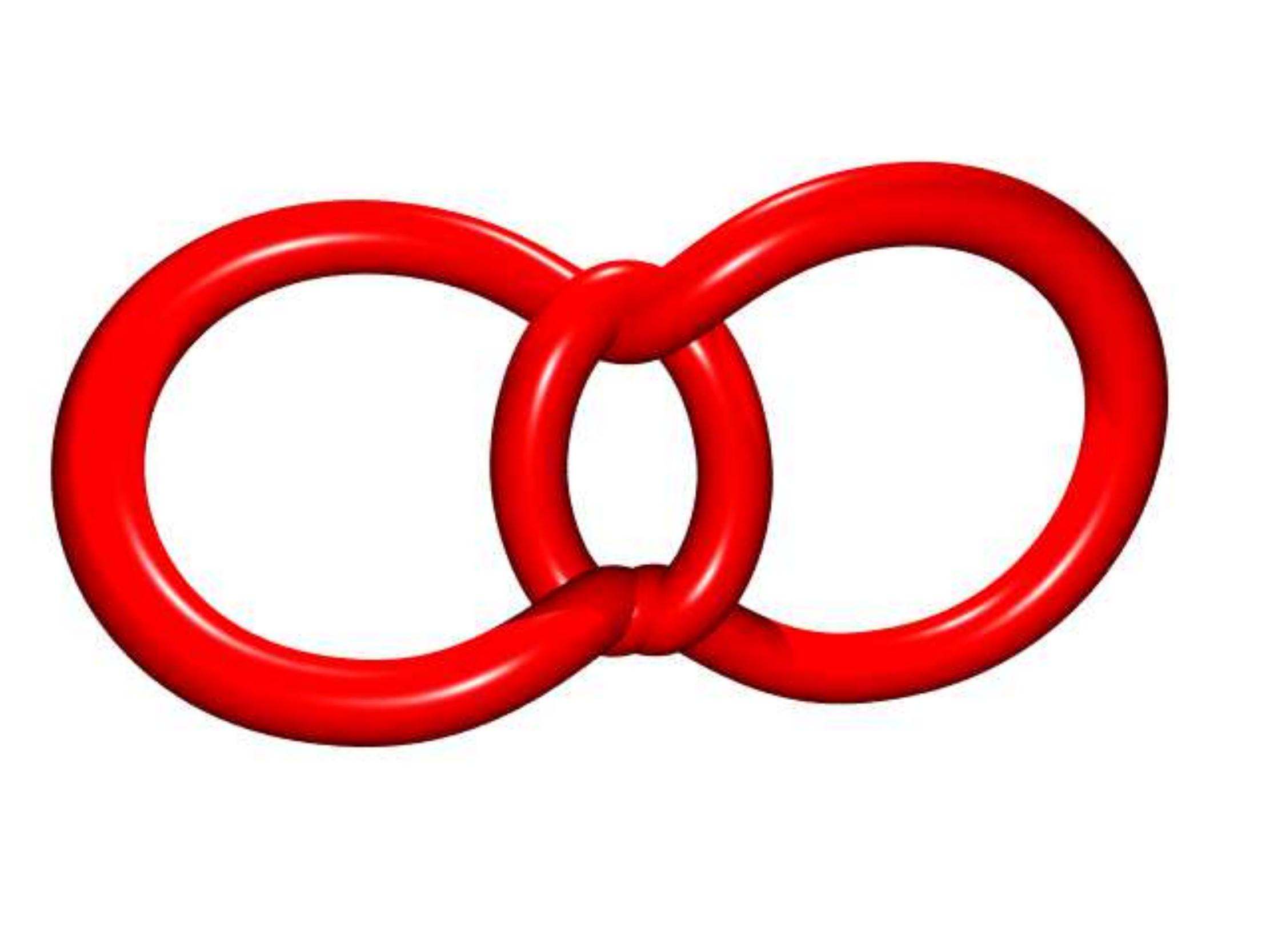}
\end{tabular}
\end{scriptsize}
\end{center}
\caption{A $5_1$ knot -- $p=3.0$ without redistribution ($\tau_{\text{max}}=0.1$, $\eps=0.05$)}
\end{figure}

\newpage
Now we consider the same configuration for the flow with redistribution.
\begin{figure}[H]
\begin{center}
\begin{scriptsize}
\begin{tabular}{ccc}
0/400000 & 1000/400000 & 400000/400000 \\
$\Le(\gamma)\approx 89.19963$ & $\Le(\gamma)\approx 86.1965$ & $\Le(\gamma)\approx 115.71134$ \\
$\E_p(\gamma)\approx 19.67818$ & $\E_p(\gamma)\approx 19.36134$ & $\E_p(\gamma)\approx 18.99994$ \\
$\tau=0.0$ & $\tau=87.12779$ & $\tau=17072.00582$ \\
\includegraphics[width=0.33\textwidth,keepaspectratio]{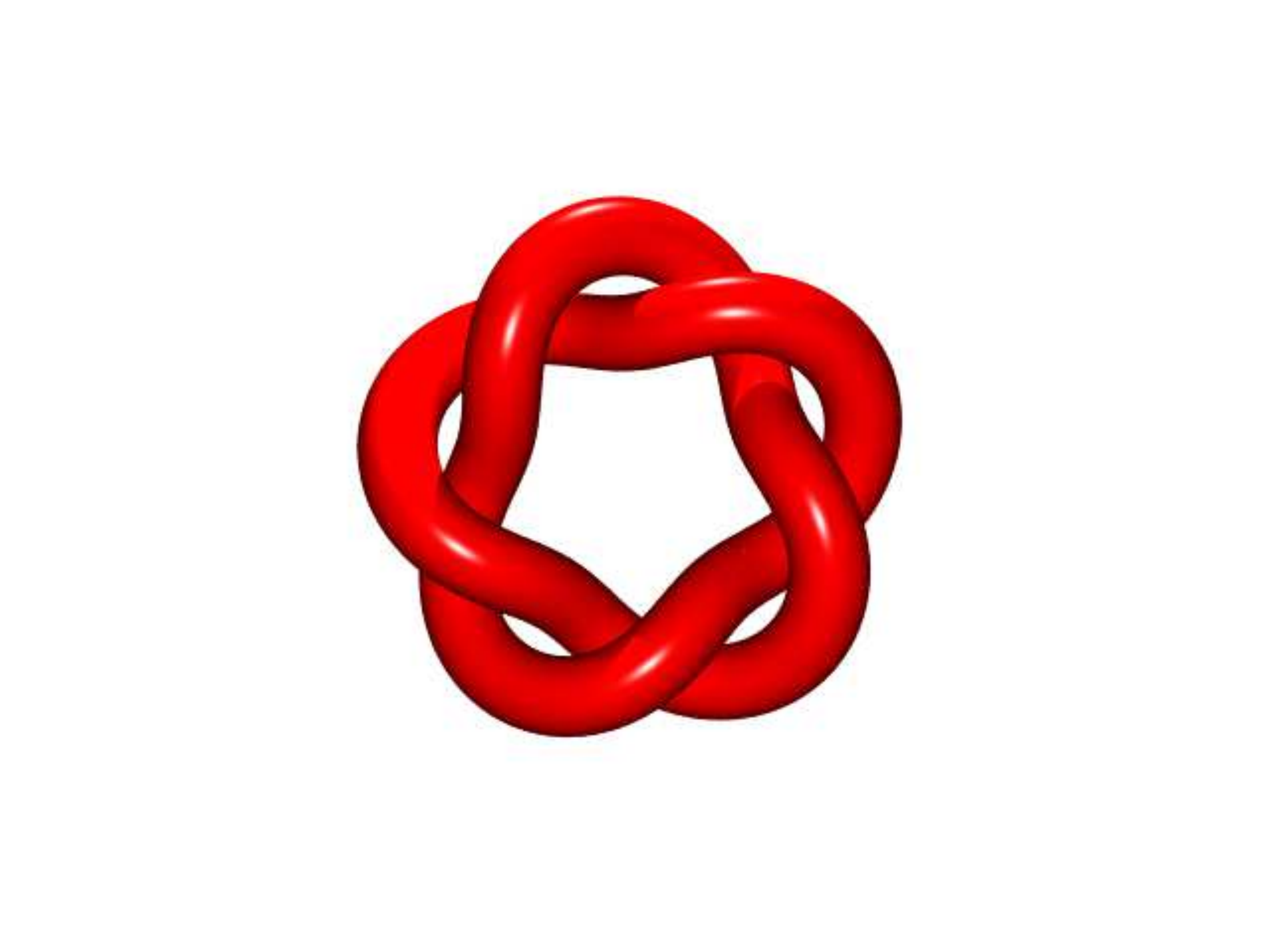} & \includegraphics[width=0.33\textwidth,keepaspectratio]{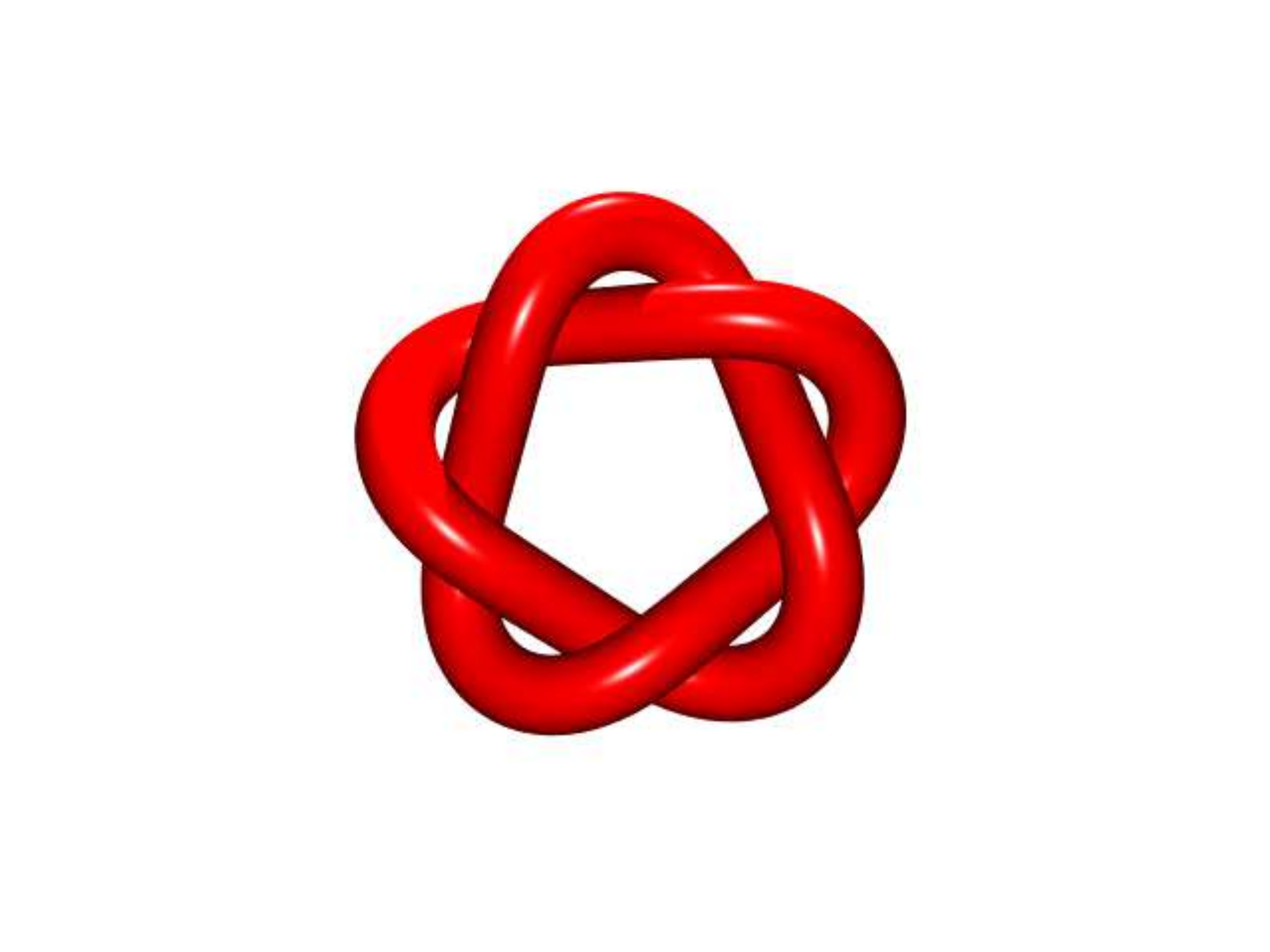} & \includegraphics[width=0.33\textwidth,keepaspectratio]{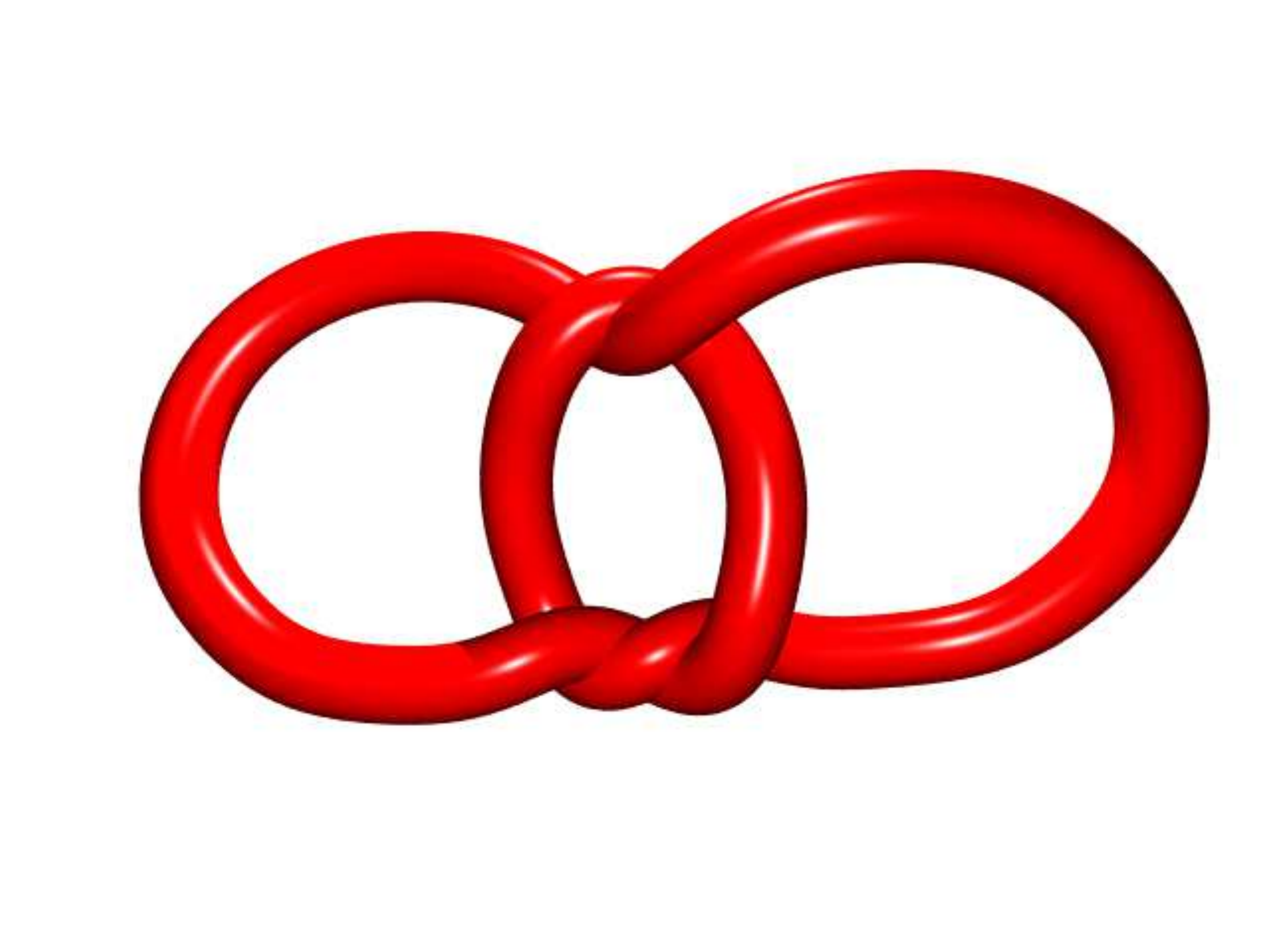}
\end{tabular}
\end{scriptsize}
\end{center}
\caption{A $5_1$ knot -- $p=3.0$ ($\tau_{\text{max}}=0.1$, $\eps=0.05$)}
\end{figure}

We see that after $400.000$ steps in this case for $p=3$ the knot stays in its knot class. Moreover, we obtain that the configuration with redistribution is larger but not that tight.

Again the flow with redistribution for $p=3.5$ stays in the symmetric configuration and the energy value stabilises in the end.
\begin{figure}[H]
\begin{center}
\begin{scriptsize}
\begin{tabular}{ccc}
0/380000 & 1000/380000 & 300000/380000 \\
$\Le(\gamma)\approx 89.19963$ & $\Le(\gamma)\approx 88.89754$ & $\Le(\gamma)\approx 89.05899$ \\
$\E_p(\gamma)\approx 20.93175$ & $\E_p(\gamma)\approx 20.73642$ & $\E_p(\gamma)\approx 20.73638$ \\
$\tau=0.0$ & $\tau=89.40621$ & $\tau=27096.27375$ \\
\includegraphics[width=0.33\textwidth,keepaspectratio]{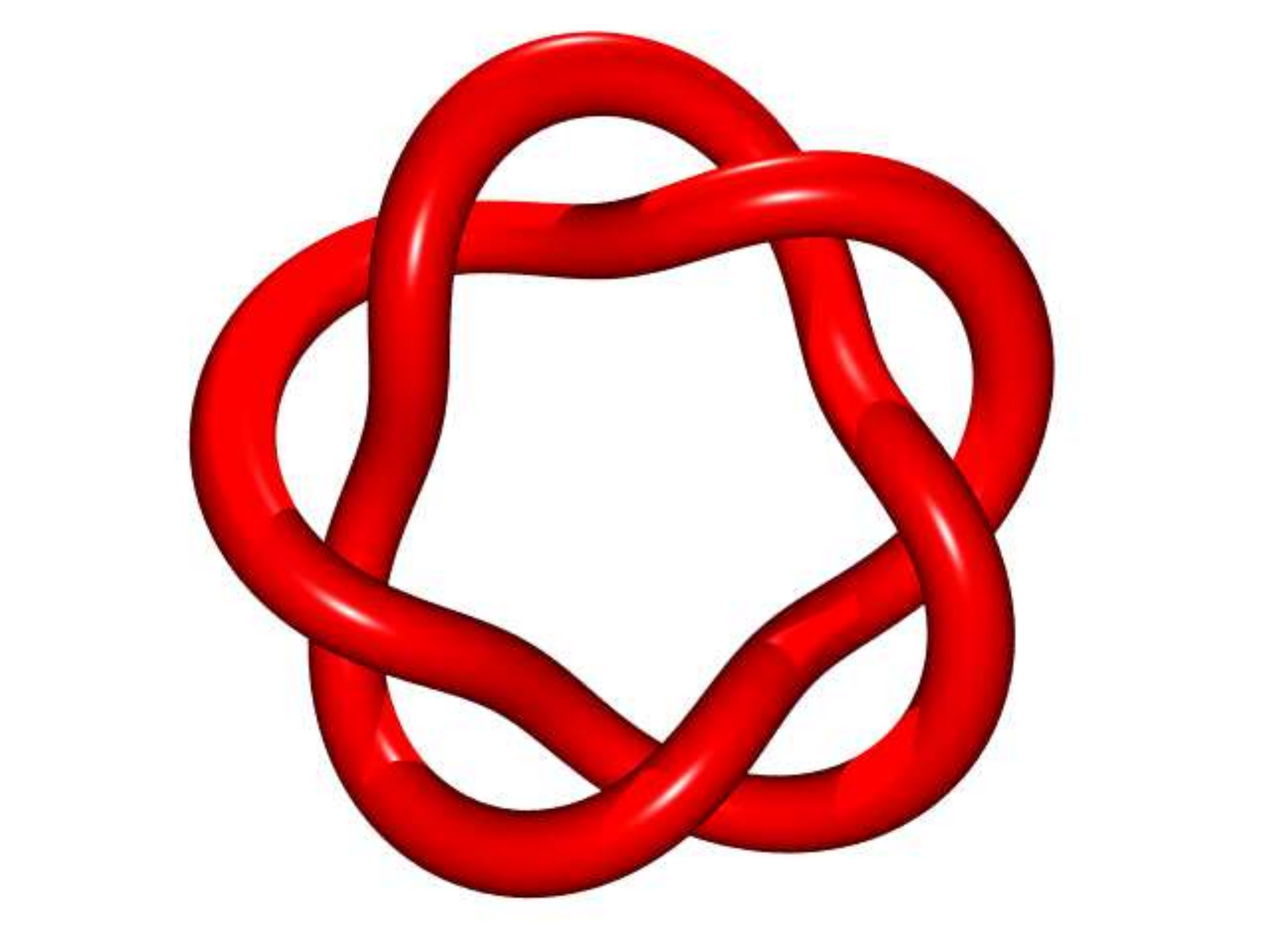} & \includegraphics[width=0.33\textwidth,keepaspectratio]{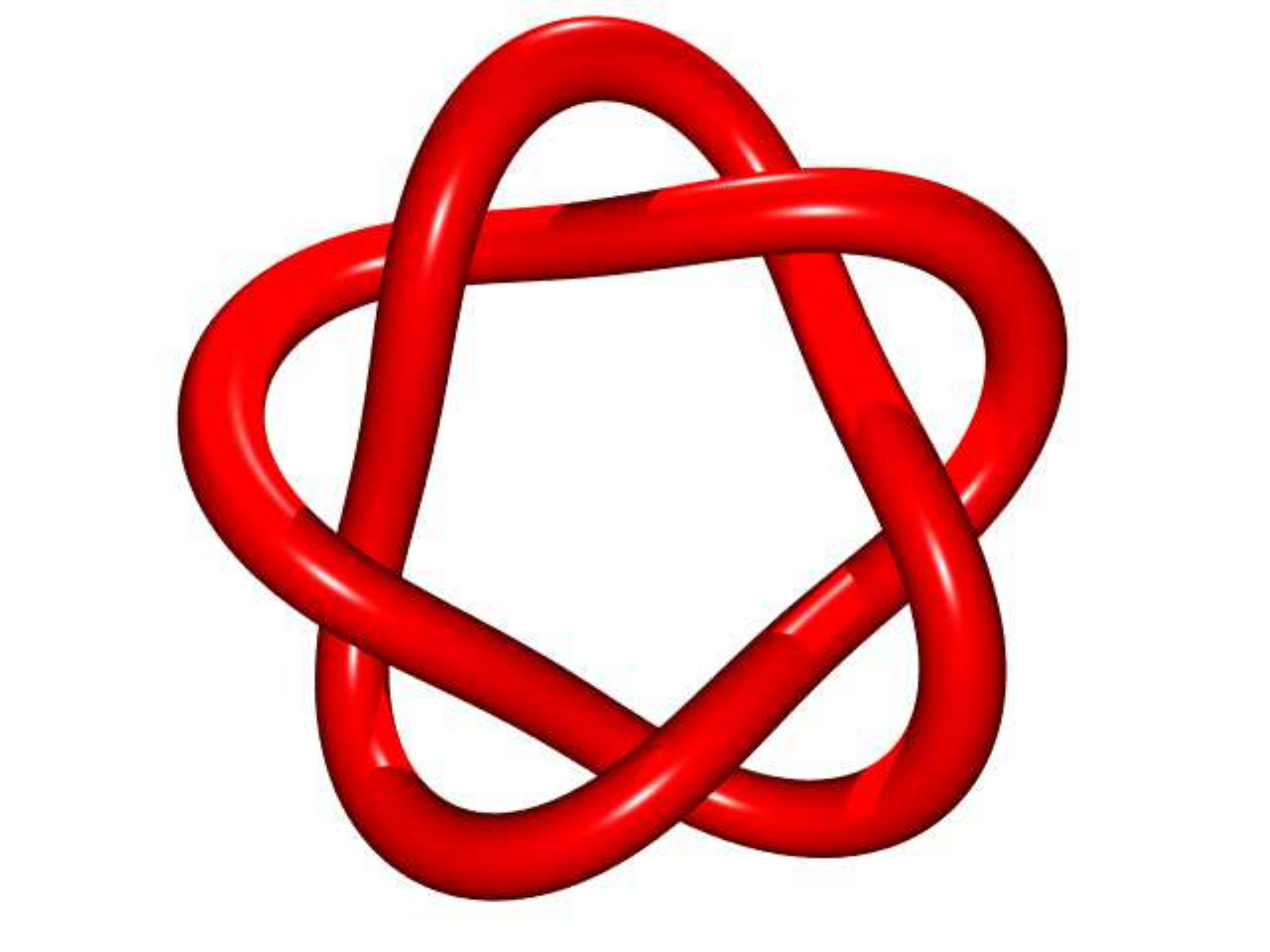} & \includegraphics[width=0.33\textwidth,keepaspectratio]{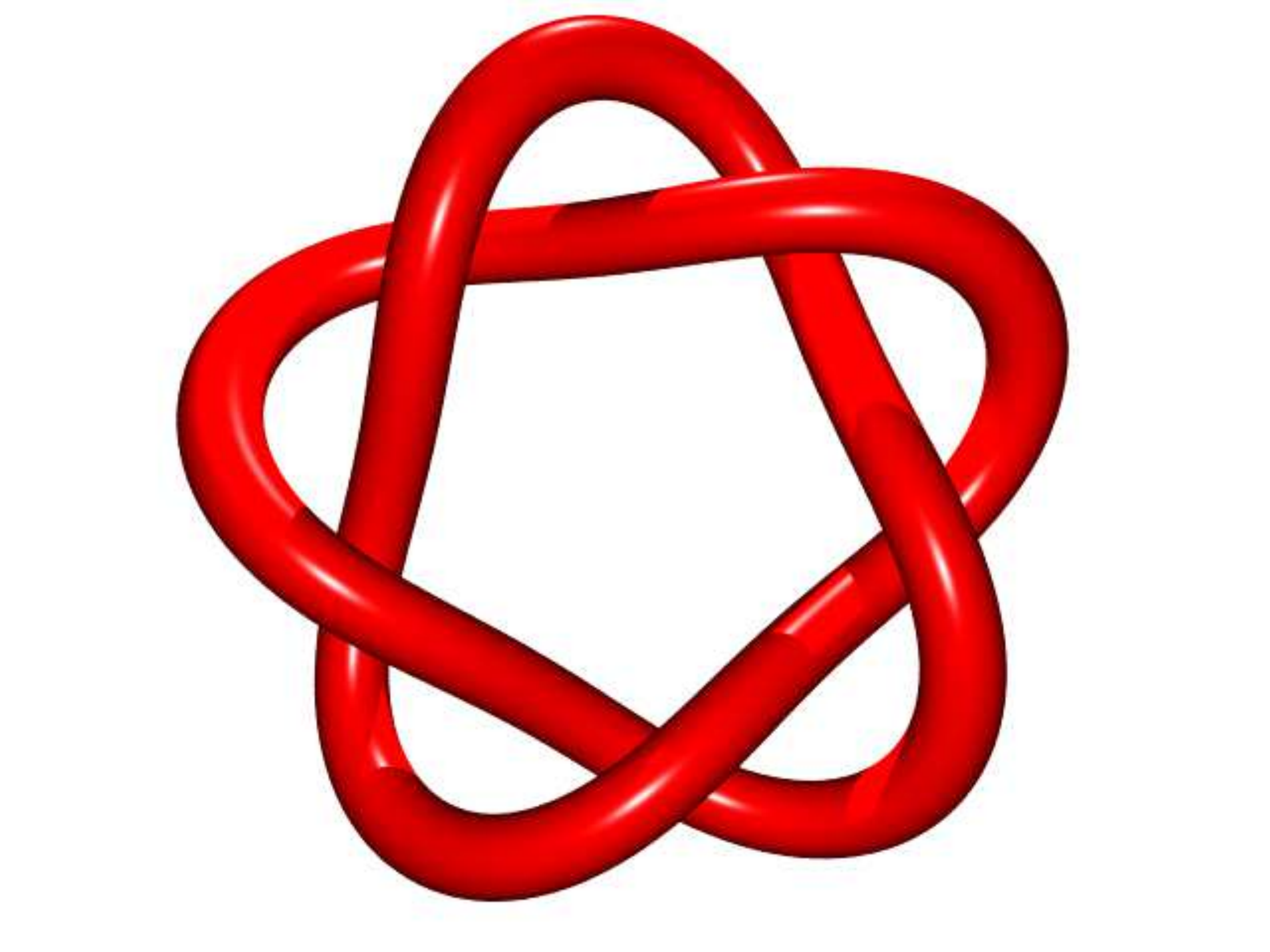}
\end{tabular}
\end{scriptsize}
\end{center}
\caption{A $5_1$ knot -- $p=3.5$}
\end{figure}

Even after $1.000.000$ steps without redistribution the shape is still symmetric and the energy is still $20.73638$.

Also for the case $p=50$ we use the flow without redistribution. However, observe that here the same time step size has been chosen in each step. This is due to the fact that this knot is quit large and therefore, a maximal time step size of $\tau_{\max}=0.01$ is below the computed value of the adaptive time step size algorithm.
\begin{figure}[H]
\begin{center}
\begin{scriptsize}
\begin{tabular}{ccc}
0/300000 & 1000/300000 & 170000/300000 \\
$\Le(\gamma)\approx 89.20002$ & $\Le(\gamma)\approx 98.85687$ & $\Le(\gamma)\approx 98.95859$ \\
$\E_p(\gamma)\approx 51.7359$ & $\E_p(\gamma)\approx 43.05068$ & $\E_p(\gamma)\approx 43.04953$ \\
$\tau=0.0$ & $\tau=10.0$ & $\tau=1700.0$ \\
\includegraphics[width=0.33\textwidth,keepaspectratio]{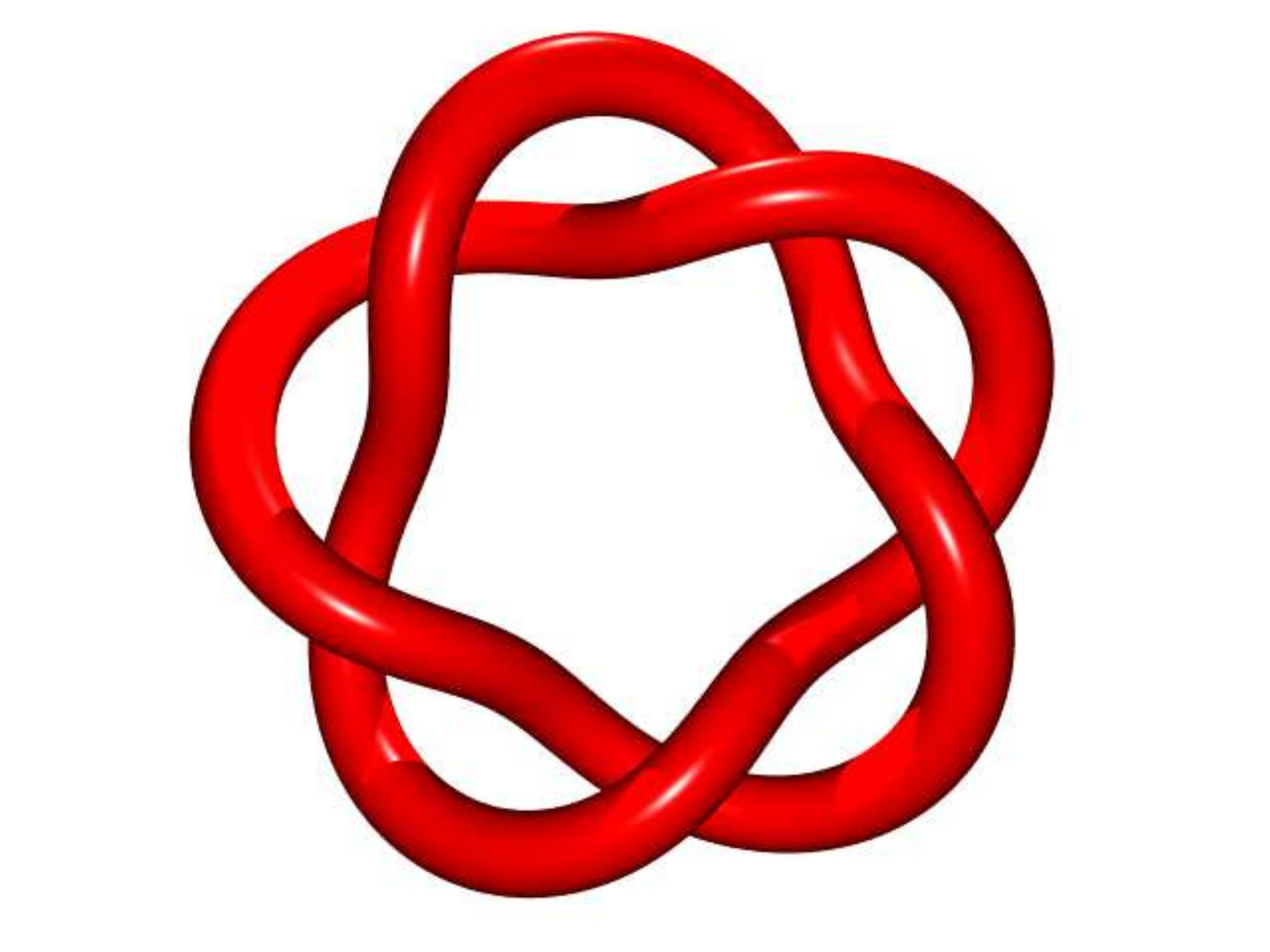} & \includegraphics[width=0.33\textwidth,keepaspectratio]{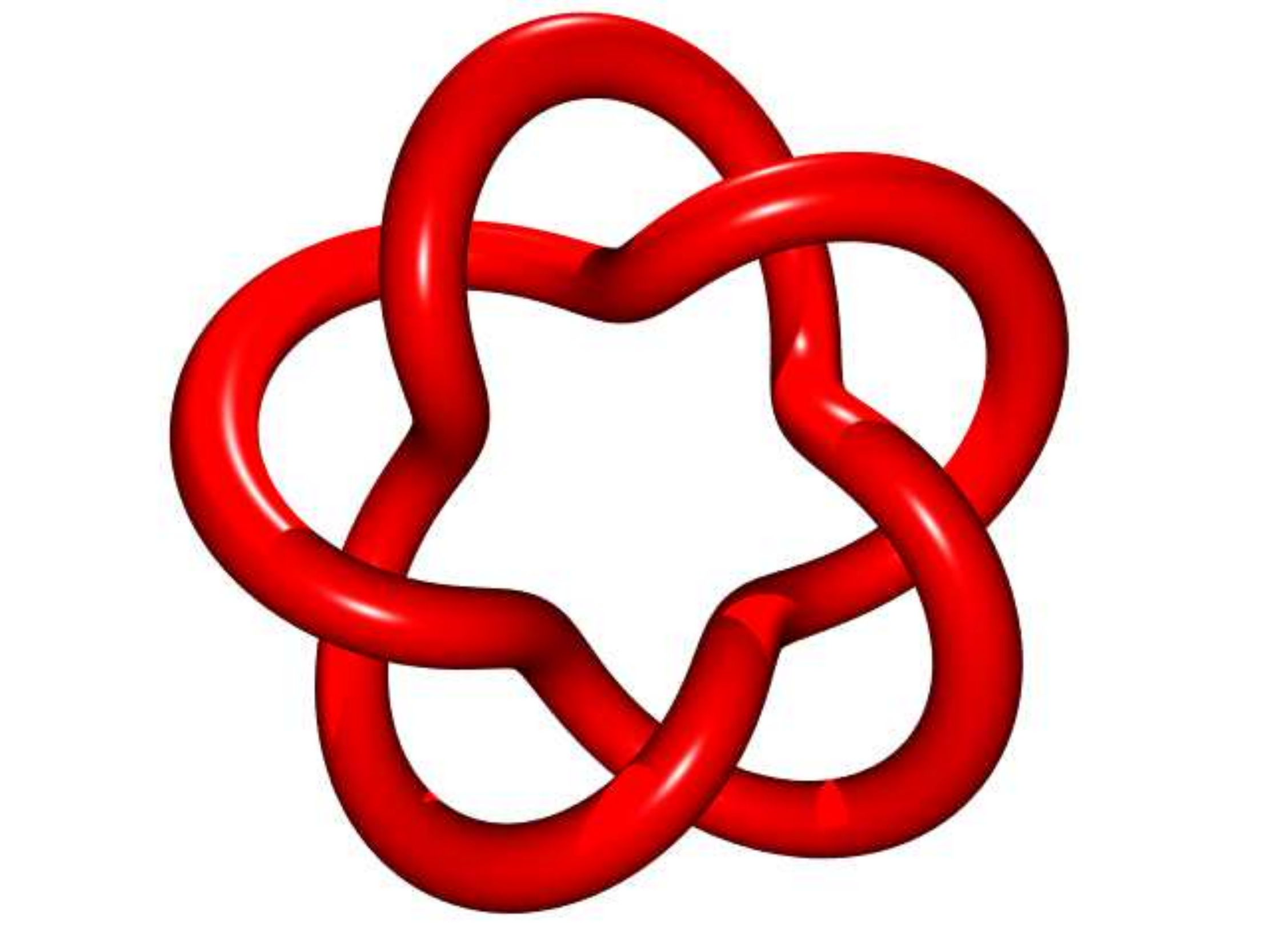} & \includegraphics[width=0.33\textwidth,keepaspectratio]{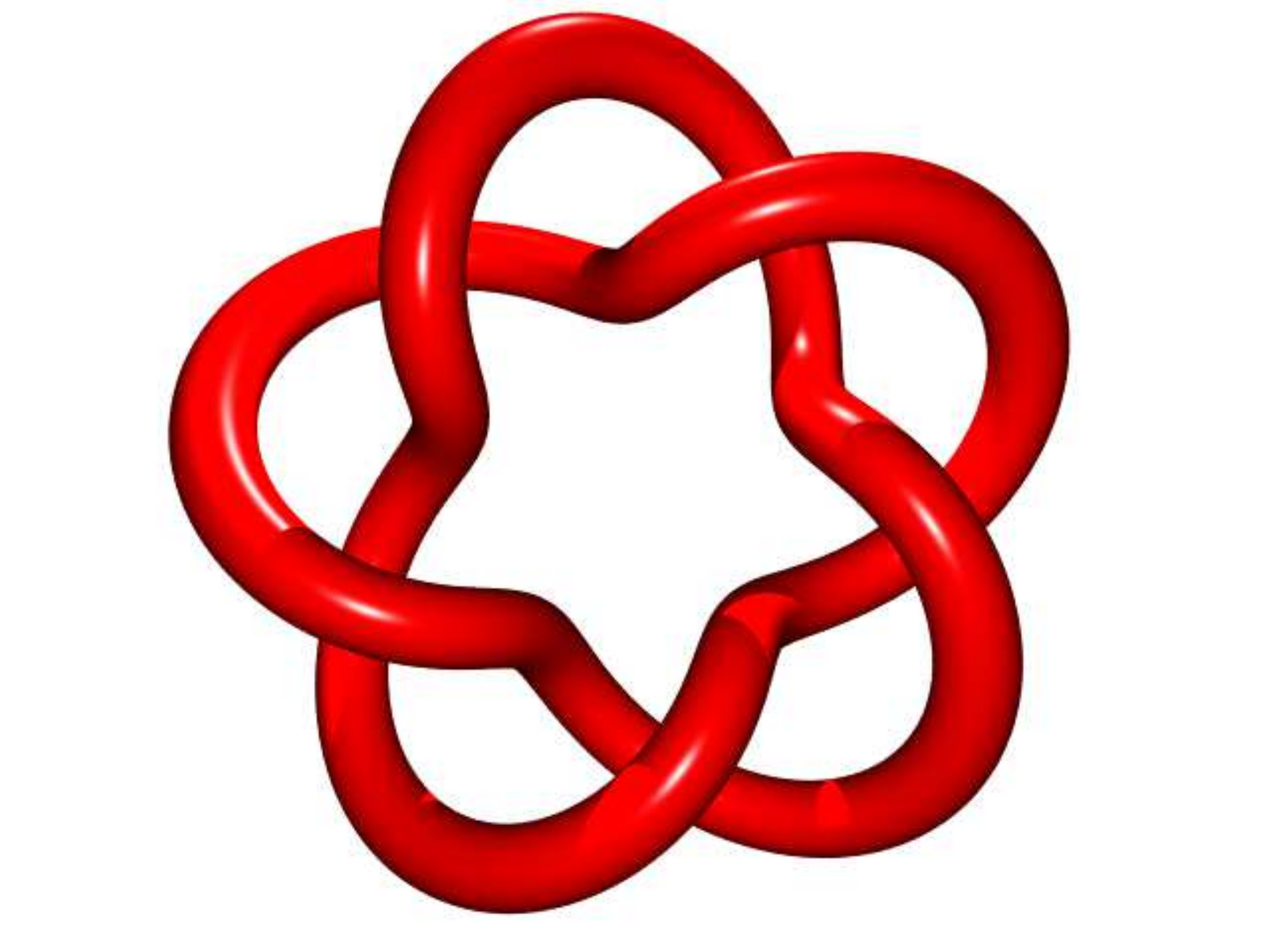} \\
208000/300000 & 220000/300000 & 300000/300000 \\
$\Le(\gamma)\approx 99.87995$ & $\Le(\gamma)\approx 101.13572$ & $\Le(\gamma)\approx 101.46585$ \\
$\E_p(\gamma)\approx 42.72715$ & $\E_p(\gamma)\approx 42.19442$ & $\E_p(\gamma)\approx 42.1562$ \\
$\tau=2080.0$ & $\tau=2200.0$ & $\tau=3000.0$ \\
\includegraphics[width=0.33\textwidth,keepaspectratio]{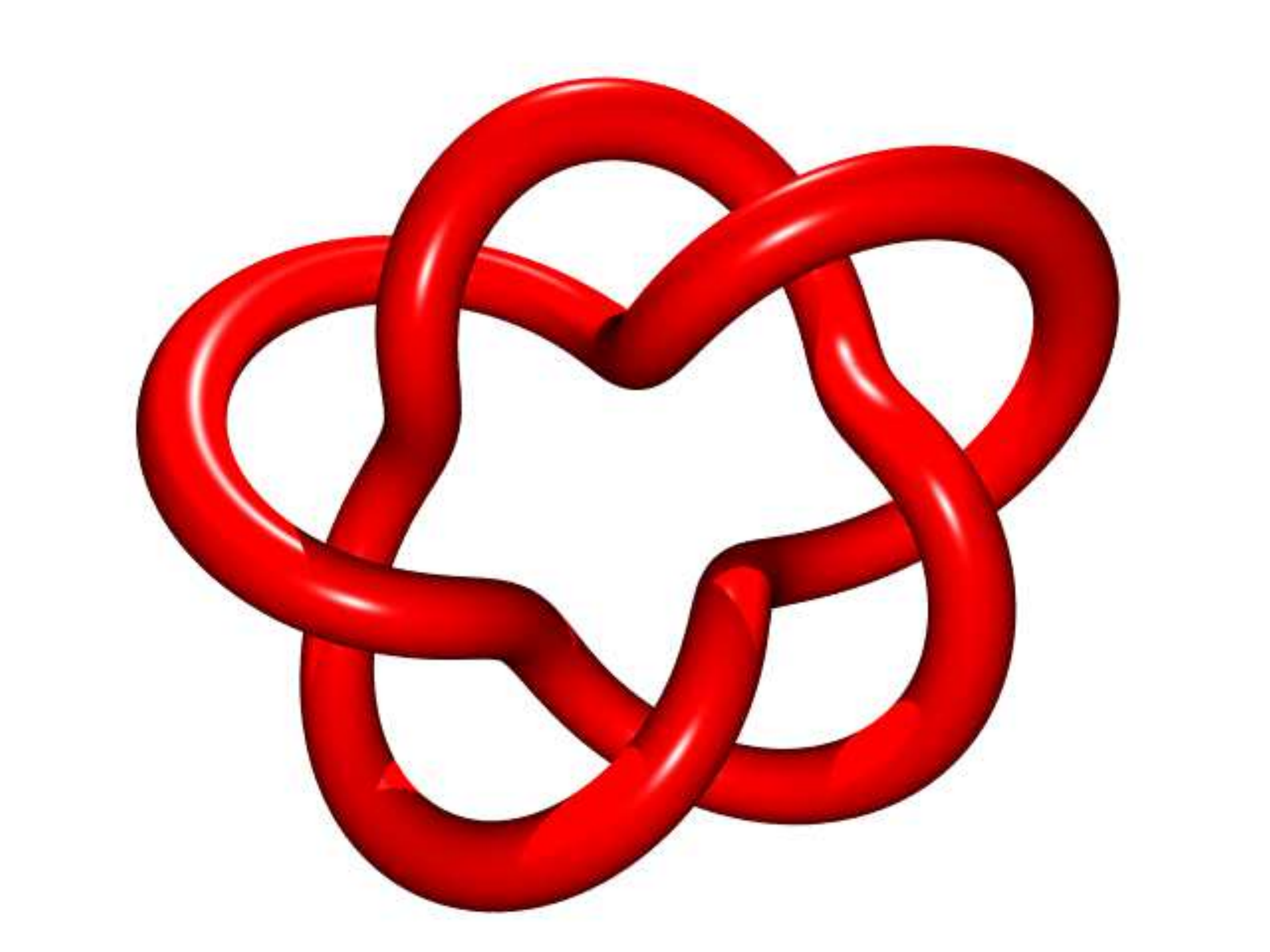} & \includegraphics[width=0.33\textwidth,keepaspectratio]{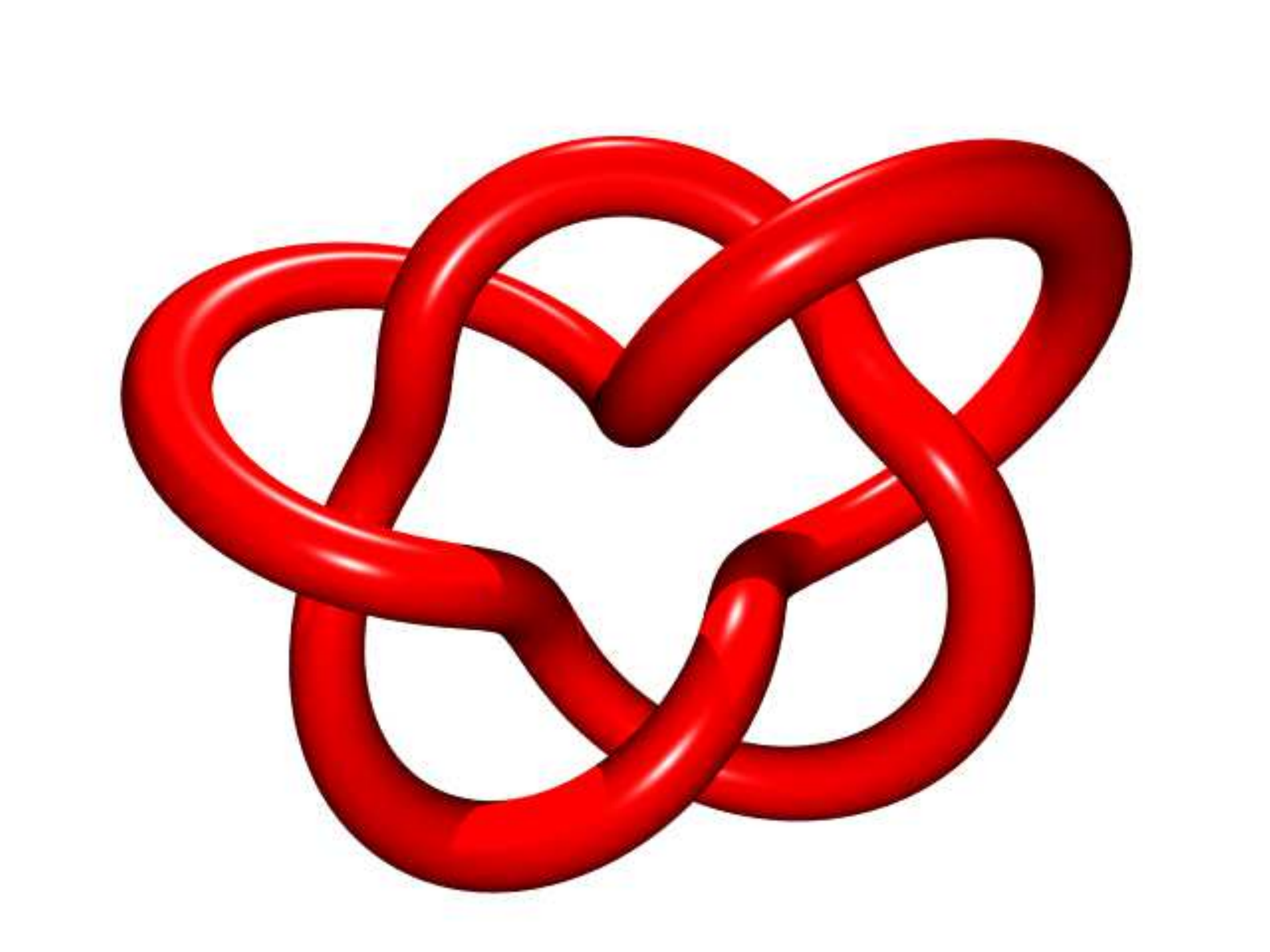} & \includegraphics[width=0.33\textwidth,keepaspectratio]{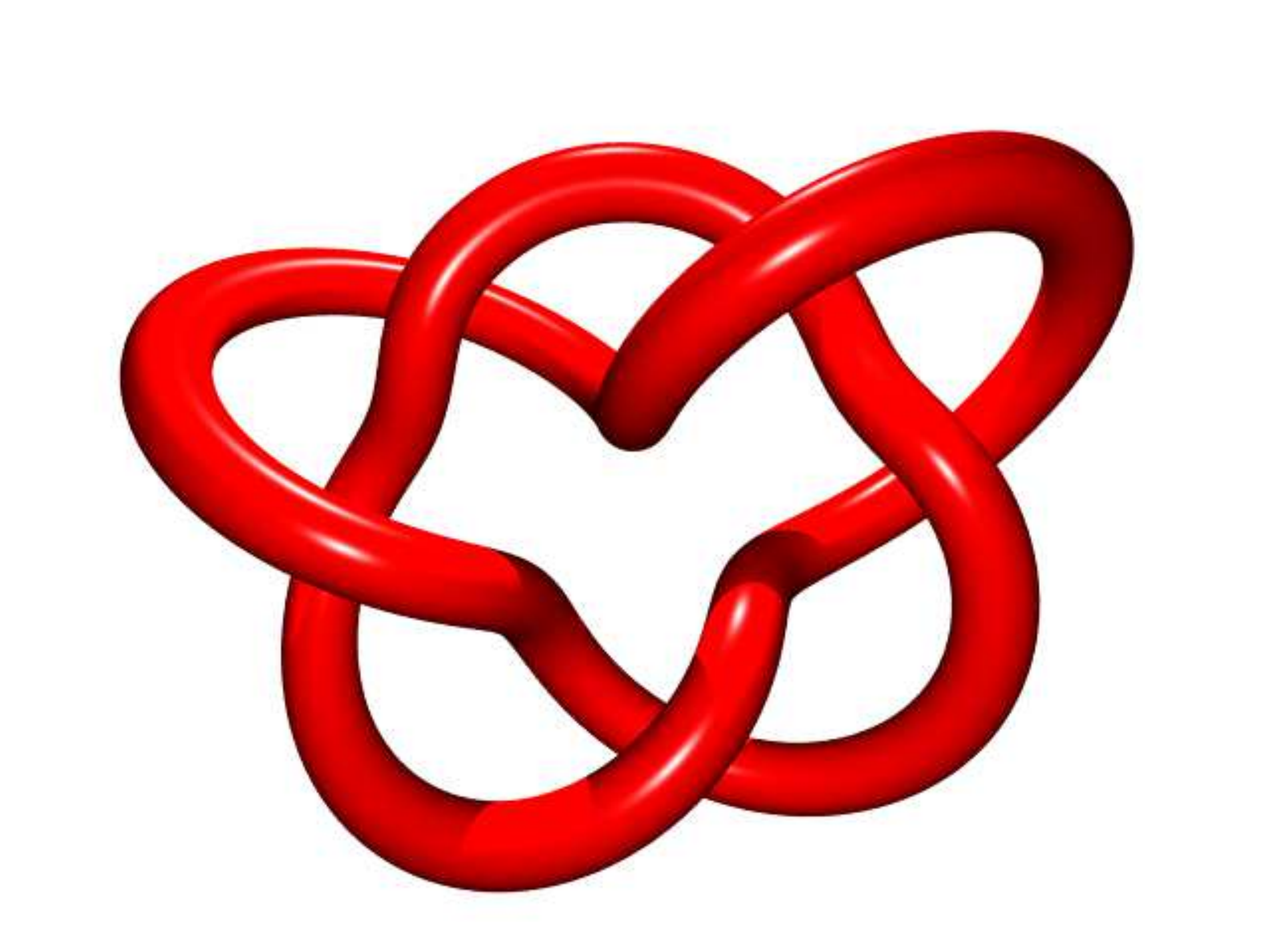}
\end{tabular}
\end{scriptsize}
\end{center}
\caption{A $5_1$ knot -- $p=50.0$ without redistribution}
\end{figure}

Observe that at first the knot stays in the symmetric configuration. It seems to be the case that the continuous flow preserves symmetries and that due to rounding-errors somewhen the symmetry is broken and the knot flows into a ``better'' configuration. With redistribution this takes longer and therefore, and only after about $1.000.000$ steps the symmetric configuration is abandoned. If we consider a rescaled version of this knot the flow with redistribution has reached the ``better'' configuration within $50.000$ steps.

\newpage
\section{$5_2$ knots} \label{knot52}
Again for $p=3$ the knot class is abandoned. However, for $p=3.5$ this is not the case, as we will see in the following pictures
\begin{figure}[H]
\begin{center}
\begin{scriptsize}
\begin{tabular}{ccc}
0/100000 & 500/100000 & 100000/100000 \\
$\Le(\gamma)\approx 24.52692$ & $\Le(\gamma)\approx 22.76258$ & $\Le(\gamma)\approx 22.58074$ \\
$\E_p(\gamma)\approx 22.20712$ & $\E_p(\gamma)\approx 21.39625$ & $\E_p(\gamma)\approx 21.34356$ \\
$\tau=0.0$ & $\tau=0.65197$ & $\tau=94.05908$ \\
\includegraphics[width=0.33\textwidth,keepaspectratio]{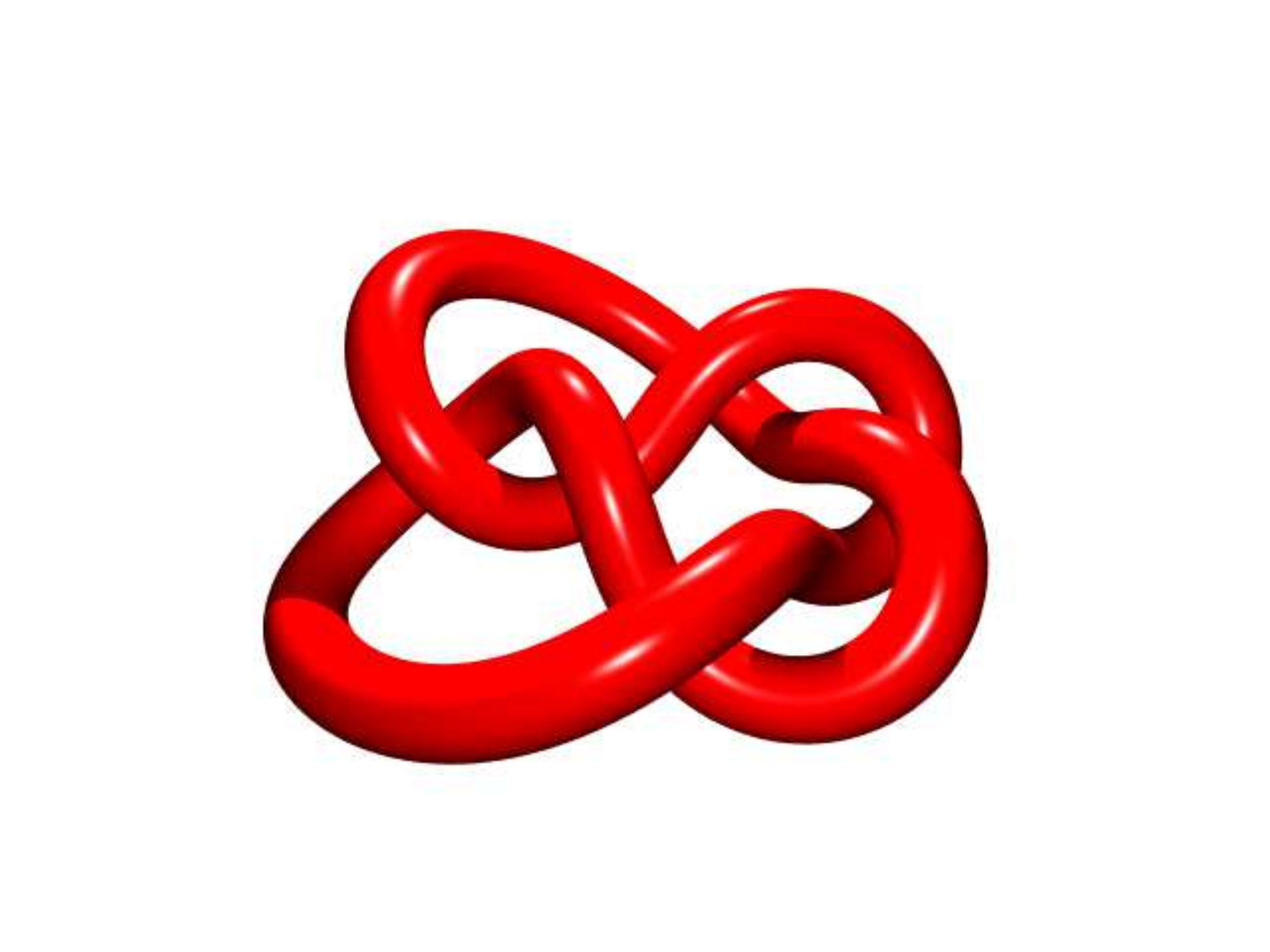} & \includegraphics[width=0.33\textwidth,keepaspectratio]{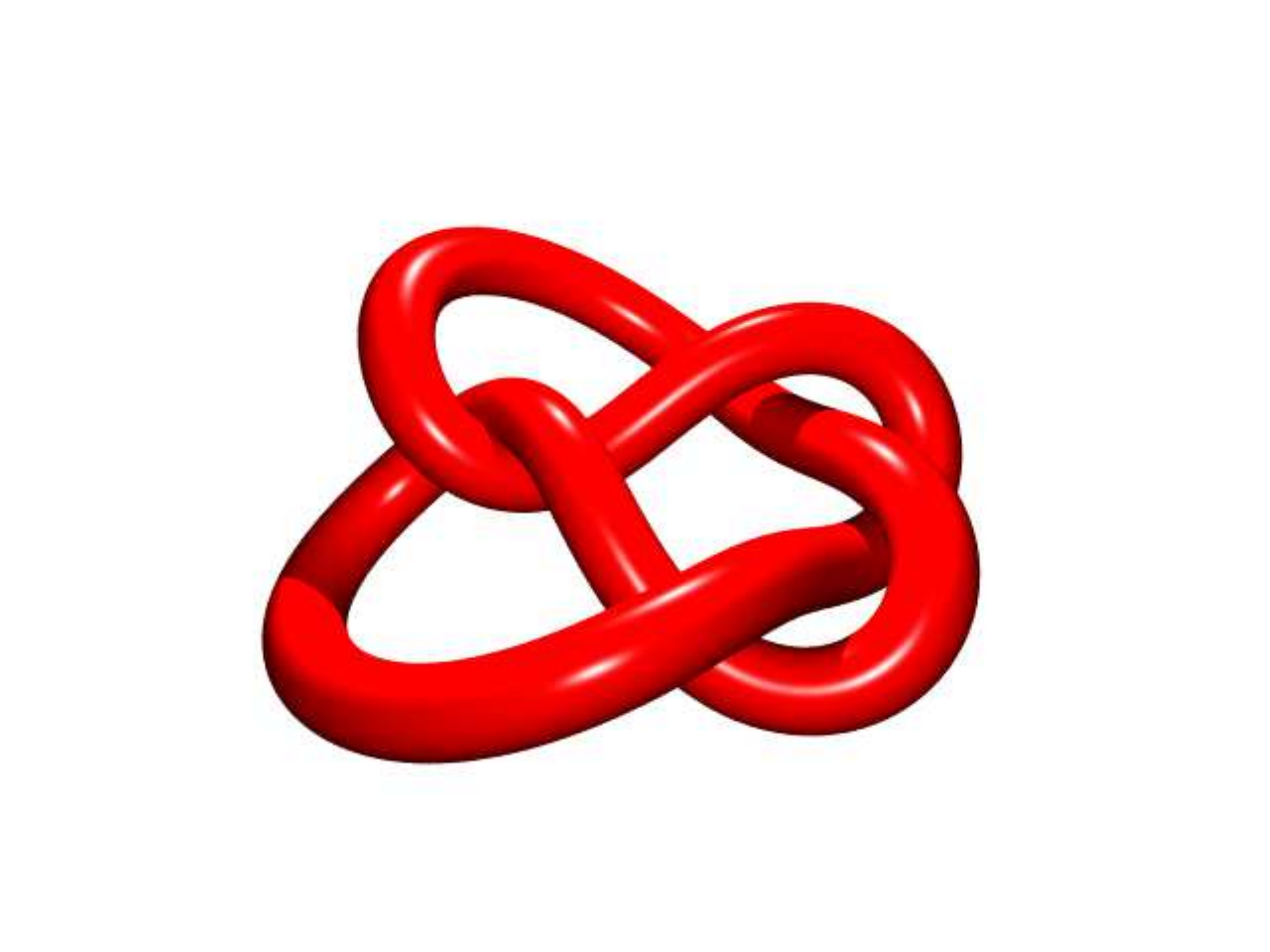} & \includegraphics[width=0.33\textwidth,keepaspectratio]{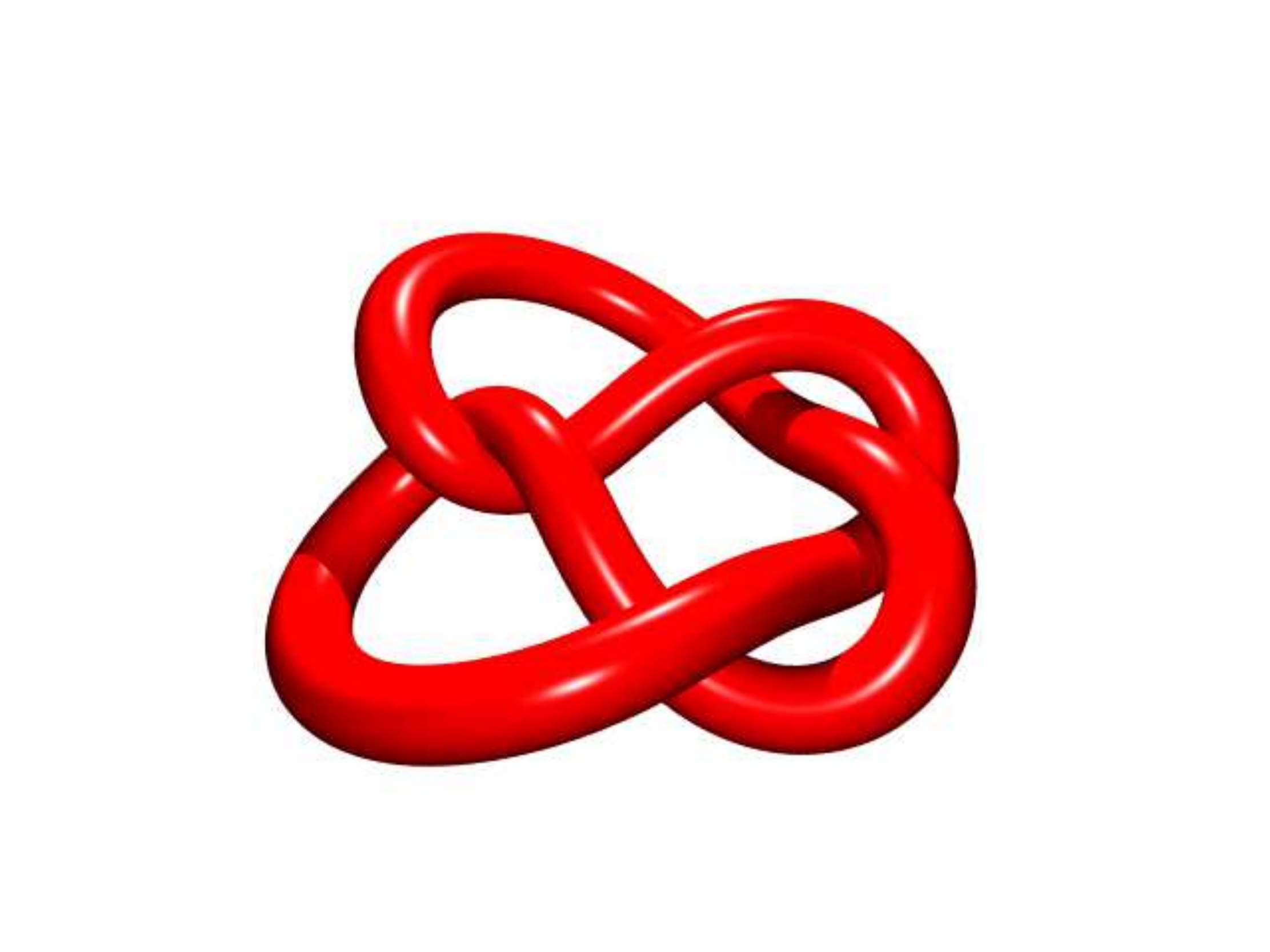}
\end{tabular}
\end{scriptsize}
\end{center}
\caption{A $5_2$ knot -- $p=3.5$}
\end{figure}

Observe that this energy value stay constant up to $300.000$ steps. After $500.000$ steps without redistribution the value $21.3434$ is reached.

\newpage
For $p=50$ the knot is transformed into a completely different configuration
\begin{figure}[H]
\begin{center}
\begin{scriptsize}
\begin{tabular}{cc}
0/50000 & 2000/50000 \\
$\Le(\gamma)\approx 24.52692$ & $\Le(\gamma)\approx 26.77052$ \\
$\E_p(\gamma)\approx 53.24807$ & $\E_p(\gamma)\approx 46.34182$ \\
$\tau=0.0$ & $\tau=1.99485$ \\
\includegraphics[width=0.4\textwidth,keepaspectratio]{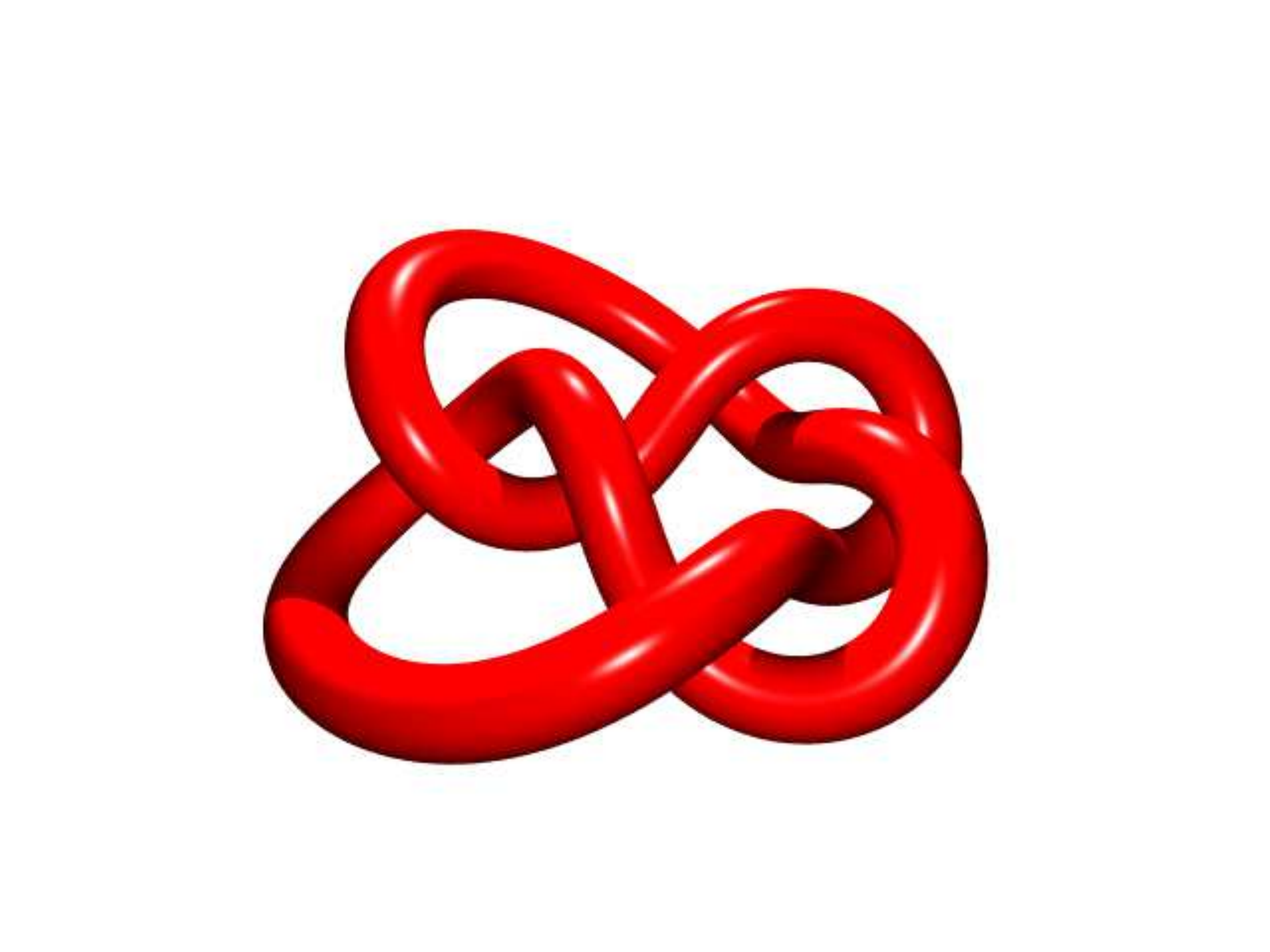} & \includegraphics[width=0.4\textwidth,keepaspectratio]{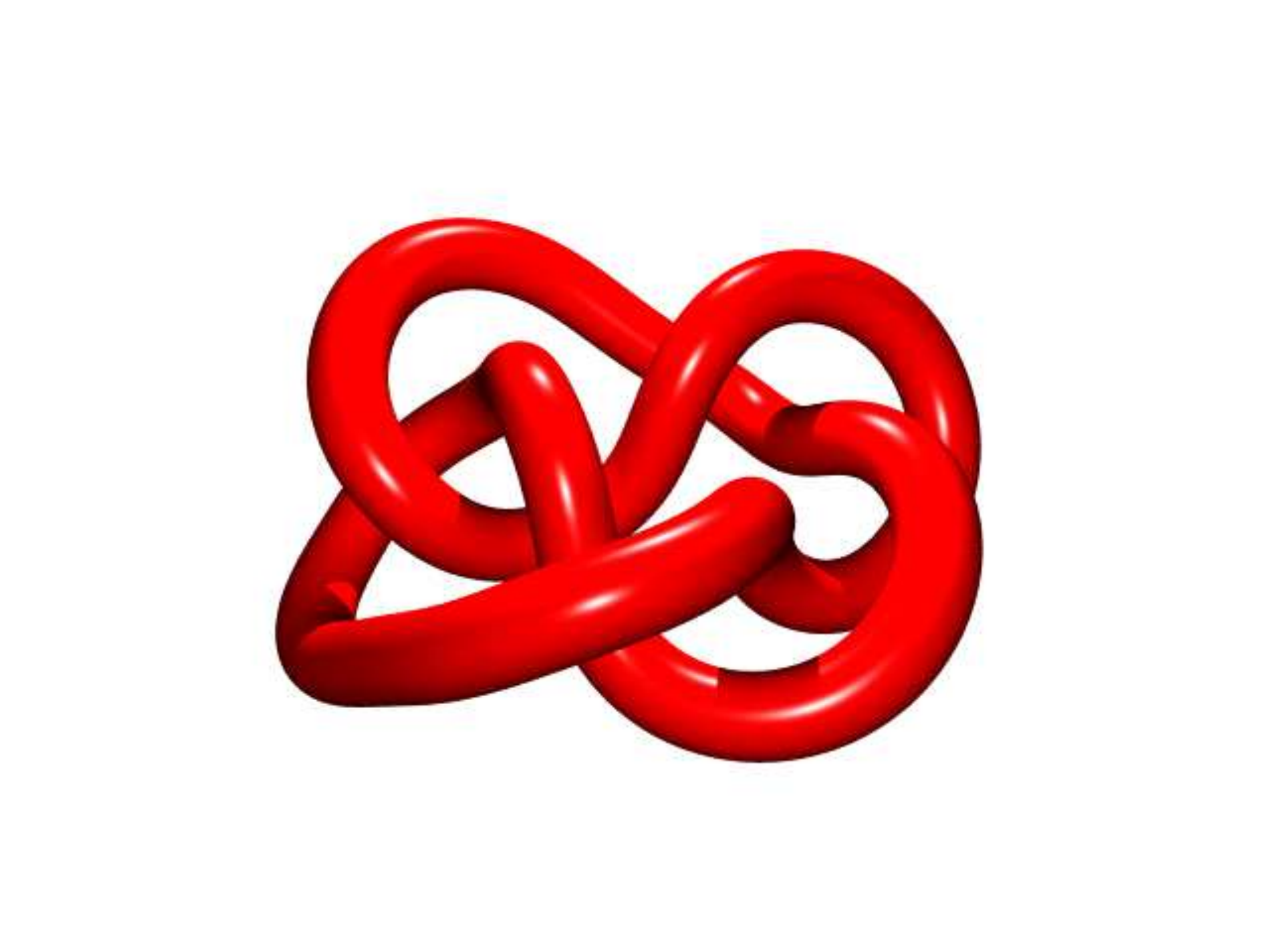} \\
6500/50000 & 10000/50000 \\
$\Le(\gamma)\approx 28.17135$ & $\Le(\gamma)\approx 29.53172$ \\
$\E_p(\gamma)\approx 45.71313$ & $\E_p(\gamma)\approx 44.03581$ \\
$\tau=7.21586$ & $\tau=11.98769$ \\
\includegraphics[width=0.4\textwidth,keepaspectratio]{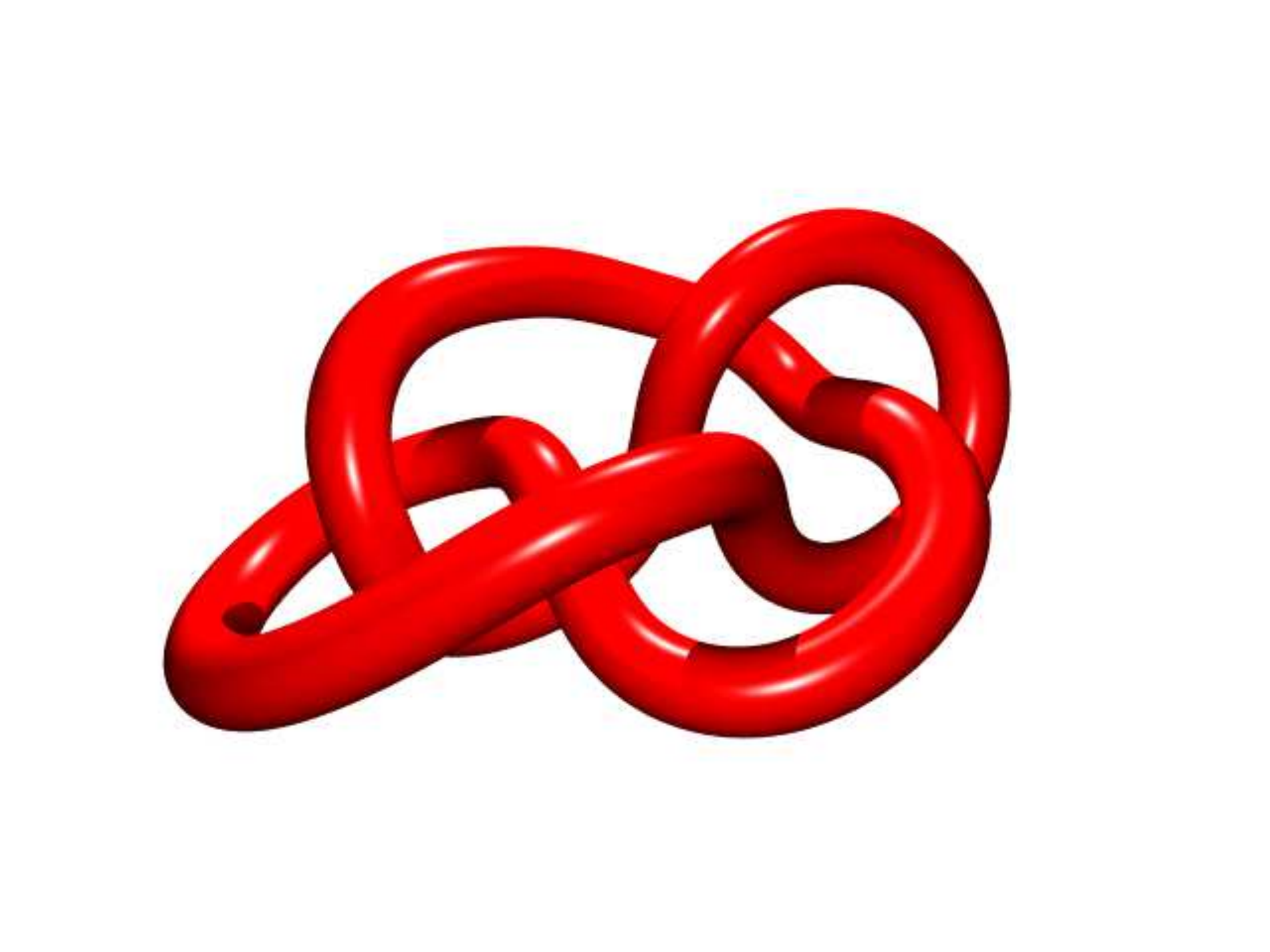} & \includegraphics[width=0.4\textwidth,keepaspectratio]{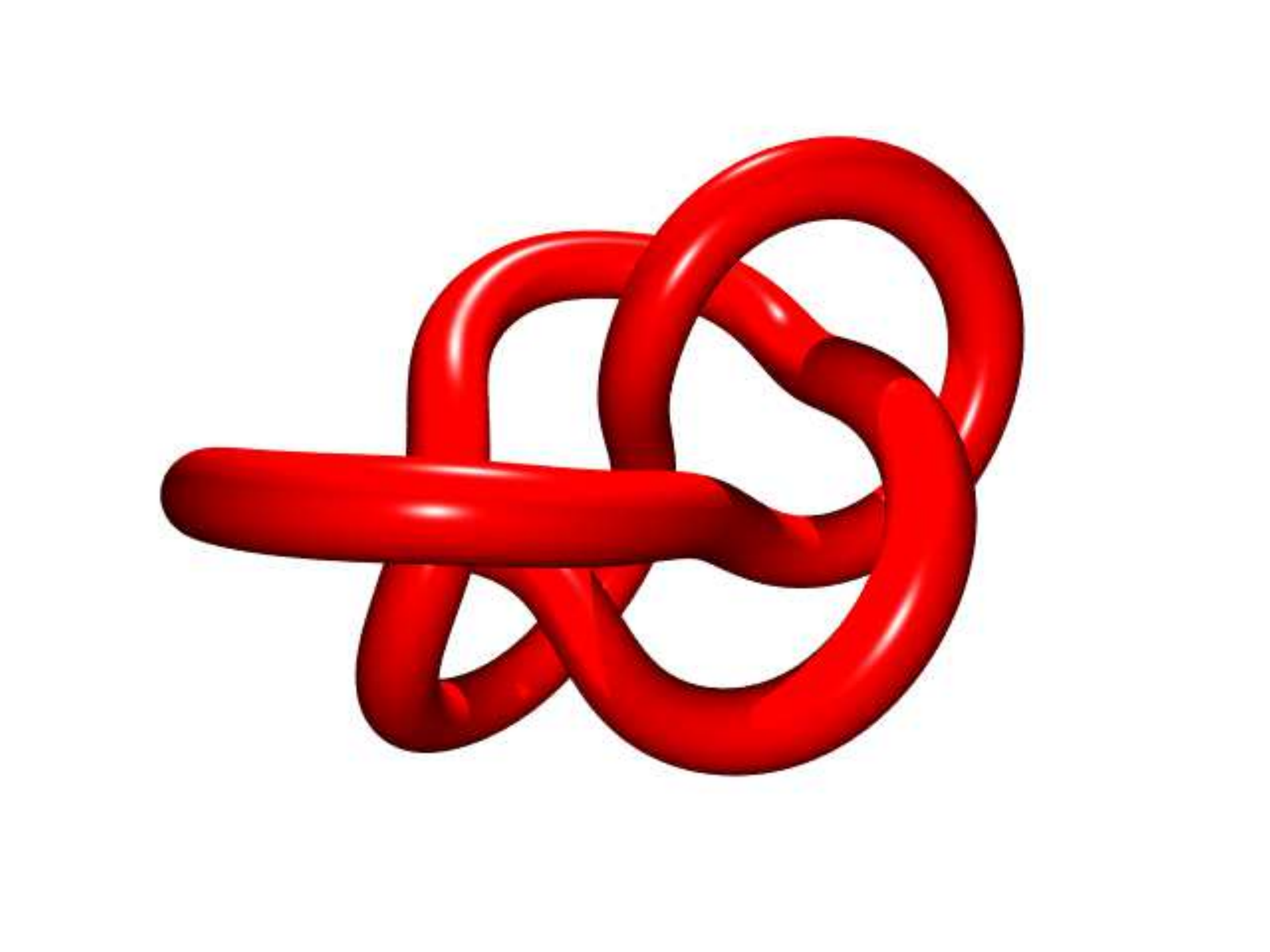} \\
5000/50000 & 50000/50000 \\
$\Le(\gamma)\approx 30.73195$ & $\Le(\gamma)\approx 40.49328$ \\
$\E_p(\gamma)\approx 43.87692$ & $\E_p(\gamma)\approx 43.87673$ \\
$\tau=20.29433$ & $\tau=116.55234$ \\
\includegraphics[width=0.4\textwidth,keepaspectratio]{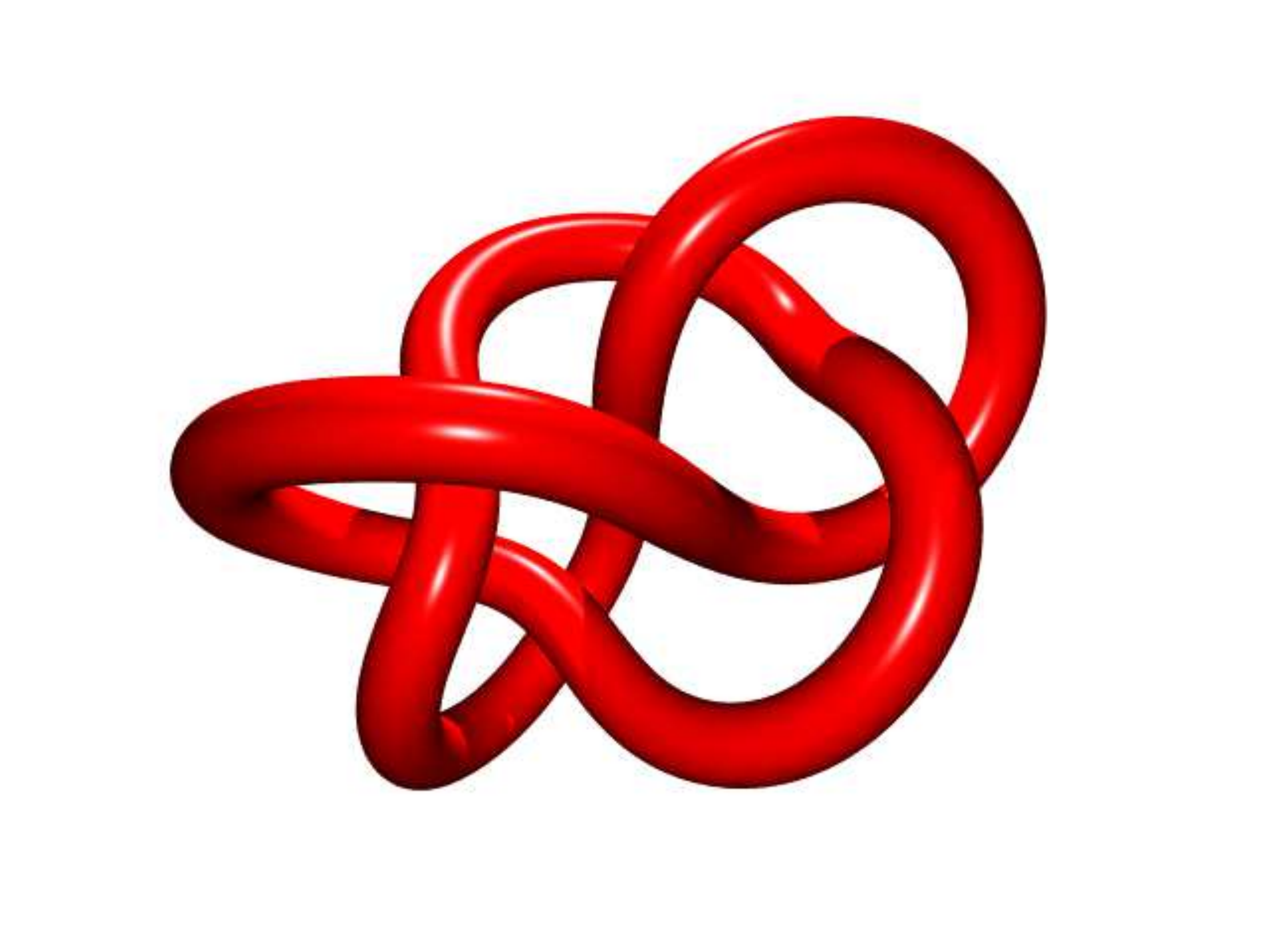} & \includegraphics[width=0.4\textwidth,keepaspectratio]{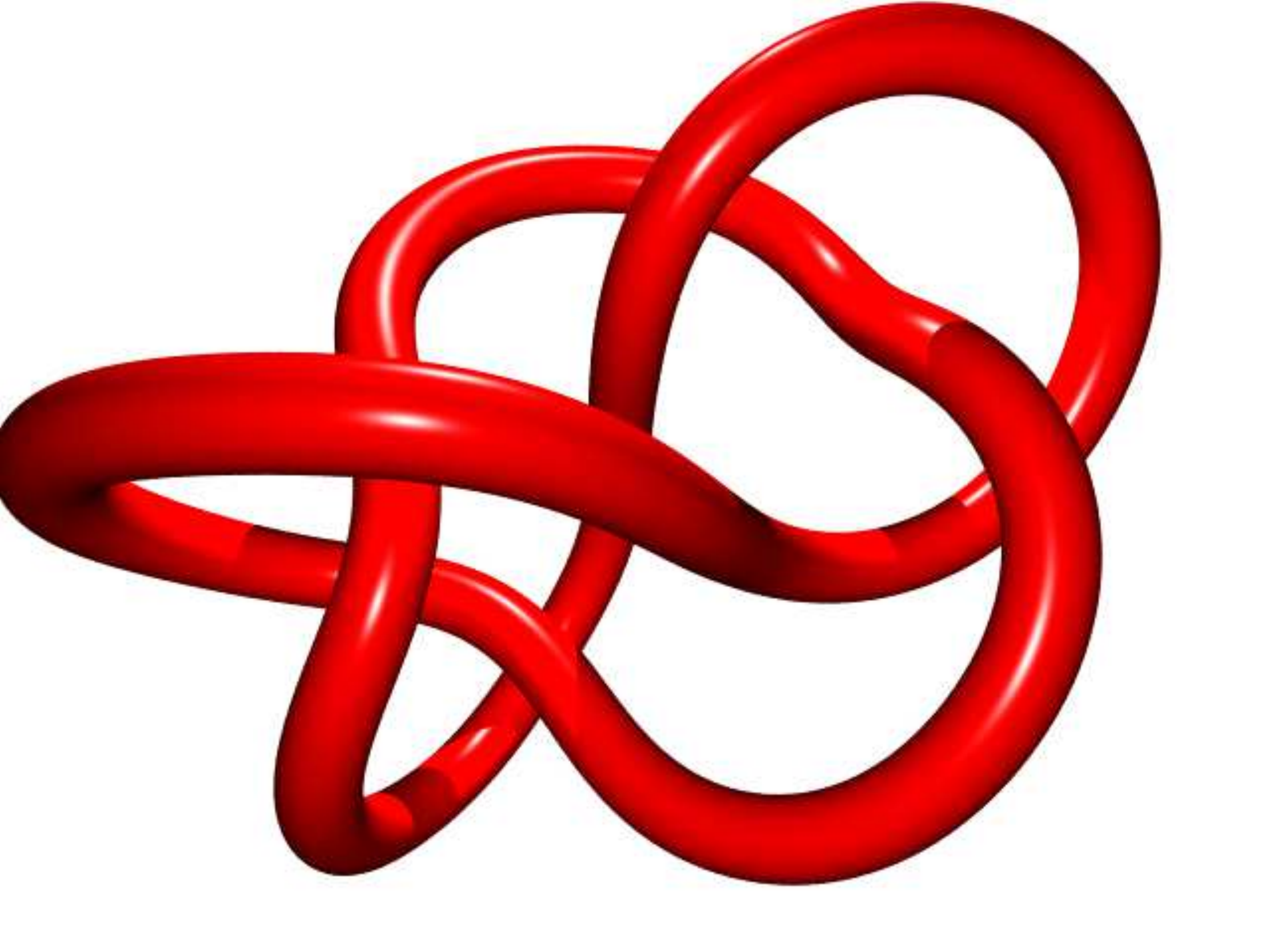}
\end{tabular}
\end{scriptsize}
\end{center}
\caption{A $5_2$ knot -- $p=50.0$}
\end{figure}

Observe, that also in this case the knot starts to grow between the last two pictures and later on the energy value even starts to increase again. After running this configuration without redistribution for $500.000$ steps we reach a value of $43.81185$.

\section{Conclusion}
The flow with the redistribution algorithm seems to be very efficient in order to come close to a stationary point of the considered energy. Nevertheless, in most cases such a stationary point seems to have \name{Fourier} coefficients that correspond to non-equidistantly distributed points on the curve. Moreover, in general the flow with redistribution seems to converge faster. If we apply the flow without redistribution to a final configuration of the flow with redistribution, the speed of convergence of the $\E_p$ energy values is very slow. However, it seems that in this case the differences in shape between the final and the initial configuration are beyond recognition.
Observe, that it is difficult to decide whether a configuration is finial or not, since the energy values can possibly stay nearly constant for a certain number of steps in middle of the flow.

For large $p$ this energy seems to effectively avoid self-penetration during the corresponding gradient flow leading to configurations similar to the ideal knots.

\cleartooddpage[\thispagestyle{empty}]



\addcontentsline{toc}{chapter}{\numberline{}\bibname}
\bibliography{bibliography}


\end{document}